\setlist[itemize]{font = \upshape, before = \leavevmode}
\setlist[enumerate]{font = \upshape, before = \leavevmode}
\setlist[description]{before = \leavevmode}
\newcommand{\cref}[1]{\zcref{#1}}
\newcommand{\Cref}[1]{\zcref[S]{#1}}
\newcommand{\zcrefglobalstringname}[2]{
  \__zrefclever_opt_varname_lang_type:nnnn{\languagename}{#1}{#2}{tl}
}
\newcommand{\zcreflocalstringname}[2]{
  \__zrefclever_opt_varname_type:een{#1}{#2}{tl}
}
\newcommand{\zcrefgetstring}[2]{
  \__zrefclever_provide_langfile:e { \languagename }
  \__zrefclever_process_language_settings:
  \ifcsvoid{\zcreflocalstringname{#1}{#2}}
    {\csname \zcrefglobalstringname{#1}{#2} \endcsname}
    {\csname \zcreflocalstringname{#1}{#2} \endcsname}
}
\NewDocumentCommand{\newzctheorem}{mO{#1}m}{
  \newtheorem{#1}[sharedtheoremcounter]{#3}
    \AddToHook{env/#1/begin}{%
      \zcsetup{countertype={sharedtheoremcounter=#1}}}
}
\numberwithin{equation}{section}
\newcommand{\xrightleftarrows}[1]{\mathrel{\substack{\xrightarrow{\rule{#1}{0pt}} \\[-0.7ex] \xleftarrow{\rule{#1}{0pt}}}}}
\DeclarePairedDelimiter{\p}{\lparen}{\rparen}
\DeclarePairedDelimiter{\ip}{\langle}{\rangle}
\theoremstyle{plain}
\newtheorem{theorem}{Theorem}[section]
\newtheorem{lemma}[theorem]{Lemma}
\newtheorem{prop}[theorem]{Proposition}
\newtheorem{conjecture}[theorem]{Conjecture}
\newtheorem{corollary}[theorem]{Corollary}
\theoremstyle{definition}
\newtheorem{definition}[theorem]{Definition}
\newtheorem{example}[theorem]{Example}
\theoremstyle{remark}
\newtheorem{remark}[theorem]{Remark}
\newtheorem{notation}[theorem]{Notation}
\newcommand{\wt}{\widetilde}
\newcommand{\wh}{\widehat}
\newcommand{\ul}{\underline}
\newcommand{\defeq}{\vcentcolon=}
\newcommand{\gp}{\mathrm{gp}}
\newcommand{\Nis}{\mathrm{Nis}}
\newcommand{\Zar}{\mathrm{Zar}}
\newcommand{\op}{\mathrm{op}}
\newcommand{\fr}{\mathrm{fr}}
\newcommand{\Et}{\mathrm{\acute{E}t}}
\newcommand{\A}{\mathbb{A}}
\newcommand{\G}{\mathbb{G}}
\newcommand{\HH}{\mathbf{H}}
\newcommand{\HHtriv}{\mathbf{H}_{s}}
\newcommand{\HHtrivptd}{\HH_{s,\bullet}}
\newcommand{\HHtrivrigid}{\HH_{\rigid}}
\newcommand{\PP}{\mathbb{P}}
\newcommand{\Z}{\mathbb{Z}}
\newcommand{\calC}{\mathcal{C}}
\newcommand{\calE}{\mathcal{E}}
\newcommand{\calF}{\mathcal{F}}
\newcommand{\calG}{\mathcal{G}}
\newcommand{\calL}{\mathcal{L}}
\newcommand{\calM}{\mathcal{M}}
\newcommand{\calO}{\mathcal{O}}
\newcommand{\calS}{\mathcal{S}}
\newcommand{\calX}{\mathcal{X}}
\newcommand{\DM}{\mathbf{DM}}
\newcommand{\Ab}{\mathrm{Ab}}
\newcommand{\Sch}{\mathrm{Sch}}
\newcommand{\Sm}{\mathrm{Sm}}
\newcommand{\EssSm}{\mathrm{EssSm}}
\newcommand{\Lrep}{\mathcal L}
\newcommand{\Rep}{\mathscr L}
\newcommand{\Loc}{L}
\newcommand{\SH}{\mathbf{SH}}
\newcommand{\Ho}{\mathrm{Ho}}
\newcommand{\Spc}{\mathbf{Spc}}
\newcommand{\Spt}{\mathbf{Spt}}
\newcommand{\Pre}{\mathbf{Pre}}
\newcommand{\Set}{\mathrm{Set}}
\newcommand{\sSet}{s\mathrm{Set}}
\newcommand{\Shv}{\mathrm{Shv}}
\newcommand{\Shvbf}{\mathbf{Shv}}
\newcommand{\tr}{\mathrm{tr}}
\newcommand{\Gm}{\G_m}
\newcommand{\red}{\mathrm{red}}
\newcommand{\can}{\mathrm{can}}
\newcommand{\id}{\mathrm{id}}
\newcommand{\Corr}{\mathrm{Corr}}
\newcommand{\nis}{\mathrm{Nis}}
\newcommand{\triv}{\mathrm{triv}}
\newcommand{\mot}{\mathrm{mot}}
\newcommand{\Irig}{\mathrm{R}_\rig}
\DeclareMathOperator{\hofib}{hofib}
\DeclareMathOperator{\hocofib}{hocofib}
\DeclareMathOperator{\hocoeq}{hocoeq}
\DeclareMathOperator{\GL}{GL}
\DeclareMathOperator{\SL}{SL}
\DeclareMathOperator{\Fun}{Fun}
\DeclareMathOperator{\coker}{coker}
\DeclareMathOperator{\Spec}{Spec}
\DeclareMathOperator{\pr}{pr}
\DeclareMathOperator{\Supp}{Supp}
\DeclareMathOperator{\codim}{codim}
\DeclareMathOperator{\Map}{Map}
\DeclareMathOperator{\Cor}{Cor}
\DeclareMathOperator{\varlim}{\displaystyle{\lim_{\longleftarrow}}}
\DeclareMathOperator{\colim}{colim}
\DeclareMathOperator{\holim}{holim}
\DeclareMathOperator{\hocolim}{hocolim}
\DeclareMathOperator{\Cyl}{Tel}
\DeclareMathOperator{\cofib}{cofib}
\newcommand{\Fr}{\mathrm{Fr}}
\newcommand{\ovFr}{\overline{\mathrm{Fr}}}
\newcommand{\ZFr}{\mathbb Z\Fr}
\newcommand{\SheafHom}{\mathscr{H}\kern-3pt om}
\address{Andrei Druzhinin, Chebyshev Laboratory, 
St. Petersburg State University \&
St. Petersburg Department of Steklov Mathematical Institute 
of Russian Academy of Sciences, Russia}
\email{\href{mailto:andrei.druzh@gmail.com}{andrei.druzh@gmail.com}}
\address{H{\aa}kon Kolderup, Western Norway University of Applied Sciences, Norway}
\email{\href{mailto:hakon.kolderup@hotmail.com}{hakon.kolderup@hotmail.com}}
\address{Paul Arne {\O}stv{\ae}r, Department of Mathematics Federigo Enriques, University of Milan, Italy \& 
Department of Mathematics, University of Oslo, Norway}
\email{\href{mailto:paul.oestvaer@unimi.it}{paul.oestvaer@unimi.it} \& 
\href{mailto:paularne@math.uio.no}{paularne@math.uio.no}}
\newcommand{\AffSch}{\mathrm{AffSch}}
\newcommand{\Aff}{\mathrm{Aff}}
\newcommand{\Smat}{\Sm^{\mathrm{cci}}}
\newcommand{\Schcci}{\Sch^\cci}
\newcommand{\SmAff}{\mathrm{SmAff}}
\newcommand{\aff}{\mathrm{aff}}
\newcommand{\cci}{\mathrm{cci}}
\newcommand{\EssSmB}{\mathrm{EssSm}_B}
\newcommand{\SmB}{\Sm_{B}}
\newcommand{\SmBcZ}{\Sm_{B,Z}}
\newcommand{\EssSmBcZ}{\EssSm_{B,Z}}
\newcommand{\SmBlZ}{\Sm_{B\ast Z}}
\newcommand{\SmBmZ}{\Sm_{B-Z}}
\newcommand{\SmZ}{\Sm_Z}
\newcommand{\SmatB}{\Smat_{B}}
\newcommand{\SmatBlZ}{\Smat_{B\ast Z}}
\newcommand{\SmatBcZ}{\Smat_{B,Z}}
\newcommand{\SmatZ}{\Smat_{Z}}
\newcommand{\SmAffSlZ}{\SmAff_{B\ast Z}}
\newcommand{\SmAffScZ}{\SmAff_{B,Z}}
\newcommand{\SmAffSmZ}{\SmAff_{B-Z}}
\newcommand{\SmAffBlZ}{\SmAff_{B\ast Z}}
\newcommand{\SmAffBcZ}{\SmAff_{B,Z}}
\newcommand{\tf}{\mathrm{tf}}
\newcommand{\tif}{\widetilde i^!}
\newcommand{\tids}{\widetilde i_*}
\newcommand{\ids}{i_*}
\newcommand{\iuf}{i^!}
\newcommand{\jus}{j^*}
\newcommand{\jds}{j_*}
\newcommand{\uus}{u^*}
\newcommand{\uds}{u_*}
\newcommand{\nothook}{\not\hookrightarrow}
\newcommand{\nothookleft}{\reflectbox{$\nothook$}}
\newcommand{\Sh}{\mathrm{Shv}}
\newcommand{\SHstf}{\SH_{s,\tf}}
\newcommand{\SHI}{strict $\A^{1}$-invariance }
\newcommand{\XhZ}{X^h_Z}
\newcommand{\XtZ}{X\times_B Z}
\newcommand{\wX}{\widetilde{X}}
\newcommand{\PreA}{\mathbf{H}_{\A^{1}}}
\newcommand{\rigid}{\mathrm{rig}}
\newcommand{\PreRig}{\mathbf{H}_{\rigid}}
\newcommand{\rig}{\mathrm{rig}}
\newcommand{\SHRig}{\mathbf{SH}_{s,\rigid}}
\newcommand{\EM}{\mathrm{EM}} 
\newcommand{\Coker}{\operatorname{coker}}
\newcommand{\ZF}{\mathbb Z \mathrm F}
\newcommand{\ovZF}{\overline{\ZF}}
\newcommand{\ovX}{\overline{X}}
\newcommand{\tildeRep}{\tilde\Rep}
\newcommand{\ovC}{\overline{C}}
\newcommand{\ovW}{\overline{W}}
\newcommand{\hocolimABZ}{\hocolim^{\Delta_{B,Z}}}
\newcommand{\hocolimAZ}{\hocolim^{\Delta_Z}}
\newcommand{\catS}{\calS_{B,Z}}
\newcommand{\Corrfr}{\mathrm{Corr}^\mathrm{fr}}
\newcommand{\smot}{\mathrm{smot}}
\newcommand{\tfsmot}{{\tf\text{-}\smot}}
\newcommand{\nissmot}{{\smot}}
\newcommand{\Id}{\mathrm{Id}}
\newcommand{\fib}{\operatorname{fib}}
\newcommand{\overarrow}[1]{\overline{#1}}
\newcommand{\PSpt}{\mathrm{PSpt}}
\newcommand{\SpcAtf}{\Spc^{\A^1}_\tf}
\newcommand{\catSH}{\mathrm{SH}}
\newcommand{\catLrep}{\mathcal L}
\newcommand{\catPre}{\mathrm{Pre}}
\newcommand{\catSpt}{\mathrm{Spt}}
\newcommand{\catSpc}{\mathrm{Spc}}
\newcommand{\cSpc}{\catSpc}
\newcommand{\cPre}{\catPre}
\newcommand{\cPretf}{\cPre_\tf}
\newcommand{\cPrenis}{\cPre_\nis}
\newcommand{\cPreA}{\cPre_{\A^{1}}} 
\newcommand{\cPreAtf}{\cPre_{\A^1,\tf}}
\newcommand{\cPreAnis}{\cPre_{\A^1,\nis}}
\newcommand{\cPrefr}{\cPre^\fr}
\newcommand{\cPrefrtf}{\cPre^\fr_\tf}
\newcommand{\cPrefrnis}{\cPre^\fr_\nis}
\newcommand{\cPrefrA}{\cPre^\fr_{\A^{1}}}
\newcommand{\cPrefrAtf}{\cPre^\fr_{\A^1,\tf}}
\newcommand{\cPrefrAnis}{\cPre^\fr_{\A^1,\nis}}
\newcommand{\cHAnis}{\catPre_{\A^1,\nis}}
\newcommand{\cSHAnis}{\catSH_{\A^1,\nis}}
\newcommand{\cSHsAtf}{\catSH^{s}_{\A^1,\tf}}
\newcommand{\cSHsAnis}{\catSH^{s}_{\A^1,\nis}}
\newcommand{\cSHstAtf}{\catSH^{\sct}_{\A^1,\tf}}
\newcommand{\cSHstAnis}{\catSH^{\sct}_{\A^1,\nis}}
\newcommand{\swt}{{s\wedge t}}
\newcommand{\sct}{{s, t}}
\newcommand{\sS}{s}
\newcommand{\cSpts}{\catSpt^{\sS}}
\newcommand{\cSptsfr}{\catSpt^{\sS,\fr}}
\newcommand{\cSptsfrtf}{\catSpt^{\sS,\fr}_\tf}
\newcommand{\cSptsfrnis}{\catSpt^{\sS,\fr}_\nis}
\newcommand{\cSptsfrA}{\catSpt^{\sS,\fr}_{\A^{1}}}
\newcommand{\cSptsfrAtf}{\catSpt^{\sS,\fr}_{\A^1,\tf}}
\newcommand{\cSHsfrAtf}{\catSH^{\sS,\fr}_{\A^1,\tf}}
\newcommand{\cSHsfrAnis}{\catSH^{\sS,\fr}_{\A^1,\nis}}
\newcommand{\cSptst}{\catSpt^{\sct}}
\newcommand{\cSptstAnis}{\catSpt^{\sct}_{\A^1,\nis}}
\newcommand{\cPSpts}{\PSpt^{s}}
\newcommand{\cPSptst}{\PSpt^{\sct}}
\newcommand{\cPSptswt}{\PSpt^{\swt}}
\newcommand{\cSptstfrtf}{\catSpt^{\sct,\fr}_\tf}
\newcommand{\cSptstfrAnis}{\catSpt^{\sct,\fr}_{\A^1,\nis}}
\newcommand{\cSHstfrAtf}{\catSH^{\sct,\fr}_{\A^1,\tf}}
\newcommand{\cSHstfrAnis}{\catSH^{\sct,\fr}_{\A^1,\nis}}
\newcommand{\cPSptsfr}{\PSpt^{s,\fr}}
\newcommand{\cPSptsfrtf}{\PSpt^{s,\fr}_\tf}
\newcommand{\cPSptstfr}{\PSpt^{\sct,\fr}}
\newcommand{\cPSptstfrtf}{\PSpt^{\sct,\fr}_\tf}
\newcommand{\cSptswt}{\catSpt^{\swt}}
\newcommand{\cSptswtAnis}{\catSpt^{\swt}_{\A^1,\nis}}
\newcommand{\cSptP}{\catSpt^{\PP^1}}
\newcommand{\cSptPAnis}{\catSpt^{\PP^1}_{\A^1,\nis}}
\newcommand{\OmegaSigma}{\Theta}
\newcommand{\bbZ}{\mathbb Z}
\newcommand{\PSptt}{\PSpt^{t}}
\newcommand{\Xlocx}{X^\mathrm{loc}_{x}}
\newcommand{\SptstfrAtf}{\mathrm{Spt}^{s,t,\fr}_{\A^{1},\tf}}
\newcommand{\SptstfrANis}{\mathrm{Spt}^{s,t,\fr}_{\A^{1},\nis}}
\newcommand{\Sptst}{\mathrm{Spt}^{s,t}}
\newcommand{\SptstNis}{\mathrm{Spt}^{s,t}_{\nis}}
\newcommand{\OmegaSigmatffrAnis}{\OmegaSigma_{\A^{1},\nis}}
\newcommand{\OmegaSigmatffrAtf}{\OmegaSigma_{\A^{1},\tf}}
\begin{document}

\title[The trivial fiber topology and framed motives over the integers]
{The trivial fiber topology\\ and framed motives over the integers}
\author{Andrei Druzhinin, Håkon Kolderup and Paul Arne Østvær}

\subjclass[2020]{14F35, 14F42, 19E15, 55P99}
\keywords{
Motivic homotopy theory, 
framed correspondences, 
trivial fiber topology, 
strict $\A^{1}$-invariance, 
$\A^{1}$-connectivity for Nisnevich sheaves of stable motivic homotopy groups}

\begin{abstract}
This paper introduces the trivial fiber topology on schemes. 
For one-dimensional base schemes, 
we use it to describe fibrant replacements in the stable motivic homotopy category and motivic infinite loop spaces. 
We also extend the Garkusha-Panin and Voevodsky strict $\A^{1}$-invariance theorems to one-dimensional base schemes. 
The trivial fiber topology plays a central role in the proof of refined localization results for motivic 
homotopy categories.
Moreover, 
we extend Morel's $\A^{1}$-connectivity theorem on Nisnevich sheaves of stable motivic homotopy groups.
These results open new vistas for computations of motivic invariants over deeper base schemes of arithmetic interest.
\end{abstract}

\maketitle

\tableofcontents
\addtocontents{toc}{\protect\setcounter{tocdepth}{1}}

\newpage

\section{Introduction}
\label{section:overview}

\subsection{Background}
\label{subsection:overview}
Voevodsky initiated the theory of framed motives in \cite{VVfc}, 
to answer a profound computational problem regarding the stable motivic homotopy category $\SH(k)$ over a field $k$.
Freely translated from Russian, his handwritten text reads:
\begin{displayquote}
The construction of the motivic category $\mathbf{DM}(k)$ in terms of finite correspondences gives the formula
\[
\mathrm{Hom}_{\mathbf{DM}(k)}(M(X),M(Y)[i])\cong {\mathbb H}_{\mathrm{Zar}}^i(X,C^*\mathbb{Z}_{\mathrm{tr}}(Y)).
\]
This works because the theories we are interested in have transfers. 
However, 
it does not work for more general theories like $K$-theory and Witt-theory.
Fabien Morel and I constructed $\SH(k)$ for this purpose. 
However, 
this category has an unsatisfactory description as there is no formula for the corresponding Hom-groups
\begin{equation}
\label{eq:HomsetSH(k)SigmaPXSimgaPY}
\mathrm{Hom}_{\SH(k)}(\Sigma^{\infty}_{\PP^1}X_{+},\Sigma^{\infty}_{\PP^1}Y_{+}[i]).
\end{equation}
The theory of framed correspondences aims to describe $\SH(k)$ in a similar way as $\mathbf{DM}(k)$ 
can be described by finite correspondences.
\end{displayquote}
\vspace{0.1in}

Fast-forward 15 years, 
the theory of framed motives has become one of the pillars for motivic homotopy theory \cite{Levine-app}.
Many foundational questions remain to be answered,  
computations remain difficult but more accessible with new methods, 
and numerous outstanding conjectures sharpen the focus of this expansion. 
To a smooth and separated $k$-scheme of finite type $X\in\Sm_k$ one associates the motivic suspension spectrum 
$\Sigma^{\infty}_{\PP^1}X_{+}\in\SH(k)$ 
--- 
for early accounts of motivic homotopy theory, 
see \cite{Voe-motives,Cisinski-Deglise,Morel-Voevodsky,morel-trieste,Jardine-spt,mot-functors,Nordfjordeid,AyoubI}. 
The motivic suspension spectrum contains in-depth cohomological information about $X$ pertaining to algebraic cycles.
The question of finding a model for $\Sigma^{\infty}_{\PP^1}X_{+}$ amenable to computations lies 
at the core of the subject.
\vspace{0.1in}

To set the notation,
let $B$ be a qcqs base scheme.
We write $\Delta^{\bullet}_{B}$ for the standard co-simplicial $B$-scheme and $\Fr$ for 
Voevodsky's geometrically defined framed correspondences.
For any smooth simplicial scheme $\calX$ over $B$, 
we discuss the equivalence explaining the fundamental importance of Voevodsky's framed correspondences
\begin{equation}
\label{eq:Xsuspension}
\Sigma^{\infty}_{\PP^1}{\mathcal X}_{+}
\simeq
\{\Rep_{\nis}\Fr(-\times\Delta^{\bullet}_{B},{\mathcal X}_{+}\wedge S^n\wedge \Gm^{\wedge n})^{\gp}\}_{n\geq 0}
\in 
\SH(B).
\end{equation}
Here, 
$\gp$ is shorthand for group completion and $\Rep_{\nis}$ denotes the Nisnevich fibrant replacement.
We note that $\Fr(-\times\Delta^{\bullet}_{B},{\mathcal X}_{+}\wedge S^n\wedge \Gm^{\wedge n})$ is a 
group-like motivic space for every $n\geq 1$ by \cite[Theorem 6.5]{Framed}, 
\cite[Appendix B]{SmAffOpPairs},
and for any $B$ there are 
structure maps 
\begin{equation}
\label{eq:structureP1spectrummorphisms}
\Rep_{\nis}\Fr(-\times\Delta^{\bullet}_{B},{\mathcal X}_{+}\wedge S^n\wedge \Gm^{\wedge n})^{\gp}
\rightarrow
\Omega_{\PP^{1}}
\Rep_{\nis}\Fr(-\times\Delta^{\bullet}_{B},{\mathcal X}_{+}\wedge S^{n+1}\wedge \Gm^{\wedge n+1})^{\gp}.
\end{equation}
The latter morphisms assemble into a $\PP^1$-spectrum, 
which is not necessarily an $\Omega_{\PP^1}$-spectrum   
(in the $\infty$-categorical setting we can view it as a prespectrum of Nisnevich sheaves 
--- see \Cref{section:StablelocalizationDefinitionsNotations}). 
In particular, the right-hand object and the morphism in \eqref{eq:Xsuspension} 
are well defined in $\SH(B)$ for any qcqs scheme $B$.
The proof of the equivalence \eqref{eq:Xsuspension} in \cite[Theorem 11.4]{Framed} for infinite perfect fields 
applies more generally to all fields by appeal to the cone theorems in \cite{ConeTheGNP,DrKyllfinFrpi00}.
Due to the motivic equivalence between $\Fr$ and tangentially framed correspondences, 
see \cite[Corollary 2.3.25]{five-authors}, 
\eqref{eq:Xsuspension} is equivalent to the $\infty$-categorical reconstruction theorem for the stable 
motivic homotopy category
\begin{equation}
\label{eq:SH(B)eqSHtfr(B)}
\SH(B)\simeq \SH^\fr(B)
\end{equation}
proven for general base schemes in \cite{Hoyois-framed-loc} by localization techniques and reconstruction 
for fields (see \cite[Theorem 3.5.12]{five-authors} for perfect fields).
Thus, \eqref{eq:Xsuspension} holds over any base scheme $B$.

To gain computational leverage from the equivalence \eqref{eq:Xsuspension}, 
one invokes \cite[Theorem 11.7]{Framed}:
if the base scheme is an infinite perfect field, 
then the structure maps \eqref{eq:structureP1spectrummorphisms} are schemewise simplicial weak equivalences of 
$\A^1$-invariant Nisnevich sheaves.
That is, 
the right side of \eqref{eq:Xsuspension} is an $\Omega_{\PP^1}$-spectrum in $\SH(k)$, 
which directly implies Morel's $\PP^{1}$-stable connectivity theorem \cite{morel-trieste,Morel-connectivity}
(see also \Cref{subsection:A1connectivity} for a review).
\vspace{0.1in}

Let $k$ be a perfect field.
By \cite[Theorem 1.3, Theorem 11.7(2)]{Framed} or \cite[Corollary 3.5.15]{five-authors}, 
the composite functor
\begin{equation}
\label{eq:XtoLnisFrDeltaX}
\Sm_k
\xrightarrow{\Sigma^{\infty}_{\PP^1}}
\SH(k)
\xrightarrow{\Omega^\infty_{\PP^1}} 
\Spc_*(k) 
\end{equation}
taking values in the homotopy category of pointed simplicial presheaves is given by 
\begin{equation}
\label{eq:XmapstoLnisFrDeltakX+}
\begin{array}{lcl}
Y
\mapsto
\Rep_{\nis}\Fr(-\times\Delta^{\bullet}_{k}, Y_{+})^{\gp},
\end{array}
\end{equation}
see \eqref{eq:notation:LLrepRep} for the definition of $\Rep_{\nis}$.
As a consequence, 
this answers Voevodsky's question about \eqref{eq:HomsetSH(k)SigmaPXSimgaPY} on account of the isomorphism
\begin{equation} 
\label{eq:pinot}
\mathrm{Hom}_{\SH(k)}(\Sigma^{\infty}_{\PP^1}X_{+},\Sigma^{\infty}_{\PP^1}Y_{+}[i]) 
\cong \pi_{n}\Rep_{\nis}\Fr(X\times\Delta^{\bullet}_{k},Y_{+}\wedge S^{n+i})^{\gp}
\end{equation}
for any integer $n\in\mathbb{Z}$ such that $n,n+i\geq 0$.
Furthermore, 
the explicit description of \eqref{eq:XtoLnisFrDeltaX} given by \eqref{eq:XmapstoLnisFrDeltakX+} 
is pivotal in the recognition theorems for 
very effective and effective motivic spectra ($\SH^\mathrm{veff}(k)$ resp.~$\SH^\mathrm{eff}(k)$),
see \cite{motivictwisted}, \cite{voevodsky.open}.
We refer to \cite{Framed,five-authors,BigFrmotives,FramedGamma} for precise statements involving 
framed motives, motivic infinite loop spaces and motivic $\Gamma$-spaces.
Moreover, 
the main result of Garkusha-Panin in \cite{BigFrmotives} reconstructs the stable motivic homotopy 
category of $k$ in terms of Nisnevich local framed $(s,t)$-bispectra
\begin{equation} 
\label{eq:gpreconstruction}
\SH(k)
\simeq 
SH^{\fr}_{\Nis}(k).
\end{equation}
We note that
$SH^{\fr}_{\Nis}(k)$
is defined by framed correspondences and Nisnevich local equivalences. 
Its construction does not involve any $\A^1$-localization functors.
\vspace{0.1in}

A verbatim generalization of the schemewise equivalence in 
\eqref{eq:structureP1spectrummorphisms} and the comparison between 
\eqref{eq:XtoLnisFrDeltaX} and 
\eqref{eq:XmapstoLnisFrDeltakX+} 
does not hold over general base schemes.
Indeed, Ayoub's counterexample to the $\PP^{1}$-stable connectivity theorem for a normal surface 
\cite{AyoubcontrexempleA1connexite} shows that dimension is an important parameter in these considerations.
In \Cref{sect:counterexample},
we give counterexamples over every positive dimensional base scheme $B$.
To prove the correct generalization, our guiding idea is to decompose $\SH(B)$ into two parts;
on the easy part, the theory of framed motives works as over a field, 
while understanding the more complicated part requires refined techniques. 
We achieve the decomposition using the trivial fiber topology introduced in \Cref{section:ttft}.
\vspace{0.1in}

Framed motives yield an essential computational result for the motivic sphere $\mathbf 1$ over $k$
initially proven by Morel \cite{Morel-sphere-spt} using different techniques.
That is, 
\cite{Nesh-FrKMW} (for fields of characteristic zero) and 
\cite{DrKyllfinFrpi00} (for perfect fields)
use \eqref{eq:pinot} to identify the graded endomorphism ring of 
$\mathbf 1$ with Milnor-Witt $K$-theory 
\begin{equation}
\label{equation:KMW1}
\bigoplus_{n\in\Z}
\pi_{n,n}(\mathbf 1)
\cong
\mathrm{K}^\mathrm{MW}_{\ast}(k).
\end{equation}
It is worth pointing out that both the comparison between  
\eqref{eq:XtoLnisFrDeltaX} and \eqref{eq:XmapstoLnisFrDeltakX+}, 
and also the isomorphism \eqref{eq:pinot}
hold over arbitrary fields due to the work of the first author in \cite{nonperfect-SHI}.
As a referee pointed out,
the general form of \eqref{equation:KMW1} follows from the case of perfect fields 
using connectivity of the unit map to very effective hermitian $K$-theory \cite{zbMATH07179287}.
\vspace{0.1in}

\Cref{subsection:smfr,subsection:A1connectivity,subsection:sA1is} review some of the fundamental theorems for 
framed motives over fields. 
This sets the stage for our current program on framed motives over base schemes such as the integers.
With a view to future applications, 
we note some guiding problems that have become approachable but are beyond the scope of this work 
and will be pursued elsewhere:
\vspace{0.05in}
\begin{itemize}
\item 
Show that the right-hand side of the equivalence \eqref{eq:Xsuspension} is an $\Omega_{\PP^1}$-spectrum in $\SH(B)$.
Deduce a stable motivic connectivity theorem for general base schemes.
\vspace{0.05in}
\item Give a local reconstruction of $\SH(B)$ as in \eqref{eq:gpreconstruction}.
\vspace{0.05in}
\item Give a Milnor-Witt $K$-theory type of presentation for the lowest degree nontrivial 
stable motivic homotopy groups of $B$ analogous to \eqref{equation:KMW1}.
\vspace{0.05in}
\item Set up a motivic infinite loop space machine to reconstruct $\SH^{\mathrm{veff}}(B)$ and $\SH^{\mathrm{eff}}(B)$
--- see \cite{zbMATH07303324} for a review of the computational significance of these subcategories of $\SH(B)$.
\end{itemize}

\subsection{Stable motivic fibrant replacements}
\label{subsection:smfr}
A fundamental result in Garkusha-Panin's theory of framed motives \cite{Framed} describes an explicit 
fibrant replacement functor on motivic spectra
(the extension of their result to all fields follows from \cite{nonperfect-SHI}).

\begin{theorem}[\protect{\cite[Theorem 11.7]{Framed}, \cite{DrKyllfinFrpi00}}]
\label{citeth:GP14MotivcReplacement}
Let $k$ be a field and $X\in\Sm_k$. 
There is a canonical stable motivic equivalence
\begin{equation}
\label{eq:SigmaCorSGXLnisLAFrSigmaTX2}
\Sigma_{s,t}^\infty X_+
\xrightarrow{\simeq}
\Rep_\nis\Rep_{\A^{1}}\Fr(-,\Sigma^\infty_{s,t} X_{+})
\end{equation}
in the category of $(s,t)$-bispectra $\mathrm{Spt}_{s,t}(k)$.
Here $\Rep_\nis\Rep_{\A^{1}}\Fr(-,\Sigma^\infty_{s,t} X_{+})$ is a motivic local 
$\Omega_{s,t}$-bispectrum in positive degrees with respect to the simplicial circle $S^{1}$
(see \Cref{subsubsect:model-cat,subsect:LrepA1notdef} for the definitions of the localization 
functors $\Rep_\nis$ and $\Rep_{\A^{1}}$).
\end{theorem}

\begin{remark}
Note that the diagonal of the $(s,t)$-bispectrum \eqref{eq:SigmaCorSGXLnisLAFrSigmaTX2} defines 
the $\PP^1$-spectrum \eqref{eq:Xsuspension}.
\end{remark}

\subsection{$\A^{1}$-connectivity}
\label{subsection:A1connectivity}
Since each simplicial presheaf 
$\Fr(-\times\Delta^{\bullet}_{B},{\mathcal X}_{+}\wedge S^{n}\wedge \Gm^{\wedge n})$
is $n$-connective,
the $\A^{1}$-local $(s,t)$-bispectrum 
$\Rep_{\A^{1}}\Fr(-,\Sigma^\infty_{s,t} X_{+})$ in 
\eqref{eq:SigmaCorSGXLnisLAFrSigmaTX2} is connective. 
Consequently, motivic localization preserves connective $(s,t)$-bispectra, which is precisely the assertion of Morel’s stable connectivity theorem \cite{morel-trieste,Morel-connectivity}. In particular, it implies the vanishing of negative stable homotopy sheaves of smooth schemes:

\begin{theorem}[\protect{\cite{morel-trieste}}]
\label{theorem:fieldconnectivity}
Let $k$ be a field.
For any $X\in \Sm_k$,
the sheaves of motivic stable homotopy groups $\ul\pi_{p,q}(X)$ are Nisnevich locally trivial for all $p<q$.
\end{theorem}

\subsubsection{Morel's homotopy $t$-structure on the stable motivic homotopy category of fields}
\label{subs:MorelSHI}
Let $\SH_{s}(k)$ denote the $S^{1}$-stable motivic homotopy category of $k$, 
see \cite{morel-trieste}. 
The heart $\SH_{s}(k)^\heartsuit$ of its homotopy $t$-structure consists of strictly $\A^{1}$-invariant 
Nisnevich sheaves of abelian groups on $\Sm_{k}$, 
and the heart $\SH(k)^\heartsuit$ is equivalent to the category of homotopy modules,
see \cite[Theorem 5.2.6]{morel-trieste}.
Morel's result can be expressed in terms of framed homotopy modules,
see Ananyevskiy-Neshitov \cite{framed-MW}, 
and in terms of Milnor-Witt cycle modules, 
see Feld \cite{Niels-Feld}.

\subsection{Strictly $\A^{1}$-invariant sheaves} 
\label{subsection:sA1is}
Voevodsky's notion of strictly $\A^{1}$-invariant Nisnevich sheaves has been central in the development of motives 
and motivic homotopy theory \cite{Levine-app}.
Recall that every field $k$ has an associated linear category of finite correspondences $\Cor_k$ in the sense of 
Friedlander-Suslin-Voevodsky \cite{cycles-book}.
A presheaf with transfers, 
i.e., an additive functor 
$$
\mathcal{F}\colon\Cor_k^\op\to \Ab
$$ 
is called $\A^{1}$-invariant if, 
for any smooth separated $k$-scheme $X$, 
the projection $X\times\A^{1}\to X$ from the affine line induces an isomorphism of abelian groups
$$
\mathcal{F}(X)\xrightarrow{\cong} \mathcal{F}(X\times\A^{1}).
$$
Moreover, 
$\mathcal{F}$ is called Nisnevich strictly $\A^{1}$-invariant if, 
for all $i\geq 0$,
the cohomology presheaf
$$
X
\mapsto
H^{i}_{\nis}(X,\mathcal{F}_{\nis})
$$
is an $\A^{1}$-invariant presheaf with transfers.
The same definitions apply verbatim to presheaves of abelian groups on the category $\Sm_{k}$ 
of smooth separated $k$-schemes of finite type.

\subsubsection{Voevodsky's strict $\A^{1}$-invariance theorem for presheaves with transfers over fields}
\label{subs:VoevSHI}
A key technical input in Voevodsky's construction of motives is his result on strict $\A^{1}$-invariance 
of radditive presheaves with transfers in \cite[\S3.2]{Voe-hty-inv}.

\begin{theorem}
\label{th:shiCor}
If $k$ is a perfect field, 
then every $\A^{1}$-invariant presheaf with transfers
\[
\mathcal{F}
\colon
\Cor_k^\op
\to 
\Ab
\]
is Nisnevich strictly $\A^{1}$-invariant.
\end{theorem}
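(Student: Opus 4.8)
The plan is to derive Theorem~\ref{th:shiCor} from two self-contained statements and then prove those by the standard ``injectivity plus Gersten resolution'' analysis of homotopy invariant presheaves with transfers over a perfect field. The reduction: it suffices to prove \textit{(A)} that if $\mathcal F$ is an $\A^1$-invariant \pst{}, then so is its Nisnevich sheafification $\mathcal F_\nis$; and \textit{(B)} that if $\mathcal G$ is an $\A^1$-invariant \pst{} which is moreover a Nisnevich sheaf, then $X\mapsto H^i_\nis(X,\mathcal G)$ is an $\A^1$-invariant \pst{} for every $i\ge 0$. Applying (B) to $\mathcal G=\mathcal F_\nis$ then yields the theorem.

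Both (A) and (B) rest on two facts about an $\A^1$-invariant \pst{} $\mathcal F$ and a smooth semilocal $k$-scheme $S$ with total fraction ring $K$, valid because $k$ is perfect: \textit{(i)} \emph{injectivity}, that for every dense open $U\hookrightarrow S$ the restriction $\mathcal F(S)\to\mathcal F(U)$ is injective, so in particular $\mathcal F(S)\hookrightarrow\mathcal F(K)$; and \textit{(ii)} \emph{Gersten resolution}, that for $S$ local the Cousin complex of $\mathcal F_\nis$ attached to the coniveau filtration is an exact resolution of $\mathcal F_\nis(S)$, with $p$-th term a direct sum over the codimension-$p$ points $x$ of $S$ of twists of $\A^1$-invariant presheaves with transfers evaluated at the residue fields $k(x)$. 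I would prove (i) and (ii) geometrically: use the Nisnevich-local presentation lemma (of Gabber--Voevodsky type) to realize a closed subset of positive codimension, in a Nisnevich neighbourhood of a point of $S$, as a finite closed subscheme of a relative affine line $\A^1_{S'}\to S'$ over a smaller smooth base $S'$; then, using good compactifications of smooth relative curves and the standard-triple formalism, exploit the freedom to move and split finite correspondences supported away from that subset, and combine this with $\A^1$-invariance to produce explicit homotopies trivializing the relevant restriction maps and boundary homomorphisms. I expect this geometric study of relative curves and their finite correspondences to be the main obstacle; it is the technical heart of Voevodsky's cohomological theory of presheaves with transfers, and perfectness of $k$ is used precisely here, in the presentation lemma and in controlling generic fibres and residue fields.

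Granting (i) and (ii), statement (A) follows: by (i), the Nisnevich stalk of $\mathcal F_\nis$ at a point $x$ of a smooth $k$-scheme $X$ equals $\mathcal F(\mathcal O^h_{X,x})$, so $\A^1$-invariance of $\mathcal F_\nis$ can be tested stalkwise and reduces to $\A^1$-invariance of $\mathcal F$ on henselian local schemes; the latter in turn follows from the hypothesis on $\mathcal F$ by applying (i) over the polynomial ring $\mathcal O^h_{X,x}[t]$ together with a specialization at $t=0$. That $\mathcal F_\nis$ retains transfers is the statement that Nisnevich sheafification carries presheaves with transfers to presheaves with transfers, i.e.\ that the Nisnevich topology is compatible with $\Cor_k$.

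Finally, for (B) I would apply (ii) with $\mathcal G=\mathcal F_\nis$. Exactness of the Cousin complex over smooth local schemes forces $H^i_\nis(S,\mathcal G)=0$ for $i>0$ on such $S$, whence $H^i_\nis(X,\mathcal G)=H^i_\Zar(X,\mathcal G)$ for all smooth $X$, and it realizes $H^{*}_\nis(-,\mathcal G)$ as the cohomology of a complex of $\A^1$-invariant presheaves with transfers, each term a direct sum of pushforwards of twisted $\A^1$-invariant presheaves with transfers over residue fields. Since both the transfer structure and $\A^1$-invariance are inherited by the cohomology of such a complex, each $X\mapsto H^i_\nis(X,\mathcal G)$ is an $\A^1$-invariant \pst{}, which together with (A) proves Theorem~\ref{th:shiCor}.
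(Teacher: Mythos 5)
First, a point of reference: the paper does not actually prove Theorem \ref{th:shiCor} --- it is recalled verbatim from Voevodsky \cite[\S3.2]{Voe-hty-inv} --- so your proposal has to be measured against the standard proof rather than against anything in the text. Your two-step reduction, (A) that Nisnevich sheafification preserves $\A^1$-invariance and (B) that the cohomology presheaves of an $\A^1$-invariant Nisnevich sheaf with transfers are again $\A^1$-invariant presheaves with transfers, is the right skeleton, and you correctly locate the technical heart in the injectivity theorems, the presentation lemma, and the standard-triple/relative-curve machinery over a perfect field.

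The genuine gap is in your derivation of (B) from the Gersten resolution. The Cousin complex of $\mathcal{G}=\mathcal{F}_\nis$ is a complex of sheaves on each individual smooth $X$; its terms, sums over codimension-$p$ points of $X$ of skyscrapers of contractions of $\mathcal{G}$ at residue fields, are \emph{not} presheaves on $\Sm_k$: they are not functorial for arbitrary morphisms (pullback along a closed immersion does not act term by term), let alone for finite correspondences, and a codimension-$p$ point of $\A^1\times X$ need not lie over a codimension-$p$ point of $X$. So neither the transfer structure nor the $\A^1$-invariance of $H^i_\nis(-,\mathcal{G})$ can be ``inherited from the terms of the complex'' as you assert. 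There is also a circularity risk: the facts that the Cousin complex computes \emph{Nisnevich} (not merely Zariski) cohomology and that $H^i_\Zar(-,\mathcal{G})=H^i_\nis(-,\mathcal{G})$ are, in Voevodsky's development, consequences of --- not inputs to --- the homotopy invariance of the cohomology presheaves; the Gersten resolution is the last theorem in the chain, not the first. (A milder instance of the same issue appears in (A): $\mathcal{F}_\nis(\A^1\times X)$ is not computed from the stalks of $\mathcal{F}_\nis$ at points of $X$, since $\A^1_S$ is not local for $S$ henselian local, so the ``stalkwise'' reduction already needs the descent statement $\mathcal{F}_\nis(\A^1_S)\cong\mathcal{F}(S)$, which is the $i=0$ case of the hard theorem.) The standard repair is to prove (B) directly by induction on $i$: establish $H^i_\nis(\A^1_S,\mathcal{G})=0$ for $i>0$ and $S$ henselian local via the injectivity/excision theorems and the contracting homotopies you describe, deduce homotopy invariance of $H^1$ from excision and Mayer--Vietoris, and pass to higher $i$ using the contraction $\mathcal{G}_{-1}$; the transfers on $H^i_\nis(-,\mathcal{G})$ come not from Gersten but from identifying $H^i_\nis(X,\mathcal{G})$ with $\Ext^i$ computed in the category of Nisnevich sheaves with transfers. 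With (B) established this way, your step (A) and the overall reduction go through as you describe.
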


Let $\Delta^{\bullet}_{k}$ denote the standard cosimplicial $k$-scheme.
A notable consequence of \Cref{th:shiCor} is that the associated Suslin complex
\[
C_*^{\A^{1}}(\mathcal{F}_{\nis})
\defeq
\mathcal{F}_{\nis}(\Delta^{\bullet}_{k}\times -)
=
(
\cdots \to 
\mathcal{F}_{\nis}(\Delta^2_k\times -)\to
\mathcal{F}_{\nis}(\Delta^1_k\times -)\to
\mathcal{F}_{\nis}
\to
0)
\] 
of $\calF_{\nis}$ is $\A^{1}$-local, 
and hence an object in Voevodsky's category of effective motives $\mathbf{DM}^\mathrm{eff}(k)$.
More generally,
see, e.g., \cite[\S4.2]{Ayoub-guide}, 
the endofunctor on the derived category of bounded above chain complexes of Nisnevich sheaves with transfers 
given by the totalization 
\begin{equation}
\label{CSus}    
\calF^\bullet\mapsto \mathrm{Tot}(C_*^{\A^{1}}(\calF^\bullet))
\end{equation}
of the bicomplex $C_*^{\A^{1}}(\calF^\bullet)$ computes the $\A^{1}$-localization functor with values in 
$\mathbf{DM}^\mathrm{eff}(k)$.

\begin{remark}
Suslin \cite{Sus17} showed that 
\Cref{th:shiCor} holds for every field after inverting the exponential characteristic.
\end{remark}

\subsubsection{Garkusha-Panin's strict $\A^{1}$-invariance theorem for framed presheaves over fields}
\label{subs:GarkushaPaninSHI}

An appealing feature of Voevodsky's theory \cite{Voe-notes} is that framed correspondences encode transfer maps 
for motivic cohomology theories, 
see \cite[Lemma 7.9]{rigid-AnanievDruzh} and \cite{five-transfers}. 
Framed presheaves restrict to presheaves with transfers because framed correspondences form in a precise way 
the universal correspondence category.
In \cite{hty-inv},  
Garkusha-Panin prove a strict $\A^{1}$-invariance theorem for $\A^{1}$-invariant quasi-stable radditive 
framed presheaves over perfect fields, 
see also \Cref{definition:qsradditive}.
The said result is fundamental for their construction of framed motives, 
which in turn provides explicit geometric models for fibrant replacements of motivic spectra \cite{Framed}.
Recall from \cite[\S2]{Framed} the category of framed correspondences $\Fr_{+}(k)$ with objects 
smooth finite type $k$-schemes, 
and morphisms pointed sets $\bigvee_n \mathrm{Fr}_n(X,Y)$ of all level $n$ framed correspondences as in 
\Cref{sectionApp:FrCor}.

\begin{theorem}[\protect{\cite[Theorem 1.1,\S 17]{hty-inv}}]
\label{th:str_field}
If $k$ is a perfect field, 
then every $\A^{1}$-invariant quasi-stable radditive framed presheaf 
\[
\mathcal{F}
\colon
\Fr_{+}(k)^\op
\to 
\Ab
\]
is Nisnevich strictly $\A^{1}$-invariant.
\end{theorem}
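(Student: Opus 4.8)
The plan is to run Voevodsky's strategy for \Cref{th:shiCor} with the category of framed correspondences $\Fr_{+}(k)$, or rather its linearization $\ZFr(k)$, in place of $\Cor_{k}$. The formal skeleton largely parallels the transfers case, although parts of it have to be re-established from scratch because $\ZFr$ is less rigid than $\Cor_{k}$: it is only radditive rather than additive, and one must carry along quasi-stability --- invertibility of the canonical level-shift $\sigma$ --- in order to make the transfer structure descend from $\ZFr$ to $\mathcal{F}$. The real work, and the main obstacle, is to reprove the two geometric inputs of Voevodsky's machine directly in terms of framed correspondences, since $\Fr_{+}$ does not receive a functor from finite correspondences and the moving arguments cannot simply be imported.

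I would first record the formal reduction. One shows that the Nisnevich sheafification $\mathcal{F}_{\nis}$ is again an $\A^{1}$-invariant quasi-stable radditive framed presheaf; this is formal granting that $\mathcal{F}$ is unramified in the appropriate sense, which comes out of the injectivity statement below. Then \Cref{th:str_field} reduces to showing that $X\mapsto H^{i}_{\nis}(X,\mathcal{F}_{\nis})$ is $\A^{1}$-invariant for all $i\ge 0$. Via the coniveau (Cousin) spectral sequence, together with purity, this is in turn reduced to two statements: that the Cousin complex of $\mathcal{F}_{\nis}$ on a smooth local $k$-scheme is an exact flasque resolution, and that the terms of the global Cousin complex are $\A^{1}$-invariant presheaves. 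The latter is handled by the contraction functor $\mathcal{F}\mapsto\mathcal{F}_{-1}$ (the $\G_{m}$-loop), which preserves the class of $\A^{1}$-invariant quasi-stable radditive framed presheaves by a direct manipulation of the framed structure on smash powers of $\G_{m}$, so the Cousin terms, being built from the $\mathcal{F}_{-p}$, inherit $\A^{1}$-invariance.

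The geometric heart --- where I expect essentially all of the difficulty to concentrate --- is the framed analogue of Voevodsky's moving lemma, yielding Zariski/Nisnevich injectivity and excision on smooth local schemes. Concretely, for a smooth affine $k$-scheme $X$, a closed subscheme $Z\subsetneq X$ of positive codimension, and a point $x\in X$, one must manufacture framed correspondences, out of $\A^{1}$-homotopies alone, showing that $\mathcal{F}(\mathcal{O}^{h}_{X,x})\hookrightarrow\mathcal{F}(k(X))$ and that $\mathcal{F}_{\nis}$ on local schemes is controlled by its birational behavior. Following Garkusha-Panin one passes to a standard triple $(\overline{X},X_{\infty},Z)$ --- a smooth relative curve over a base, with $Z$ finite over the base, a relative compactification $\overline{X}$, and a supporting divisor --- and builds an explicit level-one framed correspondence $\overline{X}\times\A^{1}\to X$ interpolating between the identity and a correspondence that factors through $X\setminus Z$; $\A^{1}$-invariance of $\mathcal{F}$ then forces the corresponding restriction maps to agree. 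Quasi-stability is what lets these manipulations on $\ZFr$ descend to $\mathcal{F}$, while perfectness of $k$ is used to guarantee that the residue field at any point of a smooth scheme is separably generated, so that such points can be put in sufficiently general position --- which is precisely what reduces exactness of the Cousin complex to the case of smooth points.

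With injectivity and excision in hand, the Cousin complex of $\mathcal{F}_{\nis}$ on any smooth local $U$ is exact, so for general smooth $X$ the group $H^{i}_{\nis}(X,\mathcal{F}_{\nis})$ is the $i$-th cohomology of the global Cousin complex of $X$. Comparing this complex for $X$ with that for $X\times\A^{1}$, using that its terms are governed by the contractions $\mathcal{F}_{-p}$ whose sheafifications are $\A^{1}$-invariant, yields $H^{i}_{\nis}(X,\mathcal{F}_{\nis})\xrightarrow{\cong}H^{i}_{\nis}(X\times\A^{1},\mathcal{F}_{\nis})$ for every $i$; the framed transfer structure on these cohomology presheaves is induced from that on $\mathcal{F}$. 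This is the asserted Nisnevich strict $\A^{1}$-invariance.
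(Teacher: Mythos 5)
Your outline reconstructs, in broad strokes, the Garkusha--Panin argument from \cite[\S 17]{hty-inv} that the paper simply cites for this theorem: the reduction to injectivity and excision statements on smooth (semi-)local schemes, the moving lemmas built from standard/nice triples and sections of ample line bundles on a relative curve, the contractions $\calF_{-1}$ controlling the terms of the Cousin complex, and the role of quasi-stability in descending manipulations on $\ZFr$ to $\calF$. For \emph{infinite} perfect fields this is the right skeleton, and the paper offers no proof of that case beyond the citation (together with pointers to the literature for the characteristic-two refinements).

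The genuine gap is the finite-field case, which the stated theorem includes. Every step of your ``geometric heart'' --- choosing finite sets of closed points meeting each irreducible component, finding sections of $\calO(d)$ with prescribed invertibility or vanishing on such sets, putting points in ``sufficiently general position'' --- is a general-position argument that requires the base field to be infinite; perfectness only buys separable generation of residue fields, not the existence of enough rational sections, and over $\F_q$ these constructions can fail outright. The paper handles this case by a separate descent argument (\Cref{theorem:shifinitefields}): choose two towers of finite separable extensions of pairwise coprime degrees (\Cref{lm:infirrpolynomials}), note that the result holds over the two infinite perfect colimit fields by the infinite-field case, and then descend using the framed transfer factorization of $l_\varepsilon$ through $\calF(Z(f))$ (\Cref{prop:FiniteExtension}) together with a relation $n_1 l_1 - n_2 l_2 = 1$ (\Cref{prop:FiniteCoprimeExtensionsLiftofmorphism,prop:FiniteExtensions}). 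Without this, or an equivalent transfer/descent step, your argument establishes the theorem only for infinite perfect fields.
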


\begin{remark}
For the proof of \Cref{th:str_field}, 
see \cite[\S 17]{hty-inv} for infinite fields of odd characteristic and fields of even characteristic 
(assuming $\mathbb Z[1/2]$-coefficients), 
and \cite{surj-etale-exc} for fields of characteristic two.
The case of finite fields is established independently in \cite{DrKyllfinFrpi00} and \cite[Appendix B]{five-authors}.
We also provide a short proof in \Cref{theorem:shifinitefields}.
\end{remark}

\begin{remark}
By \cite{nonperfect-SHI}, \Cref{th:str_field} holds also for imperfect fields.
\end{remark}

The proof of \Cref{citeth:GP14MotivcReplacement} makes use of the $S^{1}$-spectrum 
of motivic spaces with framed transfers 
\[
\calM_\fr(X)
\defeq
\left(
\Fr(\Delta^{\bullet}_{k}\times -, X_{+}), 
\Fr(\Delta^{\bullet}_{k}\times -,  X_{+}\wedge S^{1}), 
\dots, 
\Fr(\Delta^{\bullet}_{k}\times -,  X_{+}\wedge S^i),
\dots
\right)
\] 
from \cite[Definition 5.2 (3)]{Framed}.
Both strict $\A^{1}$-invariance and additivity for framed presheaves \cite[Theorem 6.4]{Framed} 
are used in the proof of \cite[Corollary 7.6]{Framed} to show the levelwise Nisnevich fibrant 
replacement $\calM_\fr(X)_{\nis}$ is a motivic fibrant $\Omega_{s}$-spectrum in positive degrees 
(the notation indicates $S^{1}$-loops).
In \cite{five-authors}, 
it is shown that the motivic localization of $\calM_\fr(X)$ coincides with the group completion 
of the Nisnevich local replacement of $\calM_\fr(X)$.

\subsection{What is done in this paper?}
\label{subsection:widitp}
In short, 
we generalize the results in \Cref{subsection:smfr,subsection:A1connectivity,subsection:sA1is}
to one-dimensional base schemes.
This project has proved to be anything but routine, 
and this article attempts to report on our findings so far.
First, we introduce the trivial fiber topology in \Cref{subsection:tftopologyintro}, 
one of the new ingredients in our approach.
Our main results are stated in \Cref{subsection:mr}. 

We invoke strict $\A^{1}$-invariance for abelian groups over imperfect fields, 
as recently shown by the first author in \cite{nonperfect-SHI}.\footnote{
The cautious reader may want to assume that our one-dimensional separated noetherian 
base scheme $B$ has perfect residue fields.}

\subsection{The $\tf$-topology}
\label{subsection:tftopologyintro}
It is known that strict $\A^{1}$-invariance in the sense of \Cref{th:shiCor} fails for every positive 
dimensional scheme, 
see \cite[Remark 4.9]{Ayoub-guide} and \Cref{example:H(A1U)} for details.  
To remedy the situation, we introduce the $\tf$-topology, 
where $\tf$ is shorthand for ``trivial fiber".

Let $B$ be a finite dimensional noetherian base scheme and let $\Sm_{B}$ denote smooth separated $B$-schemes 
of finite type.
A pullback diagram in $\Sch_{B}$ of the form
\begin{align}
\label{eq:TfgSq}
\xymatrix{
X^\prime\times_B {(B-Z)} \ar[r]\ar[d] &X^\prime\ar[d]^{\varphi} \\ 
X\times_B {(B-Z)} \ar[r] & X
}
\end{align}
is called a $\tf$-square if it is a Nisnevich square as in \cite[Definition 1.3, p. 96]{Morel-Voevodsky}, 
$\varphi$ is affine, 
and the closed immersion $Z\not\hookrightarrow B$ induces an isomorphism of $B$-schemes
\[
X^{\prime} \times_{B} Z
\cong 
X\times_{B} Z.
\]

The $\tf$-squares form a cd-structure on $\Sm_{B}$ in the sense of \cite[Definition 2.1]{VV:cd}.
The associated $\tf$-topology on $\Sm_B$ is the Grothendieck topology generated by $\tf$-squares 
as in \eqref{eq:TfgSq} and the empty covering sieve $\emptyset$. 
The $\tf$-topology on $\Sm_{B}$ is intermediate between the trivial topology and the Nisnevich topology.
Over a field, 
the $\tf$-topology coincides with the trivial topology, 
see \Cref{prop:Proptftop} (iii).
On the other hand, 
every Nisnevich covering of $B$ is also a $\tf$-covering. 
Thus, the $\tf$-topology is finer than the topology generated by pullbacks 
of Nisnevich coverings of $B$ along some structure morphism in $\Sm_{B}$.
One feature of the $\tf$-topology is that the $\tf$-cohomological dimension of any $X\in\Sch_B$ 
is bounded above by $\dim B$ (see \Cref{prop:cdstructurecompleteregularbounded}). 
The assumption that $\varphi$ is affine simplifies the description of the points of the $\tf$-topology,
see \Cref{rem:tfsquare:etalemorphismafineness}, 
but it is not essential for our main results.

\subsection{Main results}
\label{subsection:mr}

The $\tf$-topology allows us 
to decompose the motivic infinite loop space construction via certain adjunctions
\begin{equation}
\label{eq:Spc+frSHfrA1tfSHfrA1NSHAN}
\Spc_*(B)\stackrel{\gamma^*\dashv\gamma_*}{\rightleftarrows}
\Spc^\fr_*(B)\stackrel{\Sigma^\infty\dashv\Omega^\infty}{\rightleftarrows} \SH^{\fr}_{\A^1,\tf}(B)
\stackrel{L_{\nis}\dashv R_{\nis}}{\rightleftarrows} 
\SH^{\fr}_{\A^1,\nis}(B)\stackrel{\gamma_*\dashv\gamma^*}{\simeq}\SH(B).
\end{equation}
Here, starting with the homotopy category of pointed presheaves $\Spc_*(B)$ on the left, 
and proceeding to the homotopy category of $\A^1$-invariant $\tf$-local framed $(s,t)$-bispectra 
$\SH^{\fr}_{\A^1,\tf}(B)$ via the functor $\Sigma^\infty$, 
which simultaneously carries out $\tf$-motivic localization and stabilization, our analysis of 
these functors in \Cref{sect:stablemotiviclocalization,sect:ReusltFormulations} yields the following result.


\begin{theorem}
\label{theorem:homcatfunctrsshiftNisstalks}
\begin{itemize}
\item[(1)] 
If $B$ is a one-dimensional base scheme, 
then for any $\mathcal F\in \SH^\fr_{\A^{1},\tf}(B)$ and essentially smooth local henselian scheme $U$,
there is an isomorphism of bigraded abelian groups 
\begin{equation}
\pi_{*,*}^{\SH^\fr_{\A^{1},\tf}(B)}(\mathcal F)(U)
\xrightarrow{\cong}
\pi_{*,*}^{\SH_{\A^{1},\nis}(B)}(L_{\nis}\mathcal F)(U).
\end{equation}
Equivalently, 
there is a naturally induced Nisnevich local isomorphism of 
presheaves
\begin{equation}\label{eq:piSHfrAtfB(F)congNispiSHAnisB(LnisF)}
\pi_{*,*}^{\SH^\fr_{\A^{1},\tf}(B)}(\mathcal F)
\xrightarrow{\cong_{\Nis}}
\pi_{*,*}^{\SH_{\A^{1},\nis}(B)}(L_{\nis}\mathcal F).
\end{equation}
Therefore, 
$L_\nis$ is exact with respect to the homotopy $t$-structures on 
$\SH^\fr_{\A^{1},\tf}(B)$ and $\SH_{\A^{1},\nis}(B)$.
\item[(2)]
For any base scheme $B$ of Krull dimension $d$, 
the functor $\Sigma^\infty$ 
in \eqref{eq:Spc+frSHfrA1tfSHfrA1NSHAN} takes values in schemewise $(-d)$-connective objects,
see \Cref{sect:stabhomgroupsheavesconnectivity}.
\end{itemize}
\end{theorem}

\begin{remark}
\label{rem:framedspectrarepalemcetcooectivity}
The homotopy $t$-structures on $\SH^\fr_{\A^{1},\tf}(B)$ is defined 
via the $\tf$-sheaf associated to the left side of \eqref{eq:piSHfrAtfB(F)congNispiSHAnisB(LnisF)}, 
and likewise for the Nisnevich sheaf associated to the right side of \eqref{eq:piSHfrAtfB(F)congNispiSHAnisB(LnisF)}.
We note that $\gamma^*$ and $\gamma_*$ on the right side of \eqref{eq:Spc+frSHfrA1tfSHfrA1NSHAN}
are exact with respect to the homotopy $t$-structures.
\end{remark}

\begin{corollary}
\label{th:connectivitysmotsheaves}
Let $B$ be a one-dimensional base scheme.
Then the suspension functor 
\begin{equation}
\label{eq:SigmainftySpcbulletSHA1Nis(B)}
\Sigma^\infty\colon \Spc_*(B)\to \SH(B)
\end{equation} 
takes values in $(-1)$-Nisnevich locally connective objects.
For any $Y\in \Sm_B$ the Nisnevich sheaf 
$\underline{\pi}_{i,j}(Y)$ of stable motivic homotopy groups is trivial for $i<j-1$
(see \Cref{sect:stabhomgroupsheavesconnectivity}).
\end{corollary}

We emphasize that while \Cref{th:connectivitysmotsheaves} recovers the connectivity results for 
motivic $\PP^1$-spectra in \cite{ConnDodekindDomains,ConnGabPresLemNoethDominffield,ConnBase},
\Cref{theorem:homcatfunctrsshiftNisstalks} has additional computational applications.
First, 
the connectivity property of $\Sigma^\infty$ in \eqref{eq:Spc+frSHfrA1tfSHfrA1NSHAN} 
is stronger than for $\Sigma^\infty$ in \eqref{eq:SigmainftySpcbulletSHA1Nis(B)}.  
Second, \Cref{theorem:homcatfunctrsshiftNisstalks} can be applied to compute 
non-trivial Nisnevich stalks of stable motivic homotopy groups.
We use these facts in \Cref{subsect:stabletfmotivicdecomposition,sect:results:loopspaces} 
to study the stable motivic localization functor and motivic infinite loop spaces.

\begin{remark}
\Cref{theorem:fieldconnectivity} is pivotal for convergence and subsequent calculations of Voevodsky's 
slice spectral sequence over fields,  
see \cite{beo}, \cite{zbMATH06220359}, \cite{pi-one-new}, and \cite{pi-two}.
\Cref{th:connectivitysmotsheaves} is a first step towards such results for one-dimensional base schemes.
\end{remark}

\subsubsection{Stable motivic localization}
\label{subsect:stabletfmotivicdecomposition}

\Cref{theorem:homcatfunctrsshiftNisstalks} follows from our computation of the 
stable motivic localization functor through framed correspondences.
First, 
we note there is a natural stable motivic weak equivalences of $(s,t)$-bispectra 
\[
\Sigma^\infty_{\Gm,S^1}X_{+}
\simeq 
\Fr(-,\Sigma^\infty_{\Gm,S^1} X_{+}).
\]
For fields, 
see \cite{Framed}, 
and the general case follows from the reconstruction theorem  \cite{Hoyois-framed-loc}.
\vspace{0.1in}

Let $\Rep_{\nissmot}$ denote the fibrant replacement endofunctor on the category of 
$(s,t)$-bispectra of presheaves on $\Sm_{B}$ equipped with the stable motivic model structure 
\cite[\S2.3]{Nordfjordeid}, \cite{Jardine-spt}.
In \Cref{th:Lsmotspecrtaofframedpresheaves} we give the following description of $\Rep_{\nissmot}$.

\begin{theorem}
\label{theorem:results:Fr}
\par(1) 
Suppose that $B$ is a one-dimensional base scheme.
For any $(s,t)$-bispectrum of quasi-stable framed radditive presheaves $\calF$ and $X\in\Sm_{B}$,
there are canonical levelwise schemewise equivalences
\begin{equation}
\label{eqintro:LnismotdecompostionLnisLtfsmot}
\Rep_{\nissmot}(\calF)
\simeq 
\Rep_\nis\Rep_{\Gm}\Rep_{\A^{1},\tf}(\calF)^{\gp},
\end{equation}
and 
\begin{equation*}
\Rep_{\nissmot}(\Sigma^\infty_{s,t}X_+)
\simeq 
\Rep_\nis\Rep_{\Gm}\Rep_{ \A^{1},\tf}\Fr(-,\Sigma^\infty_{s,t}X_+)^{\gp}.
\end{equation*}
Here 
$\Rep_{ \A^{1},\tf}$ is the levelwise 
$\tf$-motivic localization endofunctor and 
$\Rep_{\nis}$ is the levelwise Nisnevich localization endofunctor.
The endofunctor $\Rep_{\Gm}$ maps $(s,t)$-bispectra to $S^1$-spectra of $\Omega_{\Gm}$-spectra.
\par(2) For any base scheme $B$ of Krull dimension $d$, 
the endofunctors $\Rep_{\Gm}$, 
$\Rep_{\A^{1},\tf}$ send schemewise connective objects to schemewise $(-d)$-connective objects.
\par(3) For $B$ be a one-dimensional base scheme, 
the endofunctor $\Rep_{\nissmot}$ sends Nisnevich locally connective objects to 
$(-1)$-Nisnevich locally connective ones.
\end{theorem}

\begin{remark}
For any essentially smooth local henselian scheme $U$, 
there is an equivalence $\calF(U)\simeq\Lrep_{\nis}\calF(U)$.
Therefore, 
$\Rep_\nis$ preserves the Nisnevich local connective objects.
\end{remark}

\subsubsection{Motivic infinite loop spaces}\label{sect:results:loopspaces}
The description of the stable motivic localization functor given in \Cref{subsect:stabletfmotivicdecomposition} 
allows us to deduce the following result on motivic infinite loop spaces
(see \Cref{th:Lacalizationfunctordecomposition}).

\begin{theorem}
\label{th:Lacalizationfunctordecomposition2}
Let $B$ be a one-dimensional base scheme.
The motivic infinite loop space of the motivic suspension spectrum of $X\in\Sm_B$ in 
$\SH(B)$ is Nisnevich locally equivalent to the pointed simplicial presheaf
\begin{equation*}
\label{eq:OmegastTGSAOmegastTTinftyLmot(Y)}
\colim
\Omega^l_{\Gm}(\Rep_{\A^{1},\tf}\Fr(X_{+}\wedge \Gm^{\wedge l})^{\gp}).
\end{equation*}
Here $\Fr(\mathcal X)=\Fr(-,\mathcal X)$ is the presheaf of framed correspondences in the sense of \cite{Framed} 
for any pointed simplicial smooth $B$-scheme $\mathcal X$.
\end{theorem}
\begin{remark}
\Cref{th:Lacalizationfunctordecomposition2} implies that the functor 
$\Omega^\infty_{\Gm}\colon \DM(B)\to \mathbf{D}(\Shv^\Ab_\nis(\mathrm{Cor}_B))$ 
takes the motive of $X\in\Sm_B$ to a complex of Nisnevich sheaves with transfers which is quasi-isomorphic to 
\begin{equation*}
\colim\Omega^l_{\Gm^{\wedge 1}}\Lrep_{\A^{1},\tf}(\mathbb Z_\tr(X_{+}\wedge \Gm^{\wedge l}))
\in 
\mathbf{D}(\Shv^\Ab_\nis(\mathrm{Cor}_B)).
\end{equation*}
\end{remark}

\subsubsection{Strict $\A^{1}$-invariance}
If $\tau$ is a topology on $\Sm_{B}$, 
a presheaf of abelian groups $\calF$ on $\Fr_{+}(B)$ is $\tau$-strictly $\A^{1}$-invariant if the cohomology presheaf
$H_\tau^i(-,\calF_\tau)$ is $\A^{1}$-invariant for all $i\geq 0$. 
We refer to \Cref{sectionApp:FrCor} for a discussion of framed correspondences $\Fr_{+}(B)$ over $B$ 
and quasi-stable radditive framed presheaves.
Over fields, 
this specializes to \cite[Definition 2.3]{Framed}.

\begin{definition}
\label{def:str_xibase}
A base scheme $B$ satisfies 
$\tf$-Nisnevich strict $\A^{1}$-invariance for abelian groups if every 
$\tf$-strictly $\A^{1}$-invariant quasi-stable radditive framed presheaf 
\[
\mathcal{F}
\colon
\Fr_{+}(B)^\op
\to 
\Ab
\]
is Nisnevich strictly $\A^{1}$-invariant. 
\end{definition}

As noted in \Cref{subsection:tftopologyintro}, 
the $\tf$-topology of any field is trivial. 
Hence for fields,
the notion of $\tf$-Nisnevich strict $\A^{1}$-invariance for abelian groups agrees with the notion of 
strict $\A^{1}$-invariance for abelian groups.
We show the following permanence result for strict $\A^{1}$-invariance:

\begin{theorem}
\label{thm:main}
Let $B$ be a one-dimensional base scheme.
Then $B$ satisfies $\tf$-Nisnevich strict $\A^{1}$-invariance for abelian groups.
\end{theorem}

\begin{remark}
If $B$ is a one-dimensional scheme, 
then for any presheaf of abelian groups $\calF$ on $\Sm_B$ and $X\in \Sm_B$, 
there is similarly to \Cref{lm:tfcohomologyonedimB} an isomorphism
\begin{equation}
\label{eq:Htf(X,F)Bdim1}
H_\tf^i(X,\calF_\tf)
\cong
\begin{cases}
\ker(\textrm{res}_{\calF}) & i=0\\
\coker(\textrm{res}_{\calF}) & i=1\\
0 & i\ge 2.
\end{cases}
\end{equation}
We write $\textrm{res}_{\calF}$ is the naturally induced restriction map
\begin{equation*}
\textrm{res}_{\calF}
\colon
\bigoplus_{z\in B^{(1)}}\calF(X^h_z)\oplus \bigoplus_{\eta\in B^{(0)}}\calF(X_\eta)
\to
\bigoplus_{z\in B^{(1)}, \eta\in B^{(0)}} \calF((X^h_z)_\eta).
\end{equation*}
Here $B^{(p)}$ denotes the set of codimension $p$ points of $B$,
$X^h_z$ denotes the henselization of $X$ along the closed subscheme $X_z=X\times_B z$;
$X_\eta=X\times_B \eta$, 
and $(X^h_z)_\eta=(X^h_z)\times_B \eta$.
Suppose $\calF$ is a quasi-stable radditive framed abelian presheaf on $\Sm_{B}$.
\Cref{thm:main} is equivalent to saying that if the presheaf
$X\mapsto H_\tf^i(X,\calF_\tf)$ in \eqref{eq:Htf(X,F)Bdim1} is $\A^{1}$-invariant, 
then the Nisnevich sheafification of $\calF$ is Nisnevich strictly $\A^{1}$-invariant.
\end{remark}

\subsection{Scholium for strict $\A^{1}$-invariance}
\label{subsection:squareAtimesV}
To elucidate the role of the $\tf$-topology, 
we note that a base scheme $B$ satisfies $\tf$-Nisnevich strict $\A^{1}$-invariance for abelian groups 
if and only if for every $X\in\Sm_{B}$,  
$x\in X$,
and $\calF\colon\Fr_{+}(B)^\op\to\Ab$ as in \Cref{def:str_xibase}, 
there is similarly to \Cref{corollary:ident-Nis-coh-loc-sch-corol} an isomorphism
\begin{equation}
\label{eq:Hnis(A1oloc)}
H^i_{\nis}(X_x^h\times \A^{1},\calF_{\nis})
\cong
\begin{cases}
\calF(X_x^h) & i=0\\
0 & i>0.
\end{cases}
\end{equation}
Here $X_x^h$ denotes the henselization of the local scheme $X_{(x)}$ at $x\in X$.
By the same reasoning, 
the assumption that $\calF$ is $\tf$-strictly $\A^{1}$-invariant implies the isomorphism
\begin{equation}
\label{eq:Htf(A1oloc)}
H^i_\tf(X_x^h\times \A^{1},\calF_\tf)\cong
\begin{cases}
\calF(X_x^h) & i=0\\
0 & i>0.
\end{cases}
\end{equation}
The formula \eqref{eq:Htf(A1oloc)} holds trivially over fields for any $\A^{1}$-invariant presheaf $\calF$.
To deduce \eqref{eq:Hnis(A1oloc)} we consider the Nisnevich topology on the small {\'e}tale site 
$\Et_{\A^{1}\times{X^h_x}}$ of the affine line $\A^{1}_{X^h_x}$ for all $x\in X$.
The latter topology is generated by the Nisnevich squares 
\begin{equation}
\label{eq:NissqA1FrContr}
\xymatrix{
W\times_V V^\prime \ar[r]\ar[d]& V^\prime\ar[d]\\
W\ar[r] & V
.}
\end{equation}
We consider the corresponding contractible chain complexes of presheaves of abelian groups
\[
\overline{\ZF}(-,W\times_V V^\prime)
\to 
\overline{\ZF}(-,W)\oplus \overline{\ZF}(-,V^\prime)
\to 
\overline{\ZF}(-,V). 
\]
Here $\overline{\ZF}(-,V)$ is the quotient of the presheaf of free abelian groups $\mathbb Z\Fr_+(-,V)$ 
by $\A^{1}$-homotopy, additivity, and quasi-stabilization, see \Cref{def:ZFandoverlineZF}.
We refer to \eqref{eq:NissqA1FrContr} as an $(\A^{1},\ZF)$-contractible Nisnevich square.
For every positive dimensional base scheme $B$ there exists a Nisnevich square
\begin{equation}
\label{eq:fsvSq}
\xymatrix{
\A^{1}_{U-Z(f)}-Z(r)\ar[r]\ar[d] & \A^{1}_U-Z(r)\ar[d] \\
\A^{1}_{U-Z(f)}\ar[r] & \A^{1}_U. 
}
\end{equation}
Here $U\subset B$ is an open affine subscheme, 
$f\in \mathcal O(U)$ is any regular function with a nonempty vanishing locus $Z(f)$, 
and $r=ft-1\in \mathcal O(\A^{1}_U)$.
In \Cref{sect:counterexample} we show that \eqref{eq:fsvSq} is an example of a 
non-$(\A^{1},\ZF)$-contractible Nisnevich square.
Hence the Nisnevich topology on 
$\A^{1}_{X^h_x}$ cannot be generated by $(\A^{1},\ZF)$-contractible Nisnevich squares.
The $\tf$-squares allow us to fix this problem.
This is decisive for showing that \eqref{eq:Htf(A1oloc)} implies \eqref{eq:Hnis(A1oloc)}.
In order to make this precise we use localization techniques, 
see also \Cref{rm:subsection:squareAtimesV} for a more detailed discussion.

\begin{remark}
In the proof of \Cref{thm:main}, 
we show that $\mathcal{F}_{\nis}$ and its Nisnevich cohomology groups are quasi-stable radditive framed presheaves 
for any base scheme $B$.
\end{remark}

\begin{remark}
\label{sect:AssumptionsBase}
We shall use strict homotopy invariance for all the residue fields of the base scheme $B$.
In fact, 
the strict $\A^1$-homotopy invariance theorem is proven over any field in \cite{nonperfect-SHI}
(if the field is perfect, 
this follows alternatively by combining \cite{hty-inv,surj-etale-exc} with \cite{DrKyllfinFrpi00} or \cite{five-authors}
--- see also \Cref{theorem:shifinitefields} for the case of finite fields).
The proof for imperfect fields expands on the material in \Cref{section:tcogf}.
A crucial step is to view the affine line $\A^1_k$ as a base scheme and analyze 
essentially smooth local schemes over the localization of $\PP^1_k$ at $\infty$.
\end{remark}

\begin{remark}
Suppose that for all $\eta\in B$, $X\in \Sm_{B}$, $z\in X$, and quasi-stable $\A^{1}$-invariant $F\in \Spt^\fr_s(B)$, 
the canonical morphism $F( (X^h_z)_\eta )\to L_{\nis} F( (X^h_z)_\eta)$ is a stable equivalence.
Under this assumption, 
the arguments and results in this paper remain valid over the base scheme $B$.
\end{remark}

\subsection{Further directions}
\Cref{theorem:homcatfunctrsshiftNisstalks,theorem:results:Fr} shift the problems of identifying 
the motive and computing the stable motivic homotopy groups of $X\in \Sm_B$ in $\SH(B)$ to the 
more manageable category $\SH^\fr_{\A^1,\tf}(B)$ and the endofunctor 
$\Lrep_{\A^1,\tf}\gamma_*\gamma^*$ on $\Spc_*(B)$.
If $B$ is one-dimensional, 
we claim that there is a canonical equivalence 
\begin{equation}
\label{eq:LA1tf}
\Lrep_{\A^{1}}\Lrep_{\tf}\Lrep_{\A^{1}}
\xrightarrow{\simeq}
\Lrep_{\A^{1},\tf}.
\end{equation}
The proof of \eqref{eq:LA1tf} occupies a large part of the work in progress \cite{notes}.
We remark that $\Lrep_{\A^{1},\tf}$ is not equivalent to the composite $\Lrep_{\tf}\Lrep_{\A^{1}}$ since 
$\tf$-strict $\A^{1}$-invariance fails for positive dimensional schemes, 
see \Cref{sect:counterexample}.
Hence \Cref{citeth:GP14MotivcReplacement} fails for general base schemes, 
because the right side of \eqref{eq:SigmaCorSGXLnisLAFrSigmaTX2} does not have the claimed properties.
If $B$ is a one-dimensional scheme and $X\in \Sm_B$, 
a combination of \eqref{eqintro:LnismotdecompostionLnisLtfsmot} and \eqref{eq:LA1tf} yields that the 
correct replacement for \eqref{eq:SigmaCorSGXLnisLAFrSigmaTX2} is 
\begin{equation}
\label{eq:LnLGLALtfLA}
\Sigma_{s,t}^\infty X_+
\xrightarrow{\simeq}
\Lrep_{\nis}\Lrep_{\Gm}\Lrep_{\A^{1}}\Lrep_{\tf}\Lrep_{\A^{1}}\Fr(-,\Sigma^\infty_{s,t} X_{+}).
\end{equation}
In \eqref{eq:LnLGLALtfLA}, 
the mixing of the left exact $\A^1$-localization functor $\Lrep_{\A^{1}}$ with the exact Nisnevich 
and $\tf$-localizations gives rise to nontrivial stable motivic homotopy classes in negative degrees.
We offer the following conjecture on the motivic sphere $\mathbf{1}_{B}$ and the $S^{1}$-stable motivic 
homotopy category  $\SH_{s}(B)$ of a base scheme $B$. 
We refer to \Cref{def:FrZFrZFovZF} for the definition of $\ZF(-,\Gm^{\wedge j})$ 
--- see also \cite{Nesh-FrKMW} for the second part.

\begin{conjecture}
\label{conj:S_S} 
\item[(1)] For $j\geq 0$ there is an equivalence
\[
\Rep_\smot(\Gm^{\wedge j})
\simeq 
\Rep_\nis\Fr(\Delta^\bullet_B\times -,\Sigma^\infty_{s,t}\Gm^{\wedge j})^{\gp}.
\]
\item[(2)] The stable motivic homotopy sheaves 
$\underline{\pi}_{i,j}(\mathbf{1}_{B})$ in $\SH(B)$ and $\underline{\pi}_{i}(\mathbf{1}_{B})$ 
in $\SH_{s}(B)$ are trivial in the range $i<j$ and $i<0$, 
respectively. 
\item[(3)] For $j\geq 0$ there is a canonical isomorphism
$$
\underline{\pi}_{-j,-j}(\mathbf{1}_{B})
\cong 
\ZF(-,\Gm^{\wedge j})/\ZF(\Delta^1_B\times -,\Gm^{\wedge j}).
$$
\end{conjecture}

\begin{remark}
The results in this paper reduce \Cref{conj:S_S} for any one-dimensional scheme $B$
to showing that there is a short exact sequence
\[
\calF(E\times_B U)\to
\calF((E\times_B U)^h_z)\oplus\calF((E\times_B U)\times_B (B-z))
\to 
\calF((E\times_B U)^h_z\times_B (B-z))
\]
for each closed point $z\in B$, see also \Cref{lm:tfcohomologyonedimB}.
Here, 
$U$ is an essentially smooth local henselian scheme, $E=\A^1,\Gm$,
and $\calF=H^l_\tf\Fr(\Delta^\bullet_B\times -,\Gm^{\times d})$ for $l,d\geq 0$. 
A weaker version of the second part is shown using different methods in \cite[\S5]{zbMATH07472296}.
\end{remark}

\subsection{Acknowledgments}
The authors thank Joseph Ayoub, Tom Bachmann, Elden Elmanto, Grigory Garkusha, Marc Hoyois, Ivan Panin, 
Markus Spitzweck, and our referees
for comments, discussions, and encouragements.
We acknowledge the support of the Centre for Advanced Study at the Norwegian Academy of Science and Letters in Oslo,
Norway, which funded and hosted our research project ``Motivic Geometry" during the 2020/21 academic year,
and the RCN Project no. 250399 ``Motivic Hopf Equations" and no. 312472 ``Equations in Motivic Homotopy Theory."
Druzhinin was supported by a Young Russian Mathematics award.
{\O}stv{\ae}r gratefully acknowledges support from the Humboldt Foundation and the Radboud Excellence Initiative.
Our work is supported by The European Commission -- Horizon-MSCA-PF-2022 ``Motivic integral $p$-adic cohomologies".

\section{Methods of proof, conventions and notation}
\label{section:shiitoms1s}

Next we discuss the strategy of the proof of the results from \Cref{subsection:mr},
especially the $\tf$-motivic localization theorem.
For simplicity, 
we review strict homotopy invariance for framed presheaves of \texorpdfstring{$S^{1}$}{S1}-spectra
(the proofs of \Cref{theorem:results:Fr,th:Lacalizationfunctordecomposition2} are based on similar principles).

We refer to Bousfield-Friedlander \cite{Bousfield-Friedlander} for $S^{1}$-spectra $\mathrm{Spt}$,
and to Hovey \cite{hovey:ss}, Jardine \cite{Jardine-spt}, \cite{Jardine-local} for the category 
$\mathrm{Spt}_{s}(B)$ of presheaves of $S^{1}$-spectra on $\Sm_{B}$.
We write equivalence for a stable weak equivalence in $\Spt$,
and $\Omega_{s}$-spectra for the fibrant objects in the stable model structure on $\mathrm{Spt}$.
For a Grothendieck topology $\tau$ on $\Sm_{B}$ we have the following model structures on $\mathrm{Spt}_{s}(B)$:
\begin{itemize}
\item[(1)] The levelwise (resp.~stable) $\tau$-local model structure $\Spt_{s,\tau}(B)$ 
(resp.~$\Spt_{\text{st},\tau}(B)$).
\item[(2)] The levelwise (resp.~stable) motivic $\tau$-local model structure $\Spt^{\A^{1}}_{s,\tau}(B)$ 
(resp.~$\Spt^{\A^{1}}_{\text{st},\tau}(B)$).
\end{itemize}
For a quick introduction we refer to \cite[\S 2]{Nordfjordeid}.
Let $\SH_{s,\tau}(B)$ and $\SH^{\A^{1}}_{s,\tau}(B)$ denote the corresponding stable homotopy categories.
The fibrant objects in (1) resp.~(2) are the levelwise fibrant presheaves of $S^{1}$-spectra that are $\tau$-local 
resp.~$\tau$-local and $\A^{1}$-local, 
see \cite[Lemma 20]{MZmod} for a characterization of the stable fibrations.
By \cite[Definition 3.1, Theorem 3.4]{hovey:ss}, 
the fibrant objects in the stable projective model structure on $\mathrm{Spt}_{s}(B)$ are the levelwise fibrant presheaves 
of $\Omega_{s}$-spectra.
If $\tau$ is a completely decomposable topology, 
an object in $\SH_{s,\tau}(B)$ is $\tau$-local if it satisfies (stable) excision with respect to $\tau$-squares in 
the sense of \cite[Corollary 1.4]{Jardine-spt}, \cite[Theorem 5.39]{Jardine-local}.
We say $\calF\in \SH_s(B)$ is $\A^{1}$-local with respect to $\tau$ if,  
for all $X\in \Sm_B$, 
the canonical morphism $\Lrep_\tau\calF(X)\to \Lrep_\tau\calF(X\times\A^{1})$ is an equivalence. 
Here $\Lrep_\tau$ is the $\tau$-localization endofunctor on $\SH_s(B)$.
In this terminology,
$\A^{1}$-local means $\A^{1}$-local with respect to the trivial topology.

Since the Nisnevich topology is finer than the $\tf$-topology, 
the discussion of change-of-topologies in \cite[\S 2]{ELSO} shows there exists a $\tf$-Nisnevich localization 
functor
\begin{equation}
\label{eq:tfNisSH}
\Loc^{\tf}_{\nis}
\colon 
\SH_{s,\tf}(B)
\to 
\SH_{s,\nis}(B).
\end{equation}
Thus for every $\calE\in\SH_{s,\tf}(B)$ there is a canonical morphism $\calE\to \Loc^{\tf}_{\nis}\calE$ that witnesses 
$\Loc^{\tf}_{\nis}\calE$ as the initial Nisnevich-local presheaf of $S^{1}$-spectra receiving a morphism from $\calE$.

\begin{definition}
\label{definition:qsradditive}
A framed presheaf of $S^{1}$-spectra
\[
\calF
\colon 
\Fr_{+}(B)^\op
\to 
\mathrm{Spt}
\]
is quasi-stable if, for all $X\in \Sm_{B}$, the level one framed correspondence $\sigma_X$ in 
\Cref{example:quasistable} induces an equivalence
$$
\sigma_X^*\colon 
\calF(X)
\overset{\simeq}{\to} 
\calF(X).
$$
Moreover, 
$\calF$ is radditive if for all $X,Y\in\Sm_{B}$ the 
natural map 
$$
\calF(X\amalg Y)
\to
\calF(X)\times\calF(Y)
$$
is an equivalence.
\end{definition}

A presheaf of framed $S^{1}$-spectra can be viewed as an object of $\mathrm{Spt}_{s}(B)$ 
via the functor from $\Sm_B$ to $\Fr_+(B)$.
Here we use that every morphism of schemes defines a framed correspondence.

\begin{definition}
\label{definition:SHI}
A base scheme $B$ satisfies 
$\tf$-Nisnevich 
strict $\A^{1}$-invariance for framed presheaves of $S^{1}$-spectra if the 
$\tf$-Nisnevich localization functor 
$L^{\tf}_{\nis}$ in \eqref{eq:tfNisSH} preserves $\A^{1}$-local quasi-stable 
radditive framed objects in $\SH_{s,\tf}(B)$.
\end{definition}

We are ready to state our main result on strict $\A^{1}$-invariance enhancing \Cref{thm:main} to framed presheaves 
of $S^{1}$-spectra.
See \Cref{thm:SHIrc} and \Cref{cor:ptobasedefinition:SHI} for more details, where we prove a slightly more robust result.

\begin{theorem}
\label{thm:S1spectraform0}
Suppose the base scheme $B$ is one-dimensional.
Then $B$ satisfies strict $\A^{1}$-invariance for framed presheaves of $S^{1}$-spectra. 
\end{theorem}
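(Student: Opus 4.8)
The plan is to deduce \Cref{thm:S1spectraform0} from \Cref{thm:main} by a homotopy-groups argument, reducing the statement about framed presheaves of $S^1$-spectra to the already-established statement about framed presheaves of abelian groups. First I would unwind \Cref{definition:SHI}: given an $\A^1$-local quasi-stable radditive framed object $\calE \in \SH_{s,\tf}(B)$, I must show that the canonical morphism $\calE \to \Loc^{\tf}_{\nis}\calE$ exhibits $\Loc^{\tf}_{\nis}\calE$ as already $\A^1$-local, equivalently that $\Loc^{\tf}_{\nis}\calE$ is a motivic $\tf$-fibrant object, equivalently (since the Nisnevich topology is finer than the $\tf$-topology) that $\Loc^{\tf}_{\nis}\calE$ is motivically Nisnevich-local, i.e.\ that its Nisnevich-local homotopy sheaves satisfy $\A^1$-invariance. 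Using the remark that $\calF_{\nis}$ and its Nisnevich cohomology presheaves remain quasi-stable radditive framed presheaves (valid without hypotheses on $B$), the homotopy presheaves $\ul\pi_n(\calE)$ and $\ul\pi_n(\Loc^{\tf}_{\nis}\calE)$ are again quasi-stable radditive framed presheaves of abelian groups, and the descent spectral sequence for the Nisnevich-local model structure computes $\ul\pi_n(\Loc^{\tf}_{\nis}\calE)$ from the Nisnevich cohomology of the $\ul\pi_m(\calE_{\nis})$'s.

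Next I would set up the key input. Since $\calE$ is $\A^1$-local with respect to the $\tf$-topology (this is what "$\A^1$-local" in $\SH_{s,\tf}(B)$ means, via $\Lrep_\tf$), each homotopy sheaf presheaf $X \mapsto H^i_\tf(X, (\ul\pi_m \calE)_\tf)$ is $\A^1$-invariant: this is the $S^1$-spectrum analogue of the equivalence $\Lrep_\tf \calE(X) \simeq \Lrep_\tf \calE(X \times \A^1)$ decomposed via the (conditionally convergent, strongly convergent over a finite-dimensional noetherian base by the bounded $\tf$-cohomological dimension mentioned in the introduction) descent spectral sequence. Hence each $\ul\pi_m\calE$ is a $\tf$-strictly $\A^1$-invariant quasi-stable radditive framed presheaf of abelian groups. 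By \Cref{thm:main}, since the residue fields of $B$ satisfy strict $\A^1$-invariance for abelian groups, $\ul\pi_m\calE$ is Nisnevich strictly $\A^1$-invariant: the presheaves $X \mapsto H^i_{\nis}(X, (\ul\pi_m\calE)_{\nis})$ are $\A^1$-invariant presheaves with framed transfers for all $i, m$.

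Then I would feed this back into the Nisnevich descent spectral sequence
\[
E_2^{p,q} = H^p_{\nis}\bigl(X, (\ul\pi_q \calE)_{\nis}\bigr) \Longrightarrow \ul\pi_{q-p}\bigl(\Loc^{\tf}_{\nis}\calE\bigr)(X),
\]
which converges strongly because $B$ is finite-dimensional noetherian and the Nisnevich cohomological dimension of $X \in \Sm_B$ is bounded. Each $E_2$-term is $\A^1$-invariant as a presheaf in $X$ by the previous paragraph; I would check that the differentials are morphisms of $\A^1$-invariant presheaves, that the filtration on the abutment is finite, and that an extension of $\A^1$-invariant presheaves of abelian groups is again $\A^1$-invariant. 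This yields that $X \mapsto \ul\pi_n(\Loc^{\tf}_{\nis}\calE)(X)$ is $\A^1$-invariant, i.e.\ $\Loc^{\tf}_{\nis}\calE$ is motivically Nisnevich-local, which is exactly the assertion that $L^{\tf}_{\nis}$ preserves $\A^1$-local quasi-stable radditive framed objects. The conclusion then follows since $B$ having perfect residue fields forces those residue fields to satisfy strict $\A^1$-invariance for abelian groups by \Cref{th:str_field}, so \Cref{thm:main} applies.

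The main obstacle I anticipate is the bookkeeping around the two spectral sequences: one must argue that the $\tf$-local homotopy presheaves of $\calE$ genuinely inherit $\A^1$-invariance from the $\A^1$-locality of $\calE$ in $\SH_{s,\tf}(B)$ — this requires knowing the $\tf$-descent spectral sequence converges and that $\Lrep_\tf$ can be computed levelwise via $\tf$-cohomology — and, symmetrically, that the Nisnevich-local homotopy presheaves of $\calE$ (which control $\Loc^{\tf}_{\nis}\calE$) coincide with the Nisnevich sheafifications of $\ul\pi_m\calE$, so that \Cref{thm:main} is directly applicable to them. Establishing that $\Loc^{\tf}_{\nis}\calE$ has the same (Nisnevich-sheafified) homotopy objects as $\calE_{\nis}$ — equivalently that $\tf$-Nisnevich localization does not change Nisnevich-local homotopy sheaves — is the technical heart; once that is in place the $\A^1$-invariance propagates formally through the finite, strongly convergent spectral sequence. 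The rest is standard: finiteness of the filtration, and closure of the class of $\A^1$-invariant presheaves of abelian groups under kernels, cokernels, and extensions.
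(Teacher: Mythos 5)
Your proposal inverts the logical structure of the paper and is circular as written. In the paper, \Cref{thm:main} (the abelian-group statement over $B$) is \emph{not} an independent input: it is deduced from the $S^{1}$-spectrum statement via \Cref{th:SHIeqS1Ab1} (see \Cref{cor:SHIrcAb}, whose proof cites \Cref{thm:SHIrc}). The converse implication you need --- abelian-group strict $\A^1$-invariance implies the spectrum-level statement --- is only established over \emph{fields} (\Cref{th:SHIeqS1Ab}), where the $\tf$-topology is trivial. By citing \Cref{thm:main} you therefore assume a statement whose only proof in the paper passes through the theorem you are trying to prove, and you bypass all of the actual mathematical content: the $\tf$-localization triangle $\tids\tif(\calF)\to\calF\to\jds\jus(\calF)$ (\Cref{sect:tfLoc}), the rigidity equivalence reducing $\Sm_{B,Z}$ to $\Sm_Z$ via lifting of framed correspondences along henselian pairs (\Cref{sect:SmBlZA1rigid,section:defsmZsmB}), and the contracting framed $\A^1$-homotopies that compute $\Lrep_\nis(\calF)((X^h_x)_U)$ on generic fibers (\Cref{th:Hnis(Xhxeta)dimB1}). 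The paper's actual route is: residue fields satisfy abelian SHI (Garkusha--Panin), hence spectrum-level SHI by \Cref{th:SHIeqS1Ab}; then the geometric machinery of \Cref{thm:SHIrc} propagates the spectrum-level statement from the residue fields to $B$.

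There is also a local gap in your first step. From the $\A^1$-locality of $\Lrep_\tf\calE$ you infer that each $E_2$-term $H^i_\tf(-,(\ul\pi_m\calE)_\tf)$ of the $\tf$-descent spectral sequence is $\A^1$-invariant, but an isomorphism of abutments does not formally descend to the $E_2$-page; you would need to use that the filtered inverse $i_0^*$ is itself filtered and that the spectral sequence degenerates (which does hold when $\dim B\le 1$, by the two-term description in \Cref{prop:B1dimlocLtfhocofib}), and none of this is argued. By contrast, the second half of your argument --- passing from strict $\A^1$-invariance of the homotopy presheaves back to $\A^1$-locality of $\Lrep_\nis\calE$ via Postnikov towers, bounded Nisnevich cohomological dimension, and closure of $\A^1$-invariant presheaves under extensions --- is sound and is precisely the content of the paper's \Cref{th:SHIeqS1Ab} together with \Cref{lm:dimXvanishingpiLnis,lm:homotopylimhomotopycolomLnisLA1}. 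You have correctly reconstructed the comparison layer between abelian groups and spectra, but mistaken it for the whole proof.
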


\subsection{$\tf$-motivic recollement}
\label{subsection:tfmlt}

In the proof of \Cref{thm:S1spectraform0} we appeal to a $\tf$-motivic recollement or localization 
theorem akin to Morel-Voevodsky's localization theorem; 
see \cite{AyoubI}, \cite{Cisinski-Deglise}, \cite{Hoyois-framed-loc}, and \cite{Morel-Voevodsky} 
for the Nisnevich topology.
Here, 
we state the stable form of the $\tf$-motivic localization theorem;
it follows immediately from the unstable result in \Cref{th:HA1tflocalizationrecollement}.
For a closed immersion $i\colon Z\not\hookrightarrow B$ with open complement $j\colon U\to B$ 
there are naturally induced adjunctions ($i_*$ and $j^*$ are the left adjoints):
\begin{equation}
\label{eq:locpriSHS1tf}
i_*
\colon
\SH^{\A^{1}}_{s,\tf}(\SmAff_Z)
\xrightleftarrows{1em} 
\SH^{\A^{1}}_{s,\tf}(\SmAff_B)
\colon
i^!
\,\,
\text{and}
\,\,
j^*
\colon
\SH^{\A^{1}}_{s,\tf}(\SmAff_B)
\xrightleftarrows{1em} 
\SH^{\A^{1}}_{s,\tf}(\SmAff_U)
\colon
j_*
\end{equation}

\begin{theorem}[see \Cref{th:HA1tflocalizationrecollement}]
\label{th:locthSHmottf}
The functors $i_*$ and $j_*$ in \eqref{eq:locpriSHS1tf} are fully faithful. 
Moreover, 
the counit $i_*i^!\to\id$ and the unit $\id\to j_*j^*$ induce, 
for every $\calF\in \SH^{\A^{1}}_{s,\tf}(\SmAff_B)$, 
a homotopy fiber sequence
\begin{equation}
\label{eq:idsidcalFjdsjus}
i_*i^!(\calF)\to \calF \to j_*j^*(\calF).  
\end{equation}
\end{theorem}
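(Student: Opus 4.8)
The plan is to imitate the classical Morel--Voevodsky--Ayoub localization argument, but carried out for the $\tf$-topology (or any intermediate topology $\tau$ finer than $\tf$) rather than the Nisnevich topology. The two adjunctions in \eqref{eq:locpriSHS1tf} are built by $\A^1$-localizing the corresponding adjunctions at the level of $\tau$-local (non-$\A^1$-local) categories, which in turn come from the geometric functors $i_*$, $j^*$ on schemes; so the first task is to establish the analogous localization statement \emph{before} $\A^1$-localizing, i.e.\ that $i_*\colon\SH_{s,\tau}(Z)\to\SH_{s,\tau}(B)$ and $j_*\colon\SH_{s,\tau}(B)\to\SH_{s,\tau}(U)$ are fully faithful and that $i_*i^!\to\id\to j_*j^*$ is a fiber sequence. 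For the Nisnevich topology this is standard (it is essentially the six-functor/gluing package for the small Nisnevich topos extended to presheaves of spectra on $\Sm$); for a general $\tau$ finer than $\tf$ one checks that the relevant descent and base-change statements only use that $\tau$ restricts compatibly along $i$ and $j$ and that the closed/open decomposition is respected, which holds because $\tf$-squares (and hence $\tau$-squares) pull back along $j$ and push forward along $i$ appropriately. Concretely: $j^*$ has both a left adjoint $j_\sharp$ (extension by zero) and the right adjoint $j_*$; $j^*j_\sharp=\id$ gives that $j_\sharp$, hence $j_*$, is fully faithful; $i_*$ is fully faithful because $i^*i_*=\id$ (here $i^*$ is the further left adjoint of $i_*$, restriction along $Z\hookrightarrow B$); and the fiber sequence $i_*i^!\to\id\to j_*j^*$ is obtained by checking it section-wise after applying $j^*$ (where it becomes $0\to j^*\to j^*$) and after applying $i^!$ (where $j^*\mapsto 0$ on the $i$-supported part), using that an object of $\SH_{s,\tau}(B)$ is zero iff $j^*$ of it and $i^!$ of it both vanish.

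Next I would pass to the $\A^1$-local categories. The functors $i_*$ and $j^*$ are left Quillen for the $\A^1$-$\tau$-local model structures (they preserve $\A^1$-equivalences: $j^*$ obviously, $i_*$ because $i_*(\A^1_Z)=\A^1_Z$ sits inside $\A^1_B$ compatibly), so $i_*=Li_*$ and $j^*=Lj^*$, with right adjoints $i^!$ and $j_*$ computed as the right derived functors of the corresponding point-set right adjoints followed by $\A^1$-localization. Full faithfulness of $i_*$ and $j_*$ on the $\A^1$-local categories now follows formally once one knows the \emph{unstable} $\tf$-motivic localization theorem \Cref{th:HA1tflocalizationrecollement}: the projection/composition formulas $i^*L_{\A^1}\simeq L_{\A^1}i^*$ and the fact that $\A^1$-localization over $B$ restricts to $\A^1$-localization over $Z$ and over $U$ (a ``locality of $\A^1$-localization'' statement) are exactly what is needed to deduce $i^*i_*\simeq\id$ and $j^*j_\sharp\simeq\id$ after $\A^1$-localization, and these give full faithfulness of $i_*$ and $j_*$. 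The fiber sequence \eqref{eq:idsidcalFjdsjus} then follows by applying the exact localization functor $L_{\A^1}$ to the pre-$\A^1$ fiber sequence and using that $L_{\A^1}i_*i^!\simeq i_*i^!L_{\A^1}$ and $L_{\A^1}j_*j^*\simeq j_*j^*L_{\A^1}$; exactness of $L_{\A^1}$ (it is a Bousfield localization of a stable model category, hence triangulated) preserves the homotopy fiber sequence. Stably, one just inverts $S^1$-suspension throughout, which is harmless since all the functors in sight are already $S^1$-stable and commute with $\Sigma^\infty_s$.

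The main obstacle, and where essentially all the genuine content lies, is the ``locality of $\A^1$-localization'' input: one must show that for $\calF\in\SH_{s,\tau}(B)$ supported on $Z$ (i.e.\ $j^*\calF\simeq 0$), $\A^1$-localizing over $B$ agrees with $\A^1$-localizing the restriction over $Z$ via $i_*$, and dually that $j^*$ commutes with $\A^1$-localization; equivalently, that $i_*$ and $i^!$ descend to the $\A^1$-local categories without correction terms. For the Nisnevich topology this is the heart of Morel--Voevodsky's and Ayoub's localization theorems and rests on deformation-to-the-normal-cone plus Nisnevich excision; for the $\tf$-topology the same deformation argument should go through, but one must verify that the requisite excision/descent squares used in the homotopy-purity step are $\tf$-squares (not merely Nisnevich squares), which is precisely the point of introducing the $\tf$-topology and is presumably handled in the proof of \Cref{th:HA1tflocalizationrecollement}. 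So in the write-up I would state \Cref{th:locthSHmottf} as a formal stabilization of \Cref{th:HA1tflocalizationrecollement}: reduce full faithfulness and the fiber sequence to their unstable counterparts by applying $\Sigma^\infty_s$ and $\Omega^\infty_s$, invoke that $\Sigma^\infty_s$ is left Quillen and commutes with $i_*$, $j^*$, and with $\A^1$-localization, and deduce the stable statements from the unstable ones levelwise. The only thing to be careful about is that the stable fiber sequence is genuinely a fiber sequence of spectra and not merely a levelwise one — but this is automatic since in $\Spt_{\mathrm{st},\tau}^{\A^1}(B)$ homotopy fiber sequences are detected levelwise after fibrant replacement, which the three functors preserve.
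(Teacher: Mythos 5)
Your overall recollement strategy has the right shape, but two of its load-bearing steps do not work as stated. First, the pre-$\A^1$-localization fiber sequence $i_*i^!\to\id\to j_*j^*$ with $i^!$ valued in $\SH_{s,\tau}(Z)$ is \emph{not} standard and is in fact false: for a merely $\tau$-local presheaf on $\Sm_B$, the part of $\calF$ supported on $Z$ depends on the henselian neighborhood $X^h_Z$ and not only on the fiber $X_Z$, so neither your conservativity claim (``zero iff $j^*$ and $i^!$ of it both vanish'') nor the gluing sequence holds before $\A^1$-localization. This is precisely why the paper interposes the category $\Sm_{B,Z}$ of henselizations and the functors $\widetilde i_*,\widetilde i^!$ of \eqref{eq:locpriShs1nis(Sms)}: the square formed by $X$, $X^h_Z$, $X\times_B(B-Z)$ and $X^h_Z\times_B(B-Z)$ is a cofiltered limit of $\tf$-squares, so for $\tf$-local (hence $\tau$-local) $\calF$ the recollement \eqref{eq:tidstidcalFjdsjus} follows from descent alone (\Cref{prop:filtr-iFj:unstable}) --- no $\A^1$-localization needed, but only for the henselized functors. \Cref{th:locthSHmottf} is then deduced by combining this with the equivalence $\SH^{\A^1}_{s,\tau}(B,Z)\simeq\SH^{\A^1}_{s,\tau}(Z)$, and it is this equivalence, not the gluing square, that consumes the $\A^1$-localization.

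Second, the geometric input for that equivalence is not deformation to the normal cone or homotopy purity; no purity statement enters. It is the lifting property for henselian pairs (\Cref{lm:AffSmHenselianLift}, and \Cref{lm:liftFr} for framed correspondences): morphisms out of $X_Z$ to a smooth affine scheme extend to $X^h_Z$, and applying this to the pairs $(\partial\Delta^n_B\times U)^h_Z\amalg_{(\partial\Delta^n_B\times U_Z)}(\Delta^n_B\times U)_Z\not\hookrightarrow(\Delta^n_B\times U)^h_Z$ shows that $\calF_{\A^1}(U^h_Z)\to\calF_{\A^1}(U_Z)$ is a trivial Kan fibration (\Cref{prop:SmoothLift->A1lrig:origin}). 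This rigidity is what identifies $\A^1$-local objects on $\Sm_{B,Z}$ with objects on $\Sm_Z$ (\Cref{lm:A1loc_id=iusids(Smat)}, \Cref{lm:motloc_id=iusids(Sm)}). To repair your write-up, replace your step (1) by \Cref{prop:filtr-iFj:unstable} for $\widetilde i_*,\widetilde i^!$, and replace the appeal to purity by the henselian-lifting and rigidity arguments of \Cref{sect:SmBlZA1rigid}.
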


We use localization as a tool to reduce the questions about $\SH^{\A^{1}}_{s,\tf}(B)$ to $\SH^{\A^{1}}_{s,\tf}(\SmAff_Z)$ 
and $\SH^{\A^{1}}_{s,\tf}(\SmAff_U)$.
To prove \eqref{eq:idsidcalFjdsjus} we work with the subcategory $\SmAff_{B,Z}$ of 
$\SmAff_{B}$ spanned by the 
essentially smooth schemes $X^h_{X\times_B Z}$ defined as in \eqref{eq:proafhenselizationXhY}.
If $X$ is affine, 
we note that $X^h_{X\times_B Z}$ is the henselization of $X$ along $X\times_B Z$.
Following the usual script we use $\SmAff_{B,Z}$ to construct the stable $\tf$-local homotopy category 
$\SH_{s,\tf}(\SmAff_{B,Z})$ and the adjunctions
\begin{equation}
\label{eq:locpriShs1nis(Sms)}
\widetilde i_*
\colon
\SH_{s,\tf}(\SmAff_{B,Z})
\xrightleftarrows{1em} 
\SH_{s,\tf}(\SmAff_{B})
\colon
\widetilde i^!
\,\,
\text{and}
\,\,
j^*
\colon
\SH_{s,\tf}(\SmAff_{B})
\xrightleftarrows{1em} 
\SH_{s,\tf}(\SmAff_{U})
\colon
j_*
\end{equation}

\Cref{prop:filtr-iFj:unstable} implies the following properties for the functors in \eqref{eq:locpriShs1nis(Sms)}.

\begin{prop}[see \Cref{prop:filtr-iFj:unstable},  \Cref{cor:filtr-iFj:stable}]
\label{prop:cor:filtr-iFj:stable}
The functors $\widetilde i_*$ and $j_*$ in \eqref{eq:locpriShs1nis(Sms)} are fully faithful. 
Moreover, 
the counit $\widetilde i_*\widetilde i^!\to\id$ and the unit $\id\to j_*j^*$ induce, 
for every $\calF\in \SH_{s,\tau}(\SmAff_{B})$, 
a homotopy fiber sequence
\begin{equation}
\label{eq:tidstidcalFjdsjus}
\widetilde i_*\widetilde i^!(\calF)\to \calF \to j_*j^*(\calF).  
\end{equation}
\end{prop}
We deduce \eqref{eq:idsidcalFjdsjus} from 
\eqref{eq:tidstidcalFjdsjus} and the equivalence 
\eqref{eq:smtflocalhc}
between stable motivic $\tf$-local homotopy categories
\begin{equation}
\label{eq:smtflocalhc}
\SH_{s,\tf}^{\A^{1}}(\SmAff_{B,Z})
\simeq 
\SH_{s,\tf}^{\A^{1}}(\SmAff_Z).
\end{equation}
provided by \Cref{th:LocA1tfnisstructuresDeformation}(1).
We note that, by \Cref{prop:iAGm}(1), the equivalence holds already between 
(unstable) homotopy categories of $\A^1$-invariant presheaves
\begin{equation*}
\HH^{\A^{1}}(\SmAff_{B,Z})
\simeq 
\HH^{\A^{1}}(\SmAff_{Z}),
\end{equation*}
and this follows essentially from the lifting property for presheaves represented by smooth affine schemes with respect to Henselian pairs of affine $B$-schemes, \Cref{lm:AffSmHenselianLift}.

\subsection{Outline of the proof of strict homotopy invariance}
\label{subsection:atshi}
After setting up the adjunctions 
\[
\SHstf(Z)
\xrightleftarrows{1em}
\SHstf(B,Z)
\xrightleftarrows{1em} 
\SHstf(B)
\]
and proving \Cref{prop:cor:filtr-iFj:stable}, 
we divide the proof into two steps:
(1) strict $\A^{1}$-invariance for $\SHstf(B,Z)$ and $\SHstf(U)$ implies strict $\A^{1}$-invariance for $\SHstf(B)$, 
and 
(2) strict $\A^{1}$-invariance for $\SHstf(Z)$ implies strict $\A^{1}$-invariance for $\SHstf(B,Z)$.

To deduce (1), 
we study the adjunction in \eqref{eq:locpriShs1nis(Sms)} through $\tf$-squares as in \eqref{eq:TfgSq} 
and the generic fiber of an essentially smooth local henselian scheme over a one-dimensional base scheme, 
see \Cref{subsection:Hi(Xhx)_SHIeta}.
To deduce (2), 
we consider the naturally induced adjunction 
\begin{equation}
\label{SHstf_Z-BZ}
\SHstf(Z)
\xrightleftarrows{1em}
\SHstf(B,Z).
\end{equation}
The key geometrical input in this step is a lifting property asserting that 
for every closed immersion $Z\not\hookrightarrow X$ in $\SmAff_B$ and any framed correspondence $Z\to W$ of 
smooth affine $B$-schemes, 
there exists a framed correspondence $X_Z^h\to W$ as indicated by the dotted arrow in the diagram
\begin{align}
\xymatrix{ 
& W \\ 
X_Z^h \ar@{.>}[ur] & Z. \ar[l] \ar[u]}
\label{eq:fr-lift-diagr}
\end{align}
We view \eqref{eq:fr-lift-diagr} as a diagram of framed correspondences between essentially smooth 
$B$-schemes, 
see \Cref{sectionApp:FrCor}.
We show in \Cref{subsect:Rigid}, \Cref{sect:DeformationZSZ} and \Cref{sectapp:Homotopycolimits} that the latter 
lifting property for framed correspondences leads to an equivalence between the subcategories of $\A^1$-invariant 
objects in the adjunction \eqref{SHstf_Z-BZ}.   
While the required geometrical lifting property \eqref{eq:fr-lift-diagr} follows from classical results on henselian pairs,
we also employ more involved homotopical techniques to complete the proof.

\subsection{Conventions and notation}
\label{subsection:candn}

Throughout,
we follow the same conventions as in the Stacks Project \cite{StacksProject}.
A base scheme $B$ refers to a finite dimensional 
quasi-compact and quasi-separated scheme.

\subsubsection{Notation} 
\label{subsubsectionList} 
\begin{enumerate}

\item 
We write $\Sm_{B}^\tau$ for the site associated with the $\tau$-topology on \index{Categories of $B$-schemes!$\Sm_B$}; 
the category of smooth separated finite type $B$-schemes.
We let \index{Categories of $B$-schemes!$\SmAff_B$} denote the full subcategory of $\Sm_B$ spanned by schemes that admit a closed immersion into 
some finite dimensional affine space $\A^n_B$,
and write \index{Categories of $B$-schemes!$\EssSm_B$} for essentially smooth $B$-schemes, 
see \Cref{def:EssentiallySmooth}.

\item 
To a closed immersion $Z\not\hookrightarrow B$ and $B$-scheme $X$, 
we form the fiber product $X_Z:=X\times_B Z$ and the scheme \index{Schemes!$X_Z^h$} defined in \eqref{eq:proafhenselizationXhY};
if $X$ is affine, then $X_Z^h$ is the henselization of $X$ along \index{Schemes!$X_Z$}.

\item For $x\in X$, $X\in\Sch_B$, let \index{Schemes!$X_{(x)}$} denote the local scheme of $X$ at $x$.
         For $X=B$, we use the shorter notation \index{Schemes!$B_\sigma=B_{(\sigma)}$}.
   \item For $\sigma\in B$, $X\in\Sch_B$, let \index{Schemes!$X_\sigma$} denote $X\times_B \sigma$
   (we never apply this notation to $B$), and let \index{Schemes!$X_\sigma^h$} denote $X^h_{X\times_B \sigma}$.
 \item 
We write $Z(I)$ for the vanishing locus of an ideal sheaf $I\subset \calO_{X}$ contained 
in the structure sheaf of a scheme $X$, and we write
$I_{X}(Z)\subset \calO_{X}$ for the ideal sheaf of a closed subscheme $Z$ of $X$.

\item 
For an $\infty$-category $\calC$, 
we write $\Map_{\calC}(-,-)$ or simply $\Map(-,-)$ for the mapping space.
\item 
A presheaf $\calF$ on $\Sm_B$ extends by continuation to a presheaf on $\EssSm_B$, 
see \Cref{section:essahp}.

\item 
For a closed immersion $Z\not\hookrightarrow B$ with complementary open immersion $j\colon U\hookrightarrow B$, 
we have the adjunctions 
\begin{equation*}\begin{array}{lclclcl}
\overarrow{i}^*&\colon& 
\cPre^\fr(\mathcal S_{B,Z})
&\rightleftarrows& 
\cPre^\fr(\mathcal S_Z)
&\colon& 
\overarrow{i}_*,
\\
\tids
&\colon&
\cPre^\fr(\mathcal S_{B,Z})
&\rightleftarrows& 
\cPre^\fr(\mathcal S_{B})
&\colon&
\tif,
\\
\jus
&\colon&
\cPre^\fr(\mathcal S_{B})
&\rightleftarrows& 
\cPre^\fr(\mathcal S_{B-Z})
&\colon&
\jds.
\end{array}\end{equation*}
Here, 
the functors are given by 
\begin{gather*}
\begin{array}{lcll}
\tilde i^!F(X^h_Z) &=& \fib( F(X^h_Z)\to F(X^h_Z-X_Z) ); &X^h_Z\in \mathcal S_{B,Z},\\
\tilde i_*F(X)&=&F(X^h_Z); &X\in\mathcal S_B,\\
j^*F(V)&=&F(V); &V\in \mathcal S_{B-Z},\\
j_*F(X)&=&F(X-X_Z); &X\in \mathcal S_B,\\
\overarrow{i}_*F(X)&=&F(X_Z); & X\in\mathcal S_{B,Z},\\
\overarrow{i}^*(h^\fr(Y^h_Z))&=&h^\fr(Y_Z); & Y\in \mathcal S_B.
\end{array}\end{gather*}
Similar adjunctions exist for the pointed homotopy categories
$\HHtrivptd(\Fr_+(\mathcal S_{B,Z}))$ and 
$\HHtrivptd(\mathcal S_{B,Z})$, 
see \eqref{eq:LocNisZSquare}.

\end{enumerate}

\subsubsection{Stable motivic homotopy sheaves}
\label{sect:stabhomgroupsheavesconnectivity}
We use the notation $\pi_i(Y)$ and $\pi_{i,j}(Y)$ 
for the presheaves of stable motivic homotopy groups given by 
\[
\begin{array}{lcl}
\pi_i(Y)(U) 
&=& \operatorname{Hom}_{\SH(B)}(U_+\wedge S^i,Y_+),\\
\pi_{i,j}(Y)(U)
&=& \operatorname{Hom}_{\SH(B)}(U_+\wedge S^{i-j}\wedge \Gm^{\wedge j},Y_+).
\end{array}\]
Moreover, 
we write $\underline{\pi}_i(Y)$ and $\underline{\pi}_{i,j}(Y)$ for the associated Nisnevich sheaves.
\vspace{0.1in}

An object $\calF$ of $\SH_{s,\tau}(B)$ is called schemewise $n$-connective if for all $X\in\Sm_{B}$ the vanishing 
\[
\operatorname{Hom}_{\SH_{s,\tau}(B)}(X_{+}\wedge S^i,\calF)
=
0
\] 
holds in the range $i<n$.
Moreover, 
$\calF$ is called $\tau$-locally $n$-connective if the $\tau$-sheaf associated to the presheaf 
$X\mapsto \operatorname{Hom}_{\SH_{s,\tau}(B)}(X_{+}\wedge S^i,\calF)$ vanishes in the range $i<n$.
The homotopy $t$-structure on $\SH_{s,\tau}(B)$ is defined via the associated $\tau$-sheaves 
for all $i\in \mathbb Z$.

\subsubsection{$\A^1$-localization}\label{subsect:LrepA1notdef}
We let \index{Schemes!$\A^{1}_{B,Z}$} $\A^{1}_{B,Z}$ be shorthand for $(\A^{1}_B)^h_Z$ defined in \eqref{eq:proafhenselizationXhY}, 
and write $\Delta^\bullet_{B,Z}$ for 
the cosimplicial scheme in $\Sm_{B,Z}$, see \Cref{convnotations:schemescategories}, given by 
\begin{equation*}
\label{equation:cosimplicial}
\Delta^n_{B,Z}
=
(\Delta^n_B)^h_Z
\not\hookrightarrow 
\A^{n+1}_{B,Z}
\defeq
(\A^{n+1}_B)^h_Z
\cong
\underbrace{\A^{1}_{B,Z}\times_{B,Z}\cdots\times_{B,Z}\A^{1}_{B,Z}}_{n+1}.
\end{equation*}

Let $\Rep_{\A^{1}}^{[1]}$ denote the endofunctor $F\mapsto F_f(\Delta^\bullet_{{B,Z}}\times_{{B,Z}} -)$ 
on $\Spc_{s}(\Sm_{B,Z})$.
Here $(-)_f$ is the fibrant replacement in the injective model structure,
and the symmetric monoidal product $\times_{B,Z}$ on $\Sm_{B,Z}$ is defined in \eqref{eq:timesBZ}.
Setting $\Rep_{\A^{1}}^{[l]} = (\Rep_{\A^{1}}^{[1]})^l$ and appealing to \cite[p.107]{Morel-Voevodsky}, 
we have 
\begin{equation}
\label{eq:LAl->LA}
\Rep_{\A^{1}}
\cong 
(\hocolim_{l}\Rep_{\A^{1}}^{[l]})_f. 
\end{equation}
The functors $F_f(\Delta^n\times -)$, 
$n\in \bbZ$,
and $\hocolim(-)$ preserve weak equivalences, 
see \cite[\S 41.2]{DHKS_Hlmodcatandhcat}.
Consequently, 
$\Rep_{\A^{1}}^{[l]}$, $(\Rep_{\A^{1}}^{[1]})$ and $\Rep_{\A^{1}}$ preserve weak equivalences too.
By \cite[Lemma 1.2.2]{Hovey} the latter functors descend to the homotopy category $\HH_{s}(\Sm_{B,Z})$.
In related contexts, 
such as in \Cref{sect:SmBlZA1rigid}, 
we will denote functors defined similarly as above by 
$
\Lrep_{\A^{1}}^{[l]}
$,
$(\Lrep_{\A^{1}}^{[1]})$,
$\Lrep_{\A^{1}}$.

\section{The trivial fiber topology}
\label{section:ttft}

In this section, 
we introduce the trivial fiber topology, or $\tf$-topology for short, on $\Sch_{B}$ and discuss its basic properties. 

\subsection{$\tf$-coverings}
\begin{definition}
\label{definition:tfsquares}
A pullback square in $\Sch_{B}$ 
\begin{equation}
\label{equation:Nistfsquare}
\xymatrix{ 
X^\prime-Y^\prime \ar[r]\ar[d]& X^\prime\ar^{\varphi}[d]\\ 
X-Y\ar[r] & X 
}
\end{equation}
for closed subschemes $Y\not\hookrightarrow X$ and $Y^\prime\not\hookrightarrow X^\prime$
is called a trivial fiber square, 
or a $\tf$-square for short,
if the following holds.
\begin{itemize}
\item[(i)] There exists a closed subscheme $Z\not\hookrightarrow B$ such that 
$Y=X\times_B Z$, 
and 
$Y^\prime=X^\prime\times_X Y$.
\item[(ii)] The morphism $\varphi$ is affine {\'e}tale and induces an isomorphism
$Y^\prime \overset{\cong}{\to} Y$. 
\end{itemize}
That is, 
a $\tf$-square is a Nisnevich square of the form \eqref{equation:Nistfsquare}
for which $\varphi$ is affine and there exists a closed subscheme 
$Z\not\hookrightarrow B$ such that $Y\cong X\times_B Z$.

An affine $\tf$-square over $B$ is a $\tf$-square in $\Aff_B$; 
that is, 
an affine Nisnevich square of the form \eqref{equation:Nistfsquare} 
\cite[Example 2.1.2(5)]{zbMATH06773295}
for which there exists a closed subscheme 
$Z\not\hookrightarrow B$ such that $Y\cong X\times_B Z$.
\end{definition}

\begin{remark}\label{rem:definition:tfsquares(ii)}
In \Cref{definition:tfsquares}(ii), it suffices that $\varphi$ induce an isomorphism
$Y^\prime_\red \overset{\cong}{\to} Y_\red$,
because the morphism $\varphi$ is \'etale. Hence we may work with closed subsets rather than closed subschemes.
\end{remark}

\begin{remark}
$\tf$-squares are introduced because \Cref{definition:tfsquares} assembles a collection of 
squares that are not $(\A^{1},\ZF)$-contractible in the sense of \Cref{subsection:squareAtimesV},
and these provide the essential obstruction to the strict homotopy invariance theorem over $B$.
\end{remark}

In the $\tf$-topology over an affine base scheme $B$,
the affineness assumption on $\varphi$ simplifies some basic facts about the $\tf$-topology ---
see \Cref{rem:tfsquare:etalemorphismafineness} for details.
However, 
the said assumption is not strictly needed for our results on $\tf$-localizations.

The class of $\tf$-squares is stable under isomorphism and hence forms a $cd$-structure on $\Sch_{B}$ 
in the sense of Voevodsky \cite[Definition 2.1]{VV:cd}.

\begin{definition}
\label{definition:tftopology}
The $\tf$-topology on $\Sch_{B}$ is the topology generated by the $\tf$-squares in $\Sch_{B}$.
\end{definition}

\begin{prop}
\label{prop:cdstructurecompleteregularbounded}
The $\tf$-squares form a complete and regular $cd$-structure on $\Sch_{B}$. 
If $B$ is of finite Krull dimension $d$, 
then this $cd$-structure is bounded, 
and there is a density structure such that $\dim_B X\leq d$ for all $X\in \Sch_B$.
Moreover, 
the $\tf$-cohomological dimension of any scheme $X\in \Sch_{B}$ is less than or equal to $d$.
\end{prop}
\begin{proof}
If $X=\emptyset$ in the $\tf$-square \eqref{equation:Nistfsquare}, 
then $X-Y=\emptyset$ and $X^\prime=\emptyset$.
For a morphism $X_1\to X$ in $\Sm_B$ we set $X^\prime_1=X^\prime\times_X X_1$, $Y_1=Y\times_X X_1$, 
and $Y^\prime_1=Y^\prime\times_X X_1$. 
Then the schemes $X_1^\prime$, $X^\prime_1-Y^\prime_1$, $X_1-Y_1$, and $X_1$ form a $\tf$-square.
This shows the $\tf$ $cd$-structure is complete.

To show regularity we check the following conditions for any $\tf$-square \eqref{equation:Nistfsquare}:
\begin{itemize}
\item[(i)] The square \eqref{equation:Nistfsquare} is cartesian.
\item[(ii)] The morphism $X-Y\rightarrow X$ is a monomorphism.
\item[(iii)] The square 
\begin{equation}
\label{eq:squareXprime2_X}
\xymatrix{
X-Y \ar[d]\ar[r] & X\ar[d] \\
(X^\prime-Y^\prime)^{\times 2}_{X-Y}\ar[r]& {X^\prime}^{\times 2}_X
}
\end{equation}
is a $\tf$-square.
\end{itemize}
Parts (i) and (ii) follow immediately from \Cref{definition:tfsquares}. 
Since $X^\prime\to X$ is \'etale, 
the diagonal morphism $X^\prime\to X^\prime\times_X X^\prime$ is a clopen immersion; 
in particular, it is affine and \'etale.
Since $X^\prime\times_X Y\cong Y^\prime\cong Y$, 
it follows that 
$(X^\prime-Y^\prime)\times_{X-Y}(X^\prime-Y^\prime)
\cong 
X^\prime\times_X X^\prime- Y^\prime\times_Y Y^\prime 
\cong
X^\prime\times_X X^\prime\times_B (B-Z)$. 
This shows \eqref{eq:squareXprime2_X} is a $\tf$-square.

For boundedness we define a density structure $D_*(-)$ on $\Sch_B$ by associating to 
$X\in \Sch_B$ the family 
$D_d(X)$ comprised of open immersions of the form $X\times_B (B-W)\to X$, 
where $\codim_B W\geq d$. 
With respect to the said density structure,  
every $X\in \Sch_B$ has dimension $\leq \dim B$. 
Next we prove that every $\tf$-square \eqref{equation:Nistfsquare} is reducing with respect to $D_*(-)$.
Suppose $W_\text{o\'e}$, $W_\text{o}$, $W_\text{\'e}$ are closed subschemes of $B$ such that
$\codim_B W_\text{o\'e}\geq d$, $\codim_B W_\text{o},\codim_B W_\text{\'e}\geq d+1$.
There are open immersions in the density structure
\[
\begin{array}{ccccc}
X^\prime_{-W}&\defeq&X^\prime\times_B (B-W_\text{\'e})&\to& X^\prime, \\
(X^\prime-Y^\prime)_{-W}&\defeq&(X^\prime-Y^\prime)\times_B (B-W_\text{o\'e})&\to& X^\prime-Y^\prime, \\
(X-Y)_{-W}&\defeq&(X-Y)\times_B (B-W_\text{o})&\to& X-Y.
\end{array}
\]
We set $T=W_\text{\'e}\cup W_\text{o}\cup \overline{W_\text{o\'e}\cap (B-Z)}\cap Z$, 
where $\overline{W_\text{o\'e}\cap (B-Z)}$ is the closure of ${W_\text{o\'e}\cap (B-Z)}$ in $B$.
We have that $\codim_B T\geq d$.
Consider the base change of \eqref{equation:Nistfsquare} along the open immersion $B-T\to B$, 
and let $X_1=X\times_B (B-T)$, $X^\prime_1=X^\prime\times_B (B-T)$, $Y_1=X_1\times_B Z$, $Y^\prime_1=X^\prime\times_B Z$.
The canonically induced morphisms
$X_1^\prime-Y^\prime_1\to X^\prime-Y^\prime$, $X_1^\prime\to X^\prime$, $X_1-Y_1\to X-Y$ 
factor through $(X^\prime-Y^\prime)_{-W}$, $X^\prime_{-W}$, $(X-Y)_{-W}$, respectively.
This implies \eqref{equation:Nistfsquare} is reducing. 

The claim about the $\tf$-cohomological dimension follows now from \cite[Theorem 2.26]{VV:cd}.
\end{proof}

\begin{remark}\label{rem:tftopologySmBAffBSmAffB}
The $\tf$-cd structure on $\Sch_B$ restricts to its subcategories 
$\Sm_B$, $\Aff_B$, and $\SmAff_B$.
\Cref{prop:cdstructurecompleteregularbounded} holds for the said restrictions. 
This defines the $\tf$-topology on $\Sm_B$ and the affine $\tf$-topologies on 
$\Aff_B$ and $\SmAff_B$.
\end{remark}

\begin{example}\label{ex:tftopcovering}
We give some examples to elucidate the notion of $\tf$-coverings.
\begin{itemize}
\item[(i)] 
For any $X\in \Sch_{B}$ and any affine Nisnevich covering $\widetilde B\to B$, 
the morphism $X\times_B \widetilde B\to X$ is a $\tf$-covering.
Thus every 
Nisnevich covering of the base scheme $B$ has a refinement that is a $\tf$-covering.
\item[(ii)] The Nisnevich covering of the affine line $\A^{1}_B$ by its open subschemes 
$\A^{1}_B-0_B$ and $\A^{1}_B-1_B$ is not a $\tf$-covering in $\Sm_{B}$.
\item[(iii)] Let $f\in\mathcal{O}(B)$ be a regular function and let $t$ denote the coordinate on $\A^{1}_B$. 
The Zariski covering of the affine line $\A^{1}_B$ by $\A^{1}_{B-Z(f)}$ and $\A^{1}_B-Z(ft-1)$ is a tf-covering
defined by a $\tf$-square, 
because the open immersion $\A^{1}_B-Z(ft-1)\to\A^{1}_B$ is an affine morphism of $B$-schemes.
\end{itemize}
\end{example}

\begin{lemma}
\label{lm:tfsquareglueing}
Suppose $Z\not\hookrightarrow B$ is a closed immersion, and $X\in\Sm_B$.
Then a morphism $\calF\to \calG$ of $\tf$-sheaves of sets 
on $\Sm_{B}$ induces an isomorphism $\calF(X)\to\calG(X)$
if it induces isomorphisms
\begin{equation}\label{eq:FX(-Z)XhZ}
\calF(U\times_B (B-Z))\overset{\cong}{\to} 
\calG(U\times_B (B-Z))
\,\,
\text{and} 
\,\,
\calF(X^h_Z)\overset{\cong}{\to} \calG(X^h_Z), 
\end{equation}
where $U=X,X^h_Z$.
\end{lemma}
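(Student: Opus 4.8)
The plan is to exploit the fact that the $\tf$-topology on $\Sm_B$ is generated by the $\tf$-squares of \Cref{definition:tfsquares}, and that every $\tf$-sheaf therefore satisfies descent for the canonical filtration of $X$ by $X\times_B(B-Z)$ and the $\tf$-covers of the henselization along $X_Z$. Concretely, for a fixed closed immersion $Z\not\hookrightarrow B$ and $X\in\Sch_B$, I would first observe that the henselization $X^h_Z$ is a cofiltered limit of the étale neighborhoods $X'\to X$ appearing in the upper-right corner of $\tf$-squares \eqref{equation:Nistfsquare} with $Y=X_Z$; since $\tf$-sheaves (being sheaves in particular for the Nisnevich topology restricted to such neighborhoods) commute with these cofiltered limits of affine étale maps, one has $\calF(X^h_Z)=\colim_{X'}\calF(X')$ and likewise for $\calG$. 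This reduces the statement to showing that $\calF(X)\to\calG(X)$ is an isomorphism once $\calF(X\times_B(B-Z))\to\calG(X\times_B(B-Z))$ and $\calF(X')\to\calG(X')$ are isomorphisms for every such étale neighborhood $X'$ of $X_Z$ in $X$.

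Next I would invoke the descent (Mayer--Vietoris) property for $\tf$-sheaves with respect to $\tf$-squares. For any $\tf$-square as in \eqref{equation:Nistfsquare} with $Y=X_Z$, applying $\calF$ gives a cartesian square
\[
\xymatrix{
\calF(X)\ar[r]\ar[d] & \calF(X')\ar[d]\\
\calF(X\times_B(B-Z))\ar[r] & \calF(X'\times_B(B-Z)),
}
\]
and similarly for $\calG$; this is exactly the sheaf condition for the single $\tf$-cover given by the square (using completeness and regularity of the $cd$-structure, \Cref{prop:cdstructurecompleteregularbounded}, so that no higher Čech data intervene). Note $X'\times_B(B-Z)$ is itself of the form ``an object pulled back to $B-Z$,'' so $\calF(X'\times_B(B-Z))\to\calG(X'\times_B(B-Z))$ is an isomorphism by hypothesis applied to $X'$ in place of $X$ (the pair $(X', Z)$ still satisfies $X'\times_B Z\cong X_Z$). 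Combining the two cartesian squares with the three isomorphisms on the other corners yields that $\calF(X)\to\calG(X)$ is an isomorphism.

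The one genuine subtlety — and where I'd expect to spend the most care — is the passage to the limit: I must check that both $\calF$ and $\calG$, as $\tf$-sheaves on $\Sm_B$, actually commute with the cofiltered limit $X^h_Z=\lim X'$ over étale neighborhoods, and that this limit is indeed computed within the category of étale-over-$X$ objects appearing in $\tf$-squares. The point is that $X^h_Z$ lies in $\EssSm_B$ rather than $\Sm_B$, so ``$\calF(X^h_Z)$'' must be interpreted via the standard extension of a presheaf to essentially smooth schemes as a filtered colimit, and one needs that the $\tf$-square condition is stable enough (affine étale, trivial on $X_Z$) that the neighborhoods form a cofinal system realizing the henselization. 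Granting the conventions for $\EssSm_B$ and the pro-object $X^h_Z$ fixed in \Cref{subsection:candn} and \Cref{def:EssentiallySmooth}, this is a formal continuity argument; the only thing to verify by hand is that the transition maps $X''\to X'$ between two such neighborhoods again sit in $\tf$-squares, which is immediate since a composite/refinement of affine étale maps trivial on $X_Z$ is again affine étale and trivial on $X_Z$. With that in place, the Mayer--Vietoris step above finishes the proof.
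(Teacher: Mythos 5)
Your overall strategy---realize $X^h_Z$ as the cofiltered limit of affine \'etale neighborhoods $X'$ of $X_Z$ in $X$, interpret $\calF(X^h_Z)$ as the filtered colimit $\colim_{X'}\calF(X')$, and then apply the $\tf$-sheaf condition to the $\tf$-square on $X$, $X'$, $X\times_B(B-Z)$, $X'\times_B(B-Z)$---is the same as the paper's, and the continuity point you isolate in your last paragraph is indeed just the convention for evaluating presheaves on $\EssSm_B$.

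The genuine gap is in your first paragraph's ``reduction.'' Knowing that $\calF(X^h_Z)\to\calG(X^h_Z)$ is an isomorphism, i.e.\ that the induced map of filtered colimits $\colim_{X'}\calF(X')\to\colim_{X'}\calG(X')$ is an isomorphism, does \emph{not} imply that $\calF(X')\to\calG(X')$ is an isomorphism for every (or even for some) \'etale neighborhood $X'$: a map of filtered systems can induce an isomorphism on colimits while no stage map is injective or surjective. Yet in your Mayer--Vietoris step the corner $\calF(X')\to\calG(X')$ is precisely what is needed, and it is never independently established---applying the hypothesis ``to $X'$ in place of $X$'' only yields isomorphisms on $(X')^h_Z\cong X^h_Z$ and on $X'\times_B(B-Z)$, which is the same problem for $X'$ over again. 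The standard repair is to take the filtered colimit over all $X'$ of the cartesian squares
\[
\calF(X)\;\cong\;\calF(X')\times_{\calF(X'\times_B(B-Z))}\calF(X\times_B(B-Z)),
\]
using that filtered colimits of sets commute with fiber products; this replaces the problematic corner by $\calF(X^h_Z)$, which \emph{is} controlled by the hypothesis, and replaces the intersection corner by $\colim_{X'}\calF(X'\times_B(B-Z))$. One then still needs (at least) injectivity of this intersection term into its $\calG$-counterpart, which is where the ``for every $X\in\Sm_B$'' reading of the hypothesis is used, as in the application inside \Cref{prop:Proptftop}(vi) where the $B-Z$ corner is handled by induction on $\dim B$. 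Without some such control of the intersection term, the diagram chase on the map of fiber products does not close.
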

\begin{proof}
Suppose $\calF\to \calG$ induces isomorphisms as in \eqref{eq:FX(-Z)XhZ}. 
Here, 
$\calF(X^h_Z)$ is short-hand for the colimit $\varinjlim\calF(X^\prime)$ 
indexed over the \'etale neighborhoods $X^\prime$ 
of the closed subscheme $X\times_B Z$ of $X$, 
i.e., there is a commutative diagram where the vertical map is \'etale
\[\xymatrix{
& X^\prime \ar[d]\\  X\times_B Z\ar[r]\ar[ru]& X.
}\]
Similarly,
$\calF(X^h_Z\times_B(B-Z))$ is short-hand for the colimit $\varinjlim\calF(X^\prime\times_B(B-Z))$. 
It follows that 
\[\calF(X^\prime\times_B (B-Z))\cong
\calG(X^\prime\times_B (B-Z))
\,\,
\text{and} 
\,\,
\calF(X^\prime)\cong\calG(X^\prime)\]
for some 
\'etale neighborhood $X^\prime$ as above.
Now $X$, $X^\prime$, $X\times_B (B-Z)$, and $X^\prime\times_B (B-Z)$ form a $\tf$-square as in \eqref{equation:Nistfsquare}.
Since $\calF$ and $\calG$ are $\tf$-sheaves we are done.
\end{proof}

We follow the same conventions on sites and topoi as in \cite[Tag 00UZ]{StacksProject}.
Let $X_{\Et}$ denote the small {\'e}tale site of $X\in \Sm_{B}$, 
see \cite[Tag 021A]{StacksProject}.
Its underlying category of {\'e}tale $X$-schemes with coverage given by $\tf$-coverings 
form a site we denote by $X_{\Et}^\tf$.

If $\mathcal S$ is a site we denote its associated topos by $\wt\calS$.
Note that every object $X\in \mathcal S$ defines a morphism of topoi 
$X\colon\mathrm{Sets}\rightarrow \wt\calS$ by sending a sheaf $\calF$ to $\calF(X)$.

\begin{prop}
\label{prop:Proptftop}
Let $B$ be a separated scheme of finite Krull dimension.
The following properties hold for the $\tf$-topology on $\Sm_{B}$.
\begin{itemize}
\item[(i)]
Any covering on the small Zariski site of $B$ is a $\tf$-covering.
\item[(ii)]
The $\tf$-topology is generated by {\'e}tale coverings $\widetilde X\to X$ such that  for every $\sigma\in B$, the following is satisfied:
(1) there exists a lifting in the diagram
\[
\xymatrix{
& \widetilde X \ar[d] \\
X_{\sigma}:=X\times_{B}\sigma \ar@{.>}[ur] \ar[r] & X,
}
\]
and (2) there exists a Zariski open neighborhood $U$ of $\sigma$ in $B$ such 
that the induced morphism $\widetilde X\times_B U\to X\times_B U$ is affine.
\item[(iii)]  
If $B$ is a field, 
then the $\tf$-topology on $\Sm_{B}$ is trivial.
\item[(iv)]
For each $X\in\Sm_{B}$ and $\sigma\in B$,
the functor $\calF\mapsto \calF(X^h_\sigma)$ 
defines a $\tf$-point denoted $X_\sigma^{\tf}$.
\item[(v)]
For $X\in\SmAff_{B}$ and $\sigma\in B$, 
the filtering system of $\tf$-neighborhoods of $X_\sigma$ is cofinal in 
the filtering system of Nisnevich neighborhoods of $X_\sigma$.
\item[(vi)] 
For $X\in \Sm_B$,
a morphism of $\tf$-sheaves of sets $\calF\to \calG$ on $\Sm_{B}$ is an isomorphism 
on $X$-sections if for every $\sigma\in B$ and an \'etale $X$-scheme $U$,
the $\tf$-point $U_\sigma^{\tf}$ induces an isomorphism of stalks 
$\calF_{U_\sigma^{\tf}}\to \calG_{U_\sigma^{\tf}}$.
\item[(vii)]
There is a naturally induced conservative family of morphisms of topoi
\[
\{
\widetilde{\Sm}_\sigma^\tf\to \widetilde{\Sm}^\tf_B
\}_{\sigma\in B}.
\]
Moreover,
the naturally induced family of morphisms of topoi
\begin{equation}\label{eq:Set-Xsigma>toposSmtfsigma->toposSmtf}
\mathrm{Sets}\xrightarrow{X_\sigma^{\tf}}
\widetilde{\Sm}_\sigma^\tf
\to 
\widetilde{\Sm}^\tf_B
\end{equation}
indexed by all $X\in \Sm_{B}$ and $\sigma\in B$ forms a conservative set of points for $\widetilde{\Sm}^\tf_B$.

The same holds for $\widetilde{\SmAff}^\tf_\sigma$ and $\widetilde{\SmAff}^\tf_B$.
\item[(viii)]
For every $X\in \Sm_{B}$, the naturally induced family of morphisms of topoi
\begin{equation}\label{eq:Set-Usigma>toposEttfsigma->toposEttf}
\mathrm{Sets}\xrightarrow{U_\sigma^{\tf}}
\widetilde{(X_\sigma)}^\tf_{\Et}\to
\widetilde{X}^\tf_{\Et}
\end{equation}
indexed by all $U\in X_{\Et}$ and $\sigma\in B$ forms a conservative set of points for $\widetilde{X}^\tf_{\Et}$.

The same conclusion holds for every $X\in\SmAff_B$ and the $\tf$-site of affine {\'e}tale $X$-schemes.
\end{itemize}
\end{prop}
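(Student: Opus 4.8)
The items are mildly interdependent, and I would treat them in the order (iii), (i), (iv)/(v), (ii), and finally (vi)--(viii). Part (iii) is immediate from \Cref{definition:tfsquares}: over $B=\Spec k$ the only closed subschemes are $\emptyset$ and $\Spec k$, so a $\tf$-square \eqref{equation:Nistfsquare} has either $Y=\emptyset$, in which case it reads $\{X^\prime\to X,\ \id_X\}$, or $Y=X$, in which case it reads $\{\emptyset\to X,\ X^\prime\xrightarrow{\cong}X\}$; either way the generated covering sieve on $X$ is maximal, so the $\tf$-topology on $\Sm_k$ is generated by maximal sieves together with the empty covering of $\emptyset$, i.e.\ it is trivial. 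For (i), using that $B$ is separated I would reduce to finite Zariski coverings (Noetherian), and observe that the Zariski upper-distinguished square for opens $U^\prime\cup V^\prime=W$ of $B$ is a $\tf$-square with $Z=\overline{W\setminus U^\prime}$ (closure in $B$): indeed $W\setminus U^\prime\subseteq V^\prime$ since $\{U^\prime,V^\prime\}$ covers $W$, so $\varphi=(V^\prime\to W)$ is an affine étale morphism restricting to an isomorphism over $W\times_B Z$, and $U^\prime=W\times_B(B-Z)$ because $U^\prime$ is pulled back from $B$. Iterating over a finite Zariski covering $\{U_1,\dots,U_n\}$ along $W_j=U_1\cup\dots\cup U_j$ then exhibits it as generated by $\tf$-squares.

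For (iv) and (v), which amount to essentially the same cofinality assertion, I would compute the stalk of a continuous presheaf at the $\tf$-point $X_\sigma$ directly as the colimit over $\tf$-neighborhoods and then check that this cofiltered system is cofinal in the one defining $X^h_\sigma=X^h_{X\times_B\sigma}$ of \eqref{eq:proafhenselizationXhY}. Every Nisnevich neighborhood $V\to X$ of $X\times_B\sigma$ --- i.e.\ an affine étale $X$-scheme restricting to an isomorphism over the fibre --- is a $\tf$-neighborhood, since for $Z=\overline{\{\sigma\}}$ the data $(V\to X,\ X\times_B(B-Z)\to X)$ is, after shrinking $V$ and refining the open part by affine opens of $B$, a $\tf$-covering in the sense of \Cref{definition:tfsquares}; conversely any $\tf$-neighborhood is in particular an étale neighborhood with a section, which may be shrunk to one on which the fibre is an isomorphism. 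Hence the two systems are mutually cofinal, which gives $\calF_{X_\sigma}=\calF(X^h_\sigma)$ for continuous $\calF$ on $\EssSm_B$, and gives (v) for $X\in\SmAff_B$.

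Part (ii) I would obtain by playing the two definitions against each other. For the inclusion ``$\subseteq$'', the covering $X^\prime\sqcup(X-Y)\to X$ attached to a $\tf$-square is étale, is fibrewise split over every $\sigma\in B$ --- lift into $X^\prime$ through the isomorphism $\varphi|_{Y^\prime}\colon Y^\prime\xrightarrow{\cong}Y$ when $\sigma\in Z$, and into $X-Y=X\times_B(B-Z)$ otherwise --- and, after refining the open component by the pullbacks of affine opens of $B$, is affine over $B$, so it is refined by a covering of the stated form. For ``$\supseteq$'', given an étale covering $\widetilde X\to X$ with fibrewise lifts over each $\sigma$ and affineness over a neighborhood $U_\sigma\ni\sigma$, I would refine $B$ by the $U_\sigma$ (legitimate by (i)) and then, by Noetherian induction on the closed subschemes of $B$, spread each fibrewise lift out to an actual section of $\widetilde X$ over an affine étale neighborhood of $X\times_B\sigma$ in $X$ (using openness of étale morphisms to propagate the clopen image of the section), assembling these neighborhoods and the complementary open parts into a tower of $\tf$-squares refining $\widetilde X\to X$. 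The spreading-out-and-assembly step is the most delicate point.

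Finally, for the conservativity statements (vi)--(viii) I would appeal to \Cref{prop:cdstructurecompleteregularbounded} together with Voevodsky's analysis of bounded complete regular $cd$-structures \cite{VV:cd}, exactly as for the Nisnevich topology in \cite{Morel-Voevodsky}: the topos of a bounded complete regular $cd$-structure has a conservative family of points read off the density structure, here the structure $D_*(-)$ used in the proof of \Cref{prop:cdstructurecompleteregularbounded}, and by (iv) these points are precisely the functors $\calF\mapsto\calF(X^h_\sigma)$ indexed by $X\in\Sm_B$ and $\sigma\in B$. This yields the conservative family $\{\widetilde{\Sm}^\tf_\sigma\to\widetilde{\Sm}^\tf_B\}_{\sigma\in B}$ and the conservative set of points of \eqref{eq:Set-Xsigma>toposSmtfsigma->toposSmtf}, the $\SmAff$ variant being identical once one notes that the $\tf$-squares of affine $B$-schemes stay in $\SmAff_B$; (vi) is the localization of this statement over a fixed $X$, and (viii) is the same argument applied to the small étale site $X^\tf_{\Et}$, whose $\tf$-squares again form a bounded complete regular $cd$-structure and which is discrete over a residue field by (iii). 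I expect the main obstacle to be the ``$\supseteq$'' half of (ii) and, relatedly, making precise that the cofiltered limits computing the $\tf$-points commute with the continuous presheaves in play --- this is where (i) and (iv) carry the weight.
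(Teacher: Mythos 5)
Your plan lines up with the paper's proof for most items, but the route you take for the conservativity statements (vi)--(viii) has a real gap, and item (ii) leaves out the mechanism the paper uses for the ``spreading-out'' step.

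For (i), (iii), and the cofinality argument behind (iv)/(v), your reasoning is correct and essentially the same as the paper's. In (i), your choice $Z=\overline{W\setminus U^\prime}$ works (since $W\cap\overline{W\setminus U^\prime}=W\setminus U^\prime$ in the subspace topology), though the paper's simpler $Z=B\setminus X_{l-1}$ is cleaner. In (iv)/(v) the claim that Nisnevich neighborhoods of $X_\sigma$ and $\tf$-neighborhoods are mutually cofinal is right, but you should be a bit more careful when you write ``shrinking $V$'': the paper's proof of (iv) instead directly analyzes a $\tf$-square at $\sigma$ (splitting into the cases $\sigma\in Z$ and $\sigma\notin Z$), obtaining a factorization of $X^h_\sigma$ through $X^\prime$ or $X-Y$; this is cleaner than trying to force a single Nisnevich neighborhood to restrict to an isomorphism over all of $X\times_B Z$.

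The gap in (vi)--(viii) is that ``a bounded complete regular $cd$-structure has a conservative family of points read off the density structure'' is not a theorem you can cite from Voevodsky or Morel--Voevodsky. Boundedness gives a bound on cohomological dimension, and completeness/regularity give the Brown--Gersten descent characterization of local objects, but none of this hands you a conservative set of points for free, nor identifies them with the henselizations without further work. The paper instead proves (vi) directly by Noetherian induction on $\dim B$: reduce to $B$ local via (i), apply the inductive hypothesis over $B\setminus\{z\}$, use $\calF(X^h_z)\cong\calG(X^h_z)$ from (iv), and conclude via the gluing Lemma~\ref{lm:tfsquareglueing} applied to the $\tf$-square formed by $X$, $X^h_z$, $X\times_B(B-z)$, and $X^h_z\times_B(B-z)$. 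You do not invoke Lemma~\ref{lm:tfsquareglueing} anywhere, and that is the key ingredient for (vi); once (iv) and (vi) are in hand, (vii) and (viii) are short corollaries, since the relevant functors are given by $\calF\mapsto\calF(X^h_\sigma)$ and $\calF\mapsto\calF(U^h_\sigma)$ respectively.

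For (ii), the ``spreading-out-and-assembly'' step you flag as delicate is exactly where the paper does the work: after reducing to $B$ local via (i), it inducts on $\dim B$. Given a $\tf^\prime$-covering $\widetilde X\to X$, the fiberwise lift at the closed point $\sigma$ extends (by the defining colimit of $X^h_\sigma$) to a lift $X^h_\sigma\to\widetilde X$, hence to a lift $X^\prime\to\widetilde X$ from an affine étale neighborhood $X^\prime$ of $X_\sigma$; the complement $\widetilde X\times_B(B-\sigma)\to X\times_B(B-\sigma)$ is handled by the inductive hypothesis, and the two are assembled using the $\tf$-covering $(X-X_\sigma)\amalg X^\prime\to X$. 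You should make this induction explicit rather than leaving it as a flagged gap.
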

\begin{proof}
We begin with (i). 
If $X$ is an open subscheme of $B$ it suffices to show that every Zariski covering $\coprod_{i=0}^{l} U_i\to X$ 
by affine open subschemes $U_i$ is a $\tf$-covering.
We set $X_{l-1}\defeq\bigcup_{i=0}^{l-1}U_i$ and proceed by induction on $l$.
Since $B$ (and $X$) are separated, the morphism $U_l\to X$ is affine. 
Thus $X_{l-1}\cap U_l$, $X_{l-1}$, $U_l$ and $X$ form a $\tf$-square, 
i.e., 
$X_{l-1}\amalg U_l\to X$ is a $\tf$-covering. 
Since $\coprod_{i=0}^{l-1}U_i\to X_{l-1}$ is a $\tf$-covering by assumption, this finishes the proof of (i).
\vspace{0.1in}

To prove (ii) we let $\tf^\prime$ be the topology generated by the given {\'e}tale covering $\widetilde X\to X$.
For any $\tf$-square \eqref{equation:Nistfsquare} the morphism $(X-Y)\amalg X^\prime\to X$ is a $\tf^\prime$-covering, 
and hence $\tf^\prime\supset \tf$.
Conversely, 
we claim that every $\tf^\prime$-covering $\widetilde X\to X$ is a $\tf$-covering.
By definition, 
$\tf^\prime$ contains the Zariski topology on the small Zariski site over $B$, 
which by (i) is contained in the $\tf$-topology.
Hence we may assume $B$ is a local scheme of finite Krull dimension.

The claim holds trivially if $B=\emptyset$, since then $\widetilde X\cong X=\emptyset$.
Suppose the claim holds inductively for all base schemes of dimension less than $\dim B$, 
and let $\sigma\in B$ be 
a closed point.
The lifting $X^h_\sigma\to \widetilde X$, 
afforded by the definition of a $\tf^\prime$-covering, 
gives rise to a lifting $X^\prime\to \widetilde X$ for an \'etale neighborhood of $X_\sigma$ in $X$ given by an 
affine morphism $X^\prime\to X$. 
Note that $(X-X_\sigma)\amalg X^\prime\to X$ is a $\tf$-covering.
Since $\widetilde X\times_B X^h_\sigma\to X^h_\sigma$ has a right inverse, 
it is a covering in the trivial topology and thus also in the $\tf$-topology.
The inductive assumption implies that $\widetilde X\times_B(B-\sigma)\to X\times(B-\sigma)\cong (X-X_\sigma)$ 
is a $\tf$-covering.
In summary, 
$\widetilde X\times_X ((X-X_\sigma)\amalg X^h_\sigma)\to (X-X_\sigma)\amalg X^h_\sigma\to X$ is a composition of 
$\tf$-coverings, 
and we are done with (ii).
\vspace{0.1in}

Part (iii) follows since if $B$ is a field, then any closed subscheme $Z$ of $B$ is either empty or equals $B$.
That is, 
for any $\tf$-square \eqref{equation:Nistfsquare}, 
either $X-Y=X$ or $X^\prime=X\amalg (X^\prime-X)$.
Hence any $\tf$-covering over $B$ admits a section.
\vspace{0.1in}

We proceed with the characterization of points in the $\tf$-topology.
For all $X\in\Sm_B$ and $\sigma\in B$ we claim the only $\tf$-covering of $X^h_\sigma$ is the identity, 
and moreover for every $\tf$-covering $\widetilde X\to X$ there exists an \'etale neighborhood of $X_\sigma$ in $X$ 
given by an affine morphism $X^\prime\to X$ that factors through $\widetilde X$.
In effect, 
we consider a $\tf$-square as in \eqref{equation:Nistfsquare}. 
If $\sigma\not\in Z$ then $Y=\emptyset$ and there is a canonically induced morphism $X^h_\sigma\to X-Y$. 
If $\sigma\in Z$, 
then $X^\prime\to X$ is an affine \'etale morphism and an \'etale neighborhood of $X_\sigma$ in $X^h_\sigma$. 
In this case there exists a lifting $X^h_\sigma\to X^\prime$. 
Since the $\tf$-topology is a $cd$-topology generated by $\tf$-squares, 
it follows that any $\tf$-covering $\widetilde X\to X^h_\sigma$ admits a lifting $X^h_\sigma\to \widetilde X$.
Hence there exists a lifting $X^\prime\to \widetilde X$ for an \'etale neighborhood of $X_\sigma$ given by an 
affine morphism $X^\prime\to X$.
It follows that the functor $\calF\mapsto \calF(X^h_Z)$ defines a point in the $\tf$-topology for any presheaf 
$\calF$ on $\EssSm_B$.
Moreover, 
the functor $\calF\mapsto \varinjlim_{X^\prime}\calF(X^\prime)$ defines a point in the $\tf$-topology on $\Sm_B$.
This proves part (iv).
\vspace{0.1in}

To prove (v), note that any $\tf$-neighborhood is an Nisnevich neighborhood.
Conversely, 
suppose $\widetilde X\to X$ is an \'etale morphism defining a Nisnevich neighborhood of $X_\sigma$.
We may assume that $B$ is local so that $X$ is affine.
Then there exist an affine Zariski neighborhood $\widetilde X^\prime$ of the subscheme 
$\widetilde X_\sigma=\widetilde X\times_B \sigma$ of $\widetilde X$.
Since the map $X_\sigma\to \widetilde X$ factors through $\widetilde X_\sigma$, 
it also factors through $\widetilde X^\prime$.
Hence 
$\widetilde X^\prime$ is a $\tf$-neighborhood of the subscheme $X_\sigma$ of $X$.
\vspace{0.1in}

We assume (vi) holds for all base schemes of dimension less than $\dim B$ and proceed by induction.
Since any affine Zariski covering $\widetilde B\to B$ is a $\tf$-covering by \Cref{ex:tftopcovering}(i),
a morphism $\calF(X)\to \calG(X)$ is an isomorphism if and only if for every $\sigma\in B$, 
the morphism $\calF(X\times_B{B_\sigma})\to \calG(X\times_B{B_\sigma})$ is an isomorphism.
So we may assume $B$ is local.
Applying the inductive assumption to an \'etale $X$-scheme $U$ implies 
$\calF(U\times_B(B-z))\cong \calG(U\times_B(B-z))$, 
where $z\in B$ is the closed point of $B$.
Since $\calF(X^h_z)\cong \calG(X^h_z)$ by applying our assumption to $U=X$ and $\sigma=z$, 
we deduce the desired isomorphism $\calF(X)\cong \calG(X)$ from \Cref{lm:tfsquareglueing}.
\vspace{0.1in}

Parts (vii) and (viii) follow from (iv) and (vi) since for any $X\in \Sm_B$ the functor in
\eqref{eq:Set-Xsigma>toposSmtfsigma->toposSmtf} is given by $\calF\mapsto \calF(X^h_\sigma)$, 
and for $U\in \Et_X$ the functor in \eqref{eq:Set-Usigma>toposEttfsigma->toposEttf} is given by 
$\calF\mapsto \calF(U^h_\sigma)$.
\end{proof}

\begin{remark}
We thank one of our referees for suggestion the terminology that a $\tf$-henselian local 
$B$-scheme is a $B$-scheme of the form $X^h_\sigma$ for some $X\in \Sm_B$, 
$\sigma\in B$.
\end{remark}

\begin{remark}\label{rem:tfsquare:etalemorphismafineness}
The affine trivial fiber topology on $\Aff_B$, \Cref{rem:tftopologySmBAffBSmAffB}, 
is the strongest subtopology of the affine Nisnevich topology, 
which is trivial over fields. 
Moreover, 
it is probably the weakest subtopology of the Nisnevich topology for which localization holds 
in the sense of \Cref{th:LocNisZSHI}.
The strongest subtopology of the Nisnevich topology on $\Sch_S$ which is trivial over fields 
is generated by squares as in \Cref{definition:tfsquares} without the affineness assumption.
According to our definition, 
the latter topology is stronger than the trivial fiber topology.  
The trivial fiber topology, 
with the affineness condition, 
is defined so that the $\tf$-points are of the form $X^h_z$ for $z\in B$.
\end{remark}

\subsection{Base change for points}
\label{subsection:bcfp}

Any morphism of base schemes $f\colon B^\prime\to B$ induces a canonical base change functor
\begin{equation}
\label{equation:basechangefupperstar}
f^*\colon\SH_{s}(B)\to \SH_{s}(B^\prime).
\end{equation}

\begin{lemma}
\label{lm:infZarCoveringBaseChange}
The canonical functor
\begin{equation}
\label{eq:basechangeBcoprodBsigma}
\SH_{s}(B)
\to 
\prod\limits_{\sigma\in B}
\SH_{s}(B_\sigma)
\end{equation}
detects $\A^{1}$-local objects on the subcategory of $\tf$-local objects.
\end{lemma}
\begin{proof}
Let $\calF\in \in \SH_{s}(B)$ be a $\tf$-local object so that the image along 
the functor \eqref{eq:basechangeBcoprodBsigma} is $\A^1$-local. 
For any $\tf$-local $\calF\in \SH_{s}(B)$ the presheaf of $S^{1}$-spectra 
$\calF(\A^{1}\times -)$ is $\tf$-local.
Thus to conclude $\calF$ is $\A^{1}$-local,
it suffices to prove there is a canonically induced isomorphism 
\begin{equation}
\label{equation:detecting}
\calF(X^h_\sigma)\cong \calF(\A^{1}\times X^h_\sigma) 
\end{equation}
for all $X\in \Sm_B$, $\sigma\in B$.
We note that $X^h_\sigma$ is a $B_\sigma$-scheme, see \eqref{eq:proafhenselizationXhY}.
Hence,  
the isomorphism \eqref{equation:detecting} follows because the image of $\calF$ 
under $\SH_{s}(B)\to \SH_{s}(B_\sigma)$ is $\A^{1}$-local by assumption.
\end{proof}

\begin{lemma}
\label{lm:BaseChangeCommutes}
Suppose $B$ is a base scheme and $f\colon B^\prime\to B$ is an open immersion.
Then $f^*$ in \eqref{equation:basechangefupperstar} commutes with $L_{\A^{1}}$, $L_{\nis}$, $L_{\tf}$.
Consequently, 
the functor \eqref{eq:basechangeBcoprodBsigma}
commutes with $L_{\nis}$, $L_{\tf}$, $L_{\A^{1}}$.
In addition, 
the functors
\begin{equation*}
\begin{array}{lcll}
\SH_{s}(\Fr_+(B))&\to& &\SH_{s}(\Fr_+(B^\prime)),\\
\SH_{s}(\Fr_+(B))
&\to& 
\prod\limits_{\sigma\in B}&
\SH_{s}(\Fr_+(B_\sigma))
\end{array}
\end{equation*}
preserve quasi-stable radditive 
objects.
\end{lemma}
\begin{proof}
The functor $f^*$ preserves Nisnevich local equivalences and $\tf$-local equivalences because it is a left 
Quillen adjoint with right adjoint $f_*$.
Since $f$ is an open immersion, 
$f^*$ is also a right Quillen adjoint with left adjoint $f_\#$ for the Nisnevich local and $\tf$-local projective model 
structures on $\Spt_s(\Sm_B)$ and $\Spt_s(\Sm_{B^\prime})$.
Hence $f^*$ preserves local projective fibrant objects.
Thus $f^*$ commutes with $L_{\nis}$ and $L_{\tf}$.
In more details, 
we first use that
the functor
\begin{equation}
\label{eq:XBprimeB}
\Sm_{B^\prime}\to \Sm_B;
(X\to B^\prime)
\mapsto 
(X\to B^\prime \overset{f}{\to} B)
\end{equation} 
preserves \'etale morphisms and open immersions, 
and secondly that  
\eqref{eq:XBprimeB} preserves fibre products,
because $f$ is an open immersion.
This implies that
\eqref{eq:XBprimeB}
preserves Nisnevich- and $\tf$-squares.
Thus, 
$f_\#$,
which is determined by \eqref{eq:XBprimeB}, 
preserves Nisnevich local and $\tf$-local equivalences.
For $L_{\A^{1}}$, 
we note that \eqref{eq:XBprimeB} commutes with the endofunctors on $\Sm_{B^\prime}$ and 
$\Sm_B$ given by $X\mapsto X\times_B\Delta^n_B\cong X\times_{B^\prime}\Delta^n_{B^\prime}$.
\Cref{lm:EssSmContNistfcov} implies the claim for \eqref{eq:basechangeBcoprodBsigma}.

The final claim follows because the functor $\Fr_+(B)\to \Fr_+(B^\prime)$,  
defined as in \eqref{eq:XBprimeB}, 
preserves coproducts, 
and sends the framed correspondence $\sigma_X$ in $\Fr_+(B^\prime)$, 
for $X\in \Sm_{B^\prime}$ as in \Cref{example:quasistable}, 
to $\sigma_X$ in $\Fr_+(B)$.
\end{proof}

In particular, 
the functor in \eqref{eq:basechangeBcoprodBsigma} preserves $\A^{1}$-local objects.

\begin{remark}
Using similar arguments one can show that if $f\colon B^\prime\to B$ is a Zariski covering, 
then $f^*\colon\SH_{s}(B)\to \SH_{s}(B^\prime)$ detects $\A^{1}$-local objects on the subcategory of $\tf$-local objects.
\end{remark}

\section{The Nisnevich- and \texorpdfstring{$\tf$}{tf}-topology for closed immersions}
\label{section:SmScZ}

For a closed immersion $Z\not\hookrightarrow B$, 
\index{Categories of $B$-schemes!$\EssSm_{B,Z}$} $\EssSm_{B,Z}$ is the full subcategory of $\EssSmB$ 
spanned by schemes of the form $X^h_Z:=X^h_{X\times Z}$, where $X\in\EssSm_B$, 
as in \eqref{eq:proafhenselizationXhY}. 
The universal property of the henselization \eqref{eq:proafhenselizationXhY} leads to the adjunction 
(the upper functor is the left adjoint)
\begin{equation}\label{eq:adj:SmBandSmBcZ}\begin{array}{lclcl}
    & \EssSm_B &\rightleftarrows & \EssSmBcZ & \\
    & X & \mapsto & X^h_Z  & \\    
    & X^h_Z & \mapsfrom & X^h_Z &
\end{array}\end{equation}
We can transfer the symmetric monoidal structure on $\EssSm_B$ to one on $\EssSmBcZ$ via the adjunction 
\eqref{eq:adj:SmBandSmBcZ} as follows.
Note that $\EssSm_B$ inherits a symmetric monoidal structure from that on $\Sch_B$ 
(see also \Cref{rem:productonEssSm}).
The counit of the adjunction \eqref{eq:adj:SmBandSmBcZ} is a natural isomorphism on account of 
\Cref{lm:productoverBcZ}.  
Hence the adjunction \eqref{eq:adj:SmBandSmBcZ} is a reflection
(and one may view $\EssSmBcZ$ as the localization of $\EssSm_B$ 
with respect to the morphisms $X\to Y$ inducing an isomorphism $X^h_Z\to Y^h_Z$ on henselizations).
By appealing to Kelly's  
doctrinal adjunction
\cite[Theorem 3.1]{zbMATH03522182} and \Cref{lm:productoverBcZ},
the monoidal product $\times_B$ on $\EssSm_B$ induces a monoidal product 
\index{Categories of $B$-schemes!$\times_{B,Z}$} $\times_{B,Z}$ on $\EssSmBcZ$ given by 
 \begin{equation}
\label{eq:timesBZ}
Y^h_Z\times_{B,Z} W^h_Z 
:=
(Y^h_Z\times_B W^h_Z)^h_Z\cong
(Y\times_B W)^h_Z.
\end{equation}
See also \Cref{def:productoverBcZ}.

We define \index{Categories of $B$-schemes!$\Sm_{B,Z}$} $\Sm_{B,Z}$ as the full subcategory of $\EssSmB$
spanned by $X^h_Z:=X^h_{X\times Z}$, $X\in\Sm_B$, 
as in \eqref{eq:proafhenselizationXhY}.
The symmetric monoidal product on $\EssSm_{B,Z}$ defined in \eqref{eq:timesBZ} 
restricts to a symmetric monoidal product on $\SmBcZ$ 
because $X\times_B Y\in \Sm_B$ for all $X,Y\in\Sm_B$.

For later reference, 
we introduce the Nisnevich- and the $\tf$-topology on $\Sm_{B,Z}$.
\begin{definition}
\label{definition:NiswNis}
The Nisnevich topology on $\SmBcZ$ is the strongest topology $\tau$ for which
\begin{equation}\label{eq:SmZotSmbcZ}
\begin{tikzcd}[row sep=-0.25em]
&\SmZ^{\nis} \ar{r} & \SmBcZ^{\tau} \\ 
&X_Z &\ar[l,mapsto] \XhZ 
\end{tikzcd}
\end{equation}
defines a morphism of sites, see \Cref{rem:eq:SmZotSmbcZ} 
(here, $X_Z:=X\times_B Z$ as in \Cref{subsubsectionList}).
Similarly, 
the weak Nisnevich topology (=$w\nis$ topology) on $\SmBcZ$ is the weakest topology $\tau$ for which we have a morphism 
of sites
\begin{equation}\label{eq:SmBcZotSmB}
\begin{tikzcd}[row sep=-0.25em]
\SmBcZ^{\tau} \ar{r} & \SmB^{\nis} \\ 
\XhZ & \ar[l,mapsto] X.
\end{tikzcd}
\end{equation}
\end{definition}

\begin{remark}\label{rem:eq:SmZotSmbcZ}
Any object of $\SmBcZ$ has the form $\XhZ$ for some $X\in \SmB$, 
and there is an isomorphism $\XhZ\times_B Z\simeq X_Z$.
Thus the functor \eqref{eq:SmZotSmbcZ} is well defined.
\end{remark}

We have the following precise descriptions for coverings in $\nis$ and $w\nis$ topologies.
A \emph{Nisnevich covering} of $X^h_Z\in\SmBcZ$ is a morphism $\wX^h_Z\to X^h_Z$ such that the morphism $\wX_Z\to X_Z$ defined by \eqref{eq:SmZotSmbcZ} is a 
Nisnevich covering in $\Sm_Z$.
A \emph{weak Nisnevich covering} of $X^h_Z\in\SmBcZ$ is a morphism $\wX^h_Z\to X^h_Z$ such that there is some
morphisms $\wX\to X$ in $\Sm_B$ that is a Nisnevich covering in $\Sm_B$ and maps to $\wX^h_Z\to X^h_Z$ along the functor \eqref{eq:SmBcZotSmB}.

\begin{prop}
\label{proposition:NissimeqWNisSmBcZ}
The Nisnevich and weak Nisnevich topologies on $\SmBcZ$ coincide.
Hence there is a canonical isomorphism of sites 
\[
\SmBcZ^\nis
\xrightarrow{\cong}
\SmBcZ^{w\nis}.
\]
\end{prop}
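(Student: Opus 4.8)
The plan is to note that, by \Cref{definition:NiswNis}, both topologies on $\SmBcZ$ are generated by explicit families of coverings, and to verify that each generating family for one topology consists of coverings for the other. Two reductions will be in force. First, by Zariski descent: an affine Zariski covering $\coprod_i U_i\to X$ is a Nisnevich covering in $\SmB$---hence, by definition, a weak Nisnevich covering of $\XhZ$---while its base change $\coprod_i U_i\times_B Z\to X_Z$ is a Zariski, hence Nisnevich, covering of $X_Z$ in $\SmZ$; since a sieve is a covering sieve exactly when its pullback along a covering is, we may assume the scheme $X$ underlying a given object $\XhZ$ of $\SmBcZ$ is affine (and likewise for the sources of the coverings in play). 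Second, for affine $X$ the scheme $\XhZ$ is the henselization of $X$ along $X_Z$, so $(\XhZ,X_Z)$ is a henselian pair whose defining ideal lies in the Jacobson radical; I will use freely the standard consequences---lifting of sections of \'etale morphisms, the fact that an \'etale morphism between schemes that are henselian along their closed fibers and that is an isomorphism on those fibers is an isomorphism, and that any open subscheme of $\XhZ$ containing $X_Z$ is all of $\XhZ$---together with the fact that henselization along $Z$ sends any $B$-scheme with empty $Z$-fiber to $\emptyset$.

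For the inclusion of the weak Nisnevich topology into the Nisnevich topology, let $\wX\to X$ be a Nisnevich covering in $\SmB$. Its base change $\wX_Z\to X_Z$ along the closed immersion $Z\hookrightarrow B$ is again a Nisnevich covering, since Nisnevich coverings are stable under base change; as the induced morphism $\wXhZ\to\XhZ$ in $\SmBcZ$ restricts on $Z$-fibers to $\wX_Z\to X_Z$, it is a Nisnevich covering of $\XhZ$ in the sense of \Cref{definition:NiswNis}. Hence every generating weak Nisnevich covering is a Nisnevich covering.

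For the reverse inclusion, let $g\colon\wXhZ\to\XhZ$ be a Nisnevich covering in $\SmBcZ$; by definition its restriction to $Z$-fibers is a Nisnevich covering $g_Z\colon\wX_Z\to X_Z$ in $\SmZ$, and we take $X$ affine. Since $g_Z$ is \'etale and $(\XhZ,X_Z)$ is a henselian pair, the henselian-pair properties of $\XhZ$ force $g$ itself to be \'etale---indeed the unique \'etale lift of $g_Z$ over $\XhZ$---and surjective, as $g_Z$ is. It remains to exhibit $g$ as the base change of a Nisnevich covering in $\SmB$. Since $\XhZ$ is the cofiltered limit of the affine \'etale neighborhoods $X_\alpha$ of $X_Z$ in $X$, a spreading-out argument descends $g$ to a surjective \'etale morphism $\wX'\to X_\alpha$ with $\wX'\times_{X_\alpha}X_Z\cong\wX_Z$ over $g_Z$; shrinking, we may take $X_\alpha$ affine with $(X_\alpha)^h_Z\cong\XhZ$. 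Then $\wX'\sqcup\bigl(X_\alpha\setminus(X_\alpha\times_B Z)\bigr)\to X_\alpha$ is a Nisnevich covering in $\SmB$: over $X_\alpha\times_B Z$ it restricts to the Nisnevich covering $g_Z$, and over the open complement the open immersion supplies a section. Henselizing along $Z$ annihilates the summand with empty $Z$-fiber, so the induced weak Nisnevich covering of $(X_\alpha)^h_Z\cong\XhZ$ is $(\wX')^h_Z\to\XhZ$, which is identified with $g$ via compatibility of henselization along $Z$ with \'etale base change. Hence every Nisnevich covering is a weak Nisnevich covering, the two topologies coincide, and the asserted isomorphism of sites is the identity.

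The crux is the reverse inclusion: converting a Nisnevich covering of the closed fiber $X_Z$ into a genuine Nisnevich covering of $B$-schemes. Such a covering extends only to an \'etale morphism over some \'etale neighborhood of $X_Z$ in $X$, need not be surjective---let alone Nisnevich---away from $X_Z$, and must be completed by adjoining the open complement of $X_Z$ (which vanishes upon henselization) before it becomes a Nisnevich covering in $\SmB$; verifying this, and matching the resulting covering with the original morphism $g$ through the henselian-pair lifting properties and the limit description of $\XhZ$ recorded in \eqref{eq:proafhenselizationXhY}, is where the substantive work lies.
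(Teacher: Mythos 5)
Your proof follows the same route as the paper's: base change gives the easy inclusion, and for the converse you descend $g$ to a finite \'etale level, adjoin the open complement of the closed fiber to form a Nisnevich covering in $\Sm_B$, and henselize. Making the spreading-out explicit---where the paper merely writes ``is obtained from a morphism $f\colon\wX\to X$ in $\Sm_B$''---is a welcome clarification.

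The one step whose justification does not hold is the claim that ``the henselian-pair properties of $\XhZ$ force $g$ itself to be \'etale---indeed the unique \'etale lift of $g_Z$ over $\XhZ$.'' For a henselian pair $(A,I)$ the restriction functor from \'etale $A$-schemes to \'etale $A/I$-schemes is an equivalence only in the \emph{finite} \'etale case; for general \'etale schemes there is no uniqueness (or existence) of lifts, so this cannot be why $g$ is \'etale. The correct argument, and the one the paper actually gives (``Since $\wX,X\in\Sm_B$ and $f_Z$ is \'etale, the morphism $f$ is \'etale over $\wX_Z$''), goes via smoothness over $B$: first descend $g$ to a morphism $f\colon\wX'\to X_\alpha$ of $B$-smooth schemes, which needs no a priori \'etaleness of $g$; then observe that since source and target are flat over $B$, the fibral criterion for \'etaleness shows $f$ is \'etale at every point of $\wX'_Z$ because $f_Z$ is; the \'etale locus is open, and shrinking $\wX'$ to it does not change the henselization. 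The henselian-pair property that is legitimately used is the fact you list in your preamble---that any open of $\wXhZ$ containing $\wX_Z$ is all of $\wXhZ$---but applied to the \emph{source}, not as a lifting property of the target $\XhZ$. With this repair your argument coincides with the paper's.
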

\begin{proof}
Let $\wX^h_Z\to X^h_Z$ be a weak Nisnevich covering in $\SmBcZ$.
Since the functor $\Sm_{B}\to \Sm_{Z}$ given by $X\mapsto X_{Z}$ preserves Nisnevich coverings, 
$\wX_Z\to X_Z$ is a Nisnevich covering in $\Sm_{Z}$.
By definition, 
this means $\wX^h_Z\to X^h_Z$ is a Nisnevich covering in $\SmBcZ$.

Conversely, 
it follows
by \Cref{lm:AffSmHenselianLift}
that
every morphism $\wX^h_Z\to X^h_Z$ in $\SmBcZ$ is obtained from a morphism $f\colon \wX\to X$ in $\Sm_B$. 
If $\wX^h_Z\to X^h_Z$ is a Nisnevich covering,
then the morphism $f_Z\colon \wX_Z\to X_Z$ is a Nisnevich covering in $\Sm_{Z}$.
Since $\wX,X\in \Sm_B$ and $f_Z$ is \'etale, the morphism $f$ is \'etale over $\wX_Z$.
Moreover, 
since $f_Z$ is a Nisnevich covering it follows that $\wX\amalg X_{B-Z}\to X$ is a Nisnevich covering.
Thus ${\wX}^h_Z\amalg (X_{B-Z})^h_Z\to X^h_Z$ is a Nisnevich covering in $\Sm_{B,Z}$. 
This implies ${\wX}^h_Z\to X^h_Z$ is a weak Nisnevich covering in $\Sm_{B,Z}$ since $(X_{B-Z})^h_Z=\emptyset$. 
\end{proof}

\begin{corollary}
\label{lm:topologically-XtoXhs}
The functor $\SmB\to \SmBcZ$ given by $X\mapsto \XhZ$ preserves coverings and points for the Nisnevich topology.
\end{corollary}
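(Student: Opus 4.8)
The plan is to deduce both assertions from \Cref{proposition:NissimeqWNisSmBcZ} together with \Cref{definition:NiswNis}, so that the argument is essentially formal once the coincidence of the Nisnevich and weak Nisnevich topologies on $\SmBcZ$ is in hand. Write $u\colon\SmB\to\SmBcZ$ for the functor $X\mapsto\XhZ$.

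\emph{Coverings.} Let $\wX\to X$ be a Nisnevich covering in $\SmB$. By \Cref{definition:NiswNis} the induced morphism $\wXhZ\to\XhZ$ is, tautologically, a weak Nisnevich covering of $\XhZ$, and \Cref{proposition:NissimeqWNisSmBcZ} identifies it with a Nisnevich covering in $\SmBcZ$. Moreover $u$ sends the empty sieve to the empty sieve, and $X\mapsto\XhZ$ commutes with the fiber products occurring in Nisnevich $cd$-squares; hence $u$ pulls covering sieves back to covering sieves and is continuous.

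\emph{Points.} Continuity of $u$ yields $u^p\colon\PSh(\SmBcZ)\to\PSh(\SmB)$, $(u^p\calH)(X)=\calH(\XhZ)$, which preserves sheaves and hence induces an exact functor $u^s$ on the Nisnevich sheaf topoi. It therefore suffices to check that the associated map on points carries the Nisnevich points of $\SmB$ to Nisnevich points of $\SmBcZ$ (or to the trivial point) compatibly with stalks. The Nisnevich point of $\SmB$ attached to $X\in\SmB$ and $x\in X$ has stalk $\calF\mapsto\calF(X^h_x)=\colim_{V\to X}\calF(V)$, the colimit taken over {\'e}tale neighbourhoods $V$ of $x$; applying $u^p$ and taking stalks gives $\colim_{V\to X}\calH(V^h_Z)$. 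If $x\notin X_Z$, the {\'e}tale neighbourhoods of $x$ disjoint from the closed subscheme $X_Z$ are cofinal, for such $V$ one has $V^h_Z=\emptyset$, and the stalk is $\calH(\emptyset)=\ast$. If $x\in X_Z$, then $X^h_x$ is henselian local with closed point over $x\in X_Z$, so $(X^h_x)^h_Z=X^h_x$; and by the argument in the proof of \Cref{proposition:NissimeqWNisSmBcZ} (a morphism in $\SmB$ that is {\'e}tale over $X_Z$ is {\'e}tale in a neighbourhood of $X_Z$) the system $\{V^h_Z\}$ is cofinal in the system of Nisnevich neighbourhoods of $x$ in $X_Z$ defining the corresponding Nisnevich point of $\SmBcZ$, so the two stalks agree. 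This is the claimed preservation of points.

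The one step with genuine content is the cofinality assertion in the last paragraph, together with the identity $(X^h_x)^h_Z=X^h_x$ for $x\in X_Z$; both are instances of the universal property of henselization along a closed subscheme containing the given point, handled exactly as in the stalk computations of \Cref{prop:Proptftop} and the proof of \Cref{proposition:NissimeqWNisSmBcZ}. I do not anticipate any obstacle beyond this bookkeeping.
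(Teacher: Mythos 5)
Your proof is correct and follows the paper's route: both arguments reduce everything to \Cref{proposition:NissimeqWNisSmBcZ}, the coverings part being tautological from \Cref{definition:NiswNis} and the points part resting on the identification $(X^h_x)^h_Z\cong X^h_x$ (for $x\in X_Z$, via the henselian-pair property) or $\emptyset$ (for $x\notin X_Z$). The paper simply compresses the points discussion into the phrase "takes points to weak Nisnevich points by definition," whereas you spell out the stalk comparison; the extra detail is accurate and harmless.
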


\begin{proof}
By definition, 
the functor $\SmB\to \SmBcZ$ given by $X\mapsto \XhZ$ takes Nisnevich coverings and points in $\SmB$ 
to weak Nisnevich coverings and points in $\SmBcZ$.
\Cref{proposition:NissimeqWNisSmBcZ} finishes the proof.
\end{proof}

Owing to \Cref{lm:topologically-XtoXhs}, 
there are induced site morphisms given by 
\[
\begin{tikzcd}[row sep=-0.25em]
\SmBcZ^{\nis} \ar{r} & \SmB^{\nis} & \ar{l}\SmBmZ^{\nis} \\ 
\XhZ & \ar[l,mapsto] X  \ar[r,mapsto] & X\times_B (B-Z).
\end{tikzcd}
\]

\begin{remark}
The results in this section hold for the Zariski topology on $\Sm_{B,Z}$ 
(defined analogously to the Nisnevich topology on $\Sm_{B,Z}$).
\end{remark}

\begin{definition}
\label{definition:tf}
A \emph{$\tf$-covering} of $X^h_Z\in\SmBcZ$ is a morphism $\wX^h_Z\to X^h_Z$ induced by a $\tf$-covering $\wX\to X$ in $\Sm_B$.
\end{definition}

Alternatively, 
the $\tf$-topology on $\SmBcZ$ is the weakest topology $\tau$ for which we have a morphism of sites
\begin{equation}\label{eq:iBZ}
\begin{tikzcd}[row sep=-0.25em]
\SmBcZ^{\tau} \ar{r} & \SmB^{\tf} \\ 
\XhZ & \ar[l,mapsto] X.
\end{tikzcd}
\end{equation}

For $\sigma\in B$, 
we write $B_\sigma$ for the local scheme of $B$ at $\sigma$ and $\Sm_{B,\sigma}$ for $\Sm_{B_\sigma,\sigma}$.

\begin{lemma}
\label{lmSmBcsigmatftopologytrivial}
For $\sigma\in B$, and $X\in \Sm_{B}$,
any $\tf$-covering $v\colon \widetilde{X^h_Z}\to X^h_Z$ in $\Sm_{B,Z}$ admits a section over $X^h_\sigma$, i.e., there is a morphism $X^h_\sigma\to \widetilde{X^h_Z}$ that composition with $v$ equals the morphism $X^h_\sigma\to X^h_Z$.
\end{lemma} 
\begin{proof}
It follows from the definition that the $\tf$-topology on $\Sm_{B,Z}$ is defined by 
the cd-structure formed by the images along the functor $\Sm_B\to \Sm_{B,Z}$ of the 
$\tf$-squares \eqref{equation:Nistfsquare} for all closed immersions $Z\not\hookrightarrow B$.
Given $Z$,
applying the functor $\Sm_B\to \Sm_{B,Z}$ yields a cartesian square:
\[
\xymatrix{
{X^\prime}^h_Z \times_{B,Z} (B-Z)\ar[r]\ar[d] & {X^\prime}^h_Z\ar[d]\\ 
X^h_\sigma\times_{B,Z} (B-Z)\ar[r] & X^h_Z 
}
\]
We are going to construct a section of the morphism 
\begin{equation}
\label{eq:tfsquarehZcovering}
{X^\prime}^h_\sigma\amalg X^h_\sigma\times_{B,\sigma} (B-Z)
\to 
X^h_\sigma.
\end{equation}
If $\sigma\not\in Z$, 
then $X_\sigma\cap X_Z=\emptyset$ and consequently $X^h_\sigma\times_{B,\sigma} (B-Z)\cong X^h_\sigma$. 
Thus \eqref{eq:tfsquarehZcovering} admits a section.
If $\sigma\in Z$, 
then $X_\sigma\subset X_Z$,
and consequently $(X^h_Z)^h_\sigma\cong X^h_\sigma$.
Since $X^\prime\to X$ is affine \'etale and induces an isomorphism $X^\prime\times_B Z\cong X\times_B Z$, 
there is a canonical morphism $X^h_Z\to X^\prime$.
Now the composite 
\[
X^h_\sigma\cong (X^h_Z)^h_\sigma
\to 
(X^\prime)^h_\sigma
\]
induces a section to \eqref{eq:tfsquarehZcovering}.
This shows every $\tf$-covering in $\Sm_{B,\sigma}$ admits a section, 
and hence the $\tf$-topology is trivial.
\end{proof}

\begin{prop}
\label{lm:tf-topologically-XtoXhs}
The functor $\SmB\to \SmBcZ$ given by $X\mapsto \XhZ$ preserves coverings and points for the $\tf$-topology.
\end{prop}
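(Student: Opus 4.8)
The plan is to imitate the Nisnevich argument of \Cref{lm:topologically-XtoXhs}, with \Cref{definition:tf} and \Cref{lmSmBcsigmatftopologytrivial} taking over the role that \Cref{proposition:NissimeqWNisSmBcZ} played there. Write $u\colon\SmB\to\SmBcZ$ for the functor $X\mapsto\XhZ$. That $u$ preserves $\tf$-coverings is immediate from \Cref{definition:tf}, since by fiat the $\tf$-coverings of $\XhZ$ are exactly the morphisms $u(\wX)\to u(X)$ attached to $\tf$-coverings $\wX\to X$ in $\SmB$. To upgrade this to the statement that $u$ is a morphism of sites I would in addition check that $u$ is continuous, namely compatible with the fibre products occurring in $\tf$-coverings: since the structure map $\varphi$ of a $\tf$-square \eqref{equation:Nistfsquare} is affine étale one has $\wX^h_Z\cong\wX\times_X\XhZ$, and base change of étale morphisms along $X\to\XhZ$ commutes with fibre products over $X$, so $u(\wX\times_X\wX')\cong u(\wX)\times_{u(X)}u(\wX')$. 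The same computation shows $u$ carries $\tf$-squares to $\tf$-squares, as in the proof of \Cref{lmSmBcsigmatftopologytrivial}.

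For the assertion about points the key observation is that $u$ is moreover \emph{cocontinuous} for the $\tf$-topologies: by \Cref{definition:tf} any $\tf$-covering of $u(X)=\XhZ$ is the $u$-image $u(\wX)\to u(X)$ of a $\tf$-covering $\wX\to X$ of $X$, which maps under $u$ to the given covering and in particular to a refinement of it. A cocontinuous functor induces a morphism of topoi $\widetilde{\SmB}^{\tf}\to\widetilde{\SmBcZ}^{\tf}$, and composition of geometric morphisms sends points to points; this already gives the proposition. To make the induced map on points explicit, I would then trace through the conservative family of \Cref{prop:Proptftop}(iv),(vii): the $\tf$-point of $\SmB$ attached to $X\in\SmB$ and $\sigma\in B$ is $\calF\mapsto\calF(X^h_\sigma)$, and $u$ carries it to the point of $\SmBcZ$ represented by the filtered system of affine étale $\tf$-neighbourhoods ${X^\prime}^h_Z\to\XhZ$ of the fibre over $\sigma$, with stalk functor $\calG\mapsto\varinjlim_{X^\prime}\calG({X^\prime}^h_Z)$. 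Pulling such a neighbourhood back along the canonical morphism $(\XhZ)^h_\sigma\to\XhZ$ and invoking \Cref{lmSmBcsigmatftopologytrivial}, this system becomes cofinally the identity covering of $(\XhZ)^h_\sigma\cong X^h_\sigma$, so the stalk functor is exact and unaffected by $\tf$-sheafification and simply computes $\calG(X^h_\sigma)$ for $\calG$ regarded as a continuous presheaf on $\EssSmB$. Thus $u$ sends the $\tf$-point $(X,\sigma)$ of $\SmB$ to the $\tf$-point $(\XhZ,\sigma)$ of $\SmBcZ$, in parallel with \Cref{prop:Proptftop}(iv).

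The argument is essentially formal, so I do not expect a serious obstacle. The two non-bookkeeping inputs are: the compatibility of henselization along $X_Z$ with the fibre products of the étale morphisms appearing in $\tf$-squares (used for continuity), and the identification $(\XhZ)^h_\sigma\cong X^h_\sigma$, which is immediate because henselizing $X$ first along $X_Z$ and then along $X_\sigma$ agrees with henselizing $X$ along $X_\sigma$. If I had to single out the step needing most care, it would be verifying that the pushed-forward point is genuinely $\tf$-local on $\SmBcZ$, which is precisely where \Cref{lmSmBcsigmatftopologytrivial} enters.
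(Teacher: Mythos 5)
Your treatment of coverings matches the paper's (it is immediate from \Cref{definition:tf}), but for the points you take a genuinely different formal route. The paper never pushes the point forward from the topos of $\SmB$: using \Cref{prop:Proptftop}(vii) and the identifications $(X^h_\sigma)^h_Z\cong X^h_\sigma$ for $\sigma\in Z$ and $(X^h_\sigma)^h_Z=\emptyset$ for $\sigma\notin Z$, it reduces to showing that the object $X^h_\sigma$ itself is a $\tf$-point of $\SmBcZ$, and gets this from the morphism of $\tf$-sites $\SmBcZ\to\Sm_{B,\sigma}$ together with \Cref{lmSmBcsigmatftopologytrivial}: the $\tf$-topology on $\Sm_{B,\sigma}$ is trivial, so every object there is a point, and the site morphism detects points. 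You instead argue that $u$ is cocontinuous, push the point $(X,\sigma)$ forward along the induced geometric morphism of $\tf$-topoi, and then identify the resulting stalk functor with $\calG\mapsto\calG(X^h_\sigma)$. This can be made to work, but it buys you two obligations the paper's detour avoids: (a) cocontinuity is not quite ``by fiat'' --- the $\tf$-topology on $\SmBcZ$ is only \emph{generated} by the images of $\tf$-coverings, and the object $\XhZ$ does not remember $X$, so you must check that coverings arising from another $Y$ with $Y^h_Z\cong\XhZ$, and composites and base changes of generating coverings, are still refined by $u$-images of coverings of $X$ itself; (b) the pushed-forward stalk is $\calG\mapsto\varinjlim_{X'}\calG({X'}^h_Z)$ over \'etale neighbourhoods $X'$ of $X_\sigma$, and identifying this with evaluation at $u(X^h_\sigma)$ is precisely the pro-system cofinality that the paper packages once and for all by passing to $\Sm_{B,\sigma}$, where triviality of the topology makes ``every object is a point'' immediate.

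One intermediate claim is false as stated: for the \'etale leg $\varphi\colon\wX\to X$ of a $\tf$-square one does \emph{not} have $\wX^h_Z\cong\wX\times_X\XhZ$. Since $\wX$ is an affine \'etale neighbourhood of $X_Z$, \Cref{lm:pro-etale:lift} gives $\wX^h_Z\cong\XhZ$, whereas $\wX\times_X\XhZ$ is \'etale over the henselian pair $(\XhZ,X_Z)$ and therefore splits as $\XhZ\amalg R$ with $R\times_B Z=\emptyset$; the extra component $R$ is nonempty already for $\wX=X\amalg(X-X_Z)$. This does not sink the argument --- the identities you actually need, such as $u(\wX\times_X\wX')\cong u(\wX)\times_{u(X)}u(\wX')$, survive because henselizing along $Z$ kills the spurious components on both sides --- but the continuity check must be routed through the henselization rather than the naive base change.
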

\begin{proof}
The first claim holds by definition.
To prove the second claim we use that every $\tf$-point of $\Sm_B$ is of the form $X^h_\sigma$ for some $X\in\Sm_{B}$, 
$\sigma\in B$, 
see \Cref{prop:Proptftop}(vii).
If $\sigma\not\in Z$, 
then $X_\sigma\cap X_Z=\emptyset$ and $(X^h_\sigma)^h_Z=\emptyset$.
If $\sigma\in Z$ we have $(X^h_\sigma)^h_Z\cong X^h_\sigma$. 
It remains to check that $X^h_\sigma$ defines a $\tf$-point of $\Sm_{B,Z}$.
By \Cref{lmSmBcsigmatftopologytrivial}, 
the claim follows. 
\end{proof}

\section{Reduction to smooth affine schemes with trivial vector bundles} 
\label{section:ReductionSmat}

Let \index{Categories of $B$-schemes!$\SmatB$} $\SmatB$ denote the subcategory of $\SmAff_B$
spanned by schemes $X$ such that $T_X\cong\calO_X^n$ for some $n\in \mathbb Z_{\geq 0}$.
Here ``cci'' refers to ``clopen subschemes of complete intersections.''
$\SmatBcZ$ is the subcategory of $\EssSm_B$ spanned by the schemes $X_Z^h$ for all $X\in\Smat_{B}$.

In what follows, 
we consider the injective model structure \index{Model categories!$\Spc_s(\Sm_B)$} $\Spc_s(\Sm_B)$ of simplicial presheaves on $\Sm_B$.
The cofibrations are the monomorphisms, 
and the equivalences are sectionwise equivalences \cite[\S5.1]{Jardine-local}.
There exist fibrant replacement functors in the model structures %
\index{Model categories!$\Spc_{s,\nis}(\Sm_B)$}
\index{Model categories!$\Spc_{s,\nis}(\Sm_B)$}
\index{Model categories!$\Spc^{\A^{1}}_{s,\triv}(\Sm_B)$}
$\Spc_{s,\nis}(\Sm_B)$, 
$\Spc_{s,\tf}(\Sm_B)$, 
and $\Spc^{\A^{1}}_{s,\triv}(\Sm_B)$.
On the associated homotopy category $\HHtriv(\Sm_B)$, 
we have the corresponding Nisnevich, $\tf$, and $\A^{1}$-localization endofunctors
\[
\Lrep_\nis, \Lrep_\tf, \Lrep_{\A^{1}}\colon \HHtriv(\Sm_B)\to \HHtriv(\Sm_B).
\]
The subcategories $\HH_{\nis}(\Sm_B)$, $\HH_{\tf}(\Sm_B)$, $\HH_{\A^{1}}(\Sm_B)$ of $\HHtriv(\Sm_B)$ are spanned by 
Nisnevich local, $\tf$-local, and $\A^{1}$-local objects. 
These categories arise as Bousfield localizations of $\Spc_s(\Sm_B)$,
with respect to Nisnevich local equivalences, 
$\tf$-local equivalences, and $\A^{1}$-equivalences.
The Morel-Voevodsky motivic homotopy category $\HH_{\mot}(\Sm_B)$ is equivalent to 
$\HH_{\nis}(\Sm_B)\cap \HH_{\A^{1}}(\Sm_B)$ \cite[\S7.2]{Jardine-local}, 
\cite{Morel-Voevodsky}.
Similarly, 
we have $\HHtriv(\SmBcZ)$ and $\HHtriv(\SmatBcZ)$ and the said Bousfield localizations.

Since any scheme in $\SmBcZ$ has a Zariski covering in $\SmatBcZ$,
the categories of Zariski sheaves on $\SmBcZ$ and $\SmatBcZ$
are equivalent, 
and consequently,
there is an equivalence between motivic homotopy categories
\begin{equation*}
\HH_\mot(\SmBcZ)\simeq \HH_\mot(\SmatBcZ).
\end{equation*} 
The aim of this section is to show a refined adjunction before applying motivic localization 
see \Cref{prop:AffSmScZSmat,prop:SmAffSmniseqivalence}.
Recall that $\Lrep_{\A^{1}}^{[1]}=\calF(\Delta^\bullet_{B,Z}\times_{B,Z}-)_f$, 
where $(-)_f$ is the fibrant replacement, 
cf.~\eqref{eq:LAl->LA} for the $\A^{1}$-localization endofunctor $\Lrep_{\A^{1}}$ on $\HHtriv(\SmBcZ)$.
Here $\times_{B,Z}$ is defined in \eqref{eq:timesBZ},  
$\A^{1}_{B,Z}$ is shorthand for $(\A^{1}_B)^h_Z$ as in \eqref{eq:proafhenselizationXhY}, 
and $\Delta^\bullet_{B,Z}$ is the cosimplicial scheme defined in Section \ref{equation:cosimplicial}.
We want to study the adjunctions 
\begin{equation}
\label{eq:H(Smat)H(AffSm)H(Sm)}
\begin{tikzcd}
\HH_{\Nis}(\SmatBcZ)
\ar[r,shift left,"l^\cci",shorten <= 0.3em,shorten >= 0.3em] & 
\HH_{\Nis}(\SmAffScZ)
\ar[l,shift left,"r^\cci",shorten <= 0.3em,shorten >= 0.3em]
\ar[r,shift left,"l^{\aff}",shorten <= 0.3em,shorten >= 0.3em] & 
\HH_{\Nis}(\SmBcZ), 
\ar[l,shift left, "r^{\aff}",shorten <= 0.3em,shorten >= 0.3em]
\end{tikzcd}
\end{equation}
and the behavior of functors $r^\cci$ and $r^{\aff}$ with respect to the $\A^{1}$-local objects. 
In \eqref{eq:H(Smat)H(AffSm)H(Sm)} the right adjoints $r^\aff$ and $r^\mathrm{cci}$ are restriction functors. 
Moreover, 
$l^\aff$ and $l^\mathrm{cci}$ are defined as left Kan extensions along the full embeddings
\begin{equation}
\label{eq:SmatAffSmSm}
\SmatBcZ\to \SmAffScZ\to \SmBcZ.
\end{equation}
In what follows we show that the unit maps in \eqref{eq:H(Smat)H(AffSm)H(Sm)} are natural isomorphisms.
Owing to \Cref{lm:adjunctioninunitconservative},
whose proof is left to the reader, 
it remains to show that the right adjoints $r^\aff$ and $r^\mathrm{cci}$ 
are conservative.

\begin{lemma}
\label{lm:adjunctioninunitconservative}
Suppose 
\begin{equation}
\label{equation:FGadjunction}
F\colon \mathcal C\xrightleftarrows{1em} \mathcal D\colon G
\end{equation}
is an adjunction, 
where the unit $\eta\colon \id_{\mathcal C}\to G\circ F$ is a natural isomorphism and the 
right adjoint functor $G$ is conservative. 
Then \eqref{equation:FGadjunction} is an equivalence of categories.
\end{lemma}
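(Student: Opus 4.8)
The plan is to deduce that the counit $\varepsilon\colon F\circ G\to \id_{\mathcal D}$ of the adjunction \eqref{equation:FGadjunction} is a natural isomorphism. Once this is known, the quadruple $(F,G,\eta,\varepsilon)$ is an adjoint equivalence, since by hypothesis the unit $\eta$ is already a natural isomorphism; hence $F$ and $G$ are mutually inverse equivalences and \eqref{equation:FGadjunction} is an equivalence of categories.

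First I would invoke the triangle identities for $F\dashv G$, in particular the one for $G$, namely $G\varepsilon\circ \eta G=\id_G$. Since $\eta$ is a natural isomorphism, the whiskered natural transformation $\eta G\colon G\to G\circ F\circ G$ is a natural isomorphism as well. The triangle identity then forces $G\varepsilon=(\eta G)^{-1}$, so $G\varepsilon$ is a natural isomorphism; concretely, for every object $d\in\mathcal D$ the morphism $G(\varepsilon_d)\colon G(F(G(d)))\to G(d)$ is an isomorphism in $\mathcal C$.

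Now the conservativity of $G$ is used: as $G$ reflects isomorphisms and $G(\varepsilon_d)$ is an isomorphism, $\varepsilon_d$ itself is an isomorphism in $\mathcal D$, for every $d$. Thus $\varepsilon$ is a natural isomorphism, and the argument concludes as in the first paragraph. The reasoning is entirely formal, so there is no genuine obstacle; the only point requiring a little care is bookkeeping the variances of the whiskered transformations $\eta G$ and $G\varepsilon$ and selecting the triangle identity that involves $G$ rather than $F$, since it is the one whose constituents become invertible under the hypothesis on $\eta$.
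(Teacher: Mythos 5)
Your proof is correct, and since the paper explicitly leaves this lemma's proof to the reader, your argument supplies exactly the standard one that is intended: whisker the triangle identity $G\varepsilon\circ\eta G=\id_G$, use invertibility of $\eta$ to conclude $G\varepsilon$ is invertible, and then apply conservativity of $G$ to invert $\varepsilon$. Nothing further is needed.
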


\begin{lemma}
\label{lm:SmSmatZarCov}
If $B$ is an affine scheme, 
then every $X\in\SmBcZ$ admits a Zariski covering by schemes in $\SmatBcZ$.
Thus if $X\in \SmAffScZ$ (resp.~$X\in \SmatBcZ$), 
then every Nisnevich covering of $X$ in $\SmBcZ$ admits a refinement by schemes in $\SmAffScZ$ (resp.~$\SmatBcZ$).
\end{lemma}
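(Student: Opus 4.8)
The plan is to reduce both statements to the classical local structure of smooth morphisms over an affine base --- namely that any $W\in\Sm_B$ is covered by affine opens on which the relative tangent sheaf is free --- and then to transport this through the functor $X\mapsto X^h_Z$ of \eqref{eq:proafhenselizationXhY}, using the description of coverings on $\SmBcZ$ from \Cref{proposition:NissimeqWNisSmBcZ} together with its Zariski analogue (the remark following \Cref{lm:topologically-XtoXhs}).

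For the first assertion, write $X=W^h_Z$ with $W\in\Sm_B$. Since $W\to B$ is smooth, $\Omega_{W/B}$ is locally free of finite rank, hence so is its $\calO_W$-linear dual $T_{W/B}$; as $W$ is quasi-compact there is a finite Zariski covering $\{W_i\to W\}_{i=1}^m$ by affine opens on which $T_{W/B}$ is free. Because $B$ is affine, each $W_i$ is an affine scheme of finite type over $B$ and therefore admits a closed immersion into some $\A^{n_i}_B$; thus $W_i\in\SmAff_B$, and since moreover $T_{W_i/B}\cong\calO_{W_i}^{\,d_i}$, in fact $W_i\in\Sm_{B}^\cci$, so $(W_i)^h_Z\in\SmatBcZ$. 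The functor $X\mapsto X^h_Z$ commutes with finite coproducts and carries open immersions in $\Sm_B$ to morphisms in $\SmBcZ$, so it sends the Zariski covering $\coprod_i W_i\to W$ in $\Sm_B$ to a Zariski covering $\coprod_i (W_i)^h_Z\to W^h_Z$ of $X$ in $\SmBcZ$, whose members lie in $\SmatBcZ$.

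For the second assertion I would proceed as follows, treating the case $X\in\SmAffScZ$ (the case $X\in\SmatBcZ$ is identical and yields a refinement already in $\SmatBcZ$). Write $X=W^h_Z$ with $W\in\SmAff_B$. By \Cref{proposition:NissimeqWNisSmBcZ}, any Nisnevich covering of $X$ in $\SmBcZ$ is a weak Nisnevich covering, hence is induced by a Nisnevich covering $\wX\to W$ in $\Sm_B$; in particular $\wX\in\Sm_B$. Applying the first assertion to $\wX$ gives a finite Zariski covering $\{\wX_i\to\wX\}$ with $\wX_i\in\Sm_{B}^\cci$, so $(\wX_i)^h_Z\in\SmatBcZ\subseteq\SmAffScZ$. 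The composites $(\wX_i)^h_Z\to\wX^h_Z\to X$ then refine the given covering $\wX^h_Z\to X$, and $\coprod_i(\wX_i)^h_Z\to X$ is itself a Nisnevich covering of $X$ in $\SmBcZ$, being the composite of the Zariski covering $\coprod_i(\wX_i)^h_Z\to\wX^h_Z$ with the Nisnevich covering $\wX^h_Z\to X$ and using stability of coverings under composition.

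The argument is essentially bookkeeping, and the one point requiring a little care --- the step I expect to be the only real obstacle, minor as it is --- is the compatibility of $X\mapsto X^h_Z$ with the combinatorics of coverings: that a finite affine Zariski covering of $W\in\Sm_B$ induces a Zariski covering of $W^h_Z$ in $\SmBcZ$, and dually that restriction along $Z\not\hookrightarrow B$ turns such a covering back into a Zariski (hence Nisnevich) covering in $\Sm_Z$. This is precisely what \Cref{proposition:NissimeqWNisSmBcZ} and its Zariski counterpart encode, so beyond the classical statement about smooth morphisms no further geometric input is required.
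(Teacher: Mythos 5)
Your proof is correct and follows essentially the same route as the paper: first cover $W\in\Sm_B$ by affine opens with trivial relative tangent bundle (possible since $B$ is affine), then transport the covering through $X\mapsto X^h_Z$, and for the Nisnevich statement use that coverings in $\SmBcZ$ are induced from $\Sm_B$ (\Cref{proposition:NissimeqWNisSmBcZ}) before refining via the first part. The paper's own argument is terser but identical in substance.
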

\begin{proof}
First, 
we consider the case when $\Sm_{B,Z}=\Sm_B$.
By the assumption on $B$ every $X\in\Sm_{B}$ admits a Zariski covering by affine $B$-schemes.
Since the tangent bundle of a smooth scheme is locally trivial, 
we can refine the Zariski covering by schemes in $\Smat_{B}$.
In the general case, let $X^h_Z\in\Sm_{B,Z}$, where $X\in \Sm_B$. 
The previous case implies there is a Zariski covering and hence a Nisnevich covering $Y\to X$, 
where $Y\in \Smat_B$.
The naturally induced morphism $Y^h_Z\to X^h_Z$ is a Zariski covering and a Nisnevich covering in $\Sm_{B,Z}$ by definition.
\end{proof}

By \eqref{eq:productXhZShZYhZ} and \Cref{lm:prodSmAffSmatBcZ} 
the fibre product in $\SmBcZ$ restricts to the ones in $\SmAffBcZ$, and $\SmatBcZ$,
and consequently,
the Nisnevich topology and $\tf$-topology on $\SmBcZ$ restrict to 
the ones on $\SmAffBcZ$, and $\SmatBcZ$.
\begin{lemma}
\label{lm:raffrcciLA1LNisLtf}
If $B$ is an affine scheme then the right adjoint functors $r^\aff$ and $r^\cci$ commute with the 
localization endofunctors $\Lrep_{\nis}$, $\Lrep_{\tf}$, $\Lrep_{\A^{1}}$.
\end{lemma}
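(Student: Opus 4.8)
The plan is to verify that the right adjoints $r^\aff$ and $r^\cci$ satisfy the hypotheses of a ``two-out-of-three'' style localization argument, namely that they preserve both the class of local objects and the class of local equivalences for each of the three topologies. First I would recall that the localization endofunctors $\Lrep_\nis$, $\Lrep_\tf$, $\Lrep_{\A^1}$ are the fibrant-replacement functors for the corresponding injective model structures on $\Spc(-)$, so that $\Lrep_\tau$ commutes with a given functor precisely when that functor is right Quillen for the $\tau$-local structures and the comparison map on underlying sectionwise-homotopy types is an equivalence. Since $r^\aff$ and $r^\cci$ are restriction along the fully faithful inclusions in \eqref{eq:SmatAffSmSm}, they are automatically right adjoints, and their left adjoints $l^\aff$, $l^\cci$ (left Kan extension) preserve monomorphisms; so the only point is to check that $l^\aff$ and $l^\cci$ preserve $\nis$-, $\tf$-, and $\A^1$-local equivalences, equivalently that $r^\aff$ and $r^\cci$ preserve the respective local objects.

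For the $\A^1$-case this is essentially formal: the endofunctor $\Lrep_{\A^1}$ on each category sends $\calF$ to $\calF(\Delta^\bullet_{B,Z}\times_{B,Z}-)$ by the description preceding \eqref{eq:H(Smat)H(AffSm)H(Sm)}, and the inclusions in \eqref{eq:SmatAffSmSm} are compatible with the cosimplicial object $\Delta^\bullet_{B,Z}$ and with the monoidal product $\times_{B,Z}$ (a product of affine, or trivial-tangent-bundle, schemes over $B$ along $Z$ stays in the subcategory, cf.\ \eqref{equation:cosimplicial}). Hence restriction commutes with the formula for $\Lrep_{\A^1}$ on the nose, and a fortiori with $\Lrep_{\A^1}$ up to equivalence. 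For the $\nis$- and $\tf$-cases I would invoke \Cref{lm:SmSmatZarCov}: since $B$ is affine, every object of $\SmBcZ$ admits a Zariski (hence Nisnevich, hence $\tf$) covering by objects of $\SmatBcZ$, and every covering of an object of the smaller category refines to one by objects of the smaller category. This says the inclusions \eqref{eq:SmatAffSmSm} are dense (cocontinuous) for the $\nis$- and $\tf$-topologies and induce equivalences on the associated topoi; standard comparison-of-sites theory (as in \cite[Tag 00XF]{StacksProject}) then gives that restriction commutes with $\nis$- and $\tf$-sheafification, and because these are completely decomposable $cd$-topologies with the same bounded density structure (see \Cref{prop:cdstructurecompleteregularbounded}) on both categories, the descent-spectral-sequence computation of $\Lrep_\nis$ and $\Lrep_\tf$ is computed by the same covering data and therefore is preserved by $r^\aff$, $r^\cci$.

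The main obstacle I anticipate is the bookkeeping needed to pass from ``the inclusion induces an equivalence of $\tau$-topoi'' to ``restriction commutes with the derived localization endofunctor $\Lrep_\tau$'' uniformly in the three topologies; this requires knowing that the local model structures on the two categories are Quillen equivalent via $(l^\tau, r^\tau)$ and that $r^\tau$ sends fibrant objects to fibrant objects. For $\nis$ and $\tf$ this follows from the coverage comparison just described together with the $cd$-structure formalism of \cite{VV:cd}; for $\A^1$ it follows from the explicit Suslin-complex formula. One should also check the compatibility of the three localizations with each other is not needed here — the lemma only asserts commutation with each $\Lrep$ separately — so no interaction between the topologies enters, which keeps the argument modular.
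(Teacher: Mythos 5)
Your overall strategy matches the paper's: handle $\Lrep_{\A^1}$ by observing that the cylinders $\A^1\times(-)$, $\A^1_{B,Z}\times_{B,Z}(-)$ and the cosimplicial objects of \eqref{equation:cosimplicial} commute with the embeddings \eqref{eq:SmatAffSmSm}, and handle $\Lrep_\nis$, $\Lrep_\tf$ by showing the restriction functors preserve local objects and that coverings of objects of the smaller categories refine to coverings inside those categories. Two points, however, need repair.

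First, a genuine gap in the $\tf$ case: you write that every object admits a ``Zariski (hence Nisnevich, hence $\tf$) covering'' and invoke \Cref{lm:SmSmatZarCov} for the $\tf$ refinement property. But the $\tf$-topology is \emph{coarser} than the Nisnevich topology, not finer: a Zariski or Nisnevich covering is in general not a $\tf$-covering (see \Cref{ex:tftopcovering}(ii)), and \Cref{lm:SmSmatZarCov} only gives the refinement statement for Nisnevich coverings. So your claim that the inclusions are dense/cocontinuous for $\tf$ and induce equivalences of $\tf$-topoi is unsupported as stated. The $\tf$ case needs its own argument: a $\tf$-covering of $X\in\SmAffScZ$ is generated by $\tf$-squares, whose constituents $X\times_B(B-Z(f))$ for $f\in\calO(B)$ and affine \'etale neighborhoods of $X\times_B Z(f)$ lie in (or refine into) $\SmAffScZ$; for $r^\cci$ one additionally uses that an \'etale scheme over a scheme with trivial tangent bundle again has trivial tangent bundle, so Nisnevich and $\tf$-coverings of objects of $\SmatBcZ$ already live in $\SmatBcZ$. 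This is exactly the extra step the paper supplies.

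Second, a logical understatement: preservation of local objects by $r^\aff$, $r^\cci$ is \emph{not} by itself equivalent to commutation with $\Lrep_\tau$. It gives that $r(\Lrep_\tau\calF)$ is $\tau$-local, but you must also know that $r\calF\to r(\Lrep_\tau\calF)$ is a $\tau$-local equivalence in the smaller site; a right Quillen functor preserves equivalences only between fibrant objects. The refinement-of-coverings property is precisely what delivers this second half (local equivalences over objects of the subcategory are detected by covering data that refines into the subcategory), so your later paragraph does contain the needed ingredient for the Nisnevich case --- but the reduction announced in your first paragraph should not be phrased as ``the only point.''
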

\begin{proof}
As explained above the lemma
the functors in \eqref{eq:SmatAffSmSm} preserve fiber products, 
and also 
Nisnevich and $\tf$-coverings by definition.
Hence the right adjoints $r^\aff$ and $r^\cci$ preserve Nisnevich local objects.
Owing to \Cref{lm:SmSmatZarCov}, 
if $X\in\SmAffScZ$, 
then every Nisnevich covering $Y\to X$ in $\SmBcZ$ admits a refinement in $\SmAffScZ$.
This shows that $r^\aff$ commutes with $\Lrep_\nis$.
If $f\in \mathcal O(B)$ is a regular function, 
then $X\times_B (B-Z(f))\in\SmAffSmZ$ and every {\'e}tale neighborhood of $X\times_B Z(f)$ admits 
a refinement in $\SmAffScZ$.
This shows every $\tf$-covering of $X$ admits a refinement in $\SmAffScZ$.
It follows that $r^\aff$ commutes with $\Lrep_{\tf}$.
If $Y\to X$ is an {\'e}tale morphism in $\SmAffBcZ$ and $X\in \SmatBcZ$, then $Y\in \SmatBcZ$. 
In particular, 
this applies to Nisnevich and $\tf$-coverings,  
and thus $r^\cci$ commutes with $\Lrep_\nis$ and $\Lrep_\tf$.

By \Cref{lm:prodSmAffSmatBcZ}
the endofunctor $X^h_Z\mapsto\A^{1}_{B,Z}\times_{B,Z} X^h_Z$ on $\SmBcZ$, see \eqref{eq:timesBZ}, 
preserves the subcategories
$\SmAffBcZ$ and $\SmatBcZ$.
The latter cylinders commute with $\SmAffScZ\to \SmBcZ$ and $\SmatBcZ\to \SmAffScZ$ in \eqref{eq:SmatAffSmSm}.
By reference to Section \ref{equation:cosimplicial} the same holds for the endofunctors $\Delta^n_{B}\times -$ and 
$\Delta^n_{B,Z}\times -$.
It follows that $\Lrep_{\A^{1}}$ commutes with $r^\aff$ and $r^\cci$.
\end{proof}

\begin{lemma}
\label{lm:XSmXprimeSmatreatect}
If $B$ is an affine scheme then every $X\in\SmAffScZ$ is a retract of some $V\in\SmatBcZ$.
\end{lemma}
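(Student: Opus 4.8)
The plan is to exhibit $X$ as a retract \emph{of schemes}, via the total space of a normal bundle, and then transport this retraction through the henselization functor $W\mapsto W^h_Z$.

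First I would reduce to the following scheme-level statement: every $X_0\in\SmAff_B$ is a retract in $\SmAff_B$ of some $V_0\in\Sm_B^\cci$. Granting this, the lemma follows at once: the assignment $\Sm_B\to\Sm_{B,Z}$, $W\mapsto W^h_Z$, is a functor and therefore preserves retracts; every object of $\SmAffScZ$ has the form $(X_0)^h_Z$ with $X_0\in\SmAff_B$, and $(V_0)^h_Z\in\SmatBcZ$ whenever $V_0\in\Sm_B^\cci$, so applying the functor to a retraction $X_0\rightleftarrows V_0$ produces the desired retraction of $X=(X_0)^h_Z$ onto an object of $\SmatBcZ$.

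To prove the scheme-level statement I would argue as follows. Since $B$ is affine and $X_0\in\SmAff_B$, the scheme $X_0$ is affine and carries a closed immersion $\iota\colon X_0\hookrightarrow\A^N_B$ for some $N$. As $X_0$ and $\A^N_B$ are smooth over $B$, the immersion $\iota$ is regular, its conormal sheaf $\calN^\vee$ is locally free, and the conormal sequence $0\to\calN^\vee\to\calO_{X_0}^N\to\Omega_{X_0/B}\to 0$ is exact; it splits because $X_0$ is affine, so that dualizing gives $\calN\oplus T_{X_0/B}\cong\calO_{X_0}^N$. Let $\pi\colon V_0\to X_0$ be the total space of the vector bundle with sheaf of sections $\calN$, with zero section $s\colon X_0\to V_0$; then $\pi\circ s=\id_{X_0}$, so $X_0$ is a retract of $V_0$. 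The scheme $V_0$ is the total space of a vector bundle over an affine scheme, hence affine, and since $\calN^\vee$ is a quotient of $\calO_{X_0}^N$ it admits a closed immersion into $X_0\times_B\A^N_B$ and so into an affine space $\A^M_B$; thus $V_0\in\SmAff_B$. Finally $T_{V_0/X_0}\cong\pi^*\calN$, and as $V_0$ is affine the relative tangent sequence $0\to\pi^*\calN\to T_{V_0/B}\to\pi^*T_{X_0/B}\to 0$ splits, whence $T_{V_0/B}\cong\pi^*(\calN\oplus T_{X_0/B})\cong\pi^*\calO_{X_0}^N=\calO_{V_0}^N$. Hence $V_0$ has trivial relative tangent bundle, i.e.\ $V_0\in\Sm_B^\cci$, as required.

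There is no essential difficulty here — this is the classical observation that the total space of the normal bundle of an affine scheme has trivial tangent bundle and retracts onto it. The two points to handle with a little care are the splittings of the two short exact sequences above, each of which follows from the vanishing of the relevant $\Ext^1$ over an affine scheme ($X_0$, respectively $V_0$); and the hypothesis that $B$ is affine, which enters solely to guarantee that $X_0$ itself is affine (not merely affine over $B$), so that the conormal sequence splits.
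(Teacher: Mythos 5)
Your proof is correct and rests on the same geometric construction as the paper's: take the total space of a vector bundle complementary to the relative tangent bundle and retract onto the base via the zero section and projection. The paper chooses a bundle $\xi$ directly on the essentially smooth scheme $X\in\SmAffScZ$ with $T_X\oplus\xi$ trivial and forms its total space $V$; it then leaves unaddressed the (not entirely formal) point that this $V$ actually lies in $\SmatBcZ$, i.e.\ that $V\cong W^h_Z$ for some $W\in\Smat_B$, which requires descending $\xi$ and the trivialization to a finite stage of the cofiltered limit defining $X$. Your reorganization handles this in the cleanest possible place: prove the retraction at the level of $\SmAff_B$ for $X_0$ with $X=(X_0)^h_Z$, producing $V_0\in\Sm_B^\cci$, and then transport it along the functor $W\mapsto W^h_Z$, which preserves retracts for purely formal reasons and visibly carries $\Sm_B^\cci$ into $\SmatBcZ$. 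That is a genuine tightening of the argument. Two small remarks: you use the concrete normal bundle $\calN$ of a chosen closed immersion $X_0\hookrightarrow\A^N_B$ where the paper works with an abstract stable complement $\xi$ of $T_X$; both are legitimate and both yield an object with trivial tangent bundle. And both of the short exact sequences you split do split because $X_0$, resp.\ $V_0$, is affine (here the hypothesis that $B$ is affine enters) and noetherian, so locally free sheaves correspond to finitely generated projective modules and the relevant $\Ext^1$ groups vanish, exactly as you note.
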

\begin{proof}
Since $X\in \SmAffScZ$ we may choose a vector bundle $\xi$ on $X$ such that $T_X\oplus \xi$ is trivial.
The total space of $\xi$ yields an $\A^{1}$-equivalence $p\colon V\to X$, 
where $T_{V}$ is trivial. 
We have $p\circ z=\id_X$, 
where $z\colon X\to V$ is the zero section.
By \Cref{lm:SmatBcZdefinitionsequivalence} $V\in\SmatBcZ$.
\end{proof}

\begin{remark}\label{rem:affsttrivandloctriv}
Any affine scheme with a stably trivial tangent bundle is $\A^{1}$-equivalent to a scheme with a trivial tangent bundle. 
Any affine local scheme has trivial tangent bundle.
\end{remark}

If $f$ is an endomorphism of a simplicial presheaf $\mathcal X$ on $\SmBcZ$, 
we can form the homotopy colimit
\begin{equation}\label{eq:XfcolimXfXf}
\mathcal X[f^{-1}]
\defeq
\hocolim(\mathcal X\xrightarrow{f}\mathcal X\xrightarrow{f} \cdots).
\end{equation}
Note that $f$ induces an equivalence on $\mathcal X[f^{-1}]$.
If $f$ is an equivalence on $\mathcal X$, 
then $\mathcal X\simeq \mathcal X[f^{-1}]$.
We use the same notation for an object $X\in \SmBcZ$ and its images in 
$\Spc_s(\SmBcZ)$ and 
$\HHtriv(\SmBcZ)$.

\begin{lemma}
\label{lm:retractSmH(Sm)}
Suppose $i\colon Y\not\hookrightarrow X$ is a closed immersion in $\SmBcZ$ and $p\colon X\to Y$ is a morphism 
such that $p\circ i=\id_Y$.
Then for $f:=i\circ p\colon X\to X$ there is a canonically induced isomorphism 
\begin{equation*}
\label{eq:Y->X[ip^-1]}
i_\infty
\colon 
Y
\xrightarrow{\cong} 
X[f^{-1}]
\end{equation*}
in $\HHtriv(\SmBcZ)$.
\end{lemma}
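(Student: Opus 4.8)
The plan is to identify $Y$ inside $X[f^{-1}]$ by producing mutually inverse maps in the homotopy category and checking one composite is the identity and the other is homotopic to the identity via the telescope structure.

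First I would record the elementary identities among $i$, $p$, and $f = i\circ p$: since $p\circ i = \id_Y$ we get $f\circ i = i\circ p\circ i = i$, and $p\circ f = p\circ i\circ p = p$, and consequently $f\circ f = i\circ p\circ i\circ p = i\circ p = f$, so $f$ is idempotent. These are strict equalities in $\SmBcZ$, not just up to homotopy. In particular, because $f$ is idempotent, every map in the telescope defining $X[f^{-1}]$ in \eqref{eq:XfcolimXfXf} is literally $f$ and each is idempotent; the transition maps stabilize immediately.

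Next I would construct the comparison map. The closed immersion $i\colon Y\to X$ together with the structure map $X\xrightarrow{f} X$ gives, since $f\circ i = i$, a map of sequences from the constant diagram $(Y \xrightarrow{\id} Y \xrightarrow{\id}\cdots)$ to $(X\xrightarrow{f}X\xrightarrow{f}\cdots)$; taking homotopy colimits yields $i_\infty\colon Y = \hocolim_{\id} Y \to X[f^{-1}]$. In the other direction, $p\colon X\to Y$ satisfies $p\circ f = p$, so it defines a map of sequences $(X\xrightarrow{f}X\xrightarrow{f}\cdots)\to (Y\xrightarrow{\id}Y\xrightarrow{\id}\cdots)$, hence a map $p_\infty\colon X[f^{-1}]\to Y$ on homotopy colimits. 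The composite $p_\infty\circ i_\infty$ is induced by $p\circ i = \id_Y$ on the constant diagram, so it is the identity on $Y$ — this is immediate from functoriality of $\hocolim$.

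The one genuine point — and where I expect to spend the effort — is showing $i_\infty\circ p_\infty \simeq \id_{X[f^{-1}]}$. The composite $i\circ p = f$ is precisely the shift map in the telescope, and on $\mathcal X[f^{-1}]$ the shift map is canonically equivalent to the identity; this is exactly the remark recorded just above the lemma, that $f$ induces an equivalence on $\mathcal X[f^{-1}]$, together with the observation that for a telescope $\hocolim(\mathcal X\xrightarrow{f}\mathcal X\xrightarrow{f}\cdots)$ the map induced by $f$ agrees with the canonical shift self-equivalence. So I would argue: $i_\infty\circ p_\infty$ is the map on $X[f^{-1}]$ induced by the endomorphism $i\circ p = f$ of $X$, which equals the shift, which is the identity in $\HHtriv(\SmBcZ)$. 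Combined with $p_\infty\circ i_\infty = \id_Y$, this shows $i_\infty$ is an isomorphism in $\HHtriv(\SmBcZ)$ with inverse $p_\infty$, completing the proof. The main obstacle is purely the bookkeeping of identifying ``the map induced by $f$ on the telescope'' with ``the shift,'' which is formal but must be spelled out carefully using the definition \eqref{eq:XfcolimXfXf}.
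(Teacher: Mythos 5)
Your proof is correct and follows essentially the same route as the paper's: construct $i_\infty$ and $p_\infty$ from the identities $f\circ i=i$ and $p\circ f=p$ (as the paper does via the induced maps of telescopes), observe $p_\infty\circ i_\infty=\id_Y$ by functoriality, and observe that $i_\infty\circ p_\infty$ is the map induced by $f$, which is an equivalence on $X[f^{-1}]$ by the remark preceding the lemma. The only cosmetic difference is that you phrase the last step as ``the shift equals the identity'' rather than ``$f$ induces an equivalence,'' but these amount to the same thing and both close the argument.
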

\begin{proof}
Since $f$ is an idempotent and $f\circ i=i$, 
there are naturally induced morphisms 
\[
i_\infty\colon Y\simeq Y[\mathrm{id}_Y^{-1}]\to X[f^{-1}],
\,\
p_\infty\colon X[f^{-1}]\to Y[\mathrm{id}_Y^{-1}]\simeq Y.
\]
By assumption the composite $p_\infty\circ i_\infty$ is the identity on $Y$.
Moreover,
the composite $i_\infty\circ p_\infty$ is induced by $f$, 
and hence it is an equivalence on $X[f^{-1}]$.
\end{proof}

\begin{lemma}
\label{lm:SmSmatconservative}
If $B$ is an affine scheme, 
then the restriction functor 
\[
r^\cci
\colon 
\HHtriv(\SmAffScZ)
\to 
\HHtriv(\SmatBcZ)
\]
is conservative. 
\end{lemma}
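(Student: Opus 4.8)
The plan is to deduce conservativity of $r^\cci$ directly from the honest retract of representables furnished by \Cref{lm:XSmXprimeSmatreatect}, with no appeal to descent. First I would recall the bookkeeping: a morphism is an isomorphism in $\HHtriv(\SmAffScZ)$ exactly when it is a sectionwise weak equivalence, and $r^\cci$, being precomposition with the full inclusion $\SmatBcZ\hookrightarrow\SmAffScZ$, sends sectionwise weak equivalences to sectionwise weak equivalences and hence descends to $\HHtriv$. Thus, representing a morphism $\phi\colon\calF\to\calG$ of $\HHtriv(\SmAffScZ)$ with $r^\cci\phi$ invertible by an actual map of simplicial presheaves, the hypothesis amounts precisely to the statement that $\phi(V)\colon\calF(V)\to\calG(V)$ is a weak equivalence for every $V\in\SmatBcZ$, and it remains to prove the same for every $X\in\SmAffScZ$.

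Next I would fix $X\in\SmAffScZ$ and invoke \Cref{lm:XSmXprimeSmatreatect} to obtain $V\in\SmatBcZ$ together with the zero section $z\colon X\not\hookrightarrow V$ and a projection $p\colon V\to X$ satisfying $p\circ z=\id_X$. Applying the contravariant presheaves $\calF,\calG$ and using naturality of $\phi$, the map $\phi(X)$ becomes a retract of $\phi(V)$ in the arrow category of simplicial presheaves, the retraction data being $\calF(p),\calF(z)$ and $\calG(p),\calG(z)$ — available because $z$ and $p$ are morphisms of $\SmAffScZ$, even though $X$ itself need not lie in $\SmatBcZ$. Since weak equivalences of simplicial sets are stable under retracts and $\phi(V)$ is a weak equivalence, $\phi(X)$ is one as well; as $X$ was arbitrary, $\phi$ is a sectionwise weak equivalence and therefore an isomorphism in $\HHtriv(\SmAffScZ)$.

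I could also package the retract more invariantly via \Cref{lm:retractSmH(Sm)}: with $f:=z\circ p\colon V\to V$ that lemma (applied over $\SmAffScZ$) identifies the representable $X$ with the telescope $V[f^{-1}]=\hocolim(V\xrightarrow{f}V\xrightarrow{f}\cdots)$ in the homotopy category, whence $\calF(X)\simeq\holim_n\calF(V)$ along $f^\ast$ and $\phi(X)$ is a homotopy limit of a tower of weak equivalences $\phi(V)$; this is the formulation I would prefer if one wants to avoid mentioning fibrant representatives at all. I do not anticipate a genuine obstacle here: all the geometric content is already contained in \Cref{lm:XSmXprimeSmatreatect}, and the only point requiring a little care is the routine identification of isomorphisms in $\HHtriv$ with sectionwise weak equivalences, which is exactly what licenses running the retract argument on strict representatives.
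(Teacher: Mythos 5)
Your argument is correct, and it takes a genuinely different and more elementary route than the paper. The paper passes from the retract datum $p\circ z=\id_X$ of \Cref{lm:XSmXprimeSmatreatect} to the telescope $V[f^{-1}]$ of \Cref{lm:retractSmH(Sm)}, uses the adjunction $l^\cci\dashv r^\cci$ to transport hom-groups $[V[f^{-1}],-]$ across the restriction, and concludes from the resulting bijections $[X,\calF]\cong[X,\calG]$. You instead exploit that $z$ and $p$ are honest morphisms of $\SmAffScZ$ (which they are, since $\SmatBcZ$ is a full subcategory and both $X$ and $V$ live in $\SmAffScZ$), so that after replacing the target by a fibrant representative, $\phi(X)$ is a strict retract of $\phi(V)$ in the arrow category of simplicial sets and is therefore a weak equivalence. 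This buys two things: it avoids the telescope and the left Kan extension entirely, and it is arguably tighter at the final step, since you produce a sectionwise weak equivalence directly rather than inferring an isomorphism in $\HHtriv$ from hom-set bijections against representables (which strictly speaking requires the full mapping spaces, a point the paper elides). The only bookkeeping you rely on is the saturation of weak equivalences — an honest map that becomes invertible in the homotopy category is a weak equivalence — which is standard. One small quibble: your alternative packaging via $\calF(X)\simeq\holim_n\calF(V)$ does \emph{not} avoid fibrant representatives, since identifying the derived mapping space $\Map(V,\calF)$ with the section $\calF(V)$ already requires $\calF$ fibrant; but as this is an optional aside and your primary retract argument is complete, it does not affect the proof.
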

\begin{proof}
Suppose $\calF\to \mathcal G$ is a morphism in $\HHtriv(\SmAffScZ)$ such that $r^\cci$ induces an isomorphism
\begin{equation}
\label{eq:rccicalFrccicalG}
r^\cci(\calF)
\xrightarrow{\cong} 
r^\cci(\mathcal G)
\in
\HHtriv(\SmatBcZ).
\end{equation} 
\Cref{lm:XSmXprimeSmatreatect} shows that $X\in \SmBcZ$ is a retract of some $V\in\SmatBcZ$. 
Let $z\colon X\to V$ and $p\colon V\to X$ be morphisms with compositions $p\circ z=\id_X$ and $f=z\circ p$.
From \eqref{eq:rccicalFrccicalG} we deduce an isomorphism between hom groups
\begin{equation}
\label{equation:radjoint}
[V[f^{-1}],r^\cci(\calF)]_{\HHtriv(\Smat_{B,Z})}
\cong 
[V[f^{-1}],r^\cci(\mathcal G)]_{\HHtriv(\Smat_{B,Z})}.
\end{equation}
Applying the left adjoint $l^\cci\colon \HHtriv(\SmatBcZ)\to \HHtriv(\SmAffBcZ)$, 
which commutes with homotopy colimits,
yields
\[
l^{\cci}(V)
\simeq 
V
\in 
\HHtriv(\SmAffBcZ),
\,\,
l^{\cci}(V[f^{-1}])
\simeq 
V[f^{-1}]
\in 
\HHtriv(\SmAffBcZ).
\]
Combined with \eqref{equation:radjoint} we deduce the isomorphism 
\begin{equation*}
\label{eq:VfcalFcalG}
[V[f^{-1}],\calF]_{\HHtriv(\SmAff_{B,Z})}
\cong 
[V[f^{-1}],\mathcal G]_{\HHtriv(\SmAff_{B,Z})}.
\end{equation*}
Finally,
using the isomorphism $X\simeq V[f^{-1}]\in \HHtriv(\SmAffBcZ)$ from \Cref{lm:retractSmH(Sm)},
we get
\[
[X,\calF]_{\HHtriv(\SmAffScZ)}
\cong
[V[f^{-1}],\calF]_{\HHtriv(\SmAffScZ)} 
\cong
[V[f^{-1}],\mathcal G]_{\HHtriv(\SmAffScZ)}
\cong
[X,\mathcal G]_{\HHtriv(\SmAffScZ)}.
\]
It follows that $\calF\to \mathcal G$ is an isomorphism in $\HHtriv(\SmAffScZ)$.
\end{proof}

\begin{prop}
\label{prop:AffSmScZSmat}
If $B$ is an affine scheme, 
then the restriction functor 
\[
r^\cci
\colon 
\HHtriv(\SmAffScZ)
\to 
\HHtriv(\SmatBcZ)
\]
is an equivalence. 
Moreover, 
$r^\cci$ commutes with $\Lrep_\nis$, $\Lrep_\tf$, and $\Lrep_{\A^{1}}$. 
In particular, $r^\cci$ preserves and detects Nisnevich local and $\A^{1}$-local objects. 
\end{prop}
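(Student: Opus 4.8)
The plan is to deduce the statement formally from the lemmas already established in this section; no genuinely new geometric input is needed.

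First I would observe that $l^\cci$ is defined in \eqref{eq:H(Smat)H(AffSm)H(Sm)} as a left Kan extension along the fully faithful embedding $\SmatBcZ\to\SmAffScZ$ of \eqref{eq:SmatAffSmSm}, so the unit $\id\to r^\cci l^\cci$ of the adjunction $l^\cci\dashv r^\cci$ is a natural isomorphism, as already noted after \eqref{eq:SmatAffSmSm}. Combining this with the conservativity of $r^\cci$ from \Cref{lm:SmSmatconservative}, an application of \Cref{lm:adjunctioninunitconservative} yields that $r^\cci$ is an equivalence of categories, with quasi-inverse $l^\cci$. The commutation of $r^\cci$ with $\Lrep_\nis$, $\Lrep_\tf$, and $\Lrep_{\A^1}$ is then exactly \Cref{lm:raffrcciLA1LNisLtf}.

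For the last assertion I would argue as follows. For $\tau\in\{\nis,\A^1\}$, an object $\calF\in\HHtriv(\SmAffScZ)$ is $\tau$-local precisely when the canonical morphism $\calF\to\Lrep_\tau\calF$ is an isomorphism. Since $r^\cci$ is an equivalence and commutes with $\Lrep_\tau$, this morphism is an isomorphism if and only if $r^\cci(\calF)\to\Lrep_\tau(r^\cci\calF)$ is an isomorphism, i.e.\ if and only if $r^\cci(\calF)$ is $\tau$-local; hence $r^\cci$ preserves and detects Nisnevich local and $\A^1$-local objects.

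In this assembly there is essentially no obstacle left to overcome: the real difficulty was discharged earlier, in the proof of \Cref{lm:SmSmatconservative}, which depends on the retract-and-telescope argument of \Cref{lm:XSmXprimeSmatreatect,lm:retractSmH(Sm)} — namely, that after inverting the idempotent induced by a stabilizing vector bundle, every object of $\SmAffScZ$ becomes isomorphic in $\HHtriv$ to one coming from $\SmatBcZ$ via a zero section.
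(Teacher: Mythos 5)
Your proposal is correct and follows exactly the same route as the paper: the equivalence is deduced from \Cref{lm:adjunctioninunitconservative} and \Cref{lm:SmSmatconservative} (with the unit isomorphism coming from the left Kan extension along a fully faithful embedding), and the commutation with $\Lrep_\nis$, $\Lrep_\tf$, $\Lrep_{\A^1}$ is \Cref{lm:raffrcciLA1LNisLtf}. Your explicit derivation of the ``preserves and detects local objects'' clause from equivalence plus commutation fills in a step the paper leaves implicit under ``In particular,'' but otherwise the arguments coincide.
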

\begin{proof}
The first claim follows from \Cref{lm:adjunctioninunitconservative,lm:SmSmatconservative}.
To conclude for $\Lrep_\nis$, $\Lrep_\tf$, and $\Lrep_{\A^{1}}$ we use \Cref{lm:raffrcciLA1LNisLtf}.
\end{proof}

\begin{lemma}
\label{lm:SmSmaffA1surjectiveandpreserves}
If $B$ is an affine scheme, 
then the restriction functor 
\[
r^\aff\colon
\HHtriv(\SmBcZ)\to \HHtriv(\SmAff_{B,Z})
\]
preserves $\A^{1}$-local objects and it is essentially surjective on $\A^{1}$-local objects.
\end{lemma}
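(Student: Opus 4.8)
The plan is to deduce both statements formally from \Cref{lm:raffrcciLA1LNisLtf} together with the fact, recorded in the discussion of \eqref{eq:H(Smat)H(AffSm)H(Sm)}, that the unit $\id\to r^\aff l^\aff$ is a natural isomorphism because $l^\aff$ is the left Kan extension along the full embedding in \eqref{eq:SmatAffSmSm}. Throughout I will use that $\Lrep_{\A^1}$ is the $\A^1$-localization endofunctor, that an object is $\A^1$-local precisely when its unit map into $\Lrep_{\A^1}$ of it is invertible, and that $\Lrep_{\A^1}\calX\simeq\calX(\Delta^\bullet_{B,Z}\times_{B,Z}-)$, see \eqref{equation:cosimplicial}, is always $\A^1$-local.

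First I would show that $r^\aff$ preserves $\A^1$-local objects. By \Cref{lm:raffrcciLA1LNisLtf} the functor $r^\aff$ commutes with $\Lrep_{\A^1}$, and this commutation is compatible with the respective localization units since both sides are computed by the explicit formula $\calX\mapsto\calX(\Delta^\bullet_{B,Z}\times_{B,Z}-)$, on which restriction along $\SmAffScZ\to\SmBcZ$ acts in the evident way. Hence, for an $\A^1$-local $\calF\in\HHtriv(\SmBcZ)$, applying $r^\aff$ to the isomorphism $\calF\xrightarrow{\simeq}\Lrep_{\A^1}\calF$ identifies the unit $r^\aff\calF\to\Lrep_{\A^1}r^\aff\calF$ with an isomorphism, so $r^\aff\calF$ is $\A^1$-local.

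Next I would prove essential surjectivity. Given an $\A^1$-local $\calG\in\HHtriv(\SmAffScZ)$, I view it as an object of $\HH_{\A^1}(\SmAffScZ)$ and set $\calF\defeq l^\aff\calG\in\HH_{\A^1}(\SmBcZ)\subseteq\HHtriv(\SmBcZ)$, which is $\A^1$-local by construction. The unit isomorphism of \eqref{eq:H(Smat)H(AffSm)H(Sm)} yields $r^\aff\calF\cong\calG$ in $\HH_{\A^1}(\SmAffScZ)$, and since $r^\aff$ sends $\A^1$-local objects to $\A^1$-local objects by the previous step, its restriction to $\A^1$-local objects is the same functor whether computed in $\HH_{\A^1}$ or in $\HHtriv$; hence $r^\aff\calF\simeq\calG$ in $\HHtriv(\SmAffScZ)$ as well. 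Alternatively one can stay in $\HHtriv$ throughout and take $\calF\defeq\Lrep_{\A^1}\,\mathbb{L}l^\aff\,\calG$, using that the comma category $(\SmAffScZ\downarrow X)$ has a terminal object for every $X\in\SmAffScZ$ to see that $r^\aff\,\mathbb{L}l^\aff\,\calG\simeq\calG$, and then applying \Cref{lm:raffrcciLA1LNisLtf} once more.

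The argument is essentially formal once \Cref{lm:raffrcciLA1LNisLtf} is available, so the real content — already discharged there — is that $\SmAffScZ$ is stable under the cylinder $\A^1_{B,Z}\times_{B,Z}-$ and the simplices $\Delta^\bullet_{B,Z}\times_{B,Z}-$, which uses the hypothesis that $B$ is affine, together with the standard fact that $\Sing$-type constructions land among $\A^1$-local objects. The main point requiring care is the bookkeeping in the first step, namely that the commutation $r^\aff\Lrep_{\A^1}\simeq\Lrep_{\A^1}r^\aff$ respects the localization units so that $\A^1$-locality is transported correctly; the rest is routine. In contrast with \Cref{prop:AffSmScZSmat}, one should not expect $r^\aff$ to be conservative, which is why only essential surjectivity onto the $\A^1$-local objects is asserted.
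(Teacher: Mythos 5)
Your proof is correct and follows essentially the same route as the paper: essential surjectivity is obtained by applying $\Lrep_{\A^1}$ to the left Kan extension of $\calG$ and using the commutation $r^\aff\Lrep_{\A^1}\simeq\Lrep_{\A^1}r^\aff$ from \Cref{lm:raffrcciLA1LNisLtf} together with the unit isomorphism of \eqref{equation:rlaffadjunction}, exactly as in the paper (your ``alternative'' formulation is literally the paper's argument). The only cosmetic difference is in the preservation step, where the paper simply observes that $\A^1\times X\in\SmAff_{B,Z}$ for $X\in\SmAff_{B,Z}$, while you route through the unit-compatibility of the commutation isomorphism — which reduces to the same geometric fact.
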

\begin{proof}
The first claim follows since for every $X\in\SmAff_{B,Z}$ the cylinder $\A^{1}\times X$ is in $\SmAff_{B,Z}$.
Suppose $\calF\in \HHtriv(\SmAff_{B,Z})$ is $\A^{1}$-local.
Via left Kan extension we deduce the adjunction:
\begin{equation}
\label{equation:rlaffadjunction}  
l^\aff
\colon 
\HHtriv(\SmAff_{B,Z})
\xrightleftarrows{1em}
\HHtriv(\SmBcZ)
\colon 
r^\aff
\end{equation}
Then $\Lrep_{\A^{1}}l^\aff(\calF)\in \HHtriv(\SmBcZ)$ is $\A^{1}$-local, 
and there are isomorphisms 
\[
 r^\aff(\Lrep_{\A^{1}}l^\aff(\calF))
\cong 
\Lrep_{\A^{1}}r^\aff(l^\aff(\calF))
\cong \Lrep_{\A^{1}}\calF
\cong 
\calF.
\]
The first isomorphism follows from \Cref{lm:raffrcciLA1LNisLtf}, 
and the second follows because the unit map of \eqref{equation:rlaffadjunction} is a natural isomorphism.
\end{proof}

\begin{lemma}
\label{lm:SmAffSmCovCheck}
If $B$ is an affine scheme, 
then the restriction functors 
\[
\HH_\nis(\SmBcZ)\to \HH_\nis(\SmAff_{B,Z})\to \HH_\nis(\SmatBcZ) 
\] 
are conservative. 
The same holds for $\HH_\Zar(\SmBcZ)$.
\end{lemma}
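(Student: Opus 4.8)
The plan is to reduce each of the two restriction functors to material already in place; since a composite of conservative functors is conservative, it is enough to treat the two arrows separately. First I would record that the full embeddings in \eqref{eq:SmatAffSmSm} preserve Nisnevich coverings and fiber products (as in the proof of \Cref{lm:raffrcciLA1LNisLtf}), so that $r^\aff$ and $r^\cci$ carry Nisnevich-local objects to Nisnevich-local objects and the displayed restriction functors on $\HH_\nis$ are well defined. Moreover, since a morphism between Nisnevich-local objects is invertible in the relevant $\HH_\nis(-)$ precisely when it is a sectionwise equivalence, conservativity is a statement about sections.

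For $r^\cci$ there is nothing new to prove: by \Cref{prop:AffSmScZSmat} the functor $r^\cci\colon\HHtriv(\SmAffScZ)\to\HHtriv(\SmatBcZ)$ is an equivalence that commutes with $\Lrep_\nis$ and both preserves and detects Nisnevich-local objects, so it restricts to an equivalence $\HH_\nis(\SmAff_{B,Z})\xrightarrow{\simeq}\HH_\nis(\SmatBcZ)$, which is in particular conservative.

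The content is the functor $r^\aff$, which I would handle by descent. Let $\phi\colon\calF\to\calG$ be a morphism in $\HH_\nis(B,Z)$ with $r^\aff(\phi)$ invertible, and represent $\calF,\calG$ by Nisnevich-local --- hence Zariski-local --- simplicial presheaves; as noted above $\phi$ is invertible if and only if $\calF(X)\to\calG(X)$ is an equivalence for every $X\in\SmBcZ$. By \Cref{lm:SmSmatZarCov} each such $X$ admits a finite Zariski covering by schemes in $\SmAffScZ$ (indeed in $\SmatBcZ$); since the objects of $\SmBcZ$ arise from the separated schemes of $\Sm_B$ and $B$ is affine, all of the higher fiber products occurring in the nerve of this covering are again affine, hence again lie in $\SmAffScZ$. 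Iterated Mayer-Vietoris descent for the Zariski distinguished squares in $\SmBcZ$ --- valid for the Zariski-local, a fortiori Nisnevich-local, objects $\calF$ and $\calG$ --- then computes $\calF(X)$ and $\calG(X)$ as homotopy limits assembled from the values of $\calF$ and $\calG$ on schemes in $\SmAffScZ$. Since $r^\aff(\phi)$ is invertible it induces equivalences on all of those values, hence on the homotopy limits, so $\calF(X)\to\calG(X)$ is an equivalence. Thus $r^\aff$, and with it the composite, is conservative.

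The Zariski case follows from the same three steps, replacing ``Nisnevich'' by ``Zariski'' throughout: \Cref{lm:SmSmatZarCov} directly supplies Zariski coverings by schemes in $\SmatBcZ$, and the proofs of \Cref{lm:raffrcciLA1LNisLtf} and \Cref{prop:AffSmScZSmat} go through unchanged for the Zariski-localization endofunctor $\Lrep_{\Zar}$, open immersions and Zariski coverings being preserved both by the embeddings in \eqref{eq:SmatAffSmSm} and by passage to $\SmatBcZ$. The step I expect to require the most care is the $r^\aff$ argument: one must ensure that the coverings of the essentially smooth schemes $X^h_Z$ produced by \Cref{lm:SmSmatZarCov} genuinely are finite Zariski coverings for which Nisnevich-local objects enjoy descent, and that all higher fiber products in their nerves stay inside $\SmAffScZ$ --- this is precisely where separatedness of the schemes of $\Sm_B$ and affineness of $B$ are used. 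Everything else is formal, flowing from the equivalence of \Cref{prop:AffSmScZSmat} and the conservativity of equivalences.
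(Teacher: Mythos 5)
Your proof is correct and takes essentially the same route as the paper's: a morphism of Nisnevich-local objects is an isomorphism in $\HH_\nis$ iff it is a sectionwise equivalence, and \Cref{lm:SmSmatZarCov} together with descent along the resulting Zariski coverings transports equivalences on sections over $\SmatBcZ$ up to $\SmAffScZ$ and $\SmBcZ$ (the paper states this in one line; you spell out the Čech nerve and the affineness of its terms). The only cosmetic difference is that you dispatch $r^\cci$ by citing the equivalence of \Cref{prop:AffSmScZSmat} instead of rerunning the descent argument, which is legitimate and not circular since that proposition is established independently.
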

\begin{proof}
Suppose $\calF\to \mathcal G$ is a morphism in $\HH_\nis(\SmBcZ)$ or $\HH_\nis(\SmAffScZ)$ that maps to an 
isomorphism in $\HH_\nis(\SmatBcZ)$, 
i.e., 
$\calF(X)\cong\mathcal G(X)$ for all $X\in\SmatBcZ$.
\Cref{lm:SmSmatZarCov} implies that the same holds for all $X\in\SmBcZ$ or $X\in\SmAffScZ$. 
\end{proof}

\begin{prop}
\label{prop:SmAffSmniseqivalence}
If $B$ is an affine scheme, 
then the restriction functor 
$$
\HH_\nis(\SmBcZ)\to \HH_\nis(\SmAff_{B,Z})
$$
is an equivalence.
Moreover, 
the same functor preserves and detects $\A^{1}$-local objects.
The same results hold for $\HH_\Zar(\SmBcZ)$, 
$\HH_\nis(\Fr_+(B,Z))$,
and $\HH_\Zar(\Fr_+(B,Z))$.
\end{prop}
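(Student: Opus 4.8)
The plan is to realize the restriction functor as the right adjoint of an adjunction whose unit is a natural isomorphism and whose right adjoint is conservative, so that \Cref{lm:adjunctioninunitconservative} applies; this is the same strategy used for \Cref{prop:AffSmScZSmat}.

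First, the full embedding $\SmAff_{B,Z}\hookrightarrow \SmBcZ$ from \eqref{eq:SmatAffSmSm} induces, by left Kan extension, the adjunction $l^\aff\dashv r^\aff$ of \eqref{equation:rlaffadjunction} on the sectionwise homotopy categories, and since the embedding is full the unit $\id\to r^\aff l^\aff$ is a natural isomorphism. By \Cref{lm:raffrcciLA1LNisLtf} the functor $r^\aff$ commutes with $\Lrep_\nis$, hence preserves Nisnevich-local objects; therefore it descends to a functor $\HH_\nis(B,Z)\to\HH_\nis(\SmAff_{B,Z})$ which is right adjoint to $\Lrep_\nis\circ l^\aff$. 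For a Nisnevich-local $\calF$ on $\SmAff_{B,Z}$ the corresponding unit is the composite $\calF\to r^\aff\Lrep_\nis l^\aff\calF\cong \Lrep_\nis r^\aff l^\aff\calF\cong \Lrep_\nis\calF\cong\calF$, hence is again a natural isomorphism. By \Cref{lm:SmAffSmCovCheck} the functor $r^\aff\colon\HH_\nis(B,Z)\to\HH_\nis(\SmAff_{B,Z})$ is conservative. \Cref{lm:adjunctioninunitconservative} now gives the asserted equivalence. That $r^\aff$ preserves and detects $\A^1$-local objects is then formal: it commutes with $\Lrep_{\A^1}$ by \Cref{lm:raffrcciLA1LNisLtf}, so it sends $\A^1$-local objects to $\A^1$-local objects, and since it is an equivalence it also reflects the property; alternatively, combine \Cref{lm:SmSmaffA1surjectiveandpreserves} with the equivalence just established.

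The Zariski case is verbatim the same argument: \Cref{lm:raffrcciLA1LNisLtf} and \Cref{lm:SmAffSmCovCheck} include the statements for $\Lrep_\Zar$ and for $\HH_\Zar$ (the required refinement of Zariski covers of objects of $\SmBcZ$ by objects of $\SmAff_{B,Z}$ being \Cref{lm:SmSmatZarCov}), so one only replaces $\Lrep_\nis$ by $\Lrep_\Zar$ throughout.

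Finally, for the framed variants one runs the same three steps over the framed correspondence categories. The embedding $\SmAff_{B,Z}\hookrightarrow\SmBcZ$ extends to a full embedding $\Fr_+(\SmAff_{B,Z})\hookrightarrow\Fr_+(B,Z)$, since a framed correspondence between two schemes that are affine is the same datum whether computed in $\Fr_+(\SmAff_{B,Z})$ or in $\Fr_+(B,Z)$; left Kan extension therefore again produces an adjunction $l^\aff\dashv r^\aff$ with invertible unit. Coverings in $\Fr_+(B,Z)$ are detected on underlying schemes and, by \Cref{lm:SmSmatZarCov}, admit refinements by affines, so $r^\aff$ commutes with $\Lrep_\nis$ and $\Lrep_\Zar$ and is conservative on the corresponding localized homotopy categories; \Cref{lm:adjunctioninunitconservative} then yields the equivalences, and commutation with $\Lrep_{\A^1}$ yields the statements about $\A^1$-local objects. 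The one point needing a little care — and the part of the argument that is not purely formal — is checking that restriction along $\Fr_+(\SmAff_{B,Z})\hookrightarrow\Fr_+(B,Z)$ is compatible with the localization endofunctors in the framed setting, which is precisely where the affine-cover input of \Cref{lm:SmSmatZarCov} enters.
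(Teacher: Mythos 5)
Your proof is correct and follows the same route as the paper: the paper also passes through the adjunction \eqref{equation:rlaffadjunction}, checks that the unit remains an isomorphism after Nisnevich localization (phrased in the paper as the restriction being ``Nisnevich exact''), invokes the conservativity \Cref{lm:SmAffSmCovCheck}, and applies the implicit equivalence criterion of \Cref{lm:adjunctioninunitconservative}, with \Cref{lm:raffrcciLA1LNisLtf} supplying commutation with $\Lrep_{\A^1}$ for the second claim. The only cosmetic point is that \Cref{lm:raffrcciLA1LNisLtf} as stated does not literally list $\Lrep_{\Zar}$; as you note, its proof carries over verbatim via \Cref{lm:SmSmatZarCov}, which is exactly what the paper means by ``the remaining cases are similar.''
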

\begin{proof}
The unit map of the adjunction \eqref{equation:rlaffadjunction} is a natural isomorphism.
Since $\HHtriv(B,Z)\rightarrow\HHtriv(\SmAff_{B,Z})$ is Nisnevich exact, 
the same holds for the unit map of the adjunction
\begin{equation}
\label{equation:nisadjunction}
\HH_\nis(\SmAff_{B,Z})\xrightleftarrows{1em}\HH_\nis(\SmBcZ).
\end{equation} 
\Cref{lm:SmAffSmCovCheck} implies \eqref{equation:nisadjunction} is an equivalence of categories.
\Cref{lm:raffrcciLA1LNisLtf} shows the right adjoint in \eqref{equation:nisadjunction} commutes with $\Lrep_{\A^{1}}$;
hence it preserves and detects $\A^{1}$-local objects.
The remaining cases are similar and left to the reader.
\end{proof}

\begin{lemma}
\label{lm:HHmotBwtBconsetw}
If $f\colon \widetilde B\to B$ is a Nisnevich covering, 
then the base change functor
\[
\HH_\nis(\SmBcZ)
\to 
\HH_\nis(\Sm_{\widetilde B,\widetilde Z})
\]
is conservative and preserves $\A^{1}$-local objects.
Here we write $\widetilde Z$ for the fiber product $Z\times_B \widetilde B$.

The same holds for $\HH_\nis(\SmAffScZ)$ and $\HH_\nis(\SmatBcZ)$.
\end{lemma}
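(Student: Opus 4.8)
The plan is to identify the base change functor with a restriction functor, deduce that it preserves $\A^{1}$-local objects formally, and then prove conservativity using that $f$ is a covering rather than merely smooth. Since $f$ is smooth, the base change functor $f^{*}$ is, exactly as in the proof of \Cref{lm:BaseChangeCommutes}, the restriction along the functor $c\colon\Sm_{\widetilde B,\widetilde Z}\to\Sm_{B,Z}$ that sends $\widetilde X^{h}_{\widetilde Z}$, with $\widetilde X\in\Sm_{\widetilde B}$, to the object $\widetilde X^{h}_{Z}$ attached to $\widetilde X$ regarded in $\Sm_{B}$; this is well defined because $\widetilde X\times_{\widetilde B}\widetilde Z=\widetilde X\times_{B}Z$, and $c$ is left adjoint to $X^{h}_{Z}\mapsto(X\times_{B}\widetilde B)^{h}_{\widetilde Z}=X^{h}_{Z}\times_{B}\widetilde B$. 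In particular $f^{*}$ is exact and descends to all the homotopy categories in question. For the preservation of $\A^{1}$-local objects I would argue, again as in \Cref{lm:BaseChangeCommutes}, that $c$ carries Nisnevich squares of $\Sm_{\widetilde B,\widetilde Z}$ to Nisnevich squares of $\Sm_{B,Z}$ --- an {\'e}tale morphism over $\widetilde B$ is {\'e}tale over $B$ and the closed-complement conditions are preserved, so \Cref{proposition:NissimeqWNisSmBcZ} applies --- and that it intertwines the cylinders $\A^{1}_{\widetilde B,\widetilde Z}\times_{\widetilde B,\widetilde Z}-$ and $\A^{1}_{B,Z}\times_{B,Z}-$, hence the cosimplicial schemes \eqref{equation:cosimplicial}. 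Consequently the restriction along $c$ of a Nisnevich-local and $\A^{1}$-invariant presheaf is again of this kind, i.e.\ $\A^{1}$-local; equivalently, $f$ being smooth, $f^{*}$ is right Quillen for the motivic Nisnevich-local structures, its left adjoint $c_{!}$ preserving Nisnevich- and $\A^{1}$-local equivalences by the same observations.

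The substance of the lemma is conservativity, and here I would use that $f$ is a covering. A morphism in $\HH_{\nis}(\Sm_{B,Z})$ is an isomorphism precisely when it is an equivalence on stalks at a conservative family of Nisnevich points, and by \Cref{lm:topologically-XtoXhs} such a family is inherited from $\Sm_{B}$: it is given by the stalk functors $\calF\mapsto\calF(X^{h}_{x})$ with $X\in\Sm_{B}$ and $x\in X$ whose image in $B$ lies in $Z$. Given such $X$ and $x$, with image $\sigma\in Z\subseteq B$, the defining property of a Nisnevich covering provides $\widetilde\sigma\in\widetilde B$ over $\sigma$ inducing an isomorphism $\kappa(\sigma)\cong\kappa(\widetilde\sigma)$; then $\widetilde X\defeq X\times_{B}\widetilde B\in\Sm_{\widetilde B}$ is {\'e}tale over $X$ and carries a point $\widetilde x$ over $x$ with $\kappa(x)\cong\kappa(\widetilde x)$, lying over $\widetilde Z$, so that the henselizations agree, $\widetilde X^{h}_{\widetilde x}\cong X^{h}_{x}$ over $B$. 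Thus $\calF\mapsto\calF(X^{h}_{x})$ is naturally identified with the stalk of $f^{*}\calF$ at the Nisnevich point $(\widetilde X,\widetilde x)$ of $\Sm_{\widetilde B,\widetilde Z}$; every member of our conservative family for $\Sm_{B,Z}$ therefore factors through $f^{*}$ followed by a stalk functor on $\Sm_{\widetilde B,\widetilde Z}$. Conservativity follows: if $f^{*}\alpha$ is an isomorphism then it is a stalkwise equivalence on $\Sm_{\widetilde B,\widetilde Z}$, hence $\alpha$ is a stalkwise equivalence on $\Sm_{B,Z}$ and so an isomorphism.

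For $\HH_{\nis}(\SmAff_{B,Z})$ and $\HH_{\nis}(\Smat_{B,Z})$ I would deduce the assertions from the case already treated by means of the equivalences of \Cref{prop:SmAffSmniseqivalence} and \Cref{prop:AffSmScZSmat}, which are compatible with base change and preserve and detect $\A^{1}$-local objects; the affineness of $B$ they require is harmless, since one may first pass to a Zariski cover of $B$ by affine opens and invoke \Cref{lm:BaseChangeCommutes}. I do not expect a serious obstacle here: the step most likely to need care is confirming that the base change functor really is restriction along $c$ in this essentially smooth setting and that the family of Nisnevich points of $\Sm_{B,Z}$ chosen above is detected after applying $f^{*}$; everything else is formal, the essential input being the elementary fact that a Nisnevich covering lifts every point of the base with trivial residue field extension.
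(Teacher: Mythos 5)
Your argument for the main case is essentially the paper's: conservativity is checked on the stalks $\calF(X^h_x)$, using that a Nisnevich covering lifts henselian local schemes (the paper lifts $X^h_x\to B$ directly along $f$; you lift the image point $\sigma$ and identify $X^h_x\cong\widetilde X^h_{\widetilde x}$ — same content), and preservation of $\A^1$-locality follows because the functor on schemes intertwines the $\A^1$-cylinders. One caveat on your final paragraph: deducing the $\SmAff_{B,Z}$ and $\Smat_{B,Z}$ cases from \Cref{prop:SmAffSmniseqivalence} and \Cref{prop:AffSmScZSmat} is riskier than it looks, since those equivalences require $B$ affine, and the corollary that extends them to general $B$ (via diagram \eqref{eq:HmotSmSmAffSmatBZwtBZ}) itself invokes the very lemma you are proving to get conservativity of the vertical base-change functors; your proposed fix of "first pass to a Zariski cover and use \Cref{lm:BaseChangeCommutes}" is again an instance of the lemma (a Zariski covering is a Nisnevich covering), so the reduction threatens to be circular unless carefully ordered. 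The intended route — and the safer one — is simply to rerun the same stalk argument directly in $\SmAff_{B,Z}$ and $\Smat_{B,Z}$, which works because their Nisnevich points are still the henselian local schemes $X^h_x$ (Nisnevich coverings of affine, resp.\ trivial-tangent-bundle, schemes admit refinements within these subcategories, cf.\ \Cref{lm:SmSmatZarCov}).
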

\begin{proof}
Suppose the morphism $\calF\to \mathcal G$ in $\HH_\nis(\SmBcZ)$ maps to an isomorphism in 
$\HH_\nis(\Sm_{\widetilde B,\widetilde Z})$.
For every $X\in\Sm_{B,Z}$ and $x\in X$ there exists a lifting $X^h_x\to \widetilde B$ of $X^h_x\to B$ along $f$.
Hence $X^h_x$ is an essentially smooth local henselian scheme over $B$.
Thus by assumption there is an isomorphism $\calF(X^h_x)\cong\mathcal G(X^h_x)$.
This shows the base change functor is conservative.
The second assertion follows since $\Sm_{\widetilde B,\widetilde Z}\to \SmBcZ$ maps
$X\times_{\widetilde B,\widetilde Z}\A^{1}_{\widetilde B,\widetilde Z}\to X$ to $X\times_{B,Z}(\A^{1})_{B,Z}\to X$.

The arguments for $\HH_\nis(\SmAffScZ)$ and $\HH_\nis(\SmatBcZ)$ are similar.
\end{proof}

\section{\texorpdfstring{$\A^{1}$}{A1}-locality and rigidity for closed immersions}
\label{sect:SmBlZA1rigid}

To a fixed closed immersion $Z\not\hookrightarrow B$ such that an open affine neighborhood $B^\prime\subset B$ 
of $Z$ exists, 
we associate a full subcategory $\SmBlZ$ of $\Sch_B$.
Our aim in this section is to establish a close connection between $\SmBlZ$ and $\Sm_Z$ through $\A^{1}$-local 
and rigid objects.
This input is pivotal for the proof of our main result.

\subsection{The category \texorpdfstring{$\SmBlZ$}{smblz} and its Nisnevich site.}

\begin{definition}
Let \index{Categories of $B$-schemes!$\SmBlZ$}
$\SmBlZ$ be the full subcategory of $\Sch_B$ spanned by objects of the form $\XtZ$ and $\XhZ$ for all $X\in \Sm_B$. 
The category \index{Categories of $B$-schemes!$\SmAffSlZ$} $\SmAffSlZ$ (resp.~$\SmatBlZ$) is defined similarly subject to the condition $X\in\SmAff_B$ 
(resp.~$X\in\SmatB$).
\end{definition}

We note that the assumption in the following result holds for closed points.
\begin{lemma}\label{lm:LiftSmatuniqueness}
Assume an open affine neighborhood $B^\prime\subset B$ of $Z$ exists.
For $X^h_Z, (X^\prime)^h_Z\in \SmatBcZ$ and isomorphism $r\colon X_Z\cong (X^\prime)_Z$ 
there exists an isomorphism $e\colon X^h_Z\cong (X^\prime)^h_Z$
in $\SmatBcZ$ such that $i_{B,Z}(e)=r$.
Here $i_{B,Z}$ is the base change functor $\SmatBcZ\to\SmatZ; (-)^h_Z\mapsto (-)_Z$, 
inducing the site morphism \eqref{eq:iBZ}.
\end{lemma}

\begin{proof}
By \Cref{lm:AffSmHenselianLift} there exists a morphism $e\colon (X^\prime)^h_Z\to X^h_Z$ in $\SmatBcZ$
which is \'etale because 
$(X^\prime)^h_Z$ and $X^h_Z$ are essentially smooth schemes of the same relative dimension over $B$, 
and $e$ is \'etale over $X_Z$. 
Next,
by \Cref{lm:AffSmHenselianLift}
applied to 
the closed immersion $Z^\prime\not\hookrightarrow (X^\prime)^h_Z$ and the \'etale morphism $e$, 
there exists a morphism $e^{-1}\colon X^h_Z\to (X^\prime)^h_Z$ such that $e\circ e^{-1}$ is the identity.
By the same reasoning, 
$e^{-1}$ is \'etale and there exists a morphism $(e^{-1})^{-1}\colon (X^\prime)^h_Z\to X^h_Z$ such that 
$e^{-1}\circ (e^{-1})^{-1}$ is the identity.
It follows that
$e= e\circ e^{-1}\circ (e^{-1})^{-1}= (e^{-1})^{-1}$.
\end{proof}

\begin{lemma}
\label{lm:LiftSmat}
Assume an open affine neighborhood $B^\prime\subset B$ of $Z$ exists.
Then for every $Y\in \SmatZ$,
there exists an object $\tilde Y \in\SmatBcZ$, unique up to isomorphism, such that $Y\cong\tilde Y\times_B Z$.
Furthermore, 
there exists $\widetilde Y \in\Smat_{B}$ such that $Y\cong\widetilde Y \times_B Z$.
\end{lemma}

\begin{proof}
Choose a closed immersion $Y\not\hookrightarrow \A^l_Z$ with trivial normal bundle, 
and choose polynomials $f_1,\dots, f_r$ with vanishing locus  
\[
Z(f_1,\dots, f_r)=Y\amalg Y^\prime\subset \A^l_Z
\]
for some $Y'$. Since $B^\prime$ is affine, 
the restriction morphism $\mathcal O(\A^l_{B^\prime})\to \mathcal O(\A^l_Z)$ is surjective on global sections.
Say $\widetilde f_i\in \mathcal O(\A^l_{B^\prime})$ maps to $f_i\in \mathcal O(\A^l_Z)$. 
Then 
\[
Z(f_1,\dots, f_r)\cong Z\times_{B^\prime} Z(\widetilde f_1, \dots, \widetilde f_r).
\]  
where $Z(\widetilde f_1, \dots, \widetilde f_r)\subset \A^l_{B^\prime}$ is the vanishing locus.
By \Cref{cor:henselianpairsplitting} we have a decomposition
\[
Z(\widetilde f_1, \dots, \widetilde f_r)^h_Z\cong\tilde Y\amalg \tilde Y^\prime,
\] 
where $Y\cong \tilde Y\times_B{Z}$.
Since the normal bundle $N_{Y/\A^l_{Z}}$ is trivial and the tangent bundle $T_Y$ is stably trivial, 
$N_{\tilde Y/(\A^l_B)^h_Z}$ is trivial and $T_{\tilde Y}$ is stably trivial.
This shows that $\tilde Y\in \SmatBcZ$ by \Cref{lm:SmatBcZdefinitionsequivalence}. 
The uniqueness follows by \Cref{lm:LiftSmatuniqueness}.
By the definition of $\SmatBcZ$, see \Cref{convnotations:schemescategories}, 
there is $\widetilde Y\in\Smat_B$ such that 
$\tilde Y\cong\widetilde Y^h_Z$.
\end{proof}

\begin{corollary}
\label{cor:LiftSmat}
Assume an open affine neighborhood $B^\prime\subset B$ of $Z$ exists.
There is a fully faithful functor $\SmatZ\to \SmatBlZ$.
\end{corollary}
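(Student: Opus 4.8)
The plan is to take the desired functor to be restriction of scalars along the closed immersion $i\colon Z\not\hookrightarrow B$. Concretely, define $\iota\colon\SmatZ\to\Sch_B$ on objects by sending a smooth affine $Z$-scheme $Y$ with trivial tangent bundle to $Y$ itself, regarded as a $B$-scheme via the composite $Y\to Z\xrightarrow{i}B$, and on morphisms by the identity on underlying scheme maps; a morphism of $Z$-schemes is in particular a morphism of $B$-schemes, so $\iota$ is visibly a functor.

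First I would check that $\iota$ factors through the full subcategory $\SmatBlZ\hookrightarrow\Sch_B$. This is precisely the content of \Cref{lm:LiftSmat}: it furnishes an object $\widetilde Y\in\SmatBcZ$ with $\widetilde Y\times_B Z\cong Y$, and $\widetilde Y$ is of the form $X^h_Z$ for some $X\in\SmatB$; since the henselization of $X$ along $X\times_B Z$ has closed fiber $X\times_B Z$ itself, one has $\widetilde Y\times_B Z\cong X\times_B Z$. Hence $\iota(Y)$ is isomorphic in $\Sch_B$ to $X\times_B Z\in\SmatBlZ$; replacing $\iota(Y)$ by this representative and transporting $Z$-scheme structures and morphisms along the chosen isomorphisms, we may assume $\iota$ has image in $\SmatBlZ$. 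The hypothesis of \Cref{lm:LiftSmat}, that $Z$ admits an affine open neighborhood in $B$, is available in the cases in which we apply the corollary, e.g.\ when $B$ is affine or $Z$ is a closed point.

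It then remains to see that $\iota$ is fully faithful. Since $\SmatZ$ and $\SmatBlZ$ are full subcategories of $\Sch_Z$ and $\Sch_B$, this reduces to the identity $\Hom_{\Sch_B}(Y_1,Y_2)=\Hom_{\Sch_Z}(Y_1,Y_2)$ for $Y_1,Y_2\in\SmatZ$, where the inclusion ``$\supseteq$'' is immediate. For ``$\subseteq$'', given a $B$-morphism $f\colon Y_1\to Y_2$, the two morphisms $Y_1\to Z$ given by the structure map of $Y_1$ and by $f$ followed by the structure map of $Y_2$ become equal after composing with the monomorphism $i$, hence they agree, so $f$ is a $Z$-morphism. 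The only substantive input is therefore the lifting statement \Cref{lm:LiftSmat}, already proved; full faithfulness is formal, using only that a closed immersion is a monomorphism. The one point that requires a little care is unwinding that \Cref{lm:LiftSmat} produces an object of $\SmatBcZ$ of the form $X^h_Z$ with $X\in\SmatB$ whose base change to $Z$ recovers $X\times_B Z$, so that $Y$ genuinely becomes an object of $\SmatBlZ$ rather than merely of $\EssSmB$.
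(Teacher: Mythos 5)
Your proof is correct and takes essentially the same route as the paper: send $Y\mapsto Y$ with its $B$-scheme structure, invoke \Cref{lm:LiftSmat} to identify $Y$ with $X\times_B Z\in\SmatBlZ$ for some $X\in\SmatB$, and observe that full faithfulness is automatic because the closed immersion $Z\not\hookrightarrow B$ is a monomorphism. You have filled in two small gaps the paper leaves implicit — the passage from $\widetilde Y=X^h_Z\in\SmatBcZ$ to $\widetilde Y\times_B Z\cong X\times_B Z$, and the full-faithfulness verification — and you correctly flag that the affine-neighborhood hypothesis of \Cref{lm:LiftSmat} must be supplied by the ambient setting, as the corollary is silent on it.
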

\begin{proof}
The functor is the identity morphism on objects.
\Cref{lm:LiftSmat} shows that every $Y\in\SmatZ$ is isomorphic to $\widetilde Y\times_B Z$ for some 
$\widetilde Y\in \SmatB$. 
\end{proof}

\begin{definition}\label{rem:rmorpihsmofsites}
The Nisnevich topology on $\SmBlZ$ is the weakest topology $\tau$ for which the following functors 
define site morphisms
\[
\begin{tikzcd}[row sep=-0.25em]
\Sm^\Nis_Z & \Sm^\tau_{B*Z}\ar{r}{u}\ar{l}[swap]{r} & \Sm_{B,Z}^\Nis\\
Y\ar[r,mapsto,"r^{-1}"] & Y,X_Z^h & X_Z^h. \ar[l,mapsto,swap,"u^{-1}"]
\end{tikzcd}
\]
Similarly, for $\SmatBlZ$ and $\SmAffBlZ$.
\end{definition}

\begin{remark}
A morphism $\widetilde X\to X$ in $\SmBlZ$ is a Nisnevich covering if and only if
it is an {\'e}tale morphism of 
$B$-schemes and $\widetilde{X}_Z\to X_Z$ is a Nisnevich covering in $\SmZ$.
\end{remark}

\begin{remark}\label{rem:restrictionscommuteSmBlZSmAffSmat}
The morphism of sites $r$ in \Cref{rem:rmorpihsmofsites} induces morphisms of sites 
$\SmAffBlZ\to\SmAff_Z$, $\SmatBlZ\to\Smat_Z$.
Hence restriction along $r$ commutes with $r^\cci$ and $r^\aff$ in \eqref{eq:H(Smat)H(AffSm)H(Sm)}.
\end{remark}

\begin{lemma}
\label{lemma:Sm-diagrams-recipe}
The site morphism $\SmZ^\nis\rightarrow\SmB^\nis$ factors as
\begin{equation}
\begin{tikzcd}[row sep=-0.25em]\label{eq:Sm-diagrams-recipe}
\Sm_Z^\Nis\ar{r}{i_{B*Z}} & \Sm_{B*Z}^\Nis\ar{r}{u} & \Sm_{B,Z}^\Nis\ar{r} & \Sm_B^\Nis\\
Y,X_Z & Y,X_Z^h\ar[l,mapsto,swap,"i_{B*Z}^{-1}"] & X_Z^h\ar[l,mapsto,swap,"u^{-1}"] & X,\ar[l,mapsto]
\end{tikzcd}\end{equation}
where $X_Z:=X^h_Z\times_B Z$.
\end{lemma}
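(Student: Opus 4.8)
The plan is to realize the three arrows of \eqref{eq:Sm-diagrams-recipe} as the morphisms of sites attached to three continuous functors of the underlying categories, the attachment reversing direction per our convention on morphisms of sites, exactly as recorded by the $\mapsto$-arrows in the bottom row of \eqref{eq:Sm-diagrams-recipe}:
\[
\SmB\xrightarrow{\,v\,}\SmBcZ\xrightarrow{\,w\,}\SmBlZ\xrightarrow{\,p\,}\SmZ,
\qquad v(X)=\XhZ,\quad w=\text{(full inclusion)},\quad p(W)=W\times_B Z.
\]
Note $v$ lands in the full subcategory $\SmBcZ\subseteq\SmBlZ$, so $w$ is defined, and $p$ makes sense on $\SmBcZ\subseteq\SmBlZ$ because $\XhZ\times_B Z\cong\XtZ$: the canonical morphism $\XhZ\to X$ restricts to an isomorphism over $X\times_B Z$, since henselization along a closed subscheme is an isomorphism there, and $Z\times_B Z\cong Z$ because a closed immersion is a monomorphism. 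First I would invoke \Cref{lm:topologically-XtoXhs} and the discussion following it, which already record that $v$ induces the site morphism $\SmBcZ^\Nis\to\SmB^\Nis$. Next I would check that $p$ induces $i_{B*Z}\colon\SmZ^\Nis\to\SmBlZ^\Nis$: by the definition of the Nisnevich topology on $\SmBlZ$, a covering $\wX\to X$ there is an {\'e}tale morphism of $B$-schemes with $\wX\times_B Z\to X\times_B Z$ a Nisnevich covering in $\SmZ$, and this restriction is precisely $p(\wX\to X)$, while $p=(-)\times_B Z$ is compatible with fibre products.

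The one point needing an actual argument is the continuity of the full inclusion $w\colon\SmBcZ\hookrightarrow\SmBlZ$, namely that a Nisnevich covering of $\XhZ$ in $\SmBcZ$ is a Nisnevich covering in $\SmBlZ$. Here I would appeal to \Cref{proposition:NissimeqWNisSmBcZ}: such a covering $\wXhZ\to\XhZ$ is a weak Nisnevich covering, hence induced by a Nisnevich covering $\wX\to X$ in $\SmB$; then $\wXhZ\cong\wX\times_X\XhZ\to\XhZ$ is {\'e}tale as a morphism of $B$-schemes and restricts over $Z$ to the Nisnevich covering $\wX\times_B Z\to X\times_B Z$, so it is a Nisnevich covering in $\SmBlZ$ by definition. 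The same identification $\wXhZ\cong\wX\times_X\XhZ$ reduces fibre products along such coverings to fibre products of {\'e}tale schemes over $\SmB$, so they stay in $\SmBcZ$ and $w$ is compatible with the fibre products occurring in the sheaf condition; thus $w$ induces $u\colon\SmBlZ^\Nis\to\SmBcZ^\Nis$.

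It then remains to identify the composite with the given site morphism, which is immediate: $p\circ w\circ v(X)=p(\XhZ)=\XhZ\times_B Z\cong\XtZ$, naturally in $X\in\SmB$, and $X\mapsto\XtZ$ is exactly the continuous functor defining the base-change site morphism $\SmZ^\Nis\to\SmB^\Nis$; passing back from these functors to the morphisms of sites they induce yields the asserted factorization. I expect the continuity of $w$ — that is, promoting a Nisnevich covering of $\XhZ$ to one induced by an {\'e}tale $B$-morphism — to be the main obstacle, and it is \Cref{proposition:NissimeqWNisSmBcZ} that clears it; the rest is bookkeeping with the definitions.
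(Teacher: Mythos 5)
Your decomposition into the three continuous functors $v$, $w$, $p$, with the only nontrivial point being the continuity of the inclusion $w\colon\SmBcZ\hookrightarrow\SmBlZ$, is exactly the intended route (the paper states this lemma without proof, leaning on \Cref{proposition:NissimeqWNisSmBcZ} and the definitions). However, the identification $\wXhZ\cong\wX\times_X\XhZ$ on which you hang both the \'etaleness of the covering in $\SmBlZ$ and the stability of fibre products is false in general. Take $B$ local of positive dimension with closed point $Z$, $X=B$, and $\wX=(B-Z)\amalg X'$ with $X'\to B$ affine \'etale inducing $X'_Z\cong X_Z$ (even $X'=B$ works). Since $\wX_Z$ sits inside the clopen piece $X'$, \Cref{lm:pro-etale:lift} gives $\wXhZ\cong (X')^h_Z\cong\XhZ$, whereas $\wX\times_X\XhZ\cong(\XhZ-Z)\amalg(X'\times_X\XhZ)$ retains components away from $Z$. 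The underlying issue is that the \'etale neighborhoods of $\wX_Z$ in $\wX$ pulled back from $X$ are not cofinal among all \'etale neighborhoods of $\wX_Z$ in $\wX$; there is only a canonical pro-\'etale map $\wXhZ\to\wX\times_X\XhZ$, which by \Cref{lm:pro-etale:lift} exhibits $\wXhZ$ as $(\wX\times_X\XhZ)^h_{\wX_Z}$, not as $\wX\times_X\XhZ$ itself.

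What survives without change: the restriction of $\wXhZ\to\XhZ$ over $Z$ is $\wX_Z\to X_Z$, a Nisnevich covering by \Cref{definition:NiswNis}, and the map is still (pro-)\'etale as a morphism of $B$-schemes, being the composite of the pro-\'etale map $(\wX\times_X\XhZ)^h_{\wX_Z}\to\wX\times_X\XhZ$ with the \'etale base change $\wX\times_X\XhZ\to\XhZ$; so it is a Nisnevich covering in $\SmBlZ$ in the sense of the paper's definition. But your fibre-product verification no longer follows from base change: you need to show $\wXhZ\times_{\XhZ}\wXhZ\cong(\wX\times_X\wX)^h_Z$, which requires an extra argument (e.g.\ that the left-hand side is already henselian along $\wX_Z\times_{X_Z}\wX_Z$, after which \Cref{lm:pro-etale:lift} identifies the two). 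So the architecture is right, but the single identity doing the real work in your write-up is the one that fails and must be replaced.
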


\subsection{$\A^{1}$-local and rigid presheaves on $\SmBlZ$.}
\label{subsect:Rigid}

\Cref{prop:LrepA1uuscommute} shows that the functor 
$i^*_{B,Z}$ commutes with the $\A^{1}$-localization functor $\Lrep_{\A^{1}}$. 
The key inputs are \Cref{lm:liftingsA1homotopy} and \Cref{lm:hocolimLrepA1SetPresheaf}. 
A central technical tool is what we refer to as a telescope construction
\[
\Cyl^{B,Z}_K \mathbf{X}\in \Pre(\SmatBcZ),
\]
which associates a presheaf on $\SmatBcZ$ to a diagram of schemes, 
or more generally to a morphism of simplicial sets
\[
\mathbf X\colon K\rightarrow N(\SmatBcZ).
\]

In \Cref{def:CylBcZcalCX,def:CylBcZX}, we define the telescope construction as the composite of two procedures. First, we associate to a given diagram of $B$-schemes a simplicial $B$-scheme (see \Cref{def:affinespacessimplcialsiagram}). Second, we apply a relative geometric realization to obtain a presheaf from this simplicial $B$-scheme (see \Cref{def:geomretricrealisation}).
In \Cref{sectapp:Homotopycolimits}, we prove that this construction provides a model for the corresponding homotopy colimit of $\A^1$-localizations of simplicial presheaves on $\SmatBcZ$.
For brevity, we write $\Cyl^{Z}_{-}$ for $\Cyl^{Z,Z}_{-}$.

For any $U\in \Smat_B$,
so that $U$ is affine, 
we can form the diagram of closed immersions
\begin{equation}
\label{eq:diagItoDeltaAUhZ}
(\partial\Delta^n_{B}\times U)^h_Z \nothookleft
\partial\Delta^n_{B}\times U_Z\not\hookrightarrow 
(\Delta^n_B\times U)_Z.
\end{equation}
By \cite[Corollary 3.9]{Schwede-gluing}, 
the corresponding pushout 
\begin{equation}
\label{eq:colimItoDeltaAUhZ2}
(\partial\Delta^n_{B}\times U)^h_Z\amalg_{(\partial\Delta^n_{B}\times U_Z)} (\Delta^n_{B}\times U)_Z
\end{equation}
exists in the category of $B$-schemes.
Moreover, 
since colimits in the category of schemes preserve henselian pairs \cite[Tag 0EM6]{StacksProject}, 
the scheme in \eqref{eq:colimItoDeltaAUhZ2} is henselian with respect to its fiber $(\Delta^n_{B}\times U)_Z$ over $Z$.
Hence \eqref{eq:colimItoDeltaAUhZ2} is an object in $\SmatBcZ$.  
There is a naturally induced closed immersion
\begin{equation}
\label{eq:ItoDeltaAUhZ}
(\partial\Delta^n_{B}\times U)^h_Z\amalg_{(\partial\Delta^n_{B}\times U_Z)} (\Delta^n_B\times U)_Z
\not\hookrightarrow 
(\Delta^n_{B}\times U)^h_Z.
\end{equation}
More generally,
for a morphism of simplicial sets $\mathbf U\colon K\to N(\SmatBcZ)$ to the nerve of $\SmatBcZ$, 
let $\mathbf U_Z$ denote the composite $K\to N(\SmatBcZ)\to N(\SmatZ)$, 
where the second morphism is induced by $X\mapsto X\times_B Z$.
Then any monomorphism of simplicial sets $e\colon L\to K$ induces a monomorphism
\begin{equation}
\label{eq:Cyl(Y)BZcupCylX_Z}
\Cyl^{B,Z}_L \mathbf U e \amalg_{\Cyl^{Z}_L \mathcal U_Z e} (\Cyl^{Z}_K\mathbf U_Z)
\hookrightarrow 
\Cyl^{B,Z}_K\mathbf U
\end{equation}
in the presheaf category $\Pre(\SmatBlZ)$.

\begin{lemma}
\label{lm:liftingsA1homotopy}
For $X\in \Smat_B$, $U\in \SmatBcZ$ and \eqref{eq:ItoDeltaAUhZ}, 
there exists a lifting in all diagrams in $\Sch_{B,Z}$ of the form:
\begin{equation}
\label{eq:trilift-bItXhZ}
\xymatrix{
(\partial\Delta^n_{B}\times U)^h_Z\amalg_{(\partial\Delta^n_{B}\times U_Z)} (\Delta^n_B\times U)_Z
\ar[d]\ar[r] & \XhZ\\
(\Delta^n_{B}\times U)^h_Z \ar@{.>}[ur].
}
\end{equation}
The analogous lifting property holds in $\Fr_+(\Sch_{B,Z})$.

Moreover, 
for morphisms of simplicial sets $L\stackrel{e}{\hookrightarrow} K\xrightarrow{\mathbf U} N(\SmatBcZ)$, 
\eqref{eq:Cyl(Y)BZcupCylX_Z} induces a surjection
\begin{equation}
\label{eq:trilift-ICylXhZ}
\Psi(\Cyl^{B,Z}_K\mathbf U)
\to
\Psi(\Cyl^{B,Z}_L\mathbf U\amalg_{\Cyl^{Z}_L\mathbf U_Ze} \Cyl^{Z}_K\mathbf U_Z).
\end{equation}
Here $\Psi\colon \Pre(\SmatBlZ)\to \Set$ is the functor given by  
$$
\calF
\mapsto
\Pre(\SmatBlZ)(\calF,\SmatBlZ(-,\XhZ))
$$

The analogous result holds in the category $\Fr_+(\SmatBlZ)$. 
\end{lemma}

\begin{proof}
To prove the claim for \eqref{eq:trilift-bItXhZ} we begin by applying the lifting property 
for affine henselian schemes with respect to smooth affine morphisms, 
see \Cref{lm:AffSmHenselianLift}, 
to the diagram in $\Sch_B$:
\begin{equation}
\label{eq:trilift-bItXhZ2}
\xymatrix{
(\partial\Delta^n_{B}\times U)^h_Z\amalg_{(\partial\Delta^n_{B}\times U_Z)} (\Delta^n_B\times U)_Z
\ar[d]\ar[r] & X\\
(\Delta^n_{B}\times U)^h_Z \ar@{.>}[ur].
}
\end{equation}
The indicated lifting in \eqref{eq:trilift-bItXhZ2} exists since \eqref{eq:ItoDeltaAUhZ} is an affine henselian pair.
By applying the functor $(-)^h_Z\colon\Sch_B\to\Sch_{B,Z}$ to \eqref{eq:trilift-bItXhZ2}, 
we deduce the desired lifting in \eqref{eq:trilift-bItXhZ} since $(-)^h_Z$ preserves the morphism 
\eqref{eq:ItoDeltaAUhZ} in $\Sch_{B,Z}$.
The same proof applies in the framed setting by reference to \Cref{lm:liftFr}.

The above argument proves also \eqref{eq:trilift-ICylXhZ} when $K=\Delta^n$, $L=\partial\Delta^n$;
that is, the map 
\begin{equation}\label{eq:Cyllifting-simplexcase}
\Psi((\Delta^n_{B}\times U)^h_Z)
\to
\Psi((\partial\Delta^n_{B}\times U)^h_Z\amalg_{(\partial\Delta^n_{B}\times U_Z)} (\Delta^n_B\times U)_Z)
\end{equation}
is surjective. 
The case of $L=K$ holds tautologically.
Suppose $K=L\cup \alpha$, where $\alpha\in K$ is a simplex such that $\partial \alpha\subset L$.
We write $U=\mathbf U(\alpha)$ and deduce the surjectivity of \eqref{eq:trilift-ICylXhZ} from 
\eqref{eq:Cyllifting-simplexcase} owing to the pullback square:
\begin{equation*}\xymatrix{
\Psi(\Cyl^{B,Z}_K\mathbf U)\ar[d]
\ar[r]&
\Psi(\Cyl^{B,Z}_L\mathbf U e \amalg_{\Cyl^{Z}_L\mathbf U_Z e} \Cyl^{Z}_K\mathbf U_Z)\ar[d]\\
\Psi((\Delta^n_{B}\times U)^h_Z)
\ar[r]&
\Psi((\partial\Delta^n_{B}\times U)^h_Z\amalg_{(\partial\Delta^n_{B}\times U_Z)} (\Delta^n_B\times U)_Z)
}
\end{equation*}
The general case of a monomorphism of simplicial sets $L\to K$ follows now by an induction argument 
on the number of non-degenerate simplices of the complement $K-L$.
\end{proof}

\begin{definition}
An object $\calF\in\HHtriv(\SmBlZ)$ is $\A^{1}$-local if for all $X\in\Sm_{B}$ there are naturally 
induced equivalences of simplicial sets
\[
\calF(X_Z)
\xrightarrow{\simeq}
\calF(\A^{1}\times X_Z),\quad
\calF(\XhZ)
\xrightarrow{\simeq}
\calF((\A^{1}\times X)^h_Z).\]
\end{definition}
The $\A^{1}$-localization endofunctor $\Lrep_{\A^{1}}$ on $\HHtriv(\SmatBlZ)$ 
and $\HHtriv(\Fr_+(\SmatBlZ))$ is given analogous to the discussion in \Cref{section:ReductionSmat}.
The functor $u^{-1}\colon \Smat_{B,Z}\to \Smat_{B*Z}$ from \Cref{lemma:Sm-diagrams-recipe} induces an adjunction:
\begin{equation}
\label{eq:ulsuus}
u^*\colon 
\HHtriv(\Smat_{B,Z})
\xrightleftarrows{1em} 
\HHtriv(\Smat_{B*Z})
\colon u_*
\end{equation}
If $\calF\in \HHtriv(\Smat_{B,Z})$ then $u^*(\calF)\in \HHtriv(\Smat_{B*Z})$ is the left Kan extension of 
$\calF$ along $u^{-1}$.
The right adjoint is given by $u_*\calF(X^h_Z)=\calF(X^h_Z)$, 
where $\calF\in \HHtriv(\Smat_{B*Z})$ and $X^h_Z\in \Smat_{B*Z}$.
Similarly, 
we have the adjunction
\begin{equation}
\label{eq:ulsuusfr}
u^*_\fr\colon \HHtriv(\Fr_+(\Smat_{B,Z}))\xrightleftarrows{1em} 
\HHtriv(\Fr_+(\Smat_{B*Z}))\colon u_*^\fr.
\end{equation}
Note that $u_*$ coincides with the composite of $u_*^\fr$ and the forgetful functor 
$$
\HHtriv(\Fr_+(\Smat_{B*Z}))\to \HHtriv(\Smat_{B*Z}).
$$

\begin{definition}
An object $\calF\in\HHtriv(\SmBlZ)$ is rigid if for all $X\in\Sm_{B}$ there is a naturally induced 
equivalence of simplicial sets
\[
\calF(\XhZ)\xrightarrow{\simeq}\calF(X_Z).
\]
\end{definition}

\Cref{prop:SmoothLift->A1lrig:origin} proves that the $\A^1$-localization of a 
homotopy colimit of presheaves of the form $h(X^h_Z)$ is rigid.
First we need the following notation.

\begin{definition}
\label{def:A1locorextendedtoSmatBcZ}
For $\calF\in \HHtriv(\SmatBcZ)$, 
let $\calF_{\A^{1}},\calF_{\A^{1}}^{[1]}\in \HHtriv(\Smat_{B*Z})$ be shorthand for $\Lrep_{\A^{1}} u^* (\calF)$, 
$\Lrep_{\A^{1}}^{[1]} u^* (\calF)$.
Similarly, 
if $\calF\in\HHtriv(\Fr_+(\SmatBcZ))$ we write $\calF_{\A^{1}}^\fr$, 
$\calF_{\A^{1}}^{\fr,{[1]}}$ for 
$\Lrep_{\A^{1}} u^*_\fr (\calF)$, $\Lrep_{\A^{1}}^{[1]} u^*_\fr (\calF)$.
\end{definition}

\begin{prop}
\label{prop:SmoothLift->A1lrig:origin}
For any $\calF\in \HHtriv(\SmatBcZ)$ the simplicial presheaf $\calF_{\A^{1}}$ is $\A^{1}$-local and rigid. 
Likewise, 
for any $\calF\in \HHtriv(\Fr_+(\SmatBcZ))$, 
the simplicial presheaf $\calF_{\A^{1}}^\fr$ is $\A^{1}$-local and rigid. 
\end{prop}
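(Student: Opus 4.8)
The plan is to settle $\A^1$-invariance of $\calF_{\A^1}$ formally and then, after a commutation of functors, to reduce rigidity to a homotopy-colimit computation governed by henselian liftings. For the first point, $\calF_{\A^1}=\Lrep_{\A^1}u^*(\calF)$ is by construction a value of the $\A^1$-localization endofunctor on $\HHtriv(\SmatBlZ)$, hence $\A^1$-local; concretely, using the description of $\Lrep_{\A^1}$ as $\mathcal G(-)\mapsto\mathcal G((-\times_B\Delta^\bullet_B)^h_Z)$, the usual simplicial contraction shows the resulting presheaf is $\A^1$-invariant on both families $X_Z$ and $\XhZ$. The same applies to $\calF_{\A^1}^\fr=\Lrep_{\A^1}u^*_\fr(\calF)$, so the real content is rigidity.

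I would first reduce rigidity to the case of an $\A^1$-local presheaf on $\SmatBcZ$, by showing $u^*$ (and $u^*_\fr$) commutes with $\Lrep_{\A^1}$. Since $u^*$ is a left Kan extension along a fully faithful functor it sends representables to representables, so it takes each $\A^1$-projection $(\A^1\times X)^h_Z\to\XhZ$ on $\SmatBcZ$ to the corresponding map of representables on $\SmatBlZ$, which is inverted by every $\A^1$-local presheaf; being cocontinuous, $u^*$ then preserves all $\A^1$-equivalences. Combined with \Cref{lm:uusA1loc}, which says $u^*$ and $u^*_\fr$ preserve $\A^1$-local objects, this exhibits $u^*(\Lrep_{\A^1}\calF)$ as the $\A^1$-localization of $u^*(\calF)$, i.e.\ $\calF_{\A^1}\simeq u^*(\Lrep_{\A^1}\calF)$ with $\Lrep_{\A^1}\calF$ $\A^1$-local on $\SmatBcZ$ (and likewise in the framed setting). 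Hence it suffices to prove: if $\mathcal G\in\HHtriv(\SmatBcZ)$, resp.\ $\HHtriv(\Fr_+(\SmatBcZ))$, is $\A^1$-local, then $u^*\mathcal G$, resp.\ $u^*_\fr\mathcal G$, is rigid.

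For this, fix $X\in\SmatB$. Full faithfulness of $u$ gives $u^*\mathcal G(\XhZ)=\mathcal G(\XhZ)$, while the Kan extension formula expresses $u^*\mathcal G(X_Z)$ as the homotopy colimit of $\mathcal G(W^h_Z)$ over the comma category of pairs $(W,f)$ with $W\in\SmatB$ and $f$ a $B$-morphism $X_Z\to W^h_Z$, equivalently a $Z$-morphism $X_Z\to W_Z$; the canonical object $(\XhZ,X_Z\hookrightarrow\XhZ)$ induces the comparison map $\mathcal G(\XhZ)\to u^*\mathcal G(X_Z)$, which is exactly the rigidity map. The geometric input is that, $(\XhZ,X_Z)$ being a henselian pair and $W\to B$ smooth, every $Z$-morphism $f\colon X_Z\to W_Z$ extends to a $B$-morphism $\widetilde f\colon\XhZ\to W^h_Z$ and any two extensions are $\A^1$-homotopic — this is \Cref{lm:AffSmHenselianLift}, with \Cref{lm:LiftSmat} supplying enough objects over $Z$; in the framed case the analogue is \Cref{lm:liftFr}. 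Since $\mathcal G$ is $\A^1$-local, $\A^1$-homotopic morphisms induce the same map on $\mathcal G$, so the liftings $\mathcal G(W^h_Z)\xrightarrow{\mathcal G(\widetilde f)}\mathcal G(\XhZ)$ assemble into a cocone, and both composites of it with the comparison map are $\A^1$-homotopic to the identity; this identifies $u^*\mathcal G(X_Z)\simeq\mathcal G(\XhZ)$, i.e.\ gives rigidity. The framed argument is identical with $u^*_\fr$ and \Cref{lm:liftFr}.

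The step I expect to be the main obstacle is the last one: turning the existence and $\A^1$-homotopy-uniqueness of henselian liftings into an honest collapse of the homotopy colimit. Because liftings are genuinely non-unique, there is no strict terminal object in the comma category; the point is to use $\A^1$-locality of $\mathcal G$ to see that $(\XhZ,\id)$ becomes cofinal after passing to $\A^1$-homotopy classes of morphisms, and the cofinality bookkeeping needed here runs parallel to the one already carried out in the proof of \Cref{lm:uusA1loc}.
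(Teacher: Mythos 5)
Your treatment of $\A^1$-locality and the reduction to ``$u^*\mathcal G$ is rigid for $\mathcal G$ $\A^1$-local'' are fine, but the core of the proof --- collapsing the homotopy colimit $u^*\mathcal G(X_Z)\simeq\hocolim_{(W,f)}\mathcal G(W^h_Z)$ onto $\mathcal G(\XhZ)$ --- is exactly where the proposal stops, and the ingredient you cite for it does not deliver what you need. \Cref{lm:AffSmHenselianLift} asserts only \emph{surjectivity} of $\SmAff_B(U^h_T,X)\to\SmAff_B(T,X)$, i.e.\ existence of liftings; it says nothing about two liftings being $\A^1$-homotopic. That uniqueness statement is itself a nontrivial henselian-lifting problem: given $\widetilde f_0,\widetilde f_1\colon U^h_Z\to\XhZ$ restricting to the same $f$ on $U_Z$, one must extend the resulting map off the glued scheme $(\partial\Delta^1_B\times U)^h_Z\amalg_{(\partial\Delta^1_B\times U_Z)}(\Delta^1_B\times U)_Z$ to $(\Delta^1_B\times U)^h_Z$, which requires knowing that this pushout exists as a scheme (the paper invokes \cite[Corollary 3.9]{Schwede-gluing}) and that the inclusion is a henselian pair. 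Moreover, even granting homotopy-uniqueness, your cocone of maps $\mathcal G(\widetilde f)$ only commutes with the structure maps of the colimit diagram up to (non-specified) $\A^1$-homotopy, so identifying the homotopy colimit with $\mathcal G(\XhZ)$ requires coherent higher homotopies, not just the $1$-truncated data. The cofinality argument in \Cref{lm:uusA1loc} will not rescue this: there the index subcategory is cofinal on the nose, whereas here the non-uniqueness of liftings is intrinsic.

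The paper's proof resolves precisely this by a different organization: reduce to representable $\calF=\SmatBcZ(-,\XhZ)$ (every object is a homotopy colimit of representables, and both locality and rigidity pass to homotopy colimits), use the explicit cosimplicial model $\calF_{\A^1}(-)=\SmatBlZ((\Delta^\bullet_B\times-)^h_Z,\XhZ)$, and show directly that $\calF_{\A^1}(U^h_Z)\to\calF_{\A^1}(U_Z)$ is a \emph{trivial Kan fibration} by solving the lifting problem against $\partial\Delta^n\hookrightarrow\Delta^n$ for all $n$; each such lifting is an instance of \Cref{lm:AffSmHenselianLift} applied to the henselian pair formed by the glued scheme above inside $(\Delta^n_B\times U)^h_Z$. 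The $n=0$ and $n=1$ cases are your existence and homotopy-uniqueness; the higher $n$ supply exactly the coherence your colimit argument is missing. To complete your route you would either have to reprove this levelwise lifting statement anyway, or supply an independent coherence argument; as written, the proof is incomplete.
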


\begin{proof}
By appeal to \cite[Lemma 2.8]{Du} we may assume $\calF=\SmatBcZ(-,\XhZ)$, 
$X\in \SmatB$, 
by writing any simplicial presheaf as a homotopy colimit of representable ones.
In this case, 
we have 
\[
u^*(\calF)=\SmatBlZ(-,\XhZ),
\]
and since $\SmatBcZ(-,\XhZ)$ is fibrant in $\Spc_s(\SmatBcZ)$,
we have an isomorphism
\begin{equation}\label{eq:calFA1representedbyXhZ}
\calF_{\A^{1}}^{[1]}(-)
\cong
\SmatBlZ((\Delta^\bullet_B \times -)^h_Z, \XhZ).
\end{equation}
We claim that $\calF_{\A^{1}}^{[1]}(U^h_Z)\to \calF_{\A^{1}}^{[1]}(U \times Z)$ is a trivial fibration of 
simplicial sets for all $U\in \SmAff_B$. 

By \Cref{lm:liftingsA1homotopy} there exists a lifting in every diagram in $\Sch_B$ of the form:
\begin{equation}
\label{eq:diag:trilift-ICylXhZ}
\xymatrix{
(\partial\Delta^n_{B}\times U)^h_Z\amalg_{(\partial\Delta^n_{B}\times U_Z)} (\Delta^n_B\times U)_Z
\ar[d]\ar[r]^-{p} & \XhZ\\
(\Delta^n_{B}\times U)^h_Z \ar@{.>}[ur]
}
\end{equation}

Owing to \eqref{eq:calFA1representedbyXhZ} a lifting in \eqref{eq:diag:trilift-ICylXhZ} is equivalent 
to a lifting in the diagram:
\begin{equation}
\label{eq:deltaDeltaDeltaF(UhZ)F(UZ)}
\xymatrix{
\partial\Delta^n\ar[d]\ar[r] & \calF_{\A^{1}}^{[1]}(U^h_Z) \ar[d]\\ 
\Delta^n \ar@{.>}[ru]\ar[r] & \calF_{\A^{1}}^{[1]}(U_Z) 
}
\end{equation}
Indeed, by \eqref{eq:calFA1representedbyXhZ}, we have 
\[
\sSet_*(\Delta^n, \calF_{\A^{1}}^{[1]}(U^h_Z)) 
\cong
\SmBlZ( \Delta^n_{B,Z}\times_{B,Z} U^h_Z, X^h_Z )
\cong
\Sch_B( (\Delta^n_{B}\times U)^h_Z, X^h_Z ),
\]
and commutative squares of the form \eqref{eq:deltaDeltaDeltaF(UhZ)F(UZ)} 
are in bijection with morphisms $p$ in \eqref{eq:diag:trilift-ICylXhZ}.
Since the left vertical morphism in \eqref{eq:deltaDeltaDeltaF(UhZ)F(UZ)} is a 
generating cofibration of simplicial sets, 
the right vertical morphism is a trivial Kan fibration.
It follows that there is a naturally induced equivalence of simplicial sets
\[
\calF_{\A^{1}}^{[1]}(U^h_Z)
\xrightarrow{\simeq} 
\calF_{\A^{1}}^{[1]}(U_Z).
\] 
By applying $\Lrep_{\A^{1}}$ and using that $\Lrep_{\A^{1}}\simeq \Lrep_{\A^{1}} \Lrep_{\A^{1}}^{[1]}$, 
we get the equivalence
\[
\calF_{\A^{1}}(U^h_Z)
\xrightarrow{\simeq} 
\calF_{\A^{1}}(U_Z).
\] 
This shows that $\calF_{\A^{1}}$ is a rigid presheaf on $\SmBlZ$.
We may use the same approach for $\calF_{\A^{1}}^\fr$.
\end{proof}

For a fixed $Y\in \SmatZ$, 
we consider the comma categories 
\[
\begin{array}{lll}
Y/\SmatZ &:=& \{f\colon Y\to U_Z \mid U_Z\in \SmatZ\}, \\
Y/\SmatBcZ &:=& \{f\colon Y\to U^h_Z \mid U^h_Z\in \SmatBcZ\}. 
\end{array}
\]

\begin{prop}
\label{prop:trfhocol}
For any $X\in\SmatB$, $Y\in \SmatZ$ there is a canonically induced trivial fibration of simplicial sets
\begin{equation}\label{eq:hocolimSmatBcZhocolimSmatZ}
\hocolimABZ_{Y/\SmatBcZ}(X^h_Z)\rightarrow\hocolimAZ_{Y/\SmatZ}(X_Z).  
\end{equation}
Here $X^h_Z$ and $X_Z$ are viewed as representable presheaves on $\SmatBcZ$ and $\SmatZ$, 
respectively; see \Cref{def:hocolimA} for unexplained notation.
Analogous results hold for $\Fr_+(\SmatBcZ)$ and $\Fr_+(\SmatZ)$.
\end{prop}
\begin{proof}
We may identify \eqref{eq:hocolimSmatBcZhocolimSmatZ} with the morphism 
\[
\hocolim^{\Delta_{B*Z}}_{Y/\SmatBcZ}(X^h_Z)
\rightarrow
\hocolim^{\Delta_{B*Z}}_{Y/\SmatZ}(X^h_Z),
\]
where, at both sides, $X^h_Z$ denotes the representable presheaf on $\SmatBlZ$.
We claim that there exists a lifting in any diagram of simplicial sets of the form
\begin{equation}
\label{eq:deltaDeltaDeltaF(UhZ)F(UZ)2}
\xymatrix{
\partial\Delta^n\ar[d]\ar[r] & \hocolim^{\Delta_{B*Z}}_{Y/\SmatBcZ}(X^h_Z) \ar[d]\\ 
\Delta^n \ar@{.>}[ru]\ar[r] & \hocolim^{\Delta_{B*Z}}_{Y/\SmatZ}(X^h_Z). 
}
\end{equation}
The lower and upper horizontal maps in \eqref{eq:deltaDeltaDeltaF(UhZ)F(UZ)2} give rise to the respective elements
\[
\beta\in N(Y/\SmatZ)_n=\sSet(\Delta^n,N(Y/\SmatZ)),\; 
\partial\in \sSet(\partial\Delta^n,N(Y/\SmatBcZ)).
\] 
Since the diagram \eqref{eq:deltaDeltaDeltaF(UhZ)F(UZ)2} commutes, 
we have the equality
\begin{equation}
\label{eq:partialbeta}
\partial_Z 
= 
\partial\beta.
\end{equation} 
Here we write
$\partial\gamma$ for the composite of the canonical inclusion $\partial\Delta^n\to \Delta^n$ and some 
$\gamma\colon \Delta^n\to K$.
Likewise, 
$\gamma_Z\colon K\to N(Y/\SmatZ)$ denotes the composite of some $\gamma\colon K\to N(Y/\SmatBcZ)$ with 
the canonical map $N(Y/\SmatBcZ)\to N(Y/\SmatZ)$.
In view of \Cref{lm:hocolimA(K)}, 
we may replace \eqref{eq:deltaDeltaDeltaF(UhZ)F(UZ)2} with the equivalent lifting problem in $\Pre(\SmatBlZ)$
\begin{gather}
\nonumber 
\label{eq:diag:trilift-ICylXhZ_parial_beta}
\xymatrix{
\Cyl^{B,Z}_{\partial \Delta^n}\partial \amalg_{\Cyl^{Z}_{\partial \Delta^n}\partial} \Cyl^{Z}_{\Delta^n}\beta
\ar@{.>}[d]\ar[r] & \XhZ\\
\Cyl^{B,Z}_{\Delta^n}\alpha \ar@{.>}[ur]
,}
\\\label{eq:alphabetadeta}
\alpha\in N(Y/\SmatBcZ)_n=\sSet(\Delta^n,N(Y/\SmatBcZ)), \quad
\partial = \partial\alpha, \quad
\partial\alpha_Z=\beta.
\end{gather}
Here the simplex $\alpha$ and the dotted maps in \eqref{eq:alphabetadeta} correspond uniquely to the lifting in 
\eqref{eq:deltaDeltaDeltaF(UhZ)F(UZ)2}, 
and the equalities in \eqref{eq:alphabetadeta} are equivalent to the commutativity of the triangles in 
\eqref{eq:deltaDeltaDeltaF(UhZ)F(UZ)2}.

By \Cref{lm:LiftSmat} and \Cref{lm:AffSmHenselianLift} it follows that for every sequence  
\[
\beta 
= 
\left( 
(U_0)_Z\xrightarrow{\overline{f}_1}(U_1)_Z\xrightarrow{\overline{f}_2}\cdots\xrightarrow{\overline{f}_n}(U_n)_Z 
\right)
\]
of morphisms in $\SmatZ$, 
there is a corresponding sequence
\[
\alpha 
= 
\left( 
(U_0)^h_Z\xrightarrow{f_1}(U_1)^h_Z\xrightarrow{f_2}\cdots\xrightarrow{f_n}(U_n)^h_Z 
\right)
\]
such that $\alpha_Z=\beta$.
Moreover, 
the uniqueness assertion in \Cref{lm:LiftSmat} implies $\partial\alpha= \partial$ for any $\partial$ 
that satisfies property \eqref{eq:partialbeta}.
Thus, 
for any $\partial$ and $\beta$ as above,
there exists an $\alpha$ as in \eqref{eq:alphabetadeta}.
To conclude, 
the second claim of \Cref{lm:liftingsA1homotopy} shows that for any such sequence $\alpha$ there exists a lifting 
in the diagram
\begin{equation}
\label{eq:diag:trilift-ICylXhZ_partialalpha}
\xymatrix{
\Cyl^{B,Z}_{\partial \Delta^n}\partial\alpha \amalg_{\Cyl^{Z}_{\partial \Delta^n}\partial\alpha_Z} \Cyl^{Z}_{\Delta^n}\alpha_Z
\ar[d]\ar[r] & \XhZ\\
\Cyl^{B,Z}_{\Delta^n}\alpha \ar@{.>}[ur]
}
\end{equation}
in $\Pre(\SmatBlZ)$.
\end{proof}

If $X^h_Z\in \SmatBcZ$, 
we use the shorthand notation 
\[
\Lrep_{\A^{1}}^{B,Z,[1]}(X^h_Z) := \SmatBcZ(-\times_{B,Z}\Delta^\bullet_{B,Z},X^h_Z).
\]
Similar notations will be used for the categories $\Fr_+(\SmatZ)$ and $\Fr_+(\SmatBcZ)$.

In the formulation of the next result, 
we use the adjunctions $u^*\dashv u_*$, $u^*_\fr\dashv u_*^\fr$ in \eqref{eq:ulsuus}, \eqref{eq:ulsuusfr}, 
respectively, 
and furthermore the adjunctions
\[
i^*_{B,Z} 
\colon 
\HHtriv(\SmatBcZ)
\xrightleftarrows{1em}
\HHtriv(\Smat_Z)
\colon
i^{B,Z}_*,
\]
\[
i^*_{B,Z,\fr}
\colon 
\HHtriv(\Fr_*(\SmatBcZ))
\xrightleftarrows{1em}
\HHtriv(\Fr_*(\Smat_Z))
\colon 
i^{B,Z,\fr}_*.
\]

\begin{prop}
\label{prop:LrepA1uuscommute}
The functors $u^*$, $i^*_{B,Z}$, $u^*_\fr$, $i^*_{B,Z,\fr}$ commute with the $\A^{1}$-localization $\Lrep_{\A^{1}}$.
In particular, 
they preserve $\A^{1}$-local objects.
\end{prop}
\begin{proof}
We show that both $u^*$ and $i^*_{B,Z}$ commutes with $\Lrep_{\A^{1}}^{[1]}$ 
($u^*_\fr$, $i^*_{B,Z,\fr}$ can be dealt with similarly).  
If $\calF\in \HHtriv(\SmatBcZ)$, 
then $u^*\calF\big|_{\SmatBcZ}\cong \calF$ and $u^*\calF\big|_{\SmatBcZ}=i^*_{B,Z}\calF$.
Thus it suffices to prove there is a natural isomorphism 
\[
i^*_{B,Z}(\Lrep_{\A^{1}}^{B,Z,[1]}(\calF))
\cong 
\Lrep_{\A^{1}}^{Z,[1]}(i_{B,Z}^*(\calF))
\]
in $\HHtriv(\SmatZ)$.
Here we use the functors $\Lrep_{\A^{1}}^{Z,[1]}\colon \HHtriv(\SmatZ)\to \HHtriv(\SmatZ)$, 
$\Lrep_{\A^{1}}^{B,Z,[1]} \colon \HHtriv(\SmatBcZ)\to \HHtriv(\SmatBcZ)$.
As in the proof of \Cref{prop:SmoothLift->A1lrig:origin}, 
we may assume that $\calF=\SmatBcZ(-,\XhZ)$, 
$X\in \SmatB$. 
The presheaf $\SmatBcZ(-,\XhZ)$ is fibrant in $\Spc_s(\SmatBcZ)$. 
Hence we are reduced to showing a natural isomorphism
\[
i^*_{B,Z}(\SmatBcZ(-\times_{B,Z}\Delta^\bullet_{B,Z},X^h_Z))
\cong
\SmatZ(-\times_{Z}\Delta^\bullet_{Z},X^h_Z).
\]
Working pointwise, 
in the unstable homotopy category $\HH_s$ we have the isomorphisms 
\begin{gather}
\label{eq:Zvalues}
\Lrep_{\A^{1}}^{Z,[1]}(X_Z)(Y) 
\cong
\hocolim_{U_Z\in Y/\SmatB}\Lrep_{\A^{1}}^{Z,[1]}(X_Z)(U_Z),
\\
\label{eq:uusZvalues}
i^*_{B,Z}(\Lrep_{\A^{1}}^{B,Z,[1]}(X^h_Z))(Y)
\cong 
\hocolim_{U^h_Z\in Y/\SmatBcZ}\Lrep_{\A^{1}}^{B,Z,[1]}(X^h_Z)(U^h_Z).
\end{gather}
On the right-hand sides of \eqref{eq:Zvalues}, \eqref{eq:uusZvalues},  
the objects are homotopy colimits taken in the category of simplicial sets.
We note that \eqref{eq:Zvalues} holds because $Y$ is the initial object of $Y/\SmatZ$.
The isomorphism in \eqref{eq:uusZvalues} holds because $i^*_{B,Z}(\calF(-\times \Delta^\bullet_{B,Z}))$ 
is the left Kan extension 
of $\calF(-\times \Delta^\bullet_{B,Z})=\SmatBcZ(-\times_{B,Z}\Delta^\bullet_{B,Z},X^h_Z)$ along $\SmatBcZ\to \SmatZ$.
For any $Y\in \SmatZ$,
we use \Cref{prop:trfhocol} and 
\Cref{lm:hocolimLrepA1SetPresheaf} to conclude that there 
is a natural isomorphism
\[
\hocolim_{U^h_Z\in Y/\SmatBcZ}\Lrep_{\A^{1}}^{B,Z,[1]}
(X^h_Z)(U^h_Z) 
\xrightarrow{\cong}
\hocolim_{U_Z\in Y/\SmatB}\calF(U_Z).
\]
Thus commutativity holds for $\Lrep_{\A^{1}}^{[1]}$.
For $\Lrep_{\A^{1}}^{[l]}$ and $\Lrep_{\A^{1}}$ we may now 
appeal to \eqref{eq:LAl->LA}.
\end{proof}

\begin{lemma}
\label{lm:iusBlZsimeqrds}
The functor $i^*_{B*Z}\colon \HHtriv(\SmatBlZ)\to 
\HHtriv(\SmatZ)$, see \eqref{eq:Sm-diagrams-recipe} in 
\Cref{lemma:Sm-diagrams-recipe}, 
is isomorphic to the restriction 
$r_*$ along the embedding $r^{-1}\colon \SmatZ\to \SmatBlZ$ 
of \Cref{rem:rmorpihsmofsites}.
The same holds in the framed setting.
\end{lemma}
\begin{proof}
We consider $\SmatBlZ$, $\SmatZ$, 
and reduce to representable functors on the category 
$\SmatBlZ$. 
Since the canonical embedding $\SmatBcZ\amalg \SmatZ\to 
\SmatBlZ$ is essentially surjective, there are two cases: 
\begin{itemize}
\item[(1)] $\calF=\SmatBlZ(-,\XhZ)$, 
$X\in \SmatB$,
\item[(2)]
$\calF=\SmatBlZ(-,Y)$, $Y\in \SmatZ$.
\end{itemize}
Since $i^*_{B*Z}$ is a left Kan extension, 
we have 
\[
i^*_{B*Z}(\SmatBlZ(-,\XhZ))
\cong
\SmatZ(-,X_Z),
\quad
i^*_{B*Z}(\SmatBlZ(-,Y))
\cong
\SmatZ(-,Y).
\]
The claim follows now from the isomorphisms 
\[
\SmatZ(-,X_Z)\cong r_*\SmatBlZ(-,\XhZ), 
\quad
\SmatZ(-,Y)\cong r_*\SmatBlZ(-,Y).
\]
\end{proof}

\begin{lemma}
\label{lm:A1loc_id=iusids(Smat)}
For every $\A^{1}$-local object $\calF\in \HHtriv(\SmatBcZ)$ the unit of the adjunction
\[
i^*_{B,Z}
\colon 
\HHtriv(\SmatBcZ)
\xrightleftarrows{1em}
\HHtriv(\Smat_Z)
\colon 
i^{B,Z}_*
\]
induces a natural isomorphism 
\[
\calF
\xrightarrow{\cong} 
i^{B,Z}_* i_{B,Z}^*(\calF).
\]
The same holds for every $\A^{1}$-local object $\calF\in \HHtriv(\Fr_+(\SmatBcZ))$ in the framed setting.
\end{lemma}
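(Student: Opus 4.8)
The plan is to transfer the statement from $\SmatBcZ$ to the larger category $\SmatBlZ$, where it reduces to the elementary observation that a rigid object is recovered from its restriction to $\SmatZ$. Recall that the adjunction $(i^*_{B,Z},i^{B,Z}_*)$ is the composite of the adjunction $(u^*,u_*)$ between $\HHtriv(\SmatBcZ)$ and $\HHtriv(\SmatBlZ)$ recalled before \Cref{lm:uusA1loc} with an adjunction $(i^*_{B*Z},i^{B*Z}_*)$ between $\HHtriv(\SmatBlZ)$ and $\HHtriv(\SmatZ)$; as in the proof of \Cref{lm:iusBlZsimeqrds}, $i^*_{B*Z}$ is the left Kan extension along the special-fibre functor $q\colon\SmatBlZ\to\SmatZ$, $\XhZ,\XtZ\mapsto\XtZ$, and hence $i^{B*Z}_*$ is restriction along $q$. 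In particular $i^*_{B,Z}=i^*_{B*Z}\circ u^*$ and $i^{B,Z}_*=u_*\circ i^{B*Z}_*$, and since $u$ is fully faithful and $u^*$ is a left Kan extension one has $u_*u^*\simeq\id$. It therefore suffices to prove that the unit $\calG\to i^{B*Z}_*i^*_{B*Z}(\calG)$ is an equivalence for $\calG=u^*\calF$, and then apply $u_*$.

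First I would record that for an \emph{arbitrary} $\calG\in\HHtriv(\SmatBlZ)$ the unit $\calG\to i^{B*Z}_*i^*_{B*Z}(\calG)$ is, on the object $\XhZ$, the canonical restriction $\calG(\XhZ)\to\calG(\XtZ)$ along the special-fibre inclusion $\XtZ\hookrightarrow\XhZ$, and is the identity on $\XtZ$; hence this unit is an equivalence precisely when $\calG$ is rigid. Now let $\calF\in\HHtriv(\SmatBcZ)$ be $\A^1$-local. By \Cref{lm:uusA1loc} the object $u^*\calF$ is $\A^1$-local, so the $\A^1$-localization map $u^*\calF\to\Lrep_{\A^1}u^*\calF=\calF_{\A^1}$ is an equivalence, and $\calF_{\A^1}$ is rigid by \Cref{prop:SmoothLift->A1lrig:origin}. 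Thus $u^*\calF$ is rigid, whence $u^*\calF\to i^{B*Z}_*i^*_{B*Z}(u^*\calF)$ is an equivalence. Applying $u_*$, using $u_*u^*\simeq\id$ together with the factorizations $i^*_{B,Z}=i^*_{B*Z}\circ u^*$ and $i^{B,Z}_*=u_*\circ i^{B*Z}_*$, produces an equivalence $\calF\xrightarrow{\simeq}i^{B,Z}_*i^*_{B,Z}(\calF)$; chasing the triangle identities for the two composed adjunctions identifies this map with the unit of $(i^*_{B,Z},i^{B,Z}_*)$.

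The framed case is word-for-word the same, with $\SmatBcZ,\SmatBlZ,\SmatZ$ replaced by $\Fr_+(\SmatBcZ),\Fr_+(\SmatBlZ),\Fr_+(\SmatZ)$, with $u^*$ replaced by $u^*_\fr$, and $\calF_{\A^1}$ by $\calF_{\A^1}^\fr$, invoking the framed assertions in \Cref{lm:uusA1loc} and \Cref{prop:SmoothLift->A1lrig:origin}. Alternatively, to avoid passing through $\SmatBlZ$ at once, one may reduce to representable $\calF$: both $i^*_{B,Z}$ and $i^{B,Z}_*$ preserve $\A^1$-equivalences and commute with homotopy colimits (the latter because $i^{B,Z}_*$ is a restriction functor), so the unit is a natural transformation of homotopy-colimit-preserving endofunctors of $\HH_{\A^1}(\SmatBcZ)$ and need only be checked on the generators $\Lrep_{\A^1}\SmatBcZ(-,\XhZ)$, to which \Cref{prop:SmoothLift->A1lrig:origin} applies directly.

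The essential content, rigidity of $\calF_{\A^1}$, is already contained in \Cref{prop:SmoothLift->A1lrig:origin}; what remains is bookkeeping — confirming that $(i^*_{B,Z},i^{B,Z}_*)$ is exactly the stated composite through $\HHtriv(\SmatBlZ)$, that $i^{B*Z}_*$ is restriction along the special-fibre functor (so that its unit has the stated pointwise description), and that $u_*u^*\simeq\id$. I expect the one mildly delicate point to be matching the composite of the two units with the unit of the composite adjunction, which is purely formal.
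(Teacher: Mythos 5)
Your proposal is correct and follows essentially the same route as the paper's proof: factor the adjunction through $\HHtriv(\SmatBlZ)$, use that the unit $\calF\to u_*u^*\calF$ is an equivalence, deduce rigidity of $u^*\calF\simeq\calF_{\A^1}$ from \Cref{lm:uusA1loc} and \Cref{prop:SmoothLift->A1lrig:origin}, and conclude via the identification $i^*_{B*Z}\simeq r_*$ from \Cref{lm:iusBlZsimeqrds}. The extra bookkeeping you flag (matching the composite of units with the unit of the composite adjunction) is indeed the only formal point left implicit in the paper, and your treatment of it is fine.
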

\begin{proof}
The functors $i_{B,Z}^*$ and $i^{B,Z}_*$ admit factorizations as in:
\[
\begin{tikzcd} 
\HHtriv(\SmatBcZ)\ar[rr,bend left,"i^*_{B,Z}"]\ar{r}{u^*} &\HHtriv(\SmatBlZ)\ar{r}{i^*_{B*Z}}& \HHtriv(\Smat_Z)\\
\HHtriv(\SmatBcZ)&\HHtriv(\SmatBlZ)\ar{l}[swap]{u_*}& \HHtriv(\Smat_Z)\ar{l}[swap]{i_*^{B*Z}} \ar[ll,bend left,swap,"i_*^{B,Z}"]
\end{tikzcd}
\]
Here $u^*$ and $u_*$ are obtained from the embedding $\SmatBcZ\to \SmatBlZ$,
while $i^*_{B*Z}$, and $i_*^{B*Z}$ are obtained from the functor $\SmatBlZ\to \Smat_Z$
  given by $X^h_Z,Y\mapsto X_Z, Y$ for
   $X\in \Smat_B$, $Y\in \SmatZ$.
By \Cref{lm:iusBlZsimeqrds}, 
we have $i^*_{B*Z}\cong r_*$ obtained from the embedding $\SmatZ\to \SmatBlZ$.
The unit of the adjunction $\calF\to u_*u^*(\calF)$ is an isomorphism since the functor $u^{-1}\colon \SmatBcZ\to\SmatBlZ$ 
is fully faithful.
Since $\calF$ is $\A^{1}$-local, the presheaf $u^*(\calF)$ is rigid according to 
\Cref{prop:SmoothLift->A1lrig:origin,prop:LrepA1uuscommute}.
Hence there are isomorphisms
\[
u^*(\calF) 
\xrightarrow{\cong} 
i_*^{B*Z} r_* u^*(\calF)
\xrightarrow{\cong} 
i_*^{B*Z} i^*_{B*Z} u^*(\calF),
\] 
and we deduce
\[
i_*^{B,Z}i^*_{B,Z}(\calF)
\cong
u_* i_*^{B*Z} i^*_{B*Z} u^*(\calF)
\cong
u_* u^*(\calF)
\cong 
\calF.
\]
The proof for framed correspondences is similar.
\end{proof}

\begin{lemma}
\label{lm:motloc_id=iusids(Sm)}
For every motivic local object $\calF\in \HHtriv(\SmBcZ)$ the unit of the adjunction
\[
i^*_{B,Z}
\colon 
\HHtriv(\SmBcZ)
\xrightleftarrows{1em}
\HHtriv(\Sm_Z)
\colon 
i_*^{B,Z}
\]
induces an isomorphism 
\[
\calF\xrightarrow{\cong} i_*^{B,Z} i^*_{B,Z}(\calF).
\]
The same holds for every motivic local object $\calF\in \HHtriv(\Fr_+(\SmBcZ))$.
\end{lemma}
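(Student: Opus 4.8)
The plan is to deduce this from its $\SmatBcZ$-version \Cref{lm:A1loc_id=iusids(Smat)} by transporting along the reduction equivalences of \Cref{section:ReductionSmat}. First I would set up the bookkeeping. The corollary following \Cref{lm:HHmotBwtBconsetw} provides, for $Z\not\hookrightarrow B$, a canonical equivalence $R_B\colon\HH_\mot(\SmBcZ)\xrightarrow{\simeq}\HH_\mot(\SmatBcZ)$ induced by restriction, and, applied with base scheme $Z$ and the identity closed immersion $Z\not\hookrightarrow Z$ (for which $\Sm_{Z,Z}=\SmZ$ and $\Smat_{Z,Z}=\SmatZ$), a further equivalence $R_Z\colon\HH_\mot(\SmZ)\xrightarrow{\simeq}\HH_\mot(\SmatZ)$. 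By the factorization used in the proof of \Cref{lm:A1loc_id=iusids(Smat)} together with \Cref{lemma:Sm-diagrams-recipe}, in both the $\SmBcZ$- and the $\SmatBcZ$-setting the right adjoint $i_*^{B,Z}$ is restriction along $X^h_Z\mapsto X_Z$, so $i_*^{B,Z}\calF(X^h_Z)=\calF(X_Z)$. From this formula one checks directly that $i_*^{B,Z}$ preserves $\A^1$-local objects, and, being the direct image along the morphism of sites $\SmZ^\nis\to\SmBcZ^\nis$ of \Cref{lemma:Sm-diagrams-recipe}, it also preserves Nisnevich-local objects; dually, $i^*_{B,Z}$ is the exact inverse image of that morphism of sites, and preserves $\A^1$-local objects by the bookkeeping of (the $\SmatBcZ$-case of) \Cref{lm:uusA1loc}. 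Hence $i^*_{B,Z}\dashv i_*^{B,Z}$ restricts to an adjunction $\HH_\mot(\SmBcZ)\rightleftarrows\HH_\mot(\SmZ)$, and likewise on the $\SmatBcZ$-side; and since $X\in\SmatB$ forces $X_Z\in\SmatZ$ the square $R_B\circ i_*^{B,Z}\simeq i_*^{B,Z}\circ R_Z$ commutes.

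The rest is formal. Passing to mates over the equivalences $R_B$, $R_Z$ of motivic homotopy categories, the previous square gives $R_Z\circ i^*_{B,Z}\simeq i^*_{B,Z}\circ R_B$, and $R_B$ carries the unit $\id\to i_*^{B,Z}i^*_{B,Z}$ of the $\SmBcZ$-adjunction to that of the $\SmatBcZ$-adjunction. So, for a motivic-local $\calF\in\HHtriv(\SmBcZ)$ with image $\calF':=R_B(\calF)\in\HH_\mot(\SmatBcZ)$, the unit $\eta_\calF\colon\calF\to i_*^{B,Z}i^*_{B,Z}(\calF)$ is carried by $R_B$ (up to the compatibility isomorphisms above) to $\eta_{\calF'}\colon\calF'\to i_*^{B,Z}i^*_{B,Z}(\calF')$. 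The latter is an equivalence because $\calF'$ is in particular $\A^1$-local and \Cref{lm:A1loc_id=iusids(Smat)} applies; since $R_B$ is an equivalence on $\HH_\mot$ and both $\calF$ and $i_*^{B,Z}i^*_{B,Z}(\calF)$ lie in $\HH_\mot(\SmBcZ)$, it follows that $\eta_\calF$ is an equivalence. The framed statement is obtained verbatim from the framed halves of \Cref{lm:A1loc_id=iusids(Smat)}, \Cref{prop:SmAffSmniseqivalence}, and \Cref{lm:uusA1loc}.

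The main obstacle I expect is verifying that $i^*_{B,Z}$ sends motivic-local objects of $\HHtriv(\SmBcZ)$ into the subcategory on which $R_Z$ is an equivalence --- equivalently, that it preserves $\A^1$-locality. This is exactly where the hypothesis ``motivic local'' (rather than merely ``$\A^1$-local'') enters, since the reduction functors of \Cref{section:ReductionSmat} become equivalences only after Nisnevich localization; it is also the reason one cannot simply imitate the proof of \Cref{lm:A1loc_id=iusids(Smat)} directly, as that argument would rest on a lift of $\SmAff_Z$-schemes to $\SmAffBcZ$ in the spirit of \Cref{lm:LiftSmat}, and hence on $Z$ having an affine open neighbourhood in $B$.
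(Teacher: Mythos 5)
Your overall strategy---reduce to $\SmatBcZ$, where \Cref{lm:A1loc_id=iusids(Smat)} applies, and then propagate back to $\SmBcZ$---is the same as the paper's, but the mechanism you use for the second step has a genuine gap. The paper observes that the unit is a sectionwise equivalence on $\SmatBcZ$-sections and then concludes by Nisnevich descent, using only that $\calF$ is Nisnevich local and that every object of $\SmBcZ$ (resp.\ $\Sm_Z$) admits a Nisnevich covering from $\SmatBcZ$ (resp.\ $\Smat_Z$), cf.\ \Cref{lm:SmSmatZarCov}. You instead transport along the equivalence $R_B\colon\HH_\mot(\SmBcZ)\simeq\HH_\mot(\SmatBcZ)$, and your final inference ``since $R_B$ is an equivalence on $\HH_\mot$ and both $\calF$ and $i_*^{B,Z}i^*_{B,Z}(\calF)$ lie in $\HH_\mot(\SmBcZ)$'' requires knowing \emph{beforehand} that $i_*^{B,Z}i^*_{B,Z}(\calF)$---with the underived, sectionwise-defined functors appearing in the statement---is motivic local. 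Your justification, namely that $i^*_{B,Z}$ is ``the exact inverse image of a morphism of sites'' and hence preserves Nisnevich-local objects, does not work: $i^*_{B,Z}$ is a homotopy left Kan extension, and left adjoints do not preserve local (descent-satisfying) simplicial presheaves. Worse, the motivic-locality of $i^*_{B,Z}(\calF)$ is essentially equivalent to the lemma itself (once the unit is an equivalence, the target is equivalent to $\calF$ and hence motivic local), so the argument is circular at exactly the point where the content lies.

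A second, related issue: even granting that the adjunction descends to the motivic homotopy categories, transporting along $R_B$ as an equivalence of $\HH_\mot$ only identifies the \emph{derived} unit $\calF\to i_*^{B,Z}\Lrep_\mot i^*_{B,Z}(\calF)$, not the sectionwise unit asserted by the lemma; the two agree precisely when $i^*_{B,Z}(\calF)$ is already motivic local, which is the point at issue. (The intermediate compatibility $R_Z\circ i^*_{B,Z}\simeq i^*_{B,Z}\circ R_B$ ``by mates'' is likewise only available at the level of $\HH_\mot$, not sectionwise, since left Kan extensions do not commute with restriction to a subcategory.) The repair is the paper's route: take the sectionwise equivalence on $\SmatBcZ$ supplied by \Cref{lm:A1loc_id=iusids(Smat)} and extend it to all of $\SmBcZ$ by evaluating both sides of the unit on \v{C}ech nerves of Nisnevich coverings drawn from $\SmatBcZ$ and $\Smat_Z$, rather than by invoking the equivalence of motivic homotopy categories. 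Your closing remark about \Cref{lm:LiftSmat} and affine neighbourhoods does correctly identify why the lifting argument cannot be run directly on $\SmBcZ$.
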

\begin{proof}
Owing to \Cref{lm:A1loc_id=iusids(Smat)} the unit of the adjunction induces an isomorphism
\begin{equation}
\label{equation:calFrestrictionequivalence}  
\calF\big|_{\SmatBcZ}
\xrightarrow{\cong}
i_*^{B,Z} i^*_{B,Z}(\calF)\big|_{\SmatBcZ}.
\end{equation}
Our claim follows since $\calF$ is Nisnevich local, 
and every scheme in $\SmBcZ$ (resp.~$\Sm_Z$) has a Nisnevich covering in $\SmatBcZ$ (resp.~$\Smat_Z$).
\end{proof}

\begin{lemma}
\label{lm:A1loc_id=iusids(SmAff)}
For every $\A^{1}$-local object $\calF\in \HHtriv(\SmAffBcZ)$ the unit of the adjunction
\[
(i_{B,Z}^\aff)^*
\colon 
\HHtriv(\SmAffBcZ)
\xrightleftarrows{1em}
\HHtriv(\SmAff_Z)
\colon 
(i_\aff^{B,Z})_*
\]
induces an isomorphism 
\[
\calF\xrightarrow{\cong} i_*^{B,Z} i^*_{B,Z}(\calF).
\]
The same holds for every $\A^{1}$-local object $\calF\in \HHtriv(\Fr_+(\SmAffBcZ))$.
\end{lemma}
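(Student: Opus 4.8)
The plan is to reduce to the trivial--tangent--bundle case of \Cref{lm:A1loc_id=iusids(Smat)} by transporting the assertion along the equivalence $r^\cci$ of \Cref{prop:AffSmScZSmat}. Write $r^\cci_B\colon\HHtriv(\SmAffBcZ)\to\HHtriv(\SmatBcZ)$ and $r^\cci_Z\colon\HHtriv(\SmAff_Z)\to\HHtriv(\Smat_Z)$ for the restriction functors along the full embeddings $\SmatBcZ\hookrightarrow\SmAffBcZ$ and $\Smat_Z\hookrightarrow\SmAff_Z$. By \Cref{prop:AffSmScZSmat}, applied over $B$ and over $Z$ respectively (reducing to the affine case by a Zariski cover if $Z$ is not affine), both functors are equivalences of categories --- in particular conservative --- and $r^\cci_B$ commutes with $\Lrep_{\A^1}$, so that $r^\cci_B(\calF)\in\HHtriv(\SmatBcZ)$ is again $\A^1$-local.

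First I would record that $r^\cci_B$ carries the unit $\calF\to(i_\aff^{B,Z})_*(i_{B,Z}^\aff)^*(\calF)$ of the adjunction in the statement to the unit $r^\cci_B(\calF)\to(i_\cci^{B,Z})_*(i_{B,Z}^\cci)^*(r^\cci_B\calF)$ of the corresponding adjunction in \Cref{lm:A1loc_id=iusids(Smat)}. It is cleanest to see this on the right adjoints: using the factorizations of $(i_\cci^{B,Z})_*$ through $\HHtriv(\SmatBlZ)$ from the proof of \Cref{lm:A1loc_id=iusids(Smat)} and their evident affine analogues through $\HHtriv(\SmAffSlZ)$, one checks that the full embeddings $u$, $r$ and the functor ``$X^h_Z,X_Z\mapsto X_Z$'' restrict compatibly to the subcategories cut out by triviality of the tangent bundle, whence $r^\cci_B\circ(i_\aff^{B,Z})_*\cong(i_\cci^{B,Z})_*\circ r^\cci_Z$. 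Since $r^\cci_B$ and $r^\cci_Z$ are equivalences, uniqueness of adjoints promotes this to the matching identification $r^\cci_Z\circ(i_{B,Z}^\aff)^*\cong(i_{B,Z}^\cci)^*\circ r^\cci_B$ of the left adjoints, and hence to the compatibility of units claimed above. I would also note in passing, as in \Cref{lm:uusA1loc}, that $(i_{B,Z}^\aff)^*$ preserves $\A^1$-local objects, though this is not strictly needed below.

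With this in hand, the proof concludes quickly: the unit $r^\cci_B(\calF)\to(i_\cci^{B,Z})_*(i_{B,Z}^\cci)^*(r^\cci_B\calF)$ is an equivalence by \Cref{lm:A1loc_id=iusids(Smat)} applied to the $\A^1$-local object $r^\cci_B(\calF)$, so, since $r^\cci_B$ is conservative, the unit $\calF\to(i_\aff^{B,Z})_*(i_{B,Z}^\aff)^*(\calF)$ is an equivalence as well. The framed statement follows verbatim, replacing every category by its framed variant and \Cref{prop:AffSmScZSmat}, \Cref{lm:A1loc_id=iusids(Smat)} by their framed counterparts (which hold by the same proofs). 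The one real point of the argument is the adjunction compatibility in the middle step, and in particular the compatibility of the two \emph{left} adjoints: rather than attempt to commute $r^\cci_B$ past a left Kan extension directly, I would deduce it formally from the transparent compatibility of the right adjoints together with the fact that $r^\cci_B$ and $r^\cci_Z$ are equivalences of categories.
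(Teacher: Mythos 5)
Your proof is correct and follows essentially the same route as the paper: the paper also deduces the affine case from \Cref{lm:A1loc_id=iusids(Smat)} by restricting along $\SmatBcZ\hookrightarrow\SmAffBcZ$ and invoking the equivalence of \Cref{prop:AffSmScZSmat}. You merely make explicit the compatibility of the units under $r^\cci$, which the paper leaves implicit.
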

\begin{proof}
This follows from \eqref{equation:calFrestrictionequivalence} since the canonical embedding $\SmatBcZ\to \SmAffBcZ$ 
induces an equivalence between the subcategories of $\A^{1}$-local objects in $\HHtriv(\SmBcZ)$ and $\HHtriv(\SmatBcZ)$, 
see \Cref{prop:AffSmScZSmat}.
\end{proof}

We write $\PreA(\SmBlZ)$ and $\PreRig(\SmBlZ)$ for the subcategories of $\HHtriv(\SmBlZ)$ spanned by  
$\A^{1}$-local and rigid presheaves, 
respectively.

\begin{lemma}
\label{lm:PresPreSsEquivalence}
The functor $i_{B*Z}^*$ induces an equivalence of categories
\begin{equation}
\label{eq:rusPrerig(BlZ)Pre(Z)}
i^*_\rigid
\colon 
\PreRig(\SmatBlZ)
\xrightarrow{\simeq} 
\HHtriv(\SmatZ).
\end{equation} 
Moreover, 
both $i^*_\rigid$ and its inverse preserves and detects Nisnevich local objects and Nisnevich local equivalences.
\end{lemma}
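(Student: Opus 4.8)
The plan is to produce an explicit quasi-inverse to $i^*_{B*Z}$ and to read off the Nisnevich statements from that construction. Write $r\colon \SmatZ\to \SmatBlZ$ for the fully faithful embedding of \Cref{cor:LiftSmat}, whose essential image consists of the objects $\XtZ$, and let $q\colon \SmatBlZ\to\SmatZ$ denote the ``$Z$-fibre'' functor, sending both $\XhZ$ and $\XtZ$ to $\XtZ$ and acting on morphisms by base change along $Z\not\hookrightarrow B$; this is well defined because any $B$-morphism out of a $Z$-scheme into $\XhZ$ factors through the closed immersion $\XtZ\not\hookrightarrow\XhZ$. This $q$ is the underlying functor of the site morphism $i_{B*Z}$ of \Cref{lemma:Sm-diagrams-recipe}, so that $i_*^{B*Z}$ is restriction along $q$, whereas $i^*_{B*Z}$ is restriction along $r$ by \Cref{lm:iusBlZsimeqrds}. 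One has $q\circ r=\id_{\SmatZ}$; moreover there is a natural transformation $\theta\colon r\circ q\Rightarrow\id_{\SmatBlZ}$ which is the identity on objects $\XtZ$ and the closed immersion $\XtZ\not\hookrightarrow\XhZ$ on objects $\XhZ$, and whose naturality is immediate from the (restricted) set of morphisms of $\SmatBlZ$.

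I would then check that $i^*_{B*Z}$ and $i_*^{B*Z}$ restrict to mutually quasi-inverse equivalences between $\PreRig(\SmatBlZ)$ and $\HHtriv(\SmatZ)$. First, $i_*^{B*Z}\calG=\calG\circ q$ is rigid for every $\calG\in\HHtriv(\SmatZ)$: it takes the value $\calG(\XtZ)$ on both $\XhZ$ and $\XtZ$, and its rigidity comparison map is $\calG$ applied to $q$ of the closed immersion $\XtZ\not\hookrightarrow\XhZ$, namely $\id_{\XtZ}$; hence $i_*^{B*Z}$ factors through $\PreRig(\SmatBlZ)$. Next, $i^*_{B*Z}\circ i_*^{B*Z}(\calG)=\calG\circ(q\circ r)=\calG$ naturally. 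Finally, for $\calF\in\PreRig(\SmatBlZ)$ one has $i_*^{B*Z}\circ i^*_{B*Z}(\calF)=\calF\circ(r\circ q)$, and precomposing with $\theta$ gives a natural map $\calF\to\calF\circ(r\circ q)$ whose $\XhZ$-component is exactly the rigidity comparison $\calF(\XhZ)\to\calF(\XtZ)$ and whose $\XtZ$-component is the identity; since $\calF$ is rigid this map is a sectionwise equivalence. With $i^*_\rigid\defeq i^*_{B*Z}|_{\PreRig(\SmatBlZ)}$, this establishes the equivalence.

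For the Nisnevich statements it suffices, granting the equivalence, to show that $i^*_{B*Z}$ and $i_*^{B*Z}$ each preserve Nisnevich-local objects and Nisnevich-local equivalences, the ``detect'' halves then following from the identities $i^*_{B*Z}i_*^{B*Z}\simeq\id$ and $i_*^{B*Z}i^*_{B*Z}|_{\PreRig}\simeq\id$. Here I would argue that $q$ carries a distinguished Nisnevich square of $\SmatBlZ$ — which, by the definition of Nisnevich coverings on $\SmBlZ$, is obtained by applying $-\times_B Z$ to an \'etale datum over an object of $\SmatBlZ$ — to a distinguished Nisnevich square of $\SmatZ$, base change along $Z\not\hookrightarrow B$ preserving \'etale morphisms, open immersions, and the isomorphism over the closed complement; and that $r$ carries a distinguished Nisnevich square of $\SmatZ$ to one of $\SmatBlZ$, its terms lying in $\SmatZ\subseteq\SmatBlZ$. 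Hence $i_*^{B*Z}\calG=\calG\circ q$ and $i^*_{B*Z}\calF=\calF\circ r$ are excisive whenever $\calG$, respectively $\calF$, is, so these functors preserve Nisnevich-local objects. For Nisnevich-local equivalences I would invoke that $q$ is both continuous (it preserves coverings and the relevant fibre products) and cocontinuous (a Nisnevich covering of $q(\XhZ)=\XtZ$ lifts, by henselian-pair lifting as in \Cref{lm:AffSmHenselianLift} together with \Cref{lm:LiftSmat}, to a Nisnevich covering of $\XhZ$ inside $\SmatBlZ$, while objects $\XtZ$ are immediate), and likewise $r$; restriction along a continuous and cocontinuous functor commutes with Nisnevich sheafification, hence preserves Nisnevich-local equivalences. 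Combined with the equivalence of the previous paragraph, this yields the remaining assertions.

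The main obstacle I anticipate is precisely this last compatibility of $q$ and $r$ with the Nisnevich topologies — equivalently, that the Nisnevich cd-structure on $\SmatBlZ$ is, via $q$, the pullback of that on $\SmatZ$, and dually for $r$. It rests only on the definition of Nisnevich coverings on $\SmBlZ$, on \Cref{cor:LiftSmat} and \Cref{lm:LiftSmat} to keep auxiliary \'etale neighbourhoods inside $\SmatBlZ$, and on standard henselian-pair lifting; the bookkeeping is routine, but this is where the geometry enters.
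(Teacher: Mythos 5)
Your construction is essentially the paper's own proof: you produce the quasi-inverse as restriction along the $Z$-fibre functor $q=i_{B*Z}$, use $q\circ r=\id_{\SmatZ}$ for one composite and the closed immersions $X_Z\not\hookrightarrow X^h_Z$ together with rigidity for the other, exactly as the paper does via \Cref{lm:iusBlZsimeqrds} and \Cref{cor:LiftSmat}. The only divergence is the final step: for Nisnevich local equivalences the paper just observes that two mutually inverse equivalences which both preserve local objects automatically preserve and detect local equivalences, which spares you the cocontinuity/henselian-lifting bookkeeping you flag as the main obstacle; your route through cocontinuity of $q$ is also viable (it amounts to the lifting statement underlying \Cref{proposition:NissimeqWNisSmBcZ}), but it is not needed.
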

\begin{proof}
\Cref{lm:LiftSmat} shows there is a fully faithful embedding $r^{-1}\colon \SmatZ\to \SmatBlZ$.
Due to \eqref{eq:Sm-diagrams-recipe} there are functors of categories
\begin{equation}
\label{eq:ifuSmZSmBlZSmBcZ}
\begin{tikzcd}[row sep=1em]
\SmatZ\ar[r,shift left,"r^{-1}"] & \SmatBlZ\ar[l,shift left,"i_{B*Z}^{-1}"]& \SmatBcZ\ar[l, swap,"u^{-1}"] \\
Y\ar[r,mapsto,"r^{-1}"] & Y & \\
Y,X_Z & Y,X_Z^h\ar[l,mapsto,swap,"i_{B*Z}^{-1}"] & X_Z^h.\ar[l,mapsto,swap,"u^{-1}"]
\end{tikzcd}
\end{equation}
\Cref{lemma:Sm-diagrams-recipe} shows that $r^{-1}$ is a right inverse to $i_{B*Z}^{-1}$, 
i.e., 
$\mathrm{id}_{\SmatZ} \cong i_{B*Z}^{-1}\circ r^{-1}$.
\Cref{lm:iusBlZsimeqrds} shows that $i^*_{B*Z}$ is isomorphic to $r_*$.
Hence the identity functor on $\HHtriv(\SmatZ)$ is naturally isomorphic to 
\begin{equation}
\label{eq:rdsidsHHtriv(SmZSmBlZ)}
\HHtriv(\SmatZ) 
\xrightarrow{(i_{B*Z})_*}\HHtriv(\SmatBlZ)
\xrightarrow{(i_{B*Z})^*} \HHtriv(\SmatZ).
\end{equation} 
Owing to the isomorphism of schemes 
\[
(X^h_Z)\times_B Z \cong
X_Z,
\]
the functor $(i_{B*Z})_*$ takes values in $\PreRig(\SmatBlZ)$.
By \eqref{eq:rdsidsHHtriv(SmZSmBlZ)} there are induced functors
\begin{equation}
\label{eq:rdsidsrigidHHtriv(Z)PreRig(BlZ)composite1}
\HHtriv(\SmatZ) 
\xrightarrow{i^\rigid_*}
\PreRig(\SmatBlZ)
\xrightarrow{i_\rigid^*} 
\HHtriv(\SmatZ),
\end{equation}
and the composite in \eqref{eq:rdsidsrigidHHtriv(Z)PreRig(BlZ)composite1} is naturally isomorphic 
to the identity on $\HHtriv(\SmatZ)$.
Moreover, 
the composite  
\begin{equation}
\label{eq:idsrdsrigidHHtriv(Z)PreRig(BlZ)composite2}
\PreRig(\SmatBlZ)
\xrightarrow{i_\rigid^*} 
\HHtriv(\SmatZ) 
\xrightarrow{i^\rigid_*}
\PreRig(\SmatBlZ)
\end{equation}
is naturally isomorphic to the identity on $\PreRig(\SmatBlZ)$,
since for all $\mathcal F\in \PreRig(\SmatBlZ)$ and $X\in\SmatB$ we have
\[
i^\rigid_* i_\rigid^* \calF(X^h_Z)
=
\calF(r(i_{B*Z}(X^h_Z)))
=
\calF(X_Z)
\cong
\calF(X^h_Z).
\]
Moreover, 
for $Y\in \SmatZ$, 
we have
$
i^\rigid_* i_\rigid^* \calF(Y)
=
\calF(r(i_{B*Z}(Y)))
=
\calF(Y)
$.

Since $i_{B*Z}^{-1}$ and $r^{-1}$ in \eqref{eq:ifuSmZSmBlZSmBcZ} preserve Nisnevich coverings and 
fibered products,
both $(i_{B*Z})_*$ and $r_*$, 
and consequently $i^\rigid_*$, $i_\rigid^*$, 
preserve Nisnevich local objects. 
Since $i^\rigid_*$ and $i_\rigid^*$ are inverses, 
the same functors preserve Nisnevich local equivalences.
\end{proof}

The functors $i_{B*Z}^{-1}$ and $u^{-1}$ from \eqref{eq:ifuSmZSmBlZSmBcZ} induce adjunctions
\begin{equation}\label{eq:Htriviusidsuisuds}
\begin{tikzcd}[row sep=-0.25em]
\HHtriv(\SmatBcZ)\ar[r,shift left,"\uus"] & 
\HHtriv(\SmatBlZ)\ar[r,shift left,"(i_{B*Z})^*"]\ar[l,shift left,"\uds"] & 
\HHtriv(\SmatZ) \ar[l,shift left,"(i_{B*Z})_*"].
\end{tikzcd}
\end{equation}
We shall consider the homotopy categories $\SH_{s}(\SmatB)$, $\SH_{s}(\SmatZ)$, $\SH_{s}(\SmatBlZ)$,
and the subcategory $\SHRig(\SmatBlZ)$ of $\SH_{s}(\SmatBlZ)$ spanned by levelwise rigid objects.
The considerations above for simplicial presheaves extend to pointed simplicial presheaves.
By levelwise application of \eqref{eq:Htriviusidsuisuds}, 
there are naturally induced functors 
\[
\begin{array}{llll} 
u_* & \colon \SH_{s}(\SmatBlZ) & \to & \SH_{s}(\SmatBcZ), \\
(i_{B*Z})^* & \colon \SH_{s}(\SmatBlZ) & \to & \SH_{s}(\SmatZ),  \\
(i_{B*Z})_* & \colon \SH_{s}(\SmatBlZ) & \to & \SH_{s}(\SmatZ).
\end{array}
\]

We obtain the following corollaries from \Cref{prop:SmoothLift->A1lrig:origin} and \Cref{lm:PresPreSsEquivalence}.

\begin{corollary}
For every $\calF\in \SH_{s}(\SmatBcZ)$ the object $\calF_{\A^{1}}$ is $\A^{1}$-local and rigid. 
The same holds in the framed setting.
\end{corollary}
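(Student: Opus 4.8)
The plan is to deduce this from its unstable counterpart \Cref{prop:SmoothLift->A1lrig:origin}, one level at a time. Recall that an object of $\SH_{s}(\SmatBlZ)$ is represented by a levelwise fibrant presheaf of $S^1$-spectra, i.e.\ a sequence $\{\calF^n\}_{n\ge 0}$ of pointed simplicial presheaves on $\SmatBlZ$ equipped with bonding maps $S^1\wedge\calF^n\to\calF^{n+1}$, and that the $\A^1$-localization endofunctor $\Lrep_{\A^1}$ (and, where it intervenes, $u^*$) is the levelwise extension of the corresponding unstable functor, as recorded just before the statement. By definition $\SHRig(\SmatBlZ)$ is spanned by the levelwise rigid objects, and a levelwise equivalence of presheaves of $S^1$-spectra is a stable equivalence, so both rigidity and $\A^1$-locality of $\calF_{\A^1}$ may be tested on each level $(\calF_{\A^1})^n$. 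It therefore suffices to check that every $(\calF_{\A^1})^n$ is an $\A^1$-local and rigid pointed simplicial presheaf.

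First I would observe that the proof of \Cref{prop:SmoothLift->A1lrig:origin} goes through verbatim for pointed simplicial presheaves: reducing to a representable, one identifies the comparison map $(\calF_{\A^1})^n(U^h_Z)\to(\calF_{\A^1})^n(U_Z)$, $U\in\SmAff_B$, with the map of mapping spaces induced by the closed immersion \eqref{eq:ItoDeltaAUhZ}, which is a henselian pair, and invokes \Cref{lm:AffSmHenselianLift} to produce the lifts exhibiting this map as a trivial Kan fibration; $\A^1$-locality is forced by $\Lrep_{\A^1}$. The only generators not met by that reduction are the representables on $\SmatBlZ$ attached to objects $Y\in\SmatZ$, for which rigidity is immediate: any $B$-morphism to the $Z$-scheme $Y$ factors through the fibre over $Z$, so that $\SmatBlZ((\Delta^\bullet_B\times U)^h_Z,Y)\cong\SmatBlZ((\Delta^\bullet_B\times U)_Z,Y)$. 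Running this on each level and reassembling the spectrum shows $\calF_{\A^1}\in\SHRig(\SmatBlZ)$ is $\A^1$-local. The framed case is identical, with $u^*_\fr$ replacing $u^*$ and \Cref{lm:liftFr} replacing \Cref{lm:AffSmHenselianLift}.

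The step I expect to need the most care is not geometric but bookkeeping: one must verify that the unstable functors $u^*$ and $\Lrep_{\A^1}$ assemble into functors on presheaves of $S^1$-spectra compatibly with the bonding maps — for $u^*$ this holds because it is a left Kan extension along the inclusion of site categories and hence commutes with the levelwise structure — and that the stable $\A^1$-localization of $\calF$ is computed by applying the unstable $\A^1$-localization to a levelwise fibrant model, so that the levelwise conclusions above genuinely describe $\calF_{\A^1}$ in $\SH_{s}(\SmatBlZ)$. All of this is formal; every piece of geometric input, namely Schwede's gluing result and the henselian lifting property, has already been supplied in the proof of \Cref{prop:SmoothLift->A1lrig:origin} and its framed variant.
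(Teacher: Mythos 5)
Your proof is correct and matches the paper's implicit argument: the corollary is stated with no separate proof, immediately after the remark that the unstable considerations extend levelwise to pointed simplicial presheaves, so the intended justification is exactly the levelwise application of \Cref{prop:SmoothLift->A1lrig:origin} and its framed analogue that you spell out. You have also correctly flagged the only nontrivial bookkeeping: $u^*$ is a left Kan extension along the inclusion of sites so commutes with levelwise structure, and the Suslin-complex description of $\Lrep_{\A^1}$ makes it a levelwise operation whose output is stably $\A^1$-local because a levelwise equivalence of presheaves of $S^1$-spectra is a stable one.

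One further point is worth recording. As printed, the corollary reads $\calF\in\SH_{s}(\SmatBlZ)$, but the shorthand $\calF_{\A^1}\defeq\Lrep_{\A^1}u^*\calF$ is declared only for $\calF$ on $\SmatBcZ$, so this is almost certainly a typo for $\SmatBcZ$. Under that reading, $u^*\calF$ is a homotopy colimit of representables $\SmatBlZ(-,\XhZ)$ with $X\in\Smat_B$, and the case $\SmatBlZ(-,Y)$ with $Y\in\SmatZ$ simply does not arise; your additional step is then harmless but unnecessary. If one instead takes the statement literally and interprets $\calF_{\A^1}$ as $\Lrep_{\A^1}\calF$ for $\calF$ on $\SmatBlZ$, then those representables do appear, and your observation that a $B$-morphism to the $Z$-scheme $Y$ factors through the $Z$-fibre — giving $\SmatBlZ((\Delta^\bullet_B\times U)^h_Z,Y)\cong\SmatBlZ((\Delta^\bullet_B\times U)_Z,Y)$ — correctly handles them. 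Either way the proposal is sound.
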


\begin{corollary}
\label{lm:PresPreSsEquivalenceSH}
The functors $(i^{B*Z})_*$ and $(i_{B*Z})^*$ induce an equivalence of categories
\[
i^*_\rigid\colon \SHRig(\SmatBlZ)
\xrightleftarrows{1em}
\SH_{s}(\SmatZ)\colon i^{\rig}_*.
\] 
Moreover, 
$i^*_\rigid$ and $i^\rig_*$ preserve and detect Nisnevich local objects and equivalences.
\end{corollary}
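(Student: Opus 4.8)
The plan is to deduce this stable statement from its unstable counterpart, \Cref{lm:PresPreSsEquivalence}, by levelwise application. Recall that an object of $\SH_s(\mathcal S)$ is represented by a levelwise fibrant $S^1$-spectrum, i.e.\ a sequence $\calE=(\calE_0,\calE_1,\dots)$ of pointed simplicial presheaves on $\mathcal S$ together with bonding maps $S^1\wedge\calE_n\to\calE_{n+1}$, and that the functors $(i_{B*Z})^*$ and $(i_{B*Z})_*$ on $\SH_s$ are, as recalled just before the corollary, obtained by applying levelwise the corresponding functors on pointed simplicial presheaves. Since $i_{B*Z}$ and the embedding $r\colon\SmatZ\to\SmatBlZ$ are morphisms of sites, the induced functors are given by restriction and by left Kan extension; both commute with colimits, in particular with $S^1\wedge(-)$, so they send bonding maps to bonding maps and are well defined on $S^1$-spectra. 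First I would check that they restrict to the asserted subcategories: rigidity is a condition at each level, so $(i_{B*Z})_*$ carries a levelwise rigid spectrum to a levelwise rigid spectrum (using $(X^h_Z)\times_B Z\cong X_Z$, exactly as in the proof of \Cref{lm:PresPreSsEquivalence}), whence $i^\rig_*$ takes values in $\SHRig(\SmatBlZ)$, while $i^*_\rigid$ is simply the restriction of $(i_{B*Z})^*$ to $\SHRig(\SmatBlZ)$.

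Next, the unit and counit of the adjunction $\bigl((i_{B*Z})^*,(i_{B*Z})_*\bigr)$, restricted to $\SHRig(\SmatBlZ)$, are obtained by applying levelwise the unit and counit from \Cref{lm:PresPreSsEquivalence}. By that lemma each is a sectionwise equivalence in every level, hence a levelwise equivalence of $S^1$-spectra, and therefore a stable equivalence. Consequently $i^*_\rigid\colon\SHRig(\SmatBlZ)\to\SH_s(\SmatZ)$ is an equivalence of categories with inverse $i^\rig_*$.

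For the remaining assertions, recall from \Cref{lm:PresPreSsEquivalence} that the unstable functors $i^*_\rigid$ and $i^\rig_*$ preserve and detect Nisnevich-local objects and Nisnevich-local equivalences, ultimately because $r$ and $i_{B*Z}$ preserve Nisnevich coverings and fibered products. Applying this levelwise, and using that a levelwise fibrant $S^1$-spectrum is stably Nisnevich-local precisely when each of its terms satisfies excision with respect to Nisnevich squares (the Nisnevich topology being completely decomposable, cf.\ \Cref{section:shiitoms1s}), one concludes that $i^*_\rigid$ and $i^\rig_*$ preserve and detect Nisnevich-local objects; being inverse equivalences, they then also preserve and detect Nisnevich-local equivalences, since a morphism is such an equivalence exactly when its Nisnevich localization is a sectionwise equivalence, and the functors commute with Nisnevich localization.

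The main obstacle is bookkeeping rather than mathematics: one must make sure the chosen levelwise fibrant representatives are compatible with the levelwise functors, and that \emph{levelwise rigid} is the correct homotopical condition on $\SH_s(\SmatBlZ)$ so that the unstable equivalence survives stabilization, i.e.\ that $\SHRig(\SmatBlZ)$ is stable under the relevant equivalences and that the levelwise unit and counit equivalences assemble into a genuine stable equivalence. Once this is in place the corollary is formal; there is no new geometric input beyond \Cref{lm:PresPreSsEquivalence}, which itself rests on the lifting \Cref{lm:LiftSmat} and the rigidity \Cref{prop:SmoothLift->A1lrig:origin}.
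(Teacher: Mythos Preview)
Your approach is correct and matches the paper's: the corollary is stated without explicit proof, being a direct levelwise consequence of \Cref{lm:PresPreSsEquivalence} as indicated in the discussion immediately preceding it. Your added bookkeeping (compatibility with bonding maps, levelwise nature of rigidity, passage from levelwise to stable equivalences) is exactly what the paper leaves implicit when it writes ``By levelwise application of \eqref{eq:Htriviusidsuisuds}\ldots''.
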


\section{The Nisnevich cohomology of generic fibers}
\label{section:tcogf}

The setup in this section concerns a one-dimensional base scheme \( B \) with generic point \( \eta \), 
for which strict \( {\mathbb A}^{1} \)-invariance holds, 
and a framed, quasi-stable, \( \A^{1} \)-local presheaf \( \calF \) on smooth schemes over \( \eta \). 
The key claim in~\eqref{eq:LnisFXhxetasimeqFXhxeta} asserts that the Nisnevich localization of \( \calF \) 
coincides with \( \calF \) itself when evaluated on the generic fiber \( X^h_x \times_B \eta \) 
of any local henselian \( B \)-scheme \( X^h_x \). 
Equivalently,~\eqref{eq:Hnis(((X^h_x)eta)(F))} states that the Nisnevich cohomology of \( \calF \) 
on these generic fibers vanishes in positive degrees. 
Analyzing open complements of smooth divisors in essentially smooth local schemes over \( B \), 
such as \( X^h_x \times_B \eta \), is a crucial step in extending strict \( \A^{1} \)-invariance 
from fields to one-dimensional base schemes. 
This is because the value of \( \calF \) on \( X^h_x \times_B \eta \) is controlled by the stalks of 
inverse images with compact support under closed points in $B$ and also direct images under $\eta\to B$.
We also note that for open subschemes \( U \subset \A^1_k \) over a field \( k \), 
and for products \( U \times X^h_x \), 
analogous isomorphisms play a central role in Voevodsky’s proof of strict \( \A^{1} \)-invariance.
\vspace{0.1in}

Suppose $B$ is a one-dimensional base scheme with generic point $\eta$ such that strict ${\mathbb A}^{1}$-invariance 
holds at ${\eta}$.
Let $\calF$ be an $\A^{1}$-local quasi-stable radditive framed presheaf of $S^{1}$-spectra on $\Sm_{{\eta}}$.
\Cref{th:Hnis(Xhxeta)dimB1} shows that for all $X\in\Sm_B$, $x\in X$, there is an equivalence of $S^{1}$-spectra
\begin{equation}
\label{eq:LnisFXhxetasimeqFXhxeta}
\Lrep_{\nis}(\calF)(X^h_x\times_B \eta)
\simeq 
\calF(X^h_x\times_B \eta),
\end{equation}
where $\Lrep_{\nis}$ is the Nisnevich localization endofunctor.
An equivalent statement,
see \Cref{section:stifabeliangroups}, 
says that for any $\A^{1}$-invariant framed quasi-stable additive presheaf of abelian groups $\calF$ on $\Sm_{{\eta}}$ 
we have
\begin{equation}
\label{eq:Hnis(((X^h_x)eta)(F))}
H^i_\nis(X^h_x\times_B \eta, \calF_\Nis)
\cong
\begin{cases}
\calF(X^h_x\times_B \eta) & i=0\\
0 & i>0.
\end{cases}
\end{equation}
Our idea for proving \eqref{eq:LnisFXhxetasimeqFXhxeta} is to find a framed $\A^{1}$-homotopy between 
$V_\eta=X^h_x\times_B \eta$ and its generic point.
This means that for an essentially smooth local henselian scheme $V=X^h_x$, 
where $X\in \Sm_B$, $x\in X$,
for any closed subscheme $Y\not\hookrightarrow X$ such that $X_\eta-Y_\eta\neq\emptyset$,
there is an \'etale neighborhood $V^\prime$ of $x$ and a commutative diagram in $\ZF_*(\eta)$:
\begin{equation}\label{eq:cZF:A1V->Xeta}
\xymatrix{
V^\prime_\eta\ar[r]&
X_\eta&
X_\eta-Y_\eta\ar[l]
\\
\{1\}\times V^\prime_\eta\ar[u]^{\sigma^n}\ar[r]
&
\A^{1}\times V^\prime_\eta
\ar[u] 
& 
\{0\}\times V^\prime_\eta\ar[u]\ar[l]
.}
\end{equation}
Here we write $V^\prime_\eta=V^\prime\times_B \eta$.
This suffices because the limit of $X_\eta - Y_\eta$ for a given $V$ and all possible choices of $X$ and $Y$
equals the generic point of $V_\eta$,
and the Nisnevich topology on it is trivial, 
while the values of $\Lrep_{\nis}(\calF)$ and $\calF$ at $\eta$ coincide up to equivalence.
Our assumption on the residue fields of the generic points of $B$ is used to ensure that $\Lrep_{\nis}(\calF)$ 
is $\A^{1}$-local. 
We use our assumption $\dim(B)=1$ to construct the contracting framed $\A^{1}$-homotopy \eqref{eq:cZF:A1V->Xeta}.
The $\A^1$-homotopy \eqref{eq:cZF:A1V->Xeta} is like the one from the injectivity theorems in \cite[\S8]{hty-inv}, 
where $\A^1_k$ or $X_{(x)}$ play the role of $V_\eta$ above.
In contrast to the proofs of the injectivity theorems in \cite[\S8]{hty-inv} 
we cannot assume the Picard groups of all closed subschemes of $V_\eta$ are trivial.
\vspace{0.1in}

The Krull dimension of a topological space is defined in \cite[Tag 0055]{StacksProject}.
A scheme is of pure dimension if all its irreducible components have the same Krull dimension.
The codimension of an irreducible closed subset in a topological space is defined in \cite[Tag 02I3]{StacksProject}.
The codimension of a closed subscheme $Y$ in $X$ at $y\in Y$ equals the codimension of the local scheme $Y_{(y)}$ 
of $Y$ at $y$ in $X_{(y)}$.
We say $Y$ has pure codimension $c$ if the codimension of $Y$ in $X$ equals $c$ at every point $y\in Y$.
Moreover, 
$X$ has (pure) relative dimension $d$ over $B$ if the (pure) dimension of all the fibers of the structure morphism 
$f\colon X\to B$ equals $d$.
We say $Y$ has (pure) relative codimension $c$ in $X$ if for each $\sigma\in B$, $\codim_{X_\sigma} (Y_\sigma)= c$.
For a constructible subset $W$ of $X$ \cite[Tag 005G]{StacksProject} we denote its closure by $\overline W$.

\begin{lemma}
\label{prop:onedimbaseEquidim}
Suppose $B$ is a one-dimensional scheme, and let $z\in B$ be a closed point with open complement $U=B-z$.
Let $X$ be an irreducible $B$-scheme of finite type over $B$ such that $X_z=X\times_B z\neq\emptyset$ and 
$X_U=X\times_B U$ is dense in $X$.
If $X_U$ is equidimensional over $U$, then $X$ is equidimensional over $B$.
\end{lemma}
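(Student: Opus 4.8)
The plan is to reduce the claimed equidimensionality over $B$ to dimension counting on the two kinds of fibers, the generic one $X_U$ and the special one $X_z$, using that $X$ is irreducible and $X_U$ is dense. Since $X$ is irreducible of finite type over $B$, I would first fix notation: let $d$ be the common relative dimension of the irreducible components of $X_U$ over $U$, so that every irreducible component $W$ of $X_U$ has $\dim W = d + \dim \overline{f(W)}$, where $f\colon X\to B$ is the structure morphism; since $U$ is one-dimensional and $X_U$ is dense in the irreducible scheme $X$, the image $f(X_U)$ is dense in $B$, hence $f$ is dominant and $\dim \overline{f(W)} \in \{0,1\}$ for each such $W$. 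The first key step is to pin down $\dim X$: because $X_U$ is dense in $X$ and $X$ is irreducible, $\dim X = \dim X_U$, and because $X_U$ is equidimensional over $U$ with $\dim U = 1$ and $f|_{X_U}$ dominant, every component of $X_U$ dominating $U$ has dimension $d+1$; one checks no component can fail to dominate $U$ (such a component would be a proper closed subset of the irreducible $X$ contained in a fiber $X_u$, contradicting density of the generic part), so $\dim X = d+1$.

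The second key step is to bound the special fiber: I would show every irreducible component of $X_z$ has dimension exactly $d$ over $z$. The inequality $\dim_x X_z \ge d$ at every point $x \in X_z$ follows from upper semicontinuity of fiber dimension (Chevalley, see \cite[Tag 0D4H]{StacksProject}) applied to the flat-at-generic-point situation — more precisely, since $X$ is irreducible with $\dim X = d+1$ and $z$ is cut out by (locally) a single element vanishing on a nowhere-dense closed subset, Krull's Hauptidealsatz gives $\dim_x X_z \ge \dim X - 1 = d$. For the reverse inequality $\dim_x X_z \le d$, I would use that $X_z \subset X$ is closed and $X$ is irreducible of dimension $d+1$, so any irreducible closed subset of $X_z$ has dimension $\le d+1$, and it cannot equal $d+1$ since then it would be all of $X$ (as $X$ is irreducible), forcing $X = X_z$ and contradicting that $X_U$ is dense and nonempty. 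Hence $X_z$ is equidimensional of dimension $d$ over $z$. Combining with the first step, every fiber of $f$ has pure dimension $d$, which is exactly the definition of $X$ being equidimensional over $B$.

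The main obstacle I anticipate is the lower bound $\dim_x X_z \ge d$ at points $x$ lying on components of $X_z$ that are not in the closure of $X_U$ in an obvious way: a priori $X$ irreducible does not make $X$ flat over $B$, so one cannot invoke ``fiber dimension $= \dim X - \dim B$'' directly. The way around this is to note that $z$ is a Cartier divisor on $B$ locally (a Dedekind-type or at least one-dimensional noetherian local ring has its maximal ideal generated by finitely many elements, and after shrinking $B$ around $z$ by one equation), pull this back to $X$, and apply Krull's height theorem on the local ring $\mathcal{O}_{X,x}$: the closed fiber is cut out by $\le \operatorname{ht}(\mathfrak{m}_z)$ elements, so each of its components has codimension $\le \dim B - \dim \overline{\{z\}} = 1$ in $X$, giving dimension $\ge d+1-1 = d$. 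I would also need the elementary fact that a dominant finite-type morphism to an irreducible one-dimensional scheme has generic fiber dense and that density of $X_U$ forces $f$ dominant; these are standard and I would cite \cite{StacksProject} rather than belabor them. Once both inequalities on $\dim X_z$ are in hand the conclusion is immediate.
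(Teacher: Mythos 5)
Your strategy --- fix the common fiber dimension $d$ over $U$ and then squeeze every component of $X_z$ between two bounds equal to $d$ --- is genuinely different from the paper's proof, which instead inducts on $\dim X_z$ by slicing $X$ with a function that is a non-unit on every irreducible component of the special fiber and only ever compares codimensions. Your route can be completed, but two of the steps you lean on are false as stated. First, $\dim X=\dim X_U$ fails: if $B$ is local then $U$ is the zero-dimensional generic point, and for $X=\A^1_{\Z_{(p)}}$ one has $\dim X_U=1<2=\dim X$ even though $X_U$ is dense. The value $\dim X=d+1$ is correct, but density alone does not give it, and your fallback ``every component of $X_U$ dominating $U$ has dimension $d+1$'' presumes $\dim U=1$, which is wrong in the local case. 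Second, the Hauptidealsatz step is broken: for a general one-dimensional noetherian $B$ the ideal $\mathfrak{m}_z$ need not be principal (shrinking $B$ around $z$ does not change $\mathcal O_{B,z}$), Krull bounds heights by the \emph{number of generators}, not by $\operatorname{ht}(\mathfrak m_z)$, and even granting $\codim_X T\le 1$ for a component $T$ of $X_z$ you cannot conclude $\dim T\ge\dim X-1$ without a catenarity/dimension-formula input: the prime $(px-1)$ in $\A^1_{\Z_{(p)}}$ has height one, yet its vanishing locus is a single point. Codimension one does not automatically convert into a dimension drop of one.

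Both halves are repairable. For the lower bound, the semicontinuity argument you mention first already suffices and needs no knowledge of $\dim X$: every $x\in X_z$ is a specialization of the generic point $\xi$ of $X$, and $f(\xi)=\eta$ lies in $U$ (otherwise $f(X)\subseteq\{z\}$, contradicting $X_U\neq\emptyset$), so $\dim_x X_z\ge\dim_\xi X_\eta=d$ by \cite[Tag 0D4H]{StacksProject}; the Krull digression should simply be deleted. For the upper bound, apply the dimension inequality \cite[Tag 02JU]{StacksProject} at the generic point $t$ of a component $T$ of $X_z$, after passing to reductions and replacing $B$ by $\overline{f(X)}$: since $t\neq\xi$ one gets $1\le\dim\mathcal O_{X,t}\le 1+d-\operatorname{trdeg}_{\kappa(z)}\kappa(t)=1+d-\dim T$, hence $\dim T\le d$, bypassing the computation of $\dim X$ altogether. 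What the paper's inductive slicing buys is precisely the avoidance of these local-algebra inputs; what your approach buys, once patched, is a shorter, induction-free argument at the cost of invoking the dimension formula for finite type morphisms of noetherian schemes.
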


\begin{proof}
Without loss of generality, 
we may assume $X$ is affine, 
and $B$ is local with generic point $U$.
If $\dim X_z=0$, 
then $X$ is quasi-finite over some Zariski neighborhood of $z$ in $B$, 
and the claim follows.
Thus we may assume $\dim X_z=d>0$ and that the claim holds for all $Y$ with $\dim Y_z<d$. 
Since $X_z$ is affine and $d>0$, 
there exists a nonzero regular function $f_z$ on $X_z$ that is not invertible 
on any of the irreducible components of $X_z$. 
Indeed, 
since each irreducible component of $X_z$ has a positive dimension, 
there exist disjoint finite sets of closed points $C_0,C_1\subset X_z$, 
each of which contains at least one point from every irreducible component of $X_z$.
Now, using the Chinese remainder theorem, 
choose $f_z\in \calO_{X_z}(X_z)$ such that $f_z\big|_{C_0}=0$ and $f_z\big|_{C_1}=1$.
Then $Z(f_z)$ is of pure codimension one in $X_z$. 
Since $X$ is affine, 
$f_z$ lifts to a regular function $f$ on $X$. 
Since $Z(f_z)\neq X_z$, it follows that $Z(f)\neq X$.
Since $X$ is irreducible, $Z(f)$ is of pure codimension one in $X$. 
Let $X_1$ be an irreducible component of $Z(f)$. 
This is a closed subscheme in $X$ of pure codimension one, 
and $X_{1,z}:=X_1\times_B z$ is of pure codimension one in $X_z$.
Since $X_U$ is dense in $X$, and $X_1\neq X$, it follows that $X_{1,U}:=X_1\times_B U\neq X_U$. 
Note that $X_U$ is irreducible since it is dense in the irreducible scheme $X$. 
Thus $X_{1,U}$ has positive codimension in $X_U$. 
Further, 
$X_U$ is dense in $X$, so $X_z$ has positive codimension in $X$, 
and $X_{1,z}$ has codimension at least $2$ in $X$. 
Hence $X_1\neq X_{1,z}$ and $X_{1,U}\neq \emptyset$. 
Since $X_1$ is irreducible it follows that $X_{1,U}$ is dense in $X_1$.
Summarizing the above we conclude the vanishing locus 
$$
X_{1,U}=Z(f\big|_{X_U})
$$
is nonempty and of pure codimension one in $X_U$.
Since $X_{1,U}$ is dense in the irreducible scheme $X_1$, $\dim X_{1,z}<\dim X_z$ and $X_{1,z}\neq \emptyset$. 
By the inductive assumption, 
$X_1$ is equidimensional over $B$.  
Using that $\codim_{X_z} X_{1,z}=\codim_{X_U} X_{1,U}=1$ we deduce the same statement for $X$.
\end{proof}

\begin{lemma}
\label{prop:onedimbaseCloseurefiber}
Suppose $B$ is a one-dimensional scheme with closed point $z\in B$ and complement $U=B-z$.
Let $X$ be a scheme over $B$ such that $X_z=X\times_B z$ and $X_U=X\times_B U$ are nonempty. 
Assume $Z_U$ is a closed subscheme in $X_U$ of positive codimension over $U$,
and let $Z$ be the closure of $Z_U$ in $X$. 
Then $Z$ has positive codimension over $z$ in $X$.
\end{lemma}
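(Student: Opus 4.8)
The plan is to rephrase the conclusion as the assertion that the closed subscheme $Z=\overline{Z_U}$ contains no generic point of the fiber $X_z=X\times_B z$. Indeed, $\codim_{X_z}(Z_z)>0$ holds exactly when no irreducible component of $X_z$ is contained in $Z_z=Z\cap X_z$, i.e. when $Z$ meets no generic point of $X_z$; and this is local on $X$, so I would work one point at a time (and may freely assume $X$ locally Noetherian, as it is in the intended applications). If $Z_U=\emptyset$ then $Z=\emptyset$ and there is nothing to prove, so assume $Z_U\ne\emptyset$.

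Suppose for contradiction that a generic point $\eta_0$ of $X_z$ lies in $Z$. Put $A=\calO_{X,\eta_0}$ and $R=\calO_{B,z}$; these are Noetherian local rings with maximal ideals $\mathfrak m_A,\mathfrak m_R$, and the structure morphism gives a local homomorphism $R\to A$. I would produce two contradictory bounds on $\dim A$. For the upper bound $\dim A\le 1$: since $\eta_0$ is a minimal point of $X_z$, the fiber ring $\calO_{X_z,\eta_0}=A/\mathfrak m_R A$ has a unique prime, so $\mathfrak m_R A$ is $\mathfrak m_A$-primary; because $B$ is one-dimensional, $\dim R\le 1$, and by prime avoidance against the finitely many minimal primes of $R$ one finds $t\in\mathfrak m_R$ with $\sqrt{(t)}=\mathfrak m_R$, hence $\mathfrak m_R^{k}\subseteq (t)$ for some $k$, so $(t)A$ is again $\mathfrak m_A$-primary, and Krull's principal ideal theorem gives $\operatorname{ht}(\mathfrak m_A)\le 1$. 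For the lower bound $\dim A\ge 2$: from $\eta_0\in\overline{Z_U}$ (and the fact that the closed subscheme $Z_U$ of the locally Noetherian $X_U$ has locally finitely many components) I get a generic point $\zeta$ of $Z_U$ with $\zeta\rightsquigarrow\eta_0$, hence a prime $\mathfrak q_0\subsetneq\mathfrak m_A$ of $A$, proper since $\zeta$ and $\eta_0$ lie over different points of $B$; writing $\sigma=f(\zeta)\in U$, the hypothesis that $Z_U$ has positive relative codimension over $U$ gives $\codim_{X_\sigma}((Z_U)_\sigma)\ge 1$, and since $\zeta$ is generic in $(Z_U)_\sigma$ this forces $\dim\calO_{X_\sigma,\zeta}\ge 1$, i.e. a proper generization $\xi$ of $\zeta$ inside $X_\sigma$. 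Then $\xi\rightsquigarrow\zeta\rightsquigarrow\eta_0$ is a chain of three distinct points, giving a chain of primes $\mathfrak q_1\subsetneq\mathfrak q_0\subsetneq\mathfrak m_A$ in $A$, so $\dim A\ge 2$. This contradiction proves the lemma.

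The main obstacle, and the reason I would argue through local rings rather than through global fiber-dimension formulas, is that $B$ (hence $R$ and $A$) need not be reduced or irreducible, so there is no flatness and no additivity of fiber dimensions to lean on. The device that keeps the upper bound robust is that $\mathfrak m_R A$ is $\mathfrak m_A$-primary for the soft reason that $\eta_0$ is minimal in $X_z$, after which one only needs a single element $t$ cutting out $\mathfrak m_R$ up to radical; producing such a $t$ is precisely where $\dim B=1$ is used, and it is in the collision $\dim R\le 1$ versus $\dim R\ge 1$ (the latter forced by the very existence of $\zeta$) that one-dimensionality enters decisively. Minor points to verify carefully are that $\eta_0\in\overline{Z_U}$ genuinely yields a generic point of $Z_U$ specializing to it, and that this point is generic in its fiber over $\sigma$; both are immediate from local Noetherianity. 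I do not expect to need \Cref{prop:onedimbaseEquidim}; the argument above is self-contained.
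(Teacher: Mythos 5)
Your proof is correct, but it takes a genuinely different route from the paper's. The paper reduces to the case where $B$ is local and $X$, $Z_U$ are irreducible, then invokes the preceding equidimensionality lemma (\Cref{prop:onedimbaseEquidim}) for both $X$ and $Z$ and compares fiber dimensions; your argument bypasses that lemma entirely. Instead you localize at a hypothetical generic point $\eta_0$ of $X_z$ lying in $Z$ and play two bounds on $\dim\calO_{X,\eta_0}$ against each other: the upper bound $\le 1$ comes from the fact that $\mathfrak m_R\calO_{X,\eta_0}$ is $\mathfrak m$-primary (since $\eta_0$ is minimal in its fiber) together with $\dim\calO_{B,z}\le 1$ and Krull's Hauptidealsatz, while the lower bound $\ge 2$ comes from the chain $\xi\rightsquigarrow\zeta\rightsquigarrow\eta_0$, where $\zeta$ is a generic point of $Z_U$ specializing to $\eta_0$ and $\xi$ is a proper generization of $\zeta$ inside its fiber $X_\sigma$, supplied by the positive relative codimension hypothesis. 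All the small verifications you flag (that $\eta_0\in\overline{Z_U}$ yields such a $\zeta$, that $\zeta$ is generic in $(Z_U)_\sigma$, that the three points are pairwise distinct) go through under local Noetherianity. What your approach buys is robustness and self-containment: it needs only that $X$ is locally Noetherian, whereas the paper's route implicitly requires the finite-type and irreducibility reductions needed to apply \Cref{prop:onedimbaseEquidim}, which the terse published proof does not spell out. What the paper's approach buys is brevity, since the equidimensionality lemma is proved immediately beforehand and is reused elsewhere. Either argument is acceptable; yours is the more careful of the two.
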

\begin{proof}
Without loss of generality we may assume $B$ is local, 
$Z\times_B z\neq \emptyset$, 
and $X$, $Z_U$ are irreducible.
\Cref{prop:onedimbaseEquidim} implies $X$ and $Z$ are equidimensional over $B$. 
Since $Z_U$ has positive codimension in $X_U$, 
the same holds for $Z$ in $X$.
\end{proof}

\subsection{Automorphisms and framed correspondences}
\label{subsect:Autoframed}

In the following, 
we introduce framed morphisms and analyze the action of such framed correspondences on Nisnevich local 
$\A^{1}$-local quasi-stable framed presheaves of $S^{1}$-spectra.
We refer to \Cref{sectionApp:FrCor} for our conventions on framed correspondences.
Fix a base scheme $B$ and $X\in \Sm_B$.
If $E\in \mathrm{GL}_n(X)$ we use the same symbol $E\in \Fr_n(X,X)$ to denote the framed correspondence 
$$
(0\times X, E\cdot(t_1,\dots, t_n),\mathrm{pr}\colon (\A^n_X)^h_{0\times X}\to X).
$$
Here $E\cdot(t_1,\dots, t_n)$ is the vector of regular functions on $\A^n_X$ obtained from multiplication by $E$ on the 
coordinates $(t_1,\dots,t_n)$ of $\A^n_X$.
We let $\mathrm{E}_{n}(X)\subset \GL_n(X)$ denote the subgroup generated by all elementary matrices.
In particular, 
the $(n\times n)$-identity matrix $\id_n\in\mathrm{E}_{n}(X)$.

\begin{lemma}
\label{prop:ElementaryMatrxHomotopy}
If $E\in\mathrm{E}_n(X)$ then $E=\id_n\in\overline{\Fr}_n(X,X)$ for all $X\in \Sm_B$,
where $\overline{\Fr}_n(X,X)$ is the set of $\A^1$-homoptopy classes of $\Fr_n(X,X)$.
Thus, 
for every $\A^{1}$-local framed presheaf $\calF$ of $S^{1}$-spectra on $\Sm_B$, 
the morphism $E^*\colon \calF(X)\to \calF(X)$ is a levelwise equivalence of $S^{1}$-spectra. 
\end{lemma}
\begin{proof}
We write $E=E_1\cdots E_m$ for elementary matrices $E_i$ and set
$$
E_i(\lambda)=(1-\lambda)E_i+\lambda\id_n \in \mathrm{Fr}_n(X\times\A^{1},X).
$$
The framed correspondences $E_i(\lambda)\in\Fr_n(X\times\A^{1})$ for $1\leq i\leq m$ yields the $\A^{1}$-homotopies
\[
E
=
E_1\cdots E_m
\sim_{\A^{1}}
E_1\cdots E_{m-1}
\sim_{\A^{1}}
\dots
\sim_{\A^{1}} 
E_1
\sim_{\A^{1}} 
\id_n.
\]
\Cref{prop:A1localhomotopycommutativesquare} implies the claim for $E^*\colon \calF(X)\to \calF(X)$.
\end{proof}

\begin{lemma}
\label{prop:LocalschemeinvqstFr}
Let $\calF$ be an $\A^{1}$-local quasi-stable framed presheaf of $S^{1}$-spectra on $\Sm_B$.
Suppose $X$ is an essentially smooth local scheme over $B$.
If $E\in \GL_n(X)$, 
then the framed correspondence $E\in \Fr_n(X,X)$ induces a levelwise equivalence $E^*\colon \calF(X)\xrightarrow{\simeq} \calF(X)$.
\end{lemma}
\begin{proof}
When $E\in \SL_n(X)$, 
then $E\in \mathrm{E}_n(X)$ since $X$ is local and the claim follows from \Cref{prop:ElementaryMatrxHomotopy}.
Let $(E^{-1},E)\in \GL_{2n}(X)$ denote the block-diagonal matrix 
\[
\left(
\begin{array}{ll} 
E^{-1} & 0 \\ 0 & E 
\end{array}
\right).
\] 
With this definition, 
we have equalities of framed correspondences 
\[
E^{-1}\circ E=(E,E^{-1}), 
E\circ E^{-1}=(E^{-1},E)
\in 
\Fr_{2n}(X,X).
\]
It follows that 
\[
(E^{-1})^*\circ  E^* = (E^{-1},E)^*,
E^*\circ (E^{-1})^* = (E^{-1},E)^*.
\]
Since $(E^{-1},E), (E,E^{-1})\in \SL_{2n}(X)$ we conclude that $E^*$ and $(E^{-1})^*$ are levelwise equivalences.
\end{proof}

\begin{lemma}
\label{prop:LnissigmaEeq}
Let $\calF$ be an $\A^{1}$-local quasi-stable framed presheaf of $S^{1}$-spectra on $\Sm_B$.
Suppose $c=(Z,\varphi_1,\dots, \varphi_n,\mathrm{pr})\in \Fr_n(X,X)$, 
where $\mathrm{pr}$ is induced by the projection $\A^n_X\to X$ and induces an isomorphism $Z\cong X$.
Then $c^*\colon \Rep_\nis(\calF)(X)\to \Rep_\nis(\calF)(X)$ is a levelwise equivalence. 
\end{lemma}
\begin{proof}
First we reduce to the case when $Z=0\times X$.
Since $c$ is a framing of $\id_X$, the canonical projection induces an equivalence $Z\cong X$.
There is an $\A^{1}$-homotopy $c\sim_{\A^{1}} c^\prime$ between $c$ and 
\[
c^\prime 
= 
(0\times X, T^*(\varphi), \mathrm{pr})\in \Fr_n(X,X).
\]
Here $l=(l_1,\dots,l_n)\colon X\cong Z\hookrightarrow \A^n_X$,
and 
\[
T
=
\id_{\A^n_X}+l\mathrm{pr}
\colon 
\A^n_X
\to 
\A^n_X;
(t_1,\dots,t_n)
\mapsto 
(t_1+l_1,\dots,t_n+l_n).
\]
The $\A^{1}$-homotopy is given by the framed correspondence
\[
c_{\lambda} 
= 
(T_{\lambda}^{-1}(Z\times\A^{1}), T_{\lambda}^{*}(\varphi), (\mathrm{\pr}\times\id_{\A^{1}})\circ T_{\lambda})
\in 
\Fr_n(X\times\A^{1}, X), 
\]
where 
\[
T_{\lambda} 
=
\id_{\A^n_X}+\lambda l\mathrm{pr}
\colon \A^n_X\times\A^{1}\to \A^n_X; 
(t_1,\dots ,t_n)\mapsto (t_1+\lambda l_1, \dots, t_n+\lambda l_n).
\]
Note that $(\mathrm{\pr}\times\id_{\A^{1}})=(\mathrm{\pr}\times\id_{\A^{1}})\circ T_{\lambda}$, 
where $(\mathrm{\pr}\times\id_{\A^{1}})\colon \A^n_{X\times\A^{1}}\to X\times\A^{1}$ is the canonical projection.
Then $T_0=\id_{\A^n_X}$, $T_1=T$, and
\[
\begin{array}{lllllll}
c_0&=&(T_{0}^{-1}(Z), T_{0}^{*}(\varphi), (\mathrm{\pr})\circ T_{0})
&=&(Z, \varphi, \mathrm{\pr})&=& c,\\ 
c_1&=&(T_{1}^{-1}(Z), T_{1}^{*}(\varphi), (\mathrm{\pr})\circ T_{1})
&=&(0\times X, T^*(\varphi), \mathrm{\pr})&=&c^\prime.
\end{array}\]
Thus we may assume $c=c^\prime$ and $Z=0\times X$.
\vspace{0.1in}

Let $I_{\A^n\times X}(0\times X)$ denote the vanishing ideal of the closed subscheme $0\times X$ in ${\A^n\times X}$. 
For any polynomial $f=f_0+f_1+\dots+f_d\in \calO_X(X)[t_1,\dots,t_n]$, 
where $f_i$ is homogeneous of degree $i$, 
we have $f-(f_0+f_1)\in I_{\A^n\times X}(0\times X)^2$ for the linear polynomial $f_0+f_1$.
We may choose linear polynomials 
$\varphi_1^\prime,\dots,\varphi_n^\prime\in\calO_X(X)[t_1,\dots,t_n]\cong\calO_{\A^n_X}(\A^n_X)$ 
such that $\varphi^\prime_i-\varphi_i\in I_{\A^n\times X}(0\times X)^2$ for $i=1,\dots,n$.
Then $\varphi^\prime_i\in I_{\A^n\times X}(0\times X)$, 
and hence $(\varphi^\prime_1,\dots, \varphi_n^\prime) \cong E\cdot(t_1,\dots,t_n)$ for some $E\in\GL_n(X)$.
Here $I_{\A^n\times X}(0\times X)$ is the ideal sheaf of the closed immersion $0\times X\not\hookrightarrow \A^n\times X$.
Now the framed correspondence
\[
(0\times X\times \A^{1}, (1-\lambda)\varphi+\lambda\varphi^\prime,(\mathrm{pr}\times\id_{\A^{1}}))
\in 
\Fr_n(X\times\A^{1},X)
\]
gives an $\A^{1}$-homotopy $c\sim_{\A^{1}} c^{\prime\prime}$ between $c$ and 
\[
c^{\prime\prime}
= 
(X\times\A^{1},\varphi^{\prime},\mathrm{pr})
\in 
\Fr_{n}(X,X).
\]
Hence we may assume $c=c^{\prime\prime}$ and $(\varphi_{1},\dots,\varphi_{n})\cong E\cdot(t_{1},\dots,t_{n})$ for some $E\in\GL_{n}(X)$.
\vspace{0.1in}

To each $x\in X$ we can associate the framed correspondence $c_x=E_x\in \Fr_n(X^h_x, X^h_x)$, 
where $E_x\in \GL(X^h_x)$ is the stalk of $E$.
\Cref{prop:LocalschemeinvqstFr} shows that there is an induced equivalence of stalks
\[
c_x^*
\colon 
\calF(X^h_x)
\xrightarrow{\simeq}
\calF(X^h_x).
\] 
That is, 
$c^*\colon \calF(X)\to \calF(X)$ is a Nisnevich local equivalence. 
This finishes the proof.
\end{proof}

\begin{remark}
The analog of \Cref{prop:LnissigmaEeq} 
for presheaves with tangentially framed correspondences is simpler. 
This is because framed correspondences $(Z,\varphi_1,\dots,\varphi_i,\mathrm{pr})$ and 
$(Z^\prime,\varphi_1^\prime,\dots,\varphi_i^\prime,\mathrm{pr})$,  
where $Z=Z^\prime$ and $\phi_{Z(I^2(Z))}=\phi^\prime_{Z(I^2(Z^\prime))}$,
see \Cref{subsubsectionList}(3) for the notation, 
have the same images in $\pi_0\Corrfr(B)$.
Additionally, it implies the form stated above for affine $X$ because the presheaves $\Fr(-,Y)$ and $h^\fr(Y)$ are $\A^1$-equivalent on affine schemes, as shown in \cite[Corollaries 2.2.20, 2.3.35]{five-authors}.
\end{remark}

\begin{corollary}
\label{cor:LnisrEsimeqr}
Suppose $B$ is affine and let  $\calF$ be an 
$\A^{1}$-local quasi-stable framed presheaf of $S^{1}$-spectra on $\Sm_B$.
For every 
$c=(Z,\varphi_1,\dots, \varphi_n,g)\in \Fr_n(X,Y)$ 
such that
the projection
$\pr
\colon 
\A^n_B\times_B X\to X
$
induces an isomorphism
$\pr\big|_Z\colon Z\to X$,
and
the composite 
$$
X
\xrightarrow{(\pr\big|_Z)^{-1}}
Z
\xrightarrow{g}
Y,
$$
where 
$\pr
\colon 
\A^n_B\times_B X\to X
$
is the projection, 
equals $r\colon X\to Y$, 
we have
\[
c^*
=
r^*
\colon
\Lrep_\nis(\calF)(Y)
\to 
\Lrep_\nis(\calF)(X).
\]
\end{corollary}
\begin{proof}
We note that $\id^\varphi = (Z,\varphi_1,\dots,\varphi_n,\mathrm{pr}\colon (\A^n_X)^h_X\to X)\in \Fr_n(X,X)$ 
is a framing of $\id_X$, see \Cref{def:FramedCorr}.
Since $c$ and $g\circ \id^\varphi$ define the same intermediate framed correspondence, 
see \Cref{def:FramedCorrInterm}, 
we conclude using \Cref{prop:LnissigmaEeq} and \Cref{lm:FrSimplFr}.
\end{proof}

\subsection{Contracting framed \texorpdfstring{$\A^{1}$}{A1}-homotopies}
\label{subsection:ContractingHomotopy}

We fix a smooth irreducible scheme $X$ over a one-dimensional base scheme $B$. 
Suppose $x\in X$ maps to some closed point $z\in B$, 
and set $V=X^h_x$, $U=B-z$, $V_U=V\times_B U$, $X_U=X\times_B U$. 
\Cref{prop:CorFretaVetatoXeta-Yeta} shows that for every closed immersion $Y_U\not\hookrightarrow X_U$ 
of relative positive codimension there exists a linear framed $\A^{1}$-homotopy 
\[
c
\colon 
V_U\times\A^{1}\to X_U/(X_U-Y_U)
\]
between 
the composite 
\begin{equation}
\label{eq:(VU)to(XUq(XUmYU))}V_U=(X^h_x)\times_B U\to X_U\to X_U/(X_U-Y_U)
\end{equation} 
and the constant pointed morphism.
In the next subsection, we apply this to prove the triviality of the cohomology classes on $V_U$,
using that the latter framed $\A^1$-homotopy connects any given cohomology class with the trivial one. 

To construct the announced $\A^1$-homotopy, 
we construct diagrams in $\Sch_B$ of the form
\begin{equation}
\label{eq:XxComapctifiedBprimeembedding}
\xymatrix{
&&P\\
X^h_x\ar[r]^{r}&C\ar[ur]^p\ar@{^(->}[r]^{j}&\ovC\ar[u]_{\overline{p}}&C_\infty\ar[l]_i&\\
V_U\ar@{^(->}[u]\ar[rr]^{r_U} && C_U\ar@{^(->}[ul]\ar@{^(->}[u]\ar[r]^{v} & X_U
}
\end{equation} 
subject to the following properties;
see 
\Cref{prop:onedimbaseleadstoPosCodimSubscemes,lm:ChooseSectionsOnedimensionalqfinite,lm:ChooseSectionsOnedimensionalfinite}
for further refinements.
\begin{itemize}
\item[(0)] 
$v\circ r_U\colon V_U\to X_U$ is the canonical morphism $X^h_x\to X$, 
and $P$ is local.

\item[(1)] 
$\overline p$ is a projective equidimensional morphism,
$p$ is affine, 
$p$ is smooth at $r(x)$,
$C_U\in \Schcci_{P_U}$ where 
$C_U\cong C\times_B U$,
$P_U=P\times_B U$. 
\item[(2)]
$i$ is a closed immersion such that $i(C_\infty)$ has positive relative codimension over $P$,
$j$ is an open immersion.
Moreover, 
there exists an ample bundle $\calO(1)$ on $\ovC$ over $P$ with a section  $x_\infty$ such that 
$C_\infty\cong Z(x_\infty)$ and $C\cong \ovC-C_\infty$.
\item[(3)]
The closure $\overline W$ of $W_U=v^{-1}(Y_U)$ in $\ovC$ has positive relative codimension in $\ovC$ over $P$. 
\end{itemize}

The notation indicates that $\ovC$ is a relative compactification of the affine scheme $C_U$ over $P$. 

To construct a factorization of \eqref{eq:(VU)to(XUq(XUmYU))} through an $\A^{1}$-homotopy 
\begin{equation}
\label{eq:VUi0A1VUcXU/(XU-YU)}
V_U\xrightarrow{i_0} V_U\times\A^{1} \xrightarrow{c} X_U/(X_U-Y_U), 
\end{equation}
we firstly construct factorizations through morphisms 
$C\times_P V_U\to X_U$
\begin{equation}
\label{eq:VUrCUvXU/(XU-YU)}
V_U\xrightarrow{r} C_U \xrightarrow{v} X_U \to X_U/(X_U-Y_U).
\end{equation}
The construction consequently produces $\dim_B X+1$ diagrams of the form \eqref{eq:XxComapctifiedBprimeembedding}.
$P$ plays a role of an intermediate base scheme. 
The relative codimension of $C$ over $P$ consequently decreases 
from $\dim_B X$ in 
to 1 in \Cref{lm:ChooseSectionsOnedimensionalfinite},
while $\dim_B P$ increases from 0 to $\dim_B X-1$.
The initial diagram \eqref{eq:XxComapctifiedBprimeembedding} is provided by $C=X$ and $P=B$.
For $\dim_P C=1$, 
$C_U$ and $\ovC$ are respectively an affine and a projective curves over $P_U$ and $P$.
The mentioned above factorisation of \eqref{eq:(VU)to(XUq(XUmYU))} through $C\times_P V_U$
defined by \eqref{eq:VUrCUvXU/(XU-YU)}
is a ``curve''-homotopy between composites in \eqref{eq:VUi0A1VUcXU/(XU-YU)} and \eqref{eq:VUrCUvXU/(XU-YU)}, 
that is a preliminarily step for the construction of the $\A^1$-homotopy.

Next, we define a full subcategory $\Schcci_B\subset\Aff_B$ for any base scheme $B$,
see \Cref{subsection:candn} for the smooth case.
\begin{definition}
\label{def:Schcci}
Let $B$ be a scheme. 
We say $X$ is a $\cci$-scheme over $B$ and write $X\in \Schcci_B$ if $Z(f)=X\amalg X'$, 
where $f=(f_1,\dots,f_m)$ consists of regular functions on $\A^n_S$ and $\codim_{\A^n\times B} X=m$.
\end{definition}

For the desired framed $\A^{1}$-homotopy over $U$ we equip $C_U$ with a framing over $P_U$ such that 
$C_U\in \Schcci_{P_U}$. 
In the remainder of this section, we refine the properties 
(0)-(3) for \eqref{eq:XxComapctifiedBprimeembedding}.

\begin{lemma}
\label{prop:onedimbaseleadstoPosCodimSubscemes}
Suppose $B$ is a one-dimensional scheme and $z\in B$ is a closed point with complement $U=B-z$.
Fix $X\in\Sm_B$, $x\in X\times_B z$, $V$, $V_U$, $X_U$, $Y_U$ as above, 
and let $n=\dim_B X_{(x)}$ be the relative dimension of $X$ at $x$ over $B$, 
and let $X_{(x)}$ be the local scheme of $X$ at $x$.
Then there is a diagram of the form \eqref{eq:XxComapctifiedBprimeembedding} such that $\dim_P \ovC=n=\dim_B X_{(x)}$, 
$P=B_z$, 
and $p_U$ is smooth.
\end{lemma}
\begin{proof}
By shrinking $X$ to an \'etale neighborhood of $x$ and using $X^h_x\cong X^h_x\times_B B^h_z$ we may assume 
$B=B^h_z$ is local, and $X$ is irreducible.
Since $X\in\Sm_B$, 
there exists an open affine neighborhood $X^{\prime}$ of $x$ in $X$ such that the relative tangent bundle of $X^{\prime}$ 
over $B$ is trivial. 
Note that $X^{\prime}\in\Smat_B$.
Since $X^{\prime}$ is affine, 
its schematic closure inside some projective space yields an open immersion $X^{\prime}\hookrightarrow \overline X^\prime$ 
to a projective $B$-scheme $\overline X^\prime$.
\Cref{prop:onedimbaseEquidim} shows $\ovC :=\overline X^\prime$ is equidimensional over $B$.

To construct $C$ we consider $\ovC- X^\prime$ and let $X_\infty^{\prime\prime}$ be the closure of 
$(\ovC- X^\prime)\times_B U$ in $\ovC$.
Then $X_\infty^{\prime\prime}$ has positive codimension over $z$,
see \Cref{prop:onedimbaseCloseurefiber}, 
and $x\in X^\prime\subset \ovC-X^{\prime\prime}_\infty$. 
Let $\calO(1)$ denote the canonical ample invertible sheaf on $\ovC$ over $B$. 
We may choose a finite set $F$ of closed points in $\ovC$ that has a nonempty intersection with each irreducible component of 
$\ovC-X^{\prime\prime}_\infty$. 
Serre's Theorem on sections of ample bundles \cite[Theorem 5.2]{Ha77} implies that for $d\gg 0$ there
exists a section $x_\infty\in \Gamma(\ovC,\calO(d))$ such that $x_\infty\big|_{X^{\prime\prime}_\infty}=0$,  
and $x_\infty\big|_{F\cup x}$ is invertible.
Then $Z(x_\infty)$ has positive relative codimension in $\ovC$ over $z$, 
and we set 
\begin{equation}\label{eq:Cinfty=Z(xinfty)C=C-Cinfty}
C_\infty:=
Z(x_\infty),\quad
C:=
\ovC - C_\infty.
\end{equation}
Using the open immersion $C=\ovC-Z(x_\infty)\hookrightarrow \ovC-X^{\prime\prime}_\infty$ 
and the fiber product $X^{\prime\prime}_\infty\times_B U \cong (\ovC- X^\prime)\times_B U$
we obtain the composite 
\[
v
\colon 
C_U=C\times_B U
\hookrightarrow 
(\ovC-X^{\prime\prime}_\infty)\times_B U
\cong
X^\prime\times_B U
\to 
X_U.
\]
Since $C$ is an open neighborhood of $x$ in $\ovC$ and 
$X^\prime$ is an open neighborhood of $x$ in $\ovC$,
the canonically induced composite 
\[
\overline{r}
\colon 
V=X^h_x
\to 
X^\prime
\to 
\ovC,
\]
coincides with 
\[
V
=
X^h_x
\xrightarrow{r} 
C\to \ovC.
\]
This implies (0) when setting $P=B_z$.
Moreover, 
the $B$-scheme $C$ is smooth over $r(x)$ since the localization of $C$ at $r(x)$ agrees with the localization of $X$ at $x$. 
\Cref{prop:onedimbaseEquidim} implies that the projective morphism $\overline{p}\colon \ovC\to B$ is equidimensional since 
$X^\prime$ and consequently $\ovC$ are irreducible. 
Since $C_\infty=Z(x_\infty)$ is the vanishing divisor of a section of some ample sheaf, 
$C=\ovC-C_\infty$ is affine over $B$. 
In addition, 
$C_\infty$ has positive relative codimension in $\ovC$ by \eqref{eq:Cinfty=Z(xinfty)C=C-Cinfty} 
and the choice of $x_\infty$.
Since $x_\infty\big|_{X^{\prime\prime}_\infty}=0$ and 
$X^{\prime\prime}_\infty\not\hookrightarrow C_\infty$, 
there is an open immersion
\[
(\ovC-C_\infty))\times_B U
\hookrightarrow 
(\ovC-X^{\prime\prime}_\infty)\times_B U 
\cong
X^\prime\times_B U.
\] 
Since $C_U$ is affine over $P$,
it follows that $C_U$ is smooth with a trivial tangent bundle over $P_U=U$.
This completes the proofs of (1) and (2).
Further (1) implies that $W_U$ has positive codimension in $C_U$ since $\ovC$ and consequently, 
$C$ and $C_U$ are irreducible.
Part (3) follows by \Cref{prop:onedimbaseCloseurefiber}.
Finally, by construction, $\dim_B \ovC=\dim_B X_{(x)}$, 
and $p_U$ is smooth since $C_U\in \Smat_B$.
\end{proof}

\begin{lemma}
\label{lm:ChooseSectionsOnedimensionalqfinite}
In the setting of \Cref{prop:onedimbaseleadstoPosCodimSubscemes}, 
assume more generally that $1\leq n\leq \dim_B X_{(x)}$.
Then in \eqref{eq:XxComapctifiedBprimeembedding} we have $\dim_P \ovC=n$ and $P$ is a local henselian scheme.
\end{lemma}
\begin{proof}
We may assume $X$ is irreducible.
The proof uses induction on $n$.
\Cref{prop:onedimbaseleadstoPosCodimSubscemes} verifies the case $n=\dim_B X_{(x)}$ with $C$ and $\ovC$.
We shall construct $C_1$ and $\ovC_1$ for $n-1$.
Since $C$ is smooth at $r(x)$ over $P$ there exists an \'etale morphism $f=(f_1,\dots,f_n)\colon C\to \A^n_P$.
Note that $p(r(x))$ is the closed point of $P$.
We set $T_\infty=Z(x_\infty)\cap (\ovC-r(x))$ and choose a finite set of closed points $F_\infty$ in $T$ that contains at 
least one point on each irreducible component of $T\times_P p(r(x))$. 
Let $\overline W^\prime$ be the union of the irreducible components of $\overline W$ with positive dimension over $P$,
and set $T_W=\overline W^\prime \cap (\ovC-(r(x)\cup F))$.
Let $F_{W,0}$, $F_{W,1}$ be disjoint finite sets of closed points in $\overline W^\prime\times_P p(r(x))$. 
By construction, 
$(F_{W,0}\cup F_{W,1})\cap F_\infty = \emptyset$ and $r(x)\not\in F_\infty, F_{W,0}, F_{W,1}$.
For $d\gg 0$ there exists a section $s\in\Gamma(\ovC,\calO(d))$ such that $s\big|_F$, $s\big|_{F_{W,0}}$ are invertible,
$s\big|_{F_{W,0}}=0$, 
and
\begin{equation}
\label{eq:s|_ZI2(x)}
s\big|_{Z(I^2_{\ovC}(r(x)))}
=
x_\infty^d f_n.
\end{equation}
Here $I_{\ovC}^2(r(x))\subset \calO_{\ovC}$ is the square of 
the ideal sheaf of $r(x)$ contained in the structure sheaf of $\ovC$.
With this definition we have 
\begin{equation}
\label{eq:Z(B)Xinfty}
\dim_P(Z(B)\cap Z(x_\infty) )
\leq 
\dim_P Z(x_\infty)-1
\leq 
\dim_P \ovC-2
=
n-2. 
\end{equation}

We write $[t_0: t_\infty]$ for the coordinates on $\PP^1$ and form the section 
\[
g=s t_\infty + x_\infty^d t_0
\in 
\Gamma(\ovC\times\PP^1, \calO(d,1)),
\]
where $\calO(d,1)$ is the tensor product of the pullback of $\calO(d)$ to $\ovC$ with $\calO(1)$ on $\PP^1$.
Let $P_1$ be shorthand for $(\PP^1_P)^h_{f_n(r(x))}$ where $f_n(r(x))\in \A^{1}_P$ is the image of $r(x)$ 
along $f_n\colon C\to \A^{1}_P$.
We set 
\[
\ovC_1=Z(g)\times_{\PP^1_P} P_1, 
C_{\infty,1}=\ovC_1\times_{\ovC} C_\infty, 
C_1=\ovC_1\times_{\ovC} C, 
r_1=(r,[s:x^d_\infty])\colon V\to C_1.
\]
Moreover, 
we set $C_{1,U}=C_1\times_B U$ and form the composite 
\[
v_1
\colon 
C_{U,1}
\to
C_{U}
\xrightarrow{v} 
X_U.
\]
Since $x_\infty$ is invertible on $C$ we can identify $C_{1}$ with $C\times_{\A^{1}_P} P_1$ under the morphism 
$C\to \A^{1}_P$ given by the regular function $s/x_\infty^d$ on $C$. 
We can identify $C_{\infty,1}$ with $(Z(B)\cap Z(x_\infty))\times_P P_1$.
Note that $C_\infty=Z(x_\infty)$ has positive codimension, 
and it has pure relative codimension one in $\ovC$ over $P$.
The same holds for $C_{\infty,1}$ in $\ovC_1$.
Setting $\mathrm{max} \dim_{T}(\ovC_1):=\mathrm{max}_{c\in \ovC_1} \dim_{T}(\ovC_1)_c$ 
and similarly for $C_{1,\infty}$,
we have 
\[
\mathrm{max} 
\dim_{T}(\ovC_1)
\leq 
\mathrm{max} 
\dim_{T}C_{\infty,1}+1
=
\dim_P (Z(B)\cap Z(x_\infty))
\leq
\dim_B \ovC-2
=
n-2.
\]
In the second inequality, 
we use \eqref{eq:Z(B)Xinfty}.
On the other hand, 
we have 
\[
\mathrm{min}
\dim_{P_1}(\ovC_1)
:=
\mathrm{min}_{c\in \ovC_1} \dim_{P_{1}}(\ovC_1)_c
\geq 
\dim_P \ovC_1 - \dim_P P_1
=
\dim_P \ovC-1
=
n-1.
\]
This shows that $\ovC_1$ has relative dimension $n-1$ over $P_1$.
By \eqref{eq:s|_ZI2(x)}, 
it follows that $\ovC_1$ is smooth at $r_1(x)$ over $P_1$.
Note that $C_1\times_B U$ is the vanishing locus of the regular function $s/x_\infty^d - t_0/t_\infty$ 
on $C_U\times_P P_1$, 
so that $C_{1,U}\times_B U\in \Schcci_U$.
This finishes the proofs of (1) and (2). 
By construction $P_1$ is local henselian 
and $\dim_{P_1}\ovC_1=n-1$ as shown above; 
this implies (0).

To prove (3), 
we may assume $\overline W$ has pure relative codimension one in $\ovC$ and $\overline W=\overline W^\prime$.
Let $W^\prime=\overline W^\prime\cap C$, 
and consider the closed subscheme $W^\prime_1=W^\prime\times_{\PP^1_P}P_1$ of $C_1$.
Since $s\big|_{F_{W,0}}=0$ and $s\big|_{F_{W,0}}$ is invertible, 
the maximal dimension of $W^\prime$ over $P_{1}$ is less than the maximal dimension of $\overline W$ over $P$.
Since $\dim_{P_1} C_1=\dim_P C-1$, 
$W^\prime_1$ has positive relative codimension in $C_1$ over $P_1$.
Consequently, 
the closure $\overline{W^\prime_1}$ and $\overline{W_1}$ are of positive relative codimension in $\ovC_1$ over $P_1$. 
\end{proof}

\begin{lemma}
\label{lm:ChooseSectionsOnedimensionalfinite}
Let $B$, $z\in B$, $U=B-z$, $X\in\Sm_B$, $x\in X\times_B z$, $V$, $V_U$, and $X_U$ be as in 
\Cref{prop:onedimbaseleadstoPosCodimSubscemes}.
Then in \eqref{eq:XxComapctifiedBprimeembedding} we can arrange that  
$\dim_P \ovC=1$, $P\cong V=X^h_x$,
the closure $\overline W$ of $v^{-1}(Y_U)$ in $\ovC$ is finite over $P$,
and $\ovW\cap C_\infty=\emptyset$.
\end{lemma}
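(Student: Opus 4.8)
The plan is to deduce the statement from \Cref{lm:ChooseSectionsOnedimensionalqfinite} applied in its bottom case $n=1$, followed by a single base change of the resulting relative curve which replaces its auxiliary local henselian base by $V=X^h_x$ itself. We may assume $\dim_B X_x\geq 1$; when $\dim_B X_x=0$ the scheme $Y_U$ is empty (it has positive relative codimension in a relatively zero\nobreakdash-dimensional scheme) and there is nothing to prove.

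First I would invoke \Cref{lm:ChooseSectionsOnedimensionalqfinite} with $n=1$, obtaining a diagram of the form \eqref{eq:XxComapctifiedBprimeembedding}; write its data with primes, so $\overline{p}{}'\colon\ovC'\to P'$, $p'\colon C'\to P'$, $v'\colon C'_U\to X_U$, $r'\colon V\to C'$, $C'_\infty=Z(x'_\infty)$, with $\dim_{P'}\ovC'=1$, $P'$ local henselian, $p'$ affine and smooth at $r'(x)$, $C'_U\in\Schcci_{P'_U}$, and with the closure $\ovW'$ of $W'_U:=(v')^{-1}(Y_U)$ in $\ovC'$ of positive relative codimension over $P'$. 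Since $\ovC'$ has relative dimension one over $P'$, positive relative codimension forces $\ovW'\to P'$ to have zero\nobreakdash-dimensional fibres, hence to be quasi-finite; being closed in the projective $P'$\nobreakdash-scheme $\ovC'$ it is also proper, so $\ovW'\to P'$ is finite, and therefore $\ovW'$ is semilocal. Strengthening the choices of the auxiliary ample sections in the proof of \Cref{lm:ChooseSectionsOnedimensionalqfinite} --- adjoining the finitely many points of the special fibre of $\ovW'$ to the finite sets on which those sections are required to be invertible, which is legitimate once \Cref{prop:onedimbaseCloseurefiber} is used to see that $\ovW'$ does not degenerate, over $z$, onto the loci on which $x'_\infty$ must vanish --- I would arrange in addition that $x'_\infty$ is invertible along $\ovW'$. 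Then $\ovW'\cap C'_\infty=\ovW'\cap Z(x'_\infty)$ is finite and proper over the local scheme $P'$ and meets no closed point, hence is empty.

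Next I would base change the entire diagram along $\pi:=p'\circ r'\colon V=X^h_x\to P'$, setting $P:=V$, $\ovC:=\ovC'\times_{P'}V$, $C:=C'\times_{P'}V$, $C_\infty:=C'_\infty\times_{P'}V=Z(\pi^*x'_\infty)$ with $\calO(1)$ the pullback of the ample sheaf on $\ovC'$ (relative ampleness is stable under base change), $v\colon C_U=C'_U\times_{P'_U}V_U\xrightarrow{\mathrm{pr}_1}C'_U\xrightarrow{v'}X_U$, and $r\colon V\to C$ given by $w\mapsto(r'(w),w)$, which is a section of $p\colon C\to P$ passing through $r(x)$. Then $v\circ r_U=v'\circ r'_U$ is the canonical morphism $V_U\to X_U$ and $P=V$ is local, so (0) holds; $\overline{p}\colon\ovC\to P$ is projective and equidimensional of relative dimension one, so $\dim_P\ovC=1$; $p$ is affine and is smooth at $r(x)$ because $p'$ is smooth at $r'(x)$; $C_\infty$ has positive relative codimension over $P$; and $C_U=C'_U\times_{P'_U}V_U\in\Schcci_{P_U}$ --- all of these by stability of the relevant properties under base change. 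Finally $v^{-1}(Y_U)=\mathrm{pr}_1^{-1}(W'_U)=W'_U\times_{P'_U}V_U$ is contained in the pullback $\ovW'\times_{P'}V$ of $\ovW'$ along $\ovC\to\ovC'$, so its closure $\ovW$ in $\ovC$ is a closed subscheme of $\ovW'\times_{P'}V$; the latter is finite over $V=P$ as a base change of the finite morphism $\ovW'\to P'$, whence $\ovW\to P$ is finite; and $\ovW\cap C_\infty\subseteq(\ovW'\times_{P'}V)\cap(C'_\infty\times_{P'}V)=(\ovW'\cap C'_\infty)\times_{P'}V=\emptyset$ by the previous paragraph. This produces the diagram \eqref{eq:XxComapctifiedBprimeembedding} with $P\cong V$, $\dim_P\ovC=1$, $\ovW$ finite over $P$, and $\ovW\cap C_\infty=\emptyset$, as required.

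The step I expect to be the main obstacle is the control, carried out in the second paragraph, of the Zariski closure $\ovW$ (and of the ancillary boundary loci) over the special point $z\in B$: a priori the closure of the relatively positive\nobreakdash-codimensional subscheme $W'_U$, which is defined only over $U=B-z$, could acquire over $z$ a component of positive fibre dimension, simultaneously destroying the finiteness of $\ovW\to P$ and the disjointness $\ovW\cap C_\infty=\emptyset$, and it could collide with the vanishing locus of $x'_\infty$. This is exactly where one\nobreakdash-dimensionality of $B$ is essential: together with the equidimensionality statements \Cref{prop:onedimbaseEquidim} and \Cref{prop:onedimbaseCloseurefiber} --- already exploited inside \Cref{prop:onedimbaseleadstoPosCodimSubscemes} and \Cref{lm:ChooseSectionsOnedimensionalqfinite} --- and with the properness of $\ovC'$ over $P'$, it is what upgrades ``relative positive codimension in a relative curve'' to ``finite over the base'' and what allows Serre's theorem on sections of ample bundles to separate $\ovW'$ from $C'_\infty$.
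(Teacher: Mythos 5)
Your first paragraph (reduction to the $n=1$ case of \Cref{lm:ChooseSectionsOnedimensionalqfinite}, with the degenerate case $\dim_B X_x=0$ dispatched separately) and your last paragraph (base change of the whole diagram along $V=X^h_x\to P'$, with $r$ the graph of $r'$, and the routine verification that projectivity, equidimensionality, ampleness, $\Schcci$-membership, finiteness of $\ovW$ and the disjointness all survive the base change) are sound, and both have exact counterparts in the paper, which likewise takes $n=1$ and then says ``by base change along $V\to P$ we may assume $P\cong V$.'' The gap is your second paragraph, where you try to arrange that $x'_\infty$ be invertible along the whole closure $\ovW'$ by ``strengthening the choices'' inside the proof of \Cref{lm:ChooseSectionsOnedimensionalqfinite}. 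Two objections. First, there is a circularity: $\ovW'$ is defined only once those inductive choices have been made, so ``adjoining the points of the special fibre of $\ovW'$'' to the finite sets on which $s$ is required to be invertible is not a modification you are free to make. Second, and more basically, the goal is unattainable in general: to keep $v$ defined on the (possibly shrunken) $C$ you need $Z(x'_\infty)\supset C'_\infty$, i.e.\ $x'_\infty|_{C'_\infty}=0$, yet $\ovW'$ and $C'_\infty$ are closures over $P'$ of subschemes that are disjoint only over $U$, and such closures can perfectly well acquire common points over $z$; \Cref{prop:onedimbaseCloseurefiber} gives positive relative codimension for each of them separately but says nothing about whether they meet. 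So one cannot simultaneously demand $x'_\infty|_{C'_\infty}=0$ and $x'_\infty$ invertible on all of $\ovW'$.

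The paper's resolution exploits the finiteness you correctly identified, but differently: after $n=1$ and base change to $P=V$, both $C_\infty$ and $\ovW$ are finite over the local henselian $P$, hence disjoint unions of local henselian schemes. One splits $\ovW=\ovW'\amalg\ovW_\infty$ into the component(s) with closed point $r(x)$ and the rest. Since $r(x)\in C=\ovC-C_\infty$, the local henselian scheme $\ovW'$ has its closed point outside the closed set $C_\infty$, so $\ovW'\cap C_\infty=\emptyset$ \emph{automatically} --- no further choice is needed for that part. By Serre's theorem one then picks a fresh $x'_\infty\in\Gamma(\ovC,\calO(d))$ vanishing on $C_\infty\cup\ovW_\infty$ and invertible on $\ovW'\cup F$ (these loci are disjoint), sets $C_\new:=\ovC-Z(x'_\infty)$, and restricts $v$. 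The new closure of $v_\new^{-1}(Y_U)$ equals $\ovW'$, because the far part $\ovW_\infty$ has been absorbed into $C_{\infty,\new}$. In short, one does not prevent $\ovW$ and $C_\infty$ from meeting; one shrinks $C$ so as to discard from $\ovW$ precisely the piece that might meet $C_\infty$. Your proof is missing both the split of $\ovW$ and the replacement of $x_\infty$ by a fresh section chosen \emph{after} the $n=1$ construction rather than threaded through it.
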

\begin{proof}
We apply \Cref{lm:ChooseSectionsOnedimensionalqfinite} for $n=1$.
By base change along $V\to P$ we may assume $P\cong V$.
We shall modify $C$ in such a way that $C_\infty$ and $\overline W$ become finite over $P$ and $\ovW\cap 
C_\infty=\emptyset$.
Note that $C_\infty$ is a closed subscheme of positive relative codimension in $\ovC$ over $P$.
The same holds for $\overline{W}$, 
see \Cref{prop:onedimbaseleadstoPosCodimSubscemes} and property (3) of \eqref{eq:XxComapctifiedBprimeembedding}. 
Hence $C_\infty$ and $\overline W$ are finite over the local henselian scheme $P$, 
i.e., 
both schemes are disjoint unions of local henselian schemes.
We have $\ovW=\ovW^\prime\amalg \ovW_\infty$,
where $r(x) \in\ovW^\prime$, $r(x)\not\in\ovW_\infty$.
Note that $C_\infty$ and $\ovW_\infty$ are of positive codimension in $\ovC$.
Hence there exists a finite set of closed points $F$ in $\ovC-(C_\infty\cup \ovW_\infty)$ that contains at least 
one point on each irreducible component of $\ovC$.
Now choose a section $x_\infty^\prime\in \Gamma(\ovC,\calO(d))$ for $d\gg 0$ such that 
$x_\infty^\prime\big|_{C_\infty\cup \ovW_\infty}=0$ and $x_\infty\big|_{\ovW^\prime\cup F}$ is invertible.

Next we set $\calO_{\mathrm{new}}(1):=\calO(d)$, 
$x_{\infty,\mathrm{new}}\defeq x_\infty^\prime$, 
$C_\mathrm{new}\defeq\ovC-Z(x^\prime_\infty)$, 
and $C_{\infty,\mathrm{new}}\defeq Z(x^\prime_\infty)$. 
Here $C_{\infty,\mathrm{new}}$ is finite over $P$ since $x_\infty\big|_F$ is invertible.
Then $C_{U,\mathrm{new}}\cong C_{\mathrm{new}}\times_B U$, 
and we let $v_\mathrm{new}\colon C_{U,\mathrm{new}}\to X$ be the restriction of $v$.
The closure $\ovW_\mathrm{new}$ of $v_\mathrm{new}^{-1}(Y_U)$ in $\ovC$ equals $\ovW\cap C=\ovW^\prime$. 
It follows that $\ovW_\mathrm{new}\cap C_{\infty,\mathrm{new}}\cong \ovW^\prime\cap Z(x^\prime_\infty)=\emptyset$,
and we are done.
\end{proof}

\begin{prop}
\label{prop:CorFretaVetatoXeta-Yeta}
In the setting of \eqref{eq:XxComapctifiedBprimeembedding} and \Cref{lm:ChooseSectionsOnedimensionalfinite} there exist linear framed correspondences
\begin{equation}
\label{eq:cZF(A1V-Xeta)}
c\in \ZF_{n,U}(\A^{1}\times V_U, X_U),
\tilde c_1\in \ZF_n(V_U,X_U-Y_U), 
\end{equation}
such that for the canonical morphisms $\can \colon V_U\to X_U$ and $e\colon X_U-Y_U\to X_U$ we have
\[
c\circ i_0=\can^\nu, 
c\circ i_1=j\circ \tilde c_1.
\]
Here $\can^\nu$ is a level $n$ framing of  $\can$.
\end{prop}

\begin{proof}
We use the properties of \eqref{eq:XxComapctifiedBprimeembedding} shown in 
\Cref{lm:ChooseSectionsOnedimensionalfinite}.
Since $C_U\in\Schcci_{V_U}$, for some large enough $N\in\mathbb Z_{\geq 0}$, there are regular functions
$\varphi=(\varphi_1,\dots,\varphi_{N-1})\in \calO_{\A^N\times {V_U}}(\A^N_U)$ 
such that $Z(\varphi)\simeq C_U\amalg \widehat C_U$ for some $\widehat C_U$.
Both $\ovW$ and $C_\infty$ are finite over the local scheme $V$ since $\ovW\cap C_\infty=\emptyset$.
The closed subscheme $\Delta=r(V)$ of $\ovC$ is local and $\Delta\cap C_\infty=\emptyset$. 
Moreover, 
since $C_\infty$ and $\Delta\cup \ovW$ are semi-local affine schemes, 
all line bundles on $C_\infty$ and $\Delta\cup \ovW$ are trivial. 
The constant non-zero sections of the trivial bundles on ${C_\infty}$ and $\ovW\cup\Delta$ define invertible sections
\[
u_{\infty}\in \Gamma(C_\infty,\calO(1)),\quad
u_{Y,\Delta}\in \Gamma(\ovW\cup \Delta,\calO(1)).
\]
Since $C\to V$ is smooth over $\Delta$,
there exists an invertible sheaf $\calL(\Delta)$ on $\ovC$ with a section $\delta\in \Gamma(\ovC,\calL(\Delta))$ 
such that $Z(\delta)=\Delta$.
It follows that $\delta\big|_{C_\infty}$ and $x_\infty\big|_\Delta$ are invertible.
Similarly, 
since $\ovW\cup\Delta$ is an affine semi-local scheme, 
there exists an invertible section 
\[
\check{\delta}_\Delta
\in 
\Gamma(\ovW\cup\Delta,\calO(\Delta)).
\]

Recall that $\ovW\cap C_\infty=\emptyset$ and $\Delta\cap C_\infty=\emptyset$.
Serre's theorem on ample bundles implies that for $d\gg 0$, 
there exists sections satisfying the  properties:
\begin{center}
\begin{tabular}{@{}>{$}l<{$}>{$}l<{$}@{}}
\toprule
s_0^+\in \Gamma(\ovC ,\calL(-\Delta)(d)) & s_1\in \Gamma(\ovC ,\calO(d))\\
\midrule 
s_0^+\big|_{C_\infty}=u_\infty^d \delta\big|_{C_\infty}^{-1} &
s_1\big|_{C_\infty}=u_\infty^d\\
s_0^+\big|_{\ovW\cup \Delta}=u_{Y,\Delta}^{d} \check{\delta}_\Delta^{-1}&
s_1\big|_{\ovW}=u_{Y,\Delta}^{d}\big|_{\ovW}
\\\bottomrule
\end{tabular}
\end{center}
Using the above, 
we define the section
\[
s
=
\delta s_0^+ \lambda +s_1(1-\lambda)
\in 
\Gamma(\ovC\times\A^{1},\calO(d)).
\]
Since $s\big|_{\ovC_\infty\times\A^{1}}=u_\infty^d$ is invertible on $\ovX_\infty\times\A^{1}$, 
we see that $Z(s)$ is finite over $\ovC_\infty\times\A^{1}$.
Thus we obtain the framed correspondence
\[
c^\prime 
= 
(Z(s)\times_B U,(\varphi_1,\dots,\varphi_{N-1},s/x_\infty^d),g)
\in 
\Fr_N((X^h_x)_U\times\A^{1}, X_U),
\]
where, 
by \Cref{lm:AffSmHenselianLift}, 
the morphism $g\colon (\A^N_{V_U})^h_{Z(s)\times_B U}\to X_U$ lifts the composite
\[
{Z(s)\times_B U}
\to 
C_U
\xrightarrow{v}
X_U.
\]
The sections $s_0^+\big|_{\ovW}$ and $s_1\big|_{\ovW}$ are invertible.
Thus $Z(s_0^+)$ and $Z(s_1)$ are open in $g^{-1}(X_U-Y_U)$ and there are framed correspondences
\[
\begin{array}{ll}
\tilde c_0^{+}
= 
(Z(s_0^+),(\phi_1,\dots,\phi_{N-1}, (\delta s_0^+)/x_\infty^d),\tilde g^{+}_0)&
\in 
\Fr_N((X^h_x)_U, X_U-Y_U),\\
\tilde c_1
=
(Z(s_1),(\phi_1,\dots,\phi_{N-1}, s_1/x_\infty^d),\tilde g_1)&
\in 
\Fr_N((X^h_x)_U, X_U-Y_U).
\end{array}
\]
Here $\tilde g_0^+$ and $\tilde g_1$ are induced by $g$.
Since $s_0^+\big|_{\Delta}$ is invertible, we deduce the equalities 
\begin{equation}
\label{eq:cprimei0i1}
\begin{array}{ll}
c^\prime\circ i_0 = \can^\nu+ j\circ \tilde c_0^{+},\\
c^\prime\circ i_1 = j\circ \tilde c_1,
\end{array}
\end{equation}
where $\can^\nu = (\Delta,(\phi_1,\dots,\phi_{N-1},(\delta s_0^+)/x_\infty^d)$,
$g_{0,\Delta})\in \Fr_n((X^h_x)_U, X_U)$,
$g_{0,\Delta}\colon (\A^n_{V_U})^h_\Delta\to X_U$ is induced by $g$, 
and $V_U=(X^h_x)_U$.
Setting $c=c^\prime - j\circ \tilde c_0^+$ and using \eqref{eq:cprimei0i1} finishes the proof.
\end{proof}

\subsection{Triviality of the cohomology on \texorpdfstring{$(X^h_x)_U$}{XhxU}}
\label{subsection:Hi(Xhx)_SHIeta}
If $\calF \in \Spt_s(\Sm_{B})$ or $\calF \in \Spt_s(\Fr_+(B))$,  
we will want to replace $\calF$ by the homotopy cofiber (in $\Spt_s(\Sm_{B})$ or $\Spt_s(\Fr_+(B))$)
\[
\tildeRep_\nis(\calF):=\hocofib(\calF\to \Rep_{\nis}(\calF)).
\]
For any closed subscheme $Y$ of $X$, 
we write $\calF_Y(X)=\hofib(\calF(X)\to\calF(X-Y))$.
In particular, 
for $\tildeRep_\nis(\calF)$ we use the abbreviation
\[
\tildeRep_\nis(\calF)_{Y}(X)
=
\hofib(\tildeRep_\nis(\calF)(X)\to \tildeRep_\nis(\calF)(X-Y)).
\]

\begin{theorem}
\label{th:Hnis(Xhxeta)dimB1}
Suppose that $B$ is a one-dimensional scheme such that strict $\A^{1}$-invariance holds over its generic points. 
For a closed point $z\in B$, $X\in \Sm_B$, and $x\in X\times_B z$, 
we set $U=B-z$, $X_U = X\times_B U$, and $(X^h_x)_U=X^h_x\times_B U$.
Then for any $\A^{1}$-local quasi-stable radditive framed presheaf $\calF$ of $S^{1}$-spectra over $U$,
there is a natural equivalence 
\[
\calF((X^h_x)_U)
\xrightarrow{\simeq} 
\Rep_{\nis}(\calF)((X^h_x)_U).
\]
\end{theorem}

\begin{proof}
We may assume that $B$ is a local domain with generic point $U=\eta$.
In this case, 
we show that the homotopy cofiber $\tildeRep_\nis(\calF)((X^h_x)_U)\simeq 0$.
Let $\nu$ denote the generic point of $X^h_x\times_B U$.
The canonical equivalence of stalks $\Rep_\nis(\calF)(\nu)\simeq \calF(\nu)$ implies that 
$\tildeRep_\nis(\calF)(\nu)\simeq 0$.
Thus we obtain the vanishing
\[
\varinjlim_{Y}\tildeRep_\nis(\calF)((X^h_x)_U-Y)
\simeq 
\tildeRep_\nis(\calF)(\nu)\simeq 0.
\] 
Here the indexing scheme $Y$ runs over the filtered system of closed subschemes in $(X^h_x)_U$ 
of positive relative codimension.
Hence there is a canonical equivalence
\begin{equation}
\label{eq:Fnis(XhxU)otinjlimFnistildeY(XhxU)}
\varinjlim_{Y} \tildeRep_\nis(\calF)_{Y}((X^h_x)_U)
\xrightarrow{\simeq}
\tildeRep_\nis(\calF)((X^h_x)_U).
\end{equation}

In the following, 
we apply the moving technique provided by \Cref{prop:CorFretaVetatoXeta-Yeta} to prove that 
the transition maps in the colimit \eqref{eq:Fnis(XhxU)otinjlimFnistildeY(XhxU)} induce
trivial maps on all homotopy groups.
To begin with, 
we note the equivalence
\begin{equation}\label{eq:change-filtering-indices}
\varinjlim_{Y} \tildeRep_\nis(\calF)_{Y}((X^h_x)_U)
\simeq 
\varinjlim_{\tilde Y, \tilde X} \tildeRep_\nis(\calF)_{\tilde Y}(\tilde X_U).
\end{equation}
Here $\tilde X$ runs over the cofiltered system of \'etale neighborhoods of $x$ in $X$,
and $\tilde Y$ runs over the filtered system of closed subschemes of positive relative codimension in $\tilde X_U$.
The claimed equivalence \eqref{eq:change-filtering-indices} follows since for every closed subscheme $Y$ 
of positive codimension in $(X^h_x)_U$, 
the closure $\bar Y$ of the image of $Y$ in some $\tilde X_U$ is a closed subscheme of positive codimension 
such that $Y\subset \bar Y\times_{\tilde X} X^h_x$.

\Cref{prop:CorFretaVetatoXeta-Yeta} furnishes a framed correspondence $c\in \ZF_N((X^h_x)_U)\times\A^{1},\tilde X_U)$
such that for $j\colon \tilde X_U-Y_U\to \tilde X_U$ and the naturally induced sections
$i_0, i_1\colon (X^h_x)_U\to (X^h_x)_U\times\A^{1}$, 
we have 
\begin{equation}
\label{eq:homotopyconnectsections}
c\circ i_0 = c_0=\can^\nu, 
c\circ i_1 = c_1=j\circ \tilde c_1.
\end{equation}

Recall that $\can^\nu\in \Fr_N((X^h_x)_U, X_U)$ is a framing of the canonical morphism 
$\can\colon (X^h_x)_U\to \tilde X_U$
and $c_1\in \Fr_N((X^h_x)_U, X_U-Y_U)$.
There is a naturally induced morphism 
\[
\can^*\colon \tildeRep_\nis(\calF)_{\tilde Y_U}(\tilde X_U)
\to 
\tildeRep_\nis(\calF)((X^h_x)_U).
\]
By strict $\A^{1}$-invariance over $U$, 
both $\calF$ and $\Rep_{\nis}(\calF)$ are $\A^{1}$-local quasi-stable framed presheaves of $S^{1}$-spectra on $\Sm_U$.
Hence, 
by \eqref{eq:homotopyconnectsections}, 
we have
$(\can^\nu)^*=c_0^*=i_0^*c^*=i_1^*c^*=c_1^*=\tilde c_1^*\circ j^*$.
Since $\tildeRep_\nis(\calF)_{\tilde Y_U}(\tilde X_U-\tilde Y_U)\simeq 0$,
it follows that
\[
(\can^\nu)^*
=
\tilde c_1^*\circ j^*=0.
\]
By \Cref{prop:LnissigmaEeq} and \Cref{cor:LnisrEsimeqr} it follows that $\can^*=0$.
Thus for all $\tilde X$ and $\tilde Y$,  
the morphism $\can^*$ induces the trivial map on homotopy groups.
Hence \eqref{eq:Fnis(XhxU)otinjlimFnistildeY(XhxU)} is trivial on homotopy groups, 
and we conclude the vanishing $\tildeRep_\nis(\calF)( (X^h_x)_U )\simeq 0$.
\end{proof}

\begin{corollary}\label{corollary:ident-Nis-coh-loc-sch-corol}
The identification of Nisnevich cohomology in \eqref{eq:Hnis(((X^h_x)eta)(F))} holds for every generic point $\eta\in B$ 
and any quasi-stable radditive framed presheaf of abelian groups $\calF$ on $\Sm_{{\eta}}$.
\end{corollary}

\begin{proof}
Without loss of generality, 
we may assume $B$ is a local irreducible scheme that equals the closure of $\eta$.
Applying \Cref{th:Hnis(Xhxeta)dimB1} to the Eilenberg-MacLane object $\EM(\calF)\in\SH_{s}(B)$ of $\calF$ finishes the proof.
\end{proof}

\section{Nisnevich strict and 
\texorpdfstring{$\tf$}{tf}-Nisnevich strict 
\texorpdfstring{$\A^{1}$}{A1}-invariance}

In \Cref{sect:counterexample}, we construct a counterexample to the verbatim generalization of the claim of Nisnevich strict \(\A^{1}\)-invariance from base fields to positive-dimensional base schemes B. In \Cref{section:tship}, we propose a weaker form of this generalization, 
called \(\tf\)-Nisnevich strict \(\A^{1}\)-invariance, 
which we shall prove for one-dimensional base schemes in \Cref{section:proofmaintheorem}.

\subsection{Counterexamples to Nisnevich strict \texorpdfstring{$\A^{1}$}{A1}-invariance}
\label{sect:counterexample}

Following Voevodsky's \Cref{th:shiCor} one may ask whether the Nisnevich sheafification of any 
$\A^{1}$-invariant presheaf with transfers over $B$ is strictly $\A^{1}$-invariant.
An equivalent statement is that for any such presheaf $\calF$ and essentially smooth local henselian scheme 
$U$ over $B$, 
we have 
\begin{equation}
\label{eq:contrex:Hnis(A1UF)}
H^i_{\nis}(\A^{1}_U,\calF_\Nis)
\cong
\begin{cases}
\calF(U) & i=0, \\
0 & i>0.
\end{cases}
\end{equation}

We give a counterexample to \eqref{eq:contrex:Hnis(A1UF)} for every positive dimensional base scheme $B$.
Owing to \Cref{lemma:reductiontolocalhensel} we may assume $B$ is a local henselian scheme.

\begin{lemma}
\label{lemma:reductiontolocalhensel}
The strict $\A^{1}$-invariance theorem in the form of \eqref{eq:contrex:Hnis(A1UF)} holds over a 
base scheme $B$ if and only if it holds over $B^h_\sigma$ for every $\sigma\in B$. 
\end{lemma}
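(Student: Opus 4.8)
The plan is to reduce the equivalence, in both directions, to a single observation: for $U$ essentially smooth local henselian, the group $H^i_\nis(\A^1_U,\calF_\nis)$ depends only on the restriction of $\calF$ to the small Nisnevich site of the scheme $\A^1_U$, which is intrinsic and in particular insensitive to whether $\A^1_U$ is regarded over $B$ or over $B^h_\sigma$. First I would record the geometric dictionary. If $U$ is essentially smooth local henselian over $B$ with closed point mapping to $\sigma\in B$, then by the universal property of henselization the structure map $U\to B$ factors uniquely through $B^h_\sigma$, so $U$ is essentially smooth local henselian over $B^h_\sigma$; conversely, since $B^h_\sigma$ is a cofiltered limit of affine \'etale neighborhoods of $\sigma$, every essentially smooth $B^h_\sigma$-scheme is essentially smooth over $B$. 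Hence the schemes $\A^1_U$ entering \eqref{eq:contrex:Hnis(A1UF)} over $B$ and over the various $B^h_\sigma$ coincide, and on the small Nisnevich site of such an $\A^1_U$ every object is simultaneously an essentially smooth $B$-scheme and an essentially smooth $B^h_\sigma$-scheme, with the same presentation as a cofiltered limit of smooth schemes over \'etale neighborhoods of $\sigma$.

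For the implication ``\eqref{eq:contrex:Hnis(A1UF)} over every $B^h_\sigma$ $\Rightarrow$ \eqref{eq:contrex:Hnis(A1UF)} over $B$'' I would use pullback. Given an $\A^1$-invariant presheaf with transfers $\calF$ over $B$ and $U$ as above with closed point over $\sigma$, the continuous pullback $p^*\calF$ along $p\colon B^h_\sigma\to B$ is an $\A^1$-invariant presheaf with transfers over $B^h_\sigma$: transfers pull back along the flat morphism $p$, and $\A^1$-invariance is preserved because $\A^1$ commutes with base change. By the presentation above, $p^*\calF$ agrees with $\calF$ on the small Nisnevich site of $\A^1_U$ and $p^*\calF(U)=\calF(U)$, so the hypothesis over $B^h_\sigma$ applied to $(p^*\calF,U)$ yields \eqref{eq:contrex:Hnis(A1UF)} for $(\calF,U)$.

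For the converse ``\eqref{eq:contrex:Hnis(A1UF)} over $B$ $\Rightarrow$ \eqref{eq:contrex:Hnis(A1UF)} over $B^h_\sigma$'', which is the direction used in \Cref{sect:counterexample}, I would use pushforward. Given an $\A^1$-invariant presheaf with transfers $\calG$ over $B^h_\sigma$ and an essentially smooth local henselian $B^h_\sigma$-scheme $U$ (which is then also essentially smooth local henselian over $B$), set $p_\bullet\calG(X):=\calG(X\times_B B^h_\sigma)$ for $X\in\Sm_B$; the same base-change formalism shows $p_\bullet\calG$ is an $\A^1$-invariant presheaf with transfers over $B$. Since $B^h_\sigma\to B$ is ind-\'etale, hence weakly \'etale, the diagonal section $B^h_\sigma\hookrightarrow B^h_\sigma\times_B B^h_\sigma$ is a flat closed immersion, i.e. a clopen immersion, so that $B^h_\sigma\times_B B^h_\sigma\cong B^h_\sigma\amalg Q$ for a clopen complement $Q$. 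Consequently, for every $Y$ in the small Nisnevich site of $\A^1_U$ — which is a $B^h_\sigma$-scheme — one has $Y\times_B B^h_\sigma\cong Y\amalg(Y\times_{B^h_\sigma}Q)$ and hence a natural direct-sum decomposition $(p_\bullet\calG)|_{(\A^1_U)_\nis}\cong\calG|_{(\A^1_U)_\nis}\oplus\calJ$ of presheaves on that small site. As Nisnevich sheafification and the cohomology of a fixed scheme are additive, applying \eqref{eq:contrex:Hnis(A1UF)} over $B$ to $(p_\bullet\calG,U)$ forces $H^i_\nis(\A^1_U,\calG_\nis)$ to be a direct summand of $0$ for $i>0$, and, compatibly with this splitting, to be $\calG(U)$ for $i=0$.

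The main obstacle I anticipate is bookkeeping rather than a conceptual difficulty: one must check carefully that $H^i_\nis(\A^1_U,\calF_\nis)$ is genuinely computed on the small Nisnevich site of $\A^1_U$, and that the continuous pullback $p^*$ and pushforward $p_\bullet$ interact with this restriction and with sheafification exactly as claimed — essentially a continuity argument matching sections and cohomology along the pro-system of \'etale neighborhoods of $\sigma$ — together with the standard but easily overlooked point that the diagonal of the ind-\'etale morphism $B^h_\sigma\to B$ is a clopen immersion, which is what produces the harmless extra summand $\calJ$ in the converse direction.
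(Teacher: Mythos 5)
Your argument is correct, and its core coincides with the paper's: the proof in the text consists precisely of your ``geometric dictionary'' (an essentially smooth local henselian $B^h_\sigma$-scheme is one over $B$, and conversely every essentially smooth local henselian $B$-scheme factors through $B^h_\sigma$ for $\sigma$ the image of its closed point), after which the paper simply declares that the claim ``follows readily.'' What you add, and what the paper leaves entirely implicit, is the transport of the presheaves themselves between $\Cor_B$ and $\Cor_{B^h_\sigma}$: the continuous pullback $p^*$ for the easy direction, and for the converse (which is the direction actually invoked in \Cref{sect:counterexample}) the functor $p_\bullet\calG = \calG((-)\times_B B^h_\sigma)$ together with the observation that $B^h_\sigma\times_B B^h_\sigma$ splits off a diagonal clopen copy of $B^h_\sigma$, so that $p_\bullet\calG$ restricted to the small Nisnevich site of $\A^1_U$ contains $\calG$ as a direct summand. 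That splitting is the one genuinely non-obvious point; it holds because $B^h_\sigma=\varprojlim_\alpha B_\alpha$ with each $B_\alpha\to B$ affine \'etale admitting a section from $B^h_\sigma$, whence $B^h_\sigma\times_B B_\alpha\cong B^h_\sigma\amalg Q_\alpha$ compatibly in $\alpha$ (compare \Cref{lm:pro-etale:lift} and \Cref{cor:henselianpairsplitting}), and additivity of presheaves with transfers then gives the direct-sum decomposition you use. Your version is therefore a complete, and in the converse direction more careful, rendering of the paper's one-line proof; the only points to keep explicit in a final write-up are the continuity statements identifying $H^i_\nis(\A^1_U,\calF_\nis)$ with the colimit over the pro-system presenting $\A^1_U$, and the compatibility of the $H^0$-comparison map with the clopen splitting.
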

\begin{proof}
Any essentially smooth local henselian scheme $U$ over $B^h_\sigma$ is an essentially 
smooth local henselian scheme over $B$. 
Conversely, 
any essentially smooth local henselian scheme $U$ over $B$ is a scheme over $B^h_\sigma$, 
where $\sigma$ is the image of the closed point of $U$.
Our claim for \eqref{eq:contrex:Hnis(A1UF)} follows readily from these observations.
\end{proof}

\begin{example}
\label{example:H(A1U)}
Let $B$ be a local scheme.
Assume $\dim B>0$,
and let $f\in \mathcal O(B)$ be a regular function such that the vanishing locus $Z(f)$ is a 
proper closed subscheme of $B$.
Letting $t$ denote the coordinate on $\A^{1}_\Z$, 
we have the rational function 
\[
r=ft-1
\in 
\mathcal O(\A^{1}_\Z\times B), 
\]
and the open complement 
\[
V
=
\A^{1}_B-Z(fr).
\] 
Let $\calE$ be the $\A^{1}$-invariant presheaf with transfers defined as the cokernel
\[
\calE\defeq\Coker\p*{\mathrm{Cor}_B(\A^{1}_B\times_B (-),V)\xrightarrow{i_0^*-i_1^*}\mathrm{Cor}_B(-,V)}
\]
for the canonical sections of the affine line over $B$.
Consider the class $\mathrm{id}\in H^0_{\nis}(V,\calE)$ given by the identity morphism on $V$. 
We claim that $\mathrm{id}\in H^0_{\nis}(V,\calE)$ maps to a nontrivial class in $H^1_{\nis}(\A^{1}_B,\calE)$
under the boundary morphism $\delta$ in the Mayer-Vietoris sequence
\begin{multline*}
H^0_{\nis}((\A^{1}_B-Z(f))\amalg (\A^{1}_B-Z(r)),\calE)\to 
H^0_{\nis}(V,\calE)\xrightarrow{\delta}H^1_{\nis}(\A^{1}_B,\calE)\to \\
\to H^1_{\nis}(\A^{1}_B-Z(f))\amalg (\A^{1}_B-Z(r)),\calE)\to \cdots.
\end{multline*}
First, 
we note that $\Cor_B(\A^{1}_B-Z(r),V)$ is the trivial group, 
as the following argument shows.
By assumption, $Z(f)$ is a nonempty proper closed subscheme of $B$, 
and moreover $Z(f)\times_B V=\emptyset$.
Suppose that $W$ is  a nonempty irreducible closed subscheme of $(\A^{1}_B-Z(r))\times_B V$ which is finite 
and surjective over $\A^{1}_B-Z(r)$;
in other words, assume that $W\in\Cor_B(\A^{1}_B-Z(r),V)$ is nontrivial.
Then the fiber of $W$ over $Z(f)$ would be nonempty. 
On other hand, 
since $Z(f)\times_B V=\emptyset$ and $W\subset (\A^{1}_B-Z(r))\times_B V$,
it follows that $Z(f)\times_B W=\emptyset$.
This shows the vanishing $H^0_{\nis}(\A^{1}_B-Z(r),\calE)=0$.
Thus, if $\delta(\mathrm{id})=0$, 
then the element $\mathrm{id}\in H^0_{\nis}(V,\calE)$ is in the image of $H^0_{\nis}(\A^{1}_B-Z(f),\calE)$. 
But the latter would imply that for every $\A^{1}$-invariant presheaf with transfers $\calF$ over the 
nonempty scheme $B-Z(f)$, 
the canonically induced morphism
\begin{equation}
\label{equation:surjection}
\calF\p*{\A^{1}_B-Z(f)\cong \A^{1}_{B-Z(f)}}
\to 
\calF\p*{V\cong\A^{1}_{B-Z(f)}-Z(r)}  
\end{equation}
is surjective. 
However, 
this fails for the $\A^{1}$-invariant presheaf with 
transfers of global units $\mathcal O^\times$ on $\Sm_{B-Z(f)}$
because in this case 
$r^{-1}\in\mathcal O_{V}(V)^\times$ 
does not belongs to the image of \eqref{equation:surjection}.
\end{example}

For a general base scheme,
\Cref{lemma:withtransfersandframed} shows strict $\A^{1}$-invariance for additive quasi-stable 
framed presheaves of abelian groups imply \SHI for presheaves with transfers. 
By \Cref{th:SHIeqS1Ab1,th:SHIeqS1Ab}, 
\SHI for framed presheaves of abelian groups is equivalent to Nisnevich \SHI in the sense of \Cref{def:SHIthnistriv}.
In summary, 
\Cref{example:H(A1U)} disproves all the three naive variants of \SHI discussed in this section.

\begin{lemma}
\label{lemma:withtransfersandframed}
Every presheaf with transfers of abelian groups on $\Sm_{B}$ has the structure 
of a framed additive presheaf of abelian groups on $\Sm_{B}$. 
Moreover, every such framed presheaf is quasi-stable.
\end{lemma}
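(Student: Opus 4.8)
The statement to prove is \Cref{lemma:withtransfersandframed}: every presheaf with transfers of abelian groups on $\Sm_{B}$ carries the structure of a framed additive presheaf, and every such framed presheaf is quasi-stable.

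\medskip

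The plan is to exploit the fact that framed correspondences form, in a precise sense, the universal additive correspondence category: there is a functor $\Fr_{+}(B)\to\Cor_B$ from framed correspondences to finite correspondences, compatible with composition and coproducts, sending a morphism of schemes to its graph. First I would recall the construction of this functor from the literature cited in the introduction (Garkusha--Panin, and the $\infty$-categorical account in \cite{five-authors}), which associates to a level $n$ framed correspondence $(Z,\varphi_1,\dots,\varphi_n,g)\in\Fr_n(X,Y)$ a finite correspondence supported on $Z$ with multiplicities determined by the framing functions $\varphi_i$ and the zero locus; the key point is that this is additive in $Y$ on the nose because $\Cor_B$ is additive, so it descends from the free abelian group $\Z\Fr_{+}(X,Y)$ and, after checking it kills the $\A^1$-homotopy, additivity, and quasi-stabilization relations, factors through $\ovZF(X,Y)$. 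Precomposing a presheaf with transfers $\calF\colon\Cor_B^\op\to\Ab$ with $\Fr_{+}(B)\to\Cor_B$ then yields a framed presheaf, and additivity of $\calF$ over $\Cor_B$ immediately gives radditivity/additivity of the resulting framed presheaf, since the functor sends $X\amalg Y$ to $X\amalg Y$ and respects the coproduct structure.

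\medskip

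For the quasi-stability claim, I would compute the image of the canonical level one framed correspondence $\sigma_X\in\Fr_1(X,X)$ of \Cref{example:quasistable} under $\Fr_{+}(B)\to\Cor_B$. By construction $\sigma_X$ is a framing of the identity: its support is $0\times X\subset\A^1_X$, the framing function is the coordinate $t$, and the structure map is the projection $\mathrm{pr}\colon(\A^1_X)^h_{0\times X}\to X$. The zero locus of $t$ meets $0\times X$ transversally with multiplicity one, so the associated finite correspondence is simply the graph of $\id_X$, i.e. the identity element of $\Cor_B(X,X)$. Hence $\sigma_X^*$ acts as the identity on $\calF(X)$ — in particular an isomorphism of abelian groups — which is exactly quasi-stability for a presheaf of abelian groups (the $\Omega_s$-spectra version in \Cref{definition:qsradditive} reduces, for Eilenberg--MacLane objects, to this statement, as already used in \Cref{th:SHIeqS1Ab1}). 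Thus every framed presheaf arising from a presheaf with transfers is automatically quasi-stable.

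\medskip

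The main obstacle I anticipate is not conceptual but bookkeeping: verifying that the assignment $(Z,\varphi_\bullet,g)\mapsto$ (finite correspondence) is genuinely well defined on $\ovZF$, i.e. that it respects the three relations — invariance under $\A^1$-homotopies of framings, additivity in the support, and the quasi-stabilization identification $\Fr_n(X,Y)\to\Fr_{n+1}(X,Y)$ via suspension by $\sigma$. The cleanest route is to cite this directly: the functor $\Fr_{+}(B)\to\Cor_B$ and its factorization through $\ovZF$ are established over a field in \cite{hty-inv,Framed} and in the general $\infty$-categorical framework of \cite{five-authors}, and the relative version over a base $B$ is discussed in \Cref{sectionApp:FrCor} of the present paper; so I would phrase the proof as an application of those results rather than reproving them. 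With that input the lemma is immediate, and I would keep the written proof to two or three sentences invoking the universal property together with the explicit computation of $\sigma_X$ above.
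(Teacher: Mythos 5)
Your proposal is correct and follows essentially the same route as the paper: restriction along a functor $\Fr_{+}(B)\to\Cor_B$ sending a framed correspondence to its underlying cycle with multiplicities, with quasi-stability coming from the fact that $\sigma_X$ maps to the identity correspondence. The only difference is presentational — the paper writes out the multiplicity formula explicitly and checks compatibility with composition by reduction to the generic points of $B$, whereas you defer the well-definedness to the literature (and you verify more relations, e.g. $\A^1$-homotopy invariance, than the lemma actually requires).
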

\begin{proof}
If $c=(S, V,\varphi,g)\in \Fr_n(X,Y)$ then its (non-reduced) support $S$ is isomorphic to the vanishing locus 
$Z(\varphi)$ in $V$,
where $V$ is an \'etale neighborhood of $S$ in $\A^n_X$.
Note that $S$ is a closed subscheme in $\A^n_X$ and finite over $X$.
It follows that $S$ is either finite surjective over $X$ or empty.
The same holds for the image $\underline{S}$ of $S$ in $X\times Y$ under the canonical projection $S\to X$ and 
$g\colon V\to Y$.
The irreducible components $\underline{S}_i$ of $\underline{S}$ yield the finite correspondence
\begin{equation}\label{eq:ctr=sumci}
c^\mathrm{tr}=\sum_{i} c_i \underline{S}_i
\in
\Cor(X,Y).
\end{equation}
The sum runs over the generic points $\eta_i$ of $S$ and 
$c_i=\dim_{K}\calO_{S_{(\eta_i)}}(S_{(\eta_i)})/\dim_{K}{\calO_{\eta_i}(\eta_i)}$,
where $K$ is the residue field of the image of $\eta_i$ in $X$,
$S_{(\eta_i)}$ is the local scheme of $S$ at $\eta_i$,
and $\underline{S}_i$ is the closure of the image of $\eta_i$ in $X\times Y$.
Now the first claim follows by using the functor 
\begin{equation}
\label{eq:Fr+Cor}
\Fr_+(B)\to \Cor(B);
c\mapsto c^\mathrm{tr}.
\end{equation}
Finally, the equality $(\sigma c)^\mathrm{tr}=\sigma (c^\mathrm{tr})$ implies the second claim.
\end{proof}
\begin{remark}
We note that \eqref{eq:Fr+Cor} preserves the composition by reduction to fields.
Indeed,
it suffices to consider fields since the multiplicities of the cycles in \eqref{eq:ctr=sumci} 
are defined via the fibers over the generic points of $B$.
\end{remark}

\subsection{\texorpdfstring{$\tf$}{tf}-Nisnevich strict \texorpdfstring{$\A^{1}$}{A1}-invariance}
\label{section:tship}

The strict $\A^{1}$-invariance theorems of Garkusha-Panin \cite[\S 17]{hty-inv} and 
Voevodsky \cite[\S3.2]{Voe-hty-inv} concern the effect of the Nisnevich localization functor 
$L_{\nis}$ on $\A^{1}$-local objects.
The verbatim generalization to a base scheme $B$ and $S^{1}$-spectra takes the following form.

\begin{definition}
\label{def:SHIthnistriv}
Let $\mathcal F$ be an $\A^{1}$-local quasi-stable radditive framed presheaf of $S^{1}$-spectra on $\Sm_{B}$. 
We say that Nisnevich strict ${\mathbb A}^{1}$-invariance holds on $\Sm_{B}$ if, 
for every $\calF$ as above, 
$\Lrep_{\nis}(\calF)$ is an $\A^{1}$-local quasi-stable radditive framed presheaf of $S^{1}$-spectra. 
\end{definition}

In \Cref{def:SHIthnistriv} we may replace $\Sm_{B}$ with $\SmBcZ$, $\SmAff_B$, $\SmAffScZ$, $\SmatB$, $\SmatBcZ$.
In \Cref{section:defsmZsmB} we prove that Nisnevich strict ${\mathbb A}^{1}$-invariance holds on $\Sm_{B,Z}$ 
when $Z$ is a zero-dimensional scheme.
In \Cref{sect:counterexample}, 
however,
we show that Nisnevich strict ${\mathbb A}^{1}$-invariance fails on $\Sm_B$ for any positive dimensional 
base scheme.
The same holds for $\Sm_{B,Z}$ when $Z$ is positive dimensional. 
These observations motivate the following definition.

\begin{definition}
\label{def:SHIthnistf}
Let $\mathcal F$ be a quasi-stable radditive framed presheaf of $S^{1}$-spectra on $\Sm_{B}$
such that $\Lrep_\tf(\calF)$ is $\A^{1}$-local.
We say that $\tf$-Nisnevich strict ${\mathbb A}^{1}$-invariance holds on $\Sm_B$ if, 
for every $\calF$ as above, 
$\Lrep_{\nis}(\calF)$ is an $\A^{1}$-local quasi-stable radditive framed presheaf of $S^{1}$-spectra. 
\end{definition}

The property on $B$ stated in \Cref{definition:SHI} implies the one stated in \Cref{def:SHIthnistf}.
In fact, 
these properties are equivalent because restricting the endofunctor 
$\Lrep_\Nis \colon \HH_s(B)\to \HH_s(B)$ to the subcategory of $\tf$-local objects amounts to taking 
the composite 
\[
\HH_\tf(B)\xrightarrow{L^\tf_\Nis} \HH_\Nis(B) \xrightarrow{R^\Nis_\tf} \HH_\tf(B).
\]

\begin{remark}
\Cref{thm:SHIrc} shows that $\tf$-Nisnevich strict ${\mathbb A}^{1}$-invariance implies $\tf$-Nisnevich strict 
${\mathbb A}^{1}$-invariance for presheaves of abelian groups,
see \Cref{def:str_xibase}.
In \Cref{def:SHIthnistf} we can replace $\Sm_{B}$ with $\SmBcZ$, $\SmAff_B$, $\SmAffScZ$, $\SmatB$, $\SmatBcZ$.
\end{remark}

\begin{remark}
\label{rm:SHIdim0NistfNis}
By \Cref{prop:Proptftop} (iii) (resp.~\Cref{lmSmBcsigmatftopologytrivial}), 
the $\tf$-topology is trivial on $\Sm_B$ (resp.~$\Sm_{B,Z}$) if and only if $\dim B=0$ (resp.~$\dim Z=0$).
In this case,
the notions of Nisnevich and $\tf$-Nisnevich strict ${\mathbb A}^{1}$-invariance coincide.
\end{remark}

In \Cref{section:proofmaintheorem} we prove $\tf$-Nisnevich strict ${\mathbb A}^{1}$-invariance for 
one-dimensional schemes.
Next, 
we discuss some reduction steps and establish the equivalence between different forms of Nisnevich and
$\tf$-Nisnevich strict ${\mathbb A}^{1}$-invariance.

\begin{lemma}
\label{lm:baselocality}
Suppose $\tf$-Nisnevich strict ${\mathbb A}^{1}$-invariance holds on $\Sm_{B_\sigma}$ for all $\sigma\in B$, 
where $B_\sigma$ denotes the local scheme at $\sigma$.
Then $\tf$-Nisnevich strict ${\mathbb A}^{1}$-invariance holds on $\SmB$.
\end{lemma}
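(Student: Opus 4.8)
The plan is to reduce the statement to the local schemes $B_\sigma$ by base change along the localization morphisms $f_\sigma\colon B_\sigma\to B$, using the compatibility statements in \Cref{lm:BaseChangeCommutes} together with the detection property of \Cref{lm:infZarCoveringBaseChange}. Unwinding \Cref{def:SHIthnistf}, fix a quasi-stable radditive framed presheaf of $S^1$-spectra $\calF$ on $\Sm_B$ with $\Lrep_\tf(\calF)$ $\A^1$-local; we must show that $\Lrep_\nis(\calF)$ is $\A^1$-local, quasi-stable, radditive and framed. That $\Lrep_\nis$ preserves the quasi-stable radditive framed structure requires no hypothesis on $B$ and is treated separately (see the remark in \Cref{subsection:squareAtimesV}), so the only thing to prove is that $\Lrep_\nis(\calF)$ is $\A^1$-local.

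Since the Nisnevich topology refines the $\tf$-topology, $\Lrep_\nis(\calF)$ is in particular $\tf$-local, so \Cref{lm:infZarCoveringBaseChange} applies to it: it suffices to check that $f_\sigma^*\Lrep_\nis(\calF)\in\SH_s(B_\sigma)$ is $\A^1$-local for every $\sigma\in B$. By \Cref{lm:BaseChangeCommutes}, the functor in \eqref{eq:basechangeBcoprodBsigma} — hence each $f_\sigma^*$ — commutes with $\Lrep_\nis$, $\Lrep_\tf$ and $\Lrep_{\A^1}$, preserves $\A^1$-local objects, and preserves quasi-stable radditive framed objects. Consequently $f_\sigma^*\calF$ is a quasi-stable radditive framed presheaf of $S^1$-spectra on $\Sm_{B_\sigma}$, and $\Lrep_\tf(f_\sigma^*\calF)\simeq f_\sigma^*\Lrep_\tf(\calF)$ is $\A^1$-local because $\Lrep_\tf(\calF)$ is. By the hypothesis of the lemma, $\tf$-Nisnevich strict $\A^1$-invariance holds on $\Sm_{B_\sigma}$, so $\Lrep_\nis(f_\sigma^*\calF)$ is $\A^1$-local; and $\Lrep_\nis(f_\sigma^*\calF)\simeq f_\sigma^*\Lrep_\nis(\calF)$ by the same commutation. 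Thus $f_\sigma^*\Lrep_\nis(\calF)$ is $\A^1$-local for all $\sigma$, and \Cref{lm:infZarCoveringBaseChange} yields that $\Lrep_\nis(\calF)$ is $\A^1$-local. Combined with the preservation of the quasi-stable radditive framed structure, this finishes the proof.

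The only genuinely delicate point is that $f_\sigma\colon B_\sigma\to B$ is not an open immersion but a cofiltered limit of open immersions; the two base-change lemmas invoked above (\Cref{lm:BaseChangeCommutes} and \Cref{lm:infZarCoveringBaseChange}) are precisely designed to carry the relevant facts through this limit, so once they are available the present argument is essentially formal bookkeeping. One should also bear in mind that "$\A^1$-local" here is the sectionwise $\A^1$-invariance condition, so that "$f_\sigma^*$ preserves $\A^1$-local objects" amounts to the fact that a filtered colimit of sectionwise $\A^1$-invariant $S^1$-spectra is again sectionwise $\A^1$-invariant.
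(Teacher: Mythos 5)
Your argument is correct and follows essentially the same route as the paper: both reduce to the local schemes $B_\sigma$ via the base-change functor of \Cref{lm:BaseChangeCommutes} (which commutes with $\Lrep_\nis$, $\Lrep_\tf$, $\Lrep_{\A^1}$ and preserves the quasi-stable radditive framed structure) and then conclude with the detection property of \Cref{lm:infZarCoveringBaseChange} on $\tf$-local objects. Your write-up merely spells out the bookkeeping that the paper's terser proof leaves implicit.
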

\begin{proof}
\Cref{lm:BaseChangeCommutes} shows that the base change morphisms 
$f^*\colon \SH_{s}(B)\to \prod_{\sigma\in B}\SH_{s}(B_\sigma)$ and 
$f^*\colon \SH_{s}(\Fr_+(\Sm_{B}))\to \prod_{\sigma\in B}\SH_{s}(\Fr_+(\Sm_{B_\sigma}))$
along $\coprod_{\sigma\in B} B_\sigma\to B$ commute with $L_{\nis}$, $L_\tf$, and $L_{\A^{1}}$.
In particular, 
$f^*$ preserves $\A^{1}$-local objects. 
Since $f^*$ detects $\A^{1}$-local objects on the subcategory of $\tf$-local objects, 
see \Cref{lm:infZarCoveringBaseChange}, 
we are done.
\end{proof}

\begin{lemma}
\label{cor:AffSmScZSmat}
Suppose $B$ is an affine scheme and $Z\not\hookrightarrow B$ is a closed immersion.
Then $\tf$-Nisnevich strict $\A^{1}$-invariance holds on $\SmatBcZ$ if and only if it 
holds on $\SmAff_{B,Z}$.
\end{lemma}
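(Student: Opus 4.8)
The plan is to exploit the equivalence $r^\cci\colon\HHtriv(\SmAff_{B,Z})\xrightarrow{\simeq}\HHtriv(\Smat_{B,Z})$ of \Cref{prop:AffSmScZSmat}, which moreover commutes with $\Lrep_\nis$, $\Lrep_\tf$, $\Lrep_{\A^1}$, so that it preserves and detects Nisnevich local and $\A^1$-local objects. The same statements should be transported to the framed setting: one needs the analog of \Cref{prop:AffSmScZSmat} for the categories $\Fr_+(\SmAff_{B,Z})$ and $\Fr_+(\Smat_{B,Z})$, which follows verbatim from the arguments of \Cref{section:ReductionSmat} once one notes, via \Cref{lm:XSmXprimeSmatreatect,lm:retractSmH(Sm)}, that every $X\in\SmAff_{B,Z}$ is a retract in $\HHtriv$ of some $V\in\Smat_{B,Z}$ through an $\A^1$-equivalence coming from the total space of a vector bundle, and that this retract is compatible with framed transfers because a bundle projection and its zero section define framed correspondences. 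I would also record that $r^\cci$ preserves quasi-stability and radditivity: radditivity is automatic since $r^\cci$ is a restriction, and quasi-stability is preserved because the framed correspondence $\sigma_X$ is natural in $X$ and $\SmAff_{B,Z}\hookleftarrow\Smat_{B,Z}$ sends $\sigma_X$ to $\sigma_X$.

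First I would fix a quasi-stable radditive framed presheaf of $S^1$-spectra $\calF$ on $\Smat_{B,Z}$ (resp.\ $\SmAff_{B,Z}$) with $\Lrep_\tf(\calF)$ being $\A^1$-local, and set $\calG=l^\cci(\calF)$ (resp.\ $r^\cci(\calF)$). Since the unit of the adjunction $l^\cci\dashv r^\cci$ is a natural isomorphism and $r^\cci$ is an equivalence, $\calF$ and $\calG$ correspond to each other under the equivalence, and $r^\cci$ (resp.\ its inverse) carries the $\tf$-local $\A^1$-local object on one side to the $\tf$-local $\A^1$-local object on the other; this is exactly where \Cref{lm:raffrcciLA1LNisLtf} is used. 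Then $\Lrep_\tf(\calG)$ is $\A^1$-local. Assuming $\tf$-Nisnevich strict $\A^1$-invariance on the second category, $\Lrep_\nis(\calG)$ is an $\A^1$-local quasi-stable radditive framed presheaf. Applying the equivalence in the reverse direction, and using once more that it commutes with $\Lrep_\nis$ and $\Lrep_{\A^1}$ and preserves the framed structure, we conclude $\Lrep_\nis(\calF)$ is an $\A^1$-local quasi-stable radditive framed presheaf of $S^1$-spectra. This gives one implication; the other is symmetric.

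The step I expect to be the main obstacle is the framed refinement of \Cref{prop:AffSmScZSmat}: one must check that the retract argument of \Cref{lm:XSmXprimeSmatreatect,lm:SmSmatconservative} goes through for presheaves on $\Fr_+(\SmAff_{B,Z})$ rather than on $\SmAff_{B,Z}$, i.e.\ that the left Kan extension $l^\cci$ still commutes with the homotopy colimits $V[f^{-1}]$ when $f$ is the idempotent framed endomorphism $z\circ p$, and that $V[f^{-1}]\simeq X$ in $\HHtriv(\Fr_+(\SmAff_{B,Z}))$. Here one uses that $p\colon V\to X$ and the zero section $z$ are honest morphisms of schemes, hence define framed correspondences of level zero, so $f=z\circ p$ is genuinely idempotent as a framed self-map of $X$; the argument of \Cref{lm:retractSmH(Sm)} then applies with no change. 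I would phrase the whole thing as: the equivalence of categories of \Cref{prop:AffSmScZSmat} restricts to an equivalence between the respective full subcategories of $\A^1$-local quasi-stable radditive framed presheaves of $S^1$-spectra, and it intertwines $\Lrep_\nis$; since $\tf$-Nisnevich strict $\A^1$-invariance is precisely the statement that $\Lrep_\nis$ preserves that full subcategory, it holds on one side iff on the other.
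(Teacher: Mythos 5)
Your proposal is correct and follows essentially the same route as the paper, whose proof is a two-line invocation of \Cref{prop:AffSmScZSmat} (the equivalence $r^\cci$ commuting with $\Lrep_\nis$, $\Lrep_\tf$, $\Lrep_{\A^1}$) followed by ``the desired stable result follows by arguing levelwise.'' The extra care you take with the framed refinement and with quasi-stability and radditivity is a reasonable filling-in of details the paper leaves implicit rather than a different argument.
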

\begin{proof}
By \Cref{prop:AffSmScZSmat} the restriction functor $r^\cci\colon\HHtriv(\SmAff_{B,Z})\to\HHtriv(\SmatBcZ)$ 
preserves and detects $\A^{1}$-local objects, 
and it commutes with the localization endofunctors $\Lrep_\nis$ and $\Lrep_\tf$. 
The desired stable result follows by arguing levelwise.
\end{proof}

\begin{lemma}
\label{cor:SmScZSmatA1locpreserve}
Suppose $B$ is an affine scheme and $Z\not\hookrightarrow B$ is a closed immersion.
Then Nisnevich strict ${\mathbb A}^{1}$-invariance holds on $\SmBcZ$ if and only if it holds on $\SmatBcZ$.
\end{lemma}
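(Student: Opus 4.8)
The plan is to transport the statement along the chain of Nisnevich-local equivalences established earlier, so that it reduces to the fact that $\Lrep_\nis$ commutes with the relevant restriction functors. First I would combine \Cref{prop:SmAffSmniseqivalence} and \Cref{prop:AffSmScZSmat}: since $B$ is affine, the restriction functors
\[
\HH_\nis(\SmBcZ)\longrightarrow\HH_\nis(\SmAffScZ)\xrightarrow{r^\cci}\HH_\nis(\SmatBcZ)
\]
are equivalences of categories, they preserve and detect $\A^1$-local objects, and they commute with $\Lrep_\nis$ (the last point by \Cref{lm:raffrcciLA1LNisLtf}). The same applies to the framed versions on $\HH_\nis(\Fr_+(\SmBcZ))$, $\HH_\nis(\Fr_+(\SmAffScZ))$, $\HH_\nis(\Fr_+(\SmatBcZ))$: for the first step this is part of \Cref{prop:SmAffSmniseqivalence}, and for the second step the proofs of \Cref{lm:SmSmatconservative} and \Cref{prop:AffSmScZSmat} go through with $\Fr_+$ adjoined throughout. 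Write $r$ for the resulting equivalence $\HH_\nis(\Fr_+(\SmBcZ))\xrightarrow{\simeq}\HH_\nis(\Fr_+(\SmatBcZ))$.

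Next I would check that $r$ and its inverse respect the framed structure conditions of \Cref{def:SHIthnistriv}. Being induced by the inclusions of full subcategories $\SmatBcZ\hookrightarrow\SmAffScZ\hookrightarrow\SmBcZ$, the functor $r$ sends $\sigma_X$ of \Cref{example:quasistable} to $\sigma_X$ for $X\in\SmatBcZ$ and commutes with finite coproducts, so it carries quasi-stable radditive objects to quasi-stable radditive objects. For the converse, one uses that $\sigma_X$ is natural and that every $X\in\SmAffScZ$ is a retract of some $V\in\SmatBcZ$ by \Cref{lm:XSmXprimeSmatreatect}, so that $\calF(\sigma_X)$ is a retract of $\calF(\sigma_V)$ and hence an equivalence as soon as the restriction of $\calF$ to $\SmatBcZ$ is quasi-stable; radditivity on $\SmAffScZ$ and $\SmBcZ$ follows similarly from the Zariski-covering argument of \Cref{lm:SmSmatZarCov}. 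Thus $r$ restricts to an equivalence between the categories of $\A^1$-local quasi-stable radditive framed objects, compatibly with $\Lrep_\nis$.

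Finally I would deduce the stable statement by applying the above levelwise, exactly as in the proof of \Cref{cor:AffSmScZSmat}: a framed presheaf of $S^1$-spectra is a sequence of pointed framed simplicial presheaves with bonding maps, the constructions above are natural in the spectrum level, and $\A^1$-locality, quasi-stability, radditivity and $\Lrep_\nis$ are all formed levelwise. Hence for a quasi-stable radditive framed presheaf of $S^1$-spectra $\calF$ on $\SmBcZ$, the object $\Lrep_\nis(\calF)$ is $\A^1$-local, quasi-stable and radditive precisely when the corresponding object on $\SmatBcZ$ is; since $r$ is essentially surjective, this shows Nisnevich strict $\A^1$-invariance holds on $\SmBcZ$ if and only if it holds on $\SmatBcZ$. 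The main point requiring care is the bookkeeping with the framed structure maps under the equivalences, but since this parallels \Cref{cor:AffSmScZSmat} and uses no new geometric input, I do not expect it to be a genuine obstacle.
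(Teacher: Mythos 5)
Your argument is correct and follows essentially the same route as the paper: the paper's proof simply chains \Cref{prop:AffSmScZSmat} with \Cref{lm:SmSmaffA1surjectiveandpreserves} and \Cref{prop:SmAffSmniseqivalence} to pass between $\SmatBcZ$, $\SmAffScZ$ and $\SmBcZ$, exactly as you do. Your additional bookkeeping on quasi-stability and radditivity (via the retract of \Cref{lm:XSmXprimeSmatreatect} and the coverings of \Cref{lm:SmSmatZarCov}) is detail the paper leaves implicit, not a different argument.
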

\begin{proof}
\Cref{prop:AffSmScZSmat} allows us to replace $\Smat_{B,Z}$ with $\SmAff_{B,Z}$.
Moreover,
via \Cref{lm:SmSmaffA1surjectiveandpreserves} and \Cref{prop:SmAffSmniseqivalence} we arrive at $\SmBcZ$.
\end{proof}

\section{\texorpdfstring{$\tf$}{tf}-localization by closed subschemes}
\label{sect:tfLoc}
In this section, 
for a fixed closed immersion $Z\not\hookrightarrow B$,
we prove 
our localization or recollement theorem 
in $\SHstf(B)$ discussed in \Cref{subsection:tfmlt}
without $\A^1$-localization, i.e., the claim
regarding \eqref{eq:locpriShs1nis(Sms)}. 
This allows us to reduce the problem of $\tf$-Nisnevich strict $\A^{1}$-invariance 
for $\SH_{s,\tf}(\Sm_{B})$ 
to $\SHstf(B,Z)$ and $\SH_{s,\tf}(\Sm_{B-Z})$.

\subsection{\texorpdfstring{$\tf$}{tf}-localization}
Pertinent to localization is the geometric observation that every $X\in \Sm_{B}$ allows an ``infinitesimal'' $\tf$-covering by 
the essentially smooth $B$-scheme $X^h_Z$ 
and smooth $B$-scheme $X\times_B(B-Z)$. 
If $B$ is one-dimensional, 
then with the exception of $X$ all the terms in the $\tf$-square 
\[
\xymatrix{ 
(X^h_Z)\times_B(B-Z) \ar[d]\ar[r]& X\times_B(B-Z)\ar[d] \\ 
X^h_Z\ar[r] & X
}
\] 
admit only trivial $\tf$-coverings.
The second critically important ingredient is the triviality of the Nisnevich cohomology of $X\times_B(B-Z)$ when 
$X$ is an essentially smooth local henselian scheme.
In contrast to the localization theorem for $\SH(B)$, 
we work with $\Sm_{B,Z}$ instead of $\Sm_{Z}$.
This allows us to extend the localization theorem for motivic equivalences to the level of Nisnevich local equivalences.

The pointed homotopy categories we study are related via the functors:
\begin{equation}
\label{eq:LocNisZSquare}
\xymatrix{
& \HHtrivptd(\SmB) & \\
\HHtrivptd(\SmBcZ)\ar[ru]^{\tids} & 
&\HHtrivptd(\SmBmZ)\ar[lu]_{\jds} \\
& \HHtrivptd(\SmB) \ar[lu]_{\tif}\ar[ur]^{\jus} &
}
\end{equation}
For $X\in \Sm_{B}$, $X^h_Z\in \SmBcZ$, $V\in \Sm_{B-Z}$, 
the functors $\tif$, $\jus$, $\tids$, $\jds$ are given by
\begin{equation}
\label{eq:tifjustidsjdscoPresentable}
\begin{array}{ll}
\tif(\calF)(X^h_Z)=\hofib(\calF(X^h_Z)\to \calF(X^h_Z-X_Z)), 
& \calF\in \HHtrivptd(\SmB), \\
\jus(\calF)(V)=\calF(V), &  \calF\in \HHtrivptd(\SmB), \\
\tids(\calF)(X) = \calF(X^h_Z), & \calF\in \HHtrivptd(\SmBcZ), \\
\jds(\calF)(X) = \calF(X-X_Z), & \calF\in \HHtrivptd(\SmBmZ).
\end{array}
\end{equation}
The $\tf$-localization of \eqref{eq:LocNisZSquare} induces the same adjunctions as in 
\eqref{eq:locpriShs1nis(Sms)} for $\tau=\tf$ and the pointed homotopy category $\HH_{s,\bullet}(\Sm_B^\tf)$.
We shall use similar notation for $\SH_{s}(\SmB)$, $\SH_{s}(\SmBcZ)$, and $\SH_{s}(\SmBmZ)$.
In this context, 
the motivic localization endofunctor $\Lrep_{\mot}$ preserves the adjunctions in \eqref{eq:locpriShs1nis(Sms)} 
and their properties.

\begin{prop}
\label{prop:filtr-iFj:unstable}
For every $\tf$-local object $\calF\in \HHtrivptd(\Sm_B)$ there is a homotopy pullback square
\begin{equation}
\label{eq:homotopypullback_tidsfFjdus}
\xymatrix{
\tids\tif(\calF)\ar[r]\ar[d]& \calF\ar[d] \\ 
\ast \ar[r]&\jds\jus(\calF).
}
\end{equation}
The same holds for every $\tf$-local object $\calF\in \HHtrivptd(\SmAff_B)$.
\end{prop}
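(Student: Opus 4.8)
The plan is to verify the square \eqref{eq:homotopypullback_tidsfFjdus} one section at a time and to deduce it from $\tf$-descent. Since $\HHtrivptd(\Sm_B)$ is the homotopy category of pointed simplicial presheaves with sectionwise weak equivalences, homotopy pullback squares in it are detected sectionwise, so it suffices to show that for every $X\in\Sm_B$ the square obtained by evaluating \eqref{eq:homotopypullback_tidsfFjdus} at $X$ is homotopy cartesian. Unwinding \eqref{eq:tifjustidsjdscoPresentable}, the relevant corners are $\tids\tif(\calF)(X)=\hofib\bigl(\calF(X^h_Z)\to\calF(X^h_Z-X_Z)\bigr)$ and $\jds\jus(\calF)(X)=\calF(X-X_Z)$, the map $\calF(X)\to\calF(X-X_Z)$ being restriction along the open immersion $X-X_Z\hookrightarrow X$ (this is the unit $\id\to\jds\jus$); so the assertion to prove is that the counit $\tids\tif(\calF)(X)\to\calF(X)$ exhibits $\tids\tif(\calF)(X)$ as the homotopy fibre of $\calF(X)\to\calF(X-X_Z)$.

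First I would isolate the geometric input. For any affine étale neighbourhood $\varphi\colon X'\to X$ of the closed subscheme $X_Z=X\times_B Z$ — that is, $\varphi$ affine étale inducing an isomorphism $X'\times_B Z\xrightarrow{\cong}X_Z$ — the commutative square with corners $X'-X'_Z$, $X'$, $X-X_Z$, $X$ is a $\tf$-square in the sense of \Cref{definition:tfsquares}. By \Cref{prop:cdstructurecompleteregularbounded} the $\tf$-$cd$-structure is complete, regular, and bounded, so a $\tf$-local object satisfies excision for $\tf$-squares; hence the square with corners $\calF(X)$, $\calF(X')$, $\calF(X-X_Z)$, $\calF(X'-X'_Z)$ is homotopy cartesian. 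I would then pass to the filtered homotopy colimit over the cofiltered category of such neighbourhoods $X'$: by the continuity conventions of \Cref{subsection:candn} one has $\calF(X^h_Z)\simeq\hocolim_{X'}\calF(X')$ and $\calF(X^h_Z-X_Z)\simeq\hocolim_{X'}\calF(X'-X'_Z)$, while the $\calF(X)$- and $\calF(X-X_Z)$-corners are constant in $X'$. Because filtered homotopy colimits commute with finite homotopy limits, the square with corners $\calF(X)$, $\calF(X^h_Z)$, $\calF(X-X_Z)$, $\calF(X^h_Z-X_Z)$ is again homotopy cartesian.

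Finally I would rotate: taking horizontal homotopy fibres of this last square produces an equivalence $\hofib\bigl(\calF(X)\to\calF(X-X_Z)\bigr)\xrightarrow{\simeq}\hofib\bigl(\calF(X^h_Z)\to\calF(X^h_Z-X_Z)\bigr)=\tids\tif(\calF)(X)$ compatible with the projections to $\calF(X)$, which is exactly the statement that the square at $X$ is homotopy cartesian, once one checks that this equivalence is inverse to the counit — a routine inspection of how $\tif$, $\tids$, $\jus$, $\jds$ in \eqref{eq:LocNisZSquare} are built and of the adjunctions in \eqref{eq:locpriShs1nis(Sms)}. The case $\calF\in\HHtrivptd(\SmAff_B)$ is handled identically, using that an affine étale neighbourhood of $X_Z$ in an affine $X\in\SmAff_B$ again lies in $\SmAff_B$ (any affine finite-type $B$-scheme is closed in some $\A^N_B$) and that the descent needed for the opens $X-X_Z$, $X'-X'_Z$ is part of $\tf$-descent on $\SmAff_B$. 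I expect the main obstacle to be the bookkeeping with essentially smooth schemes: checking that the indexing category of affine étale neighbourhoods is cofiltered and nonempty, that $\calF$ carries it to a filtered diagram whose homotopy colimit genuinely computes the values $\calF(X^h_Z)$ and $\calF(X^h_Z-X_Z)$ appearing in \eqref{eq:tifjustidsjdscoPresentable}, and that the natural comparison maps produced above really coincide with the adjunction unit and counit rather than being merely abstractly equivalent to them.
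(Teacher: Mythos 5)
Your proof is correct and follows essentially the same route as the paper: one evaluates at $X\in\Sm_B$, uses $\tf$-excision for the $\tf$-square built from an affine \'etale neighbourhood of $X_Z$ in $X$, passes to the cofiltered limit defining $X^h_Z$ (using continuity and commutation of filtered homotopy colimits with homotopy fibres), and identifies the horizontal homotopy fibres $\hofib(\calF(X)\to\calF(X-X_Z))\simeq\hofib(\calF(X^h_Z)\to\calF(X^h_Z\times_B(B-Z)))=\tids\tif(\calF)(X)$. The paper's proof is just a terser version of this same argument, so no further comparison is needed.
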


\begin{remark}
We would like to emphasize that, in contrast to the usual $\A^1$-version of the localization (or recollement) theorem appearing in \Cref{prop:filtr-iFj:unstable}, our formulation is carried out in the category $\SmBcZ$ and involves the functor $\tilde{i}^!$, rather than the category $\Sm_Z$ and the functor $i^!$. 
This distinction is essential: working in $\SmBcZ$ with $\tilde{i}^!$ allows us to obtain the desired statement without passing to the $\A^1$-localized setting. In particular, no $\A^1$-localization is required in the proof or formulation of \Cref{prop:filtr-iFj:unstable}.
\end{remark}

\begin{proof}
Let $\mathcal H$ be shorthand for the homotopy fiber $\hofib(\calF\to \jds\jus(\calF))$. 
By the definitions of $\tids$, $\tif$, $\jds$, $\jus$, 
there are isomorphisms
\begin{align*} 
\tids\tif(\calF)(X)&\cong \hofib(\calF(X^h_Z)\to \calF(X^h_Z\times_B(B-Z))),\\
\mathcal H(X)&\cong \hofib(\calF(X)\to\calF(X\times_B(B-Z))).
\end{align*}
For every $\tf$-local simplicial presheaf $\calF$, 
there is a homotopy pullback square
\[
\begin{tikzcd}
\calF(X)\ar{r}\ar{d} &\calF(X\times_B(B-Z))\ar{d}\\
\calF(X^h_Z)\ar{r} & \calF(X^h_Z\times_B(B-Z)).
\end{tikzcd}
\]
Hence there is a canonically induced isomorphisms
\begin{equation}
\label{eq:cofib=tildeidsf}
\mathcal H(X)
\xrightarrow{\cong} 
\tids\tif(\calF)(X).
\end{equation}
In more detail, let $\calF$ be a $\tf$-local simplicial presheaf on $\Sm_B$, then it follows that  
for any {\'e}tale neighborhood $\widetilde X$ of $Y=X\times_B Z$ in $X$, 
there is an equivalence
\[
\hofib(\calF(X)\to\calF(X\times_B(B-Z)))
\stackrel{\simeq}{\to} 
\hofib(\calF(\widetilde X)
\to
\calF(\widetilde X\times_B(B-Z))).
\]
Thus for the henselization $X^h_Z$, 
there is an equivalence
\[
\hofib(\calF(X)\to\calF(X\times_B(B-Z)))
\stackrel{\simeq}{\to} 
\hofib(\calF(X^h_Z)\to\calF(X^h_Z\times_B(B-Z))).
\]
Clearly \eqref{eq:cofib=tildeidsf} implies \eqref{eq:homotopypullback_tidsfFjdus}.
\end{proof}

\begin{corollary}
\label{cor:filtr-iFj:stable}
For every $\tf$-local object $\calF\in \SH_{s}(\Sm_B)$ there is a distinguished triangle
\begin{equation}
\label{eq:triangle_tidsfFjdus}
\tids\tif(\calF)\to \calF\to \jds\jus(\calF)\to \tids\tif(\calF)[1].  
\end{equation}
\end{corollary}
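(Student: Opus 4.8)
The plan is to obtain the distinguished triangle by stabilizing the unstable homotopy pullback square of \Cref{prop:filtr-iFj:unstable} levelwise. First I would represent $\calF\in\SH_{s}(\Sm_B)$ by a levelwise $\tf$-local, levelwise fibrant presheaf of $\Omega_{s}$-spectra $\calF=\{\calF_n\}_{n\ge 0}$; this is legitimate since $\calF$ is $\tf$-local, and each term $\calF_n$ is then a $\tf$-local pointed simplicial presheaf on $\Sm_B$. Next I would observe that the four functors $\tif$, $\tids$, $\jus$, $\jds$ of \eqref{eq:tifjustidsjdscoPresentable} are given by sectionwise formulas involving only homotopy (co)limits of the underlying spaces, so they prolong to presheaves of $S^{1}$-spectra by levelwise application and commute with the simplicial loop functor $\Omega_{s}$; in particular they carry the bonding maps of $\calF$ to bonding maps of $\tids\tif(\calF)$ and $\jds\jus(\calF)$.

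Applying \Cref{prop:filtr-iFj:unstable} to each $\calF_n$ then yields, compatibly in $n$, a homotopy pullback square of pointed simplicial presheaves whose lower left corner is the point $\ast$. Such a square exhibits $\tids\tif(\calF_n)$ as the homotopy fiber of the canonical morphism $\calF_n\to\jds\jus(\calF_n)$, i.e.
\[
\tids\tif(\calF_n)\xrightarrow{\ \simeq\ }\hofib\bigl(\calF_n\to\jds\jus(\calF_n)\bigr).
\]
Assembling these equivalences over all $n$ produces a levelwise homotopy fiber sequence $\tids\tif(\calF)\to\calF\to\jds\jus(\calF)$ of presheaves of $S^{1}$-spectra in $\Spt_{s}(\Sm_B)$. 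Since $\SH_{s}(\Sm_B)$ is a stable, hence triangulated, homotopy category, a levelwise homotopy fiber sequence of $S^{1}$-spectra is a distinguished triangle; rotating it gives
\[
\tids\tif(\calF)\to\calF\to\jds\jus(\calF)\to\tids\tif(\calF)[1],
\]
which is the desired triangle \eqref{eq:triangle_tidsfFjdus}.

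The main point to be careful about is the compatibility bookkeeping: one must check that choosing a levelwise $\tf$-local fibrant model of $\calF$ and then applying the four functors levelwise is compatible with the structure maps, so that the levelwise homotopy pullback squares genuinely assemble into a morphism of $S^{1}$-spectra rather than a mere degreewise statement. This is routine once the functors are seen to commute with $\Omega_{s}$, but it is where the (small amount of) work lies; one should also fix the sign and shift conventions entering the connecting morphism $\jds\jus(\calF)\to\tids\tif(\calF)[1]$ so that it agrees with the boundary of the fiber sequence.
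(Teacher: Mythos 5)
Your proof is correct and follows the route the paper intends: the corollary is deduced from \Cref{prop:filtr-iFj:unstable} by applying the unstable homotopy pullback square levelwise to a levelwise $\tf$-local fibrant model and invoking stability of $\SH_{s}(\Sm_B)$ to convert the resulting fiber sequence into a distinguished triangle. The compatibility bookkeeping you flag is indeed the only (routine) point of care, and your treatment of it is adequate.
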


Over a one-dimensional local base scheme, 
the distinguished triangle \eqref{eq:triangle_tidsfFjdus} allows us to describe the 
$\tf$-localization endofunctor on $\SH_{s}(\Sm_B)$. 

\begin{prop}
\label{prop:B1dimlocLtfhocofib}
Suppose $B$ is a one-dimensional local base scheme with closed point $z$.  
Then for every $\calF\in \SH_s(\Sm_B)$ there is a canonical isomorphism
\begin{equation}\label{eq:onedimB:LtfF}
\Lrep_\tf(\calF) 
\cong
\hocofib(\jds\jus(\calF)[-1]\to \tids\tif(\calF)).    
\end{equation}
That is, for every $X\in \Sm_B$, there is an isomorphism
\[
\Lrep_\tf(\calF)(X)
\cong 
\hofib(\calF(X\times_B (B-z))
\oplus 
\calF(X^h_z)
\to 
\calF(X^h_z\times_B (B-z))).
\]
\end{prop}
\begin{proof}
By \Cref{cor:filtr-iFj:stable} there is the distinguished triangle 
\begin{equation}\label{eq:tidstifLtfFLtfFjdsjusLtfF}
\tids\tif(\Lrep_\tf(\calF)) \to \Lrep_\tf(\calF)  \to \jds\jus(\Lrep_\tf(\calF)).
\end{equation}
By \Cref{prop:tifjdstfloceq,prop:tifNislocobj,prop:tidsjusjdsLnisLtf}
proven below, we have 
$\tif(\Lrep_\tf(\calF))\simeq\Lrep_\tf(\tif(\calF))$
and
$\jus(\Lrep_\tf(\calF))\simeq\Lrep_\tf(\jus(\calF))$.
Since the $\tf$-topologies on $\Sm_{B,z}$ and $\Sm_{B-z}$ are trivial by \Cref{prop:Proptftop}(iii),
the outer terms in \eqref{eq:tidstifLtfFLtfFjdsjusLtfF} are equivalent to
$\tids\tif(\calF)$
and
$\jds\jus(\calF)$.
Thus there is the distinguished triangle 
\[
\tids\tif(\calF) \to \Lrep_\tf(\calF) \to \jds\jus(\calF).
\]
The equivalence \eqref{eq:onedimB:LtfF} follows.
\end{proof}

\begin{remark}
When $B$ is one-dimensional, the $\tf$-topologies on $\Sm_Z$ and $\Sm_{B-Z}$ are trivial. 
Thus we do not need to $\tf$-sheafify $\jds\jus(\calF)$ or $\tids\tif(\calF)$ in the proof 
of \Cref{prop:B1dimlocLtfhocofib}.
\end{remark}

\subsection{Localization for motivic local objects}
\label{subsection:lfmlo}

\begin{theorem}
\label{th:HA1tflocalizationrecollement}
For every motivic local object $\calF\in \HHtrivptd(\Sm_B)$ 
(resp.~every $\tf$-motivic local object $\calF\in \HHtrivptd(\SmAff_B)$) there is a homotopy pullback square
\begin{equation}
\label{eq:pullback_idsfFjdus}
\xymatrix{
\ids\iuf(\calF)\ar[r]\ar[d]& \calF\ar[d] \\ \ast \ar[r]&\jds\jus(\calF).  
}
\end{equation}
The same holds for every motivic local object $\calF\in \HHtrivptd(\Fr_+(B))$ 
(resp.~every $\tf$-motivic local object $\calF\in \HHtrivptd(\Fr_+(\SmAff_B))$).
\end{theorem}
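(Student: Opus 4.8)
The plan is to deduce \Cref{th:HA1tflocalizationrecollement} from the $\tf$-local version already established in \Cref{prop:filtr-iFj:unstable} together with the motivic localization theorem for closed immersions. First I would observe that a motivic local object $\calF\in\HHtrivptd(\Sm_B)$ is in particular $\tf$-local, since the Nisnevich topology is finer than the $\tf$-topology; hence \Cref{prop:filtr-iFj:unstable} already gives a homotopy pullback square
\begin{equation*}
\xymatrix{
\tids\tif(\calF)\ar[r]\ar[d]& \calF\ar[d] \\ \ast \ar[r]&\jds\jus(\calF),
}
\end{equation*}
where $\tids,\tif$ are the functors between $\HHtrivptd(\SmBcZ)$ and $\HHtrivptd(\Sm_B)$. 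So the entire content to be proved is the identification of $\tids\tif(\calF)$ with $\ids\iuf(\calF)$, i.e.\ that the pushforward along $\Sm_{B,Z}\to\Sm_B$ composed with the $\Sm_{B,Z}\to\Sm_Z$ comparison recovers the $i_*i^!$ appearing in the recollement over $Z$. Concretely, $\ids$ factors as $\tids$ followed by the equivalence $\SH^{\A^1}_{s,\tau}(B,Z)\simeq\SH^{\A^1}_{s,\tau}(Z)$ mentioned after \Cref{th:locthSHmottf}, and $\iuf$ correspondingly factors through $\tif$; the point is that when $\calF$ is motivic local, its restriction to $\SmBcZ$ is motivic local there, so by \Cref{lm:motloc_id=iusids(Sm)} the unit $\calF|_{\SmBcZ}\to i_*^{B,Z}i^*_{B,Z}(\calF|_{\SmBcZ})$ is an equivalence. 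Applying $\tids$ and using that $\tids$ commutes with the relevant localizations (\Cref{cor:ijcommutLnis}) converts the $\tf$-square into \eqref{eq:pullback_idsfFjdus}.

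In more detail, the steps I would carry out are: (1) record that motivic local $\Rightarrow$ $\tf$-local, and restrict $\calF$ to $\SmBcZ$; invoke \Cref{prop:tidsjusjdsLnisLtf} (or the analogue for the injective structures) to see the restriction is again Nisnevich local and $\A^1$-local, hence motivic local on $\SmBcZ$; (2) apply \Cref{lm:motloc_id=iusids(Sm)} to get $\calF|_{\SmBcZ}\xrightarrow{\simeq} i_*^{B,Z}i^*_{B,Z}(\calF|_{\SmBcZ})$, which is precisely the statement that the $\Sm_{B,Z}$-level object is "rigid enough" to be pulled back from $\Sm_Z$; (3) combine this with the definitions $\ids\iuf\defeq\tids\circ i_*^{B,Z}\circ i^{B,Z}_*{}^{-1}\circ\cdots$ — more honestly, identify $\ids=\tids\circ(\text{equivalence } \HHtriv(\Sm_Z)\simeq\text{rigid part})$ using \Cref{lm:PresPreSsEquivalence}, and dually for $\iuf$ using $\tif$ and \Cref{lm:A1loc_id=iusids(SmAff)} — so that $\ids\iuf(\calF)\simeq\tids\tif(\calF)$; (4) feed this equivalence into \Cref{prop:filtr-iFj:unstable} to obtain \eqref{eq:pullback_idsfFjdus}. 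For the $\tf$-motivic local affine case I would run the identical argument with $\SmAff_B$ in place of $\Sm_B$, using the affine versions of \Cref{prop:filtr-iFj:unstable}, \Cref{lm:A1loc_id=iusids(SmAff)}, and \Cref{prop:SmAffSmniseqivalence}; the framed cases are handled by the same diagram chase using the framed variants of all the cited lemmas (\Cref{prop:StFrRad}, \Cref{lm:PresPreSsEquivalenceSH}, and the framed clauses of \Cref{lm:A1loc_id=iusids(Smat)}, \Cref{lm:motloc_id=iusids(Sm)}).

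The main obstacle I anticipate is step (2)--(3): carefully pinning down that the functors $i^!$ and $\iuf$ of the motivic recollement in \eqref{eq:locpriSHS1tf}, which a priori are defined via the abstract adjunction $i_*\colon\SH^{\A^1}_{s,\tau}(Z)\rightleftarrows\SH^{\A^1}_{s,\tau}(B)$, agree on motivic local objects with the concrete formula $\tif(\calF)(X^h_Z)=\hofib(\calF(X^h_Z)\to\calF(X^h_Z-X_Z))$ from \eqref{eq:tifjustidsjdscoPresentable} transported along $\SH^{\A^1}_{s,\tau}(B,Z)\simeq\SH^{\A^1}_{s,\tau}(Z)$. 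This is where the $\A^1$-locality hypothesis is essential — without it the restriction to $\Sm_{B,Z}$ is not rigid and $\tids\tif$ does not descend to $\Sm_Z$ — and it is exactly the content packaged in \Cref{lm:A1loc_id=iusids(Smat)}, \Cref{lm:motloc_id=iusids(Sm)}, and the rigidity equivalence \Cref{lm:PresPreSsEquivalenceSH}. So the "hard part" is really bookkeeping: threading the correct locality hypothesis through the chain of equivalences $\SmatBcZ\simeq\SmAffBcZ\simeq\SmBcZ$ and $\PreRig(\SmatBlZ)\simeq\HHtriv(\SmatZ)$ so that all the abstract and concrete descriptions of $i^!$ match up, rather than any new geometric input.
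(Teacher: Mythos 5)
Your proposal is correct and follows essentially the same route as the paper: the paper's proof likewise factors $i_*$ and $i^!$ through $\HHtrivptd(\SmBcZ)$, invokes \Cref{prop:filtr-iFj:unstable} (using that motivic local implies $\tf$-local) to get the square for $\tids\tif$, and then identifies $\tids\tif(\calF)$ with $\ids\iuf(\calF)$ via \Cref{lm:motloc_id=iusids(Sm)} (resp.\ \Cref{lm:A1loc_id=iusids(SmAff)} in the affine case). The only cosmetic difference is that the paper applies the unit-equivalence lemma directly to the motivic local object $\tif(\calF)\in\HHtrivptd(\SmBcZ)$ rather than to the plain restriction $\calF|_{\SmBcZ}$, which streamlines your steps (2)--(3).
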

\begin{proof}
We note there exist factorizations: 
\[\begin{tikzcd}
\HHtrivptd(\Sm_Z)\ar{r}{i_*^{B,Z}}\ar[rr,bend left,"i_*"] 
& \HHtrivptd(\SmBcZ) \ar{r}{\wt i_*} & \HHtrivptd(\Sm_B)\\
\HHtrivptd(\Sm_B)\ar{r}{\wt i^!}\ar[rr,bend right,"i^!"] 
& \HHtrivptd(\SmBcZ)\ar{r}{i^*_{B,Z}} & \HHtrivptd(\Sm_Z)
\end{tikzcd}\]
\Cref{prop:filtr-iFj:unstable} implies that for every motivic local object $\calF\in\HHtrivptd(\Sm_B)$, 
and similarly for $\HHtrivptd(\SmAff_B)$, 
there is a homotopy pullback square
\begin{equation*}
\label{eq:pullback_tidsfFjdus}
\xymatrix{
\tids\tif(\calF)\ar[r]\ar[d]& \calF\ar[d] \\ \ast \ar[r]&\jds\jus(\calF).
}
\end{equation*}
To conclude the proof we appeal to \Cref{lm:motloc_id=iusids(Sm)}, 
and \Cref{lm:A1loc_id=iusids(SmAff)} for $\HHtrivptd(\SmAff_B)$, 
which shows that there is an equivalence
\[
\tids\tif(\calF) 
\xrightarrow{\cong}
i_*^{B,Z} i^*_{B,Z} \tids\tif(\calF). 
\]
\end{proof}

\subsection{Framed \texorpdfstring{$\tf$}{tf}-localization}
To show $\tif$, $\tids$, $\jus$, $\jds$ preserve quasi-stable radditive framed presheaves we work with $\Spc_s(\Fr_+(B))$ 
and $\HHtrivptd(\Fr_+(B))$, 
and similarly for $\Fr_+(B-Z)$ and $\Fr_{+}(B,Z)$.
Due to \Cref{lm:HensBasechangeFrCor}, 
see also \Cref{rm:FrCorSchScZ}, 
there are well defined functors
\begin{equation}
\label{eq:tildeijFr_+}
\begin{array}{l}
\Fr_+(\EssSm_B)\to \Fr_+(\EssSm_B); X\mapsto X^h_Z,\\
\Fr_+(\EssSm_B)\to \Fr_+(\EssSm_{B-Z}); X\mapsto X\times_B (B-Z).
\end{array}
\end{equation}
As in \eqref{eq:tifjustidsjdscoPresentable} there exist base change functors
\[
\begin{array}{lll}
\tif\colon\HHtrivptd(\Fr_+(\EssSm_{B}))&\to& \HHtrivptd(\Fr_+(\EssSm_{B,Z})) , \\
\tids\colon\HHtrivptd(\Fr_+(\EssSm_{B,Z}))&\to& \HHtrivptd(\Fr_+(\EssSm_{B})), \\ 
\jus\colon\HHtrivptd(\Fr_+(\EssSm_{B}))&\to& \HHtrivptd(\Fr_+(\EssSm_{B-Z})) , \\
\jds\colon \HHtrivptd(\Fr_+(\EssSm_{B-Z}))&\to&\HHtrivptd(\Fr_+(\EssSm_{B}).
\end{array}\]
For simplicity we let $\Sm\in\{\SmBcZ,\SmB,\SmBmZ\}$ and $\Fr_+\in\{\Fr_+(B,Z),\Fr_+(B),\Fr_+(B-Z)\}$.
An object $\calF\in \HHtrivptd(\Fr_+)$ is Nisnevich local (resp.~$\tf$-local) if 
it maps to a Nisnevich local (resp.~$\tf$-local) object under the forgetful functor 
\begin{equation}
\label{eq:SHsfr->SHs}
\HHtrivptd(\Fr_+)\to \HHtrivptd(\Sm).
\end{equation}

\begin{prop}
\label{prop:StFrRad}
The functors $\tif$, $\tids$, $\jus$, $\jds$ commute with the forgetful functor in \eqref{eq:SHsfr->SHs}
and preserve quasi-stable radditive framed presheaves.
\end{prop}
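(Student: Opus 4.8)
The plan is to reduce the statement to three facts that are already in place: the geometric assignments $X\mapsto X^h_Z$ and $X\mapsto X\times_B(B-Z)$ of \eqref{eq:tildeijFr_+} lift to functors of framed correspondence categories (\Cref{lm:HensBasechangeFrCor}, \Cref{rm:FrCorSchScZ}); these lifts restrict, along the tautological inclusions of the scheme categories into their framed correspondence categories, to the corresponding functors of schemes; and, again by \Cref{lm:HensBasechangeFrCor}, they preserve finite coproducts and carry the level-one correspondence $\sigma_X$ of \Cref{example:quasistable} to $\sigma_{X^h_Z}$, respectively $\sigma_{X\times_B(B-Z)}$. Since being quasi-stable or radditive is a property of an object of $\HHtrivptd(\Fr_+)$ that is stable under equivalence and checked sectionwise, it suffices to play these facts off against the sectionwise formulas of \eqref{eq:tifjustidsjdscoPresentable}.

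First I would treat $\tids$, $\jus$, $\jds$, which are restriction functors along the framed lifts of \eqref{eq:tildeijFr_+} together with the tautological inclusions. Restriction of presheaves commutes with the forgetful functor \eqref{eq:SHsfr->SHs}, since the latter is itself a restriction functor and the framed lifts are compatible with the inclusions of schemes; so the commutation claim is immediate. Radditivity is inherited because the underlying geometric functors preserve finite coproducts, e.g. $\tids(\calF)(X\amalg Y)\simeq\calF((X\amalg Y)^h_Z)\simeq\calF(X^h_Z)\times\calF(Y^h_Z)$ for radditive $\calF$; the arguments for $\jus$ and $\jds$ are the same with $X\times_B(B-Z)$ in place of $X^h_Z$. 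Quasi-stability is inherited because, by \Cref{lm:HensBasechangeFrCor}, the action of $\sigma_X$ on $\tids(\calF)$ at $X$ is precisely the action of $\sigma_{X^h_Z}^*$ on $\calF(X^h_Z)$, hence an equivalence when $\calF$ is quasi-stable, and likewise for $\jus$, $\jds$.

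Next I would handle $\tif$, whose value $\tif(\calF)(X^h_Z)=\hofib\bigl(\calF(X^h_Z)\to\calF(X^h_Z-X_Z)\bigr)$ is a sectionwise homotopy fibre of a map induced by the open immersion $X^h_Z-X_Z\hookrightarrow X^h_Z$. Because the homotopy fibre is a sectionwise limit and the forgetful functor \eqref{eq:SHsfr->SHs} is exact, forgetting framed transfers commutes with $\tif$. For radditivity one combines $(X\amalg Y)^h_Z\cong X^h_Z\amalg Y^h_Z$, $(X\amalg Y)^h_Z-(X\amalg Y)_Z\cong(X^h_Z-X_Z)\amalg(Y^h_Z-Y_Z)$, and the fact that $\hofib$ commutes with finite products, to obtain $\tif(\calF)(X\amalg Y)\simeq\tif(\calF)(X)\times\tif(\calF)(Y)$. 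For quasi-stability the key point is that the defining fibre sequence $\tif(\calF)(X^h_Z)\to\calF(X^h_Z)\to\calF(X^h_Z-X_Z)$ is equivariant for $\sigma_{X^h_Z}$: this correspondence acts on the middle term as $\sigma_{X^h_Z}^*$ and on the right-hand term as the pullback $\sigma_{X^h_Z-X_Z}^*$ along the open immersion (naturality of $\sigma$ under base change and restriction, \Cref{lm:HensBasechangeFrCor}), and both maps are equivalences since $\calF$ is quasi-stable on $\EssSm_B$; hence the induced endomorphism of the homotopy fibre is an equivalence.

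The main obstacle I expect is not any one computation but the bookkeeping of framed functoriality: one must check that the lifts in \eqref{eq:tildeijFr_+} are genuine functors of correspondence categories, compatible with composition and with $\sigma$, and that the fibre-sequence presentation of $\tif$ underlying \eqref{eq:tifjustidsjdscoPresentable} is natural with respect to $\Fr_+$-morphisms, in particular with respect to $\sigma_{X^h_Z}$ and its restriction to the open complement. Both are arranged by \Cref{lm:HensBasechangeFrCor} and the discussion in \Cref{rm:FrCorSchScZ}, so once these naturality statements are invoked, the proof is the assembly of the points above.
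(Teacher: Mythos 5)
Your proposal is correct and follows essentially the same route as the paper's (much terser) proof: both arguments come down to the facts that the geometric assignments in \eqref{eq:tildeijFr_+} lift to framed correspondence categories, preserve coproducts, and carry $\sigma_X$ to $\sigma_{X^h_Z}$, resp.\ $\sigma_{X_{B-Z}}$, combined with the sectionwise formulas \eqref{eq:tifjustidsjdscoPresentable}. Your separate, more careful treatment of $\tif$ via equivariance of the defining fibre sequence is a legitimate elaboration of a point the paper leaves implicit, not a different method.
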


\begin{proof}
The first claim follows from the definitions of $\tif$, $\tids$, $\jus$, $\jds$. 
The functors in \eqref{eq:tildeijFr_+} preserve coproducts of schemes,
and for every $X$,
the framed correspondence $\sigma_X$ maps to $\sigma_{X^h_Z}$ in $\EssSm_{B}$ and $\sigma_{X_{B-Z}}$ in $\EssSm_{B-Z}$.
\end{proof}

Similarly to \Cref{prop:filtr-iFj:unstable}, \Cref{cor:filtr-iFj:stable}, and \Cref{prop:B1dimlocLtfhocofib} we have:
\begin{prop}
\label{prop:filtr-iFjFr}
For every $\tf$-local object $\calF\in \HHtrivptd(\Fr_+)$ there is a homotopy pullback square
\begin{equation*}
\label{eq:homotopypullback_tidsfFjdus:Fr}
\xymatrix{
\tids\tif(\calF)\ar[r]\ar[d]& \calF\ar[d] \\ \ast \ar[r]&\jds\jus(\calF).  
}
\end{equation*}
Thus for every $\tf$-local object $\calF\in \SH_{s}(\Fr_+)$ there is a distinguished triangle
\begin{equation*}
\label{eq:triangle_tidsfFjdus:Fr}
\tids\tif(\calF)\to \calF\to \jds\jus(\calF)\to \tids\tif(\calF)[1].  
\end{equation*}
\end{prop}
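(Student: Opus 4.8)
The plan is to deduce \Cref{prop:filtr-iFjFr} from its non-framed counterpart \Cref{prop:filtr-iFj:unstable} and \Cref{cor:filtr-iFj:stable} by pushing everything through the forgetful functor \eqref{eq:SHsfr->SHs}, exploiting the fact already recorded in \Cref{prop:StFrRad} that $\tif$, $\tids$, $\jus$, $\jds$ commute with that forgetful functor. First I would fix a $\tf$-local $\calF\in\HHtrivptd(\Fr_+)$. By definition of $\tf$-locality for framed objects, its image $U(\calF)$ under the forgetful functor \eqref{eq:SHsfr->SHs} is a $\tf$-local object of $\HHtrivptd(\Sm)$, so \Cref{prop:filtr-iFj:unstable} supplies a homotopy pullback square
\[
\xymatrix{
\tids\tif(U(\calF))\ar[r]\ar[d]& U(\calF)\ar[d] \\ \ast \ar[r]& \jds\jus(U(\calF))
}
\]
in $\HHtrivptd(\Sm)$. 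Since the four base-change functors commute with $U$ by \Cref{prop:StFrRad}, this square is precisely $U$ applied to the candidate square
\[
\xymatrix{
\tids\tif(\calF)\ar[r]\ar[d]& \calF\ar[d] \\ \ast \ar[r]&\jds\jus(\calF)
}
\]
in $\HHtrivptd(\Fr_+)$. The remaining point is that a square of framed presheaves is a homotopy pullback as soon as its image under the forgetful functor is: the homotopy-limit in $\HHtrivptd(\Fr_+)$ is computed objectwise in $S^1$-spectra, exactly as in $\HHtrivptd(\Sm)$, and $U$ only forgets the action of non-invertible framed correspondences while preserving the underlying presheaf of $S^1$-spectra together with its sectionwise homotopy type. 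Hence the candidate square is a homotopy pullback in $\HHtrivptd(\Fr_+)$, proving the unstable statement.

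For the stable assertion I would argue identically: given a $\tf$-local $\calF\in\SH_s(\Fr_+)$, apply \Cref{cor:filtr-iFj:stable} to $U(\calF)$ to get the distinguished triangle $\tids\tif(U\calF)\to U\calF\to \jds\jus(U\calF)\to \tids\tif(U\calF)[1]$, recognize its terms (again via \Cref{prop:StFrRad}) as $U$ applied to $\tids\tif(\calF)$, $\calF$, $\jds\jus(\calF)$, and note that the connecting map, being natural, is likewise in the image of $U$. Since the forgetful functor \eqref{eq:SHsfr->SHs} is exact — it is induced by the functor $\Sm_B\to\Fr_+(B)$ and commutes with the stabilization and with cofiber sequences, which are detected objectwise — a triangle of framed $S^1$-spectra whose image under $U$ is distinguished is itself distinguished. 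This yields the displayed triangle in $\SH_s(\Fr_+)$. Alternatively, one can obtain the triangle directly by rotating the homotopy pullback square from the unstable part after levelwise stabilization, since a homotopy pullback of $S^1$-spectra with contractible lower-left corner is the same as a homotopy cofiber sequence.

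The genuinely load-bearing input, as signalled by the phrasing ``Similarly to \Cref{prop:filtr-iFj:unstable}'' in the statement, is that the homotopy pullback square underlying \Cref{prop:filtr-iFj:unstable} is assembled from the elementary $\tf$-square
\[
\xymatrix{
X^h_Z\times_B(B-Z) \ar[d]\ar[r]& X\times_B(B-Z)\ar[d] \\ X^h_Z\ar[r] & X
}
\]
purely at the level of underlying presheaves, so there is no obstruction to the framed refinement beyond checking that $\tif$, $\tids$, $\jus$, $\jds$ are well-defined on framed presheaves and compatible with $U$ — and this is exactly the content of \Cref{lm:HensBasechangeFrCor} together with \Cref{prop:StFrRad}. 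Thus I expect the main (modest) obstacle to be bookkeeping: verifying that ``$\tf$-local for a framed presheaf'' is literally ``$\tf$-local on the underlying presheaf'' so that \Cref{prop:filtr-iFj:unstable} applies verbatim, and that the naturality of the comparison maps $\tids\tif(\calF)\to\calF$ and $\calF\to\jds\jus(\calF)$ as maps of framed presheaves is inherited from their naturality in $\HHtrivptd(\Sm)$. Once these identifications are in place, the proof is a one-line reduction and I would write it as such, citing \Cref{prop:filtr-iFj:unstable}, \Cref{cor:filtr-iFj:stable}, and \Cref{prop:StFrRad}.
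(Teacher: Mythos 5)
Your proposal is correct and follows exactly the route the paper intends: the paper gives no separate proof of \Cref{prop:filtr-iFjFr} but asserts it ``similarly to'' \Cref{prop:filtr-iFj:unstable} and \Cref{cor:filtr-iFj:stable}, and your reduction via the forgetful functor \eqref{eq:SHsfr->SHs} — using that $\tf$-locality for framed presheaves is by definition $\tf$-locality of the underlying presheaf, that $\tif$, $\tids$, $\jus$, $\jds$ commute with the forgetful functor by \Cref{prop:StFrRad}, and that homotopy pullbacks and distinguished triangles are detected sectionwise — is a faithful expansion of that one-line argument. No gaps.
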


\begin{definition}
Suppose $B$ is a one-dimensional local base scheme. 
Let $z$ be the closed point of $B$.
We define the functor 
\[
\Lrep_{\tf}^\fr
\colon 
\SH_s(\Fr_+(B))
\to 
\SH_s(\Fr_+(B))
\]
by setting
\begin{equation}\label{eq:diag:LfrtfLtf}
\Lrep^\fr_\tf(\calF)
:=
\hocofib(\jds\jus(\calF)[-1]\to \tids\tif(\calF)).
\end{equation}
That is, 
for each $X\in \Sm_B$, 
we have
\[
\Lrep^\fr_\tf(\calF)(X)
\simeq
\hofib(\calF(X\times_B (B-z))\oplus \calF(X^h_z)\to \calF(X^h_z\times_B (B-z))).
\]
\end{definition}

\begin{corollary}
\label{cor:onedimLnisLnisfr}
Suppose $B$ is a one-dimensional local base scheme. 
The functor 
$\Lrep_{\tf}^\fr$ 
preserves quasi-stable radditive framed presheaves of $S^1$-spectra.
Moreover, 
the diagram
\begin{equation}
\label{equation:Lrepnisfr}
\begin{tikzcd}
\SH_s(\Fr_+(B))\ar[r,"\Lrep_{\tf}^\fr"]\ar{d} & \SH_s(\Fr_+(B))\ar{d} \\
\SH_s(B)\ar[r,"\Lrep_{\tf}"] & \SH_s(B)
\end{tikzcd}
\end{equation}
commutes.
It follows that $\Lrep_{\tf}$ preserves quasi-stable radditive framed presheaves of $S^1$-spectra.
\end{corollary}
\begin{proof}
Let $z$ be the closed point of $B$.
We note that $\Lrep^\fr_\tf$ preserves quasi-stable radditive framed presheaves of $S^1$-spectra:
This follows since the functors $\SmB\to \Sm_{B-Z}$; $X\mapsto X\times_B (B-z)$, 
$\Sm_B\to \Sm_{B,Z}$; $X\mapsto X^h_z$, 
are well defined on the level of framed correspondences,
as in \Cref{lm:HensBasechangeFrCor}, 
and preserve coproducts and framed correspondences of the form $\sigma_X$.
\Cref{prop:B1dimlocLtfhocofib} implies the commutativity of \eqref{equation:Lrepnisfr}.
\end{proof}

\subsection{\texorpdfstring{$\A^{1}$}{A1}-, Nisnevich- and \texorpdfstring{$\tf$}{tf}-localization}

In the following, 
we show the functors $\tif$, $\tids$, $\jus$, and $\jds$ preserve the various locality conditions on 
radditive framed presheaves.

\begin{prop}
\label{prop:A1local}
The functors $\tids$, $\jus$, $\jds$ preserve $\A^{1}$-local objects. 
The same holds for $\tif$ on the subcategory of $\tf$-local objects.
\end{prop}
\begin{proof}
If $\calF$ is $\A^{1}$-local and $\tf$-local there are equivalences of $S^{1}$-spectra
\begin{align*} 
\tif \calF\p*{(\A^{1}\times X)^h_Z}
&\stackrel{(1)}{\simeq} \calF\p*{(\A^{1}\times X)^h_Z/((\A^{1}\times X)^h_Z\times(B-Z))} \\ 
&\stackrel{(2)}{\simeq} \calF\p*{(\A^{1}\times X)/(\A^{1}\times X\times(B-Z))}\\
&\stackrel{(3)}{\simeq} \calF\p*{X/(X\times(B-Z))}\\
&\stackrel{(4)}{\simeq} \calF\p*{\XhZ/(\XhZ\times(B-Z))}\\
&\stackrel{(5)}{\simeq} \tif \calF\p*{\XhZ},
\end{align*}
Here we write $\calF\p*{X/U}=\fib(\calF(X)\to\calF(U))$ for a morphism $U\to X$.
In the above,  
(1) and (5) hold by definition,
(2) and (4) use $\tf$-localness and the respective $\tf$-Nisnevich squares,
while (3) holds by $\A^1$-localness. 
The claims for $\tids$, $\jus$, $\jds$ follow because, 
in suggestive notation,
the functors 
\[
\begin{array}{llllllll}
\SmB&\to& \SmBcZ;& X&\mapsto& W&=&X^h_Z,\\
\SmBmZ&\to& \SmB;& V&\mapsto& X&=&V,\\
\SmB&\to&\SmBmZ; &X&\mapsto& V&=&X\times_B(B-Z), 
\end{array}
\]
commute with the corresponding endofunctors
\[\begin{array}{llllll}
\SmBcZ&\to&\SmBcZ;& W&\mapsto& W\times_{B,Z}\A^{1}_{B,Z},\\
\SmB&\to&\SmB;& X&\mapsto& X\times_B\A^{1}_B,\\ 
\SmBmZ&\to&\SmBmZ;& V&\mapsto& V\times_{B-Z}\A^{1}_{B-Z}.
\end{array}\]
\end{proof}

\begin{prop}
\label{prop:tidsjusjdsLnisLtf}
The functors $\tids$, $\jus$ preserve Nisnevich local objects and Nisnevich local equivalences.
The functor $\jds$ preserves Nisnevich local objects.
The same results hold for $\tf$-local objects and $\tf$-local equivalences.
\end{prop}
\begin{proof}
\Cref{lm:topologically-XtoXhs,lm:tf-topologically-XtoXhs,lm:LocEqCovPoints,lm:schemespointssites} 
imply the claim for $\tids$.
The case of $\jus$ follows similarly 
because the functor $\Sm_{B-Z}\rightarrow \Sm_{B}$ 
preserves fiber products, Nisnevich coverings, and points.
The latter also holds for $\tf$-coverings and points
(for $\jds$, 
we use the first part of \Cref{lm:LocEqCovPoints}).
\end{proof}

\begin{prop}
\label{prop:tifNislocobj}
The functor $\tif$ preserves Nisnevich local objects and $\tf$-local objects.
\end{prop}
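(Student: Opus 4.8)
The plan is to derive \Cref{prop:tifNislocobj} from the localization triangle of \Cref{cor:filtr-iFj:stable} (in the unstable pointed and framed settings, from \Cref{prop:filtr-iFj:unstable} and \Cref{prop:filtr-iFjFr}) together with the stability properties of $\tids$, $\jus$, $\jds$ recorded in \Cref{prop:tidsjusjdsLnisLtf}. Two preliminary observations enter. First, since every $\tf$-square is a Nisnevich square, the $\tf$-topology on $\Sm_B$ is coarser than the Nisnevich topology, so every Nisnevich-local object of $\HHtrivptd(\Sm_B)$ is in particular $\tf$-local; hence the triangle of \Cref{cor:filtr-iFj:stable} is available for any Nisnevich-local $\calF$. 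Second, the unit of the adjunction $\tids\dashv\tif$ is invertible: for $\calE\in\HHtrivptd(\SmBcZ)$ one has $\tids\calE(X^h_Z)\simeq\calE(X^h_Z)$, because $X^h_Z$ is already henselian along $(X^h_Z)_Z$, and $\tids\calE(X^h_Z-X_Z)\simeq\calE(\emptyset)\simeq\ast$, because the fibre of $X^h_Z-X_Z$ over $Z$ is empty, so that $\tif\tids\calE(X^h_Z)=\hofib(\calE(X^h_Z)\to\ast)\simeq\calE(X^h_Z)$. Thus $\tids$ is fully faithful and in particular reflects equivalences.

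For the Nisnevich statement, let $\calF\in\HHtrivptd(\Sm_B)$ be Nisnevich-local. By the first observation $\calF$ is $\tf$-local, so \Cref{cor:filtr-iFj:stable} furnishes a homotopy fibre sequence $\tids\tif(\calF)\to\calF\to\jds\jus(\calF)$. By \Cref{prop:tidsjusjdsLnisLtf} the object $\jus(\calF)$ is Nisnevich-local, hence so is $\jds\jus(\calF)$, and since Nisnevich-local objects are closed under homotopy limits we conclude $\tids\tif(\calF)$ is Nisnevich-local. Applying $\tids$ to the Nisnevich-localization unit $\tif(\calF)\to L_\nis\tif(\calF)$ yields, by \Cref{prop:tidsjusjdsLnisLtf}, a Nisnevich equivalence whose source $\tids\tif(\calF)$ and target $\tids L_\nis\tif(\calF)$ are both Nisnevich-local; a Nisnevich equivalence between Nisnevich-local objects is an equivalence, so $\tids$ applied to the unit is an equivalence. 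Since $\tids$ reflects equivalences, $\tif(\calF)\to L_\nis\tif(\calF)$ is an equivalence, i.e.\ $\tif(\calF)$ is Nisnevich-local.

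The $\tf$-statement is formally identical: now $\calF$ is $\tf$-local by hypothesis, so \Cref{cor:filtr-iFj:stable} applies directly; $\jds\jus(\calF)$ is $\tf$-local by \Cref{prop:tidsjusjdsLnisLtf}, hence so is $\tids\tif(\calF)$, and the same reflection argument — with $L_\nis$ replaced by $L_\tf$, using that $\tids$ preserves $\tf$-equivalences and $\tf$-local objects — shows $\tif(\calF)$ is $\tf$-local. The framed versions reduce to these: by \Cref{prop:StFrRad}, $\tif$ commutes with the forgetful functor $\HHtrivptd(\Fr_+)\to\HHtrivptd(\SmBcZ)$, through which Nisnevich- and $\tf$-locality of framed presheaves are by definition tested, so one applies the unframed case to the underlying presheaf.

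I expect no genuine obstacle: the geometric input is already isolated in \Cref{prop:filtr-iFj:unstable}/\Cref{cor:filtr-iFj:stable} and in \Cref{prop:tidsjusjdsLnisLtf}, and what remains is the bookkeeping of the reflection step. Alternatively, one can argue directly from the formula $\tif(\calF)(X^h_Z)\simeq\colim_{X'}\hofib(\calF(X')\to\calF(X'\times_B(B-Z)))$, with $X'$ running over étale neighbourhoods of $X_Z$ in $X$: each Nisnevich, resp.\ $\tf$, square in $\Sm_{B,Z}$ is the image under $X\mapsto X^h_Z$ of such a square in $\Sm_B$, whose base changes along the $X'\to X$ and along $B-Z\to B$ remain Nisnevich, resp.\ $\tf$, squares, and a filtered homotopy colimit of the resulting homotopy pullback squares is again a homotopy pullback; the only extra point there is that the weak Nisnevich and $\tf$ $cd$-structures on $\Sm_{B,Z}$ are bounded, complete and regular, which is proved as in \Cref{prop:cdstructurecompleteregularbounded}.
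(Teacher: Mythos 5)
Your argument is correct, but it is not the proof the paper gives, so let me compare. The paper proves \Cref{prop:tifNislocobj} by a direct descent computation: a Nisnevich (resp.\ $\tf$) covering $\widetilde X^h_Z\to X^h_Z$ in $\SmBcZ$ is by definition induced by one in $\Sm_B$, and the \v{C}ech object of $\tif\calF$ for $\widetilde X^h_Z$ is the homotopy fiber of the \v{C}ech objects of $\calF$ for $\widetilde X$ and $\widetilde X\times_B(B-Z)$; descent for $\calF$ then collapses this to $\tif\calF(X^h_Z)$. That is essentially the ``alternative'' argument you sketch in your last paragraph. Your primary route instead deduces the statement formally from \Cref{prop:filtr-iFj:unstable} together with \Cref{prop:tidsjusjdsLnisLtf}: the fiber sequence $\tids\tif(\calF)\to\calF\to\jds\jus(\calF)$ shows $\tids\tif(\calF)$ is local, and then conservativity of $\tids$ transports locality back to $\tif(\calF)$. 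This is logically sound and non-circular (everything you invoke is proved before this proposition), and it has the virtue of isolating all geometric input in the localization square; the paper's computation is more self-contained and avoids the reflection step. Two small remarks on your write-up. First, you do not actually need the adjunction $\tids\dashv\tif$ or full faithfulness to see that $\tids$ reflects equivalences: since every object of $\SmBcZ$ is of the form $X^h_Z$ with $X\in\Sm_B$ and $\tids(u)(X)=u(X^h_Z)$, the functor $\tids$ reflects sectionwise equivalences for trivial reasons, which lets you skip the computation of $\tif\tids$ (and the implicit assumption $\calE(\emptyset)\simeq\ast$ it requires). Second, your reduction of the Nisnevich case to the $\tf$-case via ``Nisnevich local $\Rightarrow$ $\tf$-local'' is the same observation the paper uses elsewhere and is fine, but note that \Cref{prop:filtr-iFj:unstable} is stated for $\tf$-local objects of $\HHtrivptd(\Sm_B)$, so you are correct to pass through the unstable pointed version rather than \Cref{cor:filtr-iFj:stable}.
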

\begin{proof}
Suppose $\calF\in \HHtriv(\Sm_B)$ is Nisnevich local (resp.~$\tf$-local).
By \Cref{definition:NiswNis,definition:tf}, 
any Nisnevich covering (resp.~$\tf$-covering) $\widetilde X^h_Z\to X^h_Z$ in $\SmBcZ$ is induced by 
a Nisnevich covering (resp.~$\tf$-covering) $\widetilde X\to X$ in $\Sm_B$. 
For $\check C_{\widetilde X}(X,\calF)$, 
see \eqref{eq:Check(widetilde X)},
we have equivalences
\begin{align*}
\check{C}_{\widetilde X^h_Z}(X^h_Z, \tif\calF)
&\simeq
\hofib\p*{ \check{C}_{\widetilde X}(X, \calF)\to 
\check{C}_{\widetilde X\times_B (B-Z)}(X\times_B (B-Z), \calF)}\\
&\simeq\hofib\p*{ \calF(X)\to \calF(X\times_B (B-Z)) }\\
&\simeq\hofib\p*{ \calF(X^h_Z)\to \calF(X^h_Z\times_B (B-Z)) }\\
&\simeq\tif\calF(X^h_Z).
\end{align*}
The first and fourth equivalences follow from the definition of $\tif\calF$ in \eqref{eq:tifjustidsjdscoPresentable},
while the second and third equivalences follow since $\calF$ is Nisnevich local (resp.~$\tf$-local).
\end{proof}

\begin{prop}
\label{prop:tifjdstfloceq}
If $B$ is a one-dimensional base scheme then $\tif$ and $\jds$ preserve $\tf$-equivalences on $\SH_{s}(\Sm_B)$. 
\end{prop}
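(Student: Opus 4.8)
The plan is to reduce the claim to the characterization of $\tf$-local equivalences by stalks at the $\tf$-points, using \Cref{prop:Proptftop}. Recall that for a one-dimensional base scheme $B$, the $\tf$-points of $\Sm_B$ are precisely the henselizations $X^h_\sigma$ for $X\in\Sm_B$, $\sigma\in B$ (\Cref{prop:Proptftop}(vii)), and similarly the $\tf$-points of $\Sm_{B,Z}$ and $\Sm_{B-Z}$ are described by \Cref{lm:tf-topologically-XtoXhs} and the discussion in \Cref{subsection:tftopologyintro}. Since $\tf$-local equivalences in $\SH_s(\Sm_B)$, $\SH_s(\SmBcZ)$, $\SH_s(\SmBmZ)$ are detected on stalks at these $\tf$-points, it suffices to show that $\tif$ and $\jds$ send a $\tf$-local equivalence to a map that is an equivalence on all such stalks.

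First I would treat $\tif$. By \eqref{eq:tifjustidsjdscoPresentable} the stalk of $\tif\calF$ at the $\tf$-point $X^h_Z$ of $\Sm_{B,Z}$ is $\hofib(\calF(X^h_Z)\to\calF(X^h_Z\times_B(B-Z)))$; but $X^h_Z$ is itself a $\tf$-point of $\Sm_B$ and, since $B$ is one-dimensional, $X^h_Z\times_B(B-Z)$ is a scheme over the (zero-dimensional) generic locus $B-Z$, hence a finite product of $\tf$-points of $\Sm_{B-Z}$, and so a filtered colimit of objects on which $\tf$-local equivalences are detected. Thus if $\calF\to\calG$ is a $\tf$-local equivalence, the induced maps $\calF(X^h_Z)\to\calG(X^h_Z)$ and $\calF(X^h_Z\times_B(B-Z))\to\calG(X^h_Z\times_B(B-Z))$ are equivalences after $\tf$-localization, hence so is the map of homotopy fibers. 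The same reasoning applies verbatim to $X^h_\sigma$-stalks for $\sigma\in Z$ (where $X^h_\sigma\times_B(B-Z)=\emptyset$, so $\tif\calF$ has stalk $\calF(X^h_\sigma)$) and for $\sigma\notin Z$ (again $\tif\calF$ has trivial stalk since $X_\sigma\cap X_Z=\emptyset$). For $\jds$, the stalk of $\jds\calF$ at $X^h_\sigma$ is $\calF(X^h_\sigma-X_{\sigma,Z})$ by \eqref{eq:tifjustidsjdscoPresentable}, which is again (a scheme of) $\tf$-points of $\Sm_{B-Z}$, so a $\tf$-local equivalence is carried to an equivalence on these stalks by the same detection principle.

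The main technical point — and the step I expect to require the most care — is the reduction of "$\calF(X^h_Z\times_B(B-Z))\to\calG(X^h_Z\times_B(B-Z))$ is an equivalence for $\tf$-local equivalences $\calF\to\calG$" to the stalkwise criterion. Here one uses crucially that $B$ is one-dimensional: $B-Z$ is then zero-dimensional, so the $\tf$-topology on $\Sm_{B-Z}$ is trivial (by \Cref{prop:Proptftop}(iii), applied fiberwise over the residue fields of the generic points), and hence every object of $\Sm_{B-Z}$ — in particular $X^h_Z\times_B(B-Z)$, viewed as an essentially smooth scheme over $B-Z$ — is a $\tf$-point, or more precisely a filtered colimit of representables each of which is a $\tf$-point. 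One must be slightly careful that $X^h_Z\times_B(B-Z)$ need not lie in $\Sm_{B-Z}$ on the nose, but it is pro-(smooth affine) over $B-Z$, so $\calF$ and $\calG$ evaluated there are filtered colimits of values on $\Sm_{B-Z}$-schemes, and filtered colimits of equivalences are equivalences.

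Finally I would assemble these observations: the functors $X\mapsto X^h_Z$ and $X\mapsto X\times_B(B-Z)$ from $\Sm_B$ commute, up to natural isomorphism, with the formation of henselizations along $\tf$-points, so the displayed hofib-computations are natural and the resulting maps $\tif\calF\to\tif\calG$ and $\jds\calF\to\jds\calG$ are $\tf$-local equivalences. Since $\tif$ and $\jds$ are computed by these pointwise formulas, the claim follows. I would close by remarking that the argument is formally identical to — and in fact a special case of — the stalk-detection reasoning already used in the proofs of \Cref{prop:tidsjusjdsLnisLtf} and \Cref{prop:tifNislocobj}, the only new input being the triviality of the $\tf$-topology over the zero-dimensional base $B-Z$.
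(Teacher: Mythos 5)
Your overall route — reduce to $\tf$-stalks via \Cref{prop:Proptftop}(vii), compute the stalks of $\tif\calF$ and $\jds\calF$ from \eqref{eq:tifjustidsjdscoPresentable}, and observe that $B-Z$ is zero-dimensional so the $\tf$-topology over its generic points is trivial — is exactly the paper's approach, and your first and third paragraphs carry it out correctly (up to the minor slip of calling $X^h_Z$, rather than $X^h_\sigma$ for $\sigma\in Z$, ``the'' $\tf$-point of $\SmBcZ$; since $X^h_Z\cong\coprod_{\sigma\in Z}X^h_\sigma$ for a zero-dimensional $Z$, this is harmless).

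The parenthetical in your second paragraph, however, contains a genuine error. For $\sigma\in Z$ a closed point of the one-dimensional base $B$, the scheme $X^h_\sigma\times_B(B-Z)$ is \emph{not} empty: $B^h_\sigma$ is a one-dimensional local henselian scheme, $B^h_\sigma\times_B(B-Z)$ is its (nonempty, zero-dimensional) punctured version at the closed point, and $X^h_\sigma\times_B(B-Z)$ is the corresponding generic fiber. If that fiber were empty, then $\tif\calF(X^h_\sigma)$ would collapse to $\calF(X^h_\sigma)$, the one-dimensionality hypothesis would never enter, and the proposition would hold over any base scheme — which it does not. The entire content of the statement is to control the term $\calF(X^h_\sigma\times_B(B-Z))$, and that is precisely what the decomposition over the generic points of $B-Z$ and the filtered-colimit argument in your first and third paragraphs accomplish. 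So the parenthetical is not ``the same reasoning applied verbatim''; it contradicts that reasoning. Similarly, the clause for ``$\sigma\notin Z$'' is out of scope: by \Cref{lm:tf-topologically-XtoXhs}, the functor $X\mapsto X^h_Z$ sends $X^h_\sigma$ to $\emptyset$ when $\sigma\notin Z$, so the $\tf$-points of $\SmBcZ$ are exactly the $X^h_\sigma$ with $\sigma\in Z$ and there is nothing to check away from $Z$. Deleting the parenthetical and retaining the argument of your first and third paragraphs yields essentially the paper's proof.
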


\begin{proof}
By \Cref{prop:Proptftop}(vii),
the claim for $\jds$ follows because the functor $\SmBcZ\to \EssSm_B$; $X^h_Z\mapsto X^h_Z\times_{B}(B-Z)$,
preserves $\tf$-points when $B$ is one-dimensional.
We note that $X^h_Z$ is a $\tf$-point.
The claim for $\tif$ follows since $\SmBcZ\to \EssSm_B$; $X^h_Z\mapsto X^h_Z$, 
preserves $\tf$-points for any $B$.
Furthermore, 
$X^h_Z\times_{B}(B-Z)$ is a $\tf$-point since 
\[
X^h_Z\times_{B}(B-Z)
\cong
X^h_Z\times_B \coprod_{\eta\in (B-Z)^{(1)}} \eta,
\] 
where $\eta$ runs over the set of generic points of $B-Z$. 
To conclude, 
we note that the $\tf$-topology on $\EssSm_\eta$ is trivial.
\end{proof}

\begin{corollary}
\label{cor:ijcommutLnis}
The functors $\tids$, $\jus$ commute with the localization endofunctors $\Lrep_{\nis}$ and $\Lrep_{\tf}$. 
Moreover,
if $\dim B=1$ with closed point 
$z$, 
then $\tif$ and $\jds$ commute with $\Lrep_\tf$.
\end{corollary}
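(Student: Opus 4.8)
The plan is to deduce the corollary from the elementary principle that a functor which preserves both local objects and local equivalences commutes, up to natural equivalence, with the corresponding localization endofunctor. Concretely, fix a topology $\tau$ and a functor $F$ between the homotopy categories in play (the source and target carry their own $\tau$-localization endofunctors $\Lrep_\tau$); for any $\calF$ the canonical arrow $\calF\to\Lrep_\tau\calF$ is a $\tau$-equivalence with $\tau$-local target, so if $F$ sends $\tau$-equivalences to $\tau$-equivalences and $\tau$-local objects to $\tau$-local objects, then $F\calF\to F\Lrep_\tau\calF$ is again a $\tau$-equivalence with $\tau$-local target, and the universal property of $\Lrep_\tau$ identifies $F\Lrep_\tau\calF$ with $\Lrep_\tau F\calF$. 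This works verbatim in the framed setting, using the forgetful functors of \eqref{eq:SHsfr->SHs} and \Cref{prop:StFrRad} to transport the preservation statements.

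For $\tids$ and $\jus$: \Cref{prop:tidsjusjdsLnisLtf} says precisely that both functors preserve Nisnevich local objects and Nisnevich equivalences, so the principle above shows they commute with $\Lrep_\nis$. The same proposition records that $\tids$ and $\jus$ also preserve $\tf$-local objects and $\tf$-equivalences, hence they commute with $\Lrep_\tf$ as well.

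For the ``moreover'' part we assume $\dim B=1$ with closed point $z$. \Cref{prop:tifNislocobj} gives that $\tif$ preserves $\tf$-local objects, and the $\tf$-part of \Cref{prop:tidsjusjdsLnisLtf} gives that $\jds$ preserves $\tf$-local objects. The remaining ingredient is preservation of $\tf$-equivalences on $\SH_s(\Sm_B)$, which for $\tif$ and $\jds$ is exactly \Cref{prop:tifjdstfloceq}. Plugging these into the principle above yields that $\tif$ and $\jds$ commute with $\Lrep_\tf$.

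The only genuine subtlety — and the reason the second assertion carries the hypothesis $\dim B=1$ — is that $\jds$ (and likewise $\tif$) need not take $\tf$-equivalences to $\tf$-equivalences over a general base: this relies on the functor $X^h_Z\mapsto X^h_Z\times_B(B-Z)$ preserving $\tf$-points, which in turn uses that the $\tf$-topology on $\EssSm_\eta$ is trivial for the generic points $\eta$ of $B-Z$, available precisely when $B$ is one-dimensional so that $B-Z$ is a disjoint union of points (cf. the proof of \Cref{prop:tifjdstfloceq}). Beyond correctly citing the preservation propositions, the argument is purely formal, so I anticipate no real obstacle here.
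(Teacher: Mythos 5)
Your proof is correct and follows the same route as the paper: both arguments reduce the corollary to \Cref{prop:tidsjusjdsLnisLtf}, \Cref{prop:tifNislocobj}, and \Cref{prop:tifjdstfloceq} via the standard principle that a functor preserving $\tau$-local objects and $\tau$-equivalences commutes with $\Lrep_\tau$. You merely make that principle (and the role of $\dim B=1$ in \Cref{prop:tifjdstfloceq}) explicit where the paper leaves it implicit.
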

\begin{proof}
\Cref{prop:tidsjusjdsLnisLtf} implies the claims for $\tids$, and $\jus$.
The isomorphism $\tif \Lrep_\tf\cong  \Lrep_\tf \tif$ follows from \Cref{prop:tifNislocobj,prop:tifjdstfloceq}, 
and $\jds \Lrep_\tf\cong  \Lrep_\tf \jds$ follows from \Cref{prop:tidsjusjdsLnisLtf,prop:tifjdstfloceq}.
\end{proof}

\begin{prop}
\label{prop:tifjdsLNis}
Suppose $B$ is a one-dimensional base scheme with closed point $z$.
If $\calF$ is an $\A^{1}$-local quasi-stable radditive framed presheaf of $S^1$-spectra on $Sm_{B}$, 
there are equivalences 
\[
\Lrep_\nis(\tif(\calF))\simeq \tif(\Lrep_\nis(\calF)),\quad
\Lrep_\nis(\jds(\calF))\simeq \jds(\Lrep_\nis(\calF)).
\]
\end{prop}
\begin{proof}
\Cref{prop:tifNislocobj,prop:tidsjusjdsLnisLtf} show $\jds(\Lrep_\nis(\calF))$ and $\tif(\Lrep_\nis(\calF))$ are 
Nisnevich local, 
and the same hold for $\Lrep_\nis(\tif(\calF))(X^h_x)$ and $\Lrep_\nis(\jds(\calF))$ by construction.
If $X^h_x$ is an essentially smooth local scheme in $\EssSm_{B,z}$, 
then there are equivalences 
\begin{align*}
\Lrep_\nis(\tif(\calF))(X^h_x)\simeq\tif(\calF)(X^h_x)
&\simeq\hofib( \calF(X^h_x) \to \calF(X^h_x-X_{(x)}) )\\
&\simeq\hofib( \Lrep_\nis(\calF)(X^h_x) \to \Lrep_\nis(\calF)(X^h_x-X_{(x)}) )\\
&\simeq\tif(\Lrep_\nis(\calF))(X^h_x).
\end{align*}
For the middle equivalence we appeal to \Cref{th:Hnis(Xhxeta)dimB1}.
It follows that $\Lrep_\nis(\tif(\calF))\simeq \tif(\Lrep_\nis(\calF))$, 
since both objects are Nisnevich local.

For every $X\in \Sm_B$, 
$x\in X\times_B z$, 
we can similarly use \Cref{th:Hnis(Xhxeta)dimB1} to show
\[
\ids(\Lrep_\nis(\calF))(X^h_x)\simeq \Lrep_\nis(\jds(\calF))(X^h_x).
\]
Moreover, 
if $x\in X-X\times_B z$,
there are equivalences
\[
\jds(\Lrep_\nis(\calF))(X^h_x)
\simeq\Lrep_\nis(\calF)(X^h_x)
\simeq\calF(X^h_x)
\simeq\jds(\calF)(X^h_x)
\simeq\Lrep_\nis(\jds(\calF))(X^h_x).
\]
This concludes the proof because $\jds(\Lrep_\nis(\calF))$ is Nisnevich local.
\end{proof}

\subsection{Localization and \texorpdfstring{$\tf$}{tf}-Nisnevich strict \texorpdfstring{$\A^{1}$}{A1}-invariance}

\begin{theorem}
\label{th:LocNisZSHI}
Suppose $B$ is a one-dimensional base scheme.
Let $Z$ be a closed subscheme of dimension zero with open complement $B-Z$.
Then $\tf$-Nisnevich strict ${\mathbb A}^{1}$-invariance on $\SmBcZ$ and $\SmBmZ$ 
implies $\tf$-Nisnevich strict ${\mathbb A}^{1}$-invariance on $\SmB$. 
\end{theorem}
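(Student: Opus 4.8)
The plan is to run the $\tf$-local localization triangle of \Cref{cor:filtr-iFj:stable} (in its framed form \Cref{prop:filtr-iFjFr}) relative to the closed immersion $Z\not\hookrightarrow B$, match its two outer vertices against the hypotheses on $\SmBcZ$ and $\SmBmZ$, and reassemble. Write $U=B-Z$ and fix a quasi-stable radditive framed presheaf of $S^1$-spectra $\calF$ on $\SmB$ with $\Lrep_\tf(\calF)$ $\A^1$-local; we must show $\Lrep_\nis(\calF)$ is an $\A^1$-local quasi-stable radditive framed presheaf. First I would reduce, using base change (\Cref{lm:BaseChangeCommutes}, \Cref{lm:infZarCoveringBaseChange}) and the argument of \Cref{lm:baselocality}, to the case where $B$ is one-dimensional local and $Z$ is the closed point, the case $Z=\emptyset$ being trivial since then $\SmBmZ=\SmB$. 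In this local situation $\Lrep_\tf$ preserves quasi-stable radditive framed presheaves of $S^1$-spectra (\Cref{cor:onedimLnisLnisfr}), and $\Lrep_\nis\simeq\Lrep_\nis\Lrep_\tf$ because the Nisnevich topology refines the $\tf$-topology; replacing $\calF$ by $\Lrep_\tf(\calF)$ we may therefore assume in addition that $\calF$ is $\tf$-local and $\A^1$-local.

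\textbf{Feeding the hypotheses.} By \Cref{prop:StFrRad} the objects $\tif(\calF)$ and $\jus(\calF)$ are quasi-stable radditive framed, on $\SmBcZ$ and $\SmBmZ$ respectively. Since $\dim Z=0$ the $\tf$-topology on $\SmBcZ$ is trivial (\Cref{rm:SHIdim0NistfNis}), so $\Lrep_\tf$ acts as the identity there; and $\tif(\calF)$ is $\A^1$-local because $\calF$ is and $\tif$ preserves $\A^1$-local objects (\Cref{prop:A1local}). Hence ``$\tf$-Nisnevich strict $\A^1$-invariance on $\SmBcZ$'' applies and yields that $\Lrep_\nis(\tif\calF)$ is an $\A^1$-local quasi-stable radditive framed presheaf. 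Similarly $\jus$ preserves $\A^1$-local objects (\Cref{prop:A1local}) and $\tf$-local objects (\Cref{prop:tidsjusjdsLnisLtf}), so $\Lrep_\tf(\jus\calF)\simeq\jus(\calF)$ is $\A^1$-local, and ``$\tf$-Nisnevich strict $\A^1$-invariance on $\SmBmZ$'' makes $\Lrep_\nis(\jus\calF)$ an $\A^1$-local quasi-stable radditive framed presheaf.

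\textbf{Reassembly.} I would then apply $\Lrep_\nis$ to the distinguished triangle $\tids\tif(\calF)\to\calF\to\jds\jus(\calF)\to\tids\tif(\calF)[1]$. The functor $\tids$ commutes with $\Lrep_\nis$ (\Cref{cor:ijcommutLnis}) and preserves Nisnevich-local, $\A^1$-local and quasi-stable radditive framed presheaves (\Cref{prop:tidsjusjdsLnisLtf}, \Cref{prop:A1local}, \Cref{prop:StFrRad}), so the left vertex becomes $\tids(\Lrep_\nis(\tif\calF))$, which is $\A^1$-local quasi-stable radditive framed by the previous step. For the right vertex, \Cref{prop:tifjdsLNis} applies to the now $\A^1$-local $\calF$ and gives $\Lrep_\nis(\jds\jus\calF)\simeq\jds\jus(\Lrep_\nis\calF)$; since $\jus$ commutes with $\Lrep_\nis$ (\Cref{cor:ijcommutLnis}) this is $\jds(\Lrep_\nis(\jus\calF))$, again $\A^1$-local quasi-stable radditive framed because $\jds$ preserves these classes (\Cref{prop:A1local}, \Cref{prop:StFrRad}, \Cref{prop:tidsjusjdsLnisLtf}). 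Thus $\Lrep_\nis(\calF)$ fits in a distinguished triangle whose other two vertices are $\A^1$-local quasi-stable radditive framed presheaves of $S^1$-spectra. The $\A^1$-local objects form a triangulated subcategory closed under extensions, so $\Lrep_\nis(\calF)$ is $\A^1$-local; and a homotopy fibre of a map of quasi-stable radditive framed presheaves is again one (quasi-stability and radditivity being stable under homotopy limits, and the forgetful functor to $\SH_{s}(\SmB)$ detecting them, \Cref{prop:StFrRad}), so $\Lrep_\nis(\calF)$ is quasi-stable radditive framed. This finishes the proof.

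\textbf{Main obstacle.} Granting the results quoted above, the crux is the commutation $\Lrep_\nis(\jds\jus\calF)\simeq\jds\jus(\Lrep_\nis\calF)$ provided by \Cref{prop:tifjdsLNis}, which ultimately rests on \Cref{th:Hnis(Xhxeta)dimB1}: the vanishing of the higher Nisnevich cohomology of the generic fibres $(X^h_x)_U$. This is the genuine geometric input, and is where one-dimensionality of $B$ — via the framed contracting $\A^1$-homotopies of \Cref{subsection:ContractingHomotopy} — is essential, as is the hypothesis on $\SmBmZ$, which supplies strict $\A^1$-invariance over the generic points of $B$. A secondary point to keep track of is that the identifications $\tif\Lrep_\tf\simeq\Lrep_\tf\tif$ and $\jds\Lrep_\tf\simeq\Lrep_\tf\jds$ on $\SmB$ in \Cref{cor:ijcommutLnis} really do use $\dim B=1$, so that the hypothesis ``$\Lrep_\tf\calF$ is $\A^1$-local'' can be propagated to the ends of the triangle; over higher-dimensional bases these ends are no longer controlled, consistent with the failure of Nisnevich strict $\A^1$-invariance in that generality.
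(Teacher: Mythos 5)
Your proof is correct and takes essentially the same route as the paper: apply the $\tf$-localization triangle of \Cref{cor:filtr-iFj:stable} to $\calF$ (equivalently, to the $\tf$-local $\Lrep_\nis\calF$), feed the hypotheses on $\SmBcZ$ and $\SmBmZ$ into $\tif(\calF)$ and $\jus(\calF)$ respectively, and use \Cref{cor:ijcommutLnis} together with \Cref{prop:tifjdsLNis} (resting on \Cref{th:Hnis(Xhxeta)dimB1}) to identify the outer vertices with $\A^1$-local objects. The preliminary reduction to a local base and the replacement $\calF\rightsquigarrow\Lrep_\tf\calF$ are harmless simplifications not made in the paper's proof, which instead runs the triangle directly on $\Lrep_\nis\calF$; otherwise the two arguments agree step for step.
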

\begin{proof}
Let $\calF\in \SH_{s}(\Fr_+({B}))$ be a quasi-stable radditive framed presheaf such that $\Lrep_\tf(\calF)$ is $\A^{1}$-local.
Owing to \Cref{cor:filtr-iFj:stable} there is a distinguished triangle
\begin{equation}
\label{equation:triangleforLnisF}
\tids\tif(\Lrep_{\nis}\calF)
\to 
\Lrep_{\nis}(\calF)
\to 
\jds\jus(\Lrep_{\nis}\calF).   
\end{equation}
\Cref{prop:StFrRad} and \Cref{cor:ijcommutLnis} show that $\tif(\calF)$ and $\jus(\calF)$ 
are quasi-stable radditive framed presheaves. 
Since $\Lrep_\tf(\calF)$ is $\A^{1}$-local, 
\Cref{prop:A1local} and \Cref{cor:ijcommutLnis} imply that $\Lrep_\tf(\tif(\calF))$ and 
$\Lrep_\tf(\jus(\calF))$ are $\A^{1}$-local.
By strict $\A^{1}$-invariance, 
both $\Lrep_\nis(\tif(\calF))$ and $\Lrep_\nis(\jus(\calF))$ are $\A^{1}$-local quasi-stable radditive framed presheaves.
Thus, 
by \Cref{prop:tifjdsLNis,prop:A1local},
the outer terms in \eqref{equation:triangleforLnisF} are $\A^{1}$-local.
Moreover, 
since all the terms in \eqref{equation:triangleforLnisF} are $\tf$-local, 
it follows that $\Lrep_{\nis}(\calF)$ is $\A^{1}$-local by viewing \eqref{equation:triangleforLnisF} 
as a distinguished triangle in $\SH_s(\Sm_{B})$.
\end{proof}

\begin{remark}\label{rem:generalitySection:sect:tfLoc}
All the results in this section hold verbatim for the categories $\Sm_B$, $\SmAff_B$, and $\Smat_B$,
with the exception of \Cref{th:HA1tflocalizationrecollement} for $\tf$-motivic categories. 
The said result holds for $\SmAff_B$ and $\Smat_B$.
\end{remark}

\section{Reduction from \texorpdfstring{$\SmBcZ$}{} to \texorpdfstring{$\SmZ$}{}}
\label{section:defsmZsmB}

Next we reduce the problem of $\tf$-Nisnevich strict $\A^{1}$-invariance over $\Sm_{B,Z}$ to the same problem over $\Sm_Z$.
The critical geometric input we use is that smooth morphisms admit liftings along henselian pairs 
$X_Z\not\hookrightarrow X^h_Z$ in the category of schemes. 
In \Cref{lm:liftFr} we show a similar property for framed correspondences. 

Recall from \Cref{subsection:candn} the categories $\SmatBlZ$, $\SmatBcZ$, 
and from \eqref{eq:rdsidsrigidHHtriv(Z)PreRig(BlZ)composite1}, \eqref{eq:Htriviusidsuisuds} the functors
\begin{equation}
\label{equation:irigdiagram}
\begin{tikzcd}[row sep=-0.25em]
\HHtriv(\SmatZ) 
& \HH_{\triv,\rig}(\SmatBlZ)\lar[swap,"i_\rigid^*"] \ar{r}{\Irig} 
& \HHtriv(\SmatBlZ)\ar{r}{\uds} & \HHtriv(\SmatBcZ) \\ 
Y\times Z & Y \ar[l,mapsto] &  
\XhZ & \XhZ. \lar[mapsto]
\end{tikzcd}
\end{equation}
Here $\Irig$ is the canonical embedding so that $i^*_\rigid\simeq (i_{B*Z})^* \circ \Irig$, 
where $(i_{B*Z})^*\colon \HHtriv(\SmatBlZ)\to \HHtriv(\SmatZ)$ is equivalent to the restriction 
along the canonical functor $r\colon \SmatZ\to \SmatBlZ$ by \Cref{lm:iusBlZsimeqrds}.
We will use similar notation in the setting of framed correspondences.

\begin{prop}
\label{prop:DiffSmatSlZ}
Suppose for each $\A^{1}$-local 
quasi-stable radditive framed presheaf of $S^{1}$-spectra $\calF$ on $\SmatZ$
the Nisnevich localization endofunctor $\Lrep_{\nis}(\calF)$ is $\A^{1}$-local. 
Then the same holds for $\A^{1}$-local quasi-stable radditive framed presheaves of $S^{1}$-spectra on $\SmatBcZ$.
\end{prop}

\begin{proof}
Let $\calF$ be an $\A^{1}$-local quasi-stable radditive framed presheaf of $S^{1}$-spectra on $\SmBcZ$. 
We need to show that $\Lrep_{\nis}(\calF)$ is an $\A^{1}$-local.
To that end we employ \eqref{equation:irigdiagram} and break the proof into the following steps.
\begin{itemize}
\item[(i)]
By \Cref{lm:deform:uus-circ-Irig}(1) there is an $\A^{1}$-local rigid framed presheaf of $S^{1}$-spectra $\calF^\rig$ on 
$\SmatBlZ$ such that $\calF\simeq \uds \Irig(\calF^\rig)$.
\item[(ii)]
By \Cref{lm:deform:uus-circ-Irig}(2), 
$\calF^\rig$ is a quasi-stable radditive framed presheaf of $S^{1}$-spectra.
\item[(iii)]
Consider the object $\calF_Z\simeq i^*_\rigid(\calF^\rig)\in \HHtriv(\SmatZ)$.
By \Cref{lm:deform:idsrig}(2), 
it is $\A^{1}$-local, 
and by \Cref{lm:deform:idsrig}(1), 
it is a quasi-stable radditive framed presheaf of $S^{1}$-spectra. 
\item[(iv)]
If the endofunctor $\Lrep_\nis$ on $\HHtriv(\Fr_+(\SmatZ))$ takes 
$\A^{1}$-local quasi-stable radditive objects 
to $\A^{1}$-local ones,
then $\Lrep_{\nis}(\calF_Z)$ is $\A^{1}$-local.
According to \Cref{lm:deform:idsrig}(2) we have $i^*_\rigid(\Lrep_{\nis}(\calF^\rig))\cong  \Lrep_{\nis}(\calF_Z)$,  
and hence it is an $\A^{1}$-local by \Cref{lm:deform:idsrig}(1).  
\item[(v)]
\Cref{lm:deform:uus}(2) and \Cref{lm:deform:Irig}(2) imply the equivalence
$\Lrep_{\nis}(\calF)\cong  \uds \Irig(\Lrep_{\nis}(\calF^\rig))$.
\Cref{lm:deform:uus}(1) and \Cref{lm:deform:Irig}(1) show it is an $\A^{1}$-local. 
\end{itemize}
\end{proof}

We will use the following reformulation of \Cref{lm:liftFr} and \Cref{lm:AffSmHenselianLift}.

\begin{lemma}
\label{lm:diffFrCor}
For $X_i\in \SmatBcZ$ we set $Y_i=X_i\times_B Z$ for $i=0,1$.
Then for the canonical closed immersions $Y_i\not\hookrightarrow X_i$ and every morphism $Y_0\to Y_1$ in $\Sch_B$
there exists a commutative diagram 
\[
\xymatrix{
Y_0\ar[d] \ar[r] & X_0\ar@{.>}[d] \\
Y_1\ar[r] & X_1.
}
\]
The same result holds for morphisms in $\Fr_+(\Sch_B)$.
\end{lemma}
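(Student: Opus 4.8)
The plan is to reduce the statement to the lifting property for henselian pairs already recorded in the excerpt, namely \Cref{lm:AffSmHenselianLift} (for morphisms of schemes) and \Cref{lm:liftFr} (for framed correspondences), the latter being exactly the lifting diagram \eqref{eq:fr-lift-diagr} discussed in \Cref{subsection:atshi}. First I would unwind the definitions: an object $X_i\in\SmatBcZ$ is by definition of the form $(\wt X_i)^h_{Z}$ for some $\wt X_i\in\SmatB$, and $Y_i=X_i\times_B Z\cong \wt X_i\times_B Z$; the closed immersion $Y_i\not\hookrightarrow X_i$ is the canonical one and the pair $(X_i, Y_i)$ is henselian, since henselization along a closed subscheme produces a henselian pair (this is the content of the setup in \Cref{subsection:candn} and is used throughout, e.g.\ in the proof of \Cref{prop:SmoothLift->A1lrig:origin}). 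The structure morphism $X_1\to B$ is smooth in the essentially smooth sense, so in particular $X_1\to B$ is formally smooth, and a fortiori $X_1$ is smooth over the base over which $X_0$ lives after the relevant localization; this is what allows the lifting.

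The key step is then the following: given the morphism $f_Z\colon Y_0\to Y_1$ in $\Sch_B$, we want to fill in the dotted arrow $X_0\to X_1$ making the square commute. Precompose $f_Z$ with the structure map to get a morphism $Y_0\to X_1$, i.e.\ a $B$-morphism from the closed subscheme $Y_0$ of the henselian pair $(X_0,Y_0)$ to the smooth $B$-scheme $X_1$. Since $(X_0, Y_0)$ is henselian and $X_1$ is smooth over $B$, \Cref{lm:AffSmHenselianLift} furnishes a lift $X_0\to X_1$ of $Y_0\to X_1$ along the closed immersion $Y_0\not\hookrightarrow X_0$; this is precisely the diagram \eqref{eq:fr-lift-diagr} with $(X_0,Y_0)$ in place of $(X^h_Z, Z)$ and $W=X_1$. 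The resulting arrow $X_0\to X_1$ restricts to $f_Z$ on $Y_0$ by construction, so the square commutes, proving the scheme-theoretic statement.

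For the framed version, I would repeat the identical argument with \Cref{lm:liftFr} replacing \Cref{lm:AffSmHenselianLift}: a morphism $Y_0\to Y_1$ in $\Fr_+(\Sch_B)$ composed with $Y_1\to X_1$ gives a framed correspondence $Y_0\to X_1$, and \Cref{lm:liftFr} (the lifting property for framed correspondences along henselian pairs of affine schemes, valid here since the schemes in $\SmatBcZ$ are affine up to henselization) produces a framed correspondence $X_0\to X_1$ extending it, with the commuting square holding in $\Fr_+(\EssSm_B)$. I do not anticipate a genuine obstacle here; the only point requiring mild care is the bookkeeping that $\SmatBcZ$-objects are literally henselizations of affine smooth $B$-schemes along $Z$, so that the affineness and henselian-pair hypotheses of \Cref{lm:liftFr} and \Cref{lm:AffSmHenselianLift} are met, and that smoothness of $X_1$ over $B$ suffices as the target condition in those lifting statements. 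Everything else is a direct invocation of the two cited lemmas.
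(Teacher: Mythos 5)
Your overall approach coincides with the paper's, which presents \Cref{lm:diffFrCor} as a ``reformulation'' of \Cref{lm:liftFr} and \Cref{lm:AffSmHenselianLift} and essentially leaves the details to the reader. However, there is a genuine (if small) gap in the way you invoke those two lemmas. The target $X_1\in\SmatBcZ$ is of the form $(\wt X_1)^h_Z$ for some $\wt X_1\in\Smat_B$, so it is an \emph{essentially} smooth affine $B$-scheme: a pro-\'etale cofiltered limit over a smooth affine scheme of finite type, not itself an object of $\SmAff_B$. Both \Cref{lm:AffSmHenselianLift} and \Cref{lm:liftFr} require the target $X$ to lie in $\SmAff_B$; your appeal to ``$X_1$ is smooth over $B$ in the essentially smooth sense, so in particular formally smooth'' does not match the hypotheses of the lemmas and is not, by itself, enough to produce the lift.

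To close the gap, argue in two steps. First compose further with the pro-\'etale map $X_1\to\wt X_1$ to obtain $Y_0\to Y_1\hookrightarrow X_1\to\wt X_1$, which now has target $\wt X_1\in\SmAff_B$; since $X_0=(\wt X_0)^h_{Y_0}$ with $\wt X_0$ affine, \Cref{lm:AffSmHenselianLift} (or \Cref{lm:liftFr} in the framed case) yields a lift $X_0\to\wt X_1$. Second, lift this morphism through the pro-\'etale map $X_1\to\wt X_1$: the fiber product $X_0\times_{\wt X_1}X_1\to X_0$ is pro-\'etale and admits a section over $Y_0$ (the pair $(\mathrm{id}_{Y_0},\,Y_0\to X_1)$), so \Cref{lm:pro-etale:lift} applied to the henselian pair $(X_0,Y_0)$ gives a section over $(X_0)^h_{Y_0}=X_0$; composing with the projection to $X_1$ produces the dotted arrow and makes the square commute. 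With this extra step explicitly recorded, your argument is correct and follows the paper's intended route.
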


\begin{lemma}
\label{lm:deform:uus-circ-Irig}
The following hold for the composite functor $\uds \circ \Irig$ in \eqref{equation:irigdiagram}.
\begin{enumerate}
\item
For any $\A^{1}$-local framed presheaf of $S^1$-spectra $\calF$ on $\SmatBcZ$ there is an $\A^{1}$-local 
rigid framed presheaf $\calF^\rig$ on $\SmatBlZ$ such that $\uds(\Irig (\calF^\rig))\simeq \calF$.
\item
It detects quasi-stable radditive framed presheaves of $S^{1}$-spectra on the subcategory of framed presheaves 
of $S^1$-spectra.
\item
It is conservative.
\end{enumerate}
\end{lemma}

\begin{proof}
Part (1) follows by taking $\calF^\rig=\calF_{\A^{1}}^\fr$ in view of 
\Cref{prop:SmoothLift->A1lrig:origin}.
Suppose $\uds\Irig(\calF)$ is quasi-stable.
Then $\sigma_{X^h_Z}$ induces an auto-equivalence on $\calF(X^h_Z)$ for all $X\in \Smat_B$, 
hence $\sigma_{X_Z}$ induces an auto-equivalence on $\calF(X_Z)$,
since $\calF(X^h_Z)\cong \calF(X_Z)$ by rigidity.
Moreover, 
if $\uds\Irig(\calF)$ is radditive, 
then $\calF$ is radditive by \Cref{cor:henselianpairsplitting} and rigidity as above.
This proves part (2).
To prove (3), 
suppose $f\in\calF\to \mathcal G$ is a morphism in $\HH_\rig(\SmatBlZ)$ such that 
$u_*R_\rig(\calF)\simeq u_*R_\rig(\mathcal G)$.
Any $Y$ in $\SmatBlZ$ is either of the form $X_Z$ or $X^h_Z$
for some
$X^h_Z\in \SmatBlZ$.
For any 
$X^h_Z\in \SmatBlZ$,
we have the equivalences 
\[
\calF(X_Z)\simeq 
\calF(X^h_Z)\simeq 
\uds R_\rig\calF(X^h_Z)\simeq 
\uds R_\rig\mathcal G(X^h_Z)
\mathcal G(X^h_Z).
\]
\end{proof}

\begin{lemma}
\label{lm:deform:uus}
The following hold for the functor $\uds$ in \eqref{equation:irigdiagram}.
\begin{enumerate}
\item
It preserves $\A^{1}$-local and quasi-stable radditive framed presheaves of $S^{1}$-spectra. 
\item 
It commutes with $\Lrep_{\nis}$.
\end{enumerate}
\end{lemma}
\begin{proof}
To conclude (1) we use that $\SmatBcZ\to \SmatBlZ; \XhZ\mapsto \XhZ$, preserves 
morphisms of the form $(\A^{1}\times X)^h_Z\to \XhZ$.
To conclude (2), 
we use that the same functor preserves Nisnevich coverings, fiber products, and Nisnevich points owing to 
\Cref{lm:LocEqCovPoints,lm:schemespointssites}.
\end{proof}

\begin{lemma}
\label{lm:Lnis(rigid)}
The endofunctor $\Lrep_\nis$ on $\HHtriv(\SmatBcZ)$ preserves rigid presheaves of $S^{1}$-spectra. 
\end{lemma}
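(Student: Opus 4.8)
The plan is to compare the Nisnevich localization endofunctor on $\SmatBlZ$ with the one on $\SmatZ$ through the equivalence of categories $i^*_\rigid\colon \SHRig(\SmatBlZ)\xrightleftarrows{1em}\SH_{s}(\SmatZ)\colon i^\rig_*$ of \Cref{lm:PresPreSsEquivalenceSH}. What I will need from that result is that the unit $\id\to i^\rig_* i^*_\rigid$ and counit $i^*_\rigid i^\rig_*\to\id$ are equivalences, that $i^\rig_*$ preserves Nisnevich local objects and lands in the full subcategory $\SHRig(\SmatBlZ)$ of levelwise rigid objects, and that $i^*_\rigid$ detects Nisnevich equivalences.

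So let $\calF\in\SHRig(\SmatBlZ)$ be a rigid presheaf of $S^{1}$-spectra and put
\[
\calG:=i^\rig_*\bigl(\Lrep_{\nis}(i^*_\rigid\calF)\bigr),
\]
where the inner $\Lrep_{\nis}$ is the Nisnevich localization endofunctor on $\SH_{s}(\SmatZ)$. Since $\Lrep_{\nis}(i^*_\rigid\calF)$ is Nisnevich local and $i^\rig_*$ preserves Nisnevich local objects, $\calG$ is Nisnevich local in $\SH_{s}(\SmatBlZ)$, and it is rigid as it lies in the image of $i^\rig_*$. There is a canonical morphism $\calF\to\calG$, namely the composite of the unit equivalence $\calF\xrightarrow{\simeq}i^\rig_* i^*_\rigid\calF$ with $i^\rig_*$ applied to the localization map $i^*_\rigid\calF\to\Lrep_{\nis}(i^*_\rigid\calF)$. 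Applying $i^*_\rigid$ to this composite and cancelling $i^*_\rigid i^\rig_*\simeq\id$ via the counit recovers, up to equivalence, the localization map $i^*_\rigid\calF\to\Lrep_{\nis}(i^*_\rigid\calF)$, which is a Nisnevich equivalence in $\SH_{s}(\SmatZ)$; since $i^*_\rigid$ detects Nisnevich equivalences, the morphism $\calF\to\calG$ is a Nisnevich equivalence.

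Therefore $\calG$ is a Nisnevich-local replacement of $\calF$, so $\Lrep_{\nis}(\calF)\simeq\calG$ by the universal property of Nisnevich localization, and $\calG$ is rigid by construction; this is the claim. The framed statement follows verbatim from the framed analogue of \Cref{lm:PresPreSsEquivalenceSH}. The only thing to keep straight is the direction of the functors $i^*_\rigid$, $i^\rig_*$ and of the localization maps: once the equivalence $\SHRig(\SmatBlZ)\simeq\SH_{s}(\SmatZ)$ is known to be compatible with the Nisnevich local structures on both sides, the argument is formal. Alternatively, one can argue directly, using the $\check{\text{C}}$ech description of Nisnevich localization (as in \Cref{lm:LnissimeqinjlimcheckC}) together with rigidity of $\calF$ to identify $\Lrep_{\nis}(\calF)(X^h_Z)$ with $\Lrep_{\nis}(\calF)(X\times_B Z)$.
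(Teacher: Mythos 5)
Your argument is correct, but it takes a different route from the paper's. The paper proves this lemma in one line, directly from the geometry of the Nisnevich topology on $\SmatBlZ$: every Nisnevich covering of a henselian pair $X_Z\to X^h_Z$ is again a henselian pair $\widetilde X_Z\to\widetilde X^h_Z$ induced by a Nisnevich covering $\widetilde X\to X$, so the filtered systems of coverings of $X^h_Z$ and of $X_Z$ match up term by term and rigidity of $\calF$ passes through the \v{C}ech/colimit construction computing $\Lrep_\nis$. This is exactly the "alternative" you sketch in your last sentence. Your main argument instead transports the problem across the equivalence $i^*_\rigid\colon\SHRig(\SmatBlZ)\rightleftarrows\SH_{s}(\SmatZ)\colon i^\rig_*$ of \Cref{lm:PresPreSsEquivalenceSH}: you form $\calG=i^\rig_*\Lrep_\nis(i^*_\rigid\calF)$, observe it is Nisnevich local and rigid, check that $\calF\to\calG$ is a Nisnevich equivalence by applying $i^*_\rigid$ and using that this functor detects Nisnevich equivalences, and conclude $\Lrep_\nis(\calF)\simeq\calG$. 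All the inputs you use (that $i^\rig_*$ lands in rigid objects, preserves Nisnevich local objects, and that both functors preserve and detect Nisnevich equivalences) are established in Section 6, before this lemma, so there is no circularity. What your route buys is that it is purely formal once the rigid equivalence is in place; what it costs is that it leans on heavier machinery (and, under the hood, the proof of that equivalence uses the same covering-matching observation the paper's one-line proof uses directly). One cosmetic point: the lemma is stated on $\HHtriv(\SmatBcZ)$, but rigidity only makes sense on $\SmatBlZ$; you correctly read the statement as being about $\SmatBlZ$, in line with the paper's own proof.
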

\begin{proof}
This follows since every Nisnevich covering in the category $\SmatBlZ$ of a henselian pair $X_Z\to X^h_Z$ is a 
henselian pair of the form $\widetilde X_Z\to \widetilde X^h_Z$ for some Nisnevich covering $\widetilde X\to X$.
\end{proof}

\begin{corollary}
\label{cor:Lnis(rigid)}
The functor $\Lrep_\nis$ restricts to an endofunctor on $\HHtrivrigid(\SmatBcZ)$.
\end{corollary}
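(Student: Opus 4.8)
The plan is to obtain the statement as an immediate formal consequence of \Cref{lm:Lnis(rigid)}. Recall from \Cref{subsect:Rigid} that $\HHtrivrigid(\SmatBlZ)=\PreRig(\SmatBlZ)$ is the full subcategory of $\HHtriv(\SmatBlZ)$ spanned by the rigid objects, namely those $\calF$ for which the canonical morphism $\calF(\XhZ)\to\calF(\XtZ)$ is an equivalence for every $X\in\SmatB$ (so the rigidity condition is tested on the ambient pair category $\SmatBlZ$). Recall also that the Nisnevich localization endofunctor $\Lrep_\nis$ is already an endofunctor of $\HHtriv(\SmatBlZ)$, induced by fibrant replacement in the Nisnevich-local injective model structure on $\SmatBlZ$, and that by the analogue of \Cref{lm:LnissimeqinjlimcheckC} it is computed at each object as the filtered homotopy colimit of the $\check{\text C}$ech nerves of Nisnevich coverings.

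First I would observe that, since $\HHtrivrigid(\SmatBlZ)$ is a full subcategory of $\HHtriv(\SmatBlZ)$ and $\Lrep_\nis$ is already defined on the ambient category, in order to conclude that $\Lrep_\nis$ restricts to an endofunctor of $\HHtrivrigid(\SmatBlZ)$ it suffices to verify that $\Lrep_\nis$ carries rigid objects to rigid objects; the action on morphisms, and the functoriality identities, are then inherited verbatim from $\HHtriv(\SmatBlZ)$. This object-level statement is precisely \Cref{lm:Lnis(rigid)}, so the corollary follows. For completeness I would recall the mechanism behind \Cref{lm:Lnis(rigid)}: every Nisnevich covering $\widetilde X^h_Z\to\XhZ$ in $\SmatBlZ$ comes from a Nisnevich covering $\widetilde X\to X$ in $\SmatB$ and restricts on special fibres to a Nisnevich covering $\widetilde X_Z\to X_Z$, so the $\check{\text C}$ech nerve diagrams computing $\Lrep_\nis(\calF)(\XhZ)$ and $\Lrep_\nis(\calF)(\XtZ)$ are levelwise identified through the rigidity equivalences of $\calF$; passing to the filtered homotopy colimit over coverings then gives $\Lrep_\nis(\calF)(\XhZ)\simeq\Lrep_\nis(\calF)(\XtZ)$, that is, $\Lrep_\nis(\calF)$ is again rigid.

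I do not anticipate any genuine obstacle here: the corollary is pure bookkeeping on top of \Cref{lm:Lnis(rigid)} together with the trivial fact that a full subcategory is stable under a functor as soon as its objects are. The only point deserving a moment's attention is to make sure the rigidity condition is read on the correct indexing category and that the endofunctor $\Lrep_\nis$ featuring in the corollary is literally the one of \Cref{lm:Lnis(rigid)}; both are immediate from the definitions recalled above.
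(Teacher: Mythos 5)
Your proposal is correct and matches the paper's (implicit) argument: the corollary is stated without proof precisely because it is the formal observation that a full subcategory is stable under an endofunctor of the ambient category as soon as the endofunctor preserves its objects, which is exactly \Cref{lm:Lnis(rigid)}. Your parenthetical recollection of the mechanism behind that lemma (weak Nisnevich coverings of the henselian pair inducing levelwise-identified \v{C}ech nerves) is consistent with the paper's one-line proof, and you are right to flag that the rigidity condition, and hence the subcategory in the statement, should be read on the pair category $\SmatBlZ$.
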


\begin{lemma}
\label{lm:deform:Irig}
The following hold for $\Irig\colon \HH_{\triv,\rig}(\SmatBlZ)\to\HHtriv(\SmatBlZ)$ in \eqref{equation:irigdiagram}.
\begin{enumerate}
\item
It preserves and detects $\A^{1}$-local quasi-stable radditive framed presheaves of $S^{1}$-spectra.
\item 
It commutes with $\Lrep_{\nis}$.
\end{enumerate}
\end{lemma}
\begin{proof}
Part (1) follows because $\Irig$ is the embedding of the subcategory spanned by rigid presheaves 
of $S^{1}$-spectra in $\HHtriv(\SmatBlZ)$.
For part (2), 
\Cref{lm:Lnis(rigid)} implies that the endofunctor $\Lrep_\nis$ on $\HHtrivrigid(\SmatBcZ)$ coincides with 
the restriction of the endofunctor $\Lrep_\nis$ on $\HHtriv(\SmatBcZ)$.
\end{proof}

\begin{lemma}
\label{lm:deform:ids}
The following hold for the functor $i_{B*Z}^*$ in \Cref{lm:iusBlZsimeqrds}. 
\begin{enumerate}
\item 
It preserves $\A^{1}$-local quasi-stable radditive framed presheaves of $S^{1}$-spectra. 
\item 
It commutes with $\Lrep_{\nis}$.
\end{enumerate}
\end{lemma}
\begin{proof}
By \Cref{lm:iusBlZsimeqrds} the functor $i_{B*Z}^*$ is induced by restriction along the canonical embedding  
\begin{equation}
\label{eq:iSmatZSmatBlZ:proofids}
\SmatZ\to \SmatBlZ;
Y\mapsto Y.
\end{equation}
Part (1) follows since \eqref{eq:iSmatZSmatBlZ:proofids} gives rise to the embedding 
$\Fr_+(\SmatZ)\to \Fr_+(\SmatBlZ)$ given by $Y\mapsto Y$.
This functor preserves coproducts and morphisms of the form $\A^{1}\times Y\to Y$ and $\sigma_{Y}$.

Part (2) is equivalent to the statement that $i_{B*Z}^*$ preserves Nisnevich local objects and Nisnevich local equivalences.
This follows by \Cref{lm:LocEqCovPoints,lm:schemespointssites} because the embedding in \eqref{eq:iSmatZSmatBlZ:proofids} 
preserves coverings and points in the Nisnevich topology and preserves fiber products.
\end{proof}

\begin{lemma}
\label{lm:deform:idsrig}
The following hold for the functor $i^*_\rigid$ in \eqref{equation:irigdiagram}.
\begin{enumerate}
\item 
It preserves and detects $\A^{1}$-local and quasi-stable radditive framed presheaves of $S^{1}$-spectra. 
\item 
It commutes with $\Lrep_{\nis}$.
\end{enumerate}
\end{lemma}

\begin{proof}
The first claim in (1) follows by \Cref{lm:deform:Irig}(1) and \Cref{lm:deform:ids}(1).
For the second claim we consider $i_{B,Z}\colon \SmatBlZ\to \SmatZ; X^h_Z\mapsto X_Z$.
The restriction $(i_{B,Z})_*\colon \HHtriv(\SmatZ)\to\HHtriv(\SmatBlZ)$ takes values in rigid objects.
Hence we obtain $i^\rigid_*\colon \HHtriv(\SmatZ)\to\HHtrivrigid(\SmatBlZ)$.
\Cref{lm:PresPreSsEquivalence} shows $i^*_\rigid$ and $i^\rigid_*$ are inverses,
and \Cref{lm:iusBlZsimeqrds} shows $i^*_\rigid$ is isomorphic to the functor $r_*$ obtained from restriction 
along $r\colon \SmatZ\to \SmatBlZ$.
Moreover, 
$i_{B,Z}$ and $r$ lift to functors between categories of framed correspondences
\[
\begin{array}{rl}
(i_{B,Z})^\fr\colon \Fr_+(\SmatBlZ)\to \Fr_+(\SmatZ);& X^h_Z\mapsto X_Z, \\
r^\fr\colon \Fr_+(\SmatZ)\to \Fr_+(\SmatBlZ);& X_Z\mapsto X^h_Z.
\end{array}
\]
To conclude the proof for (1) we use that $(i_{B,Z})^\fr$ and $r^\fr$ preserve coproducts and morphisms 
of the form $\A^{1}\times V\to V$ and $\sigma_V$ for any scheme $V\in\SmatBlZ$.
Part (2) follows from \Cref{lm:PresPreSsEquivalence}, \Cref{lm:deform:Irig}(2), \Cref{lm:deform:ids}(2).
\end{proof}

\begin{theorem}
\label{th:DiffSmScZ}
Let $Z$ be a closed subscheme of $B$.
If Nisnevich strict ${\mathbb A}^{1}$-invariance holds on $\Sm_Z$ then it holds on $\SmBcZ$.
\end{theorem}
\begin{proof}
Owing to \Cref{lm:baselocality} we may assume $B$ is affine, 
and owing to \Cref{cor:SmScZSmatA1locpreserve} we may replace $\SmZ$ and $\SmBcZ$ with $\SmatZ$ and $\SmatBcZ$, 
respectively.
We conclude by appealing to \Cref{prop:DiffSmatSlZ}.
\end{proof}

\section{\texorpdfstring{$\tf$}{tf}-Nisnevich strict homotopy invariance}
\label{section:proofmaintheorem}

In this section, 
we finish the proof of our main result concerning $\tf$-Nisnevich strict $\A^{1}$-invariance,
see \Cref{def:SHIthnistf}.

\begin{prop}
\label{prop:SHIT_BmS-<SmZSmU}
Suppose $Z\not\hookrightarrow B$ is a closed subscheme of dimension zero and the base scheme $B$ is one-dimensional. 
If $\tf$-Nisnevich \SHI holds on $\Sm_{Z}$ and $\Sm_{B-Z}$, then $\tf$-Nisnevich \SHI holds on $\Sm_{B}$.
\end{prop}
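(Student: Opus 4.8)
The plan is to combine the two localization results established earlier in the excerpt. Recall that $\tf$-Nisnevich strict $\A^1$-invariance on $\Sm_{B-Z}$ already makes sense since $B-Z$ is (at most) one-dimensional, and on $\Sm_Z$ it coincides with Nisnevich strict $\A^1$-invariance because $\dim Z = 0$, so the $\tf$-topology on $\Sm_Z$ is trivial by \Cref{prop:Proptftop}(iii) (see \Cref{rm:SHIdim0NistfNis}). The two steps are: first, pass from $\Sm_Z$ to $\SmBcZ$ using \Cref{th:DiffSmScZ}; second, glue $\SmBcZ$ and $\SmBmZ$ back to $\SmB$ using \Cref{th:LocNisZSHI}.

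First I would argue locally on $B$. By \Cref{lm:baselocality}, it suffices to prove $\tf$-Nisnevich strict $\A^1$-invariance on $\Sm_{B_\sigma}$ for every $\sigma \in B$, so we may assume $B$ is a one-dimensional local scheme; in particular $B$ is affine, and $Z\not\hookrightarrow B$ is a closed subscheme of dimension zero. Now \Cref{th:DiffSmScZ} applies with this $B$ and $Z$: since Nisnevich strict $\A^1$-invariance holds on $\Sm_Z$ (this is exactly the hypothesis that $\tf$-Nisnevich strict $\A^1$-invariance holds on $\Sm_Z$, via \Cref{rm:SHIdim0NistfNis}), it holds on $\SmBcZ$. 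But Nisnevich strict $\A^1$-invariance on $\SmBcZ$ implies $\tf$-Nisnevich strict $\A^1$-invariance on $\SmBcZ$: indeed, if $\calF$ is a quasi-stable radditive framed presheaf on $\SmBcZ$ with $\Lrep_\tf(\calF)$ being $\A^1$-local, then $\Lrep_\nis(\calF) \simeq \Lrep_\nis(\Lrep_\tf(\calF))$ up to the appropriate comparison, and the $\A^1$-locality is preserved by the unconditional Nisnevich statement applied to $\Lrep_\tf(\calF)$. (One can also simply invoke that Nisnevich strict $\A^1$-invariance is the stronger notion, as noted just before \Cref{def:SHIthnistf}.)

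Second, having $\tf$-Nisnevich strict $\A^1$-invariance on both $\SmBcZ$ and $\SmBmZ$ — the latter by hypothesis, the former just established — I invoke \Cref{th:LocNisZSHI}, which says precisely that these two together imply $\tf$-Nisnevich strict $\A^1$-invariance on $\SmB$. This completes the proof for $B$ local, and the reduction via \Cref{lm:baselocality} finishes the general one-dimensional case.

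The only subtle point — and the one I would spell out carefully — is the bookkeeping about which notion of strict $\A^1$-invariance appears where: \Cref{th:DiffSmScZ} is phrased in terms of \emph{Nisnevich} strict $\A^1$-invariance (\Cref{def:SHIthnistriv}), while the conclusion we want on $\SmBcZ$ as input to \Cref{th:LocNisZSHI} is \emph{$\tf$-Nisnevich} strict $\A^1$-invariance (\Cref{def:SHIthnistf}). The implication from the former to the latter is immediate since the $\tf$-localization $\Lrep_\tf(\calF)$ of a quasi-stable radditive framed presheaf is again such a presheaf (by the framed $\tf$-localization results of \Cref{sect:tfLoc}, e.g.\ \Cref{cor:onedimLnisLnisfr} and \Cref{prop:StFrRad}, applied over the local base), so any $\calF$ satisfying the hypothesis of \Cref{def:SHIthnistf} yields $\Lrep_\tf(\calF)$ satisfying the hypothesis of \Cref{def:SHIthnistriv}, and then $\Lrep_\nis(\Lrep_\tf \calF)$ is $\A^1$-local; one concludes by noting $\Lrep_\nis(\calF)$ and $\Lrep_\nis(\Lrep_\tf\calF)$ have the relevant comparison since $\Lrep_\tf$ is a $\tf$-equivalence and $\Lrep_\nis$ factors through $\tf$-equivalences. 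No genuinely new geometric input is needed here — the proposition is the formal assembly of \Cref{lm:baselocality}, \Cref{th:DiffSmScZ}, and \Cref{th:LocNisZSHI}.
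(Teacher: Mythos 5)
Your argument is correct and is essentially the paper's own proof: via \Cref{rm:SHIdim0NistfNis} the hypothesis on $\Sm_Z$ is just Nisnevich strict $\A^1$-invariance, \Cref{th:DiffSmScZ} transfers it to $\Sm_{B,Z}$ --- where the $\tf$-topology is again trivial since $\dim Z=0$, so the two notions of strict $\A^1$-invariance literally coincide there and your final bookkeeping paragraph can be replaced by a second appeal to \Cref{rm:SHIdim0NistfNis} rather than the comparison of $\Lrep_\nis(\calF)$ with $\Lrep_\nis(\Lrep_\tf\calF)$ --- and \Cref{th:LocNisZSHI} then glues $\Sm_{B,Z}$ and $\Sm_{B-Z}$ back to $\Sm_B$. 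Your preliminary reduction to local $B$ via \Cref{lm:baselocality} is a harmless addition that the paper defers to the proof of \Cref{thm:SHIrc}; it conveniently supplies the affineness hypothesis of \Cref{th:DiffSmScZ}.
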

\begin{proof}
In view of \Cref{rm:SHIdim0NistfNis}, 
since $\tf$-Nisnevich strict ${\mathbb A}^{1}$-invariance holds on $\Sm_Z$ by assumption, 
\Cref{th:DiffSmScZ} implies $\tf$-Nisnevich strict $\A^{1}$-invariance on $\SmBcZ$.
Using that $\tf$-Nisnevich strict ${\mathbb A}^{1}$-invariance holds on $\Sm_{B-Z}$ by assumption, 
we conclude $\tf$-Nisnevich strict ${\mathbb A}^{1}$-invariance on $\SmB$ by appealing to \Cref{th:LocNisZSHI}.
\end{proof}

We are ready to prove our final permanence result about strict $\A^{1}$-invariance.
In the proof we use that Nisnevich \SHI holds over every residue field of the base scheme.

\begin{theorem}
\label{thm:SHIrc}
Let $B$ be a base scheme of dimension one.
Then $\tf$-Nisnevich \SHI holds on $\Sm_{B}$.
\end{theorem}
\begin{proof}
Owing to \Cref{lm:baselocality} we may assume $B$ is a local scheme.
If $z\in B$ is the unique closed point, 
our assumption shows that Nisnevich \SHI holds on $\Sm_{z}$ and $\Sm_{B-z}$.
Since $z$ and $B-z$ are zero-dimensional the $\tf$-topologies on $\Sm_z$, $\Sm_{B,z}$, 
$\Sm_{B-z}$ are trivial by \Cref{prop:Proptftop}(iii).
Thus $\tf$-Nisnevich strict ${\mathbb A}^{1}$-invariance is equivalent to Nisnevich \SHI for these categories.
To conclude the proof, 
we use \Cref{prop:SHIT_BmS-<SmZSmU}.
\end{proof}

\begin{corollary}
\label{cor:SHIrcAb}
Suppose $B$ is a one-dimensional scheme.  
Then $B$ satisfies $\tf$-Nisnevich \SHI for framed presheaves of abelian groups in the sense of \Cref{def:str_xibase}.
\end{corollary}
\begin{proof}
Follows from \Cref{thm:SHIrc,th:SHIeqS1Ab1,th:SHIeqS1Ab}
(\SHI holds for quasi-stable radditive framed presheaves of abelian groups over every residue field of $B$).
\end{proof}

\begin{corollary}
\label{cor:ptobasedefinition:SHI}
Suppose $B$ is a one-dimensional scheme.
Then $B$ satisfies \SHI in the sense of \Cref{definition:SHI}, 
i.e., 
the endofunctor $\Lrep^\tf_\nis$ on $\SHstf(\Sm_{B})$ preserves $\A^{1}$-local quasi-stable framed 
presheaves of $S^1$-spectra.
\end{corollary}
\begin{proof}
The proof makes use of the fact that Nisnevich strict $\A^{1}$-invariance holds over all the residue fields of $B$.
\Cref{thm:SHIrc} shows that $\tf$-Nisnevich \SHI holds on $\Sm_{B}$.
An $\A^{1}$-local quasi-stable framed object of $\SHstf(\Sm_{B_\sigma})$ is 
synonymous with an $\A^{1}$-local $\tf$-local quasi-stable object $\calF\in \Spt_s(\Fr_+(B))$.
Then $\Lrep_\tf(\calF)\cong \calF$ is an $\A^{1}$-local quasi-stable framed presheaf of $S^1$-spectra,
and $\Lrep^\tf_\nis(\calF)\cong \Lrep_\nis(\calF)$. 
By $\tf$-Nisnevich \SHI over $B$, 
$\Lrep_\nis(\calF)$ is an $\A^{1}$-local quasi-stable framed presheaf of $S^1$-spectra. 
Thus $\Lrep^\tf_\nis$ preserves $\A^{1}$-local quasi-stable framed presheaves of $S^1$-spectra.
\end{proof}

\begin{remark}
\label{rm:subsection:squareAtimesV}
Let us elaborate on the discussion in \Cref{subsection:squareAtimesV} by outlining an alternate approach 
to \Cref{thm:SHIrc}.
If $V$ is an essentially smooth local henselian scheme over a scheme $B$, 
it suffices to show every Nisnevich covering $\widetilde U\to \A^{1}\times V$ admits a refinement 
$\widetilde U^\prime\to \A^{1}\times V$, 
in the sense of \cite[Tag 00VT]{StacksProject}, 
obtained from $\tf$-squares and $(\A^{1},\ZF)$-contractible Nisnevich squares, 
see \eqref{eq:NissqA1FrContr}.

Over fields, 
an analysis of the proof for Nisnevich strict $\A^{1}$-invariance in \cite[\S 17]{hty-inv} shows every Nisnevich 
covering of $\A^{1}\times V$ admits a refinement obtained from $(\A^{1},\ZF)$-contractible Nisnevich squares. 
Owing to \Cref{lm:A1loc_id=iusids(SmAff)} and \Cref{lm:PresPreSsEquivalenceSH} the same holds in $\Sm_{B,z}$.
The proof of \Cref{th:Hnis(Xhxeta)dimB1} shows every Nisnevich covering of $(X^h_z)\times_B \eta$,
where $B$ is one-dimensional and $z\in B^{(1)}$, $\eta\in B^{(0)}$, 
admits a refinement obtained from $\tf$-squares and $(\A^{1},\ZF)$-contractible Nisnevich squares.
In the general case, we can use $\tf$-squares to reduce to the case of $\Sm_{B,z}$, 
for various $z\in B$,
and proceed as above to find $(\A^{1},\ZF)$-contractible squares for each $z$.
However, 
controlling the steps in this process is technically more demanding than simply incorporating the localization 
theorem in \Cref{sect:tfLoc}.
\end{remark}

\section{\texorpdfstring{$\tf$}{tf}-Nisnevich strict \texorpdfstring{$\A^{1}$}{A1}-invariance for abelian groups} 
\label{section:stifabeliangroups}

We deduce \Cref{thm:main} on $\tf$-Nisnevich strict $\A^{1}$-invariance for abelian groups from the following 
comparison result.

\begin{theorem}
\label{th:SHIeqS1Ab1}
If $\tf$-Nisnevich strict $\A^{1}$-invariance for presheaves of $S^{1}$-spectra holds on $\Sm_{B}$ 
in the sense of \Cref{def:SHIthnistf}, 
then $B$ satisfies $\tf$-Nisnevich strict $\A^{1}$-invariance for presheaves of abelian groups 
in the sense of \Cref{def:str_xibase}.
\end{theorem}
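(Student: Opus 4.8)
The plan is to run the whole statement through the Eilenberg--MacLane construction, thereby turning a question about abelian groups into the question about $S^1$-spectra that is assumed. Given a $\tf$-strictly $\A^1$-invariant quasi-stable radditive framed presheaf of abelian groups $\calF\colon\Fr_+(B)^\op\to\Ab$, I would first form its Eilenberg--MacLane presheaf of $S^1$-spectra $\EM(\calF)$, the object of $\Spt_s(\Fr_+(B))$ concentrated in degree zero with $\pi_0\EM(\calF)=\calF$ (the same object used in \Cref{corollary:ident-Nis-coh-loc-sch-corol}). The first step is to check that $\EM(\calF)$ is again quasi-stable and radditive in the sense of \Cref{definition:qsradditive}: the functor $\EM$ sends isomorphisms of abelian groups to stable equivalences, so the hypotheses that $\sigma_X^*$ be an isomorphism on $\calF(X)$ and that $\calF(X\amalg Y)\to\calF(X)\times\calF(Y)$ be an isomorphism translate verbatim into the required equivalences; framedness is preserved because $\EM$ is functorial on $\Fr_+(B)^\op$.

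Second, I would record the standard descent identification: for a bounded, complete, regular $cd$-topology $\tau$ on $\Sm_B$ — which by \Cref{prop:cdstructurecompleteregularbounded} includes $\tau=\tf$, and equally $\tau=\nis$ — the $\tau$-local replacement of the Eilenberg--MacLane object has homotopy presheaves given by $\tau$-sheaf cohomology, i.e.\ for every $X\in\Sm_B$ and $i\geq 0$,
\[
\pi_{-i}\bigl(\Lrep_\tau\EM(\calF)(X)\bigr)\;\cong\;H^i_\tau(X,\calF_\tau),
\]
with all other homotopy groups vanishing. Here one uses that the relevant $\tau$-cohomological dimensions are finite since $B$ is finite-dimensional, so the descent spectral sequence converges and only finitely many homotopy groups are nonzero. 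Since a map of $S^1$-spectra is an equivalence exactly when it is an isomorphism on all homotopy groups, it follows that $\Lrep_\tau\EM(\calF)$ is $\A^1$-local, in the sense that $\Lrep_\tau\EM(\calF)(X)\to\Lrep_\tau\EM(\calF)(X\times\A^1)$ is an equivalence for all $X$, if and only if $H^i_\tau(-,\calF_\tau)$ is $\A^1$-invariant for all $i\geq 0$.

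With these two steps the argument closes formally. By hypothesis $\calF$ is $\tf$-strictly $\A^1$-invariant, so by the equivalence just established $\Lrep_\tf\EM(\calF)$ is $\A^1$-local; since $\EM(\calF)$ is a quasi-stable radditive framed presheaf of $S^1$-spectra, $\tf$-Nisnevich strict $\A^1$-invariance on $\Sm_B$ in the sense of \Cref{def:SHIthnistf} applies and yields that $\Lrep_\nis\EM(\calF)$ is an $\A^1$-local quasi-stable radditive framed presheaf of $S^1$-spectra. Applying the identification of the second step once more, now with $\tau=\nis$, the $\A^1$-locality of $\Lrep_\nis\EM(\calF)$ says precisely that $H^i_\nis(-,\calF_\nis)$ is $\A^1$-invariant for all $i\geq 0$, i.e.\ $\calF$ is Nisnevich strictly $\A^1$-invariant in the sense of \Cref{def:str_xibase}; the framed transfers on $H^i_\nis(-,\calF_\nis)$, if one wishes to record them, are inherited from the framed structure on $\Lrep_\nis\EM(\calF)$ and require no hypothesis on $B$, as remarked after \Cref{thm:main}.

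I expect the only genuinely technical point to be the descent identification in the second step: verifying carefully that the $\tau$-local fibrant replacement of $\EM(\calF)$ really computes the $\tau$-sheaf cohomology of $\calF_\tau$ — so that this works uniformly for the non-standard topology $\tf$ as well as for $\nis$ — and that passing to homotopy presheaves is compatible with recognizing $\A^1$-locality. Everything else is a routine transport of the quasi-stable/radditive/framed structure through $\EM$ together with a single invocation of the hypothesis of the theorem.
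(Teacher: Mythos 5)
Your proposal is correct and follows essentially the same route as the paper's proof: form the Eilenberg--MacLane object $\EM(\calF)$, observe it is a quasi-stable radditive framed presheaf of $S^1$-spectra, identify $\pi_{-i}\Lrep_\tau\EM(\calF)$ with $H^i_\tau(-,\calF_\tau)$ for $\tau\in\{\tf,\nis\}$ (the paper cites \cite[Theorem 8.26]{Jardine-local} for this, where you invoke the descent spectral sequence), and then apply the $S^1$-spectrum hypothesis. The only difference is that you spell out the convergence/finite-cohomological-dimension point that the paper leaves to the cited reference.
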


\begin{proof}
We write $\EM(\calF)\in\SH_{s}(B)$ for the Eilenberg-MacLane object associated to a presheaf of abelian groups 
$\calF$ on $\Sm_{B}$.
By \cite[Theorem 8.26]{Jardine-local} there are canonical isomorphisms
\begin{equation}
\label{eq:piEM}
\pi_n\Lrep_\nis\EM(\calF)\cong H_\nis^n(-,\calF_\nis),\quad
\pi_n\Lrep_\tf\EM(\calF)\cong H_\tf^n(-,\calF_\tf).
\end{equation}
Assume $\tf$-Nisnevich strict $\A^{1}$-invariance holds for presheaves of $S^{1}$-spectra.
Let $\calF\colon \Fr_+(B)\to \Ab$ be a $\tf$-strictly $\A^{1}$-invariant quasi-stable radditive framed presheaf. 
The equivalences $\sigma^*\colon \calF(X)\simeq \calF(X)$ and $\calF(X_1\amalg X_2)\simeq\calF(X_1)\oplus \calF(X_2)$ 
imply similar ones for $\EM(\calF)$, i.e., 
it is a quasi-stable radditive framed presheaf of $S^{1}$-spectra. 
By \eqref{eq:piEM} it follows that $\Lrep_\tf\EM(\calF)$ is $\A^{1}$-local.
By assumption $L_{\nis}\EM(\calF)$ is $\A^{1}$-local.
Thus $H_{\nis}^n(-,\calF)\cong \pi_{-n}\Lrep_{\nis}(\calF)$ is $\A^{1}$-invariant for all $n\geq 0$ by \eqref{eq:piEM}.
\end{proof}

\begin{theorem}
\label{th:SHIeqS1Ab}
If the field $k$ satisfies $\tf$-Nisnevich strict $\A^{1}$-invariance for presheaves of abelian groups 
in the sense of \Cref{def:str_xibase}, 
then $\tf$-Nisnevich strict $\A^{1}$-invariance for presheaves of $S^{1}$-spectra holds on $\Sm_{k}$ 
in the sense of \Cref{def:SHIthnistf}.
\end{theorem}

\begin{proof}
Let $\calF$ be an $\A^{1}$-local quasi-stable radditive framed presheaf of $S^{1}$-spectra.
Note that $\pi_l(\calF)$, 
$l\in \mathbb Z$, 
is an $\A^{1}$-invariant quasi-stable radditive framed presheaf of abelian groups.
Thus $H^n_\nis(-,\calF)$ is $\A^{1}$-local by the assumption on $k$.
By \eqref{eq:piEM} also $\Lrep_{\nis}\EM(\calF)$ is $\A^{1}$-local, 
i.e., 
there is an equivalence
\begin{equation}
\label{eq:LA1LniscalEMpiF=LniscalEMpiF}
\Lrep_{\A^{1}}\Lrep_{\nis}\EM(\pi_l\calF)
\simeq 
\Lrep_{\nis}\EM(\pi_l\calF).
\end{equation}

We refer to \cite[Section 10.6]{Jardine-local} for the yoga of Postnikov towers with respect to $t$-structures. 
Let $\SH_{s,\geq l}(\Fr_+(B))$ be the subcategory of $\SH_{s}(\Fr_+(B))$ spanned by objects $\calF$ with trivial 
homotopy presheaf $\pi_n(\calF)$ is the range $n<l$.
Likewise, 
we write $\SH_{s,\leq l}(\Fr_+(B))$ for the subcategory spanned by objects $\calF$ for which $\pi_n(\calF)=0$ when $n>l$.
Let $\calF_{\geq l}$ and $\calF_{\leq l}$ be the truncations of $\calF$ in $\SH_{s,\geq l}(\Fr_+(B))$ and 
$\SH_{s,\leq l}(\Fr_+(B))$, respectively.
With these definitions there are 
isomorphisms
\begin{equation}
\label{eq:colomFgeq0limFleq0}
\hocolim_l \calF_{\geq l} 
\cong 
\calF,
\holim_l \calF_{\leq l} 
\cong 
\calF.
\end{equation}

For $l,b\in \mathbb Z$ we can form the truncation
\[
\calF_{\geq l,\leq b}
\in 
\SH_{s,\geq l}(\Fr_+(B))\cap \SH_{s,\leq b}(\Fr_+(B)),
\] 
and for $b\geq l$ there is an
isomorphism
\begin{equation}
\label{eq:ConeFlFl-1EM(pil)}
\mathrm{hofib}(\calF_{\geq l,\leq b}
\to 
\calF_{\geq l,\leq b-1})
\cong
\EM(\pi_b(\calF)).
\end{equation}
By \eqref{eq:colomFgeq0limFleq0} and induction we conclude that for all $b\geq l\in \mathbb Z$ there is an
isomorphism
\begin{equation}
\label{eq:LA1LniscalFlb=LniscalFlb}
\Lrep_{\A^{1}}L_{\nis}(\calF_{\geq l,\leq b})
\cong 
\Lrep_{\nis}(\calF_{\geq l,\leq b}).
\end{equation}
This shows $\pi_n(\Lrep_\nis\calF_{\geq l,\leq b})(X)$ is $\A^{1}$-invariant for every $n\in \mathbb Z$.

If $X\in\Sm_B$ is $d$-dimension and $b>n+d$, 
\Cref{lm:dimXvanishingpiLnis} implies $\pi_n(\Lrep_\nis\calF_{\geq b})(X)=0$.
Hence, 
for all $l\in \mathbb Z$, we have 
\[
\pi_n(\Lrep_\nis\calF_{\geq l})(X)
\cong 
\pi_n(\Lrep_\nis\calF_{\geq l,\leq n+d})(X),
\]
and 
 \begin{equation}
\label{eq:piFl(X)=(XtimesA1)}
\pi_n(\Lrep_\nis\calF_{\geq l})(X\times\A^{1})
\cong 
\pi_n(\Lrep_\nis\calF_{\geq l})(X).
\end{equation}
Since \eqref{eq:piFl(X)=(XtimesA1)} holds all $X\in\Sm_B$ and $n\in \mathbb{Z}$ there is an isomorphism 
\begin{equation}
\label{eq:LA1LniscalFl=LniscalFl}
\Lrep_{\A^{1}}L_{\nis}(\calF_l)
\cong 
\Lrep_{\nis}(\calF_l)
\end{equation}
for every $l\in \mathbb Z$.
Owing to \Cref{lm:homotopylimhomotopycolomLnisLA1} we deduce the isomorphism
\begin{equation}
\label{eq:LA1LniscalF=LniscalF}
\Lrep_{\A^{1}}\Lrep_{\nis}(\calF)
\cong 
\Lrep_{\nis}(\calF),
\end{equation}
and thus $\Lrep_{\nis}\Lrep_\nis(\calF)$ is $\A^{1}$-local.
\end{proof}

\begin{lemma}
\label{lm:homotopylimhomotopycolomLnisLA1}
If $\calF\simeq \hocolim_{l}\calF_l$ in $\SH_s(\Fr_+(B))$ then there are isomorphisms
\[
\Lrep_\nis(\calF)\simeq \hocolim_{l} \Lrep_\nis(\calF_l),
\Lrep_{\A^{1}}(\calF)\simeq \hocolim_{l} \Lrep_{\A^{1}}(\calF_l).
\]
\end{lemma}
\begin{proof}
We may assume $\calF\in \Sm_{B}(\Sm_B)$ since the forgetful functor $\SH_s(\Fr_+(B))\to \SH_s(\Sm_B)$ 
preserves homotopy colimits.
To prove the first claim, 
we show that the sequential homotopy colimit in question preserves
Nisnevich equivalences and local objects in $\SH_s(\Sm_B)$.

Suppose $\calF_\bullet\to \mathcal G_\bullet$ is a morphism of sequential diagrams such that 
$\calF_l\to \mathcal G_l$ is an isomorphism in $\SH_s(\Sm_B)$ for all $l\geq 0$.
Then, 
for all $X\in \Sm_B$ and $x\in X$, 
there are isomorphisms
\[
(\hocolim_{l}\calF_l)(X^h_x)
\cong 
\hocolim_{l}\calF_l(X^h_x)
\cong 
\hocolim_{l}\calG_l(X^h_x) 
\cong 
(\hocolim_{l}\calG_l)(X^h_x).
\]
Moreover, 
if $\calF_\bullet$ is comprised of Nisnevich local objects, 
then for any Nisnevich square given by an open immersion $U\to X$ and \'etale morphism $X^\prime\to X$ we have isomorphisms
\begin{equation*}
\calF(X^\prime\times_X U)
\cong 
\hocolim_{l}\hocoeq ( \calF_l(X)\rightrightarrows \calF_l(U\amalg X^\prime) )\cong  \\
\hocoeq (\calF(X)\rightrightarrows \calF(U\amalg X^\prime)).
\end{equation*}
Here we use the assumption $\calF\cong \hocolim_{l}\calF_l$.
To finish the proof we use the isomorphisms 
\[
\Lrep_{\A^{1}}^{[1]}(\calF)
\cong  
\calF(\Delta^\bullet\times -)
\cong  
\hocolim_{l}\calF_l(\Delta^\bullet\times -)
\cong  
\hocolim_{l}\Lrep_{\A^{1}}^{[1]}\calF_l.
\]
The claim follows because $\Lrep_{\A^{1}}\cong \hocolim_r \Lrep_{\A^{1}}^{[r]}$.
\end{proof}

\begin{lemma}
\label{lm:dimXvanishingpiLnis}
Let $\calF$ be a presheaf of $S^{1}$-spectra on $\Sm_B$ and suppose $X\in \Sm_B$ is $d$-dimensional.
Then for integers $l, n\in \mathbb Z$ such that $n<l-d$, 
we have $\pi_n(\Lrep_\nis\calF_{\geq l})(X)=0$.
\end{lemma}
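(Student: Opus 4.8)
The plan is to deduce the vanishing from the Nisnevich descent spectral sequence, using that the Nisnevich cohomological dimension of a finite‑dimensional noetherian scheme is bounded by its Krull dimension.

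First I would record the cohomological dimension estimate: since $X$ is noetherian of Krull dimension $d$, one has $H^s_{\nis}(X,\calG)=0$ for every Nisnevich sheaf of abelian groups $\calG$ on the small Nisnevich site of $X$ and every $s>d$ (see \cite{Morel-Voevodsky}; alternatively this is an instance of boundedness of the Nisnevich $cd$-structure, \cite{VV:cd}, for the density structure under which $X$ has dimension $\leq d$). Next, for any presheaf of $S^{1}$-spectra $\calE$ on $\Sm_B$ there is the Nisnevich descent spectral sequence
\[
E_2^{s,t}=H^s_{\nis}\bigl(X,\ul\pi_t(\calE)\bigr)\ \Longrightarrow\ \pi_{t-s}\bigl(\Lrep_{\nis}(\calE)\bigr)(X),
\]
where $\ul\pi_t(\calE)$ denotes the Nisnevich sheafification of the homotopy presheaf $U\mapsto\pi_t(\calE(U))$; for connective $\calE$ this spectral sequence converges strongly, because by the previous estimate the $E_2$-page is concentrated in the columns $0\leq s\leq d$ and it vanishes below the connectivity bound in the $t$-direction, so only finitely many terms contribute in each total degree (compare the Postnikov-tower discussion in \cite[\S 10.6]{Jardine-local}).

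Finally I would apply this with $\calE=\calF_{\geq l}$. By construction of the truncation one has $\ul\pi_t(\calF_{\geq l})=0$ for $t<l$, so $E_2^{s,t}=0$ unless both $0\leq s\leq d$ and $t\geq l$, and in particular $E_2^{s,t}=0$ whenever $t-s<l-d$. Since $\pi_n(\Lrep_{\nis}(\calF_{\geq l}))(X)$ admits a finite filtration whose graded pieces are subquotients of the terms $E_\infty^{s,t}$ with $t-s=n$, it vanishes as soon as $n<l-d$, which is the claim. I expect the only point requiring genuine care to be the strong convergence of the spectral sequence; this is exactly where finite-dimensionality of $X$ enters. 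One could instead try to argue by induction on the truncation level $b$ using the fibre sequences $\mathrm{hofib}(\calF_{\geq l,\leq b}\to\calF_{\geq l,\leq b-1})\simeq\EM(\pi_b(\calF))$ of \eqref{eq:ConeFlFl-1EM(pil)} together with the identification \eqref{eq:piEM} (each $\calF_{\geq l,\leq b}$ being bounded), but then passing from the bounded truncations $\calF_{\geq l,\leq b}$ to $\calF_{\geq l}\simeq\holim_b\calF_{\geq l,\leq b}$ requires controlling the interaction of $\Lrep_{\nis}$ with the Postnikov limit, which the spectral sequence handles uniformly.
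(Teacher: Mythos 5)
Your argument is correct, but it follows a genuinely different route from the paper. You invoke the Nisnevich descent spectral sequence $E_2^{s,t}=H^s_{\nis}(X,\ul\pi_t(\calF_{\geq l}))\Rightarrow \pi_{t-s}(\Lrep_\nis\calF_{\geq l})(X)$ together with the theorem that the Nisnevich cohomological dimension of a noetherian scheme is bounded by its Krull dimension; the vanishing then drops out of the bidegree bookkeeping ($0\leq s\leq d$, $t\geq l$ forces $t-s\geq l-d>n$), with strong convergence guaranteed exactly by the finite cohomological dimension plus connectivity. The paper instead proves a stronger relative statement, namely $[X/U\otimes S^n,\Lrep_\nis\calF]_{\SH_s(B)}=0$ for all open $U\subset X$ and $n<-d$ (after normalizing $l=0$), by a double induction on $\dim X$ and on $\dim(X-U)$: the base cases use that local henselian schemes have no nontrivial Nisnevich coverings and that $\Lrep_\nis\calF$ agrees with $\calF$ on $X^h_Y$ when $Y=X-U$ is zero-dimensional, and the inductive step passes to the generic points of $X-U$ and patches via the cofiber sequence $X/(V\cup U)\to X/U\to V/(V\cap U)$. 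In effect the paper reproves, in an elementary and self-contained way, the same cohomological-dimension phenomenon that you import as a black box. Your version is shorter and arguably more transparent, at the cost of relying on two external inputs (the cohomological dimension bound and strong convergence of the descent spectral sequence); the paper's version avoids both and stays within the techniques already developed in the text. Both are acceptable proofs of the lemma.
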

\begin{proof}
Note that $\calF$ extends continuously to a presheaf of $S^{1}$-spectra on $\EssSm_B$.
By forming $(\calF_{\geq l})[-l]$ we may assume that $l=0$ and $\calF \in \SH_{s,\geq 0}(\Sm_B)$.
Let $U$ be an open subscheme of $X\in\EssSm_B$. 
In the range $n<-d$ we will show the vanishing
\begin{equation}
\label{equation:vanishingUX}
[X/U\wedge S^n,\Lrep_\nis\calF]_{\SH_s(B)}
=
0.
\end{equation}
We proceed by induction on $d=\dim X$ and $c=\dim (X-U)$. 
The claim for $d=0$ holds since there are no nontrivial Nisnevich coverings of a zero-dimensional scheme.

Assume $c=0$, $d\geq 0$, and set $Y=X- U$. 
Then, 
since $\Lrep_\nis\calF$ is Nisnevich local, 
we have
\[
[X/U\wedge S^n,\Lrep_\nis\calF]_{\SH_s(B)}
\cong
[X^h_Y/(X^h_Y\times_X U)\wedge S^n,\Lrep_\nis\calF]_{\SH_s(B)}.
\] 
Here, 
$X^h_Y$ is a disjoint union of local henselian schemes because $c=0$.
Moreover, 
since $\calF$ is radditive, 
we have $\calF\in \SH_{s,\geq 0}(\Sm_B)$.
Recall that Nisnevich coverings on a local henselian scheme are trivial;
for $n<0$, it follows that 
\[
[X^h_Y\wedge S^n,\Lrep_\nis\calF]_{\SH_s(B)}
\cong
[X^h_Y\wedge S^n,\calF]_{\SH_s(B)}=0.
\]
Thus, 
in the range $n<-d$,
the inequalities $\dim (X^h_Y\times_X U)<\dim X^h_Y\leq\dim X$ imply
\[
[(X^h_Y\times_X U)\wedge S^n,\Lrep_\nis\calF]_{\SH_s(B)}=0.
\]
For $n<-d$, 
we deduce the vanishing
\[
[X^h_Y/(X^h_Y\times_X U)\wedge S^n,\Lrep_\nis\calF]_{\SH_s(B)}=0.
\]

Assume \eqref{equation:vanishingUX} holds for all pairs $X^\prime,U^\prime$ such that $\dim X^\prime<d$, 
or $\dim X^\prime=d$, $\dim (X^\prime- U^\prime)<c$.
Let $\nu$ be the union of the generic points of $Y=X- U$. 
For all $n<-d$ the inductive assumption reads 
\[
[X_\nu/(X_\nu\times_X U)\wedge S^n,\Lrep_\nis\calF]_{\SH_s(B)}=0.
\]  
Hence for every $e\in [X/U\wedge S^n,L_{\nis}\calF]_{\SH_s(B)}$ there exists an open subscheme $V\subset X$ such that 
the closure of $V$ contains $Y$ and the image of $e$ in $[V/(V\times_X U)\wedge S^n,L_{\nis}\calF]_{\SH_s(B)}=0$ is trivial.
Since $\dim (X- (U\cup V))<\dim Y$, it follows by the inductive assumption that 
\[
[X/(V\cup U)\wedge S^n,\Lrep_\nis\calF]_{\SH_s(B)}=0
\] for all $n<-d$.  
Using the naturally induced exact sequence
\[
[X/(V\cup U)\wedge S^n,\Lrep_\nis\calF]_{\SH_s(B)}
\to 
[X/U\wedge S^n,\Lrep_\nis\calF]_{\SH_s(B)}
\to 
[V/(V\cap U)\wedge S^n,\Lrep_\nis\calF]_{\SH_s(B)}
\]
we conclude $e=0$.
\end{proof}

\begin{lemma}
\label{lm:tfcohomologyonedimB}
Let $B$ be a one-dimensional local irreducible scheme with closed point $z\in B$ and generic point $\eta$.
If $\calF$ is a presheaf of abelian groups on $\Sm_B$, 
then for $X\in \Sm_B$ we have 
\[
H^*_\tf(X,\calF)
\cong
\begin{cases}
\ker(\calF(X^h_z)\oplus \calF(X_\eta)\to \calF((X^h_z)_\eta)) & i=0,\\
\coker(\calF(X^h_z)\oplus \calF(X_\eta)\to \calF((X^h_z)_\eta)) & i=1,\\
0 & i\geq 2.
\end{cases}\]
\end{lemma}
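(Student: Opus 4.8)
The plan is to obtain the statement as the Eilenberg--MacLane shadow of the description of the $\tf$-localization endofunctor in \Cref{prop:B1dimlocLtfhocofib}, applied to the Eilenberg--MacLane object $\EM(\calF)\in\SH_s(\Sm_B)$ of the presheaf of abelian groups $\calF$. Recall that $\EM(\calF)$ is, by construction, the presheaf of $S^1$-spectra sending a scheme $Y$ to the Eilenberg--MacLane spectrum of the abelian group $\calF(Y)$, and that by \eqref{eq:piEM} the homotopy groups of $\Lrep_\tf\EM(\calF)$ compute the $\tf$-cohomology presheaves $H^\ast_\tf(-,\calF_\tf)$; here $H^\ast_\tf(X,\calF)$ is understood as $H^\ast_\tf(X,\calF_\tf)$. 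Since \Cref{prop:B1dimlocLtfhocofib} already packages the relevant geometry (the corner schemes of the basic $\tf$-square admit only trivial $\tf$-coverings), no further geometric input is needed.

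First I would apply \Cref{prop:B1dimlocLtfhocofib} with $\calF$ replaced by $\EM(\calF)$. Since $B$ is one-dimensional, local and irreducible with closed point $z$, so that $B-z=\eta$, the proposition furnishes for each $X\in\Sm_B$ a natural equivalence of $S^1$-spectra
\[
\Lrep_\tf\EM(\calF)(X)\;\simeq\;\hofib\bigl(\EM(\calF)(X^h_z)\oplus\EM(\calF)(X_\eta)\longrightarrow\EM(\calF)((X^h_z)_\eta)\bigr),
\]
where the map is induced by the canonical morphisms $(X^h_z)_\eta\to X^h_z$ and $(X^h_z)_\eta\to X_\eta$. As $\EM$ is levelwise the Eilenberg--MacLane construction, the right-hand side is the homotopy fibre of the map of Eilenberg--MacLane spectra induced by the homomorphism of abelian groups
\[
d\colon \calF(X^h_z)\oplus\calF(X_\eta)\longrightarrow\calF((X^h_z)_\eta),\qquad (a,b)\longmapsto a\big|_{(X^h_z)_\eta}-b\big|_{(X^h_z)_\eta},
\]
i.e. it is the Eilenberg--MacLane spectrum of the two-term complex $[\,\calF(X^h_z)\oplus\calF(X_\eta)\xrightarrow{\,d\,}\calF((X^h_z)_\eta)\,]$.

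Then I would read off the homotopy groups of this homotopy fibre from the long exact sequence of the fibre sequence, using that an Eilenberg--MacLane spectrum has homotopy concentrated in a single degree: the only nonzero contributions are $\ker d$ and $\coker d$, sitting in two adjacent degrees. Translating through \eqref{eq:piEM} yields $H^0_\tf(X,\calF_\tf)\cong\ker d$, $H^1_\tf(X,\calF_\tf)\cong\coker d$, and $H^i_\tf(X,\calF_\tf)=0$ for $i\ge 2$, which is the claim. I expect no serious obstacle here; the only points needing care are bookkeeping: matching the grading conventions of \eqref{eq:piEM} with the long exact sequence so that $\ker d$ lands in cohomological degree $0$ and $\coker d$ in degree $1$, confirming that the homotopy fibre in \Cref{prop:B1dimlocLtfhocofib} is the Eilenberg--MacLane spectrum of the complex $d$ rather than a twist of it, and identifying the structure map of that proposition with the restriction homomorphism $d$ above. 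In particular, the vanishing for $i\ge 2$ falls out of this computation and does not require the cohomological-dimension bound implicit in boundedness of the $\tf$-$cd$-structure (\Cref{prop:cdstructurecompleteregularbounded}).
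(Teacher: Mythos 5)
Your proposal is correct and is essentially the paper's own argument: the paper likewise applies \Cref{prop:B1dimlocLtfhocofib} to the Eilenberg--MacLane object $\mathbf{H}\calF$, identifies the resulting homotopy fibre with the Eilenberg--MacLane object of the two-term complex $[\calF(X^h_z)\oplus\calF(X_\eta)\to\calF((X^h_z)_\eta)]$, and reads off the cohomology as the kernel and cokernel of that map. Your extra care about grading conventions and the identification of the structure map with the restriction difference $d$ is just a more explicit spelling-out of the same computation.
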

\begin{proof}
Consider the Eilenberg-MacLane object $\mathbf{H}\calF$ on $\Sm_B$.
\Cref{prop:B1dimlocLtfhocofib} implies
\[
\Lrep_{\tf}(\mathbf{H}\calF)(X)
\cong
\hofib(\mathbf{H}\calF(X^h_z)\oplus \mathbf{H}\calF(X_\eta)\to \mathbf{H}\calF((X^h_z)_\eta))
\cong
\mathbf{H}(\Lrep_\tf(\calF)),
\]
where $\Lrep_\tf(\calF)$ denotes the complex
\begin{equation}
\label{eq:LtfFadditivecomplex}
[
\calF (X^h_z)\oplus \calF(X_\eta)\to \calF((X^h_z)_\eta)].
\end{equation}
Thus the $\tf$-cohomology of $\calF$ agrees with the cohomology of the complex \eqref{eq:LtfFadditivecomplex}.
\end{proof}

\section{Base change and localization for the \texorpdfstring{$\tf$}{tf}-topology}
\label{section:SplittingdiagramLocalization}

In this section, we use $\infty$-categories to revise and 
strengthen the claims concerning the $\tf$-motivic localization or recollement results presented in \Cref{subsection:tfmlt}. Specifically, we establish the \emph{commutativity} of Nisnevich localization and $\A^{1}$-localization, together with the $\Gm$-loop functor, with the functors appearing in the diagram of $\infty$-categories~\eqref{eq:Smaf(Z)A1Smaf(SZ)nisSmaf(B)(U)}.

\subsection{}
If $Z$ is a closed subscheme of the base scheme $B$, 
we have the functors
\begin{equation}
\label{eq:SmZScZSmZ}
\begin{array}{lclclcl}
\Sm_Z &\xleftarrow{\overarrow{i}}& \Sm_{B,Z} &\xleftarrow{\tilde i}& \Sm_{B} &\xrightarrow{j}& \Sm_{B,Z}\\
X_Z &\mapsfrom& X^h_Z &\mapsfrom& X &\mapsto& X\times_{S}(B-Z)
\end{array}
\end{equation}
There are naturally induced functors between $\infty$-categories of framed presheaves:
\begin{equation}
\label{eq:Smaf(Z)A1Smaf(SZ)nisSmaf(B)(U)}
\xymatrix{
& & \cPre^\fr(\Sm_B)\ar[dl]_{\tilde i^!}\ar[dr]^{j^*} & \\
\cPre^\fr(\Sm_Z)\ar@<1ex>[r]^{\overarrow{i}_*} &\cPre^\fr(\Sm_{B,Z})
\ar@<1ex>[l]^{\overarrow{i}^*}\ar[rd]_{\tilde i_*}&  &\cPre^\fr(\Sm_{B-Z})\ar[ld]^{j_*} \\
& & \cPre^\fr(\Sm_B) &
}
\end{equation}
Explicitly, 
we have the formulas
\begin{gather}
\label{eq:PreScZiSjB-Z}
\begin{array}{ll}
\tilde i^!(F)(X^h_Z) = \fib( F(X^h_Z)\to F(X^h_Z-X_Z) ); &X^h_Z\in \Sm_{B,Z},\\
\tilde i_*F(X)=F(X^h_Z); &X\in\Sm_B,\\
j^*F(V)=F(V); &V\in \Sm_{B-Z},\\
j_*F(X)=F(X-X_Z); &X\in \Sm_B,\\
\overarrow{i}_*F(X)=F(X_Z); & X\in\Sm_{B,Z};\\
\overarrow{i}^*(h^\fr(Y^h_Z))=h^\fr(Y_Z), & Y\in \Sm_B.
\end{array}\end{gather}
Moreover, 
$\overarrow{i}^*(F)$ is the left Kan extension of $F\in \cPre^\fr(\Sm_{B,Z})$ along $\overarrow{i}$. 
\vspace{0.1in}

For $V\in \Sm_{B,Z}$ we consider the endofunctor $F\mapsto F^V$ on $\cPre^\fr(\Sm_{B,Z})$, 
where 
$$
F^V(-)=F(-\times_{B,Z} V).
$$

\begin{lemma}
\label{lm:(Vtimes-)taulocA1inv}
For $V\in \Sm_{B,Z}$ the endofunctor $(-)^V$ on $\cPre^\fr(\Sm_{B,Z})$ preserves both $\tau$-local and 
$\A^{1}$-invariant objects.
\end{lemma}
\begin{proof}
The claim follows since the endofunctor $\Sm_{B,Z}\to \Sm_{B,Z};$ $U\mapsto U\times_{B,Z} V$ preserves 
$\tau$-coverings and morphisms of the form $X\times_{B,Z}\A^{1}\to X$.
\end{proof}

Next, 
we state and outline the proofs of the two main results in this section relating to diagram 
\eqref{eq:Smaf(Z)A1Smaf(SZ)nisSmaf(B)(U)}.
Auxiliary results are deferred to \Cref{sect:DeformationZSZ}.

\begin{theorem}
\label{th:LocA1tfnisstructuresShLocsquare}
\begin{itemize}
\item[(1)]
The functors $\tilde i^!$, $\tilde i_*$, $j^*$, $j_*$ preserve Nisnevich and $\tf$-local objects. 
There is a canonical equivalence
\[
i_*i^!(F)\simeq \cofib(F\to j_*j^*F)
\]
for the naturally induced functors
\[
\Sh_{\tf}^\fr(\Sm_{B,Z})
\rightleftarrows 
\Sh_\tf^\fr(\Sm_{B}) 
\rightleftarrows 
\Sh_\tf^\fr(\Sm_{B-Z}).
\]
\item[(2)]
For $V\in \Sm_B$, $V_{B-Z}=V\times_S(B-Z)$, 
and any framed $\tf$-sheaf $F$ on $\Sm_B$, $\Sm_{B,Z}$, or $\Sm_{B-Z}$, 
there are equivalence
\[\begin{array}{ll}
\tilde i^! F^V\simeq (\tilde i^!F)^{V^h_Z},& 
\tilde i_* F^{V^h_Z}\simeq (\tilde i_*F)^{V},\\ 
j^* F^{V_{B-Z}}\simeq (j^*F)^V,& 
j_* F^{V_{B-Z}}\simeq (j_*F)^V.
\end{array}
\]
In particular, 
$\tilde i^!$, $\tilde i_*$, $j^*$, $j_*$ commute with $\Omega^l_{\Gm}\Lrep_{\A^{1}}$ for all $l\geq 0$.
\item[(3)]
When restricted to $\A^1$-invariant framed presheaves on $\Sm_{B}$, 
there is an equivalence 
\[
\tilde i^! \Lrep_{\nis} 
\simeq 
\Lrep_{\nis} \tilde i^!
\colon
\cPre_{\A^{1}}(\Sm_B)\to \cPre(\Sm_{B,Z}).
\]
Similar equivalences hold for $\tilde i_*$, $j^*$, and $j_*$.
\end{itemize}

\end{theorem}
\begin{proof}
By \Cref{lm:wtaulocaobjectsShLocsquare},  
the functors $\tilde i^!$, $\tilde i_*$, $j^*$, $j_*$ preserve both Nisnevich and $\tf$-local objects.
For any $F\in \cPre_\tau^\fr(\Sm_B)$ and $X\in \Sm_B$,
there is a canonical fiber sequence
\[
\fib( F(X) \to F(X\times_S (B-Z)) ) \to F(X)\to F(X\times_S (B-Z))
\]
and an equnvalence
\[
F(X\times_S (B-Z))
\simeq 
j_* j^* F(X).
\]
Now for any $F\in \Sh_\tf^\fr(\Sm_{B})$,
there are canonical equivalence
\[
\tilde i_* \tilde i^! F(X)\simeq 
\fib( F(X^h_Z) \to F(X^h_Z\times_S (B-Z)) )\simeq  
\fib( F(X) \to F(X\times_S (B-Z))), 
\]
and thus, 
there is a natural fiber sequence
\[
\tilde i_* \tilde i^! F(X)\to F(X)\to j_* j^* F(X).
\]
The above proves Part (1).

Concerning Part (2), 
\Cref{lm:F(Vtimes-)ShLocsquare} implies the claims for $(-)^V$.
The claim for $\Omega_{\Gm}\Lrep_{\A^{1}}$ follows from the equvalence 
$$
\Lrep_{\A^{1}} F(X)\simeq  F(\Delta_{-}^\bullet\times X),
$$
where $\Delta_{-}^\bullet=\Delta_{B}^\bullet$, $\Delta_{B,Z}^\bullet$, $\Delta_{B-Z}^\bullet$,
and 
$$
\Omega_{\Gm} F(X)\simeq \fib ( F(\Gm\times X)\to F(\{1\}\times X) ).
$$

Part (3) follows by appealing to \Cref{lm:tiusjustiufjdsLnis} below.
\end{proof}

We say that a functor is exact with respect to \( \A^{1} \)-equivalences 
if it preserves \(\A^{1}\)-equivalences. 
Similarly, a functor is said to be conservative with respect to \( \A^{1} \)-equivalences 
if every morphism whose image is an \( \A^{1} \)-equivalence 
was already an \( \A^{1} \)-equivalence. 
Analogous terminology applies for Nisnevich equivalences and other types of equivalences.

\begin{theorem}
\label{th:LocA1tfnisstructuresDeformation}
\begin{itemize}
\item[(1)]
The functors $\overarrow{i}^*$ and $\overarrow{i}_*$, 
see \eqref{eq:Smaf(Z)A1Smaf(SZ)nisSmaf(B)(U)} and 
\eqref{eq:PreScZiSjB-Z},
preserve 
$\A^{1}$-invariant objects, 
$\tf$-local objects 
and 
Nisnevich local objects; 
moreover, 
there is a naturally induced equivalence
\begin{equation}
\label{lm:overlineistauA1}
\overarrow{i}_*^{\A^{1},\tf}\colon 
\cPre^\fr_{\A^{1},\tf}(\SmAff_{Z})
\simeq \cPre^\fr_{\A^{1},\tf}(\SmAff_{B,Z})
\colon \overarrow{i}^*_{\A^{1},\tf}
\end{equation}
\item[(2)] 
$\overarrow{i}^*$ is exact and conservative with respect to $\A^{1}$-equivalences.
\item[(3)]
$\overarrow{i}_*$ is exact and conservative with respect to Nisnevich local equivalences.
\item[(4)]
$\overarrow{i}^*$ is Nisnevich exact
and conservative with respect to Nisnevich local equivalences on the subcategory 
$\cPre^\fr_{\A^{1}}(\SmAff_{B,Z})$.
\item[(5)]
Both $\overline{i}_*$ and $\overarrow{i}^*$ commutes with the endofunctor $\Omega_{\Gm}\Lrep_{\A^{1}}$ on 
the $\infty$-categories $\cPre^\fr(\SmAff_{Z})$ and $\cPre^\fr(\SmAff_{B,Z})$.
\end{itemize}
\end{theorem}
\begin{proof}
The claim on $\A^{1}$-invariant objects in (1) follows by \Cref{prop:iAGm}(1).
Let $\tau$ denote either the $\tf$- or Nisnevich topology.
Next, 
we discuss $\tau$-local objects. 
The claim holds for $\overarrow{i}_*$ because the functor $\SmAff_{B,Z}\to \SmAff_Z;$ $X^h_Z\mapsto X_Z$
preserves $\tau$-coverings.
By definition, 
for any Nisnevich covering in $\SmAff_Z$ there is a refinement $\widetilde X\to X$ and 
an Nisnevich covering $\widetilde Y\to Y$ such that $\widetilde X=\widetilde Y_Z$, $X=Y_Z$. 
Thus $\overarrow{i}^*$ preserves $\tau$-local objects.
The equivalence \eqref{lm:overlineistauA1} follows because by \Cref{prop:iAGm}(1), 
the adjunction \eqref{eq:ovSBcZSZ} is an equivalence.

Since $i_*$ commutes with $\Lrep_{\A^{1}}$, 
it is exact, 
i.e., 
given an $\A^1$-equivalence $F\to G$,
$i_*(F)\to i_*(G)$ is an $\A^1$-equivalence.
Moreover, 
it induces equivalences on subcategories of $\A^{1}$-invariant objects and it is conservative with 
respect to $\A^{1}$-equivalences. 
This completes the proof of (2).
For (3) and (4) we use \Cref{prop:ovidsus(simeqNis)}. 
Part (5) is \Cref{prop:iAGm}(2).
\end{proof}

\subsection{Deformation to closed subschemes}
\label{sect:DeformationZSZ}
Let 
$\mathcal S_{*}$ denote $\Smat_{*}$ or $\SmAff_{*}$, where $*$ is $B$, $Z$, or $(B,Z)$.

\begin{lemma}
\label{lm:BcZDeltalifttrivfib}
For all $X,U\in\mathcal S_B$, 
there are equivalences
\[
\Lrep_{\A^{1}}h^\fr(X^h_Z)(U^h_Z)
\xrightarrow{\simeq}
\Lrep_{\A^{1}}h^\fr(X^h_Z)(U_Z)
\xrightarrow{\simeq}
\Lrep_{\A^{1}}h^\fr(X_Z)(U_Z) 
\]
in the $\infty$-category of pointed spaces $\cSpc_*$.
\end{lemma}
\begin{proof}
The second equivalence holds because $h^\fr(X^h_Z)(U_Z)\simeq h^\fr(X_Z)(U_Z)$. 
To prove the first one, 
we let $X$ be a smooth affine $B$-scheme and $l\in \mathbb Z_{\geq 0}$.
Note that
\Cref{lm:etneigh:productoverBcZsublm}
provide isomorphisms
\[\begin{array}{lcl}
(\partial \Delta^l_{B,Z}\coprod_{\partial \Delta^l_Z}\Delta^l_Z)\times_{B,Z} U^h_Z&\simeq& 
((\partial \Delta^l_{B}\coprod_{\partial \Delta^l_Z}\Delta^l_Z)\times_{B} U^h_Z
)^h_Z
\\
\Delta^l_{B,Z}\times_{B,Z} U^h_Z&\simeq& (\Delta^l_{B}\times_{B} U)^h_Z
\end{array}\]

Applying \Cref{lm:liftFr} to the henselian pair given by the 
closed immersions
\[
(\partial \Delta^l_{B,Z}\coprod_{\partial \Delta^l_Z}\Delta^l_Z)\times_{B,Z} U^h_Z\not\hookrightarrow \Delta^l_{B,Z}\times_{B,Z} U^h_Z
\]
we obtain the surjection 
\[
\Fr_+^{B,Z}(U^h_Z\times_{B,Z}\Delta^l_{B,Z},X^h_Z)
\to 
\Fr_+^Z(U_Z\times_Z\Delta^l_Z,X^h_Z)\times_{\Fr_+^{Z}(U_Z\times_{Z}\partial\Delta^l_{Z},X^h_Z)} 
\Fr_+^{B,Z}(U^h_Z\times_{B,Z}\partial\Delta^l_{B,Z},X^h_Z).
\]
The latter surjection implies that the morphism 
\begin{equation}
\label{eq:FrBZAtoFrZA}
\Fr_+^{B,Z}(U^h_Z\times_{B,Z}\Delta^\bullet_{B,Z},X^h_Z)
\to 
\Fr_+^Z(U_Z\times_Z\Delta^\bullet_Z,X_Z^h)
\end{equation}
has the right lifting property with respect to the class of morphisms $\partial \Delta^l\to \Delta^l$.
Here we use that $\Fr_+^Z(-,X_Z^h)$ and $\Fr_+^{B,Z}(-,X_Z^h)$ satisfy closed gluing in the sense of 
\cite[\S A.2]{five-authors}. 
Thus \eqref{eq:FrBZAtoFrZA} is a trivial fibration.
Applying \Cref{lm:liftFr} and iterating the functor $\Lrep_{\A^{1}}^{[1]}$,
we obtain the trivial fibration
\[
\Lrep^{[l]}_{\A^{1}}F^h_Z(U^h_Z)
\to 
\Lrep^{[l]}_{\A^{1}}F_Z(U_Z)
\]
for $F^h_Z(-) = \Fr_+^{B,Z}(-,X^h_Z)$ and $F_Z(-) = \Fr_+^Z(-,X_Z^h)$.
Hence there is an equivalence 
\[
\Lrep_{\A^{1}}F^h_Z(U^h_Z)
\stackrel{\simeq}{\to} 
\Lrep_{\A^{1}}F_Z(U_Z).
\]
This finishes the proof on account of the $\A^1$-equivalence 
\begin{equation}
\label{A1equivalence}
\Fr_+^{B,Z}(-,X^h_Z)
\xrightarrow{\sim \A^1} 
\Corr^{\fr}_{B,Z}(-,X^h_Z).
\end{equation}
We note that \eqref{A1equivalence} follows by applying the arguments in 
\cite[Corollary 2.2.20, Corollary 2.3.25]{five-authors}
to $X^h_Z$ and $\Smat_{B,Z}$.
\end{proof}

\begin{lemma}\label{lm:iusA1invariant}
The functor $\overline i^*\colon \cPre^\fr(\mathcal S_{B,Z})\to \cPre^\fr(\mathcal S_Z)$
preserves $\A^{1}$-invariant objects.
\end{lemma}
\begin{proof}
Consider the commutative diagram:
\[\xymatrix{
\cPre^\fr(\mathcal S_{B,Z})\ar[d]\ar[r]^{\overline i^{*}} & \cPre^\fr(\mathcal S_Z)\ar[d] \\
\cPre(\Fr_+(\mathcal S_{B,Z}))\ar[d]\ar[r]^{\overline i^{*}_{\Fr_{+}}} & \cPre(\Fr_+(\mathcal S_Z))\ar[d]\\
\cPre(\mathcal S_{B,Z})\ar[r] & \cPre(\mathcal S_Z)
}\]
By definition of $\A^1$-invariance, 
the vertical maps preserve and detect $\A^1$-invariant objects.
The claim follows because $\overline i^*_{\Fr_+}$ preserves $\A^1$-invariant objects by \Cref{prop:LrepA1uuscommute}.
\end{proof}

\begin{lemma}
\label{lm:LrepA1ovupperstar}
The functor $\overarrow{i}^*\colon \cPre^\fr(\mathcal S_{B,Z})\to \cPre^\fr(\mathcal S_Z)$ 
admits the natural equivalence
\begin{equation}\label{eq:LA1ovovLA1}\Lrep_{\A^{1}}\overarrow{i}^*\simeq \overarrow{i}^*\Lrep_{\A^{1}}.\end{equation}
\end{lemma}
\begin{proof}
Let $(X\times\A^1)^h_Z\to X^h_Z$ be 
the naturally induced map in $\SmatBcZ$. 
Under the Yoneda embedding, 
it induces a morphism between representable presheaves.
The functor $\overarrow{i}^*$ sends the latter to the morphism induced by $(X\times\A^1)_Z\to X_Z$.
It follows that $\overarrow{i}^*$ preserves $\A^1$-equivalences.
\Cref{lm:iusA1invariant} shows that $\overarrow{i}^*$ preserves $\A^1$-invariant objects.
Hence \eqref{eq:LA1ovovLA1} follows.
\end{proof}

\begin{lemma}
\label{lm:ovA1}
There is a naturally induced adjunction 
\begin{equation}
\label{eq:ovSBcZSZ}
\overarrow{i}^*\colon 
\cPrefr_{\A^{1}}(\mathcal S_{B,Z})
\rightleftarrows 
\cPrefr_{\A^{1}}(\mathcal S_Z)
\colon 
\overarrow{i}_*.
\end{equation}
\end{lemma}
\begin{proof}
Since $\overarrow{i}^*$ and $\overarrow{i}_*$ preserve $\A^{1}$-invariant objects by 
\Cref{lm:iusA1invariant} and \Cref{lm:LrepA1ovupperstar}, 
respectively,
they restrict to functors on the subcategories of $\A^1$-invariant objects.
Hence \eqref{eq:ovSBcZSZ} follows from the adjunction
$\overarrow{i}^*\colon 
\cPrefr(\mathcal S_{B,Z})
\rightleftarrows 
\cPrefr(\mathcal S_Z)
\colon 
\overarrow{i}_*.
$
\end{proof}

\begin{lemma}\label{lm:generatorsLA1representable}
The category $\cPrefr_{\A^{1}}(\mathcal S_Z)$ is generated via colimits by the objects
$
\Lrep_{\A^{1}}h^\fr(Y), 
$
where $h^\fr(Y)\in \cPrefr(\mathcal S_Z)$ is the presheaf represented by $Y\in \mathcal S_Z$.
Likewise, 
$\cPrefr_{\A^{1}}(\mathcal S_{B,Z})$ is generated via colimits by the objects 
$
\Lrep_{\A^{1}}h^\fr(X^h_Z), 
$  
where $h^\fr(X^h_Z)\in \cPrefr(\mathcal S_{B,Z})$ is the presheaf represented by $X^h_Z\in \mathcal S_{B,Z}$. 
\end{lemma}
\begin{proof}
Recall the localization functor $L_{\A^{1}}\colon \cPrefr(\mathcal S_{Z})\to\cPrefr_{\A^{1}}(\mathcal S_Z)$.
The claim follows because $\cPrefr(\mathcal S_Z)$ is generated via colimits by representable functors, 
and $L_{\A^{1}}$ preserves colimits.
The proof for $\cPrefr(\mathcal S_{B,Z})$ is similar.
\end{proof}

\begin{lemma}
\label{lm:ovcolimits}
The functors $\overline{i}_*$ and $\overline{i}^*$ preserve colimits.
\end{lemma}
\begin{proof}
For $X\in \mathcal S_B$ and any diagram of presheaves $F_\alpha$ in $\cPrefr(\mathcal S_Z)$,
there are 
equivalences
\[
\overline{i}_*(\varinjlim_{\alpha}F_\alpha)(X^h_Z)
\simeq 
(\varinjlim_{\alpha}F_\alpha)(X_Z) 
\simeq 
\varinjlim_{\alpha}(F_\alpha(X_Z)) 
\simeq 
\varinjlim_{\alpha}(\overline{i}_*F_\alpha(X^h_Z)),
\]
where the outer equivalences hold by the definition of $\overline{i}_*$,
and the middle one is because colimits in the category of presheaves are formed schemewise.
The claim for $\overline{i}^*$ follows because it is a left adjoint functor.
\end{proof}

\begin{lemma}\label{lm:unovusds}
The unit of the adjunction \eqref{eq:ovSBcZSZ} is an equivalence
\begin{equation}
\label{eq:unovusds}
\mathrm{Id}_{\cPrefrA(\mathcal S_{B,Z})}
\xrightarrow{\simeq}
\overline{i}_*\overline{i}^*.
\end{equation}
\end{lemma}
\begin{proof}
For all $X,U\in\mathcal S_B$, there are equivalences
\[\begin{array}{lcl}
\overline{i}_*\overline{i}^*\Lrep_{\A^{1}}(h^\fr(X^h_Z))(U^h_Z) 
& \simeq & \Lrep_{\A^{1}}\overline{i}_*\overline{i}^*(h^\fr(X^h_Z))(U^h_Z) \\
& \simeq & \Lrep_{\A^{1}}\overline{i}_*(h^\fr(X_Z))(U^h_Z) \\
& \simeq & \Lrep_{\A^{1}}h^\fr(X_Z)(U_Z) \\
& \simeq & \Lrep_{\A^{1}}h^\fr(X^h_Z)(U^h_Z).
\end{array}\]
In the 
fourth 
equivalence we appeal to \Cref{lm:BcZDeltalifttrivfib}.
Thus $\overline{i}_*\overline{i}^*\Lrep_{\A^{1}}(h^\fr(X^h_Z))\simeq \Lrep_{\A^{1}}h^\fr(X^h_Z)$, 
and the claim follows from \Cref{lm:generatorsLA1representable} since both 
$\overline{i}_*$ and $\overline{i}^*$ preserve colimits by \Cref{lm:ovcolimits}.
\end{proof}

\begin{lemma}\label{lm:ovA1usesssurj}
The functor $\overarrow{i}^*$ in \eqref{eq:ovSBcZSZ} is essentially surjective.
\end{lemma}
\begin{proof}
\Cref{lm:LiftSmat} shows that for any $Y\in \mathcal S_Z$ there exists $X\in \mathcal S_B$ such that
\[
\Lrep_{\A^{1}}h^\fr(Y) 
\simeq 
\Lrep_{\A^{1}}\overarrow{i}^*(h^\fr(X^h_Z))
\simeq 
\overarrow{i}^*(\Lrep_{\A^{1}}h^\fr(X^h_Z)).
\]
Since $\overarrow{i}^*$ preserves colimits, 
\Cref{lm:generatorsLA1representable} implies that $\overarrow{i}^*$ in \eqref{eq:ovSBcZSZ} is essentially surjective.
\end{proof}

\begin{prop}\label{prop:iAGm}
The following hold for the adjunction 
\[
\overarrow{i}^*
\colon 
\cPre^\fr(\mathcal S_{B,Z})
\rightleftarrows 
\cPre^\fr(\mathcal S_{Z})
\colon 
\overarrow{i}_*.
\]
\begin{itemize}
\item[(1)]
$\overarrow{i}^*$ and $\overarrow{i}_*$ preserve $\A^{1}$-invariant objects, 
and the adjunction \eqref{eq:ovSBcZSZ} is an equivalence.
\item[(2)]
$\overarrow{i}^*$ and $\overarrow{i}_*$ commute with $\Lrep_{\A^{1}}$ and 
$\Omega^j_{\Gm}\Lrep_{\A^{1}}$, for $j\geq 0 $,
\end{itemize}
\end{prop}
\begin{proof}
We note that $\overarrow{i}_*$ commutes $\Omega^j_{\Gm}\Lrep_{\A^{1}}$, 
$j\geq 0$,
because $(\Gm\times\Delta_B^n)^h_Z\times_B Z=(\Gm\times\Delta_B^n)_Z$.
Moreover, 
$\overarrow{i}^*$ and $\overarrow{i}_*$ preserve $\A^{1}$-invariant objects by 
\Cref{lm:LrepA1ovupperstar,lm:iusA1invariant}, 
and induce the adjunction \eqref{eq:ovSBcZSZ} by \Cref{lm:ovA1}.
By \Cref{lm:unovusds} the unit of the adjunction \eqref{eq:ovSBcZSZ} is an equivalence. 
By \Cref{lm:ovA1usesssurj} the functor $\overarrow{i}^*$ in \eqref{eq:ovSBcZSZ} is essentially surjective.
Hence the adjunction \eqref{eq:ovSBcZSZ} is an equivalence.

To complete the proof of (2), 
we note that $\overarrow{i}_*$ commutes with $\Omega_{\Gm}$. 
Thus its inverse $\overarrow{i}^*$ in \eqref{eq:ovSBcZSZ} commutes with $\Omega_{\Gm}$ too.
It follows that $\overarrow{i}^*$ commutes with $\Omega_{\Gm}\Lrep_{\A^{1}}$.
\end{proof}

Recall that $\mathcal S_{*}$ denotes $\Smat_{*}$ or $\SmAff_{*}$.
\begin{prop}
\label{prop:ovidsus(simeqNis)}
The functor $\overarrow{i}_*\colon \cPre^\fr(\mathcal S_Z)\to \cPre^\fr(\mathcal S_{B,Z})$
preserves and detects Nisnevich local equivalences.
Furthermore, 
the same holds for 
$\overarrow{i}^*\colon \cPre^\fr(\mathcal S_{B,Z})\to \cPre^\fr(\mathcal S_{Z})$
on the subcategory of $\A^{1}$-invariant objects. 
\end{prop}
\begin{proof}
Suppose that $F\rightarrow G$ is a Nisnevich local equivalence in $\cPre^\fr(\mathcal S_Z)$. 
If $X\in \Sm_{B}$, 
the morphism $\overarrow{i}_*F(X^h_x)\to \overarrow{i}_*G(X^h_x)$ 
or equivalently $F((X^h_x)_Z)\to G((X^h_x)_Z)$ is then an equivalence since $(X^h_x)_Z\cong (X_Z)^h_x$ 
is an essentially smooth local henselian scheme.

Assume that $F\to G$ induces a Nisnevich local equivalence $\overarrow{i}_*F\simeq_{\nis} \overarrow{i}_*G$ and 
let $X^h_x$ be an essentially smooth local henselian scheme for $X\in \mathcal S_{Z}$.
\Cref{lm:LiftSmat} implies there exists a scheme $\widetilde X\in \mathcal S_{B}$ and a morphism 
$x\to \widetilde X$ such that $(\widetilde X_{(x)})_Z\cong X_{(x)}$.
Hence $(\widetilde X^h_x)_Z\cong X^h_x$, 
and thus we can identify $F(X^h_x)\to G(X^h_x)$ with 
$\overarrow{i}_*F(\widetilde X^h_x)\simeq \overarrow{i}_*G(\widetilde X^h_x)$.
This proves the claim for $\overarrow{i}_*$.

Since $\overarrow{i}\colon \calS_{B,Z}\to \calS_{Z}$ preserves Nisnevich squares, 
$\overarrow{i}_*$ preserves Nisnevich local objects and $\overarrow{i}^*$ preserves Nisnevich local equivalences.
\Cref{prop:iAGm}(1) implies our final claim since $\overarrow{i}_*$ preserves Nisnevich local equivalences.
\end{proof}

\begin{lemma}
There are canonical equivalences
\[\cPre_{\A^{1}}(\SmAff_{B,Z})\simeq \cPre_{\A^{1}}(\Smat_{B,Z}), \; \cPre^\fr_{\A^{1}}(\SmAff_{B,Z})\simeq \cPre^\fr_{\A^{1}}(\Smat_{B,Z}).\]
\end{lemma}
\begin{proof}
For any $X^h_Z\in \SmAff_{B,Z}$ there exists an $\A^{1}$-equivalent scheme $(X^\prime)^h_Z\in \Smat_{B,Z}$.
Namely, 
$X^\prime$ is the total space of the vector bundle $N$ over $X$ such that $N\oplus T_X$ is trivial; 
in this case, 
$T_{X^\prime}$ is the inverse image of $N\oplus T_X$ along $X^\prime\to X$.
Hence the functors
\begin{equation}\label{eq:HSmAfftoHSmat}\cPre_{\A^{1}}(\SmAff_{B,Z})\to \cPre_{\A^{1}}(\Smat_{B,Z}), \cPre^\fr_{\A^{1}}(\SmAff_{B,Z})\to \cPre^\fr_{\A^{1}}(\Smat_{B,Z})\end{equation}
are conservative.
Since $\Smat_{B,Z}\to \SmAff_{B,Z}$ is fully faithful,
the composite functors
\[
\begin{array}{lclcl}
\cPre_{\A^{1}}(\Smat_{B,Z})&\to& \cPre_{\A^{1}}(\SmAff_{B,Z})&\to& \cPre_{\A^{1}}(\Smat_{B,Z}),\\ 
\cPre^\fr_{\A^{1}}(\Smat_{B,Z})&\to& \cPre^\fr_{\A^{1}}(\SmAff_{B,Z})&\to& \cPre^\fr_{\A^{1}}(\Smat_{B,Z}),
\end{array}
\]
are equivalent to the identity.
Therefore, 
since \eqref{eq:HSmAfftoHSmat} is conservative, 
the composite functors
$\cPre_{\A^{1}}(\SmAff_{B,Z})\to \cPre_{\A^{1}}(\Smat_{B,Z})\to \cPre_{\A^{1}}(\SmAff_{B,Z})$,
$\cPre^\fr_{\A^{1}}(\SmAff_{B,Z})\to \cPre^\fr_{\A^{1}}(\Smat_{B,Z})\to \cPre^\fr_{\A^{1}}(\SmAff_{B,Z})$
are equivalent to the identity too.
\end{proof}

\subsection{Localization theorem for $\tf$-sheaves}

For any closed immersion $Z\not\hookrightarrow B$ and a topology $\tau$ on the category $\Sm_B$, 
we defined the topology $\tau$ on $\Sm_{B,Z}$ as the weakest topology on $\Sm_{B,Z}$ such that the functor 
$\Sm_{B}\to \Sm_{B,Z}$; $X\mapsto X^h_Z$, 
is continuous.
Throughout this section $\tau$ is shorthand for the $\tf$-topology or the Nisnevich topology. 

\begin{lemma}
\label{lm:wtaulocaobjectsShLocsquare}
The functors $\tilde i^!$, $\tilde i_*$, $j^*$, $j_*$
on $\cPre^\fr(\Sm_{B})$, $\cPre^\fr(\Sm_{B-Z})$, and $\cPre^\fr(\Sm_{B,Z})$ in \eqref{eq:PreScZiSjB-Z}
preserve Nisnevich and $\tf$-local objects.
\end{lemma}
\begin{proof}
The claims for $\tilde i_*$, $j^*$ and $j_*$ follow since the functors 
\begin{equation}\label{eq:XhZX(B-Z)X}\begin{array}{ll}
\Sm_{B}\to \Sm_{B,Z}; &X\mapsto X^h_Z,\\
\Sm_{B}\to \Sm_{B-Z}; &X\mapsto X_{B-Z},\\
\Sm_{B-Z}\to \Sm_{B}; &X\mapsto X,\\
\end{array}\end{equation}
preserve Nisnevich and $\tf$-coverings.

By definition, 
the $\tf$-topology on $\Sm_{B,Z}$ is the weakest topology such that the functor $\Sm_B\to \Sm_{B,Z}$ is continuous,
then any $\tf$-covering of a scheme $X^h_Z\in\Sm_{B,Z}$ admits a refinement $\widetilde X^h_Z\to X^h_Z$ induced by 
some $\tf$-covering $\widetilde X\to X$.
Moreover, 
since the base change functor $\Sm_{B}\to\Sm_{B-Z}$ is continuous,
$\widetilde X_{B-Z}\to X_{B-Z}$ is a $\tf$-covering too.
Thus, 
for any $F\in \Sh^\fr_\tf(\Sm_{B})$, 
there are equivalences
\begin{align*}
\check{C}_{\widetilde X^h_Z}(X^h_Z,\tilde i^! F)&\simeq
\fib( \check{C}_{\widetilde X^h_Z}(X^h_Z,F)\to \check{C}_{\widetilde X^h_Z-X_Z}(X^h_Z-X_Z,F) )\\& \simeq 
\fib( \check{C}_{\widetilde X}(X,F)\to \check{C}_{\widetilde X-X_Z}(X-X_Z,F) )\\& \simeq 
\fib( F(X)\to F(X-X_Z) )\\& \simeq 
\fib( F(X^h_Z)\to F(X^h_Z-X_Z) )\\& \simeq 
\tilde i^! F(X^h_Z).
\end{align*}
This concludes the proof for the $\tf$-topology.
Since the Nisnevich topology is stronger then the $\tf$-topology,
a similar argument applies to the former.
\end{proof}

\begin{lemma}\label{lm:tiusjustiufjdsLnis}
For all $E\in \cPre^\fr_{\A^{1}}(\Sm_{B,Z})$, $F\in \cPre^\fr_{\A^{1}}(\Sm_{B})$, 
and $G\in \cPre^\fr_{\A^{1}}(\Sm_{B-Z})$ there are naturally induced equivalences
\begin{equation}
\label{eq:B,Z_B_Z_Lnis}
\begin{array}{lcl}
\tilde i_* \Lrep_{\nis} E\simeq \Lrep_{\nis} \tilde i_* E,
&
j^* \Lrep_{\nis} F\simeq \Lrep_{\nis} j^* F,
\\
\tilde i^! \Lrep_{\nis} F\simeq \Lrep_{\nis} \tilde i^! F,
&
j_* \Lrep_{\nis} G\simeq \Lrep_{\nis} j_* G.
\end{array}
\end{equation}
\end{lemma}
\begin{proof}
Since by \Cref{lm:wtaulocaobjectsShLocsquare}, both $\tilde i_*$ and $j^*$ preserve Nisnevich local objects;
thus to prove the first row equivalences in \eqref{eq:B,Z_B_Z_Lnis} it remains to show preservation of 
Nisnevich local equivalences.
This follows because the first and last functors in \eqref{eq:XhZX(B-Z)X} preserve essentially 
smooth local henselian schemes.

Since
$\tilde i^!$ and $j_*$ preserve Nisnevich local objects by \Cref{lm:wtaulocaobjectsShLocsquare}, 
for any $F$ and $G$ as above 
the presheaves $\tilde i^! \Lrep_\nis F$ and $j_* \Lrep_\nis G$ are Nisnevich local.
We are reduced to prove the morphisms 
\begin{equation}
\label{eq:tsLnis}
\tilde i^! \Lrep_\nis F\to \Lrep_\nis \tilde i^! \Lrep_\nis F
\leftarrow 
\Lrep_\nis \tilde i^! F
\end{equation} 
and
\begin{equation}
\label{eq:LNisG}
j_* \Lrep_\nis G\to \Lrep_\nis j_* \Lrep_\nis G
\leftarrow 
\Lrep_\nis j_* G
\end{equation}
induce equivalence of the stalks on the Nisnevich topology points.
The claim for \eqref{eq:tsLnis} follows since, 
for all $X\in \Sm_{B}$, $x\in X_Z$,
there are equivalences
\begin{gather*}
\tilde i^! F(X^h_x)\simeq
\fib ( F(X^h_x)\to F((X^h_x)_{B-Z}) ),\\
\tilde i^! \Lrep_\nis F(X^h_x)\simeq
\fib ( \Lrep_\nis F(X^h_x)\to \Lrep_\nis F((X^h_x)_{B-Z}) )\stackrel{Th \ref{th:Hnis(Xhxeta)dimB1}}{\simeq}
\fib ( F(X^h_x)\to F((X^h_x)_{B-Z}) ).
\end{gather*}
The right side equivalence in the second row holds because by \Cref{th:Hnis(Xhxeta)dimB1} $\Lrep_\nis F((X^h_x)_{B-Z})\simeq F((X^h_x)_{B-Z})$ for any $F\in \cPre^\fr_{\Sigma,\A^1}(\Sm_B)$ and consequently for any $F\in \cPre^\fr_{\A^1}(\Sm_B)$ since $(X^h_x)_{B-Z}$ is irreducible. 
The claim for \eqref{eq:LNisG} follows because, 
for any $X\in \Sm_{B}$, $x\in X_Z$,
\begin{gather*}
\Lrep_\nis j_* G(X^h_x)
\simeq
j_* G(X^h_x)\simeq
G((X^h_x)_{B-Z}),\\
j_* \Lrep_\nis G(X^h_x)\simeq
\Lrep_\nis G((X^h_x)_{B-Z}) \stackrel{Th \ref{th:Hnis(Xhxeta)dimB1}}{\simeq}
G((X^h_x)_{B-Z}), 
\end{gather*},
where the second row uses \Cref{th:Hnis(Xhxeta)dimB1} as explained above,
and when $x\in X_{B-Z}$, we have 
\[
j_* F(X^h_x)\simeq F(X^h_x), 
\,\, 
j_* \Lrep_\nis F(X^h_x)\simeq \Lrep_\nis F(X^h_x)\simeq F(X^h_x).
\]
\end{proof}

\begin{lemma}
\label{lm:F(Vtimes-)ShLocsquare}
When restricted to subcategories of $\tf$-local objects, 
the functors $\tilde i^!$, $\tilde i_*$, $j^*$, $j_*$ in \eqref{eq:PreScZiSjB-Z} commute with $(-)^V$, 
for all $V\in \Sm_{B}$, $\Sm_{B,Z}$, $\Sm_{B-Z}$.
\end{lemma}
\begin{proof}
The claims for $\tilde i_*$, $j^*$, $j_*$ follow because
the functors \eqref{eq:XhZX(B-Z)X} commute with the functor $X\mapsto X\times V$.
The case of $\tilde i^!$ follows because
\begin{align*}
\tilde i^!F( (X\times V)^h_Z )&\simeq
\fib( F( (X\times V)^h_Z )\to F( (X\times V)^h_Z-(X\times V)_Z ) )\\ &\simeq
\fib( F( X\times V )\to F( (X\times V)-(X\times V)_Z ) )\\& \simeq
\tilde i^!F^V( X^h_Z ).
\end{align*}
\end{proof}

\section{Stable motivic localization for framed trivial fiber sheaves} 
\label{sect:stablemotiviclocalization}

The main results of this section are concerned with explicit formulas for the motivic infinite loop space 
\begin{equation}
\label{equation:Nmils}
\OmegaSigma^{\tf,\fr}_{\A^{1},\nis}
\simeq
\Lrep_\nis\colim\Omega^l_{\Gm}\Lrep^{\fr,\tf}_{\A^{1}}(\Sigma^l_{\Gm}F^{\gp}),
\end{equation}
and the stable motivic localization 
\begin{equation}
\label{equation:smloc}
\catLrep^{\tf,\fr}_\nissmot(G)
\simeq  
\catLrep^\fr_\nis \catLrep^\fr_{\Gm}\catLrep^{\fr,\tf}_{\A^{1}}G^\gp.
\end{equation}
The expression \eqref{equation:Nmils} is taken in $\cPrefrtf(B)$, 
while \eqref{equation:smloc} is a natural equivalence in $\PSpt_\tf^{s,t,\fr}(B)$.
These results are recorded in \Cref{prop:OmegaSigmafrtf_SGA1} and \Cref{th:LrepsmotfrPSpttf}, 
respectively.
When $B$ is a field, 
\eqref{equation:Nmils} is equivalent to the strict homotopy invariance theorem together with property (3) 
below for the $\Gm$-loop functor $\Omega_{\Gm}$ (a weaker statement than the cancellation theorem). 

To prove \eqref{equation:Nmils} and \eqref{equation:smloc}, 
we reformulate the strict homotopy invariance theorem and the said property of $\Omega_{\Gm}$. 
In the process,
we upgrade to base schemes and $\infty$-categories the formal viewpoint for fields and 
triangulated categories developed in \cite{DruDMGWeff}.
By \Cref{lm:LreptLprepcpreserveexact}, 
the $\tf$-Nisnevich strict homotopy invariance theorem over $B$,
see \Cref{def:SHIthnistf}, 
can be formulated in the following equivalent ways:
\begin{itemize}
\item[(1)] The endofunctor $\Lrep_\nis$ on $\cSptsfrtf(B)$ preserves $\A^{1}$-invariant spectra, and 
\item[(2)] The endofunctor $\Lrep_{\A^{1}}$ on $\cSptsfrtf(B)$ preserves Nisnevich local equivalences.
\end{itemize}
By combining (2) with 
\begin{itemize}
\item[(3)] The endofunctor $\Omega_{\Gm}$ on $\cSptsfrAtf(B)$ preserves Nisnevich local equivalences, 
\end{itemize}
we obtain
\begin{itemize}
\item[(4)] The endofunctor $\Omega_{\Gm}\Lrep_{\A^{1}}$ on $\cSptsfrtf(B)$ preserves Nisnevich local equivalences.
\end{itemize}
Instead of proving (2) and (3) separately, 
we prove (4) directly. 
To that end, 
we apply the results of \Cref{section:SplittingdiagramLocalization} and study the functor 
$\Omega^\infty_{\Gm}\colon \cSptstfrtf(B)\to \cSptsfrtf(B)$ through the commutative diagram
\begin{equation*}
\label{eq:catPre(B)Shvnis(B)A1Gm}\xymatrix{
\cSptsfrtf(B)\ar[r]\ar[d]& \cSptsfrtf(B)[(\Omega_{\Gm}\Lrep_{\A^{1}})^{-1}]\ar[r]^-{\simeq}\ar[d]& 
\cSptsfrAtf(B)[\Gm^{\wedge -1}]\ar[d] \\
\cSptsfrnis(B)\ar[r]& \cSptsfrnis(B)[(\Omega_{\Gm}\Lrep_{\A^{1}}^{-1})]\ar[r]^-{\simeq} & 
\cSptsfrnis(B)[\Gm^{\wedge -1}]\ar[r]^-{\simeq}
& \catSH^{\fr}(B).
}
\end{equation*}
Here for a presentably symmetric monoidal $\infty$-category $\mathcal C$ and an object $X\in\mathcal C$, 
we denote by $\mathcal C[X^{-1}]$ the initial presentably symmetric monoidal $\infty$-category under 
$\mathcal C$ in which $X$ becomes invertible \cite[\S 2.1]{RobalnoncomKtheorybridge}.
More generally, 
we adopt the same notation for the stabilization of $\mathcal C$ with respect to an adjoint functor pair 
as in \cite[\S 2.2]{RobalnoncomKtheorybridge}.

\subsection{Formal strict homotopy invariance and motivic infinite loop spaces}
\label{sect:reformlemma}
Throughout this section, 
we fix a presentable $\infty$-category $\mathcal C$ with reflective subcategories 
$\mathcal C_{\alpha}$, 
$\mathcal C_{\beta}$.
Let \begin{equation}\label{eq:LCCEalpha}
L_{\alpha}\colon \mathcal C\rightleftarrows \mathcal  C_{\alpha}\colon E_\alpha, \quad L_\alpha\dashv E_\alpha,
\end{equation} 
be the adjunction of 
the localization functor $L_\alpha$ and the canonical embedding $E_{\alpha}$. 
$E_{\alpha}\colon \mathcal C_{\alpha}\to \mathcal C$
An $\alpha$-equivalence in $\mathcal C$ is a morphism that maps to an equivalence under $L_{\alpha}$. 
In related notation,
let $\Lrep_{\alpha}$ denote the corresponding localization endofunctor on 
$\mathcal C$ taking values in $\mathcal C_{\alpha}$, respectively.
That is, 
$\Lrep_{\alpha}$ is the composite 
$E_{\alpha}L_{\alpha}\colon \mathcal C\to \mathcal C_{\alpha}\to \mathcal C$.
We refer to \cite[Subsections  5.2.7, 5.5.4]{LurieHTT} for background on localizations of $\infty$-categories.
Furthermore, 
we assume that the intersection $\mathcal C_{\beta\alpha}=\mathcal C_\alpha\cap\mathcal C_\beta$ 
is a reflective subcategory of $\mathcal C$ and denote 
the corresponding 
endofunctor on $\mathcal C$ by $\Lrep_{\beta\alpha}$.
This extra assumption holds if $C_{\alpha}$ and $\mathcal C_{\beta}$ are accessible localizations of $\mathcal C$ 
\cite[Section 5.4]{LurieHTT}.

The next result is a formal version of strict homotopy invariance.
\begin{lemma}
\label{lm:LreptLprepcpreserveexact}
The following conditions are equivalent:
\begin{itemize}
\item[(1)] 
The canonical morphism $\Lrep_{\beta}\Lrep_{\alpha}\to\Lrep_{\beta\alpha}$ is an equivalence.
\item[(2)] $\Lrep_{\beta}$ preserves $\mathcal C_{\alpha}$.
\item[(3)] $\Lrep_{\alpha}$ preserves $\beta$-equivalences.
\end{itemize}
\end{lemma}
\begin{proof}
Let us show that (1) implies (2).
Since $\Lrep_{\beta\alpha}$ takes values in $\mathcal C_{\alpha}$, 
it preserves $\mathcal C_{\alpha}$. 
Then by assumption, 
$\Lrep_{\beta}\Lrep_{\alpha}$ preserves $\mathcal C_{\alpha}$.
Conversely, 
since $\Lrep_{\beta}$ preserves $\mathcal C_{\alpha}$ and $\Lrep_{\alpha}$ lands in $\mathcal C_{\alpha}$, 
it follows that $\Lrep_{\beta}\Lrep_{\alpha}$ lands in $\mathcal C_{\alpha}$.
Therefore, 
$\Lrep_{\beta}\Lrep_{\alpha}$ takes values in $\mathcal C_{\beta\alpha}$,
and there is a canonical equivalence
\[
\Lrep_{\beta}\Lrep_{\alpha}
\stackrel{\simeq}{\to} 
\Lrep_{\beta\alpha}\Lrep_{\beta}\Lrep_{\alpha}.
\] 
Applying $\Lrep_{\beta\alpha}$ to the composite 
\[
\Id
\to 
\Lrep_{\alpha} 
\to 
\Lrep_{\beta} 
\Lrep_{\alpha}
\]
yields an equivalence,
since $\Lrep_{\beta\alpha}$ is isomorphic to 
$\Lrep_{\beta\alpha}\Lrep_{\beta}$ and $\Lrep_{\beta\alpha}\Lrep_{\alpha}$.
Thus, 
we deduce the canonical equivalence in (1) by using the commutative diagram:
\[
\xymatrix{
\Id\ar[r]\ar[d] & \Lrep_{\beta}\Lrep_{\alpha} \ar[d]^-{\simeq}\\ 
\Lrep_{\beta\alpha} \ar[r]^-{\simeq}& 
\Lrep_{\alpha\beta} \Lrep_{\beta}\Lrep_{\alpha} } 
\]

To show that (1) implies (3), 
assume $F\to G$ is a $\beta$-equivalence so that $\Lrep_{\beta}F\simeq \Lrep_{\beta}G$ and 
$\Lrep_{\beta\alpha}F\simeq \Lrep_{\beta\alpha}G$.
Hence, 
by assumption, 
$\Lrep_{\beta} (\Lrep_{\alpha} F)\simeq \Lrep_{\beta} (\Lrep_{\alpha} G)$ and 
$\Lrep_{\alpha} F\to \Lrep_{\alpha} G$ is a $\beta$-equivalence.
Conversely, 
since the functor $\Lrep_\alpha$ preserves $\beta$-equivalences,
the functor $\Lrep_\beta\Lrep_\alpha$ sends $\beta$-equivalences to equivalences.
At the same time, 
$\Lrep_\beta\Lrep_\alpha$ sends $\alpha$-equivalences to equivalences. 
Hence since the $\beta\alpha$-equivalences are generated by $\alpha$-equivalences and $\beta$-equivalences, 
$\Lrep_\beta\Lrep_\alpha$ sends $\beta\alpha$-equivalences to equivalences.
In particular, 
it takes 
\begin{equation}
\label{eq:LbetaLalpha} 
\Id\to \Lrep_{\beta} \Lrep_{\alpha}
\colon 
\mathcal C\to \mathcal C
\end{equation}
to an equivalence.
Let $\mathcal C^\prime$ denote the full subcategory of $\mathcal C$ spanned by objects $G$ such that 
the canonical morphism $G\to \Lrep_\beta\Lrep_\alpha(G)$ is an equivalence.
Hence for any $F\in \mathcal C$ and $G\in\mathcal C^\prime$ the morphism
\[
\Map_{\mathcal C}(F,G)\to 
\Map_{\mathcal C}(\Lrep_{\beta}\Lrep_{\alpha} F,\Lrep_{\beta}\Lrep_{\alpha} G)
\simeq 
\Map_{\mathcal C}(\Lrep_{\beta}\Lrep_{\alpha} F,G)
\]
is an inverse to the canonical morphism 
\begin{equation}
\label{eq:compo-refl} 
\Map_{\mathcal C}(\Lrep_{\beta}\Lrep_{\alpha} F,G)
\to 
\Map_{\mathcal C}(F,G).
\end{equation}
Thus, 
since \eqref{eq:compo-refl} is an equivalence, 
it follows that $\Lrep_{\beta}\Lrep_{\alpha}$ induces an adjunction $\mathcal C\rightleftarrows \mathcal C^\prime$. 
Hence $\mathcal C^\prime$ is a reflective subcategory of $\mathcal C$.
Moreover, 
since $\Lrep_{\beta}\Lrep_{\alpha}$ takes $\alpha$- and $\beta$-equivalences to equivalences 
and \eqref{eq:compo-refl} is an equivalence, 
we deduce that $\Lrep_{\beta}\Lrep_{\alpha}$ takes values in the subcategory of $\alpha$- and 
$\beta$-local objects.
It follows that $\mathcal C^\prime\subset \mathcal C_{\beta\alpha}$.
Finally, 
since the canonical morphism \eqref{eq:LbetaLalpha} is an $\beta\alpha$-equivalence, 
we conclude that $\Lrep_{\beta}\Lrep_{\alpha} \simeq \Lrep_{\beta\alpha}$ and 
$\mathcal C^\prime=\mathcal C_{\beta\alpha}$.
\end{proof}

\begin{lemma}
\label{lm:LocsubsetStab}
There is a canonical equivalence $\mathcal C[\Lrep_{\alpha}^{-1}]\simeq \mathcal C_{\alpha}$. 
\end{lemma}
\begin{proof}
By definition, 
the localization $\mathcal C[\Lrep_{\alpha}^{-1}]$ is universally equipped with a functor 
$s\colon \mathcal C\to \mathcal C[\Lrep_{\alpha}^{-1}]$ and endofunctors $\Lrep_{\alpha}$,  
$\Lrep_{\alpha}^{-1}$ such that 
$\Lrep_{\alpha}^{-1}\Lrep_{\alpha}\simeq \mathrm{Id}_{\mathcal C[\Lrep_{\alpha}^{-1}]}$ 
and $s\Lrep_{\alpha}\simeq \Lrep_{\alpha}s$.
There exists a canonical functor $\mathcal C[\Lrep_{\alpha}^{-1}]\to \mathcal C_{\alpha}$ since
$\mathcal C_{\alpha}$ and $L_{\alpha}\colon \mathcal C\to \mathcal C_{\alpha}$ satisfy the said
properties.
Moreover,
$\Lrep_{\alpha}$ is idempotent. 
Hence $s$ takes $\alpha$-equivalences to equivalences, 
and there is the canonical functor 
$\mathcal C_{\alpha}\simeq \mathcal C[\Lrep_{\alpha}^{-1}]$.
In the commutative diagram
\[
\xymatrix{
\mathcal C\ar[r]\ar[dr]& \mathcal C_{\alpha}\ar@<1ex>[d]\\
& \mathcal C[\Lrep_{\alpha}^{-1}]\ar@<1ex>[u] 
}
\]
the composite endomorphisms are equivalences according to the universal property.
\end{proof}

Let $\mathcal C$ be a presentable symmetric monoidal $\infty$-category. 
Assume there exists an adjunction $\Sigma\dashv \Omega$ on $\mathcal C$ and define endofunctors on 
$\mathcal C_{\alpha}$ by 
\begin{equation}\label{eq:SigmaOmegaalpha}
\Omega_{\alpha}=L_{\alpha} \Omega E_{\alpha},\;
\Sigma_{\alpha}=L_{\alpha} \Sigma E_{\alpha},
\end{equation}
see \eqref{eq:LCCEalpha}.
We consider the categories $\mathcal C[\Omega^{-1}]$, $\mathcal C_\alpha[\Omega^{-1}_\alpha]$ as in 
\cite[\S 2.2]{RobalnoncomKtheorybridge} and \Cref{section:StablelocalizationDefinitionsNotations}.

\begin{lemma}
\label{lm:LalphaOmega}
Suppose $\Sigma$ and $\Omega$ preserve $\alpha$-equivalences.
Then there are naturally induced equivalences
\begin{equation}\label{eq:SLaLaSOLaLaO}
\Sigma_\alpha L_\alpha 
\simeq 
L_\alpha \Sigma, \;\Omega_\alpha L_\alpha 
\simeq 
L_\alpha \Omega,
\end{equation}
and
\begin{equation}\label{eq:OaSaLLaOaSa}
\Omega_{\alpha}^\infty\Sigma^\infty_{\alpha}L_{\alpha}
\simeq 
L_{\alpha}\Omega_{}^\infty\Sigma^\infty_{}
\colon
\mathcal C\to \mathcal C_\alpha.
\end{equation}
Moreover, 
the localization $L_{\alpha}\colon \mathcal C\to \mathcal C_{\alpha}$ induces a functor 
$L_{\alpha}^\Omega\colon\mathcal C[\Omega^{-1}]\to \mathcal C_{\alpha}[\Omega_{\alpha}^{-1}]$ such that 
\begin{equation}
\label{eq:SLalphaLalphaOmegaS}
\Sigma^\infty_\alpha L_\alpha \simeq 
L_\alpha^\Omega \Sigma^\infty\colon 
\mathcal C\to 
\mathcal C_\alpha[\Omega^{-1}_\alpha].
\end{equation}
\end{lemma}
\begin{proof}
Since $\Sigma$ and $\Omega$ are exact with respect to $\alpha$-equivalences, 
$\Sigma$ and $\Omega$ induce endofunctors $\Sigma_{\alpha}$ and $\Omega_{\alpha}$ on 
$\mathcal C_\alpha$, as defined in \eqref{eq:SigmaOmegaalpha},
such that \eqref{eq:SLaLaSOLaLaO} hold. Then since $L_\alpha E_\alpha$ is equivalent to the identity on $\mathcal C_\alpha$ then
\eqref{eq:SigmaOmegaalpha} hold for such $\Sigma_\alpha$ and $\Omega_\alpha$.
Moreover, since $\Sigma$ is exact with respect to $\alpha$-equivalences,
$\Omega$ preserves $\alpha$-local objects, 
and we have 
$
\Omega E_\alpha 
\simeq 
E_\alpha \Omega_\alpha.
$
Then there is an induced adjunction $\Sigma_{\alpha}\dashv \Omega_{\alpha}$ owing to 
the equivalences of mapping spaces 
\[
\begin{split}
\Map_{\mathcal C_\alpha}(\Sigma_\alpha L_\alpha X, L_\alpha Y) &\simeq 
\Map_{\mathcal C_\alpha}(L_\alpha \Sigma X, L_\alpha Y) \\ &\simeq 
\Map_{\mathcal C}(\Sigma X, E_\alpha L_\alpha Y) \\ &\simeq 
\Map_{\mathcal C}(X, \Omega E_\alpha L_\alpha Y) \\ &\simeq 
\Map_{\mathcal C}(X, E_\alpha \Omega_\alpha L_\alpha Y) \\ &\simeq 
\Map_{\mathcal C_\alpha}(L_\alpha X, \Omega_\alpha L_\alpha Y).
\end{split}
\]
By the above, 
we extract four commutative squares
\[\xymatrix{
\mathcal C \ar@<0.5ex>[r]^{\Sigma}\ar[d]_{\Lrep_\alpha} & 
\mathcal C\ar@<0.5ex>[l]^{\Omega}\ar[d]^{\Lrep_\alpha} 
\\ 
\mathcal C_\alpha\ar@<0.5ex>[r]^{\Sigma_\alpha} & \mathcal C_\alpha \ar@<0.5ex>[l]^{\Omega_\alpha}}
\xymatrix{\mathcal C\ar@<0.5ex>[r]^{\Sigma^\infty}\ar[d]_{\Lrep_\alpha}& \mathcal C[\Omega^{-1}]\ar@<0.5ex>[l]^{\Omega^\infty}\ar[d]^{\Lrep_\alpha^\Omega} \\ \mathcal C_\alpha \ar@<0.5ex>[r]^{\Sigma^\infty_\alpha}& \mathcal C_\alpha[\Omega^{-1}_{\alpha}]\ar@<0.5ex>[l]^{\Omega^\infty_\alpha}} \]
The left side squares provide \eqref{eq:SLaLaSOLaLaO}, 
and the commutativity of the right side squares 
implies \eqref{eq:SLalphaLalphaOmegaS} and \eqref{eq:OaSaLLaOaSa}.
\end{proof}

Next, 
we give a refined formal formulation of strict homotopy invariance related to loop spaces.
We refer to \Cref{def:PreSpectra} for a discussion of the $\infty$-category of prespectra
$\PSpt^\Omega(\mathcal C)$.

\begin{lemma}
\label{lm:ReformulationOmega}
Suppose $\Omega$ preserves $\alpha$-local objects.
Then the following conditions are equivalent.
\begin{itemize}
\item[(1)] 
The natural transformation $L_{\alpha}\Omega^\infty\to\Omega^\infty_{\alpha}L^\Omega_{\alpha}\colon 
\mathcal C[\Omega^{-1}]\to \mathcal C_\alpha$ is an equivalence.
\item[(2)] 
The natural transformation 
$\Lrep_{\alpha}\Lrep_{\Omega}\to\Lrep_{\Omega,\alpha}\colon\PSpt^\Omega(\mathcal C)
\to\PSpt^\Omega(\mathcal C)$ is an equivalence.
\item[(3)] $\Omega$ preserves $\alpha$-equivalences, i.e., 
if $f\colon F\to G$ is an $\alpha$-equivalence, 
then $\Omega(f)\colon \Omega(F)\to\Omega(G)$ is an $\alpha$-equivalence. 
\end{itemize}
\end{lemma}
\begin{proof}
Note that $\Omega$ preserves $\alpha$-equivalences if and only if 
\begin{equation}
\label{eq:LtnOmega(t)}
L_{\alpha} \Omega 
\simeq 
\Omega_{\alpha} L_{\alpha}.
\end{equation}
Using \eqref{eq:LtnOmega(t)}, 
it follows that $L_{\alpha}\Omega^\infty\simeq\Omega^\infty_{\alpha}L^\Omega_{\alpha}$.
Thus (3) implies (1) 
--- this part will be applied in \Cref{prop:Pretffr(SmSz)SGA1nis}.
\Cref{lm:LreptLprepcpreserveexact} shows that (2) is equivalent to asserting that applying 
$L_{\alpha}$ levelwise takes $\Omega$-objects in $\PSpt^\Omega(\mathcal C)$ to $\Omega_{\alpha}$-objects 
in $\PSpt^\Omega(\mathcal C)$, 
which is equivalent to \eqref{eq:LtnOmega(t)}.
Thus (2) and (3) are both equivalent to \eqref{eq:LtnOmega(t)} and imply (1).
To conclude the proof, 
we show that (1) implies (2). 
For $F\in \PSpt_{\Omega}(\mathcal C)$ the terms of $\Lrep_{\Omega,\alpha} F$ are given by
$E_{\alpha}\Omega_{\alpha}^\infty L^\Omega_{\alpha}[l] F$.
Here $[l]$ denotes the shift functor on $\PSpt^\Omega(C)$.
By (1), 
the terms of $\Lrep_{\alpha} L_{\Omega} F$ are given by 
$E_{\alpha}\Omega_{\alpha}^\infty L^\Omega_{\alpha}[l] F 
\simeq  
E_{\alpha} L_{\alpha}\Omega^\infty [l] F=\Lrep_{\alpha} \Omega^\infty [l] F$.
This implies part (2).
\end{proof}

\subsection{Properties of the endofunctor $\Omega_{\Gm}\Lrep_{\A^{1}}$.}
In what follows, 
we study the endofunctor $\Omega_{\Gm}^l\Lrep_{\A^{1}}$ on $\cSptsfr(\Sm_{B,z})$ for a base scheme $B$ 
and a point $z\in B$.
Our first goal is to show that $\Omega_{\Gm}$ preserves Nisnevich local equivalences in the sense that there 
is a Nisnevich local stable weak equivalence $\Omega_{\Gm}F\to \Omega_{\Gm}\Lrep_\nis F$ for all 
$F\in\cSptsfrA(\Sm_{B,z})$ when $B=z=\Spec k$.

 
\begin{lemma}
\label{lm:OmegaGmsimeqOmegaGmLrepnis}
Let $k$ be a field.
For every $F\in \cSptsfrA(\Sm_k)$ and open subscheme $V$ in $\A^{1}_k$
there is a Nisnevich local stable equivalence of presheaves of $S^1$-spectra 
\begin{equation}\label{eq:OmegaGm(Lnis)}
F^V\simeq_{S^1,\nis} (\Lrep_\nis F)^V.
\end{equation}
\end{lemma}
\begin{proof}
Since $F$ is $\A^{1}$-invariant, 
the strict homotopy invariance statement in \Cref{sect:AssumptionsBase}(1) shows that $\Lrep_\nis(F)$ is 
$\A^{1}$-invariant too. 
We note that $\overline F=\cofib(F\to \Lrep_\nis(F)$ and $\overline{F}^V$ are $\A^{1}$-invariant. 
If $X\in \Sm_k$, $x\in X$, 
we let $\eta$ be the generic point of the local scheme $\Xlocx$ of $X$ at $x$, 
and let $\iota$ denote the generic point of $V\times\eta$. 
Using \cite[Theorem 3.15(1,3)]{hty-inv} we conclude that, 
for all $l\in \mathbb Z$, 
there are injections of abelian groups
\begin{equation}
\label{eq:F(VXlocxotVetaotiota)}
\pi_l \overline{F}^V(X^\mathrm{loc}_{x}) 
\to 
\pi_l \overline{F}^V(\eta)\to \pi_l \overline{F}(\iota).
\end{equation}
Since $F(\iota)\simeq L_{\nis} F(\iota)$, the target group in \eqref{eq:F(VXlocxotVetaotiota)} is trivial.
Consequently, 
$\pi_l \overline{F}^V(X^\mathrm{loc}_{x})=0$ and hence $\overline{F}^V\simeq 0$.
Our claim follows by appealing to the equivalence
\[
\overline{F}^V
\simeq  
\cofib (F^V\to (\Lrep_\nis F)^V ).
\]
\end{proof}

\begin{lemma}
\label{lm:PregpfrkOmegaLrepNisexSHI}
Let $k$ be a field and $l\in \mathbb Z$.
As an endofunctor of $\cSptsfr(k)$ and $\cPrefr(k)^\gp$, 
the composite $\Omega_{\Gm}^l\Lrep_{\A^{1}}$ preserves Nisnevich local equivalences.
\end{lemma}
\begin{proof}
Suppose that $F_0\to F_1$ is a Nisnevich local equivalence in $\cSptsfr(k)$. 
For its fiber, 
$F\defeq \fib (F_0\to F_1)\simeq_{\nis} 0$, 
we show $\Omega_{\Gm}^l\Lrep_{\A^{1}}F\simeq_\nis 0$, or equivalently that 
$\Omega_{\Gm}^l\Lrep_{\A^{1}} F_0\simeq_\nis \Omega_{\Gm}^l\Lrep_{\A^{1}}F_1$.
By strict homotopy invariance over $k$, 
the endofunctor $\Lrep_\nis$ on $\cSptsfr(\Sm_k)$ preserves $\A^{1}$-invariant objects.
By \Cref{lm:LreptLprepcpreserveexact}, 
$\Lrep_{\A^{1}}$ preserves Nisnevich local equivalences;
thus, 
we obtain
\begin{equation}
\label{eq:LA1nisexact}
\Lrep_{\A^{1}}F
\simeq_{\nis,S^1} 0.
\end{equation}
Thus, 
by \Cref{lm:OmegaGmsimeqOmegaGmLrepnis} and \eqref{eq:LA1nisexact}, 
we deduce the equivalences
\[
\Omega_{\Gm}^l\Lrep_{\A^{1}} F
\simeq  
\Omega_{\Gm}^l\Lrep_\nis\Lrep_{\A^{1}} F
\simeq 
0.
\]

Since any additive presheaf on $\Corr^\fr(k)$ have a natural schemewise structure of an $\mathbb{E}_\infty$-monoid by \cite[]{five-authors},
delooping group-like simplicial spaces \cite{SegalammaSp} determines a functor 
$E\colon \cPrefr(\Sm_k)^{\gp}\to \cSptsfr(\Sm_k)$ such that 
$\Omega^\infty_{s} E(F)\simeq  F$ for every $F\in \cPrefr(\Sm_k)^{\gp}$.

So our claim for $\cPrefr(k)^{\gp}$ follows from the case of $\cSptsfr(k)$
because the functor $E$ preserves Nisnevich local equivalences.
\end{proof}

\begin{lemma}
\label{lm:OmegaGmLrepA1(simeq_nis)ShtfScz}
For a base scheme $B$ and point $z\in B$,
the endofunctor $\Omega_{\Gm}^l\Lrep_{\A^{1}}$ on $\cPrefrtf(\Sm_{B,z})^{\gp}$ preserves Nisnevich local equivalences.
\end{lemma}
\begin{proof}
Since the $\tf$-topology is trivial on $\Sm_{B,z}$, 
see \Cref{lmSmBcsigmatftopologytrivial}, 
there is an equivalence
\[
\cPrefrtf(\Sm_{B,z})^{\gp}\simeq\cPrefr(\Sm_{B,z})^{\gp}.
\]

Parts (1) and (2) of \Cref{th:LocA1tfnisstructuresDeformation} and part (4) of \Cref{th:LocA1tfnisstructuresDeformation} 
show that, 
for all $F\in\cPrefr(\Sm_{B,z})^{\gp}$, 
there are natural equivalences 
\begin{equation}
\label{eq:OmegaGmLA1overarrowi}
\Omega_{\Gm}^l\Lrep_{\A^{1}} F
\simeq 
\overarrow{i}_*\overarrow{i}^*( \Omega_{\Gm}^l\Lrep_{\A^{1}} F ) 
\simeq 
\overarrow{i}_*(\Omega_{\Gm})^l\Lrep_{\A^{1}} (\overarrow{i}^*F)).
\end{equation}

Suppose that $F\to G$ is a Nisnevich local equivalence in $\cPrefr(\Sm_{B,z})^{\gp}$.
\Cref{th:LocA1tfnisstructuresDeformation}(3) yields $\overarrow{i}^*F\simeq_{\nis}\overarrow{i}^*G$; 
\Cref{lm:PregpfrkOmegaLrepNisexSHI} yields 
$\Omega_{\Gm}^l\Lrep_{\A^{1}} (\overarrow{i}^*F)\simeq_{\nis} \Omega_{\Gm}^l\Lrep_{\A^{1}} (\overarrow{i}^*G)$, 
and hence
\[
\overarrow{i}_*(\Omega_{\Gm}^l\Lrep_{\A^{1}} (\overarrow{i}^*F))
\simeq_{\nis}
\overarrow{i}_*(\Omega_{\Gm}^l\Lrep_{\A^{1}} (\overarrow{i}^*G)).
\]
Thus the claim follows from \eqref{eq:OmegaGmLA1overarrowi}. 
\end{proof}

\subsection{Motivic infinite loop spaces in $\cPrefrtf(B)$.}

In this subsection, 
we compute the unit $\OmegaSigmatffrAnis$ of the suspension-loop adjunction 
\[
\Sigma^\infty_{}\colon \cPrefrtf(B) 
\leftrightarrows 
\SptstfrANis(B) 
\colon \Omega^\infty_{}
\]
in terms of the unstable motivic localization endofunctor $\Lrep^{\fr,\tf}_{\A^{1}}$ on $\cPrefrtf(B)$
(and similarly in the stable setting of $S^1$-spectra).
Denote by $\OmegaSigmatffrAtf$ the unit of the adjunction 
\[
\Sigma^\infty_{\A^{1},\tf}
\colon 
\cPrefrtf(B) 
\leftrightarrows 
\SptstfrAtf(B)
\colon 
\Omega^\infty_{\A^{1},\tf}.
\]

\begin{prop}
\label{prop:Pretffr(SmSz)SGA1nis}
There is an equivalence 
\[
\OmegaSigmatffrAnis 
\simeq  
\Lrep_{\nis}\OmegaSigmatffrAtf
\] 
in the $\infty$-category of endofunctors on $\cPrefrtf(\Sm_{B,z})$ and $\cSptsfrtf(\Sm_{B,z})$.
\end{prop}

\begin{proof}
By \Cref{lm:LocsubsetStab} there is an equivalence
\[
\SptstfrAtf(\Sm_{B,z})
\simeq 
\catPre_{\A^{1},\tf}^\fr(\Sm_{B,z})[(\Omega_{\Gm}\Lrep_{\A^{1}})^{-1}].
\]
Using Nisnevich localization, 
we deduce the equivalence
\[
\SptstfrANis(\Sm_{B,z})
\simeq 
\catPre_{\A^{1},\nis}^\fr(\Sm_{B,z})[(\Omega_{s}\Omega_{t}\Lrep_{\A^{1}})^{-1}].
\]
To conclude, 
we apply \Cref{lm:ReformulationOmega} and invoke \Cref{lm:OmegaGmLrepA1(simeq_nis)ShtfScz}, 
which shows that $\Omega_{s}\Omega_{t}\Lrep_{\A^{1}}$ preserves Nisnevich local equivalences 
on $\cPrefrtf(\Sm_{B,z})$.
The stable case of $S^1$-spectra is similar.
\end{proof}

\begin{lemma}
\label{lm:CommuteSSz}
For any base scheme $B$ and $z\in B$, 
the base change functor 
\begin{equation}
\label{eq:cPrefrSmBSmBcz}
\cPrefr(\Sm_{B})\to \cPrefr(\Sm_{B_z})
\end{equation} 
commutes with the endofunctors
$\Lrep_{\A^{1}}$, 
$\Lrep_{\tf}$, 
$\Lrep_{\nis}$, 
$\OmegaSigmatffrAtf$,
and $\OmegaSigmatffrAnis$
on $\cPrefr(-)$. 
The same holds for the case of $S^1$-spectra.
\end{lemma}

\begin{proof}
If $V\in \Sm_{B}$, 
then since the base change functor \eqref{eq:cPrefrSmBSmBcz} commutes with the endofunctor 
$F\mapsto F^V$ on $\cPrefr(-)$,
it commutes with $\Lrep_{\A^{1}}$ and $\Omega_{\Gm}$ as well.
Since the functor 
$
-\times_B B_z\colon \Sm_B\to\Sm_{B_z}
$
preserves $\tf$-squares and Nisnevich squares, 
the functor \eqref{eq:cPrefrSmBSmBcz} preserves Nisnevich and $\tf$-local equivalences.
Moreover, 
since any $\tf$-square or Nisnevich square over $B_z$ comes from a $\tf$- or Nisnevich square 
along the functor $-\times_B B_z$, 
we conclude that \eqref{eq:cPrefrSmBSmBcz} preserves $\tf$-local and Nisnevich local objects.
Hence the functor \eqref{eq:cPrefrSmBSmBcz} commutes with
$\Lrep_{\tf}$ and $\Lrep_{\nis}$.
The claim for $\OmegaSigmatffrAtf$
follows from the cases of 
$\Lrep_{\A^{1}}$,
$\Lrep_{\tf}$, 
and $\Lrep_{\nis}$.
\end{proof}

\begin{prop}
\label{prop:Pretffr(B)SGA1nis}
There are equivalences of endofunctors
\[
\OmegaSigmatffrAnis 
\simeq  
\Lrep_{\nis}\OmegaSigmatffrAtf 
\]
on the $\infty$-categories $\cPrefrtf(B)$ and $\cSptsfrtf(B)$. 
\end{prop}
\begin{proof}
The proof proceeds by induction on $\dim B$. 
The base of induction is $B=\emptyset$. 
Assume the claim holds for all schemes of dimension less than $\dim B$.

Owing to \Cref{lm:CommuteSSz}, and the conservativity of the base change functor
\[\cPrefr(\Sm_{B})\to \prod_{z\in B}\cPrefr(\Sm_{B_z}),\]
we may assume that $B$ is a local scheme with closed point $z\in B$.
\Cref{prop:Pretffr(SmSz)SGA1nis} shows that the equivalence holds on $\cPrefrtf(\Sm_{B,z})$.
By the inductive hypothesis, 
the equivalence holds for $\cPrefrtf(\Sm_{B-z})$.
Parts (2) and (3) of \Cref{th:LocA1tfnisstructuresShLocsquare} imply the equivalences
\[
\tilde i_*\tilde i^!\OmegaSigmatffrAnis
\simeq 
\tilde i_*\tilde i^!\Lrep_{\nis}\OmegaSigmatffrAtf,
\,\,
j_*j^*\OmegaSigmatffrAnis
\simeq  
j_*j^*\Lrep_{\nis}\OmegaSigmatffrAtf
\]
in $\catSpt^{\Gm,\fr}_\tf(\Sm_{B})$.
The claim follows now from part (1) of \Cref{th:LocA1tfnisstructuresShLocsquare}.
The case of $S^1$-spectra is similar.
\end{proof}

We complete this section by giving explicit formulas for the functors 
$\OmegaSigmatffrAnis$ and $\OmegaSigmatffrAtf$.

\begin{prop}
\label{prop:OmegaSigmafrtf_SGA1}
For every $F\in \cPrefrtf(B)$ there are natural equivalences
\begin{align*}
&
\OmegaSigmatffrAtf(F)\simeq  
\colim\Omega^l_{\Gm}\Lrep^{\fr,\tf}_{\A^{1}}(\Sigma^l_{\Gm}F^{\gp}),\\[5pt]
&\OmegaSigmatffrAnis(F)\simeq 
\Lrep^\tf_\nis\colim\Omega^l_{\Gm}\Lrep^{\fr,\tf}_{\A^{1}}(\Sigma^l_{\Gm}F^{\gp}).
\end{align*}
The analogous formulas hold in the stable case of $S^1$-spectra.
\end{prop}

\begin{proof}
Since the endofunctors $-\times \Gm$ and $-\times\A^1$ on $\Sm_B$ commute and the subcategory 
$\cPrefr_{\A^{1},\tf}(B)$ of $\cPrefrtf(B)$ is closed under colimits,
taking ${\Gm}$-loops $\Omega^l_{\Gm}$ yields an endofunctor on the former category.
Our second claim follows by \Cref{prop:Pretffr(B)SGA1nis}.
The case of $S^1$-spectra is similar.
\end{proof}

\begin{theorem}
\label{th:LrepsmotfrPSpttf}
For all $F\in \cPSptsfrtf(B)$ and $G\in \cPSptstfrtf(B)$, 
there are natural equivalences
\begin{align*}
\catLrep^{s,\tf,\fr}_\nissmot F 
& \simeq   
\phantom{\catLrep^\fr_{\text{nis}}}\catLrep^\fr_{\Gm}\catLrep^{\fr,\tf}_{\A^{1}}F^\gp, \\ 
\catLrep^{\tf,\fr}_\nissmot G  
& \simeq   
\catLrep^\fr_\nis \catLrep^\fr_{\Gm}\catLrep^{\fr,\tf}_{\A^{1}}G^\gp. 
\end{align*}
\end{theorem}

\begin{proof}
This follows by the second equivalence in \Cref{prop:OmegaSigmafrtf_SGA1} because the terms of 
$\catLrep^{\tf,\fr}_\nissmot F$ are given by  
$\OmegaSigmatffrAtf \Sigma_{\Gm}^l \Sigma^{n}_{s}F$.
\end{proof}

\section{Stable motivic localization over one-dimensional base schemes}
\label{sect:ReusltFormulations}

Following the preparatory results in \Cref{sect:stablemotiviclocalization}, 
we are ready to formulate our main results.
Throughout \Cref{section:stablemotloc,subsection:smlmfp1ss,subsection:motivicinfiniteloopspaces}, 
we let $B$ be a one-dimensional base scheme.

\subsection{The stable motivic localization functor}
\label{section:stablemotloc}

Consider the adjunction of $\infty$-categories of $(s,t)$-biprespectra
\[
\gamma^*\colon 
\cPSptst(B)=\cPSptst(\Sm_B)
\rightleftarrows 
\cPSptst(\Corr^\fr(B))=\cPSptstfr(B)\colon \gamma_*
\] 
The localization endofunctors $\Lrep_\nissmot$ and $\Lrep^\fr_\nissmot$ with respect to stable motivic equivalences 
are given by the composite functors
\begin{align*}
\Lrep_\nissmot&\colon 
\cPSptst(B)\to 
\cSptstAnis(B)\to\cPSptst(B),
\\
\Lrep^\fr_\nissmot&\colon
\cPSptstfr(B)\to
\cSptstfrAnis(B)\to\cPSptstfr(B).
\end{align*}
We write $\Lrep^{s}_\nissmot$ (resp.~$\Lrep^{s,\fr}_\nissmot$) for the analogous endofunctor of
$\cPSpts(B)$ (resp.~$\cPSptsfr(B)$).

\begin{theorem}
\label{th:LrepsmotfrPSpt}
For every $F\in \cPSptsfr(B)$ there are natural equivalences
\begin{align*}
\catLrep^{s,\fr}_\nissmot F&\simeq  \catLrep^\fr_\nis \catLrep^\fr_{\A^1,\tf}F^\gp,\\[5pt] 
\catLrep^\fr_\nissmot F&\simeq \catLrep^\fr_\nis \catLrep^\fr_{\Gm}\catLrep^\fr_{\A^1,\tf}F^\gp.
\end{align*}
The analogous equivalences hold for every $F\in \cPSptstfr(B)$.
\end{theorem}
\begin{proof}
Consider the canonical embedding functor $\nu_*\colon \cPrefrtf(B)\to \cPrefr(B)$.
By appeal to the canonical equivalence $\nu_*\Lrep^{\fr,\tf}_{\A^{1}}\simeq\Lrep^\fr_{\A^{1},\tf} \nu_*$,
the result is a reformulation of \Cref{th:LrepsmotfrPSpttf}.
\end{proof}

\begin{corollary}
\label{th:LrepsmotPSpt}
For every $F\in \cPSptst(B)$ there is a natural equivalence 
\[\begin{array}{lcl}
\Lrep_\nissmot F &\simeq & \Lrep_\nis \Lrep_{\Gm}\Lrep_{\A^1,\tf} (\gamma_*\gamma^* F)^\gp.
\end{array}\]
In particular, 
for every $X\in \Sm_B$, 
we have equivalences 
\[
\begin{array}{lcl}
\Lrep_\nissmot \Sigma^\infty_{s}X_+ &\simeq & 
\{\Lrep_\nis (\Lrep_{\A^1,\tf} \Fr(-, X_{+}\wedge S^n))^{\gp}\}_{n\geq 0},\\
\Lrep_\nissmot \Sigma^\infty_{s,t}X_+ &\simeq & 
\{\Lrep_\nis (\Lrep_{\Gm}\Lrep_{\A^1,\tf} \Fr(-, X_{+}\wedge S^n\wedge \Gm^{\wedge l}))^{\gp}\}_{n,l\geq 0}.
\end{array}\]
\end{corollary}
\begin{proof}
We consider biprespectra and note that, 
as in \Cref{subsect:LrepA1notdef}, 
the endofunctor $\catLrep_{\A^{1}}$ on the $\infty$-category $\cPSptst(B)$ is equivalent to the colimit of the 
functors $\catLrep_{\A^{1}}^{[l]}=(\catLrep_{\A^{1}}^{[1]})^l$, 
where $\catLrep_{\A^{1}}^{[1]} F = F(\Delta^\bullet_B\times -)$.  
Similarly, 
see \Cref{lm:SptS(C)}, 
the endofunctor $\catLrep_{\Gm}$ on $\cPSptst(B)$ is given by
\begin{equation}
\label{eq:Gmstab}
F\mapsto \colim\Omega_{\Gm}^l F(l).
\end{equation}
Here $F(1)$ denotes the $\Gm$-shift of the $(s,t)$-bispectrum $F$. 
With these descriptions, 
we deduce the natural equivalences
\[
\catLrep_{\A^{1}}\gamma_*\simeq  \gamma_*\catLrep^\fr_{\A^{1}}, \quad\catLrep_{\Gm}\gamma_*\simeq  
\gamma_*\catLrep^\fr_{\Gm}.
\]
Next, 
by \Cref{th:restrfrLoctfniscommute}, 
there are natural equivalences 
\[
\catLrep_\nis\gamma_*
\simeq
\gamma_*\catLrep^\fr_\nis,\quad \catLrep_\tf\gamma_*
\simeq \gamma_*\catLrep_\tf.
\]
\Cref{th:LrepsmotfrPSpt} finishes the proof of the first claim.
To conclude, we use the motivic equivalences 
$\gamma_*\gamma^*(\Sigma_{s}^\infty X_+)\simeq 
\{\catLrep_{\A^{1}}\Fr(-, X_{+}\wedge S^n)^{\gp}\}_{n\geq 0}$,
$\gamma_*\gamma^*(\Sigma_{s,t}^\infty X_+)\simeq 
\{\catLrep_{\A^{1}}\Fr(-, X_{+}\wedge S^n\wedge \Gm^{\wedge n})^{\gp}\}_{n\geq 0}$, 
see, e.g., 
\cite[Corollary 2.3.25]{five-authors}.
The argument for $S^1$-prespectra is similar.
\end{proof}

Now we formulate our main computational application of \Cref{th:LrepsmotfrPSpt}.

\begin{corollary}\label{cor:piisomorphism}
There is a canonically induced commutative diagram
\[
\xymatrix{
\SptstfrAtf(B)
\ar[r] \ar[d]&
\SptstfrANis(B)
\ar[r] \ar[d]&
\SptstfrAtf(B) \ar[d]
\\
\Sptst(B)
\ar[r] & 
\SptstNis(B)
\ar[r] & 
\Sptst(B).
}
\]
Thus, 
for any $Y\in \Sm_B$ and essentially smooth local henselian scheme $U$, 
we have
\begin{equation}\label{eq:piSHAonenisBYUcongpiSHfrAonetfBYU}
\pi_{*,*}^{\SH_{\A^{1},\nis}(B)}(Y)(U)
\cong
\pi_{*,*}^{\SH^\fr_{\A^{1},\tf}(B)}(Y)(U).
\end{equation}
\end{corollary}
\begin{proof}
\Cref{th:LrepsmotfrPSpt} and the equivalence
$\gamma_*\catLrep^\fr_\nis\simeq\catLrep_\nis\gamma_*$ in \Cref{th:restrfrLoctfniscommute} 
imply the diagram commutes.
The commutativity of the outer square implies the isomorphism 
$$
\pi_{*,*}^{\SH^\fr_{\A^{1},\nis}(B)}(Y)(U)
\cong
\pi_{*,*}^{\SH^\fr_{\A^{1},\tf}(B)}(Y)(U).
$$
Now the isomorphism \eqref{eq:piSHAonenisBYUcongpiSHfrAonetfBYU} follows 
from the equivalence $\SH^\fr(B)\simeq \SH(B)$ in \cite{Hoyois-framed-loc}.
\end{proof}

Let $\Rep_{\nissmot}$ (resp.~$\Rep^{s}_{\nissmot}$) denote the fibrant replacement endofunctor 
on the category of 
$(s,t)$-bispectra (resp.~$S^1$-spectra) of presheaves on $\Sm_{B}$ equipped with the stable 
motivic model structure 
\cite[\S2.3]{Nordfjordeid}, \cite{Jardine-spt}.
We write 
$\Rep_{\tfsmot}$ for the stable $\tf$-motivic localization,
$\Rep_{\nis}$ for the levelwise Nisnevich local injective model structure, and 
$\Rep_{\Gm}$ for the endofunctor given by the formula \eqref{eq:Gmstab} on the category of $(s,t)$-bispectra.

\begin{theorem}
\label{th:Lsmotspecrtaofframedpresheaves}
For every $S^1$-spectrum or $(s,t)$-bispectrum $\calF$ of radditive quasi-stable framed presheaves,
there are canonical levelwise schemewise equivalences
\begin{equation}\label{eq:LnismotdecompostionLnisLtfsmotSptstB}\begin{array}{lcl}
\Rep^{s}_{\nissmot}(\calF)&\simeq& 
\Rep_\nis(\Rep_{\A^{1}, \tf}(\calF))^{\gp},\\
\Rep_{\nissmot}(\calF)&\simeq& 
\Rep_\nis(\Rep_{\Gm}\Rep_{\A^{1}, \tf}(\calF))^{\gp}.
\end{array}\end{equation}
In particular, for every $X\in \Sm_B$, we have 
\begin{equation}
\label{eq:smotNis(A1tf)FrSigmastX}
\begin{array}{lcl}
\Rep^{s}_{\nissmot}(\Sigma^\infty_{s}X_+)&\simeq& 
\Rep_\nis(\Rep_{ \A^{1}, \tf}\Fr(-,\Sigma^\infty_{s}X_+))^{\gp},\\
\Rep_{\nissmot}(\Sigma^\infty_{s,t}X_+)&\simeq& 
\Rep_\nis\Rep_{\Gm}(\Rep_{ \A^{1}, \tf}\Fr(-,\Sigma^\infty_{s,t}X_+))^{\gp}.
\end{array}
\end{equation}
\end{theorem}
\begin{proof}
Let $N(-)$ denote the nerve functor and set $\mathcal S=\SmAff_B$.
We consider the diagram of $\infty$-categories
\[
\xymatrix{
\cPrefrAtf(\mathcal S)\ar[r]\ar[d]^{\catLrep^\fr_{\A^1,\tf}} 
&\cPreAtf(\mathcal S)\ar[r]\ar[d]^{\catLrep_{\A^1,\tf}} 
& N(\Ho(\cPreAtf(\mathcal S)))\ar@{=}[r]\ar[d]^{\Lrep_{\A^1,\tf}} 
& N(\Ho(\SpcAtf(\mathcal S)))\ar[d]^{\Lrep_{\A^1,\tf}} \\
\cPrefr(\mathcal S)\ar[r]& \cPre(\mathcal S)\ar[r] 
& N(\Ho(\cPre(\mathcal S)))\ar@{=}[r]&N(\Ho(\Spc_s(\mathcal S))).
}
\]
The functor $\Fr_+(B)\to \Corr^\fr(B)$ 
induces for all $X\in \SmAff_B$ an $\A^1$-equivalence $\Fr_{B}(-,X)\to h^\fr(X)$ of 
simplicial presheaves on $\SmAff_B$ by \cite[Corollary 2.3.25]{five-authors}.
Thus the essential image of $\cPrefrAtf(\mathcal S)$ in the nerve of $\Ho(\Spc_s(\mathcal S))$ 
coincides with the $\A^1$-invariant $\tf$-local quasi-stable framed presheaves on $\mathcal S$.
Hence, for any $\calF\in \Ho(\Spc_s(\mathcal S))$ 
given by an $\A^1$-invariant $\tf$-local quasi-stable framed presheaf on $\mathcal S$, 
\Cref{th:LrepsmotfrPSpt} shows there is an equivalence
\[
\Lrep_{\nissmot}(\calF)\simeq  
\Lrep_\nis\Lrep_{\A^{1},\tf}(\calF)^{\gp},
\]
and for any $\calG\in \Ho(\Spt_\text{st}(\mathcal S))$ there is an equivalence
\[
\Lrep_{\nissmot}(\calG)
\simeq  
\Lrep_\nis\Lrep_{\Gm}\Lrep_{\A^{1},\tf}(\calG)^{\gp}.
\]
Consequently, 
the equivalences in \eqref{eq:LnismotdecompostionLnisLtfsmotSptstB} hold in the category $\SmAff_B$.
Since both sides of \eqref{eq:LnismotdecompostionLnisLtfsmotSptstB} are Nisnevich local, 
the equivalence extends to $\Sm_B$.

Finally, 
\eqref{eq:smotNis(A1tf)FrSigmastX} is equivalent to the second part of \Cref{th:LrepsmotPSpt}.
\end{proof}

\subsection{Stably motivic local models for $\PP^1$-suspension spectra}
\label{subsection:smlmfp1ss}
Let $\catSH(B)$ denote the $\infty$-category of motivic spectra, 
a.k.a., $\PP^1$-spectra of $\A^1$-invariant Nisnevich sheaves defined as the limit 
\begin{equation}
\label{eq:prolimPre}
\catSH(B)
=
\catSpt^{\PP^1}_{\A^1,\nis}(B)
=
\varprojlim( \cdots 
\xrightarrow{\Omega_{\PP^1}} \cHAnis(B) \cdots 
\xrightarrow{\Omega_{\PP^1}} \cHAnis(B) 
\xrightarrow{\Omega_{\PP^1}} \cHAnis(B))
\end{equation}
taken in the $\infty$-category of $\infty$-categories.
Here $\cHAnis(B)$ is the $\infty$-category of $\A^1$-invariant Nisnevich sheaves on $\Sm_B$. 
Since the endofunctors $\Omega_{\PP^1}$ and $\Omega_{S^1\wedge \Gm}$ on $\cHAnis(B)$ are equivalent,
which can be seen using the standard open covering of the projective line, 
there is an equivalence of $\infty$-categories 
\begin{equation}
\label{equation:P1stequivalence}
\cSptPAnis(B)\simeq \cSptswtAnis(B).
\end{equation}
Here $\cSptP(B)$ and $\cSptswt(B)$ denote the $\infty$-categories of $\PP^1$-spectra and 
$S^1\wedge \Gm$-spectra in the sense of \Cref{def:catSpt} with respect to the endofunctors 
$\Omega_{\PP^1}$ and $\Omega_{S^1\wedge \Gm}$ on $\cPre(B)$.
We write $\cSptPAnis(B)$ and $\cSptswtAnis(B)$ for the subcategories of $\A^1$-invariant 
Nisnevich local objects.

\begin{theorem}
\label{th:projlimSHSuspens}
The $\PP^1$-suspension functor $\Sigma^{\infty}_{\PP^1}\colon\Sm_B^\op\to \catSH(B)$
admits a description in terms of framed motives given by 
\begin{equation} 
\label{eq:CorSigmaYFrSigmaY22}
X_{+}
\mapsto
\{\catLrep_{\nis} \catLrep_{\Gm} (\catLrep_{\A^1,\tf} 
\Fr(-\times\Delta^{\bullet}_B, X_{+}\wedge S^n\wedge \Gm^{\wedge n}))^{\gp}\}_{n\geq 0}.
\end{equation}
If $n>0$, 
the $\A^1$-invariant Nisnevich sheaf
$\catLrep_{\nis} \catLrep_{\Gm} \catLrep_{\A^1,\tf}\Fr(-, X_{+}\wedge S^n\wedge \Gm^{\wedge n})$ is group-like.
\end{theorem}
\begin{proof}
When forming \eqref{eq:CorSigmaYFrSigmaY22} we use the naturally induced maps
\begin{equation*}
\catLrep_{\nis}\catLrep_{\Gm} \catLrep_{\A^1,\tf}\Fr(-, X_{+}\wedge S^n\wedge \Gm^{\wedge n}) 
\to
\Omega_{\PP^1}(\catLrep_{\nis}\catLrep_{\Gm} \catLrep_{\A^1,\tf}\Fr(-, X_{+}\wedge 
S^{n+1}\wedge \Gm^{\wedge n+1})).
\end{equation*}
Moreover, 
we appeal to the equivalence \eqref{equation:P1stequivalence}.
The unit of the adjunction 
$$
\cPSptswt(B) \rightleftarrows \cSptswtAnis(B)
$$ 
yields for every $X\in \Sm_B$ a canonical morphism
\begin{equation}
\label{eq:SusptoLSwGAN}
\Sigma^\infty_{\swt}X_+\to 
\catLrep_{S^1\wedge \Gm} \catLrep_{\A^1,\nis}(\Sigma^\infty_{\swt}X_+).
\end{equation}
The functor $\cPSptst(B)\to \cPSptswt(B)$ taking an $(s,t)$-biprespectrum to its diagonal $S^1\wedge \Gm$-prespectrum
preserves stable motivic equivalences and local objects.
By \Cref{th:LrepsmotPSpt} it follows that \eqref{eq:SusptoLSwGAN} is equivalent to the natural morphism 
\[
\Sigma^\infty_{\swt} X_+\to 
\catLrep_\nis\catLrep_{\Gm} \catLrep_{\A^1,\tf} \{\Fr(-\times\Delta^{\bullet}_{k}, 
X_{+}\wedge S^n\wedge \Gm^{\wedge n})\}^\gp_{n\geq 0}.
\]
\end{proof}

\subsection{Motivic infinite loop spaces} 
\label{subsection:motivicinfiniteloopspaces}

Let $\Shv_*(\Sm_B)$ denote the category of pointed simplicial Nisnevich sheaves on $\Sm_B$. 
Every $Y\in \Sm_B$ admits an associated $\PP^1$-suspension spectrum that is the Nisnevich 
sheafification of the presheaf spectrum 
$\Sigma^\infty_{\PP^1} Y_+$.
Let $\Omega^\infty_{\mathrm{mot}}\Sigma^\infty_{\mathrm{mot}} Y\in \Shv_*(\Sm_B)$ denote the 
zeroth term of the fibrant replacement of the $\PP^1$-suspension spectrum of $Y$ with respect 
to the stable motivic model structure.

\begin{theorem}
\label{th:Lacalizationfunctordecomposition}
In $\Shv_*(\Sm_B)$, 
there is a Nisnevich local equivalence 
\begin{equation}
\label{eq:OmegastTGSAOmegastTTinftyLmot(Y)2}
\Omega^\infty_{\mathrm{mot}}\Sigma^\infty_{\mathrm{mot}} Y
\xrightarrow{\simeq} 
\varinjlim\limits_{l}\Omega^l_{\Gm^{\wedge 1}}(\Lrep_{\A^{1},\tf}\Fr(Y_+\wedge \Gm^{\wedge l}))^{\gp}.
\end{equation}
Here $\Fr(Y_+\wedge \Gm^{\wedge l})=\Fr(-,Y_+\wedge \Gm^{\wedge l})$ is the presheaf of 
framed correspondences in \cite{Framed}.
\end{theorem}
\begin{proof}
Here $\Omega^\infty_{\mathrm{mot}}\Sigma^\infty_{\mathrm{mot}} Y$ is 
equivalent to the zeroth term of the $(s,t)$-bispectrum $\Rep_{\nissmot}(\Sigma^\infty_{s,t}Y_+)$.
Thus the claim follows from \Cref{th:Lsmotspecrtaofframedpresheaves} because the right hand side of 
\eqref{eq:OmegastTGSAOmegastTTinftyLmot(Y)2} is Nisnevich local equivalent to the zeroth term of 
the right hand side of \eqref{eq:smotNis(A1tf)FrSigmastX}.
\end{proof}

\subsection{Shifting the homotopy $t$-structure}
\label{sect:shifthomtstr}

In this subsection, 
$B$ is an arbitrary base scheme unless otherwise specified.
We write 
\[
\PSptt\catSpt^s(B), \quad
\PSptt\catSpt^{s,\fr}(B)
\]
for the $\infty$-categories of $\Gm$-prespectra of presheaves of $S^1$-spectra on $\Sm_B$, $\Corrfr_B$,
respectively.
We will make use of various $\tf$-motivic and Nisnevich motivic localization functors on $\infty$-categories
\begin{equation}\label{eq:SptsPresstfr_AtfAnis}\begin{array}{lclclcl}
\cSpts(B)&\to& \cSHsAtf(B)&\to& \cSHsAnis(B),&&\\
\PSptt\catSpt^s(B)&\to& \cSHstAtf(B)&\to& \cSHstAnis(B),&&\\ 
\cSptsfr(B)&\to& \cSHsfrAtf(B)&\to& \cSHsfrAnis(B),&&\\
\PSptt\catSpt^{s,\fr}(B)&\to& \cSHstfrAtf(B)&\to& \cSHstfrAnis(B)&\simeq& \cSHstAnis(B).
\end{array}\end{equation}

\begin{definition}
\label{definition:schemwiseconnective}
An object $\calF\in \cSpts(B)$ or $\calF\in \cSptsfr(B)$ is schemewise $n$-connective if 
its stable homotopy presheaf $\pi_i(\calF)$ is trivial for $i<n$.
Likewise, 
$\calF\in \PSptt\catSpt^s(B)$ or $\calF\in \PSptt\catSpt^{s,\fr}(B)$ is schemewise 
$n$-connective if it is a $\Gm$-prespectrum of schemewise $n$-connective $S^1$-spectra.
\end{definition}

Let $\mathcal C(B)$ denote one of the following $\infty$-categories
\[
\cSpts(B),\; 
\cSptsfr(B),\; 
\PSptt\catSpt^s(B),\; 
\PSptt\catSpt^{s,\fr}(B).
\]

\begin{lemma}
\label{lm:S1LA1(tfloc;>0)}
The endofunctor $\catLrep_{\A^{1}}$ on $\mathcal C(B)$ preserves $\tf$-sheaves and 
schemewise $n$-connective objects.
\end{lemma}

\begin{proof} 
The $S^1$-stable $\A^1$-localization admits the description
\begin{equation}
\label{eq:LA1S1OmegaS1ShiftS1}
\begin{array}{lcl}
\catLrep_{\A^{1}}\calF
&\simeq & 
\colim_{l} \catLrep^{(l)}_{\A^{1}} \calF\\ &\simeq &  
\colim_{l} (\calF[l]^{(\Delta^1_B/\partial\Delta^1_B)^{\wedge l}}), 
\end{array}
\end{equation}
where 
\[
\catLrep^{(1)}_{\A^{1}} \calF(-)=\colim\left(\calF(\Delta^1_B\times -)\rightrightarrows \calF(-)\right).
\]
Here the maps are induced by the $0$- and $1$-sections;
see \cite[\S 4.2]{Morel-connectivity}, \cite[Proposition 3.19]{ConnBase} for details on
\eqref{eq:LA1S1OmegaS1ShiftS1}.
Hence $\catLrep^{\mathcal C}_{\A^{1}}$ preserves $\tf$-sheaves because sequential colimits 
preserves $\tf$-sheaves, 
and likewise for the endofunctors on $\mathcal C(B)$ given by 
$\calF\mapsto \Omega_{\Delta^1_B/\partial\Delta^1_B}^l\calF$ and $\calF\mapsto \calF[l]$.

To prove the claim for schemewise $n$-connective objects, 
we use the equivalence
\[
\catLrep^{\mathcal C}_{\A^{1}}\calF
\simeq 
\colim_{[l]\in \Delta^\mathrm{op}} \calF^{\Delta^l_B}. 
\]
Here, 
the endofunctor $\calF\mapsto \calF^{\Delta^l_B}=\calF(-\times_B \Delta^l_B)$ 
preserves schemewise $n$-connective objects in $\mathcal C(B)$, 
and likewise for the colimit.
\end{proof}

\begin{lemma}
\label{lm:conndim}
If $B$ has finite Krull dimension $d$ and $\calF\in \mathcal C(B)$ is schemewise connective, 
then $\catLrep_\tf\calF$ is schemewise $(-d)$-connective.
\end{lemma}
\begin{proof}
By \Cref{prop:cdstructurecompleteregularbounded} and \cite[Theorem 2.26]{VV:cd} the 
$\tf$-cohomological dimension of any scheme $X\in \Sch_B$ is $\leq d$.
Thus the claim follows as in \cite[\S 4.3]{Morel-connectivity} --- 
with a reference to \Cref{th:restrfrLoctfniscommute} in the case of $\cSptsfr(B)$.
\end{proof}

\begin{prop}
\label{prop:S1stabLrepA1tf_cor:shifttstrA1tf}
On $\mathcal C(B)$, 
there is a canonical equivalence of endofunctors 
\[
\catLrep_{\A^{1},\tf} 
\simeq  
\catLrep_{\A^{1}}\catLrep_\tf.
\]
If $B$ has finite Krull dimension $d$ and $\calF$ is schemewise connective, 
then $\catLrep_{\A^{1},\tf}\calF$ is schemewise $(-d)$-connective.
\end{prop}
\begin{proof}
For $\calF\in\mathcal C$, 
the first statement in \Cref{lm:S1LA1(tfloc;>0)} shows  
$\catLrep_{\A^{1}}\catLrep_{\tf}\calF$ is $\A^{1}$-local and $\tf$-local.
Hence the canonical morphism $\catLrep_{\A^{1}}\catLrep_\tf \calF\to\catLrep_{\A^{1},\tf}\calF$ is an equivalence. 
The second part follows since $\catLrep_\tf\calF$ is schemewise $(-d)$-connective, 
see \Cref{lm:conndim}, 
and $\catLrep_{\A^{1}}$ preserves schemewise $(-d)$-connective objects, 
see \Cref{lm:S1LA1(tfloc;>0)}.
\end{proof}

\Cref{th:LrepsmotfrPSpt} and \Cref{prop:S1stabLrepA1tf_cor:shifttstrA1tf} have the following consequences.

\begin{theorem}
\label{theorem:connectivitysmotsheaves2}
\par(1) For any base scheme $B$ of Krull dimension $d$, 
the leftmost functors in \eqref{eq:SptsPresstfr_AtfAnis}
take schemewise connective objects to schemewise $(-d)$-connective objects.
\par(2) 
The rightmost functors in \eqref{eq:SptsPresstfr_AtfAnis} preserve Nisnevich locally connective objects.
\par(3) 
The composite functors in \eqref{eq:SptsPresstfr_AtfAnis}
take Nisnevich locally connective objects to $(-d)$-Nisnevich locally connective ones.
\end{theorem}
\begin{proof}
Part (1) follows from \Cref{prop:S1stabLrepA1tf_cor:shifttstrA1tf}.
To prove (2), 
\Cref{th:LrepsmotfrPSpt} and \Cref{th:LrepsmotPSpt} show there are natural equivalences
\[
\begin{array}{lcl}
\Lrep^{\fr,s}_\nissmot& \simeq & \Lrep_\nis \Lrep_{\Gm}\Lrep_{\A^1,\tf},\\
\Lrep_\nissmot  &\simeq & \Lrep_\nis \Lrep_{\Gm}\Lrep_{\A^1,\tf} \gamma^*. 
\end{array}
\]
Here $\gamma^*\colon \PSpt(B)\to \PSpt^\fr(B)$ is the canonical functor and we consider the localization functors 
\[
\begin{array}{lclclcl}
\Lrep^{\fr,s}_\nissmot&\colon& \cPSptsfr(B)&\to& \cSHsfrAnis(B)&\to&\cPSptsfr(B), \\
\Lrep_\nissmot&\colon& \cPSptst(B)&\to& \cSHAnis(B)&\to&\cPSptst(B).
\end{array}
\]
Part (2) follows because 
for any $\calF\in \cPSpts(B)$ 
and essentially smooth local henselian scheme $U$,
the morphism of presheaves
\begin{equation}
\calF(U)
\to
\Lrep_{\nis}\calF(U)
\end{equation}
is an equivalence.
Consequently, 
the functor $\Lrep_\nis$ is 
preserves 
Nisnevich locally connective objects.

Part (3) follows now by (1) and (2).
\end{proof}

\appendix

\section{Essentially smooth schemes and henselian pairs}
\label{section:essahp}

Let $X_\alpha$ be a directed inverse system of schemes with affine transition maps $X_\alpha\to X_\beta$.
The cofiltered limit $\varprojlim_{\alpha} X_\alpha$ is a scheme \cite[Tag 01YX]{StacksProject}.
If $U\to X$ is an open immersion and each $X_\alpha$ is an $X$-scheme, 
we set $U_\alpha=X_\alpha\times_X U$.
Then the canonically induced morphism $\varprojlim_{\alpha} U_\alpha\to\varprojlim_{\alpha} X_\alpha$ is also 
an open immersion.

\begin{definition}
\label{def:EssentiallySmooth}
An essentially smooth $B$-scheme $X$ is a cofiltered limit 
\[
X
=
\varprojlim_{\alpha} X_\alpha
\]
of smooth $B$-schemes with affine and dominant transition maps.
\end{definition}

\begin{remark}\label{rem:productonEssSm}
The monoidal product on $B$-schemes restricts to the subcategory $\EssSm_B$ because 
affine morphisms and dominant morphisms of smooth $B$-schemes are closed under products by 
\cite[Tags 01SC, 01SD]{StacksProject} 
and 
\cite[Tag 0H3F]{StacksProject}, 
respectively.
\end{remark}

\begin{lemma}
\label{lm:EssSmContNistfcov}
The canonical functor $\SH_s(\Sm_B)\to \SH_s(\EssSm_B)$ preserves Nisnevich local and $\tf$-local objects, 
and it preserves Nisnevich local equivalences and $\tf$-local equivalences.
\end{lemma}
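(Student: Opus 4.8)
The functor in question is the continuous (left Kan) extension along the inclusion $\Sm_B\hookrightarrow\EssSm_B$: it sends a presheaf of $S^{1}$-spectra $\calF$ to the presheaf $\tilde\calF$ with $\tilde\calF(\varprojlim_\alpha X_\alpha)\simeq\hocolim_\alpha\calF(X_\alpha)$ for every presentation of an essentially smooth $B$-scheme as in \Cref{def:EssentiallySmooth}; this is well defined because a smooth $B$-scheme, being of finite presentation, has the property that any map out of it into $\varprojlim_\alpha X_\alpha$ factors through some $X_\alpha$. The plan is to reduce all four assertions to two facts: (i) in $\Spt$, filtered homotopy colimits commute with finite homotopy limits; and (ii) the limit theorems for schemes \cite{StacksProject}, which let one descend finitely-presented data (étale morphisms, affine morphisms, open and closed immersions, isomorphisms) from an essentially smooth scheme to a finite level of a presenting pro-system.

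For the preservation of local objects, recall from \Cref{prop:cdstructurecompleteregularbounded} that the $\tf$- and the Nisnevich cd-structures on $\Sm_B$ are complete, regular, and bounded, so by Voevodsky's descent criterion \cite{VV:cd} (and \cite{Morel-Voevodsky} in the Nisnevich case) an object of $\SH_s(\Sm_B)$ is $\tf$-local (resp. Nisnevich local) precisely when it is levelwise fibrant, carries $\emptyset$ to $\ast$, and carries each $\tf$-square (resp. Nisnevich square) to a homotopy pullback square; the analogue holds on $\EssSm_B$ for the cd-structure generated by $\tf$-squares (resp. Nisnevich squares) of essentially smooth $B$-schemes. Given such a square $Q$ in $\EssSm_B$, write its corners as cofiltered limits of smooth $B$-schemes and use fact (ii) to spread $Q$ out to a compatible system of $\tf$-squares (resp. Nisnevich squares) $Q_\alpha$ in $\Sm_B$ with $Q\cong\varprojlim_\alpha Q_\alpha$; here one uses that the closed subscheme $Z\hookrightarrow B$ defining a $\tf$-square is already given over the fixed base $B$, so only the étale datum over the top corner has to be descended, and that a finitely-presented morphism which becomes an isomorphism over the closed complement after a limit base change does so at a finite stage. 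Applying $\calF$ and invoking fact (i), $\tilde\calF(Q)\simeq\hocolim_\alpha\calF(Q_\alpha)$ is a homotopy pullback square since each $\calF(Q_\alpha)$ is one; as $\tilde\calF(\emptyset)\simeq\ast$, the object $\tilde\calF$ is $\tf$-local (resp. Nisnevich local).

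For the preservation of local equivalences I would use that a morphism $f$ in $\Spt_s(\Sm_B)$ is a $\tf$-local (resp. Nisnevich local) equivalence iff it induces an equivalence on all $\tf$-stalks (resp. Nisnevich stalks), which by \Cref{prop:Proptftop}(iv),(vii) are exactly the spectra $\calF(X^h_\sigma)$ for $X\in\Sm_B$, $\sigma\in B$. Since $X^h_\sigma=\varprojlim X'$ over affine étale neighborhoods of $X_\sigma$, the continuous extension satisfies $\tilde\calF(X^h_\sigma)\simeq\hocolim_{X'}\calF(X')$, which is precisely that stalk; hence $\tilde\calF$ and $\tilde\calG$ agree, up to equivalence, on every such $X^h_\sigma$. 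As the $\tf$-points (resp. Nisnevich points) of $\EssSm_B$ of this shape form a conservative family — proved exactly as in \Cref{prop:Proptftop}, the remaining essentially smooth schemes being handled by cofinality — it follows that $\tilde f$ is a $\tf$-local (resp. Nisnevich local) equivalence. Alternatively one can compute $\Lrep_\nis$ and $\Lrep_\tf$ by the \v{C}ech formula of \Cref{lm:LnissimeqinjlimcheckC} and verify directly, again via fact (ii) applied to coverings of $\varprojlim_\alpha X_\alpha$, that the continuous extension commutes with these localization endofunctors.

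The main obstacle is the spreading-out step: one must check that the \emph{full} definition of a Nisnevich square — in particular the requirement that the étale map restrict to an isomorphism over the reduced closed complement, together with affineness in the $\tf$-case — descends to a finite level of the presenting pro-system, and that the descended squares can be chosen compatibly with the transition morphisms. This is a routine but careful application of the Grothendieck--Thomason--Trobaugh limit machinery; I expect no conceptual difficulty beyond this bookkeeping.
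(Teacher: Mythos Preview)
Your approach is correct and close in spirit to the paper's, but the paper organizes the local-objects argument a bit differently. Rather than spreading out an arbitrary Nisnevich (or $\tf$-) square in $\EssSm_B$ to a pro-system of squares in $\Sm_B$, the paper verifies excision only for squares of the form $\{X^h_Z,\,X-Z\}\to X$ with $X\in\EssSm_B$ and $Z\subset X$ closed: writing $X=\varprojlim_\alpha X_\alpha$ and letting $Z_\alpha$ be the closure of $Z$ in $X_\alpha$, Nisnevich locality of $\calF$ on $\Sm_B$ already yields the fiber sequence
\[
\calF(X_\alpha)\simeq\hofib\bigl(\calF((X_\alpha)^h_{Z_\alpha})\oplus\calF(X_\alpha-Z_\alpha)\to\calF((X_\alpha)^h_{Z_\alpha}-Z_\alpha)\bigr),
\]
and one then commutes $\hocolim_\alpha$ with $\hofib$. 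Since $X^h_Z\cong(X')^h_Z$ for any \'etale neighborhood $X'$ of $Z$, these henselized squares suffice to test Nisnevich (and likewise $\tf$-) locality. This sidesteps having to establish the cd-structure descent criterion on $\EssSm_B$ itself---which you invoke but do not prove---and it avoids exactly the spreading-out bookkeeping you flag at the end (descending the isomorphism-over-the-complement and affineness conditions to a finite level). For local equivalences the two arguments coincide: both use the conservative family of points $X^h_x$ (Nisnevich) and $X^h_\sigma$ ($\tf$) from \Cref{prop:Proptftop}.
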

\begin{proof}
Suppose $X\in \EssSm_B$ and $Y$ is a closed subscheme of $X$.
Then $X\cong \varprojlim_\alpha X_\alpha$ for a filtering system of the schemes $X_\alpha\in \Sm_B$
with affine transition maps, 
and $Y\cong \varprojlim_\alpha Y_\alpha$, where $Y_\alpha$ is the closure of $Y$ in $X_\alpha$.
If $\calF\in \SH_s(\Sm_B)$ is Nisnevich local, 
then for the continuation of $\calF$ to essentially smooth schemes we have  
\begin{align*}
\calF^\mathrm{cont}(X)
:=
\hocolim_\alpha \calF(X_\alpha) 
&\cong  
\hocolim_\alpha\hofib( \calF((X_\alpha)^h_{Y_\alpha})\vee \calF^\mathrm{cont}(X_\alpha-Y_\alpha)\\
& \to \calF((X_\alpha)^h_{Y_\alpha}-Y_\alpha) )\\
&\cong  
\hofib(\hocolim_\alpha\calF((X_\alpha)^h_{Y_\alpha})\vee \hocolim_\alpha\calF^\mathrm{cont}(X_\alpha-Y_\alpha)\\
&\to \hocolim_\alpha\calF((X_\alpha)^h_{Y_\alpha}-Y_\alpha)),
\end{align*}
or equivalently
$$
\calF^\mathrm{cont}(X)\cong \hofib(\calF^\mathrm{cont}(X^h_Y)\vee \calF^\mathrm{cont}(X-Y) \to \calF^\mathrm{cont}(X^h_Y-Y)).
$$
This shows that $\calF^\mathrm{cont}\in\SH_s(\EssSm_B)$ is Nisnevich local.
A similar argument applies in the $\tf$-topology since $X_Y$ is isomorphic to $\varprojlim_\alpha (X_\alpha\times_B Y)$.
The claim for the local equivalences follows since a point in the Nisnevich topology (resp.~$\tf$-topology) is an 
essentially smooth local henselian scheme $X^h_x$, $x\in X$ (resp.~$X^h_\sigma$, $\sigma\in B$, by 
\Cref{prop:Proptftop}(vii)).
\end{proof}

For further reference, 
we record two base change results for simplicial presheaves. 
We refer to \cite[Tags 00X1, 00YK, 05V1]{StacksProject} for our standard terminology on sites.

\begin{lemma}
\label{lm:LocEqCovPoints}
Suppose $(\mathcal S^\prime,{\tau^\prime})\to (\mathcal S,\tau)$ is a morphism of sites given by a 
continuous morphism $f\colon \mathcal S\to \mathcal S^\prime$.
Then the base change functor $f_*\colon s\Pre(\mathcal S^\prime)\to s\Pre(\mathcal S)$ preserves local objects.
If $(\mathcal S,\tau)$, $(\mathcal S^\prime,{\tau^\prime})$ have enough points,  
then $f_*$ preserves local equivalences.
\end{lemma}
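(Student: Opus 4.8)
The plan is to establish the two assertions separately: the first is the simplicial‑presheaf refinement of the classical fact that restriction along a continuous functor preserves sheaves, and the second is a stalkwise argument made available by the ``enough points'' hypothesis.

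For the statement that $f_*$ preserves local objects, I would first recall that an object $\calG\in s\Pre(\mathcal S')$ is $\tau'$‑local exactly when it satisfies $\tau'$‑descent, i.e.\ $\calG(W)\xrightarrow{\ \simeq\ }\holim_{[n]}\calG(W_n)$ for every $\tau'$‑hypercover $W_\bullet\to W$ in $\mathcal S'$ (when $\tau'$ comes from a bounded, complete, regular $cd$‑structure one may instead test descent only against the generating squares, as in \Cref{lm:LnissimeqinjlimcheckC}). Continuity of $f$ means that $f$ carries $\tau$‑coverings to $\tau'$‑coverings and preserves the fiber products $U_i\times_U T$ that occur in \v{C}ech nerves of coverings, so $f$ carries a $\tau$‑hypercover $U_\bullet\to U$ in $\mathcal S$ to a $\tau'$‑hypercover $f(U_\bullet)\to f(U)$ in $\mathcal S'$. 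Applying $\tau'$‑descent for $\calG$ to the latter gives
\[
(f_*\calG)(U)=\calG(f(U))\xrightarrow{\ \simeq\ }\holim_{[n]}\calG(f(U_n))=\holim_{[n]}(f_*\calG)(U_n),
\]
so $f_*\calG$ satisfies $\tau$‑descent and is therefore $\tau$‑local; this is the homotopical enhancement of \cite[Tags 00X1, 00YK]{StacksProject}.

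For the statement about local equivalences, assume both sites have enough points. Then a morphism in $s\Pre(\mathcal S')$ is a $\tau'$‑local equivalence iff it is a weak equivalence on all stalks, and likewise on $\mathcal S$; hence it suffices to show that for each point $q$ of $(\mathcal S,\tau)$ there is a point $f(q)$ of $(\mathcal S',\tau')$ together with a natural isomorphism $(f_*\calG)_q\cong\calG_{f(q)}$. Granting this, for a $\tau'$‑local equivalence $\phi\colon A\to B$ the map $(f_*\phi)_q\cong\phi_{f(q)}$ is an equivalence for every $q$, so $f_*\phi$ is a $\tau$‑local equivalence. To produce $f(q)$: a point $q$ is presented by a filtered pro‑object $\{U_i\}$ in $\mathcal S$ with $\calH_q=\colim_i\calH(U_i)$; one transports it along $f$ to the pro‑object $\{f(U_i)\}$ in $\mathcal S'$, which — because $f$ is a morphism of sites and carries the chosen conservative family of points to one, cf.\ \Cref{lm:topologically-XtoXhs}, \Cref{lm:tf-topologically-XtoXhs}, \cite[Tag 05V1]{StacksProject} — defines a point $f(q)$ of $(\mathcal S',\tau')$, and then
\[
(f_*\calG)_q=\colim_i(f_*\calG)(U_i)=\colim_i\calG(f(U_i))=\calG_{f(q)}
\]
since evaluation and filtered colimits of simplicial sets commute with everything involved.

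The hard part will be the verification, in the second assertion, that $f$ ``preserves points'', i.e.\ that the pro‑object $\{f(U_i)\}$ really does define a point of $(\mathcal S',\tau')$ whose stalk is computed by $f_*$; abstractly this is the assertion that $f_*$ is the inverse‑image part of the morphism of topoi induced by $f$ on the chosen conservative families of points. In the applications in the body this is elementary — there $f$ is a fully faithful inclusion of a subcategory closed under the relevant \'etale (or open, or fiber‑product) operations, so every $\tau'$‑cover of $f(U)$ is already the image of a $\tau$‑cover of $U$ — but I would isolate it as the crucial input. The remaining parts are routine: the reduction of descent to hypercovers (or to $cd$‑squares) in the first part, and the exactness of filtered colimits of simplicial sets in the second.
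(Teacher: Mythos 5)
Your proof is correct and follows essentially the same route as the paper's: a descent argument (\v{C}ech against a single covering in the paper, hypercover/$cd$-square descent in your version) for preservation of local objects, and a stalkwise argument transporting points along $f$ for preservation of local equivalences. The ``crucial input'' you isolate --- that $f$ carries the chosen points of $(\mathcal S,\tau)$ to points of $(\mathcal S',\tau')$, which does not follow from ``enough points'' alone --- is exactly what the paper's proof also invokes (``since $f$ preserves points by assumption'') and is only made an explicit hypothesis in the companion \Cref{lm:schemespointssites}, so your flagging it as the step requiring verification in each application is well placed.
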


\begin{proof}
The first claim follows by \cite[Corollary 5.24]{Jardine-local}
because $f_*$ is a right Quillen adjoint for the local injective model structure.
Indeed, 
if $\calF\in s\Pre(\mathcal S^\prime)$ is $\tau^\prime$-local and $\widetilde X\to X$ is a 
$\tau$-covering in $\mathcal S$, 
then for the $\check{\text{C}}$ech construction $\check C_{f(\widetilde X)}(f(X),\calF)$, 
see \eqref{eq:Check(widetilde X)}, 
there are isomorphism  
\[
\check C_{\widetilde X}(X,f_*\calF)
\cong   
\check C_{f(\widetilde X)}(f(X),\calF)
\cong  
\calF(f(X))
\cong  
f_*\calF(X).
\]
For preservation of local equivalences we note the canonically induced morphism $f_*\to \mathbf{R}f_*$ 
is an equivalence, 
see also \cite[Proposition 1.27, p.105]{Morel-Voevodsky}.
If $\calF\to \calG$ is a local equivalence in $s\Pre(\mathcal S^\prime)$,
then for any $\tau^\prime$-point $V^\prime$, 
there is an equivalence $\calF(V^\prime)\simeq \calG(V^\prime)$.
Since $f$ is continuous, it preserves points, 
hence for any $\tau$-point $V$,
we have $f_*\calF(V)\simeq f_*\calG(V)$.
Then by assumption $f_*\calF\to f_*\calG$ is a local equivalence.
\end{proof}

\begin{lemma}
\label{lm:schemespointssites}
Let $(\mathcal C,\tau)\to(\mathcal C^\prime,\tau^\prime)$ be a morphism of sites with underlying functor 
$f\colon \mathcal C^\prime\to \mathcal C$, 
where $\mathcal C$, $\mathcal C^\prime$ are subcategories of $\Sch_B$.
Suppose $(\mathcal C,\tau)$, $(\mathcal C^\prime,\tau^\prime)$ have enough points given by objects in 
$\mathcal C$ and $\mathcal C^\prime$, 
respectively. 
Suppose also that for any $\tau^\prime$-point $U$ in $\mathcal C^\prime$,
$f(U)$ is a $\tau$-point in $\mathcal C$.
Then $f$ is a continuous morphism of sites, 
and on simplicial presheaves, 
$f_*$ takes $\tau^\prime$-local equivalences to $\tau$-local equivalences.
\end{lemma}

\begin{proof}
Let $\calF\to \mathcal G$ be a $\tau$-local equivalence between simplicial presheaves on $\mathcal C$.
Then $\calF(V)\simeq \mathcal G(V)$ for each $\tau$-point $V$ in $\mathcal C$.
For any $\tau^\prime$-point $U$ in $\mathcal C^\prime$,
$f(U)$ is a $\tau$-point by assumption, 
so that $f_*\calF(U)=\mathcal F(f(U))\simeq \mathcal G(f(U))=f_*\mathcal G(U)$.
Thus $f_*\calF\to f_*\mathcal G$ is a $\tau^\prime$-local equivalence.
\end{proof}

Recall from \cite[Tag 09XD]{StacksProject} the notion of an affine henselian pair.
Next we recall the notion of henselization of pairs used in \Cref{section:SmScZ},
see \cite[Tag 0A02, 0EM7]{StacksProject}.

\begin{lemma}
\label{lm:AffHens}
The henselization of an affine scheme $X$ along a closed subscheme $Y$ is the cofiltered limit 
\[
X^h_Y
\cong
\varprojlim_{\alpha} X_\alpha
\]
indexed by affine {\'e}tale morphisms $X_\alpha\to X$ that admit a lifting $Y\to X_\alpha$. 
The henselization $X^h_Y$ is an affine scheme. 
If $X$ is noetherian, then so is $X^h_Y$.
\end{lemma}

More generally, 
for a closed immersion $Y\not\hookrightarrow X$ of $B$-schemes, 
we form the cofiltered category of affine {\'e}tale morphisms $X_\alpha\to X$ equipped with liftings 
\begin{equation}
\label{equation:henselnbds}
\begin{tikzcd}
& X_\alpha\ar{d}\\
Y\ar[ur,dotted]\ar[r,hook] & X.
\end{tikzcd}
\end{equation}
Using the diagrams \eqref{equation:henselnbds} we form the cofiltered limit and $B$-scheme
\begin{equation}\label{eq:proafhenselizationXhY}
X^h_Y
\defeq
\varprojlim X_\alpha.
\end{equation}
The canonically induced closed immersion $Y\not\hookrightarrow X^h_Y$ lifts the morphism 
$Y\not\hookrightarrow X$ in $\Sch$.

\begin{remark}\label{rem:etaleneighborhoods}
    Note that 
\begin{equation}\label{eq:proafhenselizationXhYfinalsufystem}
    X^h_Y
    \defeq
    \varprojlim X_\alpha, 
    \end{equation}
    where the limit is taken over the cofiltered system of \'etale morphisms $X^\prime\to X$
    such that $X_\alpha\times_B Y\cong Y$, 
    because the latter family is cofinal in the family \eqref{equation:henselnbds}:
    the functor $X_\alpha\mapsto X_\alpha-(X_\alpha\times_X Y - Y)$ turns each \'etale neighborhood \eqref{equation:henselnbds} to one such that $X_\alpha\times_B Y\cong Y$.
\end{remark}
\vspace{0.05in}

In what follows, 
a pro-{\'e}tale morphism refers to a cofiltered limit 
\[
\varphi
\colon 
\widetilde X
= \varprojlim_\alpha X_\alpha
\to 
X
\]
obtained from affine {\'e}tale morphisms $\varphi_\alpha\colon X_\alpha\to X$ and 
affine {\'e}tale transition maps $X_\alpha\to X_\beta$. 
We note that the morphism $\varphi$ is affine according to 
\cite[\href{https://stacks.math.columbia.edu/tag/01YX}{Tag 01YX}]{StacksProject}.

Pro-{\'e}tale schemes over non-affine schemes are only used in our discussion of the category $\Sm_{B,Z}$, 
see \Cref{section:defsmZsmB,section:ReductionSmat}.
In the proof of our results, 
however, 
it suffices to consider henselizations of affine schemes because we reduce to this case.
We refer to \Cref{subsection:candn} and the discussion below \eqref{equation:categories} for more details.

\begin{lemma}
\label{lm:pro-etale:lift}
Suppose that $\varphi\colon \widetilde X\to X$ is a pro-{\'e}tale morphism and let $i\colon Y\not\hookrightarrow X$ 
be a closed immersion that admits a lifting $\widetilde i\colon Y\not\hookrightarrow\widetilde X$. 
There is a canonically induced isomorphism $\widetilde X^h_Y\simeq X^h_Y$. 
Consequently, $i$ lifts to a morphism $X^h_Y\to \widetilde X$.
\end{lemma}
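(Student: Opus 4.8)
\textbf{Proof plan for \Cref{lm:pro-etale:lift}.}
The plan is to unwind both henselizations as cofiltered limits of \'etale neighborhoods and show the two indexing categories are cofinally equivalent. First I would fix the lifting $\widetilde i\colon Y\not\hookrightarrow\widetilde X$ and recall from \eqref{eq:proafhenselizationXhY} that $\widetilde X^h_Y=\varprojlim_\beta \widetilde X_\beta$, where $\widetilde X_\beta\to\widetilde X$ runs over affine \'etale morphisms equipped with a lifting of $\widetilde i$, while $X^h_Y=\varprojlim_\alpha X_\alpha$ with $X_\alpha\to X$ affine \'etale carrying a lifting of $i$. Since $\varphi\colon\widetilde X\to X$ is itself affine (being a cofiltered limit of affine \'etale transition maps of \'etale morphisms), the composite $\widetilde X_\beta\to\widetilde X\xrightarrow{\varphi} X$ is again affine; moreover it is a filtered limit of \'etale morphisms, hence by \cite[Tag 01ZT, Tag 05V5]{StacksProject} it is affine and, after absorbing $\widetilde X=\varprojlim_\gamma X_\gamma$ into the limit, each such composite factors up to refinement through an affine \'etale $X$-scheme carrying a lifting of $i$. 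This produces a functor from the indexing category of $\widetilde X^h_Y$ to that of $X^h_Y$, compatible with the structure maps to $X$.

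For the reverse direction, I would show that any affine \'etale neighborhood $X_\alpha\to X$ of $Y$ pulls back along $\varphi$ to an affine \'etale neighborhood $X_\alpha\times_X\widetilde X\to\widetilde X$ of $Y$ (using $\widetilde i$ and the lifting $Y\to X_\alpha$ to get the section over $Y$), and that this pullback is cofinal in the system computing $\widetilde X^h_Y$. The key point is a cofinality statement: the henselization only depends on the \'etale neighborhoods of $Y$ ``infinitesimally near'' $Y$, and $\varphi$ is an isomorphism in a formal/henselian neighborhood of $Y$ because $\widetilde i$ is a section of $\varphi$ over $Y$ and $\varphi$ is \'etale (pro-\'etale) there. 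Concretely, $\widetilde X_{\,Y}^{\,h}$ and $X_Y^h$ are both the henselization of the pair, and \cite[Tag 0A02, Tag 0EM7]{StacksProject} characterize it by a universal property among \'etale neighborhoods; the section $\widetilde i$ shows $\widetilde X$ itself is an \'etale $X$-scheme ``at $Y$'' in the pro-sense, so the universal properties match. Passing to limits yields the canonical isomorphism $\widetilde X^h_Y\xrightarrow{\cong} X^h_Y$.

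Finally, the consequence that $i$ lifts to a morphism $X^h_Y\to\widetilde X$ is immediate: compose the isomorphism $X^h_Y\xrightarrow{\cong}\widetilde X^h_Y$ with the canonical affine map $\widetilde X^h_Y\to\widetilde X$ from \eqref{eq:proafhenselizationXhY}. One checks this lifts $Y\not\hookrightarrow\widetilde X$ by construction, and it lifts $i$ after composing with $\varphi$ because the isomorphism was built over $X$.

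\textbf{Main obstacle.} The delicate step is the cofinality argument relating the two diagrams of \'etale neighborhoods: one must argue carefully that pulling back along the \emph{pro}-\'etale (not \'etale) morphism $\varphi$ still lands in the correct filtered system and is cofinal, handling the bookkeeping of which level of the limit $\widetilde X=\varprojlim_\gamma X_\gamma$ a given neighborhood descends to, and invoking the right finite-presentation/limit results (\cite[Tag 01ZT, Tag 05V5]{StacksProject}) to move neighborhoods back and forth. The \'etale-locality of henselization of pairs and the existence of the section $\widetilde i$ over $Y$ are exactly what make this work, but making the limit manipulations precise is where the real content lies.
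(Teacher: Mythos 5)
Your proposal takes essentially the same route as the paper's proof: both identify $\widetilde X^h_Y$ and $X^h_Y$ by exhibiting the two cofiltered systems of affine \'etale neighborhoods of $Y$ as mutually cofinal — composing with $\varphi$ in one direction and pulling back along $\varphi$ with section $(\widetilde i,s_\alpha)$ in the other — and then obtain the lifting $X^h_Y\to\widetilde X$ by composing the resulting isomorphism with the canonical projection $\widetilde X^h_Y\to\widetilde X$. If anything, you are more explicit than the paper's two-line argument about the descent-to-finite-level bookkeeping needed because $\varphi$ is only pro-\'etale, which is exactly the point the paper elides.
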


\begin{proof}
Write $X^h_Y\cong\varprojlim X_\alpha$, $f_\alpha\colon X_\alpha\to X$, and $s_\alpha\colon Y\to X_\alpha$ 
following \eqref{equation:henselnbds}.
Similarly, 
we have $\widetilde X^h_Y\cong\varprojlim \widetilde X_\alpha$, 
$\widetilde f_\alpha\colon \widetilde X_\alpha\to X$, 
and $\widetilde s_\alpha\colon Y\to \widetilde X_\alpha$.
The morphism in question is induced by 
\begin{equation}
\label{eq:embeding_widetildeXalpha-s-into-Xalpha-s}
(\widetilde f_\alpha\colon 
\widetilde X_\alpha\to \widetilde X, \widetilde s_\alpha\colon Y\to \widetilde X_\alpha)
\mapsto 
(\varphi \circ f_\alpha\colon \widetilde X_\alpha\to X, s_\alpha\colon Y\to \widetilde X_\alpha).
\end{equation} 
To construct an inverse, 
we note that any \'etale neighborhood of the form 
\[
(f_\alpha\colon X_\alpha \to X, s_\alpha\colon Y\to X_\alpha),
\]
yields an \'etale neighborhood
\[
(\widetilde f_\alpha\colon \widetilde X_\alpha \defeq \widetilde X\times_X X_\alpha \to \widetilde X, 
\widetilde s_\alpha \defeq (\widetilde i,s_\alpha) \colon Y\to \widetilde X_\alpha).
\]
\end{proof}

\begin{corollary}
Suppose $X\in \Sm_B$ and $i\colon Y\not\hookrightarrow X$ is a closed immersion. 
Then $X^h_Y$ is essentially smooth over $B$. 
\end{corollary}
\begin{proof}
Let $\widetilde X$ be the union of the connected components of $X$ that have a nonempty intersection with $Y$.
\Cref{lm:pro-etale:lift} shows that the open immersion $\widetilde X\to X$ induces an isomorphism 
$\widetilde X^h_Y\cong X^h_Y$. 
For any \'etale neighborhood $X_\alpha$ of $Y$ in $\widetilde X$, 
the morphism $X\to \widetilde X$ is dominant. 
Hence $\widetilde X^h_Y$ is an essentially smooth scheme.
\end{proof}

\begin{corollary}
\label{cor:pro-etale:henseiso}
Suppose $i\colon Y\not\hookrightarrow X$ and $i^\prime\colon Y\not\hookrightarrow X^\prime$ are closed immersions such that 
$X\cong X^h_Y$, $X^\prime\cong (X^\prime)^h_Y$.
If $\varphi\colon X^\prime\to X$ is a pro-{\'e}tale morphism and $\varphi\circ i^\prime=i$, 
then $\varphi$ is an isomorphism.
\end{corollary}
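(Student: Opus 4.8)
\textbf{Proof proposal for Corollary~\ref{cor:pro-etale:henseiso}.}
The plan is to show $\varphi$ is an isomorphism by producing an explicit inverse, using \Cref{lm:pro-etale:lift} as the main input. First I would observe that $\varphi\colon X'\to X$ is pro-\'etale and $\varphi\circ i' = i$, so $i'\colon Y\not\hookrightarrow X'$ is a lifting of $i\colon Y\not\hookrightarrow X$ along $\varphi$. This puts us exactly in the hypotheses of \Cref{lm:pro-etale:lift}, which yields a canonical isomorphism $(X')^h_Y\cong X^h_Y$ compatible with the structure morphisms to $X$. By assumption $X\cong X^h_Y$ and $X'\cong (X')^h_Y$, so composing these identifications gives an isomorphism $\psi\colon X'\to X$ of schemes over $X$, i.e.\ with $\varphi\circ\psi^{-1}$ (or the relevant composite) matching the canonical map; the point is that the isomorphism of henselizations is built precisely from the cofiltered systems of \'etale neighborhoods, and under $X\cong X^h_Y$, $X'\cong (X')^h_Y$ this isomorphism is realized by $\varphi$ itself up to the identifications.

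More carefully, I would unwind the construction in the proof of \Cref{lm:pro-etale:lift}: writing $X^h_Y\cong\varprojlim_\alpha X_\alpha$ with \'etale neighborhoods $(f_\alpha\colon X_\alpha\to X,\,s_\alpha\colon Y\to X_\alpha)$, the map \eqref{eq:embeding_widetildeXalpha-s-into-Xalpha-s} embeds the system for $(X')^h_Y$ into the system for $X^h_Y$, and conversely base change along $\varphi$ sends a neighborhood $X_\alpha$ of $Y$ in $X$ to the neighborhood $X'\times_X X_\alpha$ of $Y$ in $X'$ (using $i'$ to define the section). These two assignments are mutually cofinal and mutually inverse on the level of the pro-systems, hence induce mutually inverse isomorphisms on the limits. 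Since $X\cong X^h_Y$ and $X'\cong (X')^h_Y$ are given, the composite of $\varphi$ with these canonical identifications is an isomorphism, and because $\varphi$ is itself the structure map in \eqref{eq:embeding_widetildeXalpha-s-into-Xalpha-s}, it follows that $\varphi$ is an isomorphism.

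The main obstacle I anticipate is purely bookkeeping: one must check that the isomorphism $(X')^h_Y\cong X^h_Y$ supplied by \Cref{lm:pro-etale:lift} is compatible with $\varphi$ under the identifications $X\cong X^h_Y$ and $X'\cong (X')^h_Y$, rather than being some a priori unrelated isomorphism. This amounts to verifying that $X$, viewed as its own henselization along $Y$, corresponds to the constant (final) object in its pro-system, so that $\varphi$ is identified with the canonical projection $(X')^h_Y\to X^h_Y$ from the cofinal inclusion of pro-systems. Once that compatibility is in hand, the conclusion is immediate. Everything here is a direct application of the cofiltered-limit description of henselization in \eqref{eq:proafhenselizationXhY} together with \Cref{lm:pro-etale:lift}, so no new geometric input is needed.
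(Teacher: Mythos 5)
Your proposal is correct and follows the same route as the paper, which simply writes the chain $X'\cong (X')^h_Y\cong X^h_Y\cong X$ with the middle isomorphism supplied by \Cref{lm:pro-etale:lift}. Your extra care in checking that the canonical isomorphism of henselizations is compatible with $\varphi$ under the identifications $X\cong X^h_Y$ and $X'\cong(X')^h_Y$ — so that $\varphi$ itself, and not merely some abstract map, is the isomorphism — addresses precisely the point the paper's one-line proof leaves implicit, and it goes through because the isomorphism in \Cref{lm:pro-etale:lift} is built from the map \eqref{eq:embeding_widetildeXalpha-s-into-Xalpha-s} of pro-systems over $X$.
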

\begin{proof}
The claim follows since $X^\prime \cong (X^\prime)^\prime_Y\cong X^h_Y$, 
where the last isomorphism follows from \Cref{lm:pro-etale:lift}.
\end{proof}

\begin{corollary}
\label{cor:henselianpairsplitting}
Let $Y\not\hookrightarrow X$ be a closed immersion such that $X\cong X^h_Y$.
If $Y=Y_1\amalg Y_2$, then $X\cong X_1\amalg X_2$, where $X_1=X^h_{Y_1}$ and $X_2=X^h_{Y_2}$. 
\end{corollary}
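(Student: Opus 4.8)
\textbf{Plan of proof for \Cref{cor:henselianpairsplitting}.}
The statement to prove is: if $Y \not\hookrightarrow X$ is a closed immersion with $X \cong X^h_Y$, and $Y = Y_1 \amalg Y_2$, then $X \cong X_1 \amalg X_2$ with $X_i = X^h_{Y_i}$. The plan is to produce a pro-\'etale morphism $X_1 \amalg X_2 \to X$ compatible with the closed immersions of $Y$, and then invoke \Cref{cor:pro-etale:henseiso} to conclude it is an isomorphism. So the first step is to build a canonical comparison morphism $X_1 \amalg X_2 \to X$. For this, I would use that $X_i = X^h_{Y_i}$ is a cofiltered limit over \'etale neighborhoods $(f_\alpha \colon X_\alpha \to X, s_\alpha \colon Y_i \to X_\alpha)$ in the sense of \eqref{equation:henselnbds}; the structure morphisms $X_i \to X$ assemble to a morphism $X_1 \amalg X_2 \to X$ which is affine and pro-\'etale, since each $X_i \to X$ is so and a finite coproduct of pro-\'etale morphisms over $X$ is pro-\'etale. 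The closed immersion $Y_i \not\hookrightarrow X_i$ lifting $Y_i \not\hookrightarrow X$ (as recorded after \eqref{eq:proafhenselizationXhY}) assembles, over $Y = Y_1 \amalg Y_2$, to a closed immersion $Y \not\hookrightarrow X_1 \amalg X_2$ lifting $Y \not\hookrightarrow X$.

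The second step is to check the hypotheses of \Cref{cor:pro-etale:henseiso} for $\varphi \colon X_1 \amalg X_2 \to X$: we already have $X \cong X^h_Y$ by assumption, and we need $X_1 \amalg X_2 \cong (X_1 \amalg X_2)^h_Y$. This follows from \Cref{cor:henselianpairsplitting}-free formal properties of henselization: the henselization of a disjoint union along a disjoint union of closed subschemes is the disjoint union of the henselizations, because an \'etale neighborhood of $Y = Y_1 \amalg Y_2$ in $X_1 \amalg X_2$ splits (after passing to a cofinal system) as a disjoint union of \'etale neighborhoods of $Y_i$ in $X_i$, and $X_i = X^h_{Y_i}$ already admits only trivial such neighborhoods. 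More precisely, $(X_i)^h_{Y_i} \cong X_i$ by \Cref{lm:pro-etale:lift} applied to the identity (or directly: $X^h_Y$ is idempotent under further henselization along the same closed subscheme, cf.\ \Cref{lm:HensIdempotent}), and $Y_j \times_X X_i = \emptyset$ for $j \neq i$ after restricting to the connected components meeting $Y_i$, so $Y \times_{X} X_i = Y_i$ and hence $(X_1 \amalg X_2)^h_Y \cong X_1^h_{Y_1} \amalg X_2^h_{Y_2} \cong X_1 \amalg X_2$. With $\varphi \circ (Y \not\hookrightarrow X_1 \amalg X_2) = (Y \not\hookrightarrow X)$ by construction, \Cref{cor:pro-etale:henseiso} gives that $\varphi$ is an isomorphism, which is exactly the claim.

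The main obstacle I anticipate is the bookkeeping in the second step: making precise that henselization commutes with the finite disjoint-union decomposition of $Y$, i.e.\ that the cofiltered system of \'etale neighborhoods of $Y$ in $X_1 \amalg X_2$ is cofinal among those split as coproducts of \'etale neighborhoods of the $Y_i$ in the $X_i$. One clean way to see this: given any \'etale neighborhood $(W \to X_1 \amalg X_2, Y \to W)$, restrict along $X_i \hookrightarrow X_1 \amalg X_2$ to get $(W_i \to X_i, Y_i \to W_i)$; since $X_i \cong X^h_{Y_i}$ is henselian along $Y_i$, the section $Y_i \to W_i$ extends to a section of $W_i \to X_i$ over a clopen neighborhood, giving a refinement of $W$ that is already a coproduct over the factors — and this refinement is cofinal. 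Everything else is a direct application of results already in the excerpt (\Cref{lm:pro-etale:lift}, \Cref{cor:pro-etale:henseiso}, \Cref{lm:HensIdempotent}), so once the disjoint-union compatibility of henselization is nailed down the proof is short.
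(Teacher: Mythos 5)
Your argument is correct and follows essentially the same route as the paper: build the pro-\'etale morphism $t\colon X_1\amalg X_2\to X$, check that it is compatible with the closed immersions of $Y$, and conclude by \Cref{cor:pro-etale:henseiso}. The paper's own proof is only two sentences and leaves implicit the verification that $X_1\amalg X_2\cong (X_1\amalg X_2)^h_Y$, which you spell out via the decomposition
$(X_1\amalg X_2)^h_Y\cong (X_1)^h_{Y_1}\amalg (X_2)^h_{Y_2}$
(because every affine \'etale neighborhood of $Y$ in $X_1\amalg X_2$ already splits as a coproduct of neighborhoods of $Y_i$ in $X_i$, even without passing to a cofinal refinement) together with the idempotence $(X_i)^h_{Y_i}\cong X_i$ from \Cref{lm:HensIdempotent}. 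This is a genuine completion of the argument, not a different route.

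One small cleanup: the side remark that $Y_j\times_X X_i=\emptyset$ for $j\neq i$ ``after restricting to the connected components meeting $Y_i$'' is phrased loosely and is also not needed. The closed immersion to which \Cref{cor:pro-etale:henseiso} is applied is the one you constructed by sending $Y_i$ into $X_i$; its pullback to $X_i$ along the inclusion $X_i\hookrightarrow X_1\amalg X_2$ is $Y_i$ by definition, and that suffices. (The stated fact is true — in a henselian pair, anything that is a unit modulo the ideal is a unit, so the preimage of the disjoint $Y_j$ vanishes — but it is a separate observation from what the proof requires.) Apart from that digression the plan is clean and lines up with the paper's intent.
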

\begin{proof}
The pro-\'etale morphism $t\colon X^h_{Y_1}\amalg X^h_{Y_2}\to X^h_Y\cong X$ restricts to the 
decomposition $Y_1\amalg Y_2$.
Hence, 
by \Cref{cor:pro-etale:henseiso}, 
it follows that $t$ is an isomorphism.
\end{proof}

The following statement is a variation of results on the lifting property for affine henselian schemes 
with respect to smooth affine morphisms,
see \cite[Theorem I.8]{Gruson1972} and \cite{Elkik1973SolutionsD}.

\begin{lemma}
\label{lm:AffSmHenselianLift}
Suppose $B$ is affine and let $X\in \SmAff_B$, $U\in \AffSch_B$.
If $T\not\hookrightarrow U$ is a closed immersion, 
there is a naturally induced surjection 
\[
\SmAff_B(U^h_T,X)\to\SmAff_B(T, X).
\]
\end{lemma}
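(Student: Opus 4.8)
\textbf{Proof plan for \Cref{lm:AffSmHenselianLift}.}
The plan is to reduce the statement to the standard lifting property for smooth morphisms along affine henselian pairs. Fix $X\in\SmAff_B$, $U\in\AffSch_B$, and a closed immersion $T\not\hookrightarrow U$, together with a morphism $g\colon T\to X$ over $B$. The goal is to produce a morphism $\widetilde g\colon U^h_T\to X$ over $B$ whose restriction along $T\not\hookrightarrow U^h_T$ equals $g$; here $T\not\hookrightarrow U^h_T$ is the canonical lift of $T\not\hookrightarrow U$ supplied by the construction \eqref{eq:proafhenselizationXhY}. First I would recall, via \Cref{lm:AffHens}, that since $U$ is affine, $U^h_T\cong\varprojlim_\alpha U_\alpha$ is an affine scheme, where the $U_\alpha\to U$ are the affine \'etale neighborhoods of $T$; moreover $(U^h_T,T)$ is an affine henselian pair by \cite[Tag 0A02]{StacksProject} (the henselization of the pair $(U,T)$ along the closed subscheme). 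In particular the ideal defining $T$ in $\calO(U^h_T)$ is contained in the Jacobson radical, so any \'etale cover of $U^h_T$ that splits over $T$ admits a section.

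The main step is the formal-smoothness lifting argument. Since $X$ is smooth over $B$, the structure morphism $X\to B$ is formally smooth; applied to the affine henselian pair $(U^h_T,T)$ with its given map $T\to X$ over $B$ and the structure map $U^h_T\to B$, one gets — after possibly passing to an \'etale neighborhood — a lift $U^h_T\to X$. Concretely: choose a closed immersion $X\not\hookrightarrow\A^n_B$ (possible as $X\in\SmAff_B$). The composite $T\to X\to\A^n_B$ extends trivially to $U^h_T\to\A^n_B$ (lift the $n$ coordinate functions arbitrarily from $\calO(T)$ to $\calO(U^h_T)$, which is possible because $\calO(U^h_T)\to\calO(T)$ is surjective, $U^h_T$ being affine and $T$ a closed subscheme). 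This gives a morphism $h_0\colon U^h_T\to\A^n_B$ agreeing with $g$ on $T$ after composition with $X\not\hookrightarrow\A^n_B$, but $h_0$ need not factor through $X$. However $h_0^{-1}(X)$ is a closed subscheme of $U^h_T$ containing $T$, and since $X\not\hookrightarrow\A^n_B$ is a regular immersion (as $X$ is smooth) with $X$ \'etale-locally cut out by $n-\dim$ equations whose differentials are independent along $X$, one can use the henselian property of $(U^h_T,T)$ together with the implicit function theorem / formal smoothness of $X\to B$ to correct $h_0$ to a morphism $\widetilde g\colon U^h_T\to X$ still agreeing with $g$ on $T$. This is precisely the deformation-theoretic content: $X\to B$ smooth means lifts of $T\to X$ over infinitesimal thickenings exist and are controlled by the normal bundle, and the henselian hypothesis upgrades the formal lift to an actual one over $U^h_T$.

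Alternatively — and this is probably the cleanest route to write up — I would invoke the standard fact (e.g.\ \cite[Tag 07M8]{StacksProject} or Elkik's theorem) that for an affine henselian pair $(A,I)$ and a smooth $B$-algebra $X$, the map $X(A)\to X(A/I)$ is surjective, applied with $A=\calO(U^h_T)$, $I$ the ideal of $T$, and noting $A/I=\calO(T)$. The only points needing verification are then: (i) $U^h_T$ is affine, which is \Cref{lm:AffHens}; (ii) $(\calO(U^h_T),I)$ is a henselian pair, which follows from the universal property of the henselization of a pair since every \'etale $\calO(U^h_T)$-algebra with a section mod $I$ splits; and (iii) the naturality of the resulting surjection $\SmAff_B(U^h_T,X)\to\SmAff_B(T,X)$ in $T$, $U$, $X$, which is immediate from the construction. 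The hard part will be handling the general (not necessarily henselian-pair-a-priori) input cleanly: one must confirm that the pair $(U,T)$ with $U$ merely affine of finite type over $B$ really does have the henselization described by \eqref{eq:proafhenselizationXhY}, and that this henselization remains affine and noetherian, so that the cited algebraic lifting results apply verbatim. Once that bookkeeping is in place, the lifting itself is a direct application of formal smoothness over a henselian pair.
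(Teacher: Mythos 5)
Your proposal is correct, and the underlying mechanism (smoothness of $X/B$ plus the henselian property of the pair $(U^h_T,T)$) is the same as in the paper, but the execution differs in a way worth noting. Your preferred route is to base-change to the smooth affine $U^h_T$-scheme $X\times_B U^h_T$ and invoke the general Elkik-type lifting theorem that sections of a smooth affine scheme over a henselian pair $(A,I)$ surject onto sections over $A/I$; this works, provided you verify (as you flag) that $U^h_T$ is affine and that the pair is henselian in the sense of the Stacks Project, which for affine $U$ follows from \Cref{lm:AffHens} and the universal property of \eqref{eq:proafhenselizationXhY}. The paper instead keeps the argument self-contained: it first reduces to the case where $X$ has trivial tangent bundle (replacing $X$ by the total space of a complement bundle $\xi$ with $\xi\oplus T_X$ trivial, of which $X$ is a retract), so that after embedding $X\not\hookrightarrow\A^n_B$ the normal bundle is trivial and $X$ is a clopen component of $Z(f_1,\dots,f_l)$; it then adjoins $n-l$ further functions cutting out an \'etale neighborhood $V$ of the graph $\Gamma\cong T$ inside $X\times_B U$, and concludes by splitting $T\to V$ over $U^h_T$ using only \Cref{lm:pro-etale:lift}. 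In other words, the paper proves by hand exactly the special case of the lifting theorem it needs, at the cost of the tangent-bundle reduction; your version buys brevity at the cost of an external citation. Your first, more ``concrete'' sketch (extend $T\to\A^n_B$ arbitrarily to $h_0\colon U^h_T\to\A^n_B$ and then ``correct'' it) is essentially the paper's argument, but as written it glosses over precisely the point the paper's reduction handles: to perform the correction you need $X$ presented, at least near the graph, as a complete intersection with trivialized normal bundle, which is not automatic for an arbitrary closed immersion $X\not\hookrightarrow\A^n_B$. Since your clean route via the cited theorem covers this, there is no gap, but if you write up the hands-on version you should include the trivialization step.
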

\begin{proof}
Since $X$ is smooth affine, 
there exists a vector bundle $\xi$ on $X$ such that $\xi\oplus T_X$ is trivial. 
Note that $X$ is a retract of the total space $\widetilde X$ of $\xi$ and the tangent bundle of the 
$X$-scheme $\widetilde X$ is trivial.
Hence we may assume $X$ has a trivial tangent bundle.

Choose a closed immersion $X\not\hookrightarrow \A^n_B$ for some $n\gg 0$. 
Since the tangent bundle of $X$ is trivial,
the associated normal bundle $N_{X/\A^n_B}$ of $X$ in $\A^n_B$ is stably trivial. 
By increasing $n$ we may assume $N_{X/\A^n_B}$ is trivial.
Hence there exist polynomials $f_1,\dots, f_{l}\in \mathcal O(\A^n_B)$ such that the vanishing locus 
$Z(f_1,\dots,f_l)$ splits as $X\amalg X^\prime$ for some $X^\prime$.
We use the same notation for the pullbacks of the $f_i$'s in $\mathcal O(\A^n_U)$ and $\mathcal O(\A^n_{T})$.

The graph of any scheme morphism $T\to X$ yields a closed immersion 
$\Gamma\not\hookrightarrow X\times_B T\subset \A^n_{T}$ 
such that the canonical projection induces an isomorphism $\Gamma\simeq T$.
Since the cotangent bundle of $X$ is trivial there exist polynomials $f_{l+1},\dots, f_{n}\in \mathcal O(\A^n_U)$ 
such that the differentials of the $f_i$'s, $i=l,\dots,n$, 
furnish a trivialization of the conormal bundle of $\Gamma$ in $X\times_B T$. 
We choose liftings of $f_{l+1},\dots,f_n$ to regular functions $\widetilde f_{l+1},\dots,\widetilde f_n\in \mathcal O(\A^n_{U^h_T})$
and use the trivialization of the conormal bundle of $\Gamma$ in $\A^n_T$ to conclude the vanishing locus 
$$
V
=
Z(f_1,\dots,f_l,\widetilde f_{l+1},\dots,\widetilde f_n)\subset (X\times_B U)\subset \A^n_U
$$ 
is {\'e}tale over $\Gamma$.
Now, 
since $T\not\hookrightarrow U^h_T$ is a henselian pair, 
\Cref{lm:pro-etale:lift} shows that the morphism $T\simeq \Gamma\to V$ lifts to $U^h_T\to V$. 
The desired lifting $U^h_T\to X$ of $T\to X$ is given by the composite 
\[
U^h_T \to V\to X\times_B U \to X.
\]
\end{proof}

\section{Framed correspondences}
\label{sectionApp:FrCor}

In this appendix, 
we collect definitions and results from the theory of framed correspondences which is used 
in the main body of the paper.

\begin{definition}
\label{def:FramedCorr}
Let $X$ and $Y$ be schemes over a base scheme $B$.
A framed correspondence of level $n$ from $X$ to $Y$ is a triple $c=(S,\phi,g)$, 
where $S\not\hookrightarrow \A^n_X$ is a closed immersion, 
$V=(\A^n_X)^h_S$ is the scheme defined in \eqref{eq:proafhenselizationXhY}, 
$\phi\colon V\to \A^n_B$ and $g\colon V\to Y$ are 
morphisms such that $S\cong V\times_{\phi,\A^n_B,i} (0\times B)$.
Here $i\colon 0\times B\to \A^n_B$ is the $0$-section.
We write $\Fr_n(X,Y)$ for the set of framed correspondences of level $n$ from $X$ to $Y$, 
and refer to $\Supp(c)\defeq S$ as the support of $c$.

If the projection $\A^n_X\to X$ induces an isomorphism $Z\cong X$,
we say that $c$ is a framing of the morphism 
$$X\cong S\xrightarrow{g\big|_S} Y.$$
\end{definition}

\begin{remark}
In \Cref{def:FramedCorr} we use the scheme $(\A^n_X)^h_S$ instead of \'etale neighborhoods of $B$ as in 
\cite[Definition 2.1]{Framed}.
These two definitions of framed correspondences agree if $X$ is affine. 
Consequently, 
the notions of strict $\A^{1}$-invariance in the sense of \Cref{def:SHIthnistriv,def:SHIthnistf} 
are equivalent for both types of framed correspondences.
\end{remark}

\begin{definition} 
\label{example:quasistable}
If $X\in \Sch_{B}$ we set $\sigma_X:=(0\times X, \mathrm{id}_{\A^{1}_X},\mathrm{pr}\colon \A^{1}_X\to X)\in\Fr_1(X,X)$.
\end{definition}

\begin{definition}
\label{def:FrCorrComposition}
The composition of two framed correspondences 
$(S, \phi, g)\in \Fr_n(X,X^\prime)$ and $(S^\prime, \phi^\prime, g^\prime)\in\Fr_m(X^\prime,X^{\prime\prime})$
is the level $m+n$ framed correspondence from $X$ to $X^{\prime\prime}$ 
given by $(S^{\prime\prime}, \phi^{\prime\prime}, g^{\prime\prime})$, 
where 
\begin{itemize}
\item[(1)] 
$S^{\prime\prime} =S\times_{X^\prime} S^\prime\subset \A^{n+m}_{X}
\cong 
\A^n_{X}\times_{g,X^\prime,\mathrm{pr}_{X^\prime}} \A^m_{X^\prime}$, 
$\mathrm{pr}_{X^\prime}\colon \A^m_{X^\prime}\to X^\prime$.
\item[(2)] 
$V^{\prime\prime}\to \A^{n+m}_{X}$ is the henselization of $\A^{n+m}_{X}$ at $S^{\prime\prime}$.
There is a canonical morphism $V^{\prime\prime}\to V$ (resp.~$V^{\prime\prime}\to V^\prime$)
to the henselization of $\A^n_X$ at $B$ (resp.~$\A^m_Y$ at $S^\prime$). 
\item[(3)]
$\phi^{\prime\prime}\colon V^{\prime\prime}\to \A^{n+m}_B$ is given by the composites
$V^{\prime\prime}\to V\xrightarrow{\phi} \A^n_B$, $V^{\prime\prime}\to V^\prime\xrightarrow{\phi^\prime} \A^m_B$.
\item[(4)]
$g^{\prime\prime}\colon V^{\prime\prime}\to X^{\prime\prime}$ is the composite of 
the morphism $V^{\prime\prime}\to V^\prime$, 
induced by the base change $(v^\prime)^*(g)$ of $g$ along $v^\prime\colon V^\prime\to X^\prime$ and 
$g^{\prime}\colon V^\prime\to X^{\prime\prime}$.
\end{itemize}
\end{definition}

If $Z\not\hookrightarrow B$ is a closed immersion, 
recall from \Cref{subsection:candn} that $X^h_Z$ denotes $X^h_{X\times_B Z}$.
We write $\Sch_{B,Z}$ for the full subcategory of $\Sch_B$ spanned by $X^h_Z$ for $X\in \Sch_B$.

\begin{lemma}
\label{lm:etneigh:productoverBcZsublm}
For any $X,Y\in\Sch_B$, 
the natural morphism of $B$-schemes
\[(X^h_Z\times_B Y)^h_Z\to (X\times_B Y)^h_Z\]
induced by
the natural morphism of $B$-schemes $X^h_Z\to X$
is an isomorphism.
\end{lemma}

\begin{proof}
Recall that $X^h_Z\cong\varlim_{X^\prime}X^\prime$, see \Cref{rem:etaleneighborhoods},
where $X^\prime$ runes over the cofiltered category of
\'etale neighborhoods $X^\prime\to X$ of $Z$ in $X$ such that $X^\prime\times_B Z\cong X\times_B Z$.
Given such an \'etale neighborhood $X^\prime\to X$, 
the morphism
$(X^\prime\times_B Y)\to(X\times_B Y)$ is \'etale and $(X^\prime\times_B Y)^h_Z\cong(X\times_B Y)^h_Z$.
By commuting limits we obtain
\[
(X^h_Z\times_B Y)^h_Z\cong(\varlim_{X^\prime}X^\prime\times_B Y)^h_Z\cong
\varlim_{X^\prime}(X^\prime\times_B Y)^h_Z\cong(X\times_B Y)^h_.
\]
\end{proof}

\begin{lemma}
\label{lm:productoverBcZ}
For any $X,Y\in\Sch_B$, 
the natural morphism of $B$-schemes
\[(X^h_Z\times_B Y^h_Z)^h_Z\to (X\times_B Y)^h_Z\]
induced by
the natural morphisms of $B$-schemes 
$X^h_Z\to X$ 
and
$Y^h_Z\to Y$
is an isomorphism.
\end{lemma}
\begin{proof}
By \Cref{lm:etneigh:productoverBcZsublm} we have
\[(X^h_Z\times_B Y^h_Z)^h_Z\cong 
(X^h_Z\times_B Y)^h_Z\cong 
(X\times_B Y)^h_Z.\]
\end{proof}

\begin{definition}
\label{def:productoverBcZ}
The monoidal product \index{Categories of $B$-schemes!$\times_{B,Z}$} $\Sch_{B,Z}\times\Sch_{B,Z}\to \Sch_{B,Z}$ is given by 
\[
(X^h_Z,Y^h_Z)\mapsto 
X^h_Z\times_{B,Z}Y^h_Z\stackrel{\text{def}}{=}(X^h_Z\times_B Y^h_Z)^h_Z.
\]
\end{definition}

Note that, by \Cref{lm:productoverBcZ}, there is a naturally induced isomorphism
\begin{equation}\label{eq:XhztimesBcZYhZcongXtimesBYhZ}
X^h_Z\times_{B,Z}Y^h_Z
\cong
(X\times_B Y)^h_Z.
\end{equation}
If $X\in\Sch_{B,Z}$ we write $\A^n_X$ for the fiber product $\A^n\times_{B,Z} X\in \Sch_{B,Z}$,
and note that it is isomorphic to $(\A^n\times_B X)^h_Z$.

\begin{definition}\label{def:fibredproductSmBcZ}
For morphisms
$X^h_Z\to S^h_Z$, $Y^h_Z\to S^h_Z$ in $\SmBcZ$,
we define $X^h_Z\times_{S^h_Z}Y^h_Z\in\SmBcZ$
as the image of $X^h_Z\times_{S^h_Z,S_Z}Y^h_Z\in\Sm_{S^h_Z,S_Z}$ 
under the functor $\Sm_{S^h_Z,S_Z}\to\Sm_{B,Z}$,  
where $S_Z=S^h_Z\times_B Z$.
\end{definition}
We have the isomorphism
\begin{equation}\label{eq:productXhZShZYhZ}X^h_Z\times_{S^h_Z}Y^h_Z\cong(X^h_Z\times_{S^h_Z} Y^h_Z)^h_Z.\end{equation}

\begin{lemma}\label{lm:prodSmAffSmatBcZ}
    The subcategories $\SmAffBcZ$ and $\SmatBcZ$ in $\SmBcZ$
    are closed with respect to the fibre products in $\SmBcZ$ from \Cref{def:fibredproductSmBcZ}.
\end{lemma}
\begin{proof}
    The claim follows by \eqref{eq:productXhZShZYhZ} 
    because 
    the subcategories $\SmAff_B$ and $\Smat_B$ in $\Sm_B$
    are closed with respect to the fibre product in $\Sm_B$.
\end{proof}
\vspace{0.1in}

Next we define framed correspondences in $\Sch_{B,Z}$ by tweaking \Cref{def:FramedCorr}.
\begin{definition}
\label{def:FrCorrSchScZ}
For $X,Y\in \Sch_{B,Z}$,
a framed correspondence of level $n$ from $X$ to $Y$ is a triple $(S,\phi,g)$, 
where $S\not\hookrightarrow (\A^n_X)^h_Z$ is a closed subscheme, 
$V=((\A^n_X)^h_Z)^h_S \cong (\A^n_X)^h_S$ is given by \eqref{eq:proafhenselizationXhY},
$\phi\colon V\to \A^n_B$ and $g\colon V\to Y$ are morphisms such that $S\cong V\times_{\phi,\A^n_B,i} (0\times B)$.
Here $i\colon 0\times B\to \A^n_B$ is the $0$-section.
\end{definition}

Framed correspondences in $\Sch_{B,Z}$ admit a composition given as in \Cref{def:FrCorrComposition}. 
We write $\Fr_+(\Sch_B)$ for the category with objects $X\in\Sch_{B}$ and morphisms given by
\[
\Fr_+(X,Y)\defeq \bigvee_{n}\Fr_n(X,Y).
\] 
The smooth $B$-schemes span a full subcategory $\Fr_+(B)=\Fr_+(\Sm_{B})$ of $\Fr_+(\Sch_B)$.
Similarly, 
the subcategory $\Sm_{B,Z}\subset\Sch_{B,Z}$ spanned by $X^h_Z$, $X\in \Sm_{B}$, 
gives rise to a full subcategory $\Fr_+(B,Z)$ of $\Fr_+(\Sch_{B,Z})$.

\begin{lemma}
\label{lm:HensIdempotent}
If $Z\not\hookrightarrow W\not\hookrightarrow X$ are closed immersions then $X^h_Z \cong (X^h_W)^h_Z$.
If $Z, W\not\hookrightarrow X$ are closed immersions such that the natural map $Z^h_{Z\cap W}\to Z$ 
is an isomorphism, 
then 
$$
X^h_Z \cong (X^h_W)^h_{Z^h_{Z\cap W}} \cong (X^h_W)^h_Z.
$$  
The latter isomorphism uses the closed immersion $Z\cong Z^h_{Z\cap W}\not\hookrightarrow X^h_W$.
\end{lemma}
\begin{proof}
The first claim follows from the universal property of the cofiltered limit in \eqref{eq:proafhenselizationXhY}.
The second follows from the first because $Z\cong  Z^h_{Z\cap W}$ so that 
$(X^h_W)^h_{Z\cap W}\cong (X^h_W)^h_Z$,
and
$X^h_{Z\cap W} \cong X^h_Z$,
and consequently we obtain 
\[
X^h_Z \cong 
X^h_{Z\cap W} \cong 
(X^h_W)^h_{Z\cap W}\cong 
(X^h_W)^h_Z.
\]
\end{proof}

\begin{lemma}
\label{rm:FrCorSchScZ}
Given a closed immersion $Z\not\hookrightarrow B$, and $X,Y\in \Sch_B$, 
there is an isomorphism of sets
\[\Fr^{\Sch_{B,Z}}_n(X^h_Z, Y^h_Z)\cong \Fr^{\Sch_{B}}_n(X^h_Z,Y^h_Z)\]
where the left-hand side is as in \Cref{def:FrCorrSchScZ} for 
$X^h_Z,Y^h_Z\in \Sch_{B,Z}$, and
the right-hand side is as in \Cref{def:FramedCorr} for $X^h_Z,Y^h_Z\in \Sch_{B}$.
\end{lemma}
\begin{proof}
For any $X\in\Sch_B$, by \Cref{lm:etneigh:productoverBcZsublm} we have
\[(\A^n_{X^h_Z})^h_Z=(\A^n_{X^h_{X\times Z}})^h_{\A^n_{X\times_B Z}}\cong(\A^n_{X})^h_{\A^n_{X\times_B Z}}\cong(\A^n_X)^h_Z.\]
Given a closed immersion $S\not\hookrightarrow \A^n_{X^h_Z}$, 
$S$ is finite over $X^h_Z$, and consequently $S^h_Z\cong S$.
So there is the closed immersion 
\[S\not\hookrightarrow (\A^n_{X^h_Z})^h_Z\cong (\A^n_X)^h_Z.\]
Applying \Cref{lm:HensIdempotent} to the immersions 
$S,\A^1_{X\times_B Z}\hookrightarrow \A^n_{X^h_Z}$
shows there is an isomorphism 
\[((\A^n_X)^h_Z)^h_S\cong (\A^n_{X^h_Z})^h_S.\]
\end{proof}

\begin{lemma}
\label{lm:HensBasechangeFrCor}
If $Z\not\hookrightarrow B$ is a closed immersion, 
then the functors 
\begin{equation}
\label{eq:Sch(B)Sch(BcZ)Sch(B-Z)}
\begin{array}{lll}
\Sch_B\to \Sch_B&\colon& X\mapsto X^h_Z, X\mapsto X\times_B(B-Z),\\
\Sch_B\to \Sch_{B,Z}&\colon& X\mapsto X^h_Z,\\
\Sch_B\to \Sch_{B-Z}&\colon& X\mapsto X\times_B(B-Z).\\
\end{array}
\end{equation}
extend to framed correspondences, 
see \Cref{def:FramedCorr}, 
so there are induced functors
\[\begin{array}{lll}
\Fr_+(\Sch_B)\to \Fr_+(\Sch_B)&\colon& X\mapsto X^h_Z, X\mapsto X\times_B(B-Z),\\
\Fr_+(\Sch_B)\to \Fr_+(\Sch_{B,Z})&\colon& X\mapsto X^h_Z,\\
\Fr_+(\Sch_B)\to \Fr_+(\Sch_{B-Z})&\colon& X\mapsto X\times_B(B-Z).\\
\end{array}\]
\end{lemma}
\begin{proof}
The data of an explicit framed correspondence over $B$ is encoded by diagrams of the form
\begin{equation}
\label{diagram:frameddefinition}.    
\xymatrix{
\A^n_X\ar[rdd]& V\ar[l]\ar[dr]^{(\varphi,g)}\\
& S\ar[u]\ar[lu]\ar[d]\ar[dr] & \A^n_Y\\
&X &Y\ar[u]_{\{0\}}.
}
\end{equation}
It remains to note that the functors in \eqref{eq:Sch(B)Sch(BcZ)Sch(B-Z)} 
preserve diagrams of the form \eqref{diagram:frameddefinition}.
\end{proof}

\begin{lemma}
\label{lm:SmccisubsetSchcci}
There is a commutative triangle 
\[\xymatrix{
\Smat_B\ar[dr]\ar[rr] && \Schcci_B\ar[dl]\\
& \Sch_B & 
}
\]
The fully faithful functors $\Smat_B\to\Sch_B$ and $\Schcci_B\to\Sch_B$ appear in
\Cref{convnotations:schemescategories}. That is, 
for every $X\in \Smat_B$ there exists a closed immersion $X\not\hookrightarrow \A^n_B$ and 
regular functions $e_1,\dots,e_n\in \calO(\A^n_B)$ such that $Z(e_1,\dots,e_n)=X\amalg\wh X$.
\end{lemma}
\begin{proof}
By assumption $X\in \SmAff_B$ and the relative tangent bundle of $X$ over $B$ is stably trivial. 
Hence there is a closed immersion $X\not\hookrightarrow\A^{n_1}_X$ and 
$T_{X/B}\oplus \mathcal O^{\oplus l}_X\cong \mathcal O^{\oplus n_2}_X$.
By increasing $n_1$ and $n_2$ we may assume $n=n_1=n_2$.
We claim there exists a trivialization of the normal bundle $N_{X/\A^n_B}$.
In effect, 
choose regular functions $e_1,\dots,e_l$ on $\A^n_B$ such that $e_i\big|_X=0$, $i=1,\dots,l$. 
Then the differential of $(e_1,\dots ,e_l)$ yields the desired trivialization of $N_{X/\A^n_B}$.
\end{proof}

\begin{lemma}
\label{lm:cciequationsciequations}
For every $X\in \Smat_B$ or $X\in \Schcci_B$ there exists a closed immersion $X\not\hookrightarrow \A^n_B$ 
and regular functions $e_1,\dots, e_l\in \calO(\A^n_B)$ such that $Z(e_1,\dots,e_l)=X$.
\end{lemma}
\begin{proof}
Owing to \Cref{lm:SmccisubsetSchcci} we may assume $X\in \Schcci_B$ is a clopen subscheme of the vanishing locus 
$Z(e^\prime_1,\dots,e^\prime_{l^\prime})$ for some regular functions $e^\prime_i\in \calO(\A^{n^\prime}_B)$ such that 
$\dim_B Z(e_1^\prime,\dots,e_{l^\prime}^\prime)=n-l$.
It follow that $Z(e^\prime_1,\dots,e^\prime_n)=X\amalg\wh X$.
Let $s$ be a regular function on $\A^{n^\prime}_B$ such that $s\big|_X=0$, $s\big|_{\wh X}=1$, 
and set $n=n^\prime+1$, $l=l^\prime+1$. 
Define $e_1,\dots,e_{l^\prime}\in \calO_{\A^n_B}(\A^n_B)$ as
the pullbacks of $e^\prime_{i}$ along the projection $\A^n_B\to \A^{n^\prime}_B$.
Next we set $e_l:=x_n-\tilde{s}\in\calO_{\A^n_B}(\A^n_B)$, 
where $x_n$ is the coordinate on $\A^n$ such that $Z(x_n)=\A^{n^\prime}_B$ and $\tilde{s}$ is the pullback of $s$.  
Then the subscheme $X\times 0$ of $\A^n_B$ agrees with the vanishing locus $Z(e_1,\dots,e_n)$.
\end{proof}

\begin{lemma}
\label{lm:liftFr}
 Suppose $B$ is affine and let $X\in \SmAff_B$, $U\in \AffSch_B$.
If $Y\not\hookrightarrow U$ is a closed immersion, 
there is a naturally induced surjection 
\[
\Fr_+(U^h_Y,X)\to\Fr_+(Y, X).
\]
\end{lemma}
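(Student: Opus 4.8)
The plan is to reduce the statement to the lifting property for framed correspondences along henselian pairs that is implicit in the main body (the diagram \eqref{eq:fr-lift-diagr} and \Cref{lm:diffFrCor}, together with the smooth-lifting lemma \Cref{lm:AffSmHenselianLift} referenced throughout \Cref{section:defsmZsmB}). A framed correspondence $c \in \Fr_n(U^h_Y, X)$ is the data of a diagram of the shape \eqref{diagram:frameddefinition}: a closed immersion $S \not\hookrightarrow \A^n_{U^h_Y}$ finite over $U^h_Y$, the henselization $V = (\A^n_{U^h_Y})^h_S$, and regular morphisms $\varphi\colon V \to \A^n_B$, $g\colon V \to X$ with $S = V\times_{\varphi,\A^n_B,i}(0\times B)$. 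Restricting $c$ along the closed immersion $Y \not\hookrightarrow U^h_Y$ produces an element of $\Fr_n(Y,X)$: one pulls back $S$ to $S_Y = S\times_{U^h_Y} Y \not\hookrightarrow \A^n_Y$, observes $(\A^n_Y)^h_{S_Y} \cong V \times_{U^h_Y} Y$, and restricts $\varphi$ and $g$. This defines the map $\Fr_+(U^h_Y,X) \to \Fr_+(Y,X)$ in the statement; what must be shown is surjectivity.

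First I would take a framed correspondence $d = (S_Y,\psi,h) \in \Fr_n(Y,X)$ with $S_Y \not\hookrightarrow \A^n_Y$ and $V_Y = (\A^n_Y)^h_{S_Y}$, and lift its three constituents one at a time. The closed subscheme $S_Y$ of $\A^n_Y$, being finite over $Y$, lifts to a closed subscheme $S$ of $\A^n_{U^h_Y}$ finite over $U^h_Y$ with $S\times_{U^h_Y} Y = S_Y$ — here I would use that $U$ is affine, so $\A^n_U$ is affine, and that $Y \not\hookrightarrow U^h_Y$ is a henselian pair, so that the étale (in fact finite) neighborhood data descends; concretely one can lift the ideal of $S_Y$ via surjectivity of $\calO(\A^n_{U^h_Y}) \to \calO(\A^n_Y)$, exactly as in the proof of \Cref{lm:LiftSmat}. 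Set $V = (\A^n_{U^h_Y})^h_S$. The map $\psi\colon V_Y \to \A^n_B$ lifts to $\varphi\colon V \to \A^n_B$ because $\A^n_B$ is (relatively) affine and $V_Y \not\hookrightarrow V$ is a henselian pair (the henselization of $\A^n_{U^h_Y}$ along a subscheme lying over $Y$ restricts along $Y$ to the henselization of $\A^n_Y$ along $S_Y$); moreover $\varphi$ can be chosen so that its zero-fiber is exactly $S$, by adjusting $\varphi$ by an element of the square of the ideal of $S$, which does not change the restriction to $V_Y$ modulo the ideal of $S_Y$. The equation hypothesis ($X \in \SmAff_B$, so $X$ admits a closed immersion into affine space by \Cref{lm:cciequationsciequations}) lets me lift $h\colon V_Y \to X$ to $g\colon V \to X$ by the henselian-pair smooth lifting lemma \Cref{lm:AffSmHenselianLift} applied to the smooth (even étale) structure of $X$ near the image — this is the precise analogue of the lift $g$ constructed in the proof of \Cref{prop:CorFretaVetatoXeta-Yeta}. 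The triple $(S,\varphi,g)$ is then a framed correspondence in $\Fr_n(U^h_Y,X)$ restricting to $d$, and assembling over all levels $n$ gives the surjection on $\Fr_+$.

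The main obstacle I expect is bookkeeping the compatibility of the various henselizations: one must check that $(\A^n_{U^h_Y})^h_S$ restricted along $Y$ genuinely recovers $(\A^n_Y)^h_{S_Y}$, and that the successive choices (lift of $S$, then $\varphi$ with prescribed zero locus, then $g$) can be made coherently rather than merely each in isolation — i.e. that fixing $S$ first does not obstruct the later lifts. This is handled by the idempotency of henselization (\Cref{lm:HensIdempotent}, giving $((\A^n_X)^h_Z)^h_S \cong (\A^n_X)^h_S$) and by always lifting into a relatively affine target over a henselian pair, where obstructions vanish. Once the lifts are known to assemble, naturality and surjectivity are formal. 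I would also remark that the affineness of $U$ (not just of $B$) is used, since it guarantees $\A^n_U$ is affine and the relevant rings of functions surject; this is why the hypothesis reads $U \in \AffSch_B$ rather than $U \in \Sch_B$.
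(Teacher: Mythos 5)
Your overall strategy---restrict is the easy direction, and for surjectivity lift the support, the framing, and the morphism to $X$ one at a time using affineness and henselian-pair lifting (\Cref{lm:AffSmHenselianLift})---matches the paper's. But the order in which you perform the lifts creates a genuine gap. You first lift $S_Y$ to an arbitrary closed subscheme $S\subset\A^n_{U^h_Y}$ by lifting its ideal, and only afterwards try to produce $\varphi$ with $Z(\varphi)=S$ ``by adjusting $\varphi$ by an element of the square of the ideal of $S$.'' That adjustment at best forces $\varphi$ into $I(S)$, i.e.\ $Z(\varphi)\supseteq S$, and even this presupposes that the initially lifted $\varphi$ already vanishes on $S$, which nothing guarantees: the lift of $\psi$ is only constrained along $S_Y$, not along your chosen $S$. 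There is no reason a pre-chosen lift of the support is cut out by $n$ equations restricting to the given framing. The paper resolves this by reversing the order: it lifts the framing functions first and then \emph{defines} the new support as $\widetilde S:=Z(\overline\phi)$, so that the framing condition holds by construction; the price is that one must then check $\widetilde S$ is finite over $U$ and that $S\hookrightarrow\widetilde S$ is a henselian pair (whence the henselizations of the ambient affine spaces agree), which is where the hypothesis that $Y\not\hookrightarrow U$ is closed and $U$ affine enters.

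A second, related gap: you invoke surjectivity of $\calO(\A^n_{U^h_Y})\to\calO(\A^n_Y)$ to lift $\psi$ and (implicitly) $h$, but these are defined on the henselization $V_Y=(\A^n_Y)^h_{S_Y}$, which is a pro-(affine \'etale) scheme over $\A^n_Y$, not a closed subscheme of it; its ring of functions does not receive a surjection from $\calO(\A^n_{U^h_Y})$. The paper handles this by (i) restricting the $\phi_i$ to the first-order thickening $T$ of $S_Y$ inside $\A^n_Y$ --- a genuine closed subscheme of the affine scheme $\A^n_U$ --- and lifting from there, and (ii) lifting the \'etale neighborhood $V_Y$ itself by presenting it as a complete intersection $Z(e_1,\dots,e_l)$ in a larger affine space via \Cref{lm:cciequationsciequations} and lifting the $e_j$. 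Your ``main obstacle'' paragraph correctly flags the henselization bookkeeping (and \Cref{lm:HensIdempotent} is indeed relevant), but the thickening step and the lift of the \'etale neighborhood are the missing ingredients that make the coherence of the successive choices actually work. The final step, lifting $h$ to $g$ via \Cref{lm:AffSmHenselianLift} using smoothness and affineness of $X$, is correct and is exactly what the paper does.
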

\begin{proof}
Let $c=(S,V,\phi,g)\in \mathrm{Fr}_n(Y,X)$ for some $S\subset \A^n_{Y}$ and $\phi_i\in \mathcal O(V)$.
Let $T$ be the first-order thickening of $Z(\phi)\subset \A^n_{Y}$,
see \cite[Tag 04EW]{StacksProject}, 
with restrictions $\phi_i\big|_T\in \mathcal O(T)$ for $1\leq i\leq n$.
Let $\overline \phi_i\in \mathcal O(\A^n_{U})$ be a lifting of $\phi_i\big|_T$ along the closed immersion of 
affine schemes $T\not\hookrightarrow \A^n_{U}$, 
and set $\widetilde S=Z(\overline \phi)$.
By shrinking $V$ we may assume there is a closed immersion $V\not\hookrightarrow\A^{l}_{\A^n\times Y}$, 
for some $l$,
such that the image of $B$ equals $0_S=0\times S$.
Moreover,
since $V$ is \'etale over $\A^n\times Y\cong \A^n_{Y}$, 
\Cref{lm:cciequationsciequations} lets us assume $V=Z(e_1,\dots,e_l)$ for regular functions 
$e_j\in \calO(\A^l_{\A^n\times Y})$, $1\leq j\leq l$.
Now choose a lifting $\widetilde e_j\in \mathcal O(\A^l_{\A^n\times U})$ of $e_j$ such that 
$e_j\big|_{O\times\widetilde S}=0$ and set $\widetilde V=Z(\widetilde e_1,\dots, \widetilde e_l)$;
this is a subscheme of $\A^l_{\A^n\times U}$. 
The first order thickening $\widetilde T$ of $\widetilde S$ in $\A^n_U$ and the closed immersions
\[
V\not\hookrightarrow 
\A^l_{\A^n\times Y}
\not\hookrightarrow 
\A^l_{\A^n\times U}, 
\widetilde S
\not\hookrightarrow
\A^n_U\cong 0\times \A^n_U
\not\hookrightarrow
\A^l\times \A^n_U
\cong 
\A^l_{\A^n\times U}
\]
yield $V\amalg_{T}\widetilde T\not\hookrightarrow \A^l_{\A^n\times U}$.
Let $\widetilde\phi_i\in \mathcal O(\widetilde V)$ be a lift of the regular function 
$\phi_i\amalg \overline{\phi_i}\in \mathcal O(V\amalg_T \widetilde T)$, $1\leq i\leq n$.
Note that $\widetilde S$ is finite over $U$;
hence $S\hookrightarrow \widetilde S$ is a henselian pair since the same holds for $Y\hookrightarrow U$.
Thus the henselization of $B$ in $\A^n_{Y}$ equals the henselization of $\widetilde S$ in $\A^n_U$.
Since $X$ is smooth over $B$, 
\Cref{lm:AffSmHenselianLift} shows that there exists a lifting $\widetilde g\colon (\A^n_{U})^h_{\widetilde S}\to X$ 
of $(\A^n_{Y})^h_R\xrightarrow{g} X$. 
The framed correspondence $\widetilde c= (\widetilde S,\widetilde V, \widetilde \phi_i,\widetilde g)$ is the 
desired lifting of $c$.
\end{proof}

\begin{definition}
\label{def:overlineF}
For $c_0,c_1\in \Fr_n(X,Y)$, $X,Y\in \Sm_B$, 
we write $c_1\sim_{\A^{1}}c_2$ if there exists a framed correspondence $c\in\Fr_n(X\times\A^{1},Y)$ such that 
for the canonical sections $i_0,i_1\colon X\to X\times\A^{1}$ we have $c\circ i_0=c_0$ and $c\circ i_1=c_1$.
Let $\ovFr_n(X,Y)$ denote the quotient set of $\Fr_n$ with respect to the equivalence relation generated 
by $\sim_{\A^{1}}$.
\end{definition}

\begin{definition}
\label{def:ZFandoverlineZF}
For $Y\in \Sm_B$ let $\mathbb Z\Fr_n(-,Y)$ 
denote the presheaf of free abelian groups associated to $\Fr_n(-,Y)$.
Following the notations of \cite[Definition 2.11]{hty-inv}, 
we write $\ZF_n(-,Y)$ for the quotient of $\ZFr_n(-,Y)$ with respect to the relations
\[
\begin{array}{lll} 
c=c_1+c_2, &c=(S,\phi,g)\in \Fr_+(X,Y),&X\in \Sm_B,\\
S=S_1\amalg S_2 & c_1=(S_1,\phi,g)\in \Fr_+(X,Y), &\\
& c_2=(S_2,\phi,g)\in \Fr_+(X,Y). & 
\end{array}
\]
For $X\in\Sm_B$ and the canonical sections $i_0, i_1\colon X\to \A^{1}\times X$, 
we define $\overline \ZF_n(-,Y)$ by
\[
\overline \ZF_n(X,Y)
:=
\Coker(\ZF_n(X\times\A^{1},Y)\xrightarrow{i^*_0-i^*_1}\ZF_n(X,Y)).
\]
\end{definition}

\begin{definition}
\label{def:FrZFrZFovZF}
If $X,Y\in\Sm_B$ consider the pointed sets and abelian groups
\[\begin{array}{llllll}
\Fr_+(X, Y) &:=& \bigvee_{n\geq 0}\Fr_n(X, Y),&\quad \Fr(X, Y) &:=& \varinjlim_{n}\Fr_n(X,Y),\\
\ZFr_*(X, Y) &:=& \bigoplus_{n\geq 0}\ZFr_n(X, Y),&\quad \ZFr(X,Y)&:=&\varinjlim_{n}\ZFr_n(X,Y),\\
\ZF_*(X, Y) &:=& \bigoplus_{n\geq 0}\ZF_n(X, Y),&\quad \ZF(X,Y)&:=&\varinjlim_{n}\ZF_n(X,Y),\\
\ovZF_*(X, Y) &:=& \bigoplus_{n\geq 0}\ovZF_n(X, Y),&\quad \ovZF(X,Y)&:=&\varinjlim_{n}\ovZF_n(X,Y).
\end{array}\]
Precomposition with $\sigma_{Y}$ yields the transition maps in the colimits, 
see \Cref{example:quasistable}.
Let $\ZFr_*(B)$, $\ZF_*(B)$, and $\ovZF_*(B)$ denote the categories with objects smooth $B$-schemes and 
with morphisms as above.
Let $\Fr(Y)$, $\ZFr(Y)$, $\ZF(Y)$, $\ovZF(Y)$ denote the quasi-stable framed presheaves on $\Sm_B$ defined
sectionwise as above.

The above definitions extend to simplicial smooth schemes via left Kan extension 
along the Yoneda embedding.
\end{definition}

Any framed presheaf of $S^1$-spectra $\calF\in \Spt_s(\Fr_+(B))$ defines a functor 
$\calF\colon \Fr_+(B)\to \SH$, 
which we denote by the same symbol.
Since the stable homotopy category $\SH$ is additive, 
any radditive functor $\calF\colon \Fr_+(B)\to \SH$ induces a functor
\begin{equation}
\label{eq:ZFrpairSH}
\ZF_*(B)\to \SH.
\end{equation}

\begin{prop}
\label{prop:A1localhomotopycommutativesquare}
Suppose $\calF\colon \Fr_+(B)\to \SH$ is radditive and $\A^{1}$-local.
For $c_1, c_2\in \Fr_n(X,Y)$ assume there exists an element $c\in \ZF_n(X\times\A^{1}, Y)$ such that 
\[
c_1=c\circ i_0,c_2=c\circ i_1\in\ZF_n(X,Y)
\]
for the canonical sections $i_0,i_1\colon X\to X\times\A^{1}$.
Then we have the equality
\[
c_1^*
=
c_2^*
\colon 
\calF(Y)
\to
\calF(X).
\]
In particular, the above applies to any radditive object $\calF\in \Spt_s(\Fr_+(B))$.
\end{prop}

\begin{proof}
From \eqref{eq:ZFrpairSH} we obtain the diagram
\[
\begin{tikzcd}
\calF(Y)\ar{r}{c^*}\ar[equal]{d} & 
\calF(X\times\A^{1})\ar[d,shift right,swap,"i_0^*"]\ar[d,shift left,"i_1^*"]\\
\calF(Y)\ar[r,shift left,"c_1^*"]\ar[r,shift right,swap,"c_2^*"] & \calF(X).
\end{tikzcd}
\]
Here we have $i_0^* \circ c^*=c^*_2$ and $i_1^* \circ c^*=c^*_1$.
Since $\calF$ is $\A^{1}$-local the projection $p\colon X\times\A^{1}\to X$ induces an inverse 
$p^*\colon \calF(X)\to \calF(\A^{1}\times X)$ to $i_0^*$ and $i_1^*$ in $\SH$. 
Hence the classes of $i_0^*$ and $i_1^*$, 
and thus also the classes of $c_1^*$ and $c_2^*$, 
agree in $\SH$.
\end{proof}

\begin{corollary}
\label{cor:ovZFrpairSH}
Every $\A^{1}$-local radditive functor $\calF\colon \Fr_+(B)\to \SH$ induces canonically a functor $\ovZF_*(B)\to \SH$.
\end{corollary}

Next, 
we introduce the notion of intermediate framed correspondences.
Such correspondences are somewhat easier to construct than framed correspondences, 
see \Cref{def:FramedCorr}.

\begin{definition}
\label{def:FramedCorrInterm}
If $X,Y\in\Sch_{B}$, 
an intermediate framed correspondence of level $n$ from $X$ to $Y$ is a triple $c=(S,\phi,g)$, 
where
\begin{itemize}
\item[(1)] 
$S\not\hookrightarrow \A^n_X$ is a closed immersion, 
\item[(2)]
$\phi\colon V\to \A^n_B$ is a morphism such that $S\cong V\times_{\phi,\A^n_B,i} (0\times B)$, 
where $V=(\A^n_X)^h_S$ and $i\colon 0\times B\to \A^n_B$ is the $0$-section,
\item[(3)]
$g\colon S\to Y$ is a morphism.
\end{itemize}
We refer to $B$ as the support of $c$ and write $\Supp(c)\defeq S$.
\end{definition}

\begin{lemma}
\label{lm:FrSimplFr}
Suppose $B$ is an affine scheme, $X\in \AffSch_B$, and $Y\in \SmAff_B$.
Then for every intermediate framed correspondence $(S,\varphi,g)$ there exists a framed correspondence 
$(S,\varphi,\widetilde g)$ such that $g\colon S\to Y$ and $\widetilde g\colon (\A^n_X)^h_S\to Y$ lifts $g$.
\end{lemma}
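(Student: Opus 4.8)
\textbf{Proof proposal for \Cref{lm:FrSimplFr}.}
The plan is to produce the regular morphism $\widetilde g\colon (\A^n_X)^h_S\to Y$ by a standard henselian lifting argument along the thickening $S\not\hookrightarrow (\A^n_X)^h_S$, mirroring the proof of \Cref{lm:liftFr} but in the simpler intermediate setting. First I would record the key geometric fact: the closed subscheme $S$ of $\A^n_X$ is finite over $X$ (indeed $S\cong V\times_{\phi,\A^n_B,i}(0\times B)$ is the vanishing locus of $n$ functions on the $n$-dimensional $\A^n_X$, cut out by a regular sequence, hence $S\to X$ is quasi-finite and, being closed in $\A^n_X$ and proper over $X$ after restriction, finite over $X$). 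Consequently the pair $S\not\hookrightarrow (\A^n_X)^h_S$ is a henselian pair of affine schemes: $S$ is affine because $X$ is affine and $S$ is finite over $X$, and $(\A^n_X)^h_S$ is the henselization of the affine scheme $\A^n_X$ along the closed subscheme $S$, which is again affine (a cofiltered limit of affine \'etale neighborhoods as in \eqref{eq:proafhenselizationXhY}).

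Next I would invoke smoothness of $Y$ over $B$ together with the henselian lifting property, which is exactly \Cref{lm:AffSmHenselianLift} (the lifting of smooth morphisms along henselian pairs of affine schemes used throughout \Cref{sect:counterexample} and \Cref{section:defsmZsmB}). Concretely: the morphism $g\colon S\to Y$ is a $B$-morphism from the closed subscheme $S$ of the henselian affine scheme $W\defeq (\A^n_X)^h_S$; since $Y$ is smooth over $B$ and $S\not\hookrightarrow W$ is a henselian pair, there exists a $B$-morphism $\widetilde g\colon W\to Y$ with $\widetilde g\big|_S=g$. The triple $(S,\varphi,\widetilde g)$ then literally satisfies the three conditions of \Cref{def:FramedCorr}: condition (1) on the closed immersion $S\not\hookrightarrow\A^n_X$ is unchanged, condition on $\varphi$ with $V=(\A^n_X)^h_S$ and $S\cong V\times_{\varphi,\A^n_B,i}(0\times B)$ is inherited verbatim from the intermediate datum, and now $\widetilde g\colon V\to Y$ is a genuine regular morphism on $V$ rather than merely on $S$. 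Thus $(S,\varphi,\widetilde g)\in\Fr_n(X,Y)$ and it lifts $(S,\varphi,g)$.

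The main obstacle — and the only point requiring care — is verifying that $S\not\hookrightarrow (\A^n_X)^h_S$ genuinely forms a henselian pair of affine schemes so that \Cref{lm:AffSmHenselianLift} applies; this hinges on $S$ being finite (not just quasi-finite) over the affine base $X$, which in turn uses that $S$ is cut out in $\A^n_X$ as the fiber of $\varphi$ over the origin, hence is closed in $\A^n_X$ and finite over $X$ by the structure of framed supports (cf.\ the discussion of supports in \Cref{lemma:withtransfersandframed}, where $S$ is shown to be finite over $X$). Once this is in place the lifting is immediate and the statement follows. I would also remark that the resulting $(S,\varphi,\widetilde g)$ is independent of choices only up to the equivalence relation $\sim_{\A^1}$ of \Cref{def:overlineF} — any two lifts differ by a morphism $W\times\A^1\to Y$ obtained by the same henselian lifting applied to the constant homotopy on $S$ — so the induced map $\ovFr_n(X,Y)$ receives a well-defined class, though strictly speaking the statement as phrased only asserts existence of a lift, which is all that is needed.
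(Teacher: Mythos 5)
Your proposal is correct and follows the same route as the paper, whose entire proof is an appeal to \Cref{lm:AffSmHenselianLift} applied to the closed immersion $S\not\hookrightarrow\A^n_X$ (with $\A^n_X$ affine over $B$ since $X$ is) and the smooth affine target $Y$. The finiteness of $S$ over $X$ that you flag as the main obstacle is not actually needed: \Cref{lm:AffSmHenselianLift} only requires a closed immersion into an affine $B$-scheme, so the lifting applies immediately.
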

\begin{proof}
This follows from \Cref{lm:AffSmHenselianLift}.
\end{proof}

\begin{remark}
Intermediate framed correspondences mediate between framed correspondences and 
normally framed correspondences, 
see \cite[\S4]{framed-MW}, \cite{five-authors}.
For $Y\in\Sm_{B}$ there are canonical motivic equivalences 
$\Fr(Y)\to \Fr^\mathrm{im}(Y)\to \Fr^\mathrm{nr}(Y)$ for the said types of framed correspondences.
\end{remark}

\begin{theorem}
\label{th:Injectivity}
Suppose $\calF$ is an $\A^{1}$-invariant quasi-stable radditive framed presheaf of $S^{1}$-spectra over a field $k$.
For $X\in \Sm_k$ and $x\in X$, 
assume that $\calF(\eta)\simeq 0$, 
where $\eta\in X_{(x)}$ is the generic point of the local scheme $X_{(x)}$ of $X$ at $x$.
Then we have $\calF(X_{(x)})\simeq 0$.
\end{theorem}

\begin{proof}
The presheaf of homotopy groups $\pi_{i}\calF\colon \Sm_k\to \Ab$, $i\geq 0$, 
equals the composite 
\[
\Sm_k
\to 
\Fr_+(k)\xrightarrow{\calF} 
\SH\xrightarrow{\pi_i} 
\Ab.
\] 
Thus $\pi_i\calF$ is an $\A^{1}$-invariant quasi-stable radditive framed presheaf of abelian groups
(the same holds over any base scheme when replacing the $\A^{1}$-invariance condition with $\A^{1}$-locality, 
see \Cref{section:shiitoms1s}). 
We are done since the proof of \cite[Theorem 3.9]{hty-inv} works for all fields. 
\end{proof}

\begin{prop}
\label{prop:SHI(k)&InjimpliesZareqNis}
Suppose \SHI holds on $\Sm_{k}$ and $\calF$ is an $\A^{1}$-local radditive framed presheaf of $S^{1}$-spectra on 
$\Sm_k$ in the sense of \Cref{def:SHIthnistriv}.
Then there is a canonical equivalence 
\[
\Lrep_\Zar(\calF)
\xrightarrow{\simeq} 
\Lrep_\nis(\calF).
\]
\end{prop}

\begin{proof}
Strict $\A^{1}$-invariance implies $\Lrep_\nis(\calF)$ is $\A^{1}$-local, quasi-stable, and radditive.
\Cref{th:Injectivity} implies, 
for any $X\in \Sm_k$ and $x\in X$, 
an equivalence of $S^{1}$-spectra
\[
\Lrep_\nis(\calF)(X_{(x)})
\xrightarrow{\simeq}
\calF(X_{(x)}).
\]
Moreover, there is a naturally induced equivalence
\[
\Lrep_\Zar(\calF)(X_{(x)})
\xrightarrow{\simeq}
\calF(X_{(x)}).
\]
Hence the canonical morphism $\Lrep_\Zar(\calF)\to \Lrep_\nis(\calF)$ is a Zariski local equivalence.
Since both $\Lrep_\nis(\calF)$ and $\Lrep_\Zar(\calF)$ are Zariski local,
this ends the proof.
\end{proof}

\begin{definition}\label{def:angleuanglrlepsion}
For an invertible function $u\in \calO^\times_B(B)$ we define
\[
\langle u\rangle
:=
(0\times B,\A^{1}\times B, ut,\pr\colon \A^{1}\times B\to B)
\in 
\Fr_1(B,B),
\]
where $t$ denotes the coordinate function on $\A^{1}$.
Moreover, 
for $n\geq 1$, 
we set 
\[
n_\varepsilon \defeq 
\sum\limits_{i=0}^{n-1} 
\langle (-1)^i\rangle
\in 
\ZF_1(B,B).
\]
\end{definition}
For units $u_1,u_2\in \calO^\times_B(B)$ we have the equality
\[
[\langle u_1\rangle\circ \langle u_2\rangle] 
=
[\langle u_1 u_2\rangle]\in \ovZF_2(B,B),
\]
and for $n,l\geq 1$ we have
\begin{equation}
\label{eq:nlvarepsilon=nvarepsilonlvarepsilon}
[\sigma (nl)_\varepsilon] 
= 
[n_\varepsilon \circ l_\varepsilon] 
\in 
\ovZF_2(B,B).
\end{equation}

\begin{prop}
\label{prop:FiniteCoprimeExtensionsLiftofmorphism}
For $U\in \Sm_B$ suppose the regular functions $f_1,f_2\in \calO(\A^{1}_U)$ are 
defined by monic separable polynomials in $\calO_U(U)[t]\cong \calO(\A^{1}_U)$ of coprime degrees $l_1$ and $l_2$, 
respectively.
Then for every $\A^{1}$-local quasi-stable radditive framed presheaf of $S^{1}$-spectra $\calF$ on $\Sm_B$, 
the identity $\mathrm{id}_{\calF(U)}$ factors through the morphism $\calF(U)\to\calF(Z(f_1)\amalg Z(f_2))$ induced by 
$Z(f_1)\amalg Z(f_2)\to U$.
\end{prop}
\begin{proof}
The vanishing loci $Z(f_1)$ and $Z(f_2)$ are closed subschemes in $\A^{1}_U$ that are finite and \'etale over $U$.
Since $l_1$ and $l_2$ are coprime, there exist non-zero integers $n_1$, $n_2$ such that 
\begin{equation}
\label{eq:n1l1n2l2=1}
n_1 l_1 - n_2 l_2=1.
\end{equation}
We may assume $n_1,n_2>0$.
\Cref{prop:FiniteExtension} shows that the endomorphism on $\calF(U)$ induced by $(l_j)_\varepsilon$ admits a factorization
\[
\begin{array}{lllll}
(l_j)_\varepsilon\colon \calF(U)&\xrightarrow{\pr_j^*}&\calF(Z(f_j))&\xrightarrow{c_j^*}&\calF(U),\\
\end{array}
\]
for some morphisms $c_j^*$, 
$j=1,2$.
We set 
\[
c^*
:=
(n_1)_\varepsilon^*\circ c_1^* +(n_2)_\varepsilon^*\circ c_2^*
\colon 
\calF(Z(f_1)\amalg Z(f_2))\to \calF(U).
\]
Using \eqref{eq:nlvarepsilon=nvarepsilonlvarepsilon} and \eqref{eq:n1l1n2l2=1} we deduce 
\[
(l_1)_\varepsilon\circ (n_1)_\varepsilon + (l_2)_\varepsilon\circ (n_2)_\varepsilon 
= 
[\id_U]\in \ovZF(U, U).
\]
Since $\calF$ is quasi-stable, 
\Cref{cor:ovZFrpairSH} implies the composite morphism
\begin{align*}
c^*\circ \pr^*&=( (n_1)^*_\varepsilon\circ 
c_1^*\circ \pr^*)+ ( (n_2)^*_\varepsilon\circ c_2^*\circ \pr^* )\\ 
& = (n_1)^*_\varepsilon\circ (l_1)^*_\varepsilon + (n_2)^*_\varepsilon\circ (l_2)^*_\varepsilon
\label{eq:prcirccFr(U,U)}
\end{align*}
is an auto-equivalence of $\calF(U)$.
Hence there is a lift $\calF(U)\to \calF(Z(f_1)\amalg Z(f_2))$ of the identity morphism on 
$\calF(U)$ along $\pr^*$.
\end{proof}

The argument for our next result, 
which we include for the convenience of the reader, 
can also be found in \cite[Appendix B]{five-authors} and \cite[Section 1]{DrKyllfinFrpi00} 
(the initial source is \cite[Section 5]{SmAffOpPairsv5}).
 
\begin{corollary}
\label{cor:FiniteExtensions}
Suppose $B$ is a local scheme whose closed point maps to $\gamma\in \Spec(\Z)$, 
and
\[
f
=
x^l+b_{l-1}x^{l-1}+\dots+b_0
\in
\mathbb{Z}[t]
\]
is separable over the residue field of $\gamma$. 
For every $U\in \Sm_B$ we set $U^\prime_{f}=Z(f_U)$, where $f_U\in\calO(\A^{1}_U)$ is obtained from $f$.
Suppose $\calF$ is an $\A^{1}$-local radditive framed presheaf of $S^{1}$-spectra on $\Sm_B$ such that 
$\calF(U^\prime)\simeq 0$ for $l\gg 0$.
Then we have $\calF(U)\simeq 0$.
\end{corollary}
\begin{proof}
Assume that $\calF(U^\prime)\simeq 0$ for all $l>n$.
Choose a closed point $p$ in $\Spec(\Z)$ that is contained in the closure of $\gamma$;
its residue field is $\mathbb F_p$.
For $j=1,2$ we may choose coprime integers $l_j\in\mathbb Z$, $l_j>n$,
and a separable monic polynomial $r_j$ over $k$ of degree $l_j$.
Such a polynomial exists for $l_j\gg 0$.
Let $f_j$ be a monic polynomial with integral coefficients of degrees $l_j$ that lifts $r_j$.
Then $f_j$ is separable over $\mathbb F_p$, and consequently over the residue field of $\gamma$.
We denote by the same symbol the pullback of $f_j$ in $\calO_{\A^{1}\times U}(\A^{1}\times U)$.
Note that $U^\prime_j\defeq Z(f_j)$ is finite and separable over $U$.
\Cref{prop:FiniteCoprimeExtensionsLiftofmorphism} shows $\mathrm{id}_{\calF(U)}$ admits a factorization
\[
\calF(U)\to \calF(U^\prime_1\amalg U^\prime_2)\to \calF(U).
\]
We have $\calF(U^\prime_1\amalg U^\prime_2)\simeq 0$ since $l_1,l_2>n$. 
It follows that $\calF(U)\simeq 0$.
\end{proof}

\begin{corollary}
\label{prop:FiniteExtensions}
Let $l_{1,i}$, $l_{2,i}$, for $i\geq 0$, be integers such that $(l_{1,i},l_{2,j})=1$ for all $i,j\geq 0$.
For $U\in \Sm_B$, 
suppose the finite surjective \'etale morphisms in the diagrams 
\[
\begin{array}{ccc}
U^\prime_{1,i}\to U^\prime_{1,i-1}\to\dots\to U^\prime_{1,0}=U\\
U^\prime_{2,i}\to U^\prime_{2,i-1}\to\dots\to U^\prime_{2,0}=U
\end{array}
\]
are defined by separable monic polynomials in $\calO_U(U)$ of degrees $l_{1,i}$ and $l_{2,i}$, 
respectively.
Suppose $\calF$ is an $\A^{1}$-local quasi-stable radditive framed presheaf of $S^{1}$-spectra on $\Sm_B$ such that 
\[
\varinjlim_{i\geq 0} \calF(U^\prime_{1,i})
\cong 
0
\text{ and }
\varinjlim_{i\geq 0}\calF(U^\prime_{2,i})
\cong 
0.\]
Then we have $\calF(U)= 0$.
\end{corollary}
\begin{proof}
Using the projections $\pr_{1,i}\colon U^\prime_{1,i}\to U$ and $\pr_{2,i}\colon U^\prime_{2,i}\to U$ we set 
\[ 
F_{i}
:=
\hofib(\calF(U)\to \calF(U^\prime_{1,i}\amalg U^\prime_{2,i})).
\]
\Cref{prop:FiniteCoprimeExtensionsLiftofmorphism} shows $F_{i}\to \calF(U)$ is trivial, 
and by the assumption on $\calF$ we have
\[
\varinjlim_i F_i
\simeq 
\calF(U).
\]
\end{proof}

\begin{lemma}
\label{lm:infirrpolynomials}
For every finite field $k$ there exists a sequence of irreducible polynomials $(f_i)_{\geq 0}$ with coprime degrees.
\end{lemma}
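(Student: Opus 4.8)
Lemma (restated). For every finite field $k$ there exists a sequence of irreducible polynomials $(f_i)_{i\geq 0}$ over $k$ with pairwise coprime degrees.

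The plan is to reduce the statement to the classical fact that over a finite field there is an irreducible polynomial of every positive degree, and then to choose the degrees so that they run through the prime numbers.

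First I would record why, for $k=\mathbb{F}_{q}$ and any $n\geq 1$, there exists an irreducible polynomial over $k$ of degree exactly $n$. Let $\mathbb{F}_{q^{n}}$ be the degree-$n$ extension of $k$ inside a fixed algebraic closure. Its multiplicative group $\mathbb{F}_{q^{n}}^{\times}$ is cyclic; fix a generator $\alpha$. Then $k(\alpha)\supseteq \mathbb{F}_{q^{n}}^{\times}\cup\{0\}=\mathbb{F}_{q^{n}}$, so $k(\alpha)=\mathbb{F}_{q^{n}}$ and $[k(\alpha):k]=n$. Hence the minimal polynomial of $\alpha$ over $k$ is irreducible of degree $n$. (Equivalently, one may invoke the Möbius-inversion count $\tfrac1n\sum_{d\mid n}\mu(d)q^{n/d}>0$ for the number of monic irreducibles of degree $n$.)

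Next, let $p_{0}<p_{1}<p_{2}<\cdots$ enumerate the primes. For each $i\geq 0$ apply the previous paragraph with $n=p_{i}$ to obtain an irreducible polynomial $f_{i}\in k[t]$ with $\deg f_{i}=p_{i}$. Since distinct primes are coprime, the degrees $(\deg f_{i})_{i\geq 0}$ are pairwise coprime, so $(f_{i})_{i\geq 0}$ is the desired sequence. If one wants the two interleaved sequences with cross-coprimality of degrees as used in \Cref{prop:FiniteExtensions}, it suffices to split the primes into two infinite subsequences and repeat the construction on each.

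There is no real obstacle here: the argument uses only elementary field theory together with the cyclicity of the multiplicative group of a finite field, and the passage to pairwise coprime degrees is handled simply by indexing along the primes.
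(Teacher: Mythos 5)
Your proof is correct. You reduce the lemma to the classical fact that $\mathbb{F}_q$ admits an irreducible polynomial of every degree $n\geq 1$ (via a generator of the cyclic group $\mathbb{F}_{q^n}^{\times}$, or Möbius counting) and then take the degrees to run through the primes; splitting the primes into two infinite subsequences gives the pair of sequences with cross-coprime degrees needed in \Cref{prop:FiniteExtensions} and \Cref{theorem:shifinitefields}. The paper argues differently: it proceeds by induction, assuming irreducibles $f_1,\dots,f_d$ of coprime degrees $l_i>1$ have been found, setting $L=Nl_1\cdots l_d+1$ for $N\gg 0$, producing a degree-$L$ polynomial $f$ with $f(\alpha)=1$ for all $\alpha\in k$ (so $f$ has no linear factor), and extracting an irreducible factor $f_{d+1}$ whose degree is coprime to all the $l_i$. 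That route avoids invoking the existence of irreducibles of \emph{every} degree, but the extraction step is stated rather tersely (the congruence $\deg f\equiv 1\pmod{l_1\cdots l_d}$ alone does not force a single irreducible factor to have degree coprime to $l_1\cdots l_d$, so some additional care is needed there), whereas your argument is completely explicit and self-contained. Either construction suffices for the application, since all that is used downstream is the existence of two interleaved towers of separable extensions with pairwise coprime degrees.
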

\begin{proof}
We prove the claim by induction. 
Let $f_1,\dots,f_d\in k[t]$ be irreducible polynomials of coprime degrees $l_i>1$.
For $N\gg 0$ and $L=N l_1\cdots l_d +1$, 
there exists a polynomial $f$ in $k[t]$ of degree $L$ such that $f(\alpha)=1$ for every $\alpha\in k$.
Since $L$ is coprime to $l_i$ for all $i=1,\dots, d$, 
there is an irreducible polynomial $f_{d+1}$ that divides $f$ and its degree $l_{d+1}$ is coprime to $l_i$ for all 
$i=1,\dots,d$.
Since $l_{d+1}\neq 1$ we are done.
\end{proof}

\begin{theorem}
\label{theorem:shifinitefields}
Let $k$ be a finite field.
Then \SHI holds on $\Sm_{k}$ in the sense of \Cref{def:SHIthnistriv}.
\end{theorem}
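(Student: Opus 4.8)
The plan is to reduce the statement over a finite field $k$ to the case of infinite fields, for which \Cref{th:str_field} (Garkusha-Panin) applies. By \Cref{prop:SHI(k)&InjimpliesZareqNis} and the injectivity \Cref{th:Injectivity}, it suffices to show that for every $\A^1$-local quasi-stable radditive framed presheaf of $S^1$-spectra $\calF$ on $\Sm_k$, the Nisnevich localization $\Lrep_\nis(\calF)$ remains $\A^1$-local; equivalently, working levelwise with homotopy groups, that every $\A^1$-invariant quasi-stable radditive framed presheaf of abelian groups has $\A^1$-invariant Nisnevich cohomology presheaves. The key new ingredient available here is the transfer/corestriction machinery of \Cref{def:angleuanglrlepsion}--\Cref{prop:FiniteExtensions}: over a finite field one has the sequence of irreducible polynomials with coprime degrees from \Cref{lm:infirrpolynomials}, which produces two pro-finite-\'etale towers $U'_{1,\bullet}\to U$ and $U'_{2,\bullet}\to U$ whose degrees are pairwise coprime.

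First I would fix $\calF$ as above and let $\ol k/k$ and $\ol k'/k$ be two field extensions obtained as the colimits along the two towers of field extensions of $k$ of coprime degrees supplied by \Cref{lm:infirrpolynomials}; both $\ol k$ and $\ol k'$ are infinite perfect fields. Pulling $\calF$ back along $\Spec \ol k\to \Spec k$ (and similarly for $\ol k'$) yields $\A^1$-local quasi-stable radditive framed presheaves $\calF_{\ol k}$, $\calF_{\ol k'}$ on $\Sm_{\ol k}$, $\Sm_{\ol k'}$, using that base change along a field extension is well-defined on framed correspondences and preserves coproducts and the framings $\sigma_X$ (as in \Cref{lm:HensBasechangeFrCor}, \Cref{lm:BaseChangeCommutes}). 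By \Cref{th:str_field}, $\Lrep_\nis(\calF_{\ol k})$ and $\Lrep_\nis(\calF_{\ol k'})$ are $\A^1$-local. Next I would show that $\Lrep_\nis$ commutes with these filtered base changes: for $X\in\Sm_k$ and $x\in X$, the stalk $\Lrep_\nis(\calF)_{X^h_x}$ is computed by the $\check{\mathrm C}$ech formula of \Cref{lm:LnissimeqinjlimcheckC}, which commutes with filtered colimits of presheaves, so $\Lrep_\nis(\calF)_{\ol k}\simeq \Lrep_\nis(\calF_{\ol k})$ and likewise for $\ol k'$. Hence $\Lrep_\nis(\calF)_{\ol k}$ is $\A^1$-local on $\Sm_{\ol k}$, and the same over $\ol k'$.

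The remaining step is a descent argument: from $\A^1$-locality of $\Lrep_\nis(\calF)$ after the two coprime base changes, deduce $\A^1$-locality over $k$ itself. For this I would apply \Cref{cor:FiniteExtensions} (or \Cref{prop:FiniteExtensions}) to the presheaf $\mathcal G := \hofib\p*{\Lrep_\nis(\calF)(X^h_x)\to \Lrep_\nis(\calF)(\A^1\times X^h_x)}$, which is again an $\A^1$-local radditive framed presheaf of $S^1$-spectra since $\Lrep_\nis$ preserves these properties by the argument in \Cref{section:stifabeliangroups} (\Cref{th:SHIeqS1Ab}'s methods apply verbatim over $k$ to the quasi-stable radditive framed presheaf $\Lrep_\nis(\calF)$ independently of strict invariance). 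By the two coprime towers, $\varinjlim_i \mathcal G(U'_{1,i})\simeq 0$ and $\varinjlim_i \mathcal G(U'_{2,i})\simeq 0$, whence \Cref{prop:FiniteExtensions} forces $\mathcal G\simeq 0$; that is, $\Lrep_\nis(\calF)$ is $\A^1$-local. The main obstacle I anticipate is verifying carefully that the relevant base-change and localization functors genuinely commute with the filtered colimits defining $\ol k$ and $\ol k'$ at the level of essentially smooth henselian local schemes, and that the transfer argument of \Cref{prop:FiniteCoprimeExtensionsLiftofmorphism} can be applied to $\mathcal G$ uniformly in $X$ and $x$ — i.e.\ packaging the pointwise vanishing into a statement about the presheaf $\mathcal G$. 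Once that bookkeeping is in place, the conclusion follows, and \Cref{prop:SHI(k)&InjimpliesZareqNis} upgrades it to the statement of the theorem.
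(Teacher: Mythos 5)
Your overall strategy matches the paper's: use \Cref{lm:infirrpolynomials} to produce two towers of finite separable extensions of $k$ with pairwise coprime degrees, observe that the colimit fields $K_1,K_2$ are infinite perfect so that Garkusha--Panin applies there, and then descend to $k$ via the coprime-degree transfer argument of \Cref{prop:FiniteExtensions}. (The opening appeal to \Cref{prop:SHI(k)&InjimpliesZareqNis} and \Cref{th:Injectivity} is unnecessary --- the reduction to ``\ $\Lrep_\nis(\calF)$ is $\A^1$-local'' is just \Cref{def:SHIthnistriv} --- and the former even presupposes the very statement you want.)

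There is, however, a genuine gap in the descent step. You propose to apply \Cref{prop:FiniteExtensions} to
\[
\mathcal G := \hofib\p*{\Lrep_\nis(\calF)(-)\to \Lrep_\nis(\calF)(\A^1\times -)},
\]
but the hypotheses of that proposition require $\mathcal G$ to be an $\A^1$-local quasi-stable radditive framed presheaf, and the $\A^1$-locality of $\mathcal G$ is not available at this point: by \Cref{lm:LnissimeqinjlimcheckC}, $\Lrep_\nis(\calF)(X)$ is a colimit of \v{C}ech objects indexed by Nisnevich coverings \emph{of $X$}, and that indexing system changes when $X$ is replaced by $\A^1\times X$, so one cannot commute the colimit past the $\A^1$-comparison map. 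The justification you offer (``the methods of \Cref{th:SHIeqS1Ab} apply independently of strict invariance'') does not hold, since those methods are exactly the content of strict invariance. Indeed, if one knew $\mathcal G$ were $\A^1$-local in the naive sense the argument could easily become circular.

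The fix --- and this is what the paper does --- is to replace $\Lrep_\nis$ by a \v{C}ech construction for a \emph{fixed} covering. Fix a Nisnevich covering $\widetilde V\to V=\A^1_{X^h_x}$ and set
\[
\mathcal{F}\mathrm{ib}(W) := \hofib\p*{\check C_{\widetilde V\times_V W}(W,\calF)\to \calF(W)},
\]
viewed as a presheaf on $\Fr_+(V)$. Because $\check C_{\widetilde V\times_V(-)}(-,\calF)$ is a finite homotopy limit of values of $\calF$ on schemes varying naturally with $W$, it inherits $\A^1$-locality, quasi-stability and radditivity directly from $\calF$; no appeal to strict invariance is needed. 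Garkusha--Panin then gives $\mathcal{F}\mathrm{ib}(V_{K_j})\simeq 0$ for $j=1,2$, and \Cref{prop:FiniteExtensions} applied to the coprime towers yields $\mathcal{F}\mathrm{ib}(V)\simeq 0$. Only at this last stage does one take the colimit over all $\widetilde V$ (using \Cref{lm:LnissimeqinjlimcheckC}) to conclude $\Lrep_\nis(\calF)(\A^1_{X^h_x})\simeq\calF(\A^1_{X^h_x})\simeq\calF(X^h_x)\simeq\Lrep_\nis(\calF)(X^h_x)$, which gives $\A^1$-locality of $\Lrep_\nis(\calF)$. So your instinct about the ``main obstacle'' was right --- the bookkeeping is where the argument lives --- and the correct way to package the pointwise vanishing is to fix $\widetilde V$ before invoking the transfer lemma, not after.
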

\begin{proof}
Let $\calF$ be an $\A^{1}$-local quasi-stable radditive framed presheaf of $S^{1}$-spectra on $\Sm_k$.
For $X\in \Sm_k$ and $x\in X$ we set $U=X^h_x$.
Let $\widetilde V\to V=\A^{1}_U$ be a Nisnevich covering.
We use the $\check{\text{C}}$ech construction in 
\eqref{eq:Check(widetilde X)} to define the presheaf $\mathcal Fib(-)$ of $S^{1}$-spectra on $\Fr_+(V)$
by setting
\[
\mathcal Fib(W) 
:= 
\hofib( \check C_{\widetilde V\times_V W}(W,\calF)\to\calF(W) ).
\]
Let $K$ be an infinite perfect field over $k$.
By strict $\A^{1}$-invariance for $K$ \cite[\S 17]{hty-inv} the presheaf $L_{\nis}(\calF_K)$ is $\A^{1}$-local, 
where $\calF_K$ denotes the base change to $K$. 
Hence there are equivalences
$$
L_{\nis}(\calF_K)(\A^{1}\times U\times \Spec K)
\simeq 
\calF_K(\A^{1}\times U\times \Spec K),
$$
and 
$$
\check C_{\widetilde V\times_k \Spec K}(V\times_k \Spec K,\calF)
\simeq 
\calF_K(V\times_k \Spec K).
$$
This yields the equivalence 
\begin{equation}
\label{eq:calFib(K)}
\mathcal Fib(V_K)
\simeq 
0.
\end{equation}
Furthermore we claim that 
\begin{equation}
\label{eq:calFib(A1U)simeq0}
\mathcal Fib(\A^{1}_U)
\simeq 0.
\end{equation}
Note that \eqref{eq:calFib(A1U)simeq0} implies $L_{\nis}(\calF)(\A^{1}_U)\simeq \calF(\A^{1}_U)$, 
and hence $L_{\nis}(\calF)(\A^{1}\times-)\simeq L_{\nis}(\calF)(-)$.
This shows that $L_{\nis}(\calF)$ is $\A^{1}$-local.
It remains to prove \eqref{eq:calFib(A1U)simeq0}.
Since $k$ is finite, every finite extension of $k$ is separable. 
By \Cref{lm:infirrpolynomials} there exist separable extensions $k(\alpha_{1,i})$ and $k(\alpha_{2,i})$, 
$i\geq 0$,
of coprime extension degrees over $k$.
For the infinite perfect field $K_j=\varinjlim_{i\geq 0} k(\alpha_{j,i})$,
\eqref{eq:calFib(K)} implies $\mathcal Fib(V_{K_j})\simeq 0$ for $j=1,2$.
\Cref{prop:FiniteExtensions} implies that $\mathcal Fib(V)\simeq 0$.
\end{proof}

\begin{prop}
\label{prop:FiniteExtension}
Let $U\in \Sm_B$ and suppose 
\[
f=x^l+b_{l-1}x^{l-1}+\dots+b_0
\in
\calO_U(U)
\]
is a separable monic polynomial over $\calO_U(U)$ with vanishing locus 
$Z(f)\not\hookrightarrow \A^{1}_U$.
Let $\pr\colon Z(f)\to U$ be the canonical \'etale morphism,
and note that $Z(f)\in \Sm_B$.
Then for every $\A^{1}$-local radditive framed presheaf of $S^{1}$-spectra $\calF$ on $\Sm_B$, 
the endomorphism $l_\varepsilon^*$ on $\calF(U)$ induced by the framed correspondence $l_\varepsilon\in \ZF_1(U,U)$, 
see \Cref{def:angleuanglrlepsion}, 
admits a factorization of the form
\[
\calF(U)
\xrightarrow{\pr^*}
\calF(Z(f))
\to \calF(U).
\]
\end{prop}
\begin{proof}
Note that $c=(Z(f),\A^{1}_U,g)\in \Fr_1(U, Z(f))$,
where $g\colon (\A^{1}_U)^h_{Z(f)}\to Z(f)$ exists by \Cref{lm:AffSmHenselianLift}. 
We set $\tilde f=f(1-\lambda)+x^{l}\lambda\in \calO_{\A^{1}\times U\times\A^{1}}(\A^{1}\times U\times\A^{1})$,
for the coordinate $\lambda$ on the second copy of $\A^{1}$.
Now consider the framed correspondence $\tilde c=(Z(\tilde f),\A^{1}_{U\times\A^{1}},\tilde g)$,
where $\tilde g\colon (\A^{1}_U)^h_{Z(\tilde f)}\to U$ is the canonical projection.
The $\A^{1}$-homotopy given by $\tilde c$ implies the equality
\begin{equation}
\label{eq:prcirccFr(U,U)} 
[\pr\circ c]
=
[l_\varepsilon]
\in 
\ovZF_1(U, U).
\end{equation}

Since $\calF$ is an $\A^{1}$-local quasi-stable radditive framed presheaf,
\Cref{prop:A1localhomotopycommutativesquare} and \eqref{eq:prcirccFr(U,U)} imply the endomorphism
\[
(\pr\circ c)^*
\colon 
\calF(U)
\to 
\calF(Z(f))
\to
\calF(U)
\]
defined by the composite framed correspondence $\pr\circ c$ coincides in $\SH$ with the composition
\[
\calF(U)
\xrightarrow{l_\varepsilon^*}
\calF(U)
\xrightarrow{\tau}
\calF(U)
\]
for some auto-equivalence $\tau$.
That is, 
$l_\varepsilon^*=\tau^{-1}\circ c^*\circ \pr^*\colon \calF(U)\to \calF(U)$,
and the claim follows.
\end{proof}

For $\calF \in \Spt_s(\Sm_{B})$, 
$U\in \Sm_{B}$,
and a morphism $\widetilde X\to X$ in $\Sm_{B}$, 
the $\check{\text{C}}$ech construction $\check C_{\widetilde X}(U,\calF)$ is the homotopy limit 
of the diagram of $S^{1}$-spectra
\begin{equation}
\label{eq:Check(widetilde X)}
\calF(\widetilde X\times_X U)
\begin{smallmatrix}
\longrightarrow\\
\longrightarrow 
\end{smallmatrix} 
\calF(\widetilde X\times_X\widetilde X\times_X U)
\begin{smallmatrix}
\longrightarrow  \\ 
\longrightarrow \\
\longrightarrow 
\end{smallmatrix}
\calF(\widetilde X\times_X \widetilde X\times_X \widetilde X\times_X U)
\begin{smallmatrix}
\longrightarrow  \\ 
\longrightarrow  \\
\longrightarrow  \\
\longrightarrow  
\end{smallmatrix}
\cdots.
\end{equation}
The same construction applies in other settings such as radditive framed presheaves of 
$S^{1}$-spectra.

\section{Nisnevich and \texorpdfstring{$\tf$}{tf} localization of framed presheaves}\label{sect:FrSheafNistf}
The purpose of this appendix is prove \Cref{th:restrfrLoctfniscommute} on Nisnevich and $\tf$-sheafification 
of framed presheaves (this result generalizes \cite[Proposition 3.2.14]{five-authors}).
If $Z$ is a closed subscheme of $B$, 
there is an adjunction between $\infty$-categories of presheaves
\begin{equation}
\label{equation:prefradjunction}
\gamma^*\colon \catPre(\catS)\rightleftarrows \catPre^\fr(\catS)\colon \gamma_*.
\end{equation}
Here $\catS$ is shorthand for $\Sm_{B,Z}$, $\SmAff_{B,Z}$, or $\Smat_{B,Z}$.
\Cref{th:restrfrLoctfniscommute} identifies in a precise way the Nisnevich and $\tf$-localizations of
$\catPre(\catS)$ and $\catPre^\fr(\catS)$.

\begin{definition}\label{def:PrefrNislocObj}
An object $F\in \catPre^\fr(\catS)$ is $\tf$-local (resp.~Nisnevich local) if $\gamma_*F$ is $\tf$-local 
(resp.~Nisnevich local).
\end{definition}

\begin{lemma}\label{lm:gamma-d-*Nisloc}
An object $F\in \catPre^\fr(\catS)$ is $\tf$-local (resp.~Nisnevich local) if and only if 
for any $\tf$-local (resp.~Nisnevich local) equivalence $\mathcal X\to \mathcal Y\in \catPre(\catS)$,
the induced morphism on mapping spaces 
$\mathrm{Map}_{\catPre^\fr(\catS)}(\gamma^*(\mathcal Y),F)
\to
\mathrm{Map}_{\catPre^\fr(\catS)}(\gamma^*(\mathcal X),F)$ 
is an equivalence.
\end{lemma}
\begin{proof}
The claim follows from the adjunction $\gamma^*\dashv\gamma_*$ in \eqref{equation:prefradjunction}.
\end{proof}

\begin{lemma}
The subcategories $\catPre^\fr_\tf(\catS)$ and $\catPre^\fr_\nis(\catS)$ of $\tf$-local and Nisnevich local objects 
are reflective subcategories of $\catPre^\fr(\catS)$.
\end{lemma}
\begin{proof}
The claim follows because by \Cref{lm:gamma-d-*Nisloc} the $\infty$-categories $\catPre^\fr_\tf(\catS)$ and $\catPre^\fr_\nis(\catS)$ 
are equivalent to the localizations of $\catPre^\fr(\catS)$ with respect to the class of  morphisms of the form 
$\gamma^*(f)$ for $\tf$- or Nisnevich local equivalences in $\catPre(\catS)$.
\end{proof}

We shall write $L^\fr_\tau$ (resp. $\Lrep^\fr_\tau$) for the corresponding $\tau$-localization (resp. endofunctor).
A morphism $f$ in $\catPre^\fr(\catS)$ is called a $\tau$-local equivalence if $L^\fr_{\tau}(f)$ is an equivalence. 
We write $\catPre^\fr_\tau(\catS)$ for the $\tau$-localization and $F\simeq_\tau G$ for a $\tau$-local equivalence 
$F\to G$ in $\catPre^\fr(\Sm_{B})$.

For a map $\widetilde X\to X$ in $\catS$, 
let $h_{\widetilde X}(X)$ denote the subpresheaf of the representable presheaf $h(X)$ in $\catPre(\catS)$
that is determined by the image of the canonical morphism $h(\widetilde X)\to h(X)$.
Likewise, 
we let $h_{\widetilde X}^\fr(X)$ denote the image of $h^\fr(\widetilde X)$ in $h^\fr(X)$.

\begin{lemma}
\label{lm:spanslifting}
Suppose $S,U,X\in \Sch_B$, $z\in B$, 
and $c\colon\widetilde X\to X$ is a $\tf$-covering.
For any morphism $g\colon S \to X$ and finite morphism $f\colon S\to U^h_z$, 
there exists a filler $\tilde g$ in the diagram 
\begin{equation}
\label{eq:spansl}
\xymatrix{
U^h_z & S\ar[l]_-{f} \ar@{.>}[r]^-{\tilde g} & \widetilde X\ar[d]^-{c}\\
U^h_z\ar@{=}[u] & S\ar[l]_-{f}\ar@{=}[u]\ar[r]^-{g} & X.
}
\end{equation}
\end{lemma}
\begin{proof}
Since $f$ is finite, 
and the closed immersion $U_z\not\hookrightarrow U^h_z$ is a henselian pair, 
the closed immersion $S\times_B z\not\hookrightarrow S$ is a henselian pair
by \cite[\href{https://stacks.math.columbia.edu/tag/09XI}{Tag 09XI}]{StacksProject}
and thus $S=S^h_z$.
\Cref{prop:Proptftop}(iv)-(vii) shows that $S$ is a $\tf$-point.
Then since $\widetilde X\times_X S\to S$ is a $\tf$-covering, and $S$ is a $\tf$-point, 
there exists a left inverse $S\to \widetilde X\times_X S$. 
By composing with the canonical projection to $\widetilde X$, 
we obtain the required lifting $\tilde g$. 
\end{proof}

\begin{lemma}
\label{lm:gammadstfloc}
For any $\tf$-covering $\widetilde X\to X$,
the canonical morphism $h^\fr_{\widetilde X}(X)\to h^\fr(X)$ maps to a $\tf$-local equivalence in $\catPre(\catS)$ 
under the functor $\gamma_*$ in \eqref{equation:prefradjunction}.
\end{lemma}

\begin{proof}
We check there is an equivalence $h^\fr_{\widetilde X}(X)(U^h_z)\to h^\fr(X)(U^h_z)$ for all $U\in \Sm_{B}$, 
$z\in Z$. 
The definition of tangentially framed correspondences 
\cite[Definition 2.3.4]{five-authors} 
implies isomorphisms
\[
h^\fr(X)(U^h_z)=\colim_{G} \Map_{K(S)}(0,\mathcal L_f),
\quad 
h^\fr_{\widetilde X}(X)(U^h_z)=\colim_{G^\prime} \Map_{K(S)}(0,\mathcal L_f).
\]
Here $K(S)$ is the $K$-theory space of $S$ and $G$ is the groupoid of spans of the form
$$
U^h_z\xleftarrow{f} S\xrightarrow{g} X.
$$
Moreover, 
$f$ is a finite syntomic map, 
and $G^\prime$ is the subgroupoid of $G$ comprised of spans that admit a lifting as in \eqref{eq:spansl}.
The canonical injection of groupoids $G^\prime\hookrightarrow G$ is a surjection by \Cref{lm:spanslifting}. 
\end{proof}

\begin{theorem}
\label{th:restrfrLoctfniscommute}
There are canonical isomorphism of functors 
$\gamma_* \Lrep^\fr_\tf\cong \Lrep_\tf \gamma_*$, $\gamma_* \Lrep^\fr_\nis\cong \Lrep_\nis \gamma_*$ 
from $\catPre^\fr(\catS)$ to $\catPre(\catS)$.
\end{theorem}
\begin{proof}
Denote by $v_\tf$ the class of morphisms in $\catPre(B)$ of the form $h_{\widetilde X}(X)\to h(X)$
for $\tf$-coverings $\widetilde X\to X$
(and likewise for the class $v_\tf^\fr$ of morphisms in $\catPre^\fr(\catS)$ of the form 
$h^\fr_{\widetilde X}(X)\to h^\fr(X)$).
Note that $\catPre^\fr_\tf(\catS)$ is the $\infty$-subcategory of $\catPre^\fr(\catS)$ spanned 
by objects whose image under $\gamma_*$ belongs to $\catPre_\tf(\catS)$.
Hence $\gamma_*$ preserves $\tf$-local objects. 
Moreover, 
it follows that there is an equivalence of $\infty$-categories 
\begin{equation}\label{eq:PrefrtfSsimeqPrefrSv}
\catPre^{\fr}_{\tf}(\catS)\simeq \catPre^\fr(\catS)[(v_{\tf}^{\fr})^{-1}].
\end{equation}
Thus the class of $\tf$-local equivalences $w_\tf^\fr$ in $\catPre^\fr(\catS)$ coincides with the 
morphisms whose images along the functor 
$\catPre^\fr(\catS)\to \catPre^\fr(\catS)[(v_\tf^\fr)^{-1}]$ are equivalences.
By \Cref{lm:gammadstfloc}, 
we have $\gamma_*(v^\fr_\tf)\subset v_{\tf}$.
The universal property of the localization implies that $\gamma_*(w^\fr_\tf)\subset w_{\tf}$, 
where $w_\tf$ denotes the class of $\tf$-local equivalences in $\catPre(\catS)$ 
(this follows because $w_\tf$ coincides with the class of morphisms whose images along the 
functor $\catPre(B)\to\catPre(B)[(v_\tf)^{-1}]$ are equivalences).
To summarize, 
$\gamma_*$ commutes with $\tf$-localization since it preserves $\tf$-local objects and 
$\tf$-local equivalences.
The proof for the Nisnevich topology is similar.
\end{proof}

\begin{remark}
We note that $\catPre^{\fr}_{\tf}(\catS)$ is a reflective subcategory of $\catPre^{\fr}(\catS)$, 
see \eqref{eq:PrefrtfSsimeqPrefrSv}.
Moreover, 
there is a commutative diagram of $\infty$-categories
\[
\xymatrix{
\catPre^\fr(\catS)\ar[r] \ar[d]_{\gamma^*} & 
\catPre^\fr(\catS)[(v_\tf^\fr)^{-1}]\ar[r]^{\simeq}\ar[d]&
\catPre^\fr(\catS)[(w_\tf^\fr)^{-1}]\ar[r]^>>>>>>{\simeq}\ar[d] &
\catPre^{\fr}_{\tf}(\catS)\ar[d]
\\
\catPre(B)\ar[r] &
\catPre(B)[(v_\tf^\fr)^{-1}]\ar[r]^{\simeq}&
\catPre(B)[(w_\tf^\fr)^{-1}]\ar[r]^>>>>>>>{\simeq} &
\catPre_{\tf}(B)
}
\]
Our proof of \Cref{th:restrfrLoctfniscommute} uses the horizontal equivalences and the left commutative square.
\end{remark}

\begin{corollary}\label{cor:gamma-d-sNiscons}
 The functor $\gamma_*\colon \catPre^\fr(\catS)\to \catPre(\catS)$ 
is exact and conservative with respect to $\tf$-local and Nisnevich local equivalences.
\end{corollary}
\begin{proof}
Let $F\to G$ be a $\tf$-local or Nisnevich local equivalence in $\catPre^\fr(\Sm_{B})$ so that 
$L_\tau(F)\cong  L_\tau(G)$.
By \Cref{th:restrfrLoctfniscommute} we have $L_\tau\gamma_*(F)\cong  L_\tau\gamma_*(G)$, 
and therefore $\gamma_*(F)\cong \tau \gamma_*(G)$.
Now let $F\to G$ be a morphism in $\catPre^\fr(\Sm_{B})$ such that $\gamma_*(F)\simeq_\tf\gamma_*(G)$.
Then we have $\Lrep_\tf(\gamma_*F)\cong  \Lrep_\tf(\gamma_*G)$. 
\Cref{th:restrfrLoctfniscommute} yields the isomorphism $\gamma_*(\Lrep^\fr_\tf F)\cong\gamma_*(\Lrep^\fr_\nis G)$.
Since $\gamma$ is conservative, 
we deduce $\Lrep^\fr_\tf F\cong  \Lrep^\fr_\tf G$ and thus $F\simeq_\tf G$.
The proof for $\Lrep_\nis$ is similar.
\end{proof}

\section{Stabilization}
\label{section:StablelocalizationDefinitionsNotations}

This section aims to introduce an $\infty$-category of prespectra suitable for our setting.
We do not claim any originality; 
this material goes back to work by Boardman. 
We follow Lurie's work \cite{LurieDAG1} and refer to Hovey \cite{zbMATH01698557} 
for spectra in model categories.
Throughout, 
$\mathcal C$ is a presentable symmetric monoidal $\infty$-category for which there exists an 
adjunction 
$\Sigma \colon \mathcal C\rightleftarrows \mathcal C\colon\Omega$.
In particular, 
$\mathcal C$ is cocomplete and $\Omega$ preserves limits.
In addition, 
assume $\mathcal C$ is closed under filtered colimits, 
and $\Omega$ preserves filtered colimits.
We write $\mathrm{Cat}_\infty$ for the $\infty$-category of $\infty$-categories.
Let $\mathbb N$ denote the category with objects non-negative integers 
and for each $m\leq n$, a unique morphism $m\to n$.
For $\mathcal C$ there is a functor $\mathbb N^\mathrm{op}\to \mathrm{Cat}_\infty$ given by
\begin{equation}
\label{eq:seqPre(C)Omega}
\mathcal C\xleftarrow{\Omega}\mathcal C\xleftarrow{\Omega}\mathcal C\xleftarrow{\Omega}\cdots.
\end{equation}
Applying Lurie's $\infty$-categorical Grothendieck construction to \eqref{eq:seqPre(C)Omega}, 
see, e.g., \cite{zbMATH06810410}, 
we extract an $\infty$-category $\mathcal C_\Omega$ with a Cartesian fibration to $\mathbb N$.

\begin{definition}
\label{def:PreSpectra}
The $\infty$-category of prespectra $\PSpt^\Omega(\mathcal C)$ is the $\infty$-category of 
sections of the Cartesian fibration $\mathcal C_\Omega\to \mathbb N$.
\end{definition}

See \cite[\S 4.1]{ELSO} for an example of prespectra for $\Omega=\Omega_{\Gm}$.
The shift functor yields a canonical natural transformation 
$\nu\colon \Id_{\PSpt^\Omega(\mathcal C)}\to \Omega(-)[1]$. 
We write $\Omega^l(-)[l]\colon \PSpt^\Omega(\mathcal C)\to \PSpt^\Omega(\mathcal C)$ 
for the $l$-th iteration of $\Omega(-)[1]$ (see \cite[\S4]{hovey:ss} for classical spectra).
Next we recall the stabilization of $\mathcal C$ with respect to $\Omega$ 
(see \cite[\S 2.2]{RobalnoncomKtheorybridge}).

\begin{definition}
\label{def:catSpt}
For $\mathcal C$ and $\Omega$ as above, 
the $\infty$-category of spectra $\catSpt^\Omega(\mathcal C):=\mathcal C[\Omega^{-1}]$ 
is the limit of \eqref{eq:seqPre(C)Omega}.
\end{definition}

\begin{lemma}
\label{lm:SptS(C)}
\begin{itemize}
\item[(1)] 
The subcategory of $\PSpt^\Omega(\mathcal C)$ spanned by objects $F$ such that $\nu\colon F\simeq\Omega F[1]$ 
is reflective.
Its associated localization endofunctor on $\PSpt^\Omega(\mathcal C)$ sends $F$ to $\colim \Omega^l F[l]$.
\item[(2)] 
The reflective subcategory in (1) is equivalent to $\catSpt^\Omega(\mathcal C)$.
\end{itemize}
\end{lemma}
\begin{proof}
(1) 
The functor $F\mapsto \colim \Omega^l F[l]$ is idempotent on account of the equivalences
\[
\Omega(\colim \Omega^l F[l])[1]\simeq 
\colim \Omega^{l+1} F[l+1]\simeq 
\colim \Omega^l F[l]. 
\]
Moreover, 
it sends the canonical natural transformation $F\to \colim \Omega^l F[l]$ to the identity.
Hence by the equivalence of parts (2) and (3) of \cite[Proposition 5.2.7.4]{LurieHTT}, 
the functor is a localization.
The claims follow as in \Cref{lm:LreptLprepcpreserveexact}.
For (2), 
we start with the definition
\[
\mathcal C[\Omega^{-1}]
=
\operatorname{lim}
\left(
\mathcal C\xleftarrow{\Omega}\mathcal C\xleftarrow{\Omega}\mathcal C\xleftarrow{\Omega}\cdots
\right)
\]
from \cite[\S 2.2]{RobalnoncomKtheorybridge}.
By \cite[Corollary 3.3.3.2]{LurieHTT} the above limit of $\infty$-categories is equivalent to the category of 
Cartesian sections of the Cartesian fibration $\mathcal C_\Omega\to \mathbb N$. 
The latter subcategory of the $\infty$-category of all sections is equivalent to the reflective subcategory in (1).
\end{proof}

\begin{notation}\label{notat:PSptSptLSRS}
\Cref{lm:SptS(C)} shows there exists a localization functor
\[
L_\Omega
\colon 
\PSpt^\Omega(\mathcal C)
\to 
\catSpt^\Omega(\mathcal C) 
\] 
with right adjoint embedding $R_\Omega\colon \catSpt^\Omega(\mathcal C)\to\PSpt^\Omega(\mathcal C)$.
We write $\mathcal L_\Omega\colon \PSpt^\Omega(\mathcal C)\to \PSpt^\Omega(\mathcal C)$ for the composite $R_\Omega L_\Omega$.
\end{notation}

\section{Telescopes and homotopy colimits}
\label{sectapp:Homotopycolimits}
In this appendix,  
we record some homotopical constructions needed in \Cref{subsect:Rigid} for scheme categories 
such as $\SmatZ$, $\SmatBcZ$, and $\SmatBlZ$ defined in \Cref{subsection:candn}.
Our example of primary interest is $\SmatBcZ$.
Nevertheless, 
the same constructions can be carried out for any category with a coproduct.
The main result on homotopy colimits, 
\Cref{lm:hocolimLrepA1SetPresheaf}, 
is used in \Cref{prop:LrepA1uuscommute}.

Let $\Delta^\mathrm{op}\SmatBcZ$ denote the category of simplicial objects in $\SmatBcZ$.

\begin{definition}\label{def:geomretricrealisation}
The geometric realization functor
\[
|-|_{B,Z}
\colon
\Delta^\mathrm{op}\SmatBcZ\to \Pre(\SmatBcZ)
\] 
is the left Kan extension of the
functor 
\[\Delta\times \SmatBcZ\to \Pre(\SmatBcZ); 
([n],X^h_Z)\mapsto \Delta_{B,Z}^n\times_{B,Z} X^h_Z
\]
along 
\[
\Delta\times \SmatBcZ\to \Delta^\mathrm{op}\SmatBcZ; 
([n],X^h_Z)\mapsto \Delta^n\times X^h_Z.
\]
\end{definition}

\begin{definition}
\label{def:affinespacessimplcialsiagram}
Let $\mathbf X\colon K\rightarrow N(\SmatBcZ)$ be a morphism of simplicial sets. 
Define the simplicial scheme $\mathbf X_{K}\in\Delta^\mathrm{op}\SmatBcZ$ by 
\begin{equation}
[n]
\mapsto 
(\mathbf X_{K})_n=
\coprod\limits_{\alpha\in K_{n}} 
\mathbf X(v_0(\alpha)).
\end{equation}
Here $v_0(\alpha)$ is the zeroth vertex of the simplex $\alpha$,
and for a map $\nu\colon [n^\prime]\to [n]\in \Delta$ we define 
$(\mathbf X_{K})_n\to (\mathbf X_{K})_{n^\prime}$ by taking the coproduct of the maps
\[
\mathbf X(v_0(\alpha))\to \mathbf X(v_0(\nu^*(\alpha)))
\]
induced by $v_0(\alpha)\to v_0(\nu^*(\alpha))$, 
where $\nu^*\colon K_n\to K_{n^\prime}$.
\end{definition}

Next we define $\Delta_{B,Z}$-telescopes.

\begin{definition}
\label{def:CylBcZX}
Let $\mathbf X\colon K\to N(\SmatBcZ)$ be a morphism of simplicial sets. 
The $\Delta_{B,Z}$-telescope of $\mathbf{X}$ is the geometric realization
\[
\Cyl^{B,Z}_K \mathbf{X} 
:= 
|\mathbf X_{K}|_{B,Z}.
\]
\end{definition}

Let $\mathcal C$ be a small category.
\begin{definition}\label{def:CylBcZcalCX}
For a diagram $\mathbf X\colon \mathcal C\to \SmatBcZ$ and morphism of simplicial sets $d\colon K\to N(\mathcal C)$, 
let \[
\Cyl^{B,Z}_{\mathcal C} \mathbf{X}(K) = \Cyl^{B,Z}_{K} N(\mathbf{X})d
\]
denote the $\Delta_{B,Z}$-telescope of 
$
K
\xrightarrow{d} 
N(\mathcal C)
\xrightarrow{N(\mathbf X)}
N(\SmatBcZ).
$
\end{definition}

We let $\Cyl^{Z}_{-}$ be shorthand for $\Cyl^{Z,Z}_{-}$.
If $\calF\colon(\SmatBcZ)^\mathrm{op}\to\Set$ is a functor, 
we use the same notation for its left Kan extension 
\begin{equation}
\label{eq:calFPre(S)toSet}
\calF
\colon 
\Pre(\SmatBcZ)^\mathrm{op}
\to 
\Set 
\end{equation} 
along the embedding $(\SmatBcZ)^\mathrm{op}\to \Pre(\SmatBcZ)^\mathrm{op}$ opposite to the Yoneda embedding.

\begin{definition}
\label{def:hocolimA}
Let $\mathcal C$ be a small category. 
For a diagram $\mathbf X\colon \mathcal C\to \SmatBcZ$, 
we define the simplicial set
\[
\hocolimABZ_{\mathcal C,\mathbf X}\calF
\colon 
\Delta^\mathrm{op}
\to 
\Set
\]
by setting
\begin{equation}
\label{eq:hocolimABcZX2Deltan}
(\hocolimABZ_{\mathcal C,\mathbf X}\calF)_n
:=
\coprod_{\alpha\in N(\mathcal C)_n}\calF(\Cyl^{B,Z}_{\mathcal C}\mathbf X(\alpha)).
\end{equation}
Here we apply $\mathbf X$ term-wise to the simplices $N(\mathcal C)_n = \sSet(\Delta^n,N(\mathcal C))$.
Moreover, 
the nerve induces the simplicial structure on $\hocolimABZ_{\mathcal C,\mathbf X}\calF$.
When the meaning of $\mathbf X$ is clear,  
we write $\hocolimABZ_{\mathcal C}\calF$ for this construction, 
see, e.g.,
\eqref{eq:hocolimSmatBcZhocolimSmatZ}.
If $K$ is a simplicial set, 
we generalize \eqref{eq:hocolimABcZX2Deltan} by setting 
\begin{equation}
\label{eq:hocolimABcZX2}
K^{\Delta_{B,Z}}_{\mathcal C,\mathbf X}\calF
:=
\coprod_{\sSet(K,N(\mathcal C))}\calF(\Cyl^{B,Z}_{\mathcal C}\mathbf X(K)).
\end{equation}
Here $\mathbf X(K)\in \sSet(K,N(\SmatBcZ))$ is shorthand for the composite 
$K\to N(\mathcal C)\to N(\SmatBcZ)$.
\end{definition}

\begin{remark}
The homotopy colimit notation in \Cref{def:hocolimA} will be justified in \Cref{lm:hocolimLrepA1SetPresheaf}.
\end{remark}

\begin{lemma}
\label{lm:hocolimA(K)}
There is a bijection of sets
\[
\sSet(K,\hocolimABZ_{\mathcal C,\mathbf X}\calF)
\cong 
K^{\Delta_{B,Z}}_{\mathcal C,\mathbf X}\calF.
\] 
\end{lemma}
\begin{proof}
Use that \eqref{eq:calFPre(S)toSet} and the functor $\sSet^\mathrm{op} \to \Set$ defined by \eqref{eq:hocolimABcZX2}
preserve colimits.
\end{proof}

Recall the endofunctor $\Rep_{\A^{1}}^{[1]}$ on $\Spc_{s}(\Smat_{B,Z})$ from \Cref{subsect:LrepA1notdef}.

\begin{lemma}
\label{lm:hocolimLrepA1SetPresheaf}
There is a naturally induced weak equivalence of simplicial sets
\[
\hocolim_{s\in \mathcal C}\Rep_{\A^{1}}^{[1]}\calF(\mathbf X(s))
\xrightarrow{\simeq} 
\hocolimABZ_{\mathcal C,\mathbf X}\calF. 
\] 
\end{lemma}
\begin{proof}
Consider the functor \[ \mathcal E\colon \mathcal C\to \sSet; s\mapsto \Rep_{\A^{1}}^{[1]}\calF(\mathbf X(s)),\]
and the $\mathcal E$-weighted nerve $N^{\mathcal E}(\mathcal C)$ of $\mathcal C$ from 
\cite[Tag 025X]{KerodonProject}.
For each $n\in \mathbb Z_{\geq 0}$, 
there is an isomorphism of sets
\[
N^{\mathcal E}(\mathcal C)_n\cong
\coprod_{\alpha\in N(\mathcal C)_n} {} \calF(\Cyl^{B,Z}_{\mathcal C}\mathbf X(\alpha)), 
\]
and consequently, 
an isomorphism of simplicial sets
\[
N^{\mathcal E}(\mathcal C)
\cong
\hocolimABZ_{\mathcal C,\mathbf X}\calF.
\]
The claim follows from the weak equivalence
\[
\hocolim_{s\in \mathcal C}\Rep_{\A^{1}}^{[1]}\calF(\mathbf X(s))
\xrightarrow{\simeq}
N^{\mathcal E}(\mathcal C)
\] 
provided by \cite[Tag 037S]{KerodonProject}.
\end{proof}

\section{Notation}

\subsubsection{Categories of schemes}\label{convnotations:schemescategories}
\index{Categories of $B$-schemes!$\SmatB$|(}
\index{Categories of $B$-schemes!$\EssSm_{B,Z}$|(}
\index{Categories of $B$-schemes!$\Sm_{B,Z}$|(}
\index{Categories of $B$-schemes!$\Sm_{B*Z}$|(}
\index{Categories of $B$-schemes!$\SmAff_B$|(}
\index{Categories of $B$-schemes!$\SmAff_{B,Z}$|(}
\index{Categories of $B$-schemes!$\SmAff_{B*Z}$|(}
We recall that $\Sm_{B}$ denotes 
the category of smooth separated finite type $B$-schemes,
$\SmAff_B$ denotes the full subcategory of $\Sm_B$ spanned by schemes that admit a closed immersion into 
some finite dimensional affine space $\A^n_B$,
and $\EssSm_B$ denotes essentially smooth $B$-schemes, 
see \Cref{def:EssentiallySmooth}.
Moreover, 
we use of the following full subcategories of the category of $B$-schemes:
\begin{center}
\begin{tabular}{l|l}
$\Sm_{B,Z}$ & $\ip{X_Z^h\mid X\in\Sm_{B}}$\\
$\SmBlZ$ & $\ip{X_Z,X_Z^h\mid X\in\Sm_{B}}$\\
$\SmAff_{B,Z}$ & $\ip*{X_Z^h\mid X\in\SmAff_B}$\\
$\SmAff_{B*Z}$ & $\ip*{X_Z,X_Z^h\mid X\in\SmAff_B}$\\
$\Sm_{B}^\cci$ & $\ip{X\mid X\in\SmAff_B, T_X\cong\calO_X^n\text{ for some }n}$\\
$\SmatBcZ$ & $\ip{X_Z^h\mid X\in\Smat_{B}}$\\
$\SmatBlZ$ & $\ip{X_Z,X_Z^h\mid X\in\Smat_{B}}$\\
\end{tabular}
\end{center}

For example, 
the category $\Sm_{B}^\cci$ is spanned by all $X\in\SmAff_B$ with trivial tangent bundle 
of constant relative dimension over $B$.
The superscript is shorthand for component-wise complete intersection.
We note that if $X\in\SmAff_{B,Z}$, 
then also $(\A^1_{B})^{h}_{Z}\times_{B,Z}X$ is in $\SmAff_{B,Z}$.
Here, 
the fiber product is defined in \eqref{eq:timesBZ}.

\begin{lemma}
\label{lm:SmatBcZdefinitionsequivalence}
The embedding of categories 
\[
\SmatBcZ\to\ip{X\mid X\in\SmAff_{B,Z}, T_X\cong\calO_X^n\text{ for some }n}
\] 
is an equivalence.
\end{lemma}
\begin{proof}
Suppose that $X\in\SmAff_{B,Z}$ and $T_X\cong\calO_X^n$. 
By the definition of $\SmAffBcZ$, 
there exists $\widetilde X^\prime\in\SmAff_B$ such that 
$X\cong {({\widetilde{X}}^\prime)}^h_Z$.
Choose an open neighborhood $\widetilde X$ of $X\times_B Z$ in $\widetilde X^\prime$ such that 
$T_{\widetilde X}\cong\calO_{\widetilde X}^n$.
Then we have $\widetilde X\in\SmatBcZ$ and $X\cong \widetilde X^h_Z$.
\end{proof}

Our categories are related via evident functors, 
where "$\hookrightarrow$" denotes a fully faithful embedding, 
$U=B-Z$, and $X_{U}=X\times_B U$:
\begin{equation}
\label{equation:categories}
\begin{tikzcd} 
\Sm_{U}^\cci\ar[d,hook,shift left]\arrow[r,hook] & \SmAff_{U}\arrow[r,hook]\arrow[d,hook,shift left] & 
\Sm_{U}\arrow[d,hook,shift left]\arrow[r,hook] & \EssSm_{U}\arrow[d,hook,shift left]\\
\Sm_{B}^\cci\ar[d]\ar[u,shift left]\arrow[r,hook] & \SmAff_B\arrow[r,hook]\ar[d]\ar[u,shift left] & 
\Sm_{B}\ar{d}{X\mapsto X^h_Z}\arrow[r,hook]\ar[u,shift left,"X\mapsto X_{U}"] & \EssSm_B\arrow[d,equal]\ar[u,shift left]\\
\Sm_{B,Z}^\cci\arrow[d,hook]\ar[r,hook] & \SmAff_{B,Z}\arrow[r,hook]\ar[d,hook] & 
\Sm_{B,Z}\ar[d,hook]\arrow[r,hook] & \EssSm_B\ar{d} \\
\SmBlZ^\cci \ar[d,shift left]\ar[r,hook] & \SmAff_{B\ast Z}\ar[d,shift left]\arrow[r,hook] & 
\SmBlZ\ar[d,shift left,"X^h_Z\mapsto X_Z"]\arrow[r,hook] & \Sch_B \ar[d,shift left]\\
\Sm_{Z}^\cci \ar[u,hook,shift left]\ar[r,hook] & \SmAff_{Z}\arrow[r,hook]\ar[u,hook,shift left] & 
\Sm_{Z}\arrow[r,hook]\ar[u,hook,shift left] & \Sch_Z. \ar[u,hook,shift left]
\end{tikzcd}
\end{equation}

We consider $\Sm_{B,Z}$ and $\SmBlZ$ mostly for comparison with framed correspondences 
between smooth (not necessarily affine) schemes in \Cref{sect:tfLoc}. 
\Cref{lm:baselocality} reduces the proof of our main result to the case of an affine base scheme.
\Cref{cor:SmScZSmatA1locpreserve,cor:AffSmScZSmat} show that when $B$ is affine, 
we may replace $\SmBcZ$ with $\SmAffBcZ$ or $\SmatBcZ$.
For the proofs in \Cref{section:defsmZsmB} it is essential to work with $\SmatBcZ$.

\index{Categories of $B$-schemes!$\SmatB$|)}
\index{Categories of $B$-schemes!$\EssSm_{B,Z}$|)}
\index{Categories of $B$-schemes!$\Sm_{B,Z}$|)}
\index{Categories of $B$-schemes!$\Sm_{B*Z}$|)}
\index{Categories of $B$-schemes!$\SmAff_B$|)}
\index{Categories of $B$-schemes!$\SmAff_{B,Z}$|)}
\index{Categories of $B$-schemes!$\SmAff_{B*Z}$|)}

\subsubsection{Model categories}
\label{subsubsect:model-cat} 
Let $\mathcal S$ be any of the categories in \eqref{equation:categories}. 
We fix a bounded, complete, regular $cd$-structure $\tau$ on $\mathcal S$ (see \cite[§2]{VV:cd} for details on $cd$-structures).
We write $\mathrm{Spc}(\mathcal S)$ (resp.~$\mathrm{Spt}_s(\mathcal S)$) for the category of 
presheaves of simplicial sets 
(resp.~$S^1$-spectra) on $\mathcal S$.
In the setting of $\mathrm{Spc}(\mathcal S)$ there exist injective model structures as in \cite[Theorem 5.16]{Nordfjordeid}
\begin{equation}
\label{eq:Spc(B)}
\Spc_s(\mathcal S),\quad \Spc_\tau(\mathcal S),\quad \Spc^{\A^{1}}_\tau(\mathcal S).
\end{equation} 
The weak equivalences are schemewise weak equivalences, 
$\tau$-local equivalences,
and motivic $\tau$-local equivalences, 
respectively.
Similarly, 
in the $S^{1}$-stable setting, 
there are 
levelwise schemewise, 
levelwise $\tau$-local, 
and levelwise motivic $\tau$-local model structures \cite[§5.3]{Nordfjordeid}
\begin{equation}
\label{eq:Spts(B)}
\Spt_{s}(\mathcal S),\quad 
\Spt_{{s},\tau}(\mathcal S),\quad 
\Spt^{\A^{1}}_{{s},\tau}(\mathcal S),
\end{equation}
and the stable model structures on 
$\mathrm{Spt}_{s}(\mathcal S)$
\begin{equation}
\label{eq:Sptst(B)}
\Spt_\mathrm{st}(\mathcal S),\quad 
\Spt_{\mathrm{st},\tau}(\mathcal S),\quad 
\Spt^{\A^{1}}_{\mathrm{st},\tau}(\mathcal S).
\end{equation}
We denote the homotopy categories associated to the model structures in \eqref{eq:Spc(B)} and \eqref{eq:Sptst(B)} by:
\index{Homotopy categories!$\HHtriv(\mathcal S)$} 
\index{Homotopy categories!$\HH_\tau(\mathcal S)$} 
\index{Homotopy categories!$\HH^{\A^{1}}_\tau(\mathcal S)$} 
\index{Homotopy categories!$\SH_{s}(\mathcal S)$} 
\index{Homotopy categories!$\SH_{s,\tau}(\mathcal S)$} 
\index{Homotopy categories!$\SH^{\A^{1}}_{s,\tau}(\mathcal S)$} 
\begin{equation}\label{eq:hCatms}
\begin{array}{lll}
\HHtriv(\mathcal S), &\HH_\tau(\mathcal S), &\HH^{\A^{1}}_\tau(\mathcal S),\\
\SH_{s}(\mathcal S), &\SH_{s,\tau}(\mathcal S), &\SH^{\A^{1}}_{s,\tau}(\mathcal S).
\end{array}\end{equation}

\index{Localization functors!$L_\Nis$} 
\index{Localization functors!$\Lrep_\Nis$} 
In the $\tau$-local setting, 
we consider the $\tau$-localization functor, 
the $\tau$-localization endofunctor, 
and the $\tau$-fibrant replacement functor:
\begin{equation}\label{eq:notation:LLrepRep}\begin{array}{lll}
\Loc_{\tau}\colon \HH_{s}(\mathcal S)&\to& \HH_{\tau}(\mathcal S), \\
\Lrep_{\tau}\colon \HH_{s}(\mathcal S)&\to& \HH_{s}(\mathcal S),\\
\Rep_{\tau}\colon \Spc_s(\mathcal S)&\to&\Spc_s(\mathcal S).
\end{array} 
\end{equation}
We use similar notation for the corresponding functors on the $\tau$-localization on $\SH_{s}(\mathcal S)$ and
the levelwise $\tau$-fibrant replacement on $\Spt_{s,\tau}(\mathcal S)$.

\subsubsection{$\infty$-categories} 
We freely use the language of $\infty$-categories \cite{LurieHTT,LurieHA}.
If $\calC$ is an $\infty$-category, 
let $\cPre(\calC)=\Fun(\calC^\op,\cSpc_*)$ denote the $\infty$-category of presheaves on $\calC$ with values 
in the $\infty$-category of pointed anima or spaces $\cSpc_*$, which means pointed $\infty$-groupoids.
If $\calC=\Sm_B$, 
we abbreviate \index{$\infty$-categories!$\cPre(B)$} $\cPre(\Sm_B)$ to $\cPre(B)$. 
Similarly, 
we set \index{$\infty$-categories!$\cPrefr(B)$} $\cPrefr(B)=\cPre(\Corr^\fr(B))$.
For a Grothendieck topology $\tau$, 
we let \index{$\infty$-categories!$\cPre_\tau(B)$} $\cPre_\tau(B)$ denote the subcategory of $\cPre(B)$ spanned by $\tau$-sheaves. 
Finally, we let \index{$\infty$-categories!$\cPre_{\A^1}(B)$} $\cPre_{\A^{1}}(B)$ denote the subcategory of $\cPre(B)$ spanned by $\A^1$-invariant presheaves, and
\index{$\infty$-categories!$\cPre_{\A^1,\tau}(B)$} $\cPre_{\A^1,\tau}(B) = \cPreA(B)\cap \cPre_\tau(B)$. 

These $\infty$-categories are related through adjunctions and localization functors as in the diagram:
\[\begin{tikzcd}
\cPre(B) \arrow[r, pos=0.7,"\gamma^*", shift left=0.5ex]\arrow[rrd, pos=0.8,"L_{\A^{1}}"]\arrow[d,swap,"L_\tf"] & 
\cPrefr(B)\arrow[rrd, "L_{\A^{1}}"]\arrow[d, swap,pos=0.8,"L^\fr_\tf", crossing over]\arrow[l, pos=0.3,"\gamma_*", shift left=0.5ex] &
 &\\
\cPretf(B) \arrow[r, pos=0.7,"\gamma^*_\tf", shift left=0.5ex]\arrow[rrd, pos=0.8, "L_{\A^{1}}^\tf"]\arrow[d, "L^{\tf}_\nis"] & 
\cPrefrtf(B)\arrow[rrd, pos=0.7,"L_{\A^{1}}^{\tf,\fr}"]\arrow[d, crossing over]\arrow[l, pos=0.3,"\gamma_*", shift left=0.5ex] &
 \cPreA(B) \arrow[r,pos=0.2,"\gamma^*_{\A^{1}}", shift left=0.5ex]\arrow[d,crossing over] & 
 \cPrefrA(B)\arrow[d]\arrow[l,pos=0.5,"\gamma_*", shift left=0.5ex] \\
\cPrenis(B) \arrow[r, pos=0.7,"\gamma^*_\nis", shift left=0.5ex]\arrow[rrd, pos=0.7,"L_{\A^{1}}^{\nis}"] & 
\cPrefrnis(B)\arrow[l, pos=0.3,"\gamma_*", shift left=0.5ex]\arrow[rrd, pos=0.7,"L_{\A^{1}}^{\nis,\fr}"] &
 \cPreAtf(B) \arrow[r,pos=0.1, "\gamma^*_{\A^{1},\tf}", shift left=0.5ex]\arrow[d,crossing over] & \cPrefrAtf(B)\arrow[d]\arrow[l,pos=0.5, "\gamma_*", shift left=0.5ex]\\
 & &
 \cPreAnis(B) \arrow[r, pos=0.1,"\gamma^*_{\nis,\A^1}", shift left=0.5ex] & \cPrefrAnis(B)\arrow[l,pos=0.4, "\gamma_*", shift left=0.5ex]
\end{tikzcd}
\]
\index{Localization functors!$L_\Nis$} 
\index{Localization functors!$L^\tf_\Nis$} 
\index{Localization functors!$\Lrep_\Nis$} 
\index{Localization functors!$\Lrep^\tf_\Nis$} 
If $L^{d}_{a}\colon \mathcal P\to \mathcal P^\prime$ is a left adjoint functor as above, 
with right adjoint $R^{d}_{a}\colon \mathcal P^\prime\to \mathcal P$, 
then we write $\catLrep^{d}_{a}$ for the composite $R^{d}_{a}L^{d}_{a}\colon \mathcal P\to \mathcal P$.
In particular, 
$\catLrep_{\nis}$ is the composite \[\catLrep_{\nis}\colon \cPre(B)\to \cPrenis(B)\to \cPre(B).\]

We denote the corresponding homotopy categories by
\index{Homotopy categories!$\HH^{\A^{1}}_{*}(B)$} 
\index{Homotopy categories!$\HH^{\A^{1}}_{\tau,*}(B)$} 
\begin{equation}\label{eq:notation:homcatinftycatpointed}
\begin{array}{llll} 
\Spc_*(B) &=\mathrm{Ho}\cPre(B), & \Shvbf_{\tau,*}(B)&=\mathrm{Ho}\cPre_\tau(B), \\
\HH^{\A^{1}}_{*}(B)&=\mathrm{Ho}\cPre_{\A^{1}}(B),&   \HH^{\A^{1}}_{\tau,*}(B)&=\mathrm{Ho}\cPre_{\A^1,\tau}(B),
\end{array}
\end{equation}
and similarly in the framed setting (indicated by superscript $\fr$).
In contrast to the homotopy categories in the first row in \eqref{eq:hCatms}
the ones in \eqref{eq:notation:homcatinftycatpointed} are pointed. 
For any $\mathcal{S}\in \cPre(B)$, 
e.g., 
$\mathcal{S}=(\Gm,{1})$ or $\mathcal{S}=(\PP^{1},\infty)$,
we have the diagram
\[
\begin{tikzcd}
&\catSpt_{\mathcal{S}}(B)\arrow[d, shift left = 0.5ex, "L_{\mathcal{S}}"]&\\
\cPre(B)\arrow[r, shift left = 0.5ex, "\Sigma^\infty_{\mathcal{S}}"]\arrow[ru] & 
\PSpt_\mathcal{S}(B) 
\arrow[l, shift left = 0.5ex, "\Omega^\infty_{\mathcal{S}}"]
\arrow[u, shift left = 0.5ex, "{R}_{\mathcal{S}}"]\arrow[r] & \cPre(B)[\mathcal{S}^{-1}].
\end{tikzcd}
\]
Here \index{$\infty$-categories!$\PSpt(B)$} $\PSpt_\mathcal{S}(B)$ and \index{$\infty$-categories!$\catSpt(B)$} $\catSpt_\mathcal{S}(B)$ are the $\infty$-categories of 
$\mathcal S$-prespectra and $\mathcal S$-spectra in $\cPre(B)$, 
see \Cref{section:StablelocalizationDefinitionsNotations}.
The adjoint functors ${L}_{\mathcal{S}}$ and ${R}_{\mathcal{S}}$ are constructed in 
Section \ref{notat:PSptSptLSRS}.
For the composite functors we use the shorthand notation 
\[
\begin{array}{lcl}
\OmegaSigma_\mathcal{S} &=& \Omega^\infty_{\mathcal{S}}\Sigma^\infty_{\mathcal{S}} \colon \cPre(B)\to \cPre(B),\\
\Lrep_{\mathcal{S}} &=& R_{\mathcal{S}}L_{\mathcal{S}} \colon \PSpt_{\mathcal{S}}(B)\to \PSpt_{\mathcal{S}}(B). 
\end{array}
\]
The $\infty$-categories \cite[§2.2]{RobalnoncomKtheorybridge} of $S^1$-spectra, $(s,t)$-bispectra, $S^1\wedge \Gm$-spectra, 
and $\PP^{1}$-spectra are denoted%
\index{$\infty$-categories!$\cSpts(B)$}\index{$\infty$-categories!$\cSptst(B)$}
\index{$\infty$-categories!$\cSptswt(B)$}\index{$\infty$-categories!$\cSptP(B)$}
\[
\cSpts(B),\, \cSptst(B),\, \cSptswt(B),\, \cSptP(B).
\]
Moreover,
we write \index{$\infty$-categories!$\cSptst_{\A^1,\tau}(B)$} $\cSptst_{\A^1,\tau}(B)$ for $\A^1$-invariant $\tau$-local objects and $\SH_{\A^1,\tau}(B)$ 
for the stable motivic homotopy category of $B$ \cite[Definition 2.38]{RobalnoncomKtheorybridge} which is 
equivalent to the homotopy category
$\SH^{s,t}_{\A^1,\tau}(B):=\mathrm{Ho}\catSpt^{s,t}_{\A^1,\tau}(B)$.
Similar notation will be used for the suspension coordinates $S^1$, $S^1\wedge \Gm$ and $\PP^{1}$.
For example, 
there is an equivalence $\SH^{\A^{1}}_{s,\tau}(\mathcal S)\simeq\SH^{s}_{\A^1,\tau}(B)$, 
see \eqref{eq:hCatms}.

\printindex

\printbibliography
\end{document}